\DeclareMathOperator{\sub}{sub}
\DeclareMathOperator{\ub}{ub}
\DeclareMathOperator{\lb}{lb}
\DeclareMathOperator{\colim}{colim}
\DeclareMathOperator{\with}{\&}
\DeclareMathOperator{\idl}{Idl}
\DeclareMathOperator{\Da}{\Downarrow}
\DeclareMathOperator{\wayb}{{\rotatebox[origin=c]{-90}{$\twoheadrightarrow$}}}
\newcommand{\bbT}{\mathbb{T}}
\newcommand{\bbB}{\mathbb{B}}
\newcommand{\bbC}{\mathbb{C}}
\newcommand{\bbI}{\mathbb{I}}
\newcommand{\bbF}{\mathbb{F}}
\newcommand{\bbN}{\mathbb{N}}
\newcommand{\bbP}{\mathbb{P}}
\newcommand{\CC}{\mathcal{C}}
\newcommand{\CF}{\mathcal{F}}
\newcommand{\CB}{\mathcal{B}}
\newcommand{\CN}{\mathcal{N}}
\newcommand{\CT}{\mathcal{T}}
\newcommand{\CP}{\mathcal{P}}
\newcommand{\CPd}{\mathcal{P}^\dag}
\newcommand{\CI}{\mathcal{I}}
\newcommand{\CJ}{\mathcal{J}}
\newcommand{\CO}{\mathcal{O}}
\newcommand{\lam}{\lambda}
\newcommand{\bv}{\sup}
\newcommand{\bw}{\inf}
\newcommand{\lra}{\longrightarrow}
\newcommand{\ra}{\rightarrow}
\newcommand{\da}{\downarrow}
\newcommand{\cpt}{\circ}
\newcommand{\sy}{{\sf y}}
\newcommand{{\sfm}}{{\sf m}}
\newcommand{\syd}{{\sf y}^\dag}
\newcommand{\ua}{\uparrow} 
\newcommand{\sV}{{\sf V}}
\newcommand{\sQ}{[0,1]}
\newcommand{\id}{{\rm id}}
\newcommand{\QOrd}{[0,1]\text{-}{\bf Cat}}
\newcommand{\QCL}{[0,1]\text{-}{\bf ConLat}}
\newcommand{\TCL}{\bbT\text{-}{\bf CCAlg}}
\newcommand{\QCD}{[0,1]\text{-}{\bf CDL}}
\newcommand{\QSup}{[0,1]\text{-}{\bf Sup}} 
\newcommand{\QMod}{[0,1]\text{-}{\bf Mod}}
\newcommand{\TAlg}{\bbT\text{-}{\bf Alg}}
\newcommand{\PAlg}{\bbP\text{-}{\bf Alg}}
\newcommand{\PdAlg}{\bbP^\dag\text{-}{\bf Alg}}
\newcommand{\IAlg}{\bbI\text{-}{\bf Alg}}
\newcommand{\CAlg}{\bbC\text{-}{\bf Alg}}
\newcommand{\rto}{{\lra\hspace*{-2.9ex}{\mapstochar}\hspace*{3.1ex}}}
\newcommand{\oto}{{\lra\hspace*{-3.1ex}{\circ}\hspace*{1.7ex}}}
\theoremstyle{plain}
\newtheorem{thm}{Theorem}[section]
\newtheorem{lem}[thm]{Lemma}
\newtheorem{prop}[thm]{Proposition}
\newtheorem{cor}[thm]{Corollary}
\theoremstyle{definition}
\newtheorem{defn}[thm]{Definition}
\newtheorem{ques}[thm]{Question}
\newtheorem{exmp}[thm]{Example}
\newtheorem{rem}[thm]{Remark}
\newtheorem*{con} {Convention}
\begin{document}
	
	\title{Introductory notes on real-enriched categories}
	
	\author{Dexue Zhang} \address{School of Mathematics, Sichuan University, Chengdu, China} \email{dxzhang@scu.edu.cn}
	
	\thanks{The author gratefully acknowledges the support of the National Natural Science Foundation of China (No. 12371463).}
	
	\subjclass[2020]{18B35, 18C15, 18C20, 18D15, 18D20}

	\keywords{Real-enriched category, functor, distributor, Cauchy completeness, Yoneda completeness, class of weights, monad, real-enriched domain}
	
	\begin{abstract} Real-enriched categories are  categories with real numbers as enrichment. Precisely, a real-enriched category is a category enriched over the commutative and unital quantale composed of the unit interval and a continuous t-norm. These notes present a brief introduction to such categories, focusing on the presheaf monad and its submonads in the category of  real-enriched categories.   \end{abstract} 
	
	\maketitle
	
	\tableofcontents

	\section{Introduction} 
	Real-enriched categories are a very special kind of enriched categories, namely, those categories enriched over real numbers. Precisely, a real-enriched category is a category enriched over the unital quantale composed of the the interval  $[0,1]$ and  a continuous t-norm. To require   enrichment be real numbers limits drastically  the scope of the theory, but the benefit is also apparent. First, real numbers posses much richer structures than a general quantale or more generally, a symmetric and monoidal closed category, one expects more can be said about such categories because of the richer structures of the enrichment. Second, the study of such categories is closely related to many-valued logic. In 1973 Lawvere pointed out in his influential paper \cite{Lawvere1973}  a deep connection between enriched categories and \emph{generalized pure logic}.  For real-enriched categories, this ``generalized pure logic'' is the \emph{Basic Logic}  developed by  H\'{a}jek \cite{Ha98}. As shown in H\'{a}jek \cite{Ha98} and  Cignoli,   Esteva,   Godo and Torrens \cite{CEGT},  the \emph{Basic Logic} is a \emph{logic of continuous t-norms}. So, real-enriched categories may be viewed as many-valued ordered sets with the unit interval as truth value set, we'll emphasize this viewpoint in these notes. 
	We would like to note here that around 1970 Goguen \cite{Goguen67,Goguen69,Goguen74} has argued the potential application of unital quantales as sets of truth values in the disposition of inexact concepts. 
	
	Though being a narrow class, real-enriched categories are sufficiently general to provide a common framework for ordered sets and quasi-metric spaces,  they are natural and core objects for Quantitative Domain Theory, see  e.g.  America and Rutten \cite{AR89}, Antoniuk and Waszkiewicz \cite{AW2011}, Bonsangue,   van Breugel and Rutten \cite{BBR98}, Flagg \cite{Flagg1997}, Flagg and Kopperman \cite{FK1997}, Gutierres and Hofmann \cite{GH}, Hofmann and Waskiewicz \cite{HW2011,HW2012}, Kostanek and Waszkiewicz \cite{KW2011}, K\"{u}nzi and Schellekens \cite{KS2002}.
	
	There exist many texts on enriched categories, e.g. Eilenberg and Kelly \cite{EK1966},  Lawvere \cite{Lawvere1973}, the book of Kelly \cite{Kelly}, Chapter 6 in the Handbook of Borceux \cite{Borceux1994b}, and the  works of Stubbe \cite{Stubbe2005,Stubbe2006,Stubbe2007} on quantaloid-enriched categories. The aim of these notes is to explain basic ideas  about real-enriched categories,  focusing on the presheaf monad and its submonads. Special attention is paid to  how the logic structures of the truth value set affect mathematical results, see e.g. Proposition \ref{subalgebra of a free one}, Theorem \ref{Vop is CD}, Theorem \ref{CD implies CL}, and Lemma \ref{conical filter form a monad}. Many topics are not covered here.   Mac\thinspace Neille completion of real-enriched categories, Formal Concept Analysis  of fuzzy contexts  (see B\v{e}lohl\'{a}vek \cite{Belo04}) are not touched;  topology of real-enriched categories, except the notion of open ball topology, is not touched either, interested readers are referred to Goubault-Larrecq \cite{Goubault}, Hofmann \cite{Hof2007},   Lowen \cite{Lowen1997,Lowen2015}, and the book \cite{Monoidal top} edited by Hofmann, Seal and Tholen.

	These notes are arranged into four parts. The first part consists of Section \ref{galois connections} and Section \ref{continuous t-norm}. This part recalls, for convenience of the reader,  some basic ideas in theories of ordered sets and continuous t-norms. Many notions for real-enriched categories  find their roots in the theory of ordered sets; the interval $[0,1]$ together with a continuous t-norm is our set of truth values. The second part consists of Section \ref{real-enriched cats} -- Section \ref{tensor and cotensor}. This part deals with basic structures of real-enriched categories, including   (adjoint) functors, (adjoint) distributors, Yoneda lemma, weighted limits and weighted colimits.
	The third part consists of Section \ref{cauchy complete} -- Section \ref{Smyth complete}, focusing on special kinds of colimits and limits. 
	The last 7 sections constitute the fourth part,  dealing with the presheaf monad in the category of real-enriched categories,  its submonads and algebras.

	
	
	\section{Galois connection and domains} \label{galois connections} 
	This   section recalls some basic ideas in order theory, and fixes some notations. The materials can be  found in many places, e.g. Davey and Priestley \cite{Davey2002}, Gierz,   Hofmann,   Keimel et al \cite{Gierz2003}, and  Goubault-Larrecq \cite{Goubault}.  
	
	\begin{defn} Let $X$ be a set. An order (or a preorder) on $X$ is a binary relation $\leq$ on $X$ such that, for all $x, y, z \in X$,
		\begin{enumerate}[label={\rm(\roman*)}] 
			\item $x \leq x$,
			\item $x\leq y$ and $y \leq z$ imply $x \leq z$.
		\end{enumerate}
		These conditions are referred to, respectively, as reflexivity and transitivity. A set $X$ equipped with an order relation $\leq$ is called an ordered set (or a preordered set).
		Usually we  say simply ``$X$ is an ordered set''. When it is necessary to specify the order relation we write $(X,\leq)$. If $x\leq y$ and $y\leq x$, we say that $x$ and $y$ are equivalent and write $x\cong y$. \end{defn}
	
	A partial order on a set $X$ is an order that satisfies additionally the axiom of antisymmetry:
	\begin{enumerate}\item[\rm(iii)] $x \cong y$ implies $x = y$. \end{enumerate}
	
	\begin{exmp}\begin{enumerate}[label={\rm(\roman*)}]  \item The identity relation  on a set $X$ is a partial order, called the discrete order on $X$.
			
			\item For each set $X$, the inclusion relation $\subseteq$ is a partial order on the powerset $2^X\coloneqq\{A\mid A\subseteq X\}$ of $X$.
			
			\item The smaller than or equal to relation $\leq$ is a partial order on the set $\mathbb{R}$ of real numbers. But, the strict smaller than relation $<$ on $\mathbb{R}$ is not an order since it is not reflexive.
			
			\item The divisibility relation $\sqsubseteq$,  defined by  $m\sqsubseteq n$ if $m=kn$ for some natural number $k$, is a partial order on the set $\mathbb{N}$ of natural numbers.
	\end{enumerate}\end{exmp}
	
	If $\leq$ is an order on a set $X$, then the opposite relation $\leq^{\rm op}$, given by $x\leq^{\rm op}y$ if $y\leq x$, is also an order on $X$, called the \emph{opposite} of $\leq$.
	
	Suppose $X$ is an ordered set, $A$ is a subset and $x$ is an element of $X$.
	We say that $x$  is  an \emph{upper bound} of   subset $A$ if $a\leq x$ for all $a\in A$. We say that $x$ is a \emph{join}, or  a \emph{supremum}, of $A$ if $x$ is the least upper bound of $A$; that means, $x$ is an upper bound of $A$ and $x\leq y$ whenever  $y$ is an upper bound of $A$. It is clear that any two joins of a subset $A$ are equivalent, hence every subset of an ordered set has  at most one join, up to isomorphism. So, we will speak of \emph{the join} of a subset.  Dually, $x$ is a \emph{lower bound} of $A$ if $x\leq a$ for all $a\in A$. The  greatest lower bound of $A$, if exists, is called the \emph{meet}, or the \emph{infimum}, of $A$.
	
	\begin{defn}Suppose $X$ is an ordered set and $A\subseteq X$. We say that $A$ is  \begin{enumerate}[label={\rm(\roman*)}] 
			\item a lower set if $x\in A$ and $y\leq x$ imply $y\in A$; \item an upper set if $x\in A$ and $x\leq y$ imply $y\in A$. \end{enumerate}\end{defn}
	
	For a subset $A$ of an ordered set $X$, let \[\ua A=\{x\in X\mid \exists\thinspace a\in A, a\leq x\}; \quad \da A=\{x\in X\mid \exists\thinspace a\in A, x\leq a\}.\] Then $\ua A$ and $\da A$ are, respectively, the smallest upper set and the smallest lower set containing $A$. So,   $\da A$ is the lower set and $\ua A$ the upper set generated by $A$, respectively. When $A$ is a singleton set $\{a\}$, we write $\ua a$ and $\da a$. A lower set of the form $\da a$ is called a \emph{principal lower set}. Likewise, an upper set of the form $\ua a$ is called a \emph{principal upper set}.
	
	\begin{exmp}For each topological space $(X,\tau)$, define a relation  $\leq_\tau$ on the set $X$ by \[x\leq_\tau y\iff x\in\overline{\{y\}}.\] Then $\leq_\tau$ is an order on $X$, called the \emph{specialization order} of the space $(X,\tau)$. The specialization  order $\leq_\tau$ is antisymmetric if and only if $(X,\tau)$ is   $T_0$; $\leq_\tau$ is the discrete order  if and only if $(X,\tau)$ is   $T_1$. Every open set is an upper set  and every closed set is a lower set with respect to the specialization order. For each $x\in X$, the principal lower set $\da x$ is the closure of $\{x\}$, i.e. $\da x=\overline{\{x\}}$. \end{exmp}
	
	Suppose $X$ is an ordered set and $A\subseteq X$. Let  \[\ub A=\{x\in X\mid \forall a\in A, a\leq x\}; \quad \lb A=\{x\in X\mid \forall a\in A, x\leq a\}.\] In other words, $\ub A $ and $\lb A $ are the set of all upper bounds and the set of all lower bounds of $A$, respectively. Then, $\ub A$ is an upper set and $\lb A$ is a lower set;    $A$ has a join  if and  only if $\ub A$ is a principal upper set;   $A$ has a meet if and  only if $\lb A$ is a principal lower set.
	
	\begin{defn}Suppose $X$ is an ordered set. \begin{enumerate}[label={\rm(\roman*)}] 
			\item A subset $D$  of  $X$ is   directed if $D$ is not empty and for all $x,y\in D$, there is some $z\in D$ such that $x\leq z$ and $y\leq z$. A  directed lower set of $X$ is called an ideal of $X$. 
			
			\item A subset $F$ of $X$ is  filtered if $F$ is not empty and for all $x,y\in F$, there is some $z\in F$ such that $z\leq x$ and $z\leq y$. A filtered  upper set of $X$ is called a filter of $X$. \end{enumerate}\end{defn}
	
	In particular, an ordered set $X$ is \emph{directed} if it is a directed subset of itself. This means that $X$ is not empty and every pair of elements in $X$ has an upper bound.
	
	\begin{exmp}Suppose $X$ is an ordered set. \begin{enumerate}[label={\rm(\roman*)}] 
			\item For each $x\in X$, the principal lower set $\da x$ is   an ideal. 
			\item For each $x\in X$, the principal upper set $\ua x$ is   a filter.  
			\item Let $X$ be a topological space. Then for each $x\in X$, the family $\CN_x=\{U\subseteq X\mid x\in U^\circ\}$ of all neighborhoods of $x$ is a filter of  $(2^X,\subseteq)$. 
			\item For each set $X$, the family of finite subsets of $X$ is an ideal of $(2^X,\subseteq)$. \end{enumerate}\end{exmp}
	
	\begin{defn}Suppose $X$ is an ordered set. We say that $X$ is     
		\begin{enumerate}[label={\rm(\roman*)}] 
			\item bounded if it has a least and a greatest element. 
			\item a join semilattice if it is antisymmetric and every pair of elements $x,y$ in $X$ has a join, denoted by $x\vee y$. 
			\item  a meet semilattice if it is antisymmetric and every pair of elements $x,y$ in $X$ has a meet, denoted by $x\wedge y$. 
			\item  a  lattice if it is at the same time a join semilattice and a meet semilattice.   \end{enumerate}\end{defn}

	If $X$ is a lattice, then so is the opposite $X^{\rm op}$ of $X$. The ordered set $(\mathbb{R},\leq)$ is a lattice, but not bounded.

	\begin{lem}\label{join vs meet}For each ordered set $X$, the following   are equivalent: \begin{enumerate}[label={\rm(\arabic*)}] 
			\item  Every subset of $X$, including the empty one, has a join. 
			\item Every subset of $X$, including the empty one,  has a meet.  \end{enumerate} \end{lem}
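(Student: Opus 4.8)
The plan is to exploit the duality between joins and meets so that it suffices to prove a single implication. Since the opposite $X^{\rm op}$ of an ordered set is again an ordered set, and joins in $X^{\rm op}$ are precisely meets in $X$ (and vice versa), condition (2) for $X$ is literally condition (1) for $X^{\rm op}$. Hence, once I establish that (1) implies (2) for \emph{every} ordered set, applying this to $X^{\rm op}$ immediately yields that (2) implies (1) for $X$. So I only need to prove one direction.

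To show that (1) implies (2), I would take an arbitrary subset $A\subseteq X$ and manufacture its meet out of joins. The natural candidate is the join of the lower bounds: set $m=\bigvee\lb A$, which exists by hypothesis (1). The claim is that $m$ is the meet of $A$. First I would verify that $m$ is a lower bound of $A$, i.e.\ $m\in\lb A$: for each $a\in A$, every element of $\lb A$ lies below $a$, so $a$ is an upper bound of $\lb A$; since $m$ is the least upper bound of $\lb A$, we get $m\leq a$. Second, I would verify that $m$ is the \emph{greatest} lower bound: any lower bound $x$ of $A$ belongs to $\lb A$, hence $x\leq m$ because $m$ is an upper bound of $\lb A$. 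Together these two facts say exactly that $m$ is the meet of $A$.

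The only points demanding care are the treatment of the empty subset and the fact that in a merely preordered set joins are unique only up to the equivalence $\cong$. For $A=\emptyset$ we have $\lb\emptyset=X$, so the construction produces the meet of $\emptyset$ as $\bigvee X$, the greatest element of $X$; this is precisely why the hypothesis must include the join of the whole set (equivalently, the join of the empty set's lower bounds). Because $X$ need not be antisymmetric, ``the join'' is determined only up to $\cong$, but the argument above uses nothing beyond the defining universal property of a least upper bound, so it is insensitive to this ambiguity. I do not anticipate a genuine obstacle: the content is essentially the observation that a complete join-semilattice is automatically a complete lattice, and the entire proof reduces to the lower-bound construction together with the opposite-order symmetry.
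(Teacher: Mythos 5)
Your proposal is correct and follows essentially the same route as the paper: the meet of $A$ is constructed as the join of $\lb A$, which is shown to be a lower bound of $A$ precisely because every element of $A$ is an upper bound of $\lb A$. Your explicit reduction of the converse direction to the opposite order $X^{\rm op}$ is just a clean formalization of the paper's ``$(2)\Rightarrow(1)$ Similar,'' and your remarks on the empty set and on uniqueness only up to $\cong$ are accurate.
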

	
	\begin{proof}$(1)\Rightarrow(2)$ We show that each subset $A$ of $X$  has a meet. For this consider the set $\lb A$ of lower bounds of $A$. By assumption,  $\lb A$ has a join, say $x$. If we could show that $x$ is a lower bound of $A$, then it would be the greatest lower bound, hence the meet of $A$. Since every element $a\in A$ is an upper bound of $\lb A$,  it follows that $A$ is a set of   upper bounds of $\lb A$. Since $x$, being the join of $\lb A$, is the least element of $\ub (\lb A)$, then $x\leq a$ for all $a\in A$, hence $x$ is a lower bound of $A$.  
		
		$(2)\Rightarrow(1)$ Similar. \end{proof}
	
	\begin{defn} Suppose $X$ is an ordered set. We say that $X$ is \begin{enumerate}[label={\rm(\roman*)}] 
			\item complete if it satisfies the equivalent conditions in Lemma \ref{join vs meet}. \item a complete lattice if it is complete and antisymmetric. \end{enumerate} \end{defn}
	
	The empty subset of an ordered set $X$  has a join (meet, respectively)  if and only if $X$ has a least (greatest, respectively) element. So, every complete ordered set is bounded.
	
	For each set $X$, $(2^X,\subseteq)$ is a complete lattice. For any $a\leq b$ in $\mathbb{R}$, the partially ordered set $([a,b],\leq)$ is a complete lattice. For each ordered set $X$, the set $\CP X$ of all lower sets of $X$ ordered by inclusion   is a complete lattice. In particular, if $(X,\leq)$ is discrete then $(\CP X,\subseteq)$ coincides with the complete lattice $(2^X,\subseteq)$.

	\begin{defn} Suppose $X,Y$ are ordered sets and  $f\colon X\lra Y$ is a map. We say that \begin{enumerate}[label={\rm(\roman*)}] 
			\item $f$  preserves order  if $x_1\leq x_2$ implies $f(x_1)\leq f(x_2).$ 
			\item  $f$  reflects order  if $f(x_1)\leq f(x_2)$  implies $x_1\leq x_2 $. 
			\item  $f$  preserves joins  (meets, resp.) if for each subset $A$ of $X$ and each join (meet, resp.) $a$ of $A$, $f(a)$ is a join (meet, resp.) of $\{f(x)\mid x\in A\}$. 
	\end{enumerate}\end{defn}
	
	The identity map on any ordered set is order-preserving; the composite of  order-preserving maps is order-preserving. So with ordered sets and order-preserving maps we have a category {\bf Ord}.
	
	It is clear that an order isomorphism (i.e. an isomorphism in the category {\bf Ord}) preserves both joins and meets.
	A map $f\colon X\lra Y$ is called an \emph{order-reversing isomorphism} if $f\colon X^{\rm op}\lra Y$ is an order isomorphism, or equivalently, $f\colon X\lra Y^{\rm op}$ is an order isomorphism. Every order-reversing isomorphism transforms joins  to meets  and  meets to joins.
	
	Suppose $\{X_i\}_{i\in J}$ is a family of ordered sets. Define a binary relation on the product set $\prod_{i\in J}X_i$ by \[(x_i)_i\leq (y_i)_i ~\quad~ \text{if $x_i\leq  y_i$  for each $i$}.\]  Then $\leq$ is an order on $\prod_iX_i$ and for each $j$, the projection $p_j\colon  \prod_iX_i \lra X_j $ preserves order. 
	
	\begin{exmp}\label{exmp of order-preserving maps} Suppose $X$ is an ordered set. \begin{enumerate}[label={\rm(\roman*)}] 
			\item The diagonal map
			\[\Delta\colon X\lra X\times X, \quad \Delta(x)=(x,x)\] preserves and reflects order. 
			
			\item The map \[\sy\colon X\lra\CP X, \quad x \mapsto\thinspace\da x\] preserves and reflects order, where $\CP X$ is the set of all lower sets of $X$ ordered by  inclusion. This is a special case of   \emph{Yoneda embedding} in category theory.
			
			\item The map \[\syd\colon X\lra\CPd X, \quad x \mapsto\thinspace\ua x\]  preserves and reflects order, where $\CPd X$ is the set of all upper sets of $X$ ordered by converse inclusion. This is a special case of  \emph{coYoneda embedding} in category theory.
	\end{enumerate} \end{exmp}

	\vskip 5pt \noindent{\bf Galois connection}
	
	\begin{defn}Suppose $X,Y$ are ordered sets; $f\colon X\lra Y$ and $g\colon Y\lra X$ are a pair of maps. We say that $f$ is left adjoint to $g$, or $g$ is right adjoint to $f$, and write $f\dashv g$,  if for all $x\in X$ and $y\in Y$ it holds that \[f(x)\leq y\iff x\leq g(y).\]   In this case,  the pair $(f,g)$  is called a Galois connection, or an adjunction, with $f$ being the left adjoint and $g$ the right adjoint. \end{defn}
	
	If $f\colon X\lra Y$ is an order isomorphism, then $f$ is both  left and right adjoint to its inverse $f^{-1}$.  
	
	\begin{lem}If  both $f_1\colon X\lra Y$ and $f_2\colon X\lra Y$ are left adjoint to $g\colon Y\lra X$, then $f_1(x)\cong f_2(x)$ for all $x\in X$. \end{lem}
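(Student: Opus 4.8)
The plan is to extract, from the two adjunction hypotheses, a single biconditional comparing $f_1(x)$ and $f_2(x)$ against an arbitrary test element, and then to specialize that test element by invoking reflexivity. This is the order-theoretic instance of the uniqueness of adjoints.

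First I would fix an element $x\in X$ and write down the defining biconditionals of the two adjunctions. From $f_1\dashv g$ we have $f_1(x)\leq y\iff x\leq g(y)$, and from $f_2\dashv g$ we have $f_2(x)\leq y\iff x\leq g(y)$, both holding for every $y\in Y$. Since these two biconditionals share the common right-hand side $x\leq g(y)$, I can chain them to obtain
\[
f_1(x)\leq y\iff f_2(x)\leq y\qquad\text{for all }y\in Y.
\]
Note that this step uses only the logical transitivity of ``if and only if'' and does not require any order-preservation properties of $f_1$, $f_2$, or $g$.

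Next I would specialize $y$ to the values of the functors themselves. Taking $y=f_2(x)$, reflexivity gives $f_2(x)\leq f_2(x)$, so the displayed equivalence forces $f_1(x)\leq f_2(x)$. By the symmetric choice $y=f_1(x)$, reflexivity gives $f_1(x)\leq f_1(x)$, whence $f_2(x)\leq f_1(x)$. Combining the two inequalities yields $f_1(x)\cong f_2(x)$, as required.

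There is no real obstacle here; the only thing to notice is that one should not attempt to compare $f_1(x)$ and $f_2(x)$ directly, but rather test each of them against the other as the variable $y$. The argument is entirely formal and makes no use of antisymmetry, which is why the conclusion is stated up to equivalence $\cong$ rather than as an equality.
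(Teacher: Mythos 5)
Your proof is correct. The paper states this lemma without proof (it is left to the reader, with the remark that left adjoints are therefore ``essentially unique''), and your argument is exactly the standard one that is being omitted: chain the two defining biconditionals through the common condition $x\leq g(y)$, then test with $y=f_2(x)$ and $y=f_1(x)$ using reflexivity to get both inequalities, which is precisely the paper's definition of $f_1(x)\cong f_2(x)$. Your closing observation that antisymmetry is never used, which is why the conclusion is an equivalence rather than an equality, is also the right point to make, since the paper works with preorders.
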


	Therefore, the left adjoints of a map, if exist, are essentially  unique; likewise for right adjoints.  So, we shall speak of \emph{the left adjoint} and \emph{the right adjoint}.
	
	\begin{lem}Left adjoints  and  right adjoints  preserve  order. \end{lem}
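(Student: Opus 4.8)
The plan is to prove the two halves separately, handling the left adjoint $f$ and the right adjoint $g$ of a Galois connection $f\dashv g$, and in each case to reduce monotonicity to a single application of the adjunction law $f(x)\leq y\iff x\leq g(y)$ together with reflexivity and transitivity of the orders.

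First I would record two inequalities obtained ``for free'' from reflexivity. Applying the adjunction to $f(x)\leq f(x)$ yields the \emph{unit} inequality $x\leq g(f(x))$ for every $x\in X$; dually, applying it to $g(y)\leq g(y)$ yields the \emph{counit} inequality $f(g(y))\leq y$ for every $y\in Y$. These two facts are the only consequences of the adjunction I expect to need.

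Next, to show $f$ preserves order, I would take $x_1\leq x_2$ in $X$. Composing with the unit inequality $x_2\leq g(f(x_2))$ via transitivity gives $x_1\leq g(f(x_2))$, and reading the adjunction law with $y=f(x_2)$ then converts this into $f(x_1)\leq f(x_2)$. The argument for $g$ is entirely symmetric: given $y_1\leq y_2$ in $Y$, transitivity with the counit inequality $f(g(y_1))\leq y_1$ gives $f(g(y_1))\leq y_2$, and the adjunction law with $x=g(y_1)$ turns this into $g(y_1)\leq g(y_2)$.

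Since each step is a direct invocation of reflexivity, transitivity, or the defining biconditional, I do not anticipate a genuine obstacle; the only point to watch is keeping the two sides of the biconditional straight, namely applying it in the correct direction in each of the two arguments.
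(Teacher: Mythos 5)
Your proof is correct and follows essentially the same route as the paper: derive the unit inequality from $f(x_2)\leq f(x_2)$, use transitivity, and convert back via the adjunction, with the dual argument for $g$ (which the paper leaves to "likewise"). No issues.
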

	
	\begin{proof}Suppose   $f\colon X\lra Y$ is left adjoint to $g\colon Y\lra X$. We show that both $f$ and $g$ preserve order.   Assume that $x_1\leq x_2$. Since $f(x_2)\leq f(x_2)$, then $x_2\leq gf(x_2)$, so $x_1\leq gf(x_2)$, hence $f(x_1)\leq f(x_2)$. This shows that $f$ preserves order. Likewise, $g$ preserves order. \end{proof}
	
	\begin{lem}\label{composite of left adjoints} Each composite of left adjoints is a left adjoint.  Dually, each composite of  right  adjoints is a  right  adjoint. \end{lem}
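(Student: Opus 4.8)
The plan is to verify directly the defining biconditional of an adjunction for the composite maps. It suffices to treat two composable adjunctions; the general case follows by induction on the length of the chain. So suppose $f_1\colon X\lra Y$ is left adjoint to $g_1\colon Y\lra X$ and $f_2\colon Y\lra Z$ is left adjoint to $g_2\colon Z\lra Y$. I claim that the composite $f_2\circ f_1\colon X\lra Z$ is left adjoint to $g_1\circ g_2\colon Z\lra X$. Note that the right adjoints compose in the reverse order, which is the one point to keep track of when setting up the statement.

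The main step is a short chain of equivalences. For all $x\in X$ and $z\in Z$, applying the adjunction $f_2\dashv g_2$ and then the adjunction $f_1\dashv g_1$ gives
\[(f_2\circ f_1)(x)\leq z \iff f_1(x)\leq g_2(z) \iff x\leq g_1(g_2(z))=(g_1\circ g_2)(z).\]
Since this holds for all $x$ and $z$, the pair $(f_2\circ f_1,\ g_1\circ g_2)$ is by definition a Galois connection, which establishes the first assertion.

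For the dual statement about right adjoints, I would not repeat the computation but instead invoke duality: recall that $f\dashv g$ as maps between $X$ and $Y$ is exactly the assertion that $g\dashv f$ when regarded as maps between $Y^{\rm op}$ and $X^{\rm op}$. Thus a composite of right adjoints in the original ordered sets becomes a composite of left adjoints in the opposite ordered sets, to which the first part applies, and transporting back yields the claim. Alternatively one simply runs the identical two-line chain of equivalences in the reverse reading. There is no genuine obstacle here; the only thing that requires care is the bookkeeping of composition order, namely that the composite $g_1\circ g_2$ is the correct candidate for the right adjoint of $f_2\circ f_1$.
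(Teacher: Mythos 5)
Your proof is correct and is exactly the standard verification the paper intends: the paper states this lemma without proof, and your two-step chain of equivalences $(f_2\circ f_1)(x)\leq z \iff f_1(x)\leq g_2(z)\iff x\leq (g_1\circ g_2)(z)$, together with the reversal of composition order for the right adjoints, is the argument being taken for granted. The induction remark and the duality reduction for the second assertion are likewise sound.
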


	\begin{exmp}Suppose $X$ is a topological space, $\CO(X)$ is the set of open sets and $\mathcal{C}(X)$ is the set of closed sets. Then the interior operator $( -)^\circ\colon 2^X \lra  \CO(X) $ is right adjoint to the inclusion   $  \CO(X) \lra 2^X ;$  the closure operator $(-)^-\colon 2^X \lra  \mathcal{C}(X) $ is  left adjoint to the inclusion  $  \mathcal{C}(X) \lra 2^X .$  \end{exmp}
	
	\begin{exmp}Suppose $X$ is an ordered set; $\CP X$ and $\CPd X$ are the ordered sets of lower sets and upper sets of $X$, respectively, as in Example \ref{exmp of order-preserving maps}.  For all $A\in\CP X$ and $B\in\CPd X$, it holds that \[\ub A\supseteq B\iff \forall x\in A, \forall y\in B,~ x\leq y\iff A\subseteq \lb B,\]  so $\ub\colon \CP X\lra\CPd X$ is left adjoint to $\lb\colon \CPd X\lra \CP X.$ \end{exmp}
	
	\begin{exmp} \label{image is left adjoint to preimage} Suppose $X,Y$ are sets and $f\colon X\lra Y$ is a map. Let \[f_\exists\colon2^X \lra 2^Y\] be the map that sends each subset $A$ of $X$ to its image $ \{f(a)\mid a\in A\}$; let \[f^{-1}\colon2^Y \lra 2^X\] be the map that sends each subset $B$ of $Y$ to its preimage $\{x\in X\mid f(x)\in B\}$.  
		Then $f_\exists$ is left adjoint to $f^{-1}$. The map $f^{-1}$ also has a right adjoint, given by \[f_\forall \colon 2^X \lra  2^Y ,\quad f_\forall (A)=\{y\in Y\mid f^{-1}(y)\subseteq A\}. \]   Thus, we have a string  of adjunctions: \[f_\exists\dashv f^{-1}\dashv f_\forall\colon 2^X\lra 2^Y .\]   \end{exmp}
	
	\begin{exmp} Suppose $X,Y$ are ordered sets and $f\colon X\lra Y$  preserves order. For each lower set $B\subseteq Y$, the inverse image $f^{-1}(B)$ is a lower set of $X$, so we have an order-preserving map \[f^{-1}\colon \CP Y  \lra \CP X .\] For each lower set $A\subseteq X$,  let \[f_\exists(A)=\{y\in Y\mid y\leq f(a) ~\text{for some}~ a\in A\}.\]  
		Then  \[f_\exists\colon \CP X  \lra  \CP Y \] is order-preserving and  it is left adjoint to $f^{-1}$. The map $f^{-1}$ also has a right adjoint   \[f_\forall \colon \CP X  \lra  \CP Y \] given by \[f_\forall (A)=\{y\in Y\mid f^{-1}(\da y)\subseteq A\}.\] Thus, we  have a string of adjunctions: \[f_\exists\dashv f^{-1}\dashv f_\forall\colon \CP X\lra\CP Y .\]
		If both $X$ and $Y$ are discrete ordered sets, then this string of adjunctions reduces to that in the above example. \end{exmp}
	
	\begin{thm}\label{Galois connection via unit}Suppose $X,Y$ are ordered sets; $f\colon X\lra Y$ and $g\colon Y\lra X$  preserve order. The following   are equivalent: 
		\begin{enumerate}[label={\rm(\arabic*)}] 
			\item $f\dashv g$.
			\item $\id_X\leq gf$ and $fg\leq\id_Y$.
			\item For each $x\in X$, $f(x)$ is a least element in $g^{-1}(\ua x)$. 
			\item For each $y\in Y$, $g(y)$ is a greatest element in $f^{-1}(\da y)$.  
	\end{enumerate} \end{thm}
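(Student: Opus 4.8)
The plan is to treat $(1)\Leftrightarrow(2)$ and $(1)\Leftrightarrow(3)$ directly, and then to deduce $(4)$ from $(3)$ by passing to opposite orders. Throughout I read $\id_X\leq gf$ and $fg\leq\id_Y$ pointwise, i.e.\ as the assertions $x\leq gf(x)$ and $fg(y)\leq y$ for all $x\in X$, $y\in Y$; and I unpack $g^{-1}(\ua x)=\{y\in Y\mid x\leq g(y)\}$, so that ``$f(x)$ is a least element of $g^{-1}(\ua x)$'' means precisely that $x\leq g(f(x))$ (membership) together with $f(x)\leq y$ whenever $x\leq g(y)$ (being a lower bound that lies in the set). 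Keeping these reformulations explicit is what makes the whole argument bookkeeping rather than insight.

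For $(1)\Rightarrow(2)$ I would instantiate the defining equivalence $f(x)\leq y\iff x\leq g(y)$ at the two reflexive inequalities: taking $y=f(x)$ and using $f(x)\leq f(x)$ yields $x\leq g(f(x))$, while taking $x=g(y)$ and using $g(y)\leq g(y)$ yields $f(g(y))\leq y$. For the converse $(2)\Rightarrow(1)$ I would use order-preservation crucially: given $f(x)\leq y$, applying the order-preserving $g$ gives $g(f(x))\leq g(y)$, and composing with $x\leq gf(x)$ gives $x\leq g(y)$; dually, given $x\leq g(y)$, applying $f$ gives $f(x)\leq fg(y)$, and composing with $fg(y)\leq y$ gives $f(x)\leq y$. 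This already establishes $(1)\Leftrightarrow(2)$.

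Next, for $(1)\Rightarrow(3)$ I would note that membership $f(x)\in g^{-1}(\ua x)$ is exactly the unit inequality $x\leq gf(x)$ obtained in $(2)$, and that if $y\in g^{-1}(\ua x)$, i.e.\ $x\leq g(y)$, then the adjunction gives $f(x)\leq y$; hence $f(x)$ is a least element. For $(3)\Rightarrow(1)$ I recover the two directions of the adjunction equivalence separately: the implication $x\leq g(y)\Rightarrow f(x)\leq y$ is just that $f(x)$ is a lower bound of $g^{-1}(\ua x)$, while $f(x)\leq y\Rightarrow x\leq g(y)$ follows by applying the order-preserving $g$ to $f(x)\leq y$ and composing with the membership $x\leq g(f(x))$. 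Thus $(1)\Leftrightarrow(3)$.

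Finally, $(4)$ should come for free by duality: replacing $X,Y$ by their opposites $X^{\rm op},Y^{\rm op}$ turns the adjunction $f\dashv g$ into an adjunction with $g$ left adjoint and $f$ right adjoint, sends $\ua x$ to $\da x$, and interchanges ``least'' with ``greatest''; applying the already-proved equivalence $(1)\Leftrightarrow(3)$ to this dual adjunction yields exactly statement $(4)$ for the original maps. I expect the only genuinely delicate point to be this last step — verifying that the opposite-order translation swaps $f$ and $g$ correctly and matches $g^{-1}(\ua x)$ ``least'' against $f^{-1}(\da y)$ ``greatest'' — so if a clean duality invocation feels fragile, the safe fallback is to repeat the argument of the previous paragraph verbatim with the inequalities reversed, proving $(1)\Leftrightarrow(4)$ by hand.
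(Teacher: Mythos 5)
Your proof is correct and follows essentially the same route as the paper: the same instantiation of the adjunction at reflexive inequalities for $(1)\Rightarrow(2)$, the same composition-with-order-preservation arguments for $(2)\Rightarrow(1)$ and $(3)\Leftrightarrow(1)$. The only difference is organizational — you prove pairwise equivalences with $(1)$ and dispose of $(4)$ by an explicit opposite-order duality, whereas the paper runs the cycle $(1)\Rightarrow(2)\Rightarrow(3)\Rightarrow(1)$ and dismisses $(2)\Rightarrow(4)\Rightarrow(1)$ as "similar"; your duality step is a clean formalization of exactly that remark.
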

	
	\begin{proof} $(1)\Rightarrow(2)$ 
		For each $x\in X$, since $f(x)\leq f(x)$, then  $x\leq gf(x)$, so  $\id_X\leq gf$.  For each $y\in Y$, since $g(y)\leq g(y)$, then  $fg(y)\leq y$, so $fg\leq\id_Y$.  
		
		$(2)\Rightarrow(3)$ First, since $x\leq gf(x)$, then $f(x)\in g^{-1}(\ua x)$. Second, for each $y\in g^{-1}(\ua x)$, since $x\leq g(y)$, then  $f(x) \leq fg(y) \leq y$.  So  $f(x)$ is the least element in $g^{-1}(\ua x)$.
		
		$(3)\Rightarrow(1)$ Suppose   $f(x)\leq y$. Since $f(x)\in g^{-1}(\ua x)$, then  $x\leq gf(x) \leq g(y)$. Conversely, suppose   $x\leq g(y)$. Since $fg(y)$ is the least element in $g^{-1}(\ua g(y))$ and $y\in g^{-1}(\ua g(y))$, then $f(x)\leq fg(y)\leq y$.   Therefore, $f$ is left adjoint to $g$.
		
		That $(2)\Rightarrow(4) \Rightarrow(1)$ can be proved in a similar way.  \end{proof} 
	
	\begin{prop} Suppose   $X, Y$ are partially ordered sets;  $f\colon X\lra Y$ is left adjoint to $g\colon Y\lra X$. Then $f=fgf$ and $g=gfg$, hence both $gf\colon X\lra X$ and $fg\colon Y\lra Y$ are idempotent.  
	\end{prop}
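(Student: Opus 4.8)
The plan is to reduce the hypothesis $f \dashv g$ to its unit--counit form and then sandwich $f$ (respectively $g$) between two comparisons, using the antisymmetry of the two partial orders to upgrade the resulting equivalences to genuine equalities.

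First I would apply Theorem \ref{Galois connection via unit} to replace the assumption $f \dashv g$ by the pair of inequalities $\id_X \leq gf$ and $fg \leq \id_Y$; concretely, $x \leq gf(x)$ for every $x \in X$ and $fg(y) \leq y$ for every $y \in Y$. I also recall from the earlier lemma that both $f$ and $g$, being adjoints, preserve order.

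To prove $f = fgf$, fix $x \in X$. Instantiating $fg \leq \id_Y$ at the point $f(x)$ gives $fgf(x) \leq f(x)$. Conversely, applying the order-preserving map $f$ to the inequality $x \leq gf(x)$ gives $f(x) \leq fgf(x)$. These two comparisons yield $f(x) \cong fgf(x)$, and since $Y$ is a partially ordered set, antisymmetry forces $f(x) = fgf(x)$; as $x$ was arbitrary, $f = fgf$. The identity $g = gfg$ is dual: for $y \in Y$, instantiating $\id_X \leq gf$ at the point $g(y)$ gives $g(y) \leq gfg(y)$, while applying the order-preserving map $g$ to $fg(y) \leq y$ gives $gfg(y) \leq g(y)$; antisymmetry of $X$ then yields $g(y) = gfg(y)$.

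The idempotency of $gf$ and $fg$ is then immediate: $(gf)(gf) = g(fgf) = gf$ by the first identity, and $(fg)(fg) = f(gfg) = fg$ by the second. The only subtle point is that antisymmetry is genuinely invoked, which is precisely why the statement is phrased for \emph{partially} ordered sets rather than merely preordered ones; for a preorder the same argument delivers only the pointwise equivalences $f \cong fgf$ and $g \cong gfg$. I do not anticipate any real obstacle beyond keeping track of which of the two orders supplies the antisymmetry in each of the two identities.
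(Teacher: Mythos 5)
Your proof is correct. The paper states this proposition without proof, and your argument is precisely the intended one: it uses the unit--counit inequalities $\id_X \leq gf$ and $fg \leq \id_Y$ supplied by the immediately preceding theorem, together with order-preservation of adjoints, to sandwich $fgf(x)$ against $f(x)$ and $gfg(y)$ against $g(y)$, and you correctly identify that antisymmetry of the partial orders is exactly what upgrades the resulting equivalences $\cong$ to equalities (and is the reason the hypothesis is ``partially ordered'' rather than merely preordered).
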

	
	\begin{prop}\label{surjective right adjoint} Suppose  $X, Y$ are partially ordered sets,   $f\colon X\lra Y$ is left adjoint to $g\colon Y\lra X$. The following conditions are equivalent:
		\begin{enumerate}[label={\rm(\arabic*)}] 
			\item   $g$ is surjective.
			\item $f(x)=\min g^{-1}(x)$ for all $x\in X$.
			\item $gf=1_X$.
			\item $f$ is injective.
		\end{enumerate} Likewise, the following conditions are equivalent:
		\begin{enumerate}[label={\rm(\arabic**)}] 
			\item   $g$ is injective.
			\item $g(y)=\max f^{-1}(y)$ for all $y\in Y$.
			\item $fg=1_Y$.
			\item $f$ is surjective.
	\end{enumerate}\end{prop}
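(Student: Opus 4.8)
The plan is to establish the first list of equivalences by routing all implications through condition $(3)$, and then to obtain the second list from the first by passing to the opposite orders.

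For the first list, I would begin by recording the two easy implications out of $(3)$. If $gf=1_X$, then each $x\in X$ equals $g(f(x))$, so $x$ lies in the image of $g$ and $g$ is surjective, giving $(1)$; moreover $g$ is then a left inverse of $f$, so $f$ is injective, giving $(4)$. For the returns to $(3)$ I would invoke the identities $f=fgf$ and $g=gfg$ of the preceding proposition. Assuming $(4)$, from $f(gf(x))=fgf(x)=f(x)$ and the injectivity of $f$ I conclude $gf(x)=x$, i.e. $(3)$. Assuming $(1)$, for $x\in X$ I pick $y$ with $g(y)=x$ and compute $gf(x)=gfg(y)=g(y)=x$, again $(3)$. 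This closes the loop $(1)\Leftrightarrow(3)\Leftrightarrow(4)$.

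It remains to tie in $(2)$. Here antisymmetry (the hypothesis that $X$ is partially ordered) matters, since it is what makes the minimum of a subset unique and hence $\min g^{-1}(x)$ well defined. For $(2)\Rightarrow(3)$ I would note that $f(x)=\min g^{-1}(x)$ forces $f(x)\in g^{-1}(x)$, that is $gf(x)=x$. For $(3)\Rightarrow(2)$, the equality $gf(x)=x$ shows $f(x)\in g^{-1}(x)$; and if $g(y)=x$, then the adjunction gives $f(x)\le y\iff x\le g(y)=x$, which holds, so $f(x)$ is a lower bound of $g^{-1}(x)$ lying in $g^{-1}(x)$, hence its least element.

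Finally, the second list follows by applying the first to the opposite orders. The defining biconditional $f(x)\le y\iff x\le g(y)$ says at once that $g\colon Y^{\rm op}\lra X^{\rm op}$ is left adjoint to $f\colon X^{\rm op}\lra Y^{\rm op}$. Feeding this adjunction into the first list, with $g$ now playing the role of the left adjoint and $f$ the right adjoint, turns $(1)$--$(4)$ into $(4^*)$, $(2^*)$, $(3^*)$, $(1^*)$ respectively; the only point to check is that a minimum taken in $X^{\rm op}$ is a maximum in $X$, which converts the statement $g(y)=\min f^{-1}(y)$ into $(2^*)$. I expect this bookkeeping of the duality---keeping straight which map is the left adjoint and how $\min$ turns into $\max$---to be the only delicate part; everything else is immediate from the preceding proposition and the definition of adjunction.
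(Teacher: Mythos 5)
Your proof is correct, and the paper in fact states this proposition without any proof, leaving it as a routine consequence of the preceding material. Your argument—cycling $(1)\Leftrightarrow(3)\Leftrightarrow(4)$ through the identities $f=fgf$ and $g=gfg$, tying in $(2)$ via the adjunction biconditional, and obtaining the starred list by observing that $g\colon Y^{\rm op}\lra X^{\rm op}$ is left adjoint to $f\colon X^{\rm op}\lra Y^{\rm op}$ (with $\min$ becoming $\max$)—is exactly the intended routine argument, including the correct observation that antisymmetry is what makes $\min g^{-1}(x)$ well defined.
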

	
	\begin{thm}\label{lapj} Every left adjoint  $f\colon  X\lra Y$  preserves joins. Conversely, if $X$ is complete and $f\colon X\lra Y$ preserves joins, then $f$ is a left adjoint. \end{thm}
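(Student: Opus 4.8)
The plan is to treat the two implications separately, using the adjointness bijection for the forward direction and the completeness of $X$ to manufacture a right adjoint for the converse.

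For the forward direction, suppose $f\dashv g$ and let $a$ be a join of a subset $A\subseteq X$. Since left adjoints preserve order, $f(a)$ is an upper bound of $\{f(x)\mid x\in A\}$. To see it is the least such, I would take an arbitrary upper bound $y$, so that $f(x)\leq y$ for all $x\in A$; the bijection $f(x)\leq y\iff x\leq g(y)$ turns this into the assertion that $g(y)$ is an upper bound of $A$, whence $a\leq g(y)$, and applying the bijection once more gives $f(a)\leq y$. Thus $f(a)$ is a join of the image, which is exactly the claim.

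For the converse, the natural candidate for the right adjoint is
\[ g(y)=\bigvee\{x\in X\mid f(x)\leq y\}=\bigvee f^{-1}(\da y), \]
a join that exists precisely because $X$ is complete. First I would note that $g$ preserves order, since enlarging $y$ enlarges $f^{-1}(\da y)$ and hence its join, and that $f$ preserves order, since for $x_1\leq x_2$ the element $x_2$ is a join of $\{x_1,x_2\}$, so $f(x_2)$ is a join of $\{f(x_1),f(x_2)\}$. Then I would verify the two inequalities of Theorem \ref{Galois connection via unit}: the unit $\id_X\leq gf$ is immediate because $x$ itself lies in $f^{-1}(\da f(x))$, and for the counit $fg\leq\id_Y$ I would use join-preservation to compute
\[ fg(y)=f\Big(\bigvee f^{-1}(\da y)\Big)=\bigvee\{f(x)\mid f(x)\leq y\}\leq y. \]
By Theorem \ref{Galois connection via unit}, these two inequalities yield $f\dashv g$.

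The only step demanding care is this last computation, and it is exactly where join-preservation is used essentially: one must be allowed to pull $f$ inside the possibly empty join $\bigvee f^{-1}(\da y)$. When the indexing set is nonempty the conclusion is harmless, every member of the resulting family being $\leq y$; when it is empty, $g(y)$ is the least element of $X$ and join-preservation (of the empty join) forces $f$ to send it to the least element of $Y$, which is automatically below $y$. Thus completeness of $X$, guaranteeing that every such join exists, together with join-preservation, licensing the interchange of $f$ with the join, is precisely what the converse requires, and I expect no further obstacle.
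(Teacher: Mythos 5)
Your proof is correct and follows essentially the same route as the paper: the forward direction uses the adjunction to show $f(\bv A)$ is below every upper bound of the image, and the converse builds the right adjoint as $g(y)=\bv f^{-1}(\da y)$, with join-preservation giving $fg(y)\leq y$. Your extra care with order-preservation of $f$ and $g$ and with the empty join is a fine (if slightly more verbose) filling-in of details the paper leaves implicit.
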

	
	\begin{proof} Suppose   $f\colon  X\lra Y$ has a right adjoint, say $g\colon Y\lra X$. We show that for any subset $A$ of $X$, $f(\bv A)$ is a join of $\{f(a)\mid a\in A\}$ whenever $\bv A$ exists. Since $f$ preserves order, then $f(\bv A)$ is an upper bound of $\{f(a)\mid a\in A\}$. If $y$ is an upper bound of $\{f(a)\mid a\in A\}$, then $a\leq g(y)$ for all $a\in A$, so  $\bv A\leq g(y)$, hence $f(\bv A)\leq y$.
		
		Conversely, suppose $X$ is complete and $f\colon X\lra Y$ preserves joins.  Since $f$ preserves joins, for each $y\in Y$  the join of   $f^{-1}(\da y)$ is also a member of $f^{-1}(\da y)$. Assigning to  $y$ the join of $f^{-1}(\da y)$ yields a right adjoint of $f$. \end{proof}
	
	Dually, every right adjoint preserves meets. Conversely, if $f\colon X\lra Y$ preserves meets and $X$ is complete then $f$ is a right adjoint.
	
	Galois connections    are of fundamental importance in order theory. Many concepts in order theory are postulated, or can be characterized, in terms of Galois connections. 
	
	\begin{prop}\label{complete lattice via left adjoint}For each ordered set $X$, the following statements are equivalent: 
		\begin{enumerate}[label={\rm(\arabic*)}] 
			\item $X$ is   complete. 
			\item The Yoneda embedding $\sy\colon X\lra\CP X$ has a left adjoint. 
			\item The coYoneda embedding $\syd\colon X\lra\CPd X$ has a right adjoint.
	\end{enumerate} \end{prop}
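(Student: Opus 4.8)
The plan is to prove the equivalence $(1)\Leftrightarrow(2)$ in full and then obtain $(1)\Leftrightarrow(3)$ by the parallel order-dual argument, so that no genuinely new work is needed for the third clause. The engine of the whole proof is the elementary observation that, for a lower set $A\in\CP X$ and an element $x\in X$, the containment $A\subseteq\da x$ says exactly that $x$ is an upper bound of $A$; dually, for an upper set $B\in\CPd X$, the relation $B\subseteq\ua x$ says exactly that $x$ is a lower bound of $B$. These reformulations turn the adjunction inequalities into statements about bounds, which is precisely the language in which joins and meets are defined.

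For $(1)\Rightarrow(2)$ I would exhibit the left adjoint explicitly. Assuming $X$ complete, every lower set $A$ has a join, so one may define $s\colon\CP X\lra X$ by $s(A)=\bv A$. To check $s\dashv\sy$ I must verify, for all $A\in\CP X$ and $x\in X$, the equivalence $s(A)\leq x\iff A\subseteq\da x$ (recall $\CP X$ carries the inclusion order and $\sy(x)=\da x$). Both sides assert that $x$ is an upper bound of $A$: the left side because $\bv A$ is the \emph{least} upper bound, the right side by the reformulation above. Hence $s$ is left adjoint to $\sy$. Alternatively, one could invoke the dual of Theorem \ref{lapj}: the map $\sy$ preserves arbitrary meets, since $\da\bigl(\bw A\bigr)=\bigcap_{a\in A}\da a$, and $X$ is complete, so $\sy$ is a right adjoint; but the direct construction is cleaner.

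For $(2)\Rightarrow(1)$ I would run the same computation backwards. Let $L\colon\CP X\lra X$ be a left adjoint of $\sy$, so that $L(A)\leq x\iff A\subseteq\da x$ for all $A,x$. Taking $x=L(A)$ shows $A\subseteq\da L(A)$, i.e.\ $L(A)$ is an upper bound of $A$; and for any upper bound $x$ of $A$ we have $A\subseteq\da x$, whence $L(A)\leq x$. Thus $L(A)=\bv A$ for every lower set $A$. To reach completeness I must pass from lower sets to arbitrary subsets: given any $S\subseteq X$, the set $S$ and the lower set $\da S$ it generates have exactly the same upper bounds, so the join of $S$ exists and equals $L(\da S)$. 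Therefore every subset of $X$ has a join and $X$ is complete.

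Finally, $(1)\Leftrightarrow(3)$ is the exact order-dual of $(1)\Leftrightarrow(2)$, obtained by replacing joins, upper bounds and lower sets throughout by meets, lower bounds and upper sets, and remembering that $\CPd X$ is ordered by \emph{converse} inclusion. Concretely, when $X$ is complete the candidate right adjoint of $\syd$ is $B\mapsto\bw B$, and the adjunction inequality $\syd(x)\leq B\iff x\leq\bw B$ unwinds---using $\ua x\leq B\iff B\subseteq\ua x$---into the single statement that $x$ is a lower bound of $B$; conversely a right adjoint $R$ of $\syd$ satisfies $R(B)=\bw B$ for every upper set $B$, and since an arbitrary subset $S$ shares its lower bounds with the upper set $\ua S$ it generates, every subset has a meet and $X$ is complete by Lemma \ref{join vs meet}. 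I expect the only real friction to be bookkeeping: getting the direction of the converse-inclusion order on $\CPd X$ right inside the adjunction inequalities, and justifying the passage from lower (resp.\ upper) sets to all subsets. Neither point is deep, but both are easy to misstate.
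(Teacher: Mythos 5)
Your proof is correct and follows essentially the same route as the paper: construct the left adjoint $s(A)=\bv A$ when $X$ is complete, and conversely unwind the adjunction inequality $s(A)\leq x\iff A\subseteq\da x$ to see that $s(A)$ is a least upper bound, with $(1)\Leftrightarrow(3)$ by order duality. The only difference is that you make explicit two points the paper leaves implicit—the passage from joins of lower sets to joins of arbitrary subsets, and the bookkeeping for the converse-inclusion order on $\CPd X$—which is a welcome refinement, not a departure.
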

	
	\begin{proof} $(1)\Rightarrow(2)$ Suppose   $X$ is complete. Let $s\colon\CP X\lra X$ be a map such that $s(A)$ is a join of $A$ for each $A\in\CP X$. Then $s$ is a left adjoint of $\sy$. 
		
		$(2)\Rightarrow(1)$ If  $s\colon\CP X\lra X$ is left adjoint to $\sy$, then for all $x\in X$ and $A\in\CP X$, \[A\subseteq\thinspace\da x\iff s(A)\leq x,\] which says that $x$ is an upper bound of $A$ if and only if $x\geq s(A)$.  So  $s(A)$ is a least upper bound of $A$.
		
		Likewise for $(1)\Leftrightarrow(3)$.\end{proof}
	
	\begin{prop}Suppose $X$ is a partially ordered set.  \begin{enumerate}[label={\rm(\roman*)}] 
			\item $X$ is a meet semilattice if and only if the diagonal map $\Delta\colon X\lra X\times X$ has a right adjoint. 
			\item  $X$ is a join semilattice if and only if the diagonal map $\Delta\colon X\lra X\times X$ has a left adjoint. 
			\item $X$ is a lattice if and only if the  diagonal map $\Delta\colon X\lra X\times X$ has, at the same time, a left and a right adjoint; that is, there is a string of adjunctions: $\vee\dashv\Delta\dashv\wedge\colon X\times X\lra X.$ 
	\end{enumerate} \end{prop}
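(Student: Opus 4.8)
The plan is to prove (i) by directly unwinding the definition of adjunction, to obtain (ii) by the order-dual argument, and to read off (iii) by combining the two. The single observation that drives everything is that the product order on $X\times X$ translates the position of the diagonal into a bound condition: for $x\in X$ and $(a,b)\in X\times X$ one has $\Delta(x)=(x,x)\leq(a,b)$ exactly when $x\leq a$ and $x\leq b$, that is, exactly when $x\in\lb\{a,b\}$. Dually, $(a,b)\leq\Delta(x)$ holds exactly when $x\in\ub\{a,b\}$.

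For (i), suppose first that $\Delta$ has a right adjoint $g\colon X\times X\lra X$. By the defining equivalence of the adjunction, $x\leq g(a,b)$ iff $\Delta(x)\leq(a,b)$ iff $x\in\lb\{a,b\}$, for all $x$. Setting $x=g(a,b)$ and using reflexivity shows $g(a,b)$ is itself a lower bound of $\{a,b\}$, while the same equivalence applied to an arbitrary lower bound $x$ gives $x\leq g(a,b)$; hence $g(a,b)$ is the greatest lower bound, i.e. the meet $a\wedge b$, so $X$ is a meet semilattice. Conversely, if $X$ is a meet semilattice, put $g(a,b)=a\wedge b$; the universal property of the meet is precisely the equivalence $x\leq a\wedge b\iff(x\leq a\text{ and }x\leq b)\iff\Delta(x)\leq(a,b)$, so $g$ is right adjoint to $\Delta$. (Alternatively, the adjunction can be checked via the unit--counit form of Theorem \ref{Galois connection via unit}.) Part (ii) is the order-dual statement: running the same argument with the reversed adjunction inequality, a left adjoint $f$ must send $(a,b)$ to the least upper bound, i.e. the join $a\vee b$, and conversely $f(a,b)=a\vee b$ is left adjoint to $\Delta$ by the universal property of the join. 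Finally (iii) is immediate: by definition $X$ is a lattice iff it is simultaneously a meet and a join semilattice, which by (i) and (ii) is equivalent to $\Delta$ having both a right adjoint $\wedge$ and a left adjoint $\vee$, yielding the string $\vee\dashv\Delta\dashv\wedge$.

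There is no substantial obstacle here; the argument is a direct unwinding of definitions. The only points requiring care are orienting the product order correctly in each half — an upper bound condition for the left adjoint, a lower bound condition for the right — and invoking the antisymmetry of $X$ to guarantee that the greatest lower bound (respectively the least upper bound) is a genuinely single-valued assignment, so that $g$ (respectively $f$) is a well-defined map rather than a mere multivalued correspondence. The essential uniqueness of adjoints recorded earlier then makes the identifications $g=\wedge$ and $f=\vee$ unambiguous.
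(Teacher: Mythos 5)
Your proof is correct. The paper in fact states this proposition without any proof, so there is no argument to compare against; your verification — translating $\Delta(x)\leq(a,b)$ into the lower-bound condition $x\in\lb\{a,b\}$, identifying the right adjoint with the meet (dually the join), and combining the two parts for (iii) — is the standard one, and it runs exactly parallel to the proof the paper does supply for Proposition \ref{complete lattice via left adjoint}, where the left adjoint of the Yoneda embedding is identified with the join operator by the same universal-property translation. Your two points of care (antisymmetry making the meet a single-valued assignment, and the correct orientation of the product order in each half) are precisely the details that need to be checked, so the write-up is complete as it stands.
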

	
	Quantales provide another kind of interesting examples.  Here we only include the definition of commutative and unital  quantales, standard reference for  quantales is Rosenthal \cite{Rosenthal1990}. A \emph{commutative and unital quantale}  \[{\sf Q}=({\sf Q},\with,k)\]
	is a commutative monoid with $k$ being the unit, such that the underlying set ${\sf Q}$ is a complete lattice  and for each $x\in{\sf Q}$, the map $x\with-\colon{\sf Q}\lra{\sf Q}$ has a right adjoint $x\ra-\colon{\sf Q}\lra{\sf Q}$. This is equivalent to that   $\with$ distributes over  joins.  In particular, a \emph{frame} is a complete lattice $L$ such that $(L, \wedge,1)$ is a quantale; or equivalently,  for each $a\in L$ the meet operator $a\wedge-\colon L\lra L$ distributes over joins.

	\vskip 5pt \noindent{\bf Completely distributive lattices}
	
	Suppose $X$ is a partially ordered set. By Proposition \ref{complete lattice via left adjoint}, $X$ is a complete lattice if and only if the Yoneda embedding $\sy\colon X\lra\CP X$ has a left adjoint $\sup\colon \CP X\lra X$, which sends each subset $A$ of $X$ to its join $\bv A$.
	
	\begin{defn}A complete lattice $X$ is  completely distributive if the left adjoint of the Yoneda embedding $\sy\colon X\lra\CP X$ also has a left adjoint; that means, there is a string of adjunctions \[\Downarrow\thinspace\dashv\sup\dashv\sy\colon X\lra\CP X.\]\end{defn}
	
	Lattices satisfying the requirement in the above definition are also said to be \emph{constructively completely distributive}, see e.g. Wood \cite{Wood}, because, compared with the complete distributivity law (see Theorem \ref{CompDist}), it does not resort to the Axiom of Choice.
	
	To characterize completely distributive lattices, we need the notion of totally below relation. Suppose $x,y$ are elements of a complete lattice $X$. We say that $x$ is \emph{totally below} $y$, in symbols $x\lhd y$, if for each subset $A$ of $X$, $$ y\leq\sup A \implies   x\leq a~\text{for some}~  a\in A.$$
	
	The following  facts are easy to check: \begin{itemize} 
		\item If $x\lhd y$ then $x\leq y$.  
		\item $x\lhd y$ if and only if   $x\in \bigcap\{A\in \CP X\mid y\leq \sup A\}$. 
		\item If $a\leq x\lhd y\leq b$  then $a\lhd b$. \item If $x\lhd \bv_{i\in I}y_i$  then $x\lhd y_i$  for some $i\in I$.
	\end{itemize}
	
	\begin{thm}\label{CompDist} For each complete lattice $X$, the following   are equivalent: \begin{enumerate}[label={\rm(\arabic*)}] 
			\item $X$ is completely distributive.
			\item For each family $\{A_i\}_{i\in I}$ of lower sets of $X$, \[\sup\bigcap_{i\in I}A_i=\bw_{i\in I}\sup A_i.\]
			\item Every element $x$ of $X$ is the join of the elements   totally below it, i.e. $x=\bv\{z\in X\mid z\lhd x\}$.
			\item $X$ satisfies the complete distributivity law: for any family $\{x_{i,j}\mid i\in I, j\in J_i\}$ of elements of $X$, \begin{equation*}  \label{CD law}\bw_{i\in I}\bv_{j\in J_i}x_{i,j} = \bv_{f\in\prod_{i\in I}J_i}\bw_{i\in I}x_{i,f(i)}.\end{equation*} \end{enumerate}  \end{thm}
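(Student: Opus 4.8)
The plan is to prove the chain of equivalences $(1)\Leftrightarrow(2)$, $(1)\Leftrightarrow(3)$ and $(2)\Leftrightarrow(4)$, organising everything around the description of complete distributivity as the existence of a left adjoint to $\sup\colon\CP X\lra X$. Recall from Proposition \ref{complete lattice via left adjoint} that, $X$ being complete, $\sup$ is the left adjoint of the Yoneda embedding $\sy$, and that by definition $X$ is completely distributive exactly when this $\sup$ in turn admits a left adjoint $\Downarrow$.

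For $(1)\Leftrightarrow(2)$ I would simply invoke the adjoint functor theorem for posets. Since $\CP X$ is a complete lattice, the dual of Theorem \ref{lapj} says that $\sup$ has a left adjoint if and only if $\sup$ preserves meets. Meets in $\CP X$ are intersections, so $\sup$ preserving meets means precisely $\sup\bigcap_{i\in I}A_i=\bw_{i\in I}\sup A_i$ for every family of lower sets, which is $(2)$. Note that the inequality $\sup\bigcap_iA_i\leq\bw_i\sup A_i$ is automatic from monotonicity of $\sup$, so only the reverse inequality carries content.

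For $(1)\Leftrightarrow(3)$ the key is to identify the left adjoint $\Downarrow$ with the totally-below sets. Assuming $(1)$, the adjunction $\Downarrow\dashv\sup$ reads $\Downarrow x\subseteq A\iff x\leq\sup A$, so $\Downarrow x$ is the least lower set $A$ with $x\leq\sup A$, i.e. $\Downarrow x=\bigcap\{A\in\CP X\mid x\leq\sup A\}$; by the listed characterization of $\lhd$ this equals $\{z\mid z\lhd x\}$. The unit of the adjunction gives $x\leq\sup\Downarrow x$, while $z\lhd x\Rightarrow z\leq x$ gives $\sup\Downarrow x\leq x$, hence $x=\bv\{z\mid z\lhd x\}$, which is $(3)$. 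Conversely, assuming $(3)$, I would define $\Downarrow x=\{z\mid z\lhd x\}$; this is a lower set and $\Downarrow$ is monotone, both by the fact $a\leq x\lhd y\leq b\Rightarrow a\lhd b$. To verify $\Downarrow\dashv\sup$: if $\Downarrow x\subseteq A$ then $x=\sup\Downarrow x\leq\sup A$ by $(3)$ and monotonicity of $\sup$; if $x\leq\sup A$ then any $z\lhd x$ satisfies $z\lhd\sup A$, so applying the definition of $\lhd$ to the subset $A$ itself yields $z\leq a$ for some $a\in A$, whence $z\in A$ since $A$ is a lower set, i.e. $\Downarrow x\subseteq A$.

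Finally, for $(2)\Leftrightarrow(4)$ I would translate between families of elements and families of lower sets. For $(2)\Rightarrow(4)$, given $\{x_{i,j}\}$ set $A_i=\da\{x_{i,j}\mid j\in J_i\}$, so that $\sup A_i=\bv_{j\in J_i}x_{i,j}$; each $y\in\bigcap_iA_i$ lies below some $x_{i,f(i)}$ for a choice function $f\in\prod_{i}J_i$, giving $y\leq\bw_ix_{i,f(i)}\leq\bv_f\bw_ix_{i,f(i)}$, so $(2)$ forces $\bw_i\bv_jx_{i,j}=\sup\bigcap_iA_i\leq\bv_f\bw_ix_{i,f(i)}$, the reverse inequality being automatic. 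For $(4)\Rightarrow(2)$, given lower sets $A_i$ take $J_i=A_i$ and $x_{i,a}=a$, so that $\bw_i\sup A_i=\bv_{f\in\prod_iA_i}\bw_if(i)$; since $\bw_if(i)\in\bigcap_iA_i$ for every $f$ and, conversely, every $y\in\bigcap_iA_i$ arises as $\bw_if(i)$ from the constant choice $f(i)=y$, this supremum equals $\sup\bigcap_iA_i$, which is $(2)$. The main obstacle I anticipate is exactly this last bookkeeping: correctly matching the product $\prod_iJ_i$ against intersections of lower sets, tracking which inequality holds for free, and keeping honest about the appeal to the Axiom of Choice hidden in the choice function $f$ — precisely the point flagged in the remark following the definition of complete distributivity, which is why $(1)$–$(3)$ are choice-free while $(4)$ is not.
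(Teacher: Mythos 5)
Your proof is correct, but it is organized differently from the paper's. The paper proves the single cycle $(1)\Rightarrow(2)\Rightarrow(3)\Rightarrow(4)\Rightarrow(1)$: the first step is exactly your observation that a right adjoint preserves meets; $(2)\Rightarrow(3)$ follows from the characterization $z\lhd x\iff z\in\bigcap\{A\in\CP X\mid x\leq\sup A\}$; $(3)\Rightarrow(4)$ is the choice-function argument; and $(4)\Rightarrow(1)$ defines $\Da x=\bigcap\{A\in\CP X\mid x\leq\sup A\}$ and uses the distributivity law to show $\sup\Da x=x$, whence $\Da\dashv\sup$. You instead prove three two-way equivalences: $(1)\Leftrightarrow(2)$ in both directions at once via the dual of Theorem \ref{lapj} (the paper never uses the converse half of that theorem, since its route from $(2)$ back to $(1)$ goes around the cycle through $(3)$ and $(4)$); $(1)\Leftrightarrow(3)$ by identifying the left adjoint $\Downarrow$ with $x\mapsto\{z\mid z\lhd x\}$ and verifying the adjunction directly; and $(2)\Leftrightarrow(4)$ by translating between families of elements and the lower sets they generate. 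The two proofs share their core ingredients (the identification of $\Downarrow x$ with the totally-below set, and the choice-function bookkeeping for $(4)$), but your decomposition has a genuine payoff: it establishes the equivalence of $(1)$, $(2)$, $(3)$ entirely without reference to $(4)$, so the Axiom of Choice is visibly confined to the single implication $(2)\Rightarrow(4)$, which makes precise the paper's remark that conditions $(1)$--$(3)$ are the ``constructive'' formulation of complete distributivity; in the paper's cycle, by contrast, passing from $(2)$ or $(3)$ back to $(1)$ silently routes through the choice-dependent step $(3)\Rightarrow(4)$. What the paper's arrangement buys in exchange is economy: four short implications rather than six.
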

	
	\begin{proof}$(1)\Rightarrow(2)$ This follows from that $\sup\colon \CP X\lra X$ is a right adjoint.
		
		$(2)\Rightarrow(3)$ This follows from that an element $z$  is totally below $x$ if and only if  \[z\in \bigcap\{A\in \CP X\mid x\leq \sup A\}.\]  
		
		$(3)\Rightarrow(4)$ Let \[x=\bw_{i\in I}\bv_{j\in J_i}x_{i,j}\] and suppose $z\lhd x$. By definition of the totally below relation, for each $i\in I$, there is some element in $J_i$, say $g(i)$, such that $z\leq x_{i,g(i)}$. Thus \[z\leq \bv_{f\in\prod_{i\in I}J_i}\bw_{i\in I}x_{i,f(i)}\] and consequently, \[\bw_{i\in I}\bv_{j\in J_i}x_{i,j} \leq \bv_{f\in\prod_{i\in I}J_i}\bw_{i\in I}x_{i,f(i)}\] by arbitrariness of $z$. The converse inequality is obvious.
		
		$(4)\Rightarrow(1)$ For each $x\in X$, let \[\Da x=\bigcap\{A\in\CP X\mid x\leq\sup A\}.\] We show that for all $x\in X$ and $A\in\CP X$, \[\Da x\subseteq A\iff x\leq\sup A,\]  which entails that \[\Downarrow\colon X\lra \CP X\] is left adjoint to \[\sup\colon \CP X\lra X.\] To this end, we check that   each $x\in X$  is the join of $\Da x$, i.e. $\sup\Da x=x$.
		
		Index the family $\{A\in\CP X\mid x\leq\sup A\}$ as $\{A_i\}_{i\in I}$; for each $i$, index the elements of $A_i$ as $\{x_{i,j}\mid j\in J_i\}$. Since each $A_i$ is a lower set, it follows that for each $f\in\prod_{i\in I}J_i$,   \[\bw_{i\in I}x_{i,f(i)}\in\bigcap_{i\in I}A_i.\] Then,  from the complete distributivity law it follows that $x$ is the join of $\Da x$ which is, by definition, the intersection $\bigcap_{i\in I}A_i$.   \end{proof}
	
	\begin{cor}The totally below relation in a completely distributive lattice $X$ is interpolative in the sense that if $x\lhd y$ then there is some $z\in X$ such that $x\lhd z\lhd y$.  \end{cor}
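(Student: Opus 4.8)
The plan is to exhibit an interpolant explicitly, using the characterization of complete distributivity in Theorem \ref{CompDist}(3), namely that every element is the join of the elements totally below it. Assume $x\lhd y$. I would work with the ``doubly totally below'' set
\[A=\{w\in X\mid w\lhd z\ \text{for some}\ z\ \text{with}\ z\lhd y\}.\]
The idea is that an element witnessing $x\lhd y$ through $A$ must already sit strictly below some $z$ that is itself strictly below $y$, and that $z$ will be the interpolant.

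First I would verify that $A$ is a lower set, so that it is a legitimate input for the totally below machinery. If $v\leq w$ and $w\in A$, choose $z\lhd y$ with $w\lhd z$; then $v\leq w\lhd z\leq z$, and the monotonicity fact ``$a\leq x\lhd y\leq b\Rightarrow a\lhd b$'' listed before Theorem \ref{CompDist} gives $v\lhd z$, so $v\in A$. Hence $A\in\CP X$.

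The crucial step is to show $\sup A=y$. On the one hand, every $w\in A$ satisfies $w\lhd z\leq y$ for some $z\lhd y$, so $w\leq y$ and therefore $\sup A\leq y$. On the other hand, for each $z\lhd y$ the set $\{w\mid w\lhd z\}$ is contained in $A$ and, by Theorem \ref{CompDist}(3) applied to $z$, has join equal to $z$; hence $z\leq\sup A$. Taking the join over all $z\lhd y$ and invoking Theorem \ref{CompDist}(3) once more, now applied to $y$, yields $y=\bv\{z\mid z\lhd y\}\leq\sup A$. Thus $\sup A=y$.

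Finally, from $x\lhd y=\sup A$ and the definition of the totally below relation there is some $w\in A$ with $x\leq w$; by the definition of $A$ there is a $z$ with $w\lhd z\lhd y$, and then $x\leq w\lhd z$ gives $x\lhd z$, again by the monotonicity fact. Therefore $x\lhd z\lhd y$, and $z$ is the desired interpolant. I expect the identity $\sup A=y$ to be the only substantive point: it is where complete distributivity is genuinely used, twice through condition (3), whereas the closure of $A$ under going down and the final extraction of $z$ are purely formal consequences of the elementary properties of $\lhd$ recorded before Theorem \ref{CompDist}.
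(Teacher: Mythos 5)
Your proof is correct, but it takes a more elementary route than the paper's. The paper disposes of the corollary in one line: since $\Downarrow\colon X\lra\CP X$ is a left adjoint, it preserves joins, so applying it to $y=\bv\{z\in X\mid z\lhd y\}$ (Theorem \ref{CompDist}(3)) yields $\Da y=\bigcup_{z\lhd y}\Da z$ (joins in $\CP X$ being unions), and $x\in\Da y$ then lies in $\Da z$ for some $z\lhd y$. Your set $A$ is exactly this union $\bigcup_{z\lhd y}\{w\mid w\lhd z\}$; what you do differently is avoid the adjunction altogether: instead of establishing the set equality $\Da y=A$, you prove the weaker statement $\sup A=y$ by two applications of Theorem \ref{CompDist}(3), and then extract the interpolant by feeding $A$ into the definition of $x\lhd y$. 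This buys self-containedness --- your argument uses only characterization (3) together with the elementary monotonicity of $\lhd$, never the adjoint string $\Downarrow\dashv\sup\dashv\sy$ that the paper takes as the definition of complete distributivity --- at the cost of length. Two small remarks: the verification that $A$ is a lower set is unnecessary, since the definition of $\lhd$ quantifies over arbitrary subsets, not just lower sets; and there is an even shorter variant of your argument, namely combining $y=\bv\{z\mid z\lhd y\}$ with the fact, listed just before Theorem \ref{CompDist}, that $x\lhd\bv_{i\in I}y_i$ implies $x\lhd y_i$ for some $i\in I$.
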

	
	\begin{proof}This follows immediately from the fact that the map $\Downarrow\colon X\lra\CP X$, being a left adjoint, preserves joins.\end{proof}
	
	For each ordered set $X$, the set $\CP X$ of all lower sets of $X$ ordered by inclusion  is  a completely distributive lattice. That $(\CP X,\subseteq)$ is a complete lattice is trivial. For each $A\in \CP X$, it is clear that  for every $x\in A$,  the lower set $\da x$ is totally below $A$,  and $A$ is the join of $\{\da x\mid x\in A\}$, so, $\CP X$ is a completely distributive lattice.  
	
	\begin{prop}\label{CD is self dual}
		The complete distributivity law is self dual in the sense that if $X$ is a completely distributive lattice, then so is $X^{\rm op}$. \end{prop}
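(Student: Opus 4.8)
The plan is to work through item (4) of Theorem \ref{CompDist}, the complete distributivity law, since that is the form most amenable to dualization. Writing $\vee,\wedge$ for the joins and meets of $X$, the joins and meets of $X^{\rm op}$ are exactly $\wedge,\vee$, so to prove that $X^{\rm op}$ is completely distributive it suffices to verify that $X$ itself satisfies the \emph{dual} law: for every family $\{x_{i,j}\mid i\in I,\ j\in J_i\}$ of elements of $X$,
$$\bv_{i\in I}\bw_{j\in J_i}x_{i,j}=\bw_{f\in\prod_{i\in I}J_i}\bv_{i\in I}x_{i,f(i)}.$$
Thus the whole problem reduces to deriving this one identity from the (ordinary) complete distributivity law that $X$ enjoys by hypothesis.

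First I would dispose of the inequality $\leq$, which holds in any complete lattice: for each $f\in\prod_i J_i$ and each $i\in I$ one has $\bw_{j}x_{i,j}\leq x_{i,f(i)}\leq\bv_{i}x_{i,f(i)}$, whence $\bv_{i}\bw_{j}x_{i,j}\leq\bv_{i}x_{i,f(i)}$ for every $f$, and taking the meet over $f$ gives the claim. The content is therefore in the reverse inequality. To obtain it I would apply item (4) of Theorem \ref{CompDist} to the right-hand side, viewing it as a meet (indexed by $f\in K\coloneqq\prod_{i\in I}J_i$) of joins (indexed by $i\in I$); this rewrites
$$\bw_{f\in K}\bv_{i\in I}x_{i,f(i)}=\bv_{g\in I^{K}}\bw_{f\in K}x_{g(f),\,f(g(f))},$$
where $g$ ranges over all functions $K\lra I$. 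It then remains to bound each inner meet by the left-hand side, i.e. to show that for every $g\colon K\lra I$ there is an index $i_0\in I$ with $\bw_{f\in K}x_{g(f),f(g(f))}\leq\bw_{j\in J_{i_0}}x_{i_0,j}$; taking the join over $g$ would then yield the desired $\bw_{f}\bv_{i}x_{i,f(i)}\leq\bv_{i}\bw_{j}x_{i,j}$.

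The heart of the argument, and the step I expect to be the main obstacle, is the combinatorial claim that such an $i_0$ exists for each $g$. Concretely, I would look for $i_0$ so that every $x_{i_0,j}$ (with $j\in J_{i_0}$) actually occurs among the terms $x_{g(f),f(g(f))}$; this is exactly the condition that for each $j\in J_{i_0}$ there be some $f\in K$ with $g(f)=i_0$ and $f(i_0)=j$, and it forces $\bw_{f}x_{g(f),f(g(f))}\leq x_{i_0,j}$ for every such $j$. Suppose no admissible $i_0$ existed. Then for every $i\in I$ one could pick $j_i\in J_i$ such that no $f$ with $f(i)=j_i$ satisfies $g(f)=i$. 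Assembling these choices into $f^\ast\in K$ defined by $f^\ast(i)=j_i$, and setting $i^\ast\coloneqq g(f^\ast)$, produces a contradiction: since $f^\ast(i^\ast)=j_{i^\ast}$, the defining property of $j_{i^\ast}$ forces $g(f^\ast)\neq i^\ast$.

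This contradiction establishes the existence of $i_0$ and completes the reverse inequality, hence the dual law and the proposition. The one subtlety to watch is the degenerate situation in which some $J_i$ is empty, for then $K$ may be empty and the combinatorial step has no content; I would treat that case separately, observing that the usual conventions for empty joins and meets make both sides of the dual law equal (to the greatest element), so that nothing further is needed.
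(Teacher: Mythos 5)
Your proof is correct, but it takes a genuinely different route from the paper's. You dualize condition (4) of Theorem \ref{CompDist}: starting from the complete distributivity law for $X$, you derive the dual identity $\bv_{i\in I}\bw_{j\in J_i}x_{i,j}=\bw_{f\in\prod_i J_i}\bv_{i\in I}x_{i,f(i)}$ by applying the law to the re-indexed family $\{x_{i,f(i)}\mid f\in\prod_iJ_i,\ i\in I\}$ and then, for each $g\colon\prod_iJ_i\lra I$, locating a suitable index $i_0$ via the diagonal argument with $f^\ast(i)=j_i$; this is the classical (Raney-style) proof that the distributivity law is literally self-dual as an identity, and your separate treatment of the degenerate case where some $J_i$ is empty (both sides become the top element) is also right. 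The paper instead dualizes condition (2): it checks $\inf\bigcap_{i\in I}A_i=\bv_{i\in I}\inf A_i$ for families of upper sets, using the totally below relation $\lhd$ — if $x\lhd\inf\bigcap_iA_i$ but $x\not\leq\inf A_i$ for every $i$, then witnesses $a_i\in A_i$ with $x\not\leq a_i$ have their join inside $\bigcap_iA_i$ precisely because the $A_i$ are upper sets, contradicting the definition of $\lhd$. The paper's argument is shorter because it reuses the $\lhd$-machinery already set up in conditions (2) and (3) of Theorem \ref{CompDist}; yours is self-contained at the level of lattice identities, needing only condition (4), and it makes the combinatorial content of the self-duality explicit. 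Both proofs invoke the Axiom of Choice (yours to select the $j_i$, the paper's to select the $a_i$), which is unavoidable here and consistent with the paper's remark that only the adjoint-string formulation is the choice-free one.
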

	
	\begin{proof} By Theorem \ref{CompDist}, it suffices to check that for each  family $\{A_i\}_{i\in I}$ of upper sets in $X$, \[\inf\bigcap_{i\in I}A_i=\bv_{i\in I}\inf A_i.\]
		
		We only need to check that the left side is smaller than or equal to the right side. To this end, we check that if   $x$ is totally below $\inf\bigcap_{i\in I}A_i$, then $x\leq \inf A_i$ for some $i\in I$. Suppose on the contrary that $x\not\leq \inf A_i$ for any $i\in I$. Take some $z$ such that $x\lhd z\lhd\inf\bigcap_{i\in I}A_i$;   for each $i$  take some $a_i\in A_i$ with $x\not\leq a_i$. Since each $A_i$ is an upper set, it follows that \[\bv_{i\in I}a_i\in \bigcap_{i\in I}A_i.\] Thus, $z\not\leq \bv_{i\in I}a_i$, contradicting that $z\leq \inf \bigcap_{i\in I}A_i$. \end{proof}
	
	\vskip 5pt \noindent{\bf Domains}
	
	Suppose $X$ is an ordered set and $\idl X$ is the set of  ideals (= directed lowers sets) of $X$ ordered by inclusion. Since for each $x\in X$, the principal lower set $\da x$ is an ideal, the Yoneda embedding $\sy\colon X\lra\CP X$ factors through $\idl X$.  We also use the symbol $\sy$ to denote the Yoneda embedding with codomain restricted to   $\idl X$. This will cause no confusion,  it is easily detected from the context which one is meant.
	
	Let $x,y$ be elements of a partially ordered $X$. We say that $x$ is \emph{way below} $y$, in symbols $x\ll y$, if   for each directed set $D$ of $X$ with a join,  \[y\leq\sup D \implies   x\leq d~\text{for some}~  d\in D.\]
	
	The following  facts are easily verified: \begin{itemize}
		\item If $x\ll y$ then $x\leq y$. 
		\item $x\ll y$ if and only if   $x\in \bigcap\{I\in \idl X\mid \text{$I$ has a join and}~y\leq \sup I\}$. 
		\item If $a\leq x\ll y\leq b$  then $a\ll b$.  
	\end{itemize}
	
	\begin{defn}Suppose $X$ is a partially ordered set. We say that  
		\begin{enumerate}[label={\rm(\roman*)}] 
			\item $X$ is a directed complete partially ordered set, a dcpo for short, if every directed subset of $X$ has a join. 
			\item $X$ is a continuous partially ordered set if for each $x\in X$, the set $\{y\in X\mid y\ll x\}$ is directed and has $x$ as a join. 
			\item $X$ is a domain if $X$ is a continuous dcpo.  
	\end{enumerate} \end{defn}

	We say that a map $f\colon X\lra Y$ between ordered sets is \emph{Scott continuous} if it preserves directed joins. With dcpos and Scott continuous maps we have a category $${\bf dcpo}.$$  
	
	\begin{thm}\label{domain} Suppose $X$ is a partially ordered set.   
		\begin{enumerate}[label={\rm(\roman*)}] 
			\item $X$ is a dcpo if and only if  the Yoneda embedding $\sy\colon X\lra\idl X$ has a left adjoint, which is denoted by  $\sup\colon \idl X\lra X$.  
			\item $X$ is continuous if and only if for each $x\in X$, there is a directed set $D$ consisting of elements that are way below $x$ such that $x=\sup D$.
			
			\item $X$ is a domain if and only if   the left adjoint $\sup\colon \idl X\lra X$ of  $\sy\colon X\lra\idl X$  has a left adjoint, which is denoted by  $\wayb \colon X\lra\idl X$.
			In this case, $\wayb x=\{z\in X\mid z\ll x\}$.
	\end{enumerate}  \end{thm}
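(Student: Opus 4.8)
The plan is to follow the pattern already established for complete lattices in Proposition \ref{complete lattice via left adjoint}, replacing the presheaf lattice $\CP X$ by the ideal poset $\idl X$ and reading each adjunction through its unit--counit description (Theorem \ref{Galois connection via unit}). For (i), I would unwind what $\sup\dashv\sy$ says: since $\sy(x)=\da x$, the containment $I\subseteq\sy(x)$ means exactly that $x$ is an upper bound of $I$, so the adjunction equivalence $\sup I\leq x\iff I\subseteq\da x$ asserts precisely that $\sup I$ is the \emph{least} upper bound of the ideal $I$. Thus a left adjoint of $\sy$ exists iff every ideal has a join. To pass between ideals and arbitrary directed sets I would use that the down-closure $\da D$ of a directed set $D$ is an ideal with the same set of upper bounds, hence the same join; so ``every ideal has a join'' is equivalent to ``every directed set has a join,'' that is, to $X$ being a dcpo.

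For (ii), the forward direction is immediate from the definition of continuity, taking $D=\{y\mid y\ll x\}$. For the converse, given for each $x$ a directed set $D_x\subseteq\{y\mid y\ll x\}$ with $x=\sup D_x$, the join clause is easy: $D_x\subseteq\{y\mid y\ll x\}$ and $y\ll x\Rightarrow y\leq x$ force $x=\sup\{y\mid y\ll x\}$. The real work is showing $\{y\mid y\ll x\}$ is directed. Here I would take $y_1,y_2\ll x=\sup D_x$; applying the way-below property twice produces $d_1,d_2\in D_x$ with $y_i\leq d_i$, and directedness of $D_x$ yields $d\in D_x$ above both, with $d\ll x$ since $d\in D_x$. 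This $d$ is the required common upper bound inside $\{y\mid y\ll x\}$. I expect this directedness step to be the main obstacle, since it is where the hypothesis ``\emph{some} directed set of way-below elements'' must be boot-strapped into directedness of the \emph{full} set of way-below elements.

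For (iii), I would first check that $\wayb x=\{z\mid z\ll x\}$ is an ideal: it is a lower set by the fact $a\leq z\ll x\Rightarrow a\ll x$, and directed with join $x$ once $X$ is continuous. Assuming $X$ is a domain, I verify $\wayb\dashv\sup$ by proving $\wayb x\subseteq I\iff x\leq\sup I$: the forward implication uses monotonicity of $\sup$ and $\sup\wayb x=x$; the backward one promotes $z\ll x$ to $z\ll\sup I$ through $z\ll x\leq\sup I$, then applies the definition of $\ll$ against the directed set $I$ (whose dcpo join is $\sup I$) and the fact that $I$ is a lower set, landing $z\in I$. Conversely, given any left adjoint $L$ of $\sup$, part (i) already makes $X$ a dcpo, and I would identify $Lx$ with $\wayb x$ by a two-sided inclusion: $Lx\subseteq\da x$ gives $\sup Lx\leq x$ while the unit gives $x\leq\sup Lx$, so $\sup Lx=x$; then $z\in Lx$ forces $z\ll x$ (test against an arbitrary directed $D$ via its down-closure $\da D\in\idl X$, using the adjunction $Lx\subseteq\da D\iff x\leq\sup\da D$), and conversely $z\ll x=\sup Lx$ with $Lx$ directed forces $z\in Lx$. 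This yields $Lx=\wayb x$, whence $X$ is continuous and a domain, and records the promised formula. Throughout, the recurring subtlety---again the likely sticking point---is replacing an arbitrary directed test set $D$ by the ideal $\da D$ so that the $\idl X$-level adjunctions apply without disturbing joins.
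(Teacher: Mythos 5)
Your proposal is correct, and there is nothing in the paper to diverge from: Theorem \ref{domain} is stated without proof, as recalled standard material from the order-theory references cited at the start of Section \ref{galois connections}. Your argument is the standard one and follows exactly the pattern the paper itself uses for the complete-lattice analogue (Proposition \ref{complete lattice via left adjoint}), with the key points handled correctly --- passing between a directed set $D$ and the ideal $\da D$ without changing upper bounds, bootstrapping directedness of the full way-below set from a single directed witness in (ii), and identifying any left adjoint $L$ of $\sup$ with $x\mapsto\{z\mid z\ll x\}$ via the adjunction tested against $\da x$, $\da D$, and $Lx$ itself in (iii).
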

	
	Therefore, a partially ordered set $X$ is a domain if there is a string of adjunctions \[\wayb \dashv \sup\dashv\sy\colon X\lra\idl X.\]
	
	\begin{cor}\label{way below interpolates} The way below relation in a continuous partially ordered set $X$ is interpolative in the sense that   $x\ll y$ implies  that $x\ll z\ll y$ for some $z\in X$.  \end{cor}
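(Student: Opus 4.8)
The plan is to exploit the continuity of $X$ twice and then to apply the definition of the way below relation to a carefully chosen directed set whose join is exactly $y$. Given $x\ll y$, continuity provides the directed set $D=\wayb y=\{w\in X\mid w\ll y\}$ with $\sup D=y$. For each $w\in D$, continuity again gives the directed set $\wayb w=\{v\in X\mid v\ll w\}$ with $\sup\wayb w=w$. I would then form the union
\[
M=\bigcup_{w\in D}\wayb w
\]
and argue that $M$ is directed with $\sup M=y$; once this is established, the conclusion follows quickly from the definition of $\ll$.

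First I would check that $M$ is directed. Given $v_1\in\wayb w_1$ and $v_2\in\wayb w_2$ with $w_1,w_2\in D$, directedness of $D$ yields some $w_3\in D$ above both; since $v_i\ll w_i\leq w_3$, the monotonicity fact that $a\leq x\ll y\leq b$ implies $a\ll b$ gives $v_i\ll w_3$, so both $v_1,v_2$ lie in the directed set $\wayb w_3$, which therefore contains a common upper bound $v_3\in M$. Next I would compute $\sup M$. Every element of $M$ is below $y$ (each $v\in\wayb w$ satisfies $v\ll w\leq y$), so $y$ is an upper bound of $M$; and if $u$ is any upper bound of $M$, then for each $w\in D$ we have $\wayb w\subseteq M$ and hence $w=\sup\wayb w\leq u$, so $u$ bounds $D$ and thus $y=\sup D\leq u$. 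This shows that the join of $M$ exists and equals $y$.

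Finally, since $x\ll y$ and $M$ is a directed set with join $y$, the defining property of the way below relation produces some $v\in M$ with $x\leq v$. By construction $v\in\wayb w$ for some $w\in D$, that is $x\leq v\ll w$ together with $w\ll y$; applying once more the fact that $a\leq x\ll y\leq b$ implies $a\ll b$ gives $x\ll w$, so $z=w$ satisfies $x\ll z\ll y$. The main obstacle is the middle step: verifying that $M$ is directed and, crucially, that its join \emph{exists} and equals $y$ even though $X$ is only assumed continuous rather than a dcpo. The existence of $\sup M$ is not automatic here, and must be extracted from the observation that $y$ is forced to be the least upper bound of $M$.
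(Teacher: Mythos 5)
Your proof is correct, and it takes a genuinely different route from the one the paper implicitly intends. The paper states this corollary immediately after Theorem \ref{domain} and offers no argument; the parallel corollary for the totally below relation in a completely distributive lattice is proved there by the one-line remark that $\Downarrow$, being a left adjoint, preserves joins, and the corresponding slick argument here would be that $\wayb\dashv\sup\colon X\lra\idl X$ forces $\wayb$ to preserve directed joins, so $\wayb y=\bigcup_{w\ll y}\wayb w$, whence $x\ll y$ yields $x\ll w\ll y$ for some $w$. That adjunction, however, is only available when $X$ is a domain (a continuous dcpo), whereas the corollary is stated for arbitrary continuous partially ordered sets. Your direct argument --- forming $M=\bigcup_{w\ll y}\wayb w$, checking its directedness via the fact that $a\leq x\ll y\leq b$ implies $a\ll b$, and, crucially, verifying by hand that $\sup M$ exists and equals $y$ even though $X$ is not assumed directed complete --- is the classical interpolation proof and covers the full generality of the statement. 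What the adjunction route buys is brevity when $X$ is a domain; what your route buys is validity for all continuous posets, precisely because you isolate and resolve the one delicate point, namely that the join of $M$ must be shown to exist rather than invoked from directed completeness. The only cosmetic omission is that directedness in this paper includes nonemptiness, which for $M$ follows at once from the nonemptiness of $\wayb y$ and of each $\wayb w$.
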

	
	\begin{exmp}For each ordered set $X$, the set $\idl X$ of all ideals of $X$ ordered by inclusion is a domain. For each directed family $\{I_d\}_{d\in D}$ of  ideals in $X$,  the union $\bigcup_{d\in D}I_d$ is an ideal in $X$, and it is the join of $\{I_d\}_{d\in D}$  in $(\idl X,\subseteq)$, hence $(\idl X,\subseteq)$ is directed complete. For each ideal $I$   of $X$, it is clear that $I$ is the join of the directed family $\{\da x\mid x\in I\}$ of ideals. Since for each $x\in I$, the principal ideal $\da x$ is   way below $I$ in $(\idl X,\subseteq)$, it follows that $\idl X$ is a domain. \end{exmp}
	
	\begin{defn} {\rm (Scott \cite{Scott72})} A partially ordered set is   a continuous lattice if it is  at the same time  a domain and a complete lattice. \end{defn}
	
	It is clear that a subset $A$ of a join semilattice $X$ is an ideal if and only if $A$ is a lower set and closed under binary joins. So, the intersection of a family of ideals in a bounded lattice   is also an ideal, though the (set-theoretic) intersection of a family of ideals in a partially ordered may fail to be so.
	
	For a bounded lattice $X$, the set of ideals $\idl X$ is closed under meets in $\CP X$, so it is a complete lattice and the inclusion $\idl X\lra \CP X$ has a left adjoint. In particular, if $X$ is a completely distributive lattice, then the composite of $\Downarrow\colon X\lra\CP X$ and the left adjoint of the inclusion \(\idl X\lra \CP X\) is   left adjoint to $\sup\colon \idl X\lra X$. This shows that every completely distributive lattice is continuous.
	
	\begin{thm}\label{DD law} For each complete lattice $X$, the following   are equivalent: \begin{enumerate}[label={\rm(\arabic*)}] 
			\item $X$ is continuous.
			\item The map \(\sup\colon \idl X\lra X\) is a right adjoint.   
			
			\item $X$ satisfies the directed distributivity law: for any  family $\{x_{t,s}\mid t\in T, s\in S_t\}$ of elements of $X$ such that  $\{x_{t,s}\mid s\in S_t\}$ is directed for each $t\in T$, it holds that \begin{equation*}  \label{DDlaw2}\bw_{t\in T}\bv_{s\in S_t}x_{t,s} = \bv_{f\in\prod_{t\in T}S_t}\bw_{t\in T}x_{t,f(t)}.\end{equation*}
	\end{enumerate} \end{thm}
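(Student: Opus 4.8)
The plan is to prove Theorem~\ref{DD law} by establishing the cycle $(1)\Rightarrow(2)\Rightarrow(3)\Rightarrow(1)$, following closely the pattern of the proof of the complete distributivity Theorem~\ref{CompDist}, but with ideals (directed lower sets) replacing arbitrary lower sets and the way-below relation $\ll$ replacing the totally-below relation $\lhd$ throughout. The key structural fact I would exploit at the outset is Theorem~\ref{domain}(i): since $X$ is assumed to be a complete lattice, it is in particular a dcpo, so the Yoneda embedding $\sy\colon X\lra\idl X$ already has a left adjoint $\sup\colon\idl X\lra X$. Thus ``$X$ is continuous'' (condition (1)) is, by Theorem~\ref{domain}(iii), exactly the statement that this $\sup$ has a \emph{further} left adjoint $\wayb$, and the whole theorem becomes the directed analogue of the characterization already proved for $\CP X$.

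\textbf{First,} for $(1)\Rightarrow(2)$: if $X$ is continuous then by Theorem~\ref{domain}(iii) the map $\sup\colon\idl X\lra X$ has a left adjoint $\wayb$, hence $\sup$ is itself a right adjoint, which is precisely (2). \textbf{Second,} for $(2)\Rightarrow(3)$: a right adjoint preserves all meets (by the dual of Theorem~\ref{lapj}). The directed distributivity law is then read off by taking, for each $t\in T$, the ideal $I_t$ generated by the directed set $\{x_{t,s}\mid s\in S_t\}$, whose join is $\bv_{s\in S_t}x_{t,s}$; the meet $\bw_{t\in T}I_t$ in $\idl X$ is computed inside $\CP X$ (here I must note that in a bounded lattice the intersection of ideals is again an ideal, as the excerpt observes just before Theorem~\ref{DD law}, so meets in $\idl X$ agree with intersections), and applying the meet-preservation of $\sup$ to $\bw_t I_t$ yields the left-hand side as $\bw_t\sup I_t$ while unwinding the intersection produces the right-hand side. \textbf{Third,} for $(3)\Rightarrow(1)$: mirroring the argument $(4)\Rightarrow(1)$ in Theorem~\ref{CompDist}, I would show that each $x\in X$ satisfies $x=\sup\wayb x$ with $\wayb x=\{z\mid z\ll x\}$ directed, by indexing the family of ideals $\{I\in\idl X\mid I\text{ has a join and }x\leq\sup I\}$ and feeding it into the directed distributivity law to recover $x$ as the join of the intersection, which is exactly the way-below set.

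\textbf{The main obstacle} I expect is in $(2)\Rightarrow(3)$ and $(3)\Rightarrow(1)$: the directedness hypotheses must be tracked carefully, since $\idl X$ consists only of \emph{directed} lower sets, whereas in the completely distributive case every lower set was allowed. Concretely, in $(3)\Rightarrow(1)$ I need that for each choice function $f\in\prod_t S_t$ the meet $\bw_{t}x_{t,f(t)}$ lands in the intersection $\bigcap_t I_t$; this required the $A_i$ to be \emph{lower} sets in Theorem~\ref{CompDist}, and here it still does, so the meets must be taken over a family of ideals that are genuine lower sets closed downward — which they are. The genuinely delicate point is ensuring the indexing family $\{I\in\idl X\mid I\text{ has a join},\ x\leq\sup I\}$ is itself directed (or that its intersection behaves like $\wayb x$), so that $\sup$ of the intersection is legitimately computed; I would handle this by invoking the interpolation-type behaviour and the fact, recorded in the bulleted list before the definition of dcpo, that $x\ll y$ iff $x$ lies in the intersection of all such ideals. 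Once directedness of $\wayb x$ and the identity $x=\sup\wayb x$ are secured, the adjunction $\wayb\dashv\sup$ follows just as in Proposition~\ref{complete lattice via left adjoint}, closing the loop.
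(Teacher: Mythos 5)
The paper never proves Theorem~\ref{DD law}: it is stated as part of the recalled background on domains (like Theorem~\ref{domain}), so there is no in-paper argument to compare against. Judged on its own merits, your proof is correct, and it is exactly the directed analogue of the paper's proof of Theorem~\ref{CompDist}, which is surely the intended argument. The cycle closes as you say: $(1)\Rightarrow(2)$ is immediate from Theorem~\ref{domain}(iii), since a continuous complete lattice is a domain; $(2)\Rightarrow(3)$ works because $\sup$, being a right adjoint, preserves meets, meets in $\idl X$ are intersections (the intersection of ideals in a bounded lattice is an ideal), and $\sup\bigl(\bigcap_{t}I_t\bigr)$ coincides with the right-hand side of the law — each $\bw_{t}x_{t,f(t)}$ lies in every $I_t$ because ideals are lower sets, while conversely each $y\in\bigcap_t I_t$ satisfies $y\leq x_{t,f(t)}$ for some choice function $f$, hence $y\leq\bw_t x_{t,f(t)}$; and $(3)\Rightarrow(1)$ recovers $x=\bv\{z\mid z\ll x\}$ by feeding the family $\{I\in\idl X\mid x\leq \sup I\}$ into the law, just as in $(4)\Rightarrow(1)$ of Theorem~\ref{CompDist} (note that $\da x$ belongs to this family, which is what forces $\bw_t\sup I_t\leq x$ and hence equality).

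One correction on the point you flag as delicate. You do not need the indexing family $\{I\in\idl X\mid x\leq\sup I\}$ to be directed, and you should not appeal to ``interpolation-type behaviour'': interpolation (Corollary~\ref{way below interpolates}) is a \emph{consequence} of continuity, so invoking it inside $(3)\Rightarrow(1)$ would be circular. What the definition of continuity actually requires, beyond $x=\bv\{z\mid z\ll x\}$, is that the way-below set $\{z\mid z\ll x\}=\bigcap\{I\in\idl X\mid x\leq\sup I\}$ be directed — and this is automatic from the fact you already cite in your $(2)\Rightarrow(3)$ step: in a bounded lattice the intersection of ideals is again an ideal, hence directed. With that substitution your argument is complete; alternatively, once $x=\sup\wayb x$ with $\wayb x$ an ideal, the adjunction $\wayb\dashv\sup$ follows as in Proposition~\ref{complete lattice via left adjoint}, giving $(2)$ and hence $(1)$ directly.
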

	
	\begin{defn}Suppose $X$ is a lattice and $x\in X$. We say that 
		\begin{enumerate}[label={\rm(\roman*)}]  
			\item $x$ is meet-irreducible if,   $ a\wedge b=x\implies x=a$ or $x=b$.
			\item $x$ is  prime if,   $a\wedge b\leq x  \implies a\leq x$ or $b\leq x$.
			\item $x$ is join-irreducible if,   $x= a\vee b\implies x=a$ or $x=b$.
			\item $x$ is  coprime if,   $x\leq a\vee b\implies x\leq a$ or $x\leq b$.     
	\end{enumerate} \end{defn}
	
	It is clear that $x$ is   meet-irreducible in $X$ if and only if $x$ is join-irreducible in $X^{\rm op}$;   $x$ is prime in $X$ if and only if $x$ is coprime in $X^{\rm op}$. Every prime element is   meet-irreducible,  but the converse is not   true.
	
	\begin{prop}Every meet-irreducible element in a distributive lattice is prime. \end{prop}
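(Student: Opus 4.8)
The plan is to argue directly from the definitions, exploiting the distributive law to convert a hypothesis about meets into a hypothesis about a single meet that meet-irreducibility can consume. Suppose $x$ is meet-irreducible in a distributive lattice $X$ and suppose $a\wedge b\leq x$; the goal is to deduce $a\leq x$ or $b\leq x$. The trick is not to look at $a$ and $b$ directly but at the joins $x\vee a$ and $x\vee b$, whose meet we can compute explicitly.

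First I would carry out the key computation: using the distributive law twice (and absorption), one finds
\[(x\vee a)\wedge(x\vee b)=x\vee(a\wedge b).\]
Indeed, distributing gives $(x\vee a)\wedge(x\vee b)=\big((x\vee a)\wedge x\big)\vee\big((x\vee a)\wedge b\big)=x\vee(x\wedge b)\vee(a\wedge b)$, and since $x\wedge b\leq x$ the middle term is absorbed, leaving $x\vee(a\wedge b)$. Now the hypothesis $a\wedge b\leq x$ forces $x\vee(a\wedge b)=x$, so we conclude $(x\vee a)\wedge(x\vee b)=x$.

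With this identity in hand, I would invoke meet-irreducibility of $x$: since $x$ is exhibited as the meet $(x\vee a)\wedge(x\vee b)$, we must have $x=x\vee a$ or $x=x\vee b$. The first says $a\leq x$ and the second says $b\leq x$, which is exactly the primeness conclusion. This completes the argument.

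I do not expect any genuine obstacle here; the entire content is the distributivity computation above. The only point requiring a little care is making sure the absorption step $x\vee(x\wedge b)=x$ is applied correctly and that distributivity is used in the correct orientation, but both are immediate consequences of the lattice axioms together with the distributive law assumed for $X$. It is worth remarking, as a sanity check on the hypothesis, that the converse inclusion (every prime element is meet-irreducible) holds in any lattice, so distributivity is precisely what is needed to upgrade meet-irreducibility to primeness.
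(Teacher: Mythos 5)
Your proof is correct and complete: the identity $(x\vee a)\wedge(x\vee b)=x\vee(a\wedge b)$, obtained from distributivity and absorption, is exactly the right device, and once $a\wedge b\leq x$ collapses the right-hand side to $x$, meet-irreducibility yields $x=x\vee a$ or $x=x\vee b$, i.e.\ $a\leq x$ or $b\leq x$. The paper states this proposition without proof, and your argument is the standard one that it implicitly relies on, so there is nothing to reconcile; your closing remark that primeness implies meet-irreducibility in any lattice is also accurate and correctly locates distributivity as the essential hypothesis for the converse.
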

	
	\begin{prop} \label{CL has enough irr} Suppose   $x$ and $y$ are elements of a continuous lattice $X$ with $x\not\leq y$. Then there is a meet-irreducible element $p$ with $y\leq p$ and $x\not\leq p$. \end{prop}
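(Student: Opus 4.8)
The plan is to reduce the separation of $x$ from $y$ to a single ``way-below'' witness and then to climb, by Zorn's lemma, to a maximal element that will turn out to be meet-irreducible. First I would use continuity: since $x$ is the join of the directed set $\{u\in X\mid u\ll x\}$ and $x\not\leq y$, not every such $u$ can lie below $y$, so I may fix some $u\ll x$ with $u\not\leq y$. The target is then to locate a meet-irreducible element inside $\ua y$ that avoids $u$ in a suitable sense.

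The separation properties of the element produced come almost for free, so I would isolate them first. If $p$ is the candidate and $x\leq p$, then from $u\ll x\leq p$ together with the basic fact that $v\ll w\leq z$ implies $v\ll z$, I would get $u\ll p$, hence $u\leq p$; so arranging that $u\not\leq p$ (or $u\not\ll p$) forces $x\not\leq p$, while $y\leq p$ is built in. The tempting set to maximise over is $M=\{a\mid y\leq a,\ u\not\leq a\}$, because meet-irreducibility of a maximal $p$ would then be immediate: if $p=a\wedge b$ with $a,b>p$, maximality puts $u\leq a$ and $u\leq b$, whence $u\leq a\wedge b=p$, a contradiction.

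The catch — and the step I expect to be the main obstacle — is that $M$ need not be closed under directed joins (already in the chain $[0,1]$ the join of a chain in $M$ can recapture $u$), so Zorn's lemma does not apply to $M$ directly, and this is exactly where continuity must do genuine work. The remedy I would adopt is to weaken the defining condition to $u\not\ll a$, i.e. to work inside $M'=\{a\mid y\leq a,\ u\not\ll a\}$: using the interpolation property of $\ll$ (Corollary \ref{way below interpolates}) one checks that the complement $\{a\mid u\not\ll a\}$ is downward closed and closed under directed joins, and intersecting with the upper set $\ua y$ preserves this; since $X$ is a complete lattice, every chain in $M'$ then has its join as an upper bound in $M'$, and Zorn yields a maximal $p$ with $y\leq p$ and $x\not\leq p$. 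The price of this trade is that the clean irreducibility argument now only delivers $u\ll a$ and $u\ll b$ for $a,b>p$, and to contradict $p=a\wedge b$ I must upgrade this to $u\ll a\wedge b$. Controlling the interaction of the way-below relation with binary meets is the crux: I would attack it through meet-continuity of $X$ together with interpolation, pushing the witness strictly below $a\wedge b$, and, should no uniform argument present itself, by choosing $u$ far enough below $x$ at the outset so that the maximal element of $M'$ cannot split as a proper meet. Everything else in the argument is routine.
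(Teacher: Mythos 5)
Your reduction to a single witness $u\ll x$ with $u\not\leq y$, and your Zorn argument over $M'=\{a\mid y\leq a,\ u\not\ll a\}$, are both correct: $\{a\mid u\not\ll a\}$ is indeed Scott closed (by interpolation), so a maximal $p\in M'$ exists, and it satisfies $y\leq p$, $x\not\leq p$. But the step you yourself flag as the crux is a genuine gap, and it cannot be repaired along the lines you suggest. From $a,b>p$ your maximality gives $u\ll a$ and $u\ll b$, and to contradict $p=a\wedge b$ you need $u\ll a\wedge b$; this is precisely \emph{multiplicativity} of the way-below relation, which fails in general continuous lattices. A standard counterexample: let $X=\mathbb{N}\cup\{p,q\}$ be topologized so that subsets of $\mathbb{N}$ are open and neighborhoods of $p$ (resp.\ $q$) are the sets containing them with cofinite trace on $\mathbb{N}$; then $\CO(X)$ is a continuous lattice in which $\mathbb{N}\ll\mathbb{N}\cup\{p\}$ and $\mathbb{N}\ll\mathbb{N}\cup\{q\}$, yet $\mathbb{N}\not\ll\mathbb{N}=(\mathbb{N}\cup\{p\})\wedge(\mathbb{N}\cup\{q\})$. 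Your two fallback ideas do not help: every continuous lattice is meet-continuous, so ``meet-continuity plus interpolation'' would prove multiplicativity in general, contradicting the example; and no choice of a single witness $u$ farther below $x$ changes the situation, since the defect lies in the blocking set $\{a\mid u\not\ll a\}$ itself, not in the particular $u$.

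The paper's proof resolves exactly this tension by replacing the single witness with a countable interpolating sequence $z\ll\cdots\ll z_{n+1}\ll z_n\ll\cdots\ll z_1=x$ with $z\not\leq y$, and blocking with $U=\bigcup_{n\geq 1}\ua z_n$ instead of $\{a\mid u\ll a\}$. Because the $z_n$ form a descending chain, $U$ is an upper set \emph{closed under finite meets} (if $z_n\leq a$ and $z_m\leq b$ then $z_{\max(n,m)}\leq a\wedge b$), so a maximal element $p$ of $X\setminus U$ above $y$ is meet-irreducible for trivial reasons: $p=a\wedge b$ with $a,b\in U$ would force $p\in U$. Meanwhile, the role your Scott-closedness played is recovered by shifting the index: if $z_n\leq\sup C$ for a chain $C$, then $z_{n+1}\ll z_n$ yields $z_{n+1}\leq c$ for some $c\in C$, so the supremum of a maximal chain in $X\setminus U$ stays outside $U$. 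Thus the sequence simultaneously fixes both obstacles you identified, without ever invoking multiplicativity; this is the idea your proposal is missing.
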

	
	\begin{proof}Since $X$ is a continuous lattice and $x\not\leq y$, there is some $z\ll x$ such that $z\not\leq y$.  By interpolation property of the way below relation, there is a sequence of elements $\{z_n\}_n$ with \[z\ll\cdots\ll z_n\ll z_{n-1}\ll\cdots\ll z_1=x.\]  Let $U=\bigcup_{n=1}^\infty \ua z_n$. Then  $U$ is an upper set and is closed under finite meets.
		Pick a maximal chain $C$ in $X\setminus U$ that contains $y$, and let $p=\sup C$. We claim that $p$ is not in $U$, hence maximal in $X\setminus U$. Otherwise, there is some $n$ such that $z_n\leq p$. Since $z_{n+1}\ll z_n$ and $C$ is a chain, then $z_{n+1}\leq c$ for some $c\in C$, contradicting that $c\notin U$. It is clear that $y\leq p$ and  $x\not\leq p$, thus it remains to check that $p$ is meet-irreducible. Suppose   $p=a\wedge b$. Since $U$ is closed under finite meets,   either $a\notin U$ or $b\notin U$.  Since $p$ is maximal in $X\setminus U$, it follows that $p=a$ or $p=b$. \end{proof}

	The above conclusion shows  that each element in a continuous lattice is the meet of a family of meet-irreducible elements. In other words, every continuous lattice has enough meet-irreducible elements.
	
	We say that a complete lattice $X$ has enough coprimes if every element of $X$ is the join of a set of coprimes.

	\begin{thm}\label{CD vs DD} For each complete lattice $X$, the following are equivalent: \begin{enumerate}[label={\rm(\arabic*)}]  
			\item $X$ is a completely distributive.
			\item $X$ is distributive and both $X$ and $X^{\rm op}$ are   continuous lattices.
			\item $X$ is continuous and has enough coprimes. \end{enumerate} \end{thm}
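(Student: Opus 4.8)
The plan is to prove the cycle of implications $(1)\Rightarrow(2)\Rightarrow(3)\Rightarrow(1)$, leaning on the machinery already established: the complete distributivity law and the totally-below characterization from Theorem \ref{CompDist}, the self-duality of complete distributivity (Proposition \ref{CD is self dual}), the directed distributivity law characterizing continuity (Theorem \ref{DD law}), the interpolation property (Corollary \ref{way below interpolates}), and the fact that a continuous lattice has enough meet-irreducibles (Proposition \ref{CL has enough irr}).

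For $(1)\Rightarrow(2)$ I would first observe that the complete distributivity law specializes, on one hand, to ordinary distributivity (take the index set $I=\{1,2\}$ with one singleton fibre, so that the law reads $a\wedge(b\vee c)=(a\wedge b)\vee(a\wedge c)$), and on the other hand to the directed distributivity law, whence $X$ is continuous by Theorem \ref{DD law}. Since complete distributivity is self-dual (Proposition \ref{CD is self dual}), $X^{\rm op}$ is completely distributive too, hence also continuous; thus both $X$ and $X^{\rm op}$ are continuous lattices. For $(2)\Rightarrow(3)$, continuity of $X$ is part of the hypothesis, so only ``enough coprimes'' needs proof. Applying Proposition \ref{CL has enough irr} to the continuous lattice $X^{\rm op}$ shows that every element of $X^{\rm op}$ is a meet of meet-irreducibles, i.e.\ every element of $X$ is a join of join-irreducibles. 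As distributivity is self-dual, $X^{\rm op}$ is distributive as well, so a meet-irreducible of $X^{\rm op}$ is prime there; translating back through the dualities meet-irreducible/join-irreducible and prime/coprime, every join-irreducible of $X$ is coprime. Hence every element of $X$ is a join of coprimes.

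The real work is $(3)\Rightarrow(1)$, and the key is a lemma bridging the way-below relation to the totally-below relation: if $p$ is coprime and $p\ll x$, then $p\lhd x$. To see this, given any $A\subseteq X$ with $x\leq\sup A$, I pass to the directed set $D$ of finite joins of members of $A$; since $\sup D=\sup A\geq x$ and $p\ll x$, I obtain $p\leq a_1\vee\cdots\vee a_n$ for finitely many $a_i\in A$, and coprimality of $p$ lets me descend to $p\leq a_i$ for some $i$, which is exactly $p\lhd x$ (the degenerate case where $x$ is the least element must be treated separately, since no element is totally below it). With this lemma in hand I would show $x=\sup\{p\mid p\text{ coprime},\ p\ll x\}$: continuity gives $x=\sup\{y\mid y\ll x\}$, and for each $y\ll x$ I interpolate $y\ll w\ll x$, expand $w$ as a join of coprimes (enough coprimes), and use $y\ll w$ together with the directedness of finite joins to squeeze $y$ below a finite join of coprimes, each of which is then way below $x$. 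Combining this equality with the lemma yields $x=\sup\{z\mid z\lhd x\}$, which is condition (3) of Theorem \ref{CompDist} and hence gives complete distributivity.

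The main obstacle I anticipate is exactly this $(3)\Rightarrow(1)$ passage: the totally-below relation quantifies over \emph{arbitrary} subsets while continuity controls only \emph{directed} ones, so coprimality must do the heavy lifting of converting an arbitrary cover into a finite join that a way-below element can drop beneath. Care is also needed with the least element, where the way-below and totally-below relations behave differently, but this is a harmless special case disposed of directly.
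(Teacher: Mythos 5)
Your proof is correct and takes essentially the same approach as the paper's: the cycle $(1)\Rightarrow(2)\Rightarrow(3)\Rightarrow(1)$, with self-duality of complete distributivity for $(1)\Rightarrow(2)$, Proposition \ref{CL has enough irr} applied to $X^{\rm op}$ together with distributivity for $(2)\Rightarrow(3)$, and the bridge ``coprime and way below implies totally below'' feeding into Theorem \ref{CompDist}(3) for $(3)\Rightarrow(1)$. The details you add (the finite-join coprimality descent, the interpolation/enough-coprimes argument, and the least-element edge case) are precisely the steps the paper leaves implicit, and you handle them correctly.
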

	
	\begin{proof} $(1)\Rightarrow(2)$ First,  it follows from the complete distributivity law that $X$ is distributive. Second, since complete distributivity is self-dual and   every completely distributive lattice is continuous, it follows that both $X$ and $X^{\rm op}$ are   continuous lattices.
		
		$(2)\Rightarrow(3)$ Apply Proposition \ref{CL has enough irr} to $X^{\rm op}$.
		
		$(3)\Rightarrow(1)$ Since $X$ is continuous and has enough coprimes,   for each $x\in X$, \[x=\sup\{z\in X\mid z\ll x\}=\sup\{z\in X\mid z~\text{is   coprime and }  z\ll x\}.\] Since for each  coprime $z$,  $z$  is way below $x$ implies $z$ is totally below $x$,    the conclusion thus follows. \end{proof}

	\section{Continuous t-norm} \label{continuous t-norm}
	
	The interval $[0,1]$ together with a continuous t-norm is our truth-value set. So, the structure of continuous t-norms is of fundamental importance in the development of real-enriched categories. For sake of self-containment and convenience of the reader,   proofs of the well-known  characterization of continuous Archimedean t-norms (Theorem \ref{representation}) and the ordinal sum decomposition of continuous t-norms (Theorem \ref{ordinal sum decomposition}) are included.  
	Standard references for continuous t-norms are  the monographs: Alsina,  Frank,     Schweizer \cite{AFS},   and Klement,   Mesiar, Pap \cite{Klement2000}.
	
	\begin{defn} A triangular norm (a t-norm  for short) on an interval $[a,b]$, where $a$ is allowed to be $-\infty$ and $b$ is allowed to be $\infty$, is a binary operator $\&$ such that for all $x,y,z\in[a,b]$,   
		\begin{enumerate}[label={\rm(T\arabic*)}] 
			\item $x\with y=y\with x$; 
			\item $(x\with y)\with z=x\with(y\with z)$; 
			\item $x\with y\leq x\with z$ whenever $y\leq z$; \item $x\with b =x$.\end{enumerate}  \end{defn}
	
	From (iii) and (iv) we immediately obtain that $a\with x=a$ for all $x\in [a,b]$.
	
	In the vocabulary of lattice theory, a t-norm on $[a,b]$ is a binary operator $\with$ such that $([a,b],\with,b)$ is a commutative lattice-ordered monoid, see e.g. Birkhoff \cite{Birkhoff}. In the language of category theory, a t-norm is a binary operator $\with$ such that $([a,b],\with,b)$ is a commutative monoid in the category of ordered sets and order-preserving maps, see e.g. Mac\thinspace Lane \cite{MacLane1998}.

	Suppose $\with_1$ is a t-norm on $[a_1,b_1]$ and $\with_2$ is a t-norm on $[a_2,b_2]$. We say that $\with_1$ is \emph{isomorphic} to $\with_2$ if there is an order-preserving bijection $f\colon [a_1,b_1]\lra[a_2,b_2]$ such that for all $x,y\in[a_1,b_1]$, \[f(x\with_1 y)=f(x)\with_2 f(y).\]  
	
	Given a  t-norm $\&$ on $[c,d]$ and an order isomorphism $f\colon [a,b]\lra [c,d]$, the binary operator \[\with_f\colon [a,b]\times[a,b]\lra[a,b],\quad  x\with_f y=f^{-1}(f(x)\with f(y)) \] is a  t-norm on $[a,b]$ and   isomorphic to $\with$.  This fact shows that we only need to consider t-norms on the unit interval $[0,1]$.
	
	\begin{exmp} Some basic   t-norms on $[0,1]$:
		\begin{enumerate}[label={\rm(\roman*)}] 
			\item The G\"{o}del t-norm: $x\with y= \min\{x,y\}$.
			
			\item The product t-norm: $x\with y=x\cdot y$.
			
			\item The {\L}ukasiewicz t-norm:
			$x\with y=\max\{x+y-1,0\}$.
			
			\item The nilpotent minimum t-norm: $$x\with  y= \begin{cases} 0  & x+y\leq 1,\\ \min\{x,y\}  & x+y>1.\end{cases}$$
			
			\item The drastic product t-norm: $$x\with  y= \begin{cases} 0  & x,y<1,\\ \min\{x,y\}  & {\rm otherwise}.\end{cases}$$
	\end{enumerate}\end{exmp}
	
	It is clear that   every  t-norm on $[0,1]$ lies between the drastic product  and the G\"{o}del t-norm.
	
	\begin{exmp}The addition $+$ is a t-norm on the interval $[-\infty,0]$. The correspondence $x\mapsto \mathrm{e}^x$ defines an isomorphism  $([-\infty,0],+,0)\lra([0,1],\cdot,1)$. \end{exmp}
	
	\begin{exmp}[Truncated addition] For each $a\in(-\infty,0)$, the truncated addition \[+_a\colon[a,0]\times[a,0]\lra[a,0],\quad x+_a y=\max\{a,x+y\}\] is a t-norm on $[a,0]$. \end{exmp}
	
	\begin{exmp}[Truncated product] For each $b\in(0,1)$, the truncated product \[\cdot_b\colon[b,1]\times[b,1]\lra[b,1],\quad x\cdot_b y=\max\{b,x\cdot y\}\] is a t-norm on $[b,1]$. \end{exmp}
	
	\begin{prop}For all $a\in(-\infty,0)$ and  $b\in(0,1)$, the following t-norms are isomorphic to each other: \begin{enumerate}[label={\rm(\roman*)}] 
			\item The truncated addition on $[a,0]$. \item The truncated product on $[b,1]$. \item The {\L}ukasiewicz t-norm on $[0,1]$. \end{enumerate}  \end{prop}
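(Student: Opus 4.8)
The plan is to exploit the fact that ``being isomorphic'' is an equivalence relation on t-norms, and then to exhibit two explicit order isomorphisms realizing the required identifications. First I would record that the relation is reflexive (via the identity), symmetric (the inverse of an order isomorphism is again an order isomorphism and inverts the defining equation $f(x\with_1 y)=f(x)\with_2 f(y)$), and transitive (the composite of two order isomorphisms intertwining the respective operations again intertwines them, as in Lemma \ref{composite of left adjoints} for adjoints). Consequently it suffices to prove that both the truncated addition on $[a,0]$ and the truncated product on $[b,1]$ are isomorphic to the {\L}ukasiewicz t-norm on $[0,1]$; the three then lie in a single isomorphism class for every choice of $a\in(-\infty,0)$ and $b\in(0,1)$.

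For the truncated addition I would propose the affine map $f\colon[0,1]\lra[a,0]$, $f(x)=a(1-x)$. Since $a<0$ this is an order-preserving bijection, and I would verify $f(\max\{x+y-1,0\})=\max\{a,f(x)+f(y)\}$ by splitting into the cases $x+y\geq 1$ and $x+y<1$. In the first case $f(x)+f(y)=a(2-x-y)\geq a$ and equals $f(x+y-1)$; in the second $a(2-x-y)<a$, so the right-hand side collapses to $a=f(0)$. This gives the isomorphism between (i) and (iii).

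For the truncated product I would use $g\colon[0,1]\lra[b,1]$, $g(x)=b^{1-x}$, which is an order-preserving bijection because $\ln b<0$. The identical case split, together with $g(x)\cdot g(y)=b^{2-x-y}$ and the fact that $t\mapsto b^{t}$ is decreasing, yields $g(\max\{x+y-1,0\})=\max\{b,g(x)\cdot g(y)\}$, giving the isomorphism between (ii) and (iii). Alternatively, this step can be deduced from the already-recorded isomorphism $x\mapsto \mathrm{e}^x$ from $([-\infty,0],+,0)$ to $([0,1],\cdot,1)$: restricting it to $[a,0]$ carries $+_a$ to the truncated product $\cdot_{\mathrm{e}^a}$, and choosing $a=\ln b$ identifies the parameter.

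No step is a genuine obstacle, since everything reduces to elementary calculations with explicit maps. The only point requiring care is the case analysis showing that the ``cut-off'' at the bottom element is respected, i.e.\ that the threshold $x+y=1$ for the {\L}ukasiewicz t-norm corresponds exactly to the thresholds at $a$ and at $b$ under $f$ and $g$; this is precisely what the two cases $x+y\geq 1$ and $x+y<1$ confirm.
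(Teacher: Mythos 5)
Your proof is correct and takes essentially the same route as the paper: exhibiting explicit order isomorphisms that intertwine the {\L}ukasiewicz t-norm with the truncated addition and the truncated product. The only difference is cosmetic --- the paper first normalizes to the fixed intervals $[-2,0]$ and $[1/4,1]$ (leaving that rescaling step as ``clear'') and then writes down the maps $x\mapsto 2(x-1)$ and $x\mapsto 2^{2(x-1)}$, which are precisely your $f$ and $g$ specialized at $a=-2$ and $b=1/4$, whereas you handle general $a$ and $b$ directly and supply the case-split verification the paper omits.
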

	\begin{proof} It is clear that the truncated addition on $[a,0]$ is isomorphic to the truncated addition on $[-2,0]$  and the truncated product on $[b,1]$  is isomorphic to the truncated product on $[1/4,1]$, so it suffices to show that both the truncated addition on $[-2,0]$ and the truncated product on $[1/4,1]$  are isomorphic to  the {\L}ukasiewicz t-norm on $[0,1]$.
		
		The map $$f\colon [0,1]\lra[1/4,1],\quad f(x)=2^{2(x-1)}$$ defines an isomorphism between the {\L}ukasiewicz t-norm on $[0,1]$ and the truncated product on $[1/4,1]$;  the map $$g\colon [0,1]\lra [-2,0],\quad g(x)=2(x-1)$$ defines an isomorphism between the {\L}ukasiewicz t-norm on $[0,1]$ and the truncated addition on $[-2,0]$.   The proof is thus completed. \end{proof}
	
	Given a  t-norm $\&$ on $[0,1]$ and $x\in[0,1]$, the powers (with respect to $\&$) is defined recursively by \[x^1=x \quad {\rm and} \quad x^{n+1}=x^n\with x \] for all positive integers $n$. For all $x$ and $n$, $x^{n+1}\leq x^n$, so $\{x^n\}_{n\in\mathbb{N}}$ is a decreasing sequence. By associativity of $\&$ we have \[x^{m+n}=x^m\with x^n=x^n\with x^m \quad {\rm and} \quad x^{mn}=(x^m)^n=(x^n)^m\] for all $m,n\geq 1$.
	
	\begin{defn}Suppose $\&$ is a  t-norm on $[0,1]$ and $x\in[0,1]$. We say that 
		\begin{enumerate}[label={\rm(\roman*)}]   
			\item $x$ is idempotent if $x\with x=x$;
			\item $x$ is nilpotent if $x^n=0$ for some $n\geq 1$.
	\end{enumerate}  \end{defn}
	
	The G\"{o}del t-norm and the product t-norm have only one nilpotent element. Every element in $[0,1)$ is nilpotent for the {\L}ukasiewicz t-norm and the drastic t-norm. For the nilpotent minimum t-norm, every $x\in[0,1/2]$ is   nilpotent.

	\begin{defn}A  t-norm $\&$ on $[0,1]$ is Archimedean  if for all $x,y\in(0,1)$, there is some $n$ such that $x^n<y$. \end{defn}
	
	Both  the product t-norm and the {\L}ukasiewicz t-norm are Archimedean.
	
	\begin{prop}Suppose $\&$ is an  Archimedean t-norm. If $\&$ has a nonzero nilpotent element, then all $x\in[0,1)$ are nilpotent.  \end{prop}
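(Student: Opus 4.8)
The plan is to reduce an arbitrary $x\in(0,1)$ to the given nilpotent element using the Archimedean hypothesis, and then to transport nilpotency back to $x$ via monotonicity of powers. First I would dispose of the trivial reductions. If $x=0$ then $x^1=0$, so nothing is to be shown. By hypothesis there is a nilpotent element $a\neq 0$, say with $a^k=0$ for some $k\geq 1$; since the unit $1$ satisfies $1^n=1$ for every $n$, such an $a$ cannot equal $1$, and being nonzero it therefore lies in $(0,1)$. Hence it remains only to treat an arbitrary $x\in(0,1)$.

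The first key step is to invoke the Archimedean property for the pair $x,a\in(0,1)$: there is some $n$ with $x^n<a$ (in fact $x^n\leq a$ is all that is needed). The second key step is the elementary fact that the $k$-th power is order-preserving, i.e. $u\leq v$ implies $u^k\leq v^k$. This follows by induction on $k$ from the monotonicity axiom (T3) together with commutativity (T1): for the inductive step one has $u^{j+1}=u^j\with u\leq v^j\with u\leq v^j\with v=v^{j+1}$, where the first inequality increases the first factor using $u^j\leq v^j$ and the second increases the second factor using $u\leq v$. Applying this monotonicity to $x^n\leq a$ yields $(x^n)^k\leq a^k=0$, and since $(x^n)^k=x^{nk}$ by associativity, I conclude $x^{nk}=0$. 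Thus $x$ is nilpotent, completing the argument.

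There is essentially no obstacle here: the whole content is the single combination ``push $x$ below $a$ by the Archimedean property, then raise to the $k$-th power.'' The only point requiring a moment's care is the preliminary observation that a nonzero nilpotent element is automatically strictly below $1$, so that the Archimedean hypothesis, stated only for arguments in the open interval $(0,1)$, genuinely applies to it.
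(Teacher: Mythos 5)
Your proof is correct and follows essentially the same route as the paper's: apply the Archimedean property to get $x^n<a$, then raise to the power $k$ (the paper's $m$) to conclude $x^{nk}\leq a^k=0$. The extra details you supply — that a nonzero nilpotent element lies in $(0,1)$, the trivial case $x=0$, and the induction establishing monotonicity of powers — are all implicit in the paper's two-line argument.
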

	
	\begin{proof}Suppose that $a>0$ is a nilpotent element of $\&$. Then $a^m=0$ for some $m\geq 1$. For each $x<1$, since $\&$ is Archimedean, there is some $n$ such that $x^n<a$. Then $x^{nm}\leq a^m=0$. \end{proof}

	A t-norm $\&$ on $[0,1]$ is \emph{left continuous on the first variable} if  
	for each $y$ and each increasing sequence $\{x_n\}_{n\in\mathbb{N}}$ in $[0,1]$,   \[\sup\limits_{n\in\mathbb{N}}(x_n\with y)=(\sup\limits_{n\in\mathbb{N}}x_n)\with y.\] 
	
	By commutativity of $\&$, left continuity on the first variable implies left continuity on the second variable.
	
	\begin{defn}\label{left continuous} Let $\&$ be  a  t-norm on $[0,1]$. We say that 
		\begin{enumerate}[label={\rm(\roman*)}]   
			\item $\&$ is   left continuous   if it is left continuous on each variable;
			\item    $\&$ is  continuous   if it is a continuous map $[0,1]\times[0,1]\lra[0,1]$ with respect to the usual topology. 
	\end{enumerate}   \end{defn}
	
	Said differently, a left continuous t-norm is a binary operator $\&$ on the unit interval such that  $([0,1],\&,1)$ is a commutative and unital qutantale. Every continuous t-norm is left continuous. The G\"{o}del t-norm, the product t-norm and the {\L}ukasiewicz t-norm are all continuous; the nilpotent minimum t-norm  is left continuous but not continuous;  the drastic product t-norm is not left continuous.
	
	Suppose $\&$ is a left continuous t-norm on $[0,1]$. For each $x\in[0,1]$, the map $$x\with-\colon [0,1]\lra[0,1]$$ has a right adjoint  $$x\ra-\colon [0,1]\lra[0,1].$$  For all $x,y,z\in[0,1]$ it holds that \[x\with y\leq z\iff y\leq x\ra z.\]  Because of this property, in fuzzy logic left continuous t-norms  and their right adjoints are   employed to model the logic connectives ``conjunction'' and   ``implication'', respectively. So, the binary operator \[\ra\colon [0,1]\times[0,1]\lra[0,1],\quad (x,y)\mapsto x\ra y\] is called the \emph{implication} of the t-norm $\&$.
	
	\begin{exmp}  
		\begin{enumerate}[label={\rm(\roman*)}] 
			\item The implication operator of the G\"{o}del t-norm: \[x\ra y= \begin{cases} 1  & x\leq y,\\ y & x>y.\end{cases}\]
			It is continuous on $[0,1]^2\setminus\{(x,x)\mid x<1\}$.
			
			\item The implication operator of the product t-norm: $$x\ra y= \begin{cases}  1  & x\leq y,\\ y/x  & x>y.\end{cases}$$ It is continuous except at $(0,0)$.
			
			\item The implication operator of the  {\L}ukasiewicz t-norm: $$x\ra y= \min\{1, 1-x+y\}. $$ It is continuous on $[0,1]^2$. 
			
			\item The implication operator of the nilpotent minimum t-norm is given by $$x\ra y= \begin{cases} 1  & x\leq y,\\ \max\{1-x,y\}  & x>y.\end{cases}$$ It is continuous on $[0,1]^2\setminus\{(x,x)\mid 0<x<1\}$. 
	\end{enumerate} \end{exmp}

	\begin{prop}{\rm(H\'{a}jek \cite{Ha98})} For each left continuous t-norm $\&$ on $[0,1]$, the following are equivalent: 
		\begin{enumerate}[label={\rm(\arabic*)}] 
			\item $\&$ is continuous.  
			\item $\&$ is divisible in the sense that if $x\leq y$, then $x=y\with z$ for some $z\in[0,1]$.  
			\item  For all $x,y\in[0,1]$, $x\with(x\ra y)=\min\{x,y\}$.
	\end{enumerate}    \end{prop}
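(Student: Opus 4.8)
The plan is to prove the two biconditionals $(1)\Leftrightarrow(2)$ and $(2)\Leftrightarrow(3)$, so that divisibility (condition (2)) serves as the hub. Throughout I use the defining adjunction $x\with y\le z\iff y\le x\ra z$ together with the boundary laws $0\with x=0$ and $x\with 1=x$. One inequality holds for \emph{every} left continuous t-norm and will be used repeatedly: since $x\ra y\le x\ra y$, the adjunction gives $x\with(x\ra y)\le y$, while $x\ra y\le 1$ and monotonicity give $x\with(x\ra y)\le x\with 1=x$; hence $x\with(x\ra y)\le\min\{x,y\}$ always. Consequently condition (3) is equivalent to the reverse inequality $\min\{x,y\}\le x\with(x\ra y)$.

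I would first dispose of $(2)\Leftrightarrow(3)$, which is purely algebraic. For $(3)\Rightarrow(2)$, given $x\le y$ I set $z=y\ra x$; then (3) yields $y\with z=\min\{y,x\}=x$, which is exactly divisibility. For $(2)\Rightarrow(3)$ I split into two cases. If $x\le y$, then $x\with 1=x\le y$ forces $x\ra y=1$, so $x\with(x\ra y)=x\with 1=x=\min\{x,y\}$ (no divisibility is needed here). If $x>y$, then $y\le x$ and divisibility give some $z$ with $x\with z=y$; from $x\with z\le y$ the adjunction yields $z\le x\ra y$, so by monotonicity $y=x\with z\le x\with(x\ra y)$, and together with the always-valid inequality this gives $x\with(x\ra y)=y=\min\{x,y\}$.

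The implication $(1)\Rightarrow(2)$ is immediate from the intermediate value theorem: for fixed $y$ the map $y\with(-)\colon[0,1]\to[0,1]$ is continuous with value $0$ at $0$ and value $y$ at $1$, so every $x$ with $0\le x\le y$ is attained, i.e. $x=y\with z$ for some $z$.

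The substance lies in $(2)\Rightarrow(1)$, and this is the step I expect to be the main obstacle. Divisibility says precisely that each non-decreasing map $g_y=y\with(-)\colon[0,1]\to[0,1]$ is \emph{onto} $[0,y]$ (its range contains $g_y(0)=0$, $g_y(1)=y$, and every $x\le y$). Such a function cannot have a jump, since at a jump point $x_0$ the values lying strictly between the one-sided limits $g_y(x_0^-)$ and $g_y(x_0^+)$, apart from $g_y(x_0)$ itself, would be omitted from the range, contradicting surjectivity onto the interval $[0,y]$. Hence each $g_y$ is continuous, i.e. $\&$ is separately continuous in each variable. It then remains to upgrade separate continuity to \emph{joint} continuity, which is the delicate point. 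I would dispatch it by the standard fact that a function $[0,1]^2\to[0,1]$ non-decreasing in each variable is jointly continuous as soon as it is separately continuous: for $(x_n,y_n)\to(a,b)$ one passes to subsequences on which $x_n$ and $y_n$ are each monotone, and then sandwiches the product between the two coordinate lines. For instance, when $x_n\le a$ and $y_n\ge b$ one has $x_n\with y_n\le a\with y_n\to a\with b$ and $x_n\with y_n\ge x_n\with b\to a\with b$ by continuity along each axis; the remaining sign patterns are handled by a short double-limit argument combining axiswise continuity with monotonicity.
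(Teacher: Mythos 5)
Your proposal is correct, and at the level of individual steps it is close to the paper's proof, but the organization and one key point of completeness differ. The paper closes the cycle $(1)\Rightarrow(2)\Rightarrow(3)\Rightarrow(1)$; you prove the two biconditionals $(1)\Leftrightarrow(2)$ and $(2)\Leftrightarrow(3)$ with divisibility as the hub. The implications $(1)\Rightarrow(2)$ (intermediate value theorem) and $(2)\Rightarrow(3)$ (adjunction plus the always-valid bound $x\with(x\ra y)\leq\min\{x,y\}$) are the same in both. The genuine divergence is the return to continuity: the paper proves $(3)\Rightarrow(1)$ by contradiction, using left continuity (join preservation) to locate a jump of a section $y\with-$ as a gap above some $x$, and then producing a value $b<y$ in the gap that is omitted from the range, contradicting $y\with(y\ra b)=b$; you prove $(2)\Rightarrow(1)$ directly, reading divisibility as the statement that each section $y\with-$ is a monotone surjection onto $[0,y]$ and hence jump-free. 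These are two phrasings of one idea --- the range of each section is a full interval --- so neither buys more generality. Where your write-up is more complete is the last step: the paper simply asserts ``it suffices to show that $\&$ is separately continuous,'' leaving the upgrade from separate to joint continuity unproved, whereas you identify this as the delicate point, state the relevant fact (a map $[0,1]^2\to[0,1]$ monotone in each variable and continuous in each variable is jointly continuous), and sketch the monotone-subsequence sandwich and double-limit argument that proves it. That sketch is sound (the mixed-sign case is the direct sandwich; the same-sign cases follow by fixing one index and letting the other tend to its limit), so your version actually discharges an obligation the paper leaves implicit.
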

	
	\begin{proof}$(1)\Rightarrow(2)$ Suppose that $x\leq y$. Since $y\with 0=0$ and $y\with 1=y$, by continuity of $\&$, the function $y\with-$ maps $[0,1]$ onto $[0,y]$,  then  there is some $z\in[0,1]$ such that $x=y\with z$.
		
		$(2)\Rightarrow(3)$  The equality is trivial if $x\leq y$. Assume that $x>y$. Since $\&$ is divisible, $y=x\with z$ for some $z\in[0,1]$. Then, by definition of $\ra$,    $z\leq x\ra y$, hence  $$y= x\with z\leq x\with(x\ra y)\leq y,$$ and consequently, $x\with(x\ra y)=\min\{x,y\}$.
		
		$(3)\Rightarrow(1)$ It suffices to show that $\&$ is separately continuous. Suppose on the contrary that there is some $y$ such that $y\with -\colon [0,1]\lra[0,1]$ is not continuous. Since $y\with-$ preserves joins, there must exist some $x$ such that \[y\with x<\inf_{z>x}(y\with z).\]  Pick some $b$ with $$y\with x<b<\inf_{z>x}(y\with z).$$  Then $b<y$ and   $y\with z\not=b$ for all $z\in[0,1]$,   contradicting $y\with(y\ra b)=b$.\end{proof}
	
	\begin{cor} \label{idempotent between} Suppose $\&$ is a continuous  t-norm on $[0,1]$ and   $b$ is an idempotent element. Then 
		\begin{enumerate}[label={\rm(\roman*)}] 
			\item  $y\with x=\min\{x,y\}=x$ whenever $x\leq b\leq y$. 
			\item $y\ra x=x$ whenever $x<b\leq y$. \end{enumerate} \end{cor}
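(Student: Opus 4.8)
The plan is to treat the two assertions in turn, deriving (ii) from (i). Throughout I will use monotonicity of $\with$ (axiom (T3)) together with commutativity, the unit law $x\with 1=x$, and the fact established in the preceding proposition (H\'{a}jek's characterization) that a continuous t-norm is \emph{divisible}, i.e.\ $x\leq y$ implies $x=y\with z$ for some $z\in[0,1]$.

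For (i), suppose $x\leq b\leq y$. The upper bound is immediate: since $y\leq 1$, monotonicity gives $y\with x\leq 1\with x=x$. For the reverse inequality I would first show that $b\with x=x$. By divisibility applied to $x\leq b$, write $x=b\with z$; then associativity and idempotency of $b$ give
\[ b\with x=b\with(b\with z)=(b\with b)\with z=b\with z=x. \]
Since $b\leq y$, monotonicity now yields $x=b\with x\leq y\with x$, and combined with the upper bound this forces $y\with x=x$. Finally $\min\{x,y\}=x$ because $x\leq b\leq y$.

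For (ii), suppose $x<b\leq y$ and set $w=y\ra x$. Recall the defining adjunction $y\with z\leq x\iff z\leq w$, which in particular gives $y\with w\leq x$. First, $x\leq w$: by (i) we have $y\with x=x\leq x$, so the adjunction gives $x\leq y\ra x=w$. The crux is to prove $w\leq x$, and for this I would show $w\leq b$. Suppose instead $b<w$; then $b\with w\leq b\with 1=b$, while $b=b\with b\leq b\with w$, so $b\with w=b$. As $b\leq y$, monotonicity gives $b=b\with w\leq y\with w\leq x$, contradicting $x<b$. Hence $w\leq b\leq y$, and applying (i) to $w$ yields $y\with w=w$; but $y\with w\leq x$, so $w\leq x$. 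Together with $x\leq w$ this gives $y\ra x=w=x$.

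The only subtle point is the lower bound in (i): the key realization is that divisibility lets one present $x$ as $b\with z$, after which idempotency of $b$ collapses $b\with x$ to $x$. Everything in (ii) is then a clean consequence of (i) together with the adjunction, the one trick being the indirect argument that $w$ cannot exceed the idempotent $b$.
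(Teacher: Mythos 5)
Your proof is correct and follows essentially the same route as the paper: in (i) both arguments present $x$ as $b\with z$ (your divisibility witness $z$ is exactly the paper's $b\ra x$, coming from $b\with(b\ra x)=\min\{b,x\}=x$) and use idempotency of $b$ to get $b\with x=x$ before squeezing $x=b\with x\leq y\with x\leq x$; in (ii) both reduce to showing $y\ra x$ lies below $b$ so that (i) applies. The paper's version of (ii) is merely more compact, invoking $y\with(y\ra x)=\min\{x,y\}=x$ directly and reading off $y\ra x<b$, where you reach the same bound by a contradiction argument from the adjunction.
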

	
	\begin{proof}(i) Since $b\with b=b$, then \[x=b\with(b\ra x)=b\with b\with(b\ra x)\leq b\with x\leq x, \] hence \[ x=b\with x\leq y\with x \leq x,\] so $x=y\with x$.
		
		(ii) Since $\&$ is continuous, then $y\with(y\ra x)= x<b$, so we must have $y\ra x<b$. Thus, by (i) we have  \begin{align*}x&= y\with(y\ra x)=y\ra x.\qedhere \end{align*} \end{proof}
	
	\begin{cor}\label{restriction of CT}
		Let $\&$ be a continuous t-norm on $[0,1]$.  Then for any idempotent elements  $a$ and $b$ with $a<b$, the restriction of $\&$ on $[a,b]$ is a continuous t-norm on the interval $[a,b]$. \end{cor}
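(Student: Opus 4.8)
The plan is to check three things in turn: that $[a,b]$ is closed under $\with$ (so that the restriction is a well-defined binary operation), that this restriction satisfies the four t-norm axioms with $b$ playing the role of unit, and that it remains continuous. Of these, only the closure step has any content; the axioms are largely inherited and continuity is purely topological.

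First I would prove closure. Fix $x,y\in[a,b]$, so that $a\le x\le b$ and $a\le y\le b$. For the lower bound, monotonicity (T3) together with idempotency of $a$ gives $a=a\with a\le x\with y$; symmetrically, for the upper bound, idempotency of $b$ gives $x\with y\le b\with b=b$. Hence $a\le x\with y\le b$, i.e. $x\with y\in[a,b]$, and the restriction $\with\colon[a,b]\times[a,b]\lra[a,b]$ is well defined. Note that this is exactly where the hypothesis that $a$ and $b$ are \emph{idempotent} (not merely that $a<b$) is used.

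Next I would verify the axioms. Commutativity (T1), associativity (T2) and monotonicity (T3) are universally quantified (in)equalities in $\with$ that hold throughout $[0,1]$, so they continue to hold on the subset $[a,b]$ with no further argument. For the boundary condition (T4), with $b$ as the prospective unit, I would invoke Corollary \ref{idempotent between}(i): taking the corollary's parameters to be our element $x$ and the element $b$, the hypothesis $x\le b\le b$ holds for every $x\in[a,b]$, and the conclusion reads $x\with b=\min\{x,b\}=x$. Thus $b$ is a unit for the restricted operation, and $([a,b],\with,b)$ is a t-norm on $[a,b]$.

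Finally, continuity follows at once: the restriction factors as the inclusion $[a,b]\times[a,b]\hookrightarrow[0,1]\times[0,1]$ followed by the continuous map $\with\colon[0,1]\times[0,1]\lra[0,1]$, and its image lies in $[a,b]$; since $[a,b]$ carries the subspace topology of $[0,1]$, the corestriction to $[a,b]$ is continuous. The only genuine obstacle is the closure step, and even there the work is minimal once one observes that idempotency of $a$ and of $b$ pins $x\with y$ into the interval from below and above respectively.
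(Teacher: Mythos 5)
Your proof is correct and follows exactly the route the paper intends: the paper states this as an immediate corollary of Corollary \ref{idempotent between} without writing out the details, and your argument supplies precisely those details — closure of $[a,b]$ under $\with$ via idempotency of $a$ and $b$, the unit axiom for $b$ via Corollary \ref{idempotent between}\thinspace(i), and continuity by restriction/corestriction. Nothing is missing.
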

	
	\vskip 5pt \noindent{\bf Continuous   Archimedean  t-norm}
	
	\begin{prop}A continuous  t-norm $\&$ is Archimedean if and only if it has no idempotent element other than $0$ and $1$. \end{prop}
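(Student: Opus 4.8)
The plan is to prove both implications separately, with the converse carrying essentially all the weight. The forward direction is immediate, while the backward direction rests on a single observation about limits of power sequences, for which continuity (not merely left continuity) is indispensable.

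For the forward direction, suppose $\&$ is Archimedean and, toward a contradiction, that some $b\in(0,1)$ is idempotent. A trivial induction from $b\with b=b$ gives $b^n=b$ for every $n\geq 1$, so the Archimedean condition applied to the pair $x=y=b$ would demand an $n$ with $b^n<b$, which is impossible. Hence $0$ and $1$ are the only idempotents.

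For the converse, assume $\&$ is continuous and has $0$ and $1$ as its only idempotents, and fix $x\in(0,1)$. Since $\{x^n\}_n$ is decreasing and bounded below, it converges to $a\coloneqq\inf_n x^n$. The crux of the whole argument is that this limit $a$ is idempotent. Indeed, writing $x^n\with x^n=x^{2n}$ and using that $\{x^{2n}\}_n$ is a subsequence of $\{x^n\}_n$ (hence also converges to $a$), continuity of $\&$ as a map $[0,1]\times[0,1]\lra[0,1]$ lets us pass to the limit:
\[
a\with a=\Big(\lim_n x^n\Big)\with\Big(\lim_n x^n\Big)=\lim_n\big(x^n\with x^n\big)=\lim_n x^{2n}=a.
\]
By hypothesis $a\in\{0,1\}$, and since $a\leq x<1$ we conclude $a=0$, that is, $\lim_n x^n=0$.

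It then remains only to read off the Archimedean property: given any $y\in(0,1)$, from $x^n\to 0<y$ we obtain some $n$ with $x^n<y$, and since $x,y$ were arbitrary in $(0,1)$ this establishes that $\&$ is Archimedean. The one genuinely substantive step—and the place I expect the main difficulty to lie—is the identification of $\lim_n x^n$ as an idempotent; this is exactly where continuity of $\&$ is used (left continuity alone would not license passing the limit through both arguments simultaneously), and it is the hinge on which the converse turns.
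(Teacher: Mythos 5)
Your proof is correct and follows essentially the same route as the paper: the paper's sufficiency argument also takes $b=\lim_{n\to\infty}x^n$ for $x<1$, infers $b\with b=b$ from continuity of $\&$, and concludes $b=0$; your subsequence computation just makes that continuity step explicit, and your treatment of necessity fills in what the paper dismisses as clear.
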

	
	\begin{proof} Necessity is clear. For sufficiency, we show that $\lim\limits_{n\ra\infty}x^n=0$ for each $x<1$. Since   $\{x^n\}_{n\in\mathbb{N}}$ is a decreasing sequence,  the limit $\lim\limits_{n\ra\infty}x^n$ exists. Let $b=\lim\limits_{n\ra\infty}x^n$. From continuity of $\&$ one infers that $b\with b=b$, so $b$ is idempotent and $b=0$. \end{proof}
	
	\begin{prop}Suppose $\&$ is a continuous  Archimedean  t-norm on $[0,1]$. 
		\begin{enumerate}[label={\rm(\roman*)}]  
			\item If $x^n>0$ then $x^n>x^{n+1}$.
			\item If $x^n>0$ then $x^n>y^n$ whenever $x>y$.
	\end{enumerate} \end{prop}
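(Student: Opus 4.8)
The plan is to reduce both items to a single elementary cancellation fact and then apply it twice. The fact I would isolate is: \emph{if $s\with e=s$ and $e<1$, then $s=0$}. To prove it, iterate the hypothesis to get $s\with e^{k}=s$ for every $k\geq 1$; this uses only associativity and commutativity of $\with$, since $e^{k+1}=e^{k}\with e$ and hence $s\with e^{k+1}=(s\with e^{k})\with e=s\with e=s$. Because $s\leq 1$, monotonicity gives $s\with e^{k}\leq 1\with e^{k}=e^{k}$, so $s\leq e^{k}$ for all $k$. As $e<1$, the Archimedean hypothesis forces $e^{k}\to 0$, whence $s\leq 0$, i.e. $s=0$. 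Notably this step needs neither continuity nor residuation, only the Archimedean property (and, for $e^{k}\to 0$, just the defining condition for all $x,y\in(0,1)$).

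For (i) I would argue by contradiction, under the understanding that $x<1$ (for $x=1$ one has $x^{n}=x^{n+1}=1$, so this is the only case where the inequality can hold). Since $\{x^{m}\}_{m}$ is decreasing we already have $x^{n+1}\leq x^{n}$. If equality held, then $x^{n}\with x=x^{n+1}=x^{n}$, so the cancellation fact applied with $s=x^{n}$ and $e=x<1$ would give $x^{n}=0$, contradicting the hypothesis $x^{n}>0$. Therefore $x^{n+1}<x^{n}$.

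For (ii), first dispose of the case $y^{n}=0$, where $x^{n}>0=y^{n}$ is immediate; so assume $y^{n}>0$, whence $x>y>0$. Monotonicity of $\with$ already yields $x^{n}\geq y^{n}$, and I would suppose $x^{n}=y^{n}=:s>0$ for contradiction. This is where continuity enters: by the characterization recalled earlier, a continuous t-norm is divisible, so from $y\leq x$ I may write $y=x\with d$ for some $d\in[0,1]$; since $y<x$ we must have $d\neq 1$, and then $d^{n}\leq d<1$. Commutativity and associativity give the bookkeeping identity $(x\with d)^{n}=x^{n}\with d^{n}$, so $s=y^{n}=s\with d^{n}$, and applying the cancellation fact with $e=d^{n}<1$ forces $s=0$, a contradiction. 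Hence $x^{n}>y^{n}$. The only genuinely delicate points are invoking divisibility to obtain the factorization $y=x\with d$ and verifying the power identity $(x\with d)^{n}=x^{n}\with d^{n}$; I expect these to be the main obstacle, since everything else collapses to the one-line cancellation lemma.
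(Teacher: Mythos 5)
Your proof is correct, and its skeleton matches the paper's: both items go by contradiction, and (ii) uses divisibility (hence continuity) to factor $y=x\with z$ with $z<1$, reducing everything to an absorption identity $s=s\with e$ with $e<1$. Where you genuinely differ is in how absorption forces $s=0$. The paper iterates to $x^n=x^n\with x^m$ (resp. $x^n\le x^n\with z^m$) and then uses \emph{continuity} of $\with$ to pass to the limit: $x^n=x^n\with\lim_m x^m=x^n\with 0=0$. Your cancellation lemma instead bounds $s=s\with e^k\le 1\with e^k=e^k$ by monotonicity and invokes the Archimedean property directly to conclude $\inf_k e^k=0$; this dispenses with continuity in that step entirely, so the lemma is valid for an arbitrary (not necessarily continuous) Archimedean t-norm, continuity entering your argument only through divisibility in (ii). Two further minor points in your favor: the exact identity $(x\with d)^n=x^n\with d^n$ is a clean substitute for the paper's inequality chain $y^n=y^{n-1}\with x\with z\le x^n\with z$, and you explicitly flag the implicit hypothesis $x<1$ in (i), which both the statement and the paper's proof silently require (the claim fails at $x=1$, and the paper's use of $\lim_m x^m=0$ presupposes $x<1$).
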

	
	\begin{proof}(i) Suppose on the contrary that $x^n=x^n\with x$. By induction we have $x^n=x^n\with x^m$ for all $m\geq 1$. So, \[x^n=\lim\limits_{m\ra\infty}(x^n\with x^m)= x^n\with\lim\limits_{m\ra\infty}x^m = x^n\with 0=0, \]   contradicting that $x^n>0$.
		
		(ii) Suppose on the contrary that $x^n\leq y^n$. Since $\&$ is divisible, $y=x\with z$ for some $z<1$. Then \[x^n\leq y^n=y^{n-1}\with x\with z\leq x^n\with z. \] Consequently, \[x^n\leq x^n\with\lim\limits_{m\ra\infty}z^m = x^n\with 0=0,\]   contradicting that $x^n>0$. \end{proof}
	
	Suppose $t\colon [0,1]\lra[-\infty,0]$ is  continuous and strictly increasing and $t(1)=0$. Since $t$ preserves meets, it has a left adjoint $$t^\dashv\colon  [-\infty,0]\lra [0,1] , \quad  t^\dashv (x) =\begin{cases}  t^{-1}(x)  & x\in[t(0),0], \\
		0  & x< t(0).\end{cases}$$ 
	
	\begin{prop} The left adjoint $t^\dashv \colon  [-\infty,0]\lra [0,1] $ satisfies:   
		\begin{enumerate}[label={\rm(\roman*)}] 
			\item $t^\dashv \circ t(x)=x$ for all $x\in[0,1]$. \item $t\circ t^\dashv (x)=x$ for all $x\in[t(0),0]$. 
			\item $t^\dashv $ is continuous and is strictly increasing on $[t(0),0]$. 
	\end{enumerate} \end{prop}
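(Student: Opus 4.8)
The plan is to exploit the fact that $t$, being continuous and strictly increasing with $t(1)=0$, is an order isomorphism from $[0,1]$ onto the interval $[t(0),0]$ (where $t(0)$ may equal $-\infty$), so that on $[t(0),0]$ the map $t^\dashv$ is literally the two-sided inverse $t^{-1}$, while on $[-\infty,t(0))$ it is the constant $0$. First I would record the standard consequence of this: since a continuous strictly increasing bijection between intervals (in the order topology, allowing the endpoint $-\infty$) has a continuous strictly increasing inverse, the restriction $t^{-1}\colon [t(0),0]\lra[0,1]$ is continuous and strictly increasing. Everything then reduces to bookkeeping with this inverse and the constant piece.

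For (i), observe that for every $x\in[0,1]$ monotonicity gives $t(x)\in[t(0),t(1)]=[t(0),0]$, so by the definition of $t^\dashv$ we get $t^\dashv\circ t(x)=t^{-1}(t(x))=x$. Alternatively, this is immediate from the adjunction $t^\dashv\dashv t$: since $t$ is strictly increasing, hence injective, Proposition \ref{surjective right adjoint} (applied with $f=t^\dashv$ and $g=t$) yields $t^\dashv\circ t=\id_{[0,1]}$. For (ii), if $x\in[t(0),0]$ then $t^\dashv(x)=t^{-1}(x)$ by definition, whence $t\circ t^\dashv(x)=t(t^{-1}(x))=x$; there is nothing more to check because on this interval $t^\dashv$ coincides with the genuine inverse. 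Note that one cannot expect this identity on all of $[-\infty,0]$ unless $t(0)=-\infty$, which is exactly why (ii) is restricted to the image $[t(0),0]$.

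For (iii), the strict monotonicity on $[t(0),0]$ is just that of $t^{-1}$. For continuity I would argue piecewise: $t^\dashv$ equals the continuous map $t^{-1}$ on $[t(0),0]$ and the constant $0$ on $[-\infty,t(0))$, so the only point where continuity is not automatic is the junction $x=t(0)$. There the value is $t^\dashv(t(0))=t^{-1}(t(0))=0$, the left limit is $0$ since $t^\dashv$ is constantly $0$ below $t(0)$, and the right limit is $\lim_{x\to t(0)^+}t^{-1}(x)=t^{-1}(t(0))=0$ by continuity of $t^{-1}$; hence the two pieces glue continuously. The one case requiring a separate word is $t(0)=-\infty$, in which the constant piece is vacuous and $t^\dashv=t^{-1}$ on all of $[-\infty,0]$, so continuity is simply continuity of $t^{-1}$ at $-\infty$, guaranteed by the homeomorphism established in the first paragraph.

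The main obstacle, modest in this routine statement, is the continuity claim in (iii): one must be sure that the inverse of a continuous strictly increasing function is again continuous, that this persists at the infinite endpoint when $t(0)=-\infty$, and then that gluing with the constant $0$ produces no discontinuity at $t(0)$. Parts (i) and (ii) are essentially forced by the explicit piecewise definition of $t^\dashv$ together with the bijectivity of $t$ onto $[t(0),0]$.
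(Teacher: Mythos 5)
Your proof is correct. The paper states this proposition without proof (treating it as routine), and your argument — direct verification from the explicit piecewise formula for $t^\dashv$, with $t$ an order isomorphism onto $[t(0),0]$ and the gluing check at $x=t(0)$ (plus the degenerate case $t(0)=-\infty$) — together with the alternative derivation of (i) from Proposition \ref{surjective right adjoint}, is exactly the intended verification.
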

	
	\begin{prop}\label{generation of Archimedean t-norm} Suppose $t\colon [0,1]\lra[-\infty,0]$ is continuous and strictly increasing and $t(1)=0$. Define a binary operator $\&$ on $[0,1]$ by \begin{equation*} \label{formula 1} x\with y=t^\dashv (t(x)+t(y)),\end{equation*} where $t^\dashv $ is the left adjoint of $t$. Then 
		\begin{enumerate}[label={\rm(\roman*)}] 
			\item $\&$ is a continuous Archimedean t-norm.
			\item $\&$ has a nonzero nilpotent element if and only if $t(0)>-\infty$.
			\item $\&$ is isomorphic to the truncated addition on $[t(0),0]$.
		\end{enumerate}
		The function $t\colon [0,1]\lra[-\infty,0]$ is called an additive generator of the t-norm $\&$. \end{prop}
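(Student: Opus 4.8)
The plan is to use the additive generator $t$ to exhibit $\&$ as an isomorphic copy of the (truncated) addition on $[t(0),0]$, and then to transport (i)--(iii) from the well understood behaviour of addition. Everything hinges on the single identity
\[
t^\dashv(s)=t^\dashv\bigl(\max\{t(0),s\}\bigr),\qquad t\bigl(t^\dashv(s)\bigr)=\max\{t(0),s\}\qquad (s\in[-\infty,0]),
\]
which is immediate from the explicit formula for $t^\dashv$ together with the relations $t^\dashv\circ t=\id$ and $t\circ t^\dashv(x)=x$ on $[t(0),0]$ recorded in the preceding proposition. The only point to keep in mind is that $t^\dashv$ collapses all of $[-\infty,t(0)]$ to $0$, which is precisely the source of the truncation (the $\max$) below.

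First I would note that, being continuous, strictly increasing and normalized by $t(1)=0$, the generator restricts to an order isomorphism $t\colon[0,1]\lra[t(0),0]$ whose inverse is $t^\dashv$ restricted to $[t(0),0]$. Write $+_{t(0)}$ for the truncated addition $u+_{t(0)}v=\max\{t(0),u+v\}$ on $[t(0),0]$ (ordinary addition when $t(0)=-\infty$, the $\max$ being vacuous there). Transporting $+_{t(0)}$ along this isomorphism produces the operation $t^\dashv\bigl(t(x)+_{t(0)}t(y)\bigr)$, and by the crux identity
\[
t^\dashv\bigl(t(x)+_{t(0)}t(y)\bigr)=t^\dashv\bigl(\max\{t(0),t(x)+t(y)\}\bigr)=t^\dashv\bigl(t(x)+t(y)\bigr)=x\with y .
\]
Thus $\&$ is exactly the t-norm obtained by transporting $+_{t(0)}$ along $t$, so by the transport principle recalled earlier it is a t-norm on $[0,1]$ isomorphic to $+_{t(0)}$; this is (iii), and it simultaneously shows $\&$ is a t-norm. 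If one prefers a bare-hands check of T1--T4: commutativity and T4 are immediate from $t^\dashv\circ t=\id$, monotonicity follows since $t$ and $t^\dashv$ preserve order, and associativity collapses via the crux identity to associativity of $+$, both triple products equalling $t^\dashv(t(x)+t(y)+t(z))$.

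For (i) it remains to carry continuity and the Archimedean property across the isomorphism. Continuity is clear, either because $\&=t^\dashv\circ(+)\circ(t\times t)$ is a composite of continuous maps (the preceding proposition gives continuity of $t^\dashv$) or by the isomorphism itself. For the Archimedean property I would argue directly: an easy induction using the crux identity gives $x^n=t^\dashv\bigl(n\,t(x)\bigr)$, and for $x\in(0,1)$ the value $t(x)$ is a finite negative real, so $n\,t(x)\to-\infty$ and hence $x^n<y$ for large $n$ and any $y\in(0,1)$. Alternatively one invokes the earlier criterion that a continuous t-norm is Archimedean iff its only idempotents are $0,1$: here $x\with x=x$ forces $\max\{t(0),2t(x)\}=t(x)$, whence $t(x)\in\{t(0),0\}$, i.e. $x\in\{0,1\}$.

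Finally (ii) reads off the same formula $x^n=t^\dashv(n\,t(x))$, together with the observation that $t^\dashv(s)=0$ precisely when $s\le t(0)$: a nonzero $x$ is nilpotent iff $n\,t(x)\le t(0)$ for some $n$. If $t(0)>-\infty$ this holds for every $x\in(0,1)$ since $n\,t(x)\to-\infty$, so nonzero nilpotents abound, consistent with $\&$ being isomorphic to the {\L}ukasiewicz t-norm in this case; if $t(0)=-\infty$, then $n\,t(x)$ stays a finite real for all finite $n$, so no $x\neq0$ is nilpotent, consistent with $\&$ being isomorphic to the product t-norm. I do not expect a genuine obstacle here; the only real care needed is the bookkeeping around $t^\dashv$ on $[-\infty,t(0)]$ and the correct handling of the value $-\infty$ in the two boundary regimes, $t(0)$ finite versus $t(0)=-\infty$.
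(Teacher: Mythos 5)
Your proposal is correct, and at bottom it runs on the same engine as the paper's proof: the power formula $x^n=t^\dashv\bigl(n\,t(x)\bigr)$ together with the fact that $t^\dashv$ collapses $[-\infty,t(0)]$ to $0$, from which the Archimedean property (your direct argument is essentially verbatim the paper's: choose $n$ with $n\,t(y)<t(x)$) and part (ii) both follow. Where you differ is in the decomposition: the paper proves (i) first, declaring the t-norm axioms ``routine,'' and then disposes of (iii) with the two words ``by definition,'' whereas you prove (iii) first -- via the truncation identity $t\circ t^\dashv(s)=\max\{t(0),s\}$ and the transport principle stated earlier in the section -- and then harvest the t-norm axioms, continuity, and (via the idempotent criterion, as an alternative) the Archimedean property from the isomorphism with truncated addition. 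Your route buys completeness: it makes explicit exactly the bookkeeping the paper suppresses, in particular the induction behind $x^n=t^\dashv(n\,t(x))$ (which the paper uses without comment, and which does require the truncation identity, since $t\circ t^\dashv\neq\mathrm{id}$ off $[t(0),0]$) and the case analysis showing associativity survives the truncation. The paper's route buys brevity, at the cost of leaving those verifications to the reader. Both are sound; yours would serve better as a self-contained writeup.
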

	
	\begin{proof} (i) The verification of that $\&$ is a continuous t-norm is routine. To see that $\&$ is Archimedean, suppose   $0<x<y<1$. Since $t$ is strictly increasing, then $0>t(y)>t(x)>-\infty$. Pick some $n\geq 1$ with $nt(y)< t(x)$, then \[y^n=t^\dashv (nt(y))< t^\dashv (t(x))=x.\]
		
		(ii) Suppose   $t(0)>-\infty$. For each $x<1$, since $t(x)<0$, there exists $n\geq 1$ such that $nt(x)\leq t(0)$, then $x^n=t^\dashv (nt(x))=0$, hence every $x<1$ is a nilpotent element of $\&$. Now suppose  $t(0)=-\infty$. For each $x\in(0,1)$, since $t(x)>-\infty$ and $t^\dashv $ is strictly increasing on $[-\infty,0]$, it follows that for all $n\geq 1$, \[x^n=t^\dashv (nt(x))>t^\dashv (-\infty)=0,\] so $\&$ has no nilpotent element other than $0$.

		(iii) By definition. \end{proof}
	
	\begin{exmp}\begin{enumerate}[label={\rm(\roman*)}] 
			\item    The  function $$t\colon [0,1]\lra[-\infty,0],\quad t(x)=\ln x$$   is an additive generator of the product t-norm.
			
			\item  The  function $$t\colon [0,1]\lra[-\infty,0],\quad t(x)=x-1$$   is an additive generator of the {\L}ukasiewicz t-norm.
			
			\item For each $p>0$,  the function $$t\colon [0,1]\lra[-\infty,0],\quad t(x)= x^{p}-1$$ is continuous and strictly increasing and  $t(1)=0$.   The  t-norm $\with_p$ with $t$ as  additive generator is given by $$x\with_p y= (\max\{x^p+y^p-1,0\})^{\frac{1}{p}}.$$ The G\"{o}del t-norm is the limit of $\with_p$ when $p$ tends to $0$; the drastic product is the limit of $\with_p$ when $p$ tends to infinity. 
	\end{enumerate}\end{exmp}
	
	\begin{thm}\label{representation} Every continuous  Archimedean t-norm $\&$  on $[0,1]$ has an additive generator. That means, there exists a continuous and strictly increasing function $t\colon [0,1]\lra[-\infty,0]$ with $t(1)=0$   such that for all $x,y\in[0,1]$, $$x\with y =t^\dashv (t(x)+t(y)),$$ where $t^\dashv $ is the left adjoint of $t$. \end{thm}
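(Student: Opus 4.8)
The plan is to construct the additive generator as the (negated) inverse of the map $q \mapsto a^q$ for a fixed $a \in (0,1)$, where $a^q$ denotes a suitably defined rational $\with$-power of $a$. The first task is to make sense of such powers. I would begin by establishing existence and uniqueness of $n$-th roots: given $x \in (0,1)$ and $n \geq 1$, the map $y \mapsto y^n$ is continuous with $0^n = 0 < x < 1 = 1^n$, so the intermediate value theorem yields some $y$ with $y^n = x$; uniqueness follows from the earlier proposition that $x^n > 0$ and $x > z$ force $x^n > z^n$. Writing $a^{1/n}$ for this root and $a^{m/n} := (a^{1/n})^m$, one checks well-definedness by raising to the $nn'$-th power and invoking uniqueness of roots, and then verifies the two fundamental identities $a^q \with a^{q'} = a^{q+q'}$ and the strict monotonicity ``$q < q'$ and $a^{q'} > 0$ imply $a^q > a^{q'}$'' (the latter reducing, via a common denominator, to the proposition that $b^m > b^{m+1}$ whenever $b^m > 0$).

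Next I would pin down the limiting behaviour. As $q \to \infty$ we have $a^q \to 0$ because $\with$ is Archimedean, while as $q \to 0^+$ we have $a^{1/n} \to 1$: the nondecreasing sequence $\{a^{1/n}\}$ converges to some $b \geq a > 0$, and by joint continuity $b \with b = \lim a^{2/n} = b$, so $b$ is idempotent; since a continuous Archimedean t-norm has no idempotent other than $0$ and $1$, we get $b = 1$. The crucial point is density: I claim $\{a^{k/2^n} : k \geq 0\}$ (with $a^0 := 1$) is dense in $[0,1]$. Indeed, $\with$ is continuous on the compact square $[0,1]^2$, hence uniformly continuous, and together with $a^{1/2^n} \to 1$ this makes $\delta_n := \sup_{z}(z - z \with a^{1/2^n})$ tend to $0$; as the strictly decreasing sequence $a^{k/2^n}$ runs from $1$ down to $0$ with consecutive gaps $a^{k/2^n} - a^{(k+1)/2^n} \leq \delta_n$, every point of $[0,1]$ lies within $\delta_n$ of some $a^{k/2^n}$, and letting $n \to \infty$ gives density.

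With density in hand I would define $t$ on the dense set $D = \{a^q : q \in \mathbb{Q}_{>0},\ a^q > 0\}$ by $t(a^q) = -q$ and set $t(1) = 0$; this is strictly increasing by the monotonicity above, has dense image in the relevant interval, and satisfies $t(x \with y) = t(x) + t(y)$ on $D$. A strictly monotone function on a dense subset with dense image extends uniquely to a continuous strictly monotone function on the closure. Here the two cases separate cleanly: in the strict case $\inf D = 0$ and $t(x) \to -\infty$, so we set $t(0) = -\infty$; in the nilpotent case $a^q = 0$ for $q$ beyond $q_0 = \inf\{q : a^q = 0\}$, and one sets $t(0) = -q_0 > -\infty$, consistent with $t$ by continuity. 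Either way $t \colon [0,1] \to [-\infty, 0]$ is continuous, strictly increasing, with $t(1) = 0$, matching the description of $t(0)$ in Proposition \ref{generation of Archimedean t-norm}.

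Finally I would verify the generator formula. On $D$ it holds by construction: $t^\dashv(t(a^q) + t(a^{q'})) = t^\dashv(-(q+q')) = a^{q+q'} = a^q \with a^{q'}$, using the recorded properties of $t^\dashv$. Both sides of $x \with y = t^\dashv(t(x) + t(y))$ are continuous in $(x,y)$ — the left by hypothesis, the right since $t$ and $t^\dashv$ are continuous — so the identity propagates from $D \times D$ to all of $[0,1]^2$. I expect the main obstacle to be the density step together with the uniform bookkeeping across the strict and nilpotent cases: the argument that consecutive dyadic powers have uniformly shrinking gaps is exactly what forces the generator to be genuinely \emph{continuous} rather than merely an order-embedding of $D$, and it is also where joint continuity of $\with$, as opposed to mere left continuity, is essential.
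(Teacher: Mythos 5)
Your proposal is correct and is essentially the paper's own argument: both construct rational $\with$-powers of a fixed base $a\in(0,1)$ (the paper's $h(r)=f_r(c)$ is exactly your $c^{-r}$), establish the homomorphism identity, strict monotonicity and the Archimedean limiting behaviour, and rely on uniform continuity of $\with$ on the compact square $[0,1]^2$ as the decisive analytic input. The only difference is presentational: the paper extends the power map $h$ continuously from the dense rationals in $[-\infty,0]$ and then takes $t$ to be its right adjoint, whereas you extend the inverse map $t$ itself from the dense set of powers in $[0,1]$ via monotone density and propagate the generator formula by continuity --- mirror versions of the same extension step.
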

	
	We make some preparations first. Under the assumption of Theorem \ref{representation}, define for each natural number $n\geq 1$ a function  \[f_n\colon [0,1]\lra[0,1]\] by $$f_n(x)=x^n.$$ Then $f_n(0)=0$, $f_n(1)=1$,   $f_n$ is continuous and is strictly increasing whenever it is positive. In particular,  $f_n\colon [0,1]\lra[0,1]$ preserves joins, so it has a right adjoint \[g_n\colon [0,1]\lra[0,1],\] which sends each $x>0$ to the  unique $y$ with $y^n=x$.
	
	\begin{lem}\label{f_n,g_n} For each $n\geq1$, let $$a_n=g_n(0)=\sup\{x\in[0,1]\mid f_n(x)=0\}.$$  Then for all $n,m\geq 1$, we have: 
		\begin{enumerate}[label={\rm(\roman*)}] 
			\item $f_n(a_n)=0$ and $f_n$ is continuous and strictly increasing on $[a_n,1]$.
			\item $g_n(0)=a_n$ and $g_n$ is continuous and strictly increasing on $[0,1]$.
			\item For all $x\in[0,1]$, $f_n\circ g_n(x)=x$; for all $x\in[a_n,1]$, $g_n\circ f_n(x)=x$.
			\item $f_n(x)\geq f_{n+1}(x)$ and $g_n(x)\geq g_{n+1}(x)$.
			\item $f_n\circ f_m(x)=f_{nm}(x)=f_m\circ f_n(x)$.
			\item $g_n\circ g_m(x)=g_{nm}(x)=g_m\circ g_n(x)$.
			\item For each natural number $k\geq 1$, $f_{kn}\circ g_{km}=f_n\circ g_m$
	\end{enumerate} \end{lem}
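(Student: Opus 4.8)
The plan is to first determine the exact shape of each power map $f_n$ and to identify its right adjoint $g_n$ with the inverse of a suitable restriction of $f_n$; once this is done, parts (iv)--(vii) follow formally from the power laws recalled above and from the calculus of adjoints. For (i), I would consider the zero-set $\{x\in[0,1]\mid f_n(x)=0\}$. Since $f_n$ is order-preserving with $f_n(0)=0$ it is a lower set, and since $f_n$ is continuous it is closed; hence it equals $[0,a_n]$ with $a_n$ its supremum, and continuity gives $f_n(a_n)=0$. For $x>a_n$ we have $f_n(x)>0$, so by the already-noted fact that $f_n$ is strictly increasing wherever it is positive, $f_n$ is continuous on $[0,1]$ and strictly increasing on $[a_n,1]$.

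Next I turn to (ii) and (iii). The right adjoint of a join-preserving self-map of $[0,1]$ is given by $g_n(y)=\sup\{x\mid f_n(x)\le y\}$, so $g_n(0)=\sup\{x\mid f_n(x)=0\}=a_n$, which is the first assertion of (ii). Because $f_n$ is continuous with $f_n(0)=0$ and $f_n(1)=1$, the intermediate value theorem makes $f_n$ surjective; combined with (i) this exhibits $f_n$ as a continuous strictly increasing bijection $[a_n,1]\lra[0,1]$ whose inverse is again continuous and strictly increasing, and this inverse is exactly $g_n$, completing (ii). For (iii), surjectivity of $f_n$ together with Proposition \ref{surjective right adjoint} gives $f_n\circ g_n=\id$ on all of $[0,1]$; and for $x\in[a_n,1]$ the unit inequality $x\le g_nf_n(x)$, together with the injectivity of $f_n$ on $[a_n,1]$ (apply $f_n\circ g_n=\id$ to $f_n(x)$), forces $g_nf_n(x)=x$.

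Parts (iv)--(vii) are then formal. In (iv) the inequality $f_n\ge f_{n+1}$ is the decreasing-powers fact $x^{n+1}\le x^n$, and the corresponding comparison of the right adjoints follows from the order-reversing behaviour of the passage to right adjoints. Part (v) is the power law $(x^m)^n=x^{mn}=(x^n)^m$ applied pointwise. For (vi) I use uniqueness of adjoints: the right adjoint of the composite $f_n\circ f_m$ is $g_m\circ g_n$ by a direct check from the defining adjunctions, and by (v) this composite equals $f_{nm}$, whose right adjoint is $g_{nm}$, so $g_{nm}=g_m\circ g_n=g_n\circ g_m$. Finally (vii) combines these: writing $f_{kn}=f_n\circ f_k$ by (v) and $g_{km}=g_k\circ g_m$ by (vi), the identity $f_k\circ g_k=\id$ from (iii) collapses $f_{kn}\circ g_{km}=f_n\circ f_k\circ g_k\circ g_m=f_n\circ g_m$. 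The only genuine work lies in (i)--(iii): the subtlety is that $f_n$ is constant $0$ on the initial segment $[0,a_n]$ and becomes a bijection only after this segment is removed, so one must be careful to confine $g_n\circ f_n=\id$ to $[a_n,1]$ while $f_n\circ g_n=\id$ holds globally; after that everything is a direct application of the power laws and of the uniqueness and composition properties of adjoints.
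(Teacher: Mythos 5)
Your handling of the only parts the paper actually proves---(vi) and (vii)---is identical to the paper's: for (vi) you observe that $g_n\circ g_m$, $g_m\circ g_n$ and $g_{nm}$ are all right adjoint to $f_{nm}$ (via (v) and composition of adjunctions) and invoke uniqueness of right adjoints, and for (vii) you write $f_{kn}\circ g_{km}=f_n\circ f_k\circ g_k\circ g_m$ and collapse the middle with $f_k\circ g_k=\id$ from (iii); this is word for word the paper's argument. Your fill-ins for (i), (ii), (iii) and (v), which the paper leaves to the reader, are also correct: the closed-lower-set description of the zero-set for (i), the identification of $g_n$ with the inverse of the bijection $f_n\colon[a_n,1]\lra[0,1]$ for (ii), Proposition \ref{surjective right adjoint} plus injectivity of $f_n$ on $[a_n,1]$ for (iii), and the power laws for (v).

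The one step that fails is the second half of (iv). Passage to right adjoints is order-\emph{reversing}: if $f_1\leq f_2$ pointwise and $f_i\dashv g_i$, then $f_1(g_2(y))\leq f_2(g_2(y))\leq y$ gives $g_2\leq g_1$. Applied to $f_{n+1}\leq f_n$ this yields $g_n\leq g_{n+1}$, the \emph{opposite} of the inequality printed in the lemma, so your appeal to order-reversal cannot deliver $g_n\geq g_{n+1}$. In fact the printed inequality is a misprint: for the product t-norm one has $g_n(x)=x^{1/n}$, which strictly increases with $n$ for $0<x<1$, and the subsequent Lemma \ref{def of h} explicitly uses that $g_n(c)$ increases to $1$ as $n\to\infty$, i.e.\ exactly $g_n\leq g_{n+1}$. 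So the tool you chose is the right one and proves the fact that is actually needed later, but as written your sentence asserts that it establishes the stated comparison, which it does not; the honest conclusion is that the inequality for $g_n$ in the statement must be reversed, and this discrepancy should have been flagged rather than glossed over.
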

	
	\begin{proof} (i)-(v)   are left to the reader. (vi) follows from the fact that all of the functions  $g_n\circ g_m$, $g_{nm}$ and $g_m\circ g_n$ are right adjoint to $f_{nm}$. As for (vii),
		since $f_{kn}\circ g_{km}=f_n\circ f_k\circ g_k\circ g_m$ by (v) and (vi), the conclusion follows from (iii) immediately. \end{proof}
	
	For each rational number $r=-n/m<0$, define $$f_r\colon [0,1]\lra [0,1]$$ by \[f_r=f_n\circ g_m.\]
	Lemma \ref{f_n,g_n}\thinspace(vii) ensures that $f_r$ is well-defined.
	
	\begin{lem}\label{def of h}  Fix some $c\in(0,1)$. Define a function \[ h\colon \mathbb{Q}\cap(-\infty,0]\lra[0,1]\]  by $$h(0)=1,\quad \text{$h(r)=f_r(c)$ for all $r<0$}.$$ Then
		\begin{enumerate}[label={\rm(\roman*)}] 
			\item If $r<s$ and $h(s)>0$, then $h(r)<h(s)$.
			\item For all $r,s\in (-\infty,0]$, $h(r+s)=h(r)\with h(s)$.
			\item $\lim\limits_{r\ra-\infty}h(r)=0$.
			\item $h$ is uniformly continuous.
	\end{enumerate}\end{lem}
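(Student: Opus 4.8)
The plan is to handle the four claims in the order (ii), (i), (iii), (iv), because the homomorphism property (ii) reduces everything to ordinary $\with$-powers of a single base. Throughout I will use a \emph{common-denominator} description of $h$: for a rational $r=-n/m<0$ and any $M$ divisible by $m$, Lemma \ref{f_n,g_n}(vii) gives $h(r)=f_{nM/m}\circ g_M(c)=\bigl(g_M(c)\bigr)^{nM/m}$. Writing $e=g_M(c)$, I first record that $e\in(0,1)$: by Lemma \ref{f_n,g_n}(ii) the map $g_M$ is strictly increasing on $[0,1]$ with $g_M(0)=a_M\ge 0$ and $g_M(1)=1$, so $0<c<1$ forces $a_M<e<1$. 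Thus every value $h(r)$ is an ordinary $\with$-power $e^{N}$ of a base $e\in(0,1)$, and two rationals $r,s$ can always be brought over a common denominator $M$ so that $h(r)=e^{N}$ and $h(s)=e^{N'}$ with the \emph{same} base $e=g_M(c)$.

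For (ii) I put $r,s\le 0$ over a common denominator $M$, so that $h(r)=e^{a}$, $h(s)=e^{b}$ and $h(r+s)=e^{a+b}$ with $a,b\ge 0$. The identity then reduces to the power law $e^{a}\with e^{b}=e^{a+b}$ recorded for $\with$-powers in Section \ref{continuous t-norm} (the cases where $a$ or $b$ vanishes use only that $1=e^{0}$ is the unit, matching $h(0)=1$). For (i), again over a common denominator, $r<s$ means $N>N'$ while $h(s)=e^{N'}>0$. If $N'\ge 1$, the Proposition asserting $x^{n}>0\Rightarrow x^{n}>x^{n+1}$ (applied to $x=e$, $n=N'$) gives $e^{N'}>e^{N'+1}\ge e^{N}$, the last step because $\{e^{k}\}_k$ is decreasing and $N\ge N'+1$; if $N'=0$ then $s=0$ and $h(r)=e^{N}\le e<1=h(s)$. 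Either way $h(r)<h(s)$, and the non-strict monotonicity of $h$ follows identically from $N\ge N'\Rightarrow e^{N}\le e^{N'}$.

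Claim (iii) is then immediate from monotonicity: along the integers $h(-n)=f_n(c)=c^{n}$, and $c<1$ with the Archimedean hypothesis gives $c^{n}\to 0$ (the limit computation in the proof that a continuous t-norm is Archimedean iff it has no idempotent other than $0,1$); for arbitrary $r\le -n$ monotonicity yields $h(r)\le c^{n}$, whence $h(r)\to 0$ as $r\to-\infty$.

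The real work is (iv), and the crux is \emph{continuity at $0$}, i.e. $h(-\eta)\to 1$ as $\eta\to 0^{+}$. Here I use $h(-1/k)=g_k(c)$ together with $g_k(c)^{k}=f_k\circ g_k(c)=c$ from Lemma \ref{f_n,g_n}(iii): if some $\beta<1$ satisfied $g_k(c)\le\beta$ for infinitely many $k$, then $c=g_k(c)^{k}\le\beta^{k}\to 0$ by the Archimedean property, contradicting $c>0$; hence $g_k(c)\to 1$, and monotonicity of $h$ upgrades this to $h(-\eta)\to 1$. To finish, for $r<s\le 0$ I apply (ii) to $r=s+(r-s)$ to get $h(r)=h(s)\with h(r-s)$, so $0\le h(s)-h(r)=h(s)-h(s)\with h(r-s)$. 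Since $\with$ is continuous on the compact square $[0,1]^{2}$ it is uniformly continuous there; given $\epsilon>0$ I choose $\delta'$ witnessing this, then $\delta$ with $1-h(-\eta)<\delta'$ whenever $0<\eta<\delta$. For $|r-s|<\delta$ the pair $(h(s),h(r-s))$ lies within $\delta'$ of $(h(s),1)$, so $|h(s)\with h(r-s)-h(s)\with 1|<\epsilon$, that is $|h(r)-h(s)|<\epsilon$, uniformly in $s$. The one genuine obstacle is this continuity-at-$0$ step; once $g_k(c)\to 1$ is established, the functional equation (ii) and compactness of $[0,1]^{2}$ do the rest.
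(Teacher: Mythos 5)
Your proof is correct and follows essentially the same route as the paper's: bring the rationals over a common denominator so that every value of $h$ is a $\with$-power of the single base $g_M(c)$, deduce (i) and (ii) from the power law and the strict decrease of positive powers of a continuous Archimedean t-norm, get (iii) from $c^n\to 0$, and obtain (iv) from the functional equation (ii) combined with uniform continuity of $\with$ on the compact square $[0,1]^2$ and the continuity of $h$ at $0$. The only difference is that you supply the short Archimedean argument (via $g_k(c)^k=c$) for the key fact $g_k(c)\to 1$, which the paper asserts without proof.
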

	
	\begin{proof}(i) Assume that $r=-m/k, s=-n/k$ and $m>n$. Then \[h(s) =f_n\circ g_k(c)=g_k(c)^n >g_k(c)^{n+1}\geq g_k(c)^m=f_m\circ g_k(c) =h(r).\]
		
		(ii) Assume that $r=-m/k, s=-n/k$. Then \[h(r)\with h(s)=(f_m\circ g_k(c))\with  (f_n\circ g_k(c)) =g_k(c)^{m+n}= f_{m+n}\circ g_k(c)=h(r+s).\]
		
		(iii) This follows from (i) and that $\lim\limits_{n\ra\infty}h(-n)= \lim\limits_{n\ra\infty}f_n(c) =\lim\limits_{n\ra\infty}c^n=0$.
		
		(iv) It suffices to show that for each $\epsilon>0$, there is a rational number $l>0$ such that for all  $r\in\mathbb{Q}\cap(-\infty,0]$, \[ h(r+l)-\epsilon\leq h(r)\leq h(r-l)+\epsilon,\] where we agree  that $h(r+l)=1$ if $r+l>0$.
		
		Since $\&$ is continuous and $[0,1]^2$ is compact, there exists   $\delta>0$ such that \[|x\with y-u\with v|<\epsilon\] whenever $|x-u|<\delta$ and $|y-v|<\delta$. Since $g_n(c)$ tends to $1$ when $n$ tends to infinity, there is a natural number $N$ such that $1-g_N(c)<\delta$. Let   $l=1/N$. Then  $$1-h(-l)= 1-f_1\circ g_N(c)= 1-g_N(c)<\delta.$$ We claim that $l$ satisfies the requirement.
		
		Since \[h(r)-h(r-l)=|h(r)\with 1-h(r)\with h(-l)|\leq \epsilon,\] it follows that $h(r)\leq h(r-l)+\epsilon$.
		As for the inequality $h(r+l)-\epsilon\leq h(r)$, we proceed with two cases. If $r+l\leq0$, then \[h(r+l)-h(r)= |h(r+l)\with 1-h(r+l)\with h(-l)|\leq \epsilon.\] If $r+l>0$, then \[h(r+l)-h(r)= 1- h(r) \leq 1-h(-l)\leq \epsilon.\] In either case we have $h(r+l)-\epsilon\leq h(r)$.
	\end{proof}

	\begin{proof}[Proof of Theorem \ref{representation}]
		Consider the function $$h\colon \mathbb{Q}\cap(-\infty,0]\lra[0,1]$$ given in Lemma \ref{def of h}. Since $h$ is uniformly continuous and $\lim\limits_{r\ra-\infty}h(r)=0$, then $h$  can be extended to a continuous function $$\overline{h}\colon   [-\infty,0]\lra[0,1].$$  By continuity of $\&$ and Lemma \ref{def of h}\thinspace(ii), for all $u,v\in[-\infty,0]$  we have \[\overline{h}(u+v)=\overline{h}(u)\with\overline{h}(v). \]  
		Moreover, if $u<v$ and $\overline{h}(v)>0$, then $\overline{h}(u)<\overline{h}(v)$ by Lemma \ref{def of h}\thinspace(i).
		
		Since $\overline{h}\colon[-\infty,0] \lra[0,1]$ preserves joins, it has a right adjoint  $$t\colon [0,1]\lra[-\infty,0].$$  We claim that $t$ satisfies the requirement. 
		
		By definition $\overline{h}$ is the left adjoint of $t$, i.e. $t^\dashv =\overline{h}$.  It is readily verified that $t(1)=0$ and that $t$ is continuous and strictly increasing, in particular, $t$ is injective. Then by Proposition \ref{surjective right adjoint}, $t^\dashv (t(x))=x$ for all $x\in [0,1]$. Therefore,  for all $x,y\in[0,1]$ we have \begin{align*}x\with y&=t^\dashv (t(x))\with t^\dashv (t(y)) \\ 
			&=\overline{h}(t(x))\with\overline{h}(t(y))\\ &=\overline{h}(t(x)+t(y))\\ 
			&=t^\dashv (t(x)+t(y)),\end{align*}  which completes the proof. \end{proof}
	
	An immediate consequence of Theorem \ref{representation} is the following conclusion which asserts that there are essentially only two  continuous Archimedean t-norms: the product  and the {\L}ukasiewicz.
	
	\begin{cor}\label{CA=L or P} Suppose $\&$ is a continuous Archimedean t-norm  on $[0,1]$.  \begin{enumerate}[label={\rm(\roman*)}] 
			\item If $\&$ has a nonzero nilpotent element, then $\&$ is isomorphic to the {\L}ukasiewicz t-norm.
			\item If $\&$ has no nonzero nilpotent element, then $\&$ is isomorphic to the product t-norm.
	\end{enumerate}\end{cor}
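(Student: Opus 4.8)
The plan is to invoke the representation theorem and read off the two cases from the behaviour of the additive generator at $0$. By Theorem \ref{representation}, the given continuous Archimedean t-norm $\&$ admits an additive generator $t\colon[0,1]\lra[-\infty,0]$: a continuous, strictly increasing function with $t(1)=0$ satisfying $x\with y=t^\dashv(t(x)+t(y))$. The entire dichotomy will be governed by whether $t(0)=-\infty$ or $t(0)>-\infty$, which by Proposition \ref{generation of Archimedean t-norm}(ii) is exactly the dichotomy between having and not having a nonzero nilpotent element. So I fix such a $t$ once and for all and split into the two cases.

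For (i), suppose $\&$ has a nonzero nilpotent element. Then $a\coloneqq t(0)\in(-\infty,0)$. Proposition \ref{generation of Archimedean t-norm}(iii) already states that $\&$ is isomorphic to the truncated addition $+_a$ on $[a,0]$. It then remains only to compose this with the isomorphism between the truncated addition on $[a,0]$ and the {\L}ukasiewicz t-norm on $[0,1]$ supplied by the earlier proposition relating truncated addition, truncated product and the {\L}ukasiewicz t-norm. Since isomorphism of t-norms is transitive, $\&$ is isomorphic to the {\L}ukasiewicz t-norm.

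For (ii), suppose $\&$ has no nonzero nilpotent element. Then $t(0)=-\infty$, so $t\colon[0,1]\lra[-\infty,0]$ is a continuous strictly increasing bijection, hence an order isomorphism; in particular $t^\dashv=t^{-1}$ on all of $[-\infty,0]$. The defining formula then reads $t(x\with y)=t(x)+t(y)$, which says precisely that $t$ is an isomorphism from $\&$ to the addition t-norm $+$ on $[-\infty,0]$. Composing with the isomorphism $x\mapsto \mathrm{e}^x$ from $([-\infty,0],+,0)$ to $([0,1],\cdot,1)$ recorded earlier in this section shows that $\&$ is isomorphic to the product t-norm.

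The argument is essentially a bookkeeping of isomorphisms, so there is no genuine obstacle beyond the representation theorem itself; the only point requiring a moment's care is the identification $t^\dashv=t^{-1}$ in case (ii), which rests on $t$ being a bijection exactly when $t(0)=-\infty$, together with the general fact (Proposition \ref{surjective right adjoint}) that precomposing a left adjoint with an injective right adjoint returns the identity.
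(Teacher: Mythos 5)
Your proof is correct and follows essentially the same route as the paper's own argument: apply Theorem \ref{representation}, split on whether $t(0)>-\infty$ or $t(0)=-\infty$, and identify $\&$ with the truncated addition (hence the {\L}ukasiewicz t-norm) in the first case and with addition on $[-\infty,0]$ (hence the product t-norm) in the second. The only difference is that you spell out the justification for $t^\dashv=t^{-1}$ when $t(0)=-\infty$, which the paper leaves implicit.
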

	
	\begin{proof} 
		By Theorem \ref{representation} there is a continuous and strictly increasing function $t\colon [0,1]\lra[-\infty,0]$ with $t(1)=0$ such that $$x\with y=t^\dashv (t(x)+t(y))$$ for all $x,y\in[0,1]$.
		
		(i) If $\&$ has a nonzero nilpotent element, we must have $t(0)>-\infty$. Then $\&$ is isomorphic to the truncated addition on $[t(0),0]$, hence to the {\L}ukasiewicz t-norm on $[0,1]$.
		
		(ii) If $\&$ has no nonzero nilpotent element, we must have $t(0)=-\infty$. Then $\&$ is isomorphic to the addition on $[-\infty,0]$, hence to the product t-norm on $[0,1]$. \end{proof}

	Suppose $\&$ is a left continuous t-norm on $[0,1]$. We say that $([0,1],\&,1)$ (or simply $\&$) satisfies the \emph{law of double negation} if \[(x\ra0)\ra0 =x\] for all $x\in [0,1]$.
	
	Both the {\L}ukasiewicz t-norm and the nilpotent minimum satisfy the law of double negation.
	
	\begin{prop}\label{MV=Luka} Let $\&$ be a continuous t-norm on $[0,1]$.  Then $\&$ satisfies the law of double negation if and only if $\&$ is isomorphic to the {\L}ukasiewicz t-norm.  \end{prop}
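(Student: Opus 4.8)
The plan is to dispatch the easy implication directly and reduce the hard one to the classification of continuous Archimedean t-norms in Corollary \ref{CA=L or P}. For the ``if'' direction, since the {\L}ukasiewicz implication is $x\ra y=\min\{1,1-x+y\}$, one computes $x\ra 0=1-x$ and hence $(x\ra0)\ra0=x$, so the {\L}ukasiewicz t-norm satisfies the law of double negation. An isomorphism of t-norms is an order isomorphism compatible with $\with$, so it preserves the residuation $\ra$ and therefore the negation $\neg x\coloneqq x\ra0$; thus any t-norm isomorphic to {\L}ukasiewicz satisfies the law as well.

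For the ``only if'' direction, assume $\with$ is continuous and $\neg\neg x=x$ for all $x$. First I would show $\with$ is Archimedean by ruling out idempotents in $(0,1)$: if $b\in(0,1)$ were idempotent, then Corollary \ref{idempotent between}(ii), applied with $x=0<b\leq b=y$, gives $\neg b=b\ra0=0$, whence $\neg\neg b=\neg0=0\ra0=1\neq b$, contradicting double negation. By the characterization of Archimedean continuous t-norms (those with no idempotent other than $0$ and $1$), this forces $\with$ to be Archimedean.

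Next I would exhibit a nonzero nilpotent element. The negation $\neg$ is order-reversing because residuation reverses its first variable, and by $\neg\neg=\id$ it is a bijection; being a strictly decreasing surjection of $[0,1]$ onto itself it is continuous, with $\neg0=1$ and $\neg1=0$. The intermediate value theorem then yields a fixed point $a$ with $\neg a=a$, and $a\in(0,1)$ since the endpoints are interchanged. Specializing the divisibility identity $x\with(x\ra y)=\min\{x,y\}$ from H\'{a}jek's characterization of continuous t-norms to $y=0$ gives $x\with\neg x=0$ for all $x$; at the fixed point this reads $a\with a=a\with\neg a=0$, so $a$ is a nonzero nilpotent element. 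Finally, Corollary \ref{CA=L or P}(i) applies: a continuous Archimedean t-norm with a nonzero nilpotent element is isomorphic to the {\L}ukasiewicz t-norm, which completes the argument.

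I expect the crux to be the construction of the nonzero nilpotent element, i.e.\ locating the fixed point of $\neg$; this rests on observing that $\neg$ is forced to be a continuous strictly decreasing involution (rather than a possibly discontinuous collapse such as the one the product t-norm produces) and on the identity $x\with\neg x=0$. The Archimedean step and the final invocation of Corollary \ref{CA=L or P} are then routine by comparison.
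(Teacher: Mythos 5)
Your proof is correct, and its first half is exactly the paper's argument: the Archimedean step (a nontrivial idempotent $b$ would give $b\ra0=0$ by Corollary \ref{idempotent between}, hence $(b\ra0)\ra0=1\neq b$) is identical. Where you diverge is in how the classification of Corollary \ref{CA=L or P} is applied. The paper treats it as a dichotomy -- every continuous Archimedean t-norm is isomorphic to the product or to {\L}ukasiewicz -- and disposes of the product case in one line, because the product fails the law of double negation and the law is invariant under isomorphism (a fact the paper leaves implicit, but which you verify explicitly in your ``if'' direction). You instead invoke only part (i) of that corollary, which forces you to produce a nonzero nilpotent element; you do this by showing $\neg x=x\ra0$ is a strictly decreasing bijective involution, hence continuous, hence has a fixed point $a\in(0,1)$ by the intermediate value theorem, and then $a\with a=a\with\neg a=0$ by the divisibility identity $x\with(x\ra y)=\min\{x,y\}$. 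Both routes are sound: the paper's is shorter, while yours is more constructive, needs no knowledge of the product t-norm's negation, and makes the isomorphism-invariance of the law explicit rather than tacit. Your closing remark that the nilpotent construction is ``the crux'' is thus a fair description of your proof but not of the paper's, which sidesteps that construction entirely.
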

	
	\begin{proof}First we show that $\&$ is Archimedean. Otherwise it has a nontrivial idempotent element, say $b$. Then by Corollary \ref{idempotent between}, $b\ra0=0$, hence $(b\ra0)\ra0 =1$, contradicting that $\&$ satisfies the law of double negation. Next, since the product does not satisfy the law of double negation, then by Corollary \ref{CA=L or P}, $\&$ must be isomorphic to  the {\L}ukasiewicz t-norm. \end{proof}
	
	\vskip 5pt \noindent{\bf Ordinal sum decomposition theorem}
	
	Suppose   $\{(a_i,b_i)\}_{i\in I}$ is a countable set of pairwise disjoint open intervals in $[0,1]$ and  for each  $i\in I$,   $\&_{i}$ is a t-norm on the closed interval $[a_{i},b_{i}]$.  It is readily verified that  $$\with \colon [0,1]\times[0,1]\lra[0,1],\quad  x\with y=\begin{cases}
		x\with_i y  &  (x,y)\in [a_{i},b_{i}]^{2},     \\
		\min\{x,y\}  &   {\rm otherwise}
	\end{cases}$$  is  a t-norm on $[0,1]$, called the \emph{ordinal sum} of $\{\&_i\}_{i\in I}$. Each $\with_i$ is called a \emph{summand}  of $\&$. The ordinal sum is (left) continuous if and only if so is each of its summands. In the case that the ordinal sum $\&$ is left continuous, its implication operator is given by \begin{equation*}
		x\ra y=\begin{cases} 1 & x\leq y, \\ x\ra_i y  & a_i\leq y<  x\leq b_i~\text{for some }i,\\ y &{\rm otherwise}, \end{cases} \end{equation*} where $\ra_i$ is the implication operator of the t-norm $\&_i$ on $[a_i,b_i]$.
	
	A fundamental result says that every continuous t-norm is an ordinal sum of continuous Archimedean t-norms. To see this, let $\&$ be a continuous t-norm on $[0,1]$ and let $E$ be the set of idempotent elements of $\&$. Then $E$ is a closed set by continuity of $\&$. The open set $[0,1]\setminus E$ is a countable union of disjoint open intervals $\bigcup_{i\in I}(a_i,b_i)$. By Corollary \ref{restriction of CT},   the restriction of $\&$ on each $[a_i,b_i]$ is a continuous t-norm without nontrivial idempotent elements, hence a continuous Archimedean t-norm. By Corollary \ref{idempotent between}, $x\with y=\min\{x,y\}$ whenever $(x,y)\notin\bigcup_{i\in I}[a_i,b_i]^2$. This proves the following \emph{ordinal sum decomposition theorem} for continuous t-norms.
	
	\begin{thm}\label{ordinal sum decomposition} {\rm (Mostert and Shields \cite[Theorem B]{Mostert1957})} Every continuous  t-norm $\&$ on $[0,1]$ is an ordinal sum of continuous Archimedean t-norms. This means, for each continuous t-norm $\&$ on $[0,1]$, there exists a countable set of disjoint open intervals $\{(a_i,b_i)\}_{i\in I}$ in $ [0,1]$ such that
		\begin{enumerate}[label={\rm(\roman*)}] 
			\item  for each $i\in I$, both $ a_i$ and $ b_i$ are idempotent and the restriction of $\&$ on the closed interval $ [a_i,b_i] $ is Archimedean; 
			\item  $x\with y= \min\{x,y\}$ whenever $(x,y)\notin\bigcup_{i\in I}[a_i,b_i]^2$.
		\end{enumerate}
		
		Each $[a_i,b_i]$  with the restriction of $\&$ is called an Archimedean block of $\&$. \end{thm}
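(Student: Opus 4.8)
The plan is to realize the ordinal sum structure by cutting $[0,1]$ along the idempotent elements of $\with$. First I would set $E=\{x\in[0,1]\mid x\with x=x\}$ and observe that $E$ is closed: the self-map $x\mapsto x\with x$ is continuous (being the composite of the diagonal with $\with$), so $E$ is its fixed-point set, equivalently the zero set of the continuous function $x\mapsto x\with x-x$. Since $0$ and $1$ are idempotent, $E$ is a closed subset of $[0,1]$ containing both endpoints, so its complement lies in $(0,1)$ and is open in $\mathbb{R}$. Hence $[0,1]\setminus E=\bigcup_{i\in I}(a_i,b_i)$ for a countable family of pairwise disjoint open intervals. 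Because each $(a_i,b_i)$ is a connected component of the open set $[0,1]\setminus E$ and $E$ is closed, maximality forces the endpoints to satisfy $a_i,b_i\in E$; that is, $a_i$ and $b_i$ are idempotent.

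Next I would check condition (i). For each $i$, since $a_i<b_i$ are idempotent, Corollary~\ref{restriction of CT} shows that the restriction $\with_i$ of $\with$ to $[a_i,b_i]$ is a continuous t-norm on $[a_i,b_i]$. An element of $[a_i,b_i]$ is idempotent for $\with_i$ exactly when it is idempotent for $\with$, and $(a_i,b_i)\cap E=\emptyset$, so the only idempotents of $\with_i$ are its bottom $a_i$ and its unit $b_i$. By the characterization of continuous Archimedean t-norms proved above (a continuous t-norm is Archimedean iff it has no idempotent other than its bottom and top), transported to $[a_i,b_i]$ via an order isomorphism onto $[0,1]$, the t-norm $\with_i$ is Archimedean. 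This establishes (i), so each $[a_i,b_i]$ is an Archimedean block.

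For condition (ii), suppose $(x,y)\notin\bigcup_{i\in I}[a_i,b_i]^2$; by commutativity I may assume $x\leq y$. The key step is to produce an idempotent $b$ with $x\leq b\leq y$. If no such $b$ existed, the closed interval $[x,y]$ would be disjoint from $E$, hence contained in $[0,1]\setminus E=\bigcup_{i\in I}(a_i,b_i)$; being connected, $[x,y]$ would lie inside a single component $(a_i,b_i)$, forcing $(x,y)\in[a_i,b_i]^2$ and contradicting the hypothesis. With an idempotent $b$ satisfying $x\leq b\leq y$ in hand, Corollary~\ref{idempotent between}(i) gives $x\with y=\min\{x,y\}$, which is exactly (ii).

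The genuinely substantive inputs — that a continuous Archimedean t-norm is the product or the {\L}ukasiewicz t-norm, and that restrictions between idempotents are again continuous t-norms — are already available as earlier results, so the remaining work is structural assembly. I expect the main obstacle to be condition (ii): one must resist computing $x\with y$ directly and instead exploit connectedness to insert an idempotent between $x$ and $y$, after which Corollary~\ref{idempotent between} finishes the argument. A minor point to handle carefully is the degenerate case $x=y$, where the hypothesis $(x,x)\notin\bigcup_{i\in I}[a_i,b_i]^2$ directly yields $x\in E$, so one may take $b=x$.
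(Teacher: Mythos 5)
Your proposal is correct and follows essentially the same route as the paper: cut $[0,1]$ along the closed set $E$ of idempotents, apply Corollary~\ref{restriction of CT} together with the characterization of continuous Archimedean t-norms as those with no nontrivial idempotents to get (i), and apply Corollary~\ref{idempotent between} to get (ii). You merely make explicit two steps the paper leaves implicit — that the component endpoints lie in $E$, and the connectedness argument inserting an idempotent between $x$ and $y$ — which is a welcome elaboration rather than a deviation.
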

	\begin{center}\begin{tikzpicture}[scale=0.75]  
			\draw (0,0) rectangle (4.2,4.2);
			\draw[line width=1pt,dotted] (0,0) -- (4.2,4.2); 
			\fill [lightgray] (3.2,3.2) rectangle (3.8,3.8); 
			\fill [lightgray] (0,0) rectangle (0.5,0.5); 
			\fill [lightgray] (0.8,0.8) rectangle (1.5,1.5); 
			\fill [lightgray] (1.7,1.7) rectangle (2.6,2.6); 
			\fill [lightgray] (2.61,2.61) rectangle (3.2,3.2); 
			\node at (3,1) {$\min$};  \node at (1,3) {$\min$};
	\end{tikzpicture}\end{center} 
	
	\begin{prop}\label{Luka-free} Suppose $\&$ is a continuous t-norm on $[0,1]$. For each $p\in[0,1]$, let $p^-$ be the greatest idempotent element in $[0,p]$ and let $p^+$ be the least idempotent element in $[p,1]$. Then the following  are equivalent:
		\begin{enumerate}[label=\rm(\arabic*)] 
			\item For each non-idempotent element $p\in[0,1]$, the restriction of $\&$ on $[p^-,p^+]$ is isomorphic to the product t-norm on $[0,1]$ whenever $p^->0$.
			\item The implication  operator $\ra\colon [0,1]^2\lra[0,1]$ is  continuous  at every point off the diagonal $\{(x,x)\mid x\in[0,1]\}$.
			\item For each $p\in(0,1]$, the function $p\ra -\colon[0,1]\lra[0,1]$ is continuous  on the interval $[0,p)$.
		\end{enumerate}
	\end{prop}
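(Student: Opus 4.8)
The plan is to read everything through the ordinal sum decomposition. By Theorem \ref{ordinal sum decomposition} I would write $\&$ as an ordinal sum of Archimedean blocks $[a_i,b_i]$, each of which, by Corollary \ref{CA=L or P}, is isomorphic either to the product t-norm (no nonzero nilpotent) or to the {\L}ukasiewicz t-norm (nonzero nilpotents present). For a non-idempotent $p$ the interval $[p^-,p^+]$ is precisely the block containing $p$, so condition (1) says exactly that \emph{no block $[a_i,b_i]$ with $a_i>0$ is of {\L}ukasiewicz type}. I would then prove the cycle $(1)\Rightarrow(2)\Rightarrow(3)\Rightarrow(1)$, the first implication carrying the bulk of the work.

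Two facts about a single block $[a_i,b_i]$ drive the argument, both read off from the explicit product and {\L}ukasiewicz implications together with Corollary \ref{idempotent between}: since $b_i$ is the unit of the block, $b_i\ra_i y=y$ for all $y$; and for $a_i<x<b_i$ one has $x\ra_i a_i=a_i$ when the block is of product type, whereas $x\ra_i a_i>a_i$ when it is of {\L}ukasiewicz type. For $(1)\Rightarrow(2)$ I would verify continuity of $\ra$ at an arbitrary off-diagonal point $(x_0,y_0)$ using the ordinal-sum formula for $\ra$. Above the diagonal a neighbourhood keeps $x<y$, so $\ra\equiv 1$. Below the diagonal I would split according to the position of $(x_0,y_0)$ relative to the block triangles $\{a_i\le y<x\le b_i\}$: on the interior of such a triangle $\ra$ equals the (continuous, since off-diagonal) block implication, and off all triangles $\ra(x,y)=y$; hence a discontinuity can occur only on a block boundary. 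On the right edge $x=b_i$ the inner limit $b_i\ra_i y=y$ matches the outer value $y$; at a bottom-edge point $(x_0,a_i)$ with $a_i<x_0<b_i$ the limit from inside the block is $x_0\ra_i a_i$, while the limit from below---which exists only if $a_i>0$---equals $a_i$. These agree exactly when $x_0\ra_i a_i=a_i$, i.e.\ when the block is of product type, and when $a_i=0$ the matching is vacuous as there is no region below. Thus (1) forces continuity everywhere off the diagonal.

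The remaining implications are short. $(2)\Rightarrow(3)$ is immediate, since for fixed $p\in(0,1]$ the set $\{(p,y):0\le y<p\}$ consists of off-diagonal points, so $y\mapsto p\ra y$ is a restriction of the continuous (off-diagonal) map $\ra$. For $(3)\Rightarrow(1)$ I would argue by contraposition: if some block $[a,b]$ with $a>0$ is of {\L}ukasiewicz type, choose a non-idempotent $p\in(a,b)$; then $p\ra y=y$ for $y<a$, which tends to $a$ as $y\uparrow a$, whereas $p\ra a=p\ra_{[a,b]}a>a$ by the block fact, so $p\ra-$ is discontinuous at the point $a\in[0,p)$ and (3) fails.

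The main obstacle is the joint-continuity bookkeeping in $(1)\Rightarrow(2)$: one must confirm that a sufficiently small neighbourhood of an off-diagonal point meets the interior of at most one block triangle together with the ``otherwise'' region, so that the one-sided limit computations above are genuinely exhaustive. This needs care precisely at idempotent points that are accumulation points of the block family, where infinitely many triangles cluster; the argument there rests on the idempotents forming a closed set and on $b_i$ being the block unit, which keeps the two competing values ($y$ from outside, and $b_i\ra_i y$ or $x_0\ra_i a_i$ from inside) under control.
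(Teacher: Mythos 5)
Your proof is correct and follows essentially the same route as the paper: ordinal sum decomposition, the dichotomy at an off-diagonal point between having an idempotent strictly between the two coordinates (where $\ra$ is locally the projection onto $y$) and lying in a single block triangle (where the product-type block implication must match the outside value along the bottom and right edges), and contraposition via a {\L}ukasiewicz block starting above $0$ for $(3)\Rightarrow(1)$. The clustering worry you flag as the main obstacle is in fact harmless: any triangle meeting a small neighbourhood of an off-diagonal point $(x_0,y_0)$ comes from a block of length at least roughly $x_0-y_0$, and only finitely many of the pairwise disjoint blocks can be that long.
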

	
	\begin{proof} $(1)\Rightarrow(2)$  It suffices to show that $\ra$ is continuous at each point $(x,y)$ with $x>y$. If there is some idempotent element $b$ satisfying $y<b<x$, then $(b,1]\times[0,b)$ is a neighborhood of $(x,y)$  such that $x'\ra y' =y'$ for all $(x',y')\in (b,1]\times[0,b)$, hence $\ra$ is continuous at $(x,y)$. If there is no idempotent element between $x$ and $y$, then $x$ and $y$ belong to the same block by the ordinal sum decomposition theorem. That means, there exists a non-idempotent $p$ such that $p^-\leq y<x\leq p^+$ and that the restriction of $\&$ on $[p^-,p^+]$ is a continuous Archimedean t-norm.  Now we proceed with two cases.
		
		Case 1. $p^-=0$. Then the conclusion follows from the fact that the implication operator of a continuous Archimedean t-norm on $[0,1]$ is continuous except possibly at $(0,0)$.
		
		Case 2. $p^-\not=0$. In this case, the restriction of $\&$ on $[p^-,p^+]$ is isomorphic to the product t-norm on $[0,1]$. 
		We only need to show that for all $x>p^-$, the function $x\ra-\colon[0,1]\lra[0,1]$ is continuous at $p^-$. Since $ x\ra z=z$ for each $z<p^-$, the right limit of $ x\ra -$ at $p^-$ is equal to $p^-$, then $\lim\limits_{z\ra p^-}(x\ra z)= p^-= x\ra p^-$,  hence $x\ra-$ is continuous at $p^-$.

		$(2)\Rightarrow(3)$ Obvious.
		
		$(3)\Rightarrow(1)$ Suppose on the contrary that there is some non-idempotent element $p$ for which $p^->0$ and the restriction of $\&$ on $[p^-,p^+]$ is isomorphic to the {\L}ukasiewicz t-norm. Then the function $p\ra-\colon[0,1]\lra[0,1]$ is not continuous at $p^-$, a contradiction.\end{proof}

	Thus, for a continuous t-norm $\&$, the implication  operator $$\ra\colon [0,1]^2\lra[0,1]$$ is  continuous  at every point off the diagonal $\{(x,x)\mid x\in[0,1]\}$ if and only if it is \emph{almost \L ukasiewicz free} in the sense that $\&$ has at most one Archimedean block that is isomorphic to the  \L ukasiewicz t-norm and that block  should start at $0$. Such t-norms are first studied by Morsi  \cite{Morsi1995a,Morsi1995b}.
	

	\begin{con}From now on, we always assume that $\with$ is a continuous t-norm on the interval $[0,1]$. \end{con}

	\section {Real-enriched categories} \label{real-enriched cats} 
	
	Suppose $X,Y$ are sets. A \emph{$[0,1]$-relation}   $r\colon X\rto Y$ is a map $r\colon X\times Y\lra [0,1]$, the value $r(x,y)$ is interpreted as the degree that $x$ is  related to $y$. For a $[0,1]$-relation $r\colon X\rto Y$, the $[0,1]$-relation $$r^{\rm op}\colon Y\rto X,\quad r^{\rm op}(x,y)=r(y,x) $$ is called the  opposite of $r$.
	
	$[0,1]$-relations can be composed.
	Given $[0,1]$-relations $r\colon X\rto Y$  and $s\colon Y\rto Z$, the composite $s\circ r$ is the $[0,1]$-relation $X\rto Z$   given by  \[s\circ r(x,z)=\sup_{y\in Y}s(y,z)\with r(x,y).\]
	
	For each set $X$, write $\id_X\colon X\rto X$ for the $[0,1]$-relation given by \[\id_X(x,y)=\begin{cases} 1 & x=y,\\
		0 & x\not=y.\end{cases}\] Then $\id_X$  is a unit  for the composition of $[0,1]$-relations; that is, $r\circ \id_X=r=\id_Y\circ r$ for each $[0,1]$-relation $r\colon X\rto Y$. So, we call $\id_X$ the identity relation on $X$.
	
	The composition of $[0,1]$-relations is associative. 
	Therefore, with sets as objects and $[0,1]$-relations as arrows we obtain a category \[[0,1]\text{-}{\sf Rel}.\] The category $[0,1]\text{-}{\sf Rel}$ has an important property: for all objects $X,Y$,  the  hom-set $$[0,1]\text{-}{\sf Rel}(X,Y)$$ is a complete lattice under the pointwise order,  the composition of $[0,1]$-relations preserves joins in both variables.
	
	\begin{prop}For all sets $X,Y,Z$, the composition \[\circ\colon [0,1]\text{-}{\sf Rel}(Y,Z)\times[0,1]\text{-}{\sf Rel}(X,Y)\lra [0,1]\text{-}{\sf Rel}(X,Z)\] preserves joins on each variable.   
	\end{prop}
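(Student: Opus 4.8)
The plan is to reduce the claim to a single fact: the tensor $\with$ distributes over arbitrary joins in each of its two arguments. This holds because $([0,1],\with,1)$ is a commutative and unital quantale, so for each fixed element the map $x\with-\colon[0,1]\lra[0,1]$ is a left adjoint and hence preserves arbitrary joins by Theorem~\ref{lapj}; by commutativity the map $-\with y$ preserves joins as well. I would also recall that the partial order on each hom-set $[0,1]\text{-}{\sf Rel}(X,Y)$ is the pointwise order, so joins there are computed pointwise: $\bigl(\bigvee_i r_i\bigr)(x,y)=\sup_i r_i(x,y)$. Since a map between complete lattices preserves joins as soon as it does so pointwise, it suffices to verify the two one-variable statements pointwise.

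For preservation of joins in the first variable, I would fix $s\colon Y\rto Z$ and a family $\{r_i\}_{i\in I}$ in $[0,1]\text{-}{\sf Rel}(X,Y)$, and compute, for all $x\in X$ and $z\in Z$,
\begin{align*}
\Bigl(s\circ\textstyle\bigvee_i r_i\Bigr)(x,z)
&=\sup_{y\in Y} s(y,z)\with\Bigl(\textstyle\bigvee_i r_i\Bigr)(x,y)\\
&=\sup_{y\in Y} s(y,z)\with\sup_i r_i(x,y)\\
&=\sup_{y\in Y}\sup_i\bigl(s(y,z)\with r_i(x,y)\bigr)\\
&=\sup_i\sup_{y\in Y}\bigl(s(y,z)\with r_i(x,y)\bigr)\\
&=\sup_i (s\circ r_i)(x,z)
=\Bigl(\textstyle\bigvee_i(s\circ r_i)\Bigr)(x,z),
\end{align*}
where the third equality uses that $\with$ preserves joins in its right argument and the fourth is the interchange of suprema over a doubly-indexed family.

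For the second variable the computation is entirely symmetric: fixing $r$ and a family $\{s_j\}_{j\in J}$, one writes $\bigl((\bigvee_j s_j)\circ r\bigr)(x,z)=\sup_y (\sup_j s_j(y,z))\with r(x,y)$ and uses instead that $\with$ preserves joins in its \emph{left} argument, followed by the same interchange of suprema. Thus both one-variable maps preserve joins, and since joins in the hom-sets are pointwise this yields the proposition. I expect no genuine obstacle here; the only point requiring (routine) care is the interchange of the two suprema, which is legitimate because the supremum of a doubly-indexed family in a complete lattice may be taken in either order.
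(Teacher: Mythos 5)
Your proof is correct. The paper states this proposition without proof, treating it as a routine verification, and your argument is exactly the expected one: joins in the hom-sets are pointwise, the tensor $\with$ distributes over arbitrary suprema in each argument (as you note, this is the defining property of the commutative unital quantale $([0,1],\with,1)$, guaranteed here by continuity of the t-norm), and the remaining step is the interchange of suprema over a doubly-indexed family, which is always valid in a complete lattice.
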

	
	Therefore, $[0,1]\text{-}{\sf Rel}$ is a quantaloid in the sense of \cite{Rosenthal1996,Stubbe2005}.
	This fact has far-reaching consequences, it plays a crucial role in the calculus of $[0,1]$-relations, as we see now.
	
	For each $[0,1]$-relation   $r\colon X\rto Y$, since $\circ$ preserves joins in each variable, the map \[-\circ r\colon [0,1]\text{-}{\sf Rel}(Y,Z)\lra [0,1]\text{-}{\sf Rel}(X,Z)\] has a right adjoint \[-\swarrow r\colon [0,1]\text{-}{\sf Rel}(X,Z)\lra[0,1]\text{-}{\sf Rel}(Y,Z).\] Explicitly, for each   $t\colon X\rto Z$, the $[0,1]$-relation $t\swarrow r\colon  Y\rto Z$ is computed as follows:  \[(t\swarrow r)(y,z)=\inf_{x\in X}(r(x,y)\ra t(x,z)).\]
	
	$$\bfig \ptriangle(0,0)/>`>`>/<520,500>[X`Y`Z;r`t`t\swarrow r]
	\place(260,500)[\rotatebox{0}{$\mapstochar$}] \place(260,255)[\rotatebox{45}{$\mapstochar$}] \place(5,255)[\rotatebox{90}{$\mapstochar$}] \place(150,350)[\geq]
	\efig$$
	
	Likewise, for each $[0,1]$-relation   $s\colon Y\rto Z$, the map \[s\circ -\colon [0,1]\text{-}{\sf Rel}(X,Y)\lra [0,1]\text{-}{\sf Rel}(X,Z)\] has a right adjoint \[s\searrow -\colon [0,1]\text{-}{\sf Rel}(X,Z)\lra[0,1]\text{-}{\sf Rel}(X,Y) \] given by    \[(s\searrow t)(x,y)=\inf_{z\in Z}(s(y,z)\ra t(x,z)) \] for all   $t\colon X\rto Z$ and $x\in X$ and $y\in Y$.
	$$\bfig \qtriangle(0,0)/>`<-`<-/<520,500>[Y`Z`X;s`s\searrow t`t]
	\place(260,500)[\rotatebox{0}{$\mapstochar$}] \place(255,255)[\rotatebox{-45}{$\mapstochar$}] \place(525,255)[\rotatebox{90}{$\mapstochar$}] \place(350,350)[\leq]
	\efig$$

	Some basic properties of the $[0,1]$-relation calculus are listed below:
	\begin{enumerate}[label={\rm(\roman*)}]  
		\item For all $[0,1]$-relations $r\colon X\rto Y$, $s\colon Y\rto Z$ and $t\colon X\rto Z$, \[s\leq t\swarrow r\iff s\circ r\leq t\iff r\leq s\searrow t. \]
		
		\item  For all   $r\colon X\rto Y$ and $s\colon X\rto Y$, \[\id_X\leq r\searrow s\iff r\leq s\iff \id_Y\leq s\swarrow r.\] 
		
		\item  For all $r\colon X\rto Y$, $s\colon Y\rto Z$ and $t\colon X\rto W$, \[t\swarrow (s\circ r)= (t\swarrow r)\swarrow s.\] 
		
		\item For all $r\colon X\rto Y$, $s\colon Y\rto Z$ and $t\colon W\rto Z$, \[(s\circ r)\searrow t= r\searrow(s\searrow t).\]  
		
		\item For all $r\colon X\rto Y$, $s\colon W\rto Z$ and $t\colon X\rto Z$, \[(s\searrow t)\swarrow r= s\searrow(t\swarrow r).\] 
	\end{enumerate} 
	
	\begin{defn}
		A real-enriched category is a pair $(X, \alpha)$, where $X$ is a set and   $\alpha\colon X\times X\lra[0,1]$ is a function  such that \begin{enumerate}[label=\rm(\roman*)]  \item   $\alpha(x,x)= 1$ for all $x\in X$;     \item   $ \alpha(y,z)\with \alpha(x,y)\leq \alpha(x,z)$ for all $x,y,z\in X$.  
	\end{enumerate}  \end{defn}
	
	If we interpret  the value $\alpha(x,y)$  as the truth degree that $x$ is smaller than or equal to $y$, then  (i) is reflexivity and (ii) is transitivity. So, real-enriched categories can be viewed as many-valued ordered sets.
	
	To ease notations, for a real-enriched category $(X, \alpha)$,   we often omit the symbol $\alpha$ and write $X(x,y)$ for $\alpha(x,y)$.

	Two elements $x$ and $y$ of a real-enriched category $X$ are \emph{isomorphic} if $X(x,y)=1=X(y,x)$. A real-enriched category $X$ is   \emph{separated} if its isomorphic elements are identical.
	
	\begin{exmp}
		\begin{enumerate}[label=\rm(\roman*)]  
			\item The pair $([0,1],\alpha_L)$ is a separated real-enriched category, where   for all $x,y\in [0,1]$, $\alpha_L (x,y)= x\ra y$.  
			The opposite  of $([0,1],\alpha_L)$ is  denoted by $([0,1],\alpha_R)$; that means, $\alpha_R (x,y)=y\ra x.$ In the language of enriched category theory,  $\alpha_L$ is the \emph{internal hom} of (the truth-value set) $([0,1],\with,1)$. In the sequel we  write $\sV$ for   $([0,1],\alpha_L)$, hence $\sV^{\rm op}$ for $([0,1],\alpha_R)$. Both $\sV$ and $\sV^{\rm op}$ play  an important role in the theory of real-enriched categories. 
			
			\item For each set $X$, $([0,1]^X,\sub_X)$ is a separated real-enriched category, where   for all $\lam,\mu\in [0,1]^X$,
			$$\sub_X(\lam,\mu) =\inf_{x\in X}\lam(x)\ra \mu(x). $$  
			If we, following Zadeh \cite{Zadeh65}, view $\lam$ and $\mu$ as fuzzy subsets of $X$,  then the value $\sub_X(\lam,\mu)$ measures the truth degree that $\lam$ is contained in $\mu$, so the category $([0,1]^X,\sub_X)$ is also known as the \emph{enriched powerset} of $X$.  
		\end{enumerate}   
	\end{exmp}  
	
	Suppose $X, Y$ are real-enriched categories. A functor   $f\colon X\lra Y$  is a map such that  $X(x,y)\leq Y(f(x),f(y))$ for all $x,y\in X$. The category of real-enriched categories and functors is denoted by \[[0,1]\text{-}{\bf Cat}.\]  
	
	Given an ordered set $(X,\leq)$, define a map $\omega(\leq)\colon X\times X\lra[0,1]$ by $\omega(\leq)(x,y)=1$ if $x\leq y$, and $\omega(\leq)(x,y)=0$ otherwise. Then $(X,\omega(\leq))$ is a real-enriched category. In this way we obtain a full and faithful  functor $$\omega\colon{\bf Ord}\lra\QOrd.$$
	
	The functor $\omega$ has both a left and a right adjoint. We spell out the right adjoint here. For each real-enriched category $X$, define a binary relation $\sqsubseteq$ on $X$ by $x\sqsubseteq y$ if $X(x,y)=1.$ Then $\sqsubseteq$ is  a reflexive and transitive relation, hence an order   on $X$, called the \emph{underlying order} of  $X$. We write $X_0$ for the ordered set $(X,\sqsubseteq)$. The assignment $X\mapsto X_0$ defines a functor $$(\text{-})_0\colon\QOrd\lra {\bf Ord},$$ a right adjoint of $\omega$.
	
	\begin{defn}Suppose $X,Y$ are real-enriched categories, $f\colon X\lra Y$ and $g\colon Y\lra X$ are functors.  We say that $f$ is left adjoint to $g$, or $g$ is right adjoint to $f$, and write $f\dashv g$, if
		for all $x\in X$ and $y\in Y$, 
		$$Y(f(x),y)=X(x,g(y)).$$ The pair $(f,g)$ is  called an adjunction, or a(n enriched) Galois connection.\end{defn} This is an extension of Galois connections between ordered sets; and a special case of adjunction in the theory of enriched categories.

	\begin{thm}\label{Characterization of adjoints} For each pair of maps $f\colon X\lra Y$ and $g\colon Y\lra X$   between real-enriched categories, the following are equivalent:  
		\begin{enumerate}[label={\rm(\arabic*)}]  
			\item Both  $f$ and $g$ are  functors, and $f$ is left adjoint to $g$.  
			\item Both $f$ and $g$ are  functors, and 
			$f\colon X_0\lra Y_0$ is left adjoint to $g\colon Y_0\lra X_0$.  
			\item For all $x\in X$ and $y\in Y$, $Y(f(x),y)=X(x,g(y))$. 
	\end{enumerate}\end{thm}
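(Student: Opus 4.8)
The plan is to route everything through reflexivity and transitivity of the enriched hom, together with the ordered-set characterization of adjunctions in Theorem~\ref{Galois connection via unit}. The implication $(1)\Rightarrow(3)$ is immediate, since (1) asserts precisely the identity in (3) (one simply drops the functoriality hypotheses). The converse $(3)\Rightarrow(1)$ is where the first real point lies: from the bare identity $Y(f(x),y)=X(x,g(y))$ I must recover that $f$ and $g$ are functors. For $f$, I would first specialize the identity at $y=f(x')$, so that reflexivity gives $X(x',gf(x'))=Y(f(x'),f(x'))=1$, i.e. $x'\sqsubseteq gf(x')$ in $X_0$; transitivity then yields $X(x,x')\leq X(x,gf(x'))=Y(f(x),f(x'))$, which is exactly functoriality of $f$. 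The argument for $g$ is dual, specializing the identity at $x=g(y)$ and using reflexivity to get $Y(fg(y),y)=1$.

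For $(1)\Rightarrow(2)$ I would observe that the enriched identity, read only at those pairs where a hom-value equals $1$, says $Y(f(x),y)=1\iff X(x,g(y))=1$, which is the defining condition of an ordered-set adjunction $f\dashv g\colon X_0\lra Y_0$; moreover a functor preserves the underlying order (a hom-value $1$ is sent to a hom-value $1$), so $f$ and $g$ descend to order-preserving maps between $X_0$ and $Y_0$.

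The main obstacle is $(2)\Rightarrow(1)$, since there I know only the order-level adjunction (which sees hom-values solely through the predicate ``$=1$'') yet must reconstruct the full $[0,1]$-valued identity. The key is to promote the order adjunction to unit and counit inequalities via Theorem~\ref{Galois connection via unit}: I obtain $\id\leq gf$ and $fg\leq\id$ in the underlying orders, that is $X(x,gf(x))=1$ and $Y(fg(y),y)=1$. Then I combine these with functoriality and transitivity. Functoriality of $g$ gives $Y(f(x),y)\leq X(gf(x),g(y))$, and transitivity against $X(x,gf(x))=1$ collapses this to $Y(f(x),y)\leq X(x,g(y))$; dually, functoriality of $f$ gives $X(x,g(y))\leq Y(f(x),fg(y))$, and transitivity against $Y(fg(y),y)=1$ yields $X(x,g(y))\leq Y(f(x),y)$. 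The two inequalities together give the equality $Y(f(x),y)=X(x,g(y))$, completing $(2)\Rightarrow(1)$ and closing the equivalence.
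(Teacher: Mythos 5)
Your proposal is correct and is essentially the paper's proof: your $(2)\Rightarrow(1)$ computation (unit/counit from the order adjunction, then functoriality plus transitivity to force the two inequalities) is exactly the paper's $(2)\Rightarrow(3)$ step, and your $(3)\Rightarrow(1)$ recovery of functoriality via reflexivity at $y=f(x')$ is the paper's $(3)\Rightarrow(1)$ step. The only difference is bookkeeping — you prove $(1)\Leftrightarrow(2)$ and $(1)\Leftrightarrow(3)$ separately where the paper runs the cycle $(1)\Rightarrow(2)\Rightarrow(3)\Rightarrow(1)$ — which changes nothing of substance.
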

	
	\begin{proof} $(1)\Rightarrow(2)$ Obvious. 
		
		$(2)\Rightarrow(3)$  
		Since $f(x)\sqsubseteq f(x)$, then $x\sqsubseteq gf(x)$, hence $1\leq X(x,gf(x))$ for all $x\in X$. Likewise, $1\leq Y(fg(y),y)$ for all $y\in Y$. So  we have 
		\[X(x,g(y))\leq   Y(fg(y),y)\with Y(f(x),fg(y))\leq
		Y(f(x),y)\] and \[Y(f(x),y)\leq  
		X(gf(x),g(y))\with X(x,gf(x))\leq X(x,g(y)).\] Therefore 
		$X(x,g(y))=Y(f(x),y)$. 
		
		$(3)\Rightarrow(1)$ It suffices to check that  $f$ and $g$ are  functors. For  all $x_1,x_2\in X$, \begin{align*}X(x_1,x_2)&\leq Y(f(x_2),f(x_2))\with X(x_1,x_2) \\ &= X(x_2,gf(x_2))\with X(x_1,x_2)\\ &\leq X(x_1,gf(x_2))  \\ &=Y(f(x_1),f(x_2)),\end{align*} it follows that $f$ is a  functor. Likewise, $g$  is a  functor. \end{proof}
	
	\begin{exmp}\label{image vs preimage} Suppose $X,Y$ are set,  $f\colon X\lra Y$ is a map. Then,   $$f^{-1} \colon ([0,1]^Y,\sub_Y)\lra ([0,1]^X,\sub_X), \quad  f^{-1}(\mu)=\mu\circ f$$ is a functor, it has both a left adjoint and a right adjoint. The left adjoint $$f_\exists\colon ([0,1]^X,\sub_X)\lra ([0,1]^Y,\sub_Y) $$ is given by  $$ f_\exists(\lam)(y)=\sup\{\lam(x)\mid f(x)=y\};$$ the right  adjoint $$f_\forall\colon ([0,1]^X,\sub_X)\lra ([0,1]^Y,\sub_Y) $$ is given by  $$ f_\forall(\lam)(y)=\inf\{\lam(x)\mid f(x)=y\}.$$ Therefore, we have a string of adjunctions   $$ f_\exists\dashv f^{-1}\dashv f_\forall.$$ \end{exmp}   
	
	\section{The Yoneda lemma}
	
	\begin{defn}
		Suppose $X, Y$ are real-enriched categories. A distributor $\phi$ from $X$ to $Y$, written $\phi\colon X\oto Y$, is a map $\phi\colon X\times Y\lra [0,1]$ such that 
		\begin{enumerate}[label={\rm(\roman*)}] 
			\item for all $x_1,x_2\in X$ and $y\in Y$, $\phi(x_2,y)\with X(x_1,x_2)\leq \phi(x_1,y)$;  \item for all $x\in X$ and $y_1,y_2\in Y$,   $Y(y_1,y_2)\with\phi(x,y_1)\leq \phi(x,y_2)$.  \end{enumerate}      \end{defn} 
	
	If   $\phi$ is a distributor from $X$ to $Y$, then  $\phi^{\rm op}(y,x)\coloneqq\phi(x,y)$ is a distributor from   $Y^{\rm op}$ to  $X^{\rm op}$.  As we have agreed, for each real-enriched category $X$,  the symbol ``$X$'' also denotes  the real-enriched category structure  on $X$. So,  a distributor $\phi\colon X\oto Y$ is  a $[0,1]$-relation $\phi\colon X\rto Y$ such that $\phi\circ X\leq \phi$ and $Y\circ\phi\leq\phi$, or equivalently, $\phi\circ X=\phi=Y\circ\phi$. 
	
	Let $\phi\colon X\oto Y$ be a distributor. Fixing the first argument $x$ gives a functor $$\phi(x,-)\colon Y\lra \sV, \quad y\mapsto \phi(x,y),$$   where $\sV=([0,1],\alpha_L)$; fixing the second argument $y$ gives a functor $$\phi(-,y)\colon X^{\rm op}\lra \sV, \quad x\mapsto\phi(x,y).$$  
	
	\begin{exmp}
		Every functor $f\colon X\lra Y$ induces a pair of distributors: \[f_*\colon X\oto Y, \quad f_*(x,y)=Y(f(x),y)\] and \[f^*\colon Y\oto X, \quad f^*(y,x)=Y(y,f(x)).\] The  distributors $f_* $  and $f^* $ are called,  respectively, the  graph  and the  cograph  of $f$.  \end{exmp}
	
	A  functor $f\colon X\lra Y$ is   \emph{fully faithful} if  for all $x_1,x_2\in X$, \[X(x_1,x_2)=Y(f(x_1),f(x_2)).\]  We leave it to the reader to check that a functor  $f\colon X\lra Y$ is fully faithful   if and only if $f^*\circ f_*=X$.

	For every pair of real-enriched categories $X$ and $Y$, the set of distributors from $X$ to $Y$  is a subset of the   $[0,1]$-relations from $X$ to $Y$. The following proposition says that this subset is closed with respect to various operations on $[0,1]$-relations.
	
	\begin{prop}\label{Distcalcu} Suppose $X,Y,Z$ are real-enriched categories. 
		\begin{enumerate}[label={\rm(\roman*)}] 
			\item For any distributors $\phi\colon X\oto Y$ and $\psi\colon Y\oto Z$, the composite $\psi\circ\phi$ is a distributor $X\oto Z$.
			\item For each family $\{\phi_i\}_i$ of distributors $X\oto Y$, both the pointwise join $\sup_i\phi_i$ and the pointwise meet $\inf_i\phi_i$ are distributor  $X\oto Y$.
			\item For any distributors $\phi\colon X\oto Y$, $\psi\colon Y\oto Z$ and $\xi\colon X\oto Z$, both of the $[0,1]$-relations $\xi\swarrow\phi$ and $\psi\searrow\xi$ are distributors. 
	\end{enumerate}\end{prop}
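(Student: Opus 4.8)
The plan is to use the reformulation recorded just above the statement: a $[0,1]$-relation $\phi\colon X\rto Y$ is a distributor $X\oto Y$ precisely when $\phi\circ X=\phi$ and $Y\circ\phi=\phi$, where $X,Y$ also denote the hom-relations of the two categories. Because the diagonal of a hom-relation equals $1$, the inequalities $\phi\circ X\geq\phi$ and $Y\circ\phi\geq\phi$ hold automatically, so only the reverse inequalities carry content. I would also use that each hom-relation is idempotent, $X\circ X=X$ and $Y\circ Y=Y$ (reflexivity gives $\geq$, transitivity gives $\leq$), together with associativity of relational composition and the adjunctions $-\circ r\dashv -\swarrow r$ and $s\circ-\dashv s\searrow-$ established before the statement.

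For (i) everything reduces to associativity: $(\psi\circ\phi)\circ X=\psi\circ(\phi\circ X)=\psi\circ\phi$ since $\phi\circ X=\phi$, and $Z\circ(\psi\circ\phi)=(Z\circ\psi)\circ\phi=\psi\circ\phi$ since $Z\circ\psi=\psi$. For the join in (ii) I invoke the quantaloid property of the preceding proposition, namely that composition preserves joins in each variable: $(\sup_i\phi_i)\circ X=\sup_i(\phi_i\circ X)=\sup_i\phi_i$, and symmetrically on the $Y$-side.

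The one point where the join argument does not transfer is the meet in (ii), since composition need not preserve meets; here I argue pointwise from monotonicity of $\with$. For fixed $x_1,x_2,y$ and each index $i$, monotonicity together with $\inf_j\phi_j(x_2,y)\leq\phi_i(x_2,y)$ gives
\[
X(x_1,x_2)\with\inf_j\phi_j(x_2,y)\leq X(x_1,x_2)\with\phi_i(x_2,y)\leq\phi_i(x_1,y),
\]
and taking the infimum over $i$ on the right yields the first distributor condition for $\inf_i\phi_i$; the second is symmetric. Conceptually, $\psi\mapsto Y\circ\psi\circ X$ is an idempotent, join-preserving closure operator on the complete lattice of $[0,1]$-relations whose fixed points are exactly the distributors, so these are automatically closed under pointwise joins and pointwise meets; the displayed computation just makes this closure property explicit.

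Part (iii) I would settle by transposing across the adjunctions rather than unravelling the infimum in the formula for $\swarrow$. To see that $\xi\swarrow\phi$ is a distributor $Y\oto Z$: for the $Y$-side, $(\xi\swarrow\phi)\circ Y\leq\xi\swarrow\phi$ is equivalent to $\bigl((\xi\swarrow\phi)\circ Y\bigr)\circ\phi\leq\xi$, and the left side equals $(\xi\swarrow\phi)\circ(Y\circ\phi)=(\xi\swarrow\phi)\circ\phi\leq\xi$ using $Y\circ\phi=\phi$ and the counit $(\xi\swarrow\phi)\circ\phi\leq\xi$; for the $Z$-side, $Z\circ(\xi\swarrow\phi)\leq\xi\swarrow\phi$ transposes to $\bigl(Z\circ(\xi\swarrow\phi)\bigr)\circ\phi\leq\xi$, and $Z\circ\bigl((\xi\swarrow\phi)\circ\phi\bigr)\leq Z\circ\xi=\xi$ since $\xi$ is a distributor. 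The claim for $\psi\searrow\xi$ is entirely dual, using $\psi\circ-\dashv\psi\searrow-$, the counit $\psi\circ(\psi\searrow\xi)\leq\xi$, and the unit laws $\xi\circ X=\xi$ (for the $X$-side) and $\psi\circ Y=\psi$ (for the $Y$-side). Essentially none of this is hard, as the whole proposition is formal quantaloid calculus; the two points that require a moment's care are the meet in (ii), where one must argue by monotonicity rather than pushing composition through the infimum, and keeping the variances straight in (iii) so as to feed in the correct unit law for each of the four conditions.
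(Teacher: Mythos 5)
Your proof is correct. The paper states Proposition \ref{Distcalcu} without proof, treating it as routine quantaloid calculus, and your verification — recasting the distributor axioms as the unit laws $\phi\circ X=\phi=Y\circ\phi$, then using associativity and join-preservation of composition for (i) and the joins in (ii), pointwise monotonicity of $\with$ for the meets, and transposition across the adjunctions $-\circ\phi\dashv-\swarrow\phi$ and $\psi\circ-\dashv\psi\searrow-$ for (iii) — is exactly the style of formal argument the paper itself uses in nearby results such as Lemma \ref{adjoint_arrow_calculation}, with every step checking out (including the care taken over which unit law, $\xi\circ X=\xi$ versus $\psi\circ Y=\psi$, feeds each of the four conditions in (iii)).
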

	
	With real-enriched categories as objects and distributors as morphisms we have a category, indeed a quantaloid \[[0,1]\text{-}{\bf Dist},\] called the category of  distributors.
	
	\begin{defn}Let $\phi\colon Y\oto X$ and $\psi\colon X\oto Y$ be a pair of distributors. We say   $\phi$ is right adjoint to   $\psi$,  or $\psi$ is left adjoint to $\phi$, and write $\psi\dashv\phi$, if $X\leq \phi\circ\psi$ and $\psi\circ\phi\leq Y$.  \end{defn}

	\begin{exmp}\label{graph adjoint to cograph} 
		For each  functor $f\colon X\lra Y$, the graph $f_*\colon X\oto Y$ is left adjoint to the cograph $f^*\colon Y\oto X$. 
	\end{exmp}
	
	The following lemma, contained in Heymans \cite[Proposition 2.3.4]{Heymans}, is very useful in the calculus of distributors.
	
	\begin{lem} \label{adjoint_arrow_calculation} 
		Suppose   $\psi\colon X\oto Y$ is left adjoint to $\phi\colon Y\oto X$.  
		\begin{enumerate}[label={\rm(\roman*)}] 
			\item For each distributor $\xi\colon Y\oto Z$, $\xi\circ \psi=\xi\swarrow \phi$. In particular, $\psi=Y\swarrow\phi$.
			
			\item  For each distributor $\lambda\colon W\oto Y$, $\phi\circ \lambda =\psi\searrow \lambda$. In particular, $\phi=\psi\searrow Y$. 
	\end{enumerate}  \end{lem}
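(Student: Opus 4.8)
The plan is to prove each of the two identities by two opposite inequalities, using only the quantaloid adjunctions recorded in item~(i) of the $[0,1]$-relation calculus (namely $s\leq t\swarrow r\iff s\circ r\leq t$ and $\psi\circ u\leq v\iff u\leq\psi\searrow v$), the hypotheses $X\leq\phi\circ\psi$ and $\psi\circ\phi\leq Y$ coming from $\psi\dashv\phi$, and the distributor identities $\mu\circ(\dom\mu)=\mu=(\cod\mu)\circ\mu$. The one external ingredient I will invoke is Proposition~\ref{Distcalcu}(iii), which guarantees that the auxiliary relations $\xi\swarrow\phi$ and $\psi\searrow\lambda$ are again distributors, so that the unit/counit of those relational adjunctions may be composed with the identity distributors without leaving the subcategory of distributors.

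For part~(i), fix a distributor $\xi\colon Y\oto Z$, so that both $\xi\circ\psi$ and $\xi\swarrow\phi$ are $[0,1]$-relations $X\rto Z$. To establish $\xi\circ\psi\leq\xi\swarrow\phi$ I would invoke the adjunction $-\circ\phi\dashv-\swarrow\phi$: it suffices to verify $(\xi\circ\psi)\circ\phi\leq\xi$, which follows from associativity of composition, the inequality $\psi\circ\phi\leq Y$, and the distributor identity $\xi\circ Y=\xi$. For the reverse inequality $\xi\swarrow\phi\leq\xi\circ\psi$, I would first use Proposition~\ref{Distcalcu}(iii) to write $\xi\swarrow\phi=(\xi\swarrow\phi)\circ X$, then bound $(\xi\swarrow\phi)\circ X\leq(\xi\swarrow\phi)\circ(\phi\circ\psi)=\bigl((\xi\swarrow\phi)\circ\phi\bigr)\circ\psi$ using $X\leq\phi\circ\psi$ and monotonicity of composition, and finally apply the counit $(\xi\swarrow\phi)\circ\phi\leq\xi$ (itself an instance of $s\circ\phi\leq t$ obtained from $s=t\swarrow\phi\leq t\swarrow\phi$). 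Combining gives $\xi\circ\psi=\xi\swarrow\phi$, and the special case $\psi=Y\swarrow\phi$ drops out by taking $\xi=Y$ and using $Y\circ\psi=\psi$.

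Part~(ii) is the exact mirror image, carried out with the adjunction $\psi\circ-\dashv\psi\searrow-$ in place of $-\circ\phi\dashv-\swarrow\phi$. For $\lambda\colon W\oto Y$ the inequality $\phi\circ\lambda\leq\psi\searrow\lambda$ reduces, via $\psi\circ u\leq v\iff u\leq\psi\searrow v$, to $\psi\circ(\phi\circ\lambda)=(\psi\circ\phi)\circ\lambda\leq Y\circ\lambda=\lambda$, using $\psi\circ\phi\leq Y$ and the distributor identity $Y\circ\lambda=\lambda$. For $\psi\searrow\lambda\leq\phi\circ\lambda$ I would again use Proposition~\ref{Distcalcu}(iii) to write $\psi\searrow\lambda=X\circ(\psi\searrow\lambda)$, bound it above by $(\phi\circ\psi)\circ(\psi\searrow\lambda)=\phi\circ\bigl(\psi\circ(\psi\searrow\lambda)\bigr)$ via $X\leq\phi\circ\psi$, and conclude with the unit $\psi\circ(\psi\searrow\lambda)\leq\lambda$. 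Taking $\lambda=Y$ and using $\phi\circ Y=\phi$ yields $\phi=\psi\searrow Y$.

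I do not anticipate a genuine obstacle here; the argument is a symmetric two-inequality chase. The only point demanding care is bookkeeping of the variances, i.e.\ matching the domains and codomains of $\phi$, $\psi$, $\xi$, $\lambda$ to the correct slots in $\swarrow$ and $\searrow$, and remembering to pass through Proposition~\ref{Distcalcu}(iii) so that inserting the identity distributors $X$ and $Y$ is legitimate. Everything else is associativity, monotonicity of composition, and the unit/counit inequalities of the two relational adjunctions.
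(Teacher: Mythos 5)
Your proof is correct and is essentially the paper's own argument: both directions of (i) are obtained exactly as in the paper, by using the adjunction $-\circ\phi\dashv-\swarrow\phi$ together with $\psi\circ\phi\leq Y$ for one inequality, and inserting $X\leq\phi\circ\psi$ followed by the counit $(\xi\swarrow\phi)\circ\phi\leq\xi$ for the other, with (ii) handled by the mirror adjunction $\psi\circ-\dashv\psi\searrow-$. The only cosmetic difference is that you spell out (ii) and the special cases explicitly, whereas the paper proves (i) and leaves the rest to symmetry.
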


	\begin{proof}  We prove (i)  for example. On the one hand, since $\xi\circ\psi\circ\phi \leq \xi\circ Y= \xi$, then $\xi\circ\psi\leq\xi\swarrow \phi$.  On the other hand, since  $(\xi\swarrow \phi)\circ\phi\circ\psi\leq\xi\circ\psi$, then $\xi\swarrow \phi\leq \xi\circ\psi$. 
	\end{proof}
	
	In particular, a distributor has at most one right (left, resp.) adjoint. So, we shall speak of \emph{the right (left, resp.) adjoint} of a distributor.
	
	\begin{prop}Suppose $f\colon X\lra Y$ and $g\colon Y\lra X$ are functors. The following  are equivalent:
		\begin{enumerate}[label={\rm(\arabic*)}] 
			\item  $f$ is left adjoint to $g$.
			\item   $f_*=g^*$.
			\item   As distributors, $g_*$ is left adjoint to  $f_*$.
			\item   As distributors, $g^*$ is left adjoint to   $f^*$.
	\end{enumerate} \end{prop}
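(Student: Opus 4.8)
The plan is to route all four conditions through condition (2), the single equality $f_* = g^*$, and to lean on two facts already in hand: that the graph of any functor is left adjoint (as a distributor) to its cograph (Example \ref{graph adjoint to cograph}), and that adjoint distributors are unique, as recorded right after Lemma \ref{adjoint_arrow_calculation}. With these in place almost no computation is needed.

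First I would settle (1)$\Leftrightarrow$(2) by merely unravelling definitions. Since $f_*(x,y) = Y(f(x),y)$ and $g^*(x,y) = X(x,g(y))$, the equality of distributors $f_* = g^*$ holds exactly when $Y(f(x),y) = X(x,g(y))$ for all $x\in X$ and $y\in Y$, which is precisely the defining condition of $f\dashv g$. Nothing beyond substitution enters here. For (2)$\Leftrightarrow$(3) I would invoke that $g_*$ is always left adjoint to $g^*$. If $f_* = g^*$, then substituting gives $g_*\dashv f_*$, which is (3); conversely, if $g_*\dashv f_*$, then $f_*$ is \emph{a} right adjoint of $g_*$, and since $g_*\dashv g^*$ exhibits $g^*$ as another one, uniqueness of right adjoints forces $f_* = g^*$. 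The equivalence (2)$\Leftrightarrow$(4) is entirely dual: I would use that $f_*$ is always left adjoint to $f^*$, so that $f_* = g^*$ makes $g^*$ a left adjoint of $f^*$, while conversely $g^*\dashv f^*$ together with $f_*\dashv f^*$ and uniqueness of left adjoints yields $g^* = f_*$.

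Because the substantive work (the graph--cograph adjunction and uniqueness of adjoints) has already been carried out, there is no genuine obstacle. The only point demanding care is the bookkeeping of source and target: one must track that $g_*\colon Y\oto X$ and $f_*\colon X\oto Y$ compose in the order the definition of adjoint distributors requires, and likewise for $g^*,f^*$, so that each composite lands in the right hom-object and the inequalities $X\leq\phi\circ\psi$, $\psi\circ\phi\leq Y$ are being checked on the correct objects. Once the typing is confirmed, every implication is a one-line appeal to the stated example or to uniqueness.
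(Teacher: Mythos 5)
Your proof is correct, and since the paper states this proposition without proof, your argument is exactly the intended one: it uses only the two results the paper places immediately before the statement, namely the graph--cograph adjunction $h_*\dashv h^*$ (Example \ref{graph adjoint to cograph}) and the uniqueness of adjoint distributors following Lemma \ref{adjoint_arrow_calculation}, together with the observation that $f_*=g^*$ is a literal restatement of $Y(f(x),y)=X(x,g(y))$. The typing bookkeeping you flag ($g_*\colon Y\oto X$, $f_*\colon X\oto Y$, etc.) is indeed the only place to be careful, and you have it right.
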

	
	Suppose $X$ is a real-enriched category. A \emph{weight} of $X$  is defined to be a distributor $\phi\colon X\oto \star$ from $X$ to the terminal real-enriched category $\star$. In other words, a weight of $X$ is a functor $\phi\colon X^{\rm op}\lra \sV$, where $\sV=([0,1],\alpha_L)$.  Weights of $X$ constitute a real-enriched category $\mathcal{P}X$ with $$\mathcal{P}X(\phi_1,\phi_2) =\phi_2\swarrow\phi_1=\sub_X(\phi_1,\phi_2) .$$ 
	
	$$\bfig \ptriangle(0,0)/>`>`>/<500,500>[X`\star`\star;\phi_1`\phi_2`\phi_2\swarrow \phi_1]
	\place(230,500)[\circ] \place(250,250)[\circ] \place(0,275)[\circ] 
	\efig$$
	
	Dually, a distributor of the form $\star\oto X$ is called a \emph{coweight} of $X$. In other words, a coweight of $X$ is  a  functor $\psi\colon X\lra \sV$, where $\sV=([0,1],\alpha_L)$. 
	All coweights of $X$ constitute a real-enriched category $\CPd X$ with $$\CPd X(\psi_1,\psi_2) =\psi_2\searrow\psi_1=\sub_X(\psi_2,\psi_1) .$$     $$\bfig \qtriangle(0,0)/>`<-`<-/<500,500>[\star`X`\star;\psi_2`\psi_2\searrow \psi_1`\psi_1]
	\place(250,500)[\circ] \place(250,250)[\circ] \place(500,265)[\circ]  
	\efig$$
	
	\begin{con} Every functor $f\colon X\lra \sV$ determines a distributor  $\star\oto X, ~(\star,x)\mapsto f(x)$; conversely, every distributor  $\psi\colon\star\oto X$ determines a functor $X\lra\sV,~x\mapsto\psi(\star,x)$.  So, sometimes we'll view a functor $X\lra \sV$ as a distributor $\star\oto X$, and vice versa. To avoid confusion of notations, we make an agreement here. \begin{enumerate}[label={\rm(\roman*)}] 
			\item For each functor $f\colon X \lra\sV$,   $f^\sharp$ denotes the   distributor $\star\oto X, ~(\star,x)\mapsto f(x)$. For each distributor $\psi\colon \star\oto X$,   $\psi^\sharp$   denotes the corresponding functor $X \lra\sV,~x\mapsto\psi(\star,x)$. 
			\item For each functor $g\colon X^{\rm op}\lra\sV$,   $g^\sharp$ denotes the   distributor $X\oto\star,~(x,\star)\mapsto g(x)$. For each distributor $\phi\colon X\oto\star$,   $\phi^\sharp$   denotes the  corresponding functor $X^{\rm op}\lra\sV,x\mapsto\phi(x,\star) $.   \end{enumerate} \end{con} 
	
	\begin{rem}For each real-enriched category $X$, the underlying order of $\CP X$ coincides with the (pointwise) order on $[0,1]$-${\bf Dist}(X,\star)$, while the underlying order of  $\CPd  X$ is the reverse order  on  $[0,1]$-${\bf Dist}(\star,X)$. That is to say, for any weights $\phi_1,\phi_2\colon X\oto\star $,  \[\phi_1\sqsubseteq\phi_2~~ {\rm in}~~ (\CP X)_0\iff  \forall x\in X, ~\phi_1(x) \leq\phi_2 (x);  
		\] while for any coweights $\psi_1,\psi_2\colon \star\oto X$,  \[\psi_1\sqsubseteq\psi_2~~ {\rm in}~~ (\CPd  X)_0\iff \forall x\in X,~ \psi_1(x) \geq\psi_2(x). 
		\]  
	\end{rem}
	
	Suppose $X,Y$ are real-enriched categories, $f\colon X\lra Y$ is a functor. The map \[f^{-1}\colon\CP  Y\lra \CP X,\quad f^{-1}(\gamma)=\gamma\circ f_*\] is   readily verified to be a functor. The functor $f^{-1}$ has at the same time a left adjoint $f_\exists$ and a right adjoint $f_\forall$. The left adjoint is given by   \[f_\exists\colon\CP  X\lra\CP Y, \quad f_\exists(\phi)=\phi\circ f^*;  \] the  right adjoint is given by $$f_\forall\colon \CP X\lra \CP Y, \quad f_\forall(\phi)=  \phi\swarrow f_*.$$  In the vocabulary of category theory,  $f_\exists$ and  $f_\forall$ are called the left and the right Kan extension of $f$, respectively.
	
	Dually, the functor \[f^{-1}\colon\CPd  Y\lra \CPd X,\quad f^{-1}(\mu)=f^*\circ\mu \] also has  a left adjoint $f^\dag_\forall$  and a right adjoint $f^\dag_\exists$. Before spelling out the left adjoint and the right adjoint, we would like to remind the reader that the underlying order of $\CPd X$ is opposite to the pointwise order of coweights. The left adjoint of $f^{-1}$ is     \[f^\dag_\forall\colon\CPd  X\lra\CPd Y, \quad f^\dag_\forall(\psi)=f^*\searrow\psi;  \] the  right adjoint is   $$f^\dag_\exists\colon \CPd X\lra \CPd Y, \quad f^\dag_\exists(\psi)= f_*\circ \psi.$$    
	
	\begin{prop}\label{left and right kan} For each functor $f\colon X\lra Y$,  we have $$ f_\exists\dashv f^{-1}\dashv f_\forall\colon \CP X\lra\CP Y$$ and $$f^\dag_\forall\dashv f^{-1}\dashv f^\dag_\exists\colon \CPd X\lra \CPd Y.$$  \end{prop}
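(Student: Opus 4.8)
The plan is to reduce everything to the single adjunction $f_*\dashv f^*$ between the graph and the cograph of $f$ (Example \ref{graph adjoint to cograph}), together with the exchange identities of the distributor calculus and the criterion of Theorem \ref{Characterization of adjoints}. Recall that to establish an adjunction $h\dashv k$ in $\QOrd$ it suffices, by Theorem \ref{Characterization of adjoints}, to verify the single numerical identity $B(h(a),b)=A(a,k(b))$ for all $a,b$; functoriality of $h$ and $k$ then comes for free. So for each of the four claims I only have to match two hom-values, and I need not separately check that $f_\exists,f^{-1},f_\forall,f^\dag_\forall,f^\dag_\exists$ are functors.

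First I would record two consequences of $f_*\dashv f^*$ drawn from Lemma \ref{adjoint_arrow_calculation}: for every weight $\gamma\colon Y\oto\star$ one has $\gamma\circ f_*=\gamma\swarrow f^*$ (part (i), with $\xi=\gamma$), and for every coweight $\mu\colon\star\oto Y$ one has $f^*\circ\mu=f_*\searrow\mu$ (part (ii), with $\lambda=\mu$). These are the only places where the adjointness of the graph and cograph actually enters.

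Next, for the presheaf string I would compute the two relevant hom-values using the exchange law $t\swarrow(s\circ r)=(t\swarrow r)\swarrow s$. For $f_\exists\dashv f^{-1}$, writing $f_\exists(\phi)=\phi\circ f^*$, the hom $\CP Y(f_\exists(\phi),\gamma)=\gamma\swarrow(\phi\circ f^*)$ rearranges to $(\gamma\swarrow f^*)\swarrow\phi$, which equals $(\gamma\circ f_*)\swarrow\phi=\CP X(\phi,f^{-1}(\gamma))$ by the first identity above. For $f^{-1}\dashv f_\forall$, with $f_\forall(\phi)=\phi\swarrow f_*$, the hom $\CP X(f^{-1}(\gamma),\phi)=\phi\swarrow(\gamma\circ f_*)$ rearranges directly to $(\phi\swarrow f_*)\swarrow\gamma=\CP Y(\gamma,f_\forall(\phi))$, needing no more than the exchange law. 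Theorem \ref{Characterization of adjoints} then delivers both adjunctions.

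The copresheaf string is handled in the same spirit, now using the dual exchange law $(s\circ r)\searrow t=r\searrow(s\searrow t)$ and keeping in mind that $\CPd X(\psi_1,\psi_2)=\psi_2\searrow\psi_1$; since adjunctions in $\QOrd$ are tested on these enriched hom-values rather than on the underlying orders, the reversal of the underlying order on $\CPd X$ causes no trouble. For $f^\dag_\forall\dashv f^{-1}$ both sides reduce to $\mu\searrow(f^*\searrow\psi)$, and for $f^{-1}\dashv f^\dag_\exists$ both sides reduce to $\psi\searrow(f_*\searrow\mu)$ after substituting $f^*\circ\mu=f_*\searrow\mu$. Alternatively, the whole copresheaf string is just the presheaf string for $f^{\mathrm{op}}\colon X^{\mathrm{op}}\lra Y^{\mathrm{op}}$ read through the identification of coweights of $X$ with weights of $X^{\mathrm{op}}$, with left and right adjoints interchanged by the order reversal. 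I expect the only real friction to be bookkeeping: correctly tracking the domains and codomains of the distributors so that the exchange laws apply with the right bracketing, and respecting the reversed order convention on $\CPd$. There is no genuine mathematical obstacle once $f_*\dashv f^*$ and the calculus identities are in hand.
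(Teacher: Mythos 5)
Your proof is correct: each of the four hom-identities you compute is valid (the types of the distributors all match, the exchange laws $t\swarrow(s\circ r)=(t\swarrow r)\swarrow s$ and $(s\circ r)\searrow t=r\searrow(s\searrow t)$ are applied with the right bracketing, and the two consequences of $f_*\dashv f^*$ drawn from Lemma \ref{adjoint_arrow_calculation} are exactly what is needed), and Theorem \ref{Characterization of adjoints} does let you conclude functoriality and adjointness simultaneously from those identities. The paper states this proposition without proof, asserting the adjunctions as readily verified consequences of the definitions, and your verification via $f_*\dashv f^*$, Lemma \ref{adjoint_arrow_calculation} and the exchange laws of the distributor calculus is precisely the intended argument, so there is nothing to add.
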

	
	\begin{prop} \label{left and right kan extension} Suppose $f\colon X\lra Y$ is a fully faithful functor.  
		\begin{enumerate}[label={\rm(\roman*)}] 
			\item For all $\phi\in\CP X$, $f^{-1}\circ f_\exists(\phi) =\phi =f^{-1}\circ f_\forall(\phi)$.  \item For all $\psi\in\CPd X$, $f^{-1}\circ f^\dag_\exists(\psi) =\psi =f^{-1}\circ f^\dag_\forall(\psi)$. 
	\end{enumerate}\end{prop}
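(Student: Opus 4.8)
The plan is to reduce all four equalities to three ingredients: the characterization of full faithfulness as the identity $f^*\circ f_*=X$ (recorded just before Proposition \ref{Distcalcu}), the unit laws of the quantaloid $[0,1]\text{-}{\bf Dist}$, and the adjunction $f_*\dashv f^*$ of Example \ref{graph adjoint to cograph} together with Lemma \ref{adjoint_arrow_calculation}. Everything is then pure distributor calculus, and the two statements involving $f_\forall$ and $f^\dag_\forall$ are exact duals of those involving $f_\exists$ and $f^\dag_\exists$.

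For the $\exists$-statements I would simply unfold the definitions and reassociate. For $\phi\in\CP X$,
\[f^{-1}(f_\exists(\phi)) = (\phi\circ f^*)\circ f_* = \phi\circ(f^*\circ f_*) = \phi\circ X = \phi,\]
using associativity of composition, the full faithfulness identity $f^*\circ f_*=X$, and the unit law $\phi\circ X=\phi$. Dually, for $\psi\in\CPd X$,
\[f^{-1}(f^\dag_\exists(\psi)) = f^*\circ(f_*\circ\psi) = (f^*\circ f_*)\circ\psi = X\circ\psi = \psi.\]
These two require nothing beyond associativity and the unit laws, and do not even need Lemma \ref{adjoint_arrow_calculation}.

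The $\forall$-statements carry the only real content: there the composite mixes a composition with a residuation, so before I can cancel $f^*\circ f_*$ against $X$ I must first trade the residuation for a composition (or vice versa). This is exactly what Lemma \ref{adjoint_arrow_calculation} supplies for the pair $f_*\dashv f^*$. For $f_\forall$ I would write
\[f^{-1}(f_\forall(\phi)) = (\phi\swarrow f_*)\circ f_* = (\phi\swarrow f_*)\swarrow f^* = \phi\swarrow(f^*\circ f_*) = \phi\swarrow X = \phi,\]
where the second equality is Lemma \ref{adjoint_arrow_calculation}(i) (turning $-\circ f_*$ into $-\swarrow f^*$), the third is property (iii), $t\swarrow(s\circ r)=(t\swarrow r)\swarrow s$, in the list of basic properties of the $[0,1]$-relation calculus, and the last is the residuated unit law $\phi\swarrow X=\phi$. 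The computation for $f^\dag_\forall$ is the precise dual,
\[f^{-1}(f^\dag_\forall(\psi)) = f^*\circ(f^*\searrow\psi) = f_*\searrow(f^*\searrow\psi) = (f^*\circ f_*)\searrow\psi = X\searrow\psi = \psi,\]
now invoking Lemma \ref{adjoint_arrow_calculation}(ii) and property (iv), $(s\circ r)\searrow t=r\searrow(s\searrow t)$.

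The only steps needing a word of justification are the residuated unit laws $\phi\swarrow X=\phi$ and $X\searrow\psi=\psi$. I would either verify them directly (for $\phi\swarrow X$, the inequality $\leq$ is the instance $x=y$ of the defining infimum, while $\geq$ is precisely condition (i) in the definition of a distributor applied to $\phi$), or observe that they are instances of Lemma \ref{adjoint_arrow_calculation} applied to the self-adjunction $X\dashv X$ (legitimate since $X\circ X=X$). I expect no genuine obstacle: the whole content is that full faithfulness lets one cancel $f^*\circ f_*$, and the single mild subtlety is the bookkeeping observation that in the $\forall$ cases this cancellation can only be exposed after the adjoint-arrow lemma has converted a composition into a residuation.
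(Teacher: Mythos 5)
Your proposal is correct and follows essentially the same route as the paper: both establish the $\exists$-equalities by cancelling $f^*\circ f_*=X$ directly, and both handle the $\forall$-equalities by first invoking Lemma \ref{adjoint_arrow_calculation} for the adjunction $f_*\dashv f^*$ to convert $(\phi\swarrow f_*)\circ f_*$ into $(\phi\swarrow f_*)\swarrow f^*$, then reassociating the residuation and cancelling. Your extra verification of the residuated unit laws $\phi\swarrow X=\phi$ and $X\searrow\psi=\psi$, and your explicit write-out of part (ii) (which the paper dismisses as ``Similar''), are harmless additional detail.
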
 
	
	\begin{proof}  
		(i) Since $f$ is fully faithful,  $f^*\circ f_*=X$, then  for all $\phi\in\CP X$, $$f^{-1}\circ f_\exists(\phi)=\phi\circ f^*\circ f_*=\phi$$ and \begin{align*} f^{-1}\circ f_\forall(\phi)&=(\phi\swarrow f_*)\circ f_*\\ &= (\phi\swarrow f_*)\swarrow f^* &(f_*\dashv f^*)\\ &= \phi\swarrow(f^*\circ f_*)\\ &=\phi.  \end{align*} 
		
		(ii) Similar. \end{proof} 
	
	Assigning to each $f\colon X\lra Y$ the functor  $$\CP f\coloneqq f_\exists\colon \CP X\lra\CP Y$$  defines a functor $$\CP\colon\QOrd\lra\QOrd, $$ called the \emph{presheaf functor}.\footnote{The terminology comes from the fact that,  in category theory, a contravariant functor from a category   to the category of sets is called a presheaf of that category.}   This functor plays a central role in the theory of real-enriched categories. 
	Dually,  we have a \emph{copresheaf functor} $$\CPd\colon\QOrd\lra\QOrd $$ that maps $f\colon X\lra Y$ to   \[\CPd f\coloneqq f_\exists^\dag\colon\CPd  X\lra\CPd  Y,  \quad \psi\mapsto f_*\circ\psi.  \]   
	
	For an element $a$ of a real-enriched category $X$, write $\sy(a)$ for the weight  \[  X\oto\star, \quad x\mapsto X(x,a); \]   write $\syd(a)$ for the coweight \[ \star\oto X, \quad x\mapsto X(a,x).\] Weights of the form $\sy(a)$  and coweights of the form $\syd(a)$ are said to be  \emph{representable}.  
	As distributors, the representable coweight $\syd(a)\colon \star\oto X$ is left adjoint to the representable weight $\sy(a)\colon X\oto\star$.  
	
	The following lemma, a special case of the Yoneda lemma in the theory of enriched categories, implies that for each real-enriched category $X$, both of   \[\sy\colon X\lra\CP X, \quad a\mapsto \sy(a)\] and \[\syd\colon X\lra\CPd  X, \quad a\mapsto \syd(a)\] are fully faithful functors.  
	
	\begin{lem}[Yoneda lemma] Suppose $X$ is a real-enriched category and $a\in X$. 
		\begin{enumerate}[label={\rm(\roman*)}] 
			\item For each   $\phi\in\CP X$, $\CP X(\sy(a),\phi)=\phi(a)$.  
			\item   For each   $\psi\in\CPd  X$, $\CPd  X(\psi,\syd(a))=\psi(a)$. 
	\end{enumerate}   \end{lem}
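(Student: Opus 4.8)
The plan is to reduce both statements to a direct computation with the operations $\swarrow$ and $\searrow$, using only the defining inequalities of weights and coweights together with the adjunction $x\with y\le z\iff y\le x\ra z$ from the theory of continuous t-norms. I would prove part (i) first and obtain part (ii) by dualization.

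For part (i), I would begin by unwinding the left-hand side. By the definition of the hom of $\CP X$ we have $\CP X(\sy(a),\phi)=\phi\swarrow\sy(a)=\sub_X(\sy(a),\phi)$, and since $\sy(a)(x)=X(x,a)$ this equals $\inf_{x\in X}\bigl(X(x,a)\ra\phi(x)\bigr)$. Thus the task is to prove
\[ \inf_{x\in X}\bigl(X(x,a)\ra\phi(x)\bigr)=\phi(a). \]
The inequality $\le$ follows by evaluating the infimum at the single index $x=a$: since $X(a,a)=1$ and $1\ra\phi(a)=\phi(a)$, the term at $x=a$ is exactly $\phi(a)$, so the infimum cannot exceed it. For the reverse inequality $\ge$, I would use that $\phi$, being a weight, is by definition a distributor $X\oto\star$ and hence satisfies $\phi(x_2)\with X(x_1,x_2)\le\phi(x_1)$. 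Taking $x_2=a$ and $x_1=x$ gives $\phi(a)\with X(x,a)\le\phi(x)$, which by the $\with\dashv\ra$ adjunction is equivalent to $\phi(a)\le X(x,a)\ra\phi(x)$. Since this holds for every $x\in X$, it follows that $\phi(a)\le\inf_{x\in X}\bigl(X(x,a)\ra\phi(x)\bigr)$, completing part (i).

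Part (ii) is entirely dual. Here $\CPd X(\psi,\syd(a))=\syd(a)\searrow\psi=\sub_X(\syd(a),\psi)=\inf_{x\in X}\bigl(X(a,x)\ra\psi(x)\bigr)$, using $\syd(a)(x)=X(a,x)$ and the reversed convention for the hom of $\CPd X$. Evaluating at $x=a$ again gives the $\le$ inequality, while the defining inequality of the coweight $\psi$, namely $X(y_1,y_2)\with\psi(y_1)\le\psi(y_2)$ with $y_1=a$ and $y_2=x$, yields $X(a,x)\with\psi(a)\le\psi(x)$ and hence $\psi(a)\le X(a,x)\ra\psi(x)$ for all $x$. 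I expect no genuine obstacle in this argument; the only step requiring care is the bookkeeping — correctly matching the two directions of the distributor axiom to the two inequalities, and keeping track of the order-reversal built into the hom of $\CPd X$ (recorded in the Remark on underlying orders) so that the roles of $\psi$ and $\syd(a)$ in $\searrow$ are not inadvertently swapped.
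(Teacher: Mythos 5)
Your proof is correct, but it follows a different route from the paper's. The paper recognizes $\phi(a)$ as the composite distributor $\phi\circ\syd(a)$, invokes the adjunction $\syd(a)\dashv\sy(a)$ between representable coweights and weights, and then applies Lemma \ref{adjoint_arrow_calculation} (calculation with adjoint distributors) to get $\phi\circ\syd(a)=\phi\swarrow\sy(a)=\CP X(\sy(a),\phi)$ in one line, with part (ii) handled symmetrically. You instead unwind the hom of $\CP X$ to the pointwise formula $\inf_{x\in X}\bigl(X(x,a)\ra\phi(x)\bigr)$ and prove the two inequalities bare-hands: evaluation at $x=a$ with $X(a,a)=1$ for one direction, and the distributor axiom $\phi(a)\with X(x,a)\le\phi(x)$ plus the residuation adjunction for the other; this is essentially an inline re-proof of the relevant instance of Lemma \ref{adjoint_arrow_calculation}. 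What the paper's approach buys is brevity and reusability — the same adjoint-distributor lemma is cited repeatedly elsewhere (e.g.\ in Lemma \ref{X is isomorphic to CX} and Proposition \ref{absolute sup}); what yours buys is self-containedness and transparency, since it needs nothing beyond the definitions of weight, coweight, $\sub_X$, and the adjunction $x\with-\dashv x\ra-$ in $[0,1]$. Your attention to the order-reversal in $\CPd X$ (so that $\CPd X(\psi,\syd(a))=\sub_X(\syd(a),\psi)$) is exactly the bookkeeping point where such a computation could go wrong, and you handled it correctly.
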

	
	\begin{proof} (i) Since \[\phi(a)=\sup_{x\in X}\phi(x)\with X(a,x)=\phi\circ\syd(a)\] and $\sy(a)$ is right adjoint to $\syd(a)$,    by Lemma \ref{adjoint_arrow_calculation} we have \[\phi(a)=\phi\circ\syd(a) =\phi\swarrow \sy(a)=\CP X(\sy(a),\phi).\]
		
		(ii) Since $\psi(a)= \sy(a)\circ \psi$ and $\syd(a)$ is left adjoint to $\sy(a)$,   by Lemma \ref{adjoint_arrow_calculation} we have \begin{align*}\psi(a)&= \sy(a)\circ \psi=\syd(a)\searrow\psi =\CPd  X(\psi,\syd(a)).\qedhere\end{align*}
	\end{proof}
	
	From the Yoneda lemma one infers that for each real-enriched category $X$, the functor $\sy\colon X\lra \CP X$ is fully faithful, so it is called   the \emph{Yoneda embedding}.
	Dually, the fully faithful functor  $\syd\colon X\lra \CPd X$ is called the \emph{coYoneda embedding}. 
	
	\begin{prop}\label{kan for yoneda} Suppose $X$ is a real-enriched category. 
		\begin{enumerate}[label={\rm(\roman*)}] 
			\item  $(\sy_X)_\forall=\sy_{\CP X}$, i.e. $(\sy_X)_\forall(\phi)=\CP X(-,\phi)$   for all $\phi\in\CP X$. 
			\item  $(\syd_X)^\dag_\forall=\syd_{\CPd X}$, i.e. $(\syd_X)^\dag_\forall(\psi)=\CPd X(\psi,-)$   for all $\psi\in\CPd X$. 
	\end{enumerate} \end{prop}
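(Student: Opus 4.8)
The plan is to prove both statements by direct computation: unfold the explicit formulas for the Kan extensions $(\sy_X)_\forall$ and $(\syd_X)^\dag_\forall$, evaluate the graph and cograph of the (co)Yoneda embedding via the Yoneda lemma, and recognize the resulting expression as a representable (co)weight.

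For (i), recall that the right Kan extension along $f$ is computed by $f_\forall(\phi)=\phi\swarrow f_*$. Taking $f=\sy_X\colon X\lra\CP X$, the first step is to evaluate its graph: by definition $(\sy_X)_*(x,\mu)=\CP X(\sy_X(x),\mu)$, and the Yoneda lemma (part (i)) rewrites this as $\mu(x)$, so that $(\sy_X)_*$ is the ``evaluation'' distributor $(x,\mu)\mapsto\mu(x)$. Substituting into the defining formula $(t\swarrow r)(y,z)=\inf_{x}(r(x,y)\ra t(x,z))$ with $t=\phi$ and $r=(\sy_X)_*$, I obtain, for each $\mu\in\CP X$,
\[(\sy_X)_\forall(\phi)(\mu)=\inf_{x\in X}(\mu(x)\ra\phi(x))=\sub_X(\mu,\phi).\]
Since $\CP X(\mu,\phi)=\sub_X(\mu,\phi)$ by the definition of the presheaf hom, the right-hand side is exactly $\sy_{\CP X}(\phi)(\mu)$, giving $(\sy_X)_\forall(\phi)=\sy_{\CP X}(\phi)$.

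Part (ii) I would treat dually, using $f^\dag_\forall(\psi)=f^*\searrow\psi$ for $f=\syd_X$. Here the cograph satisfies $(\syd_X)^*(\mu,x)=\CPd X(\mu,\syd_X(x))=\mu(x)$ by the Yoneda lemma (part (ii)), and unwinding the definition of $\searrow$ yields $(\syd_X)^\dag_\forall(\psi)(\mu)=\inf_{x}(\mu(x)\ra\psi(x))=\sub_X(\mu,\psi)$. Recalling $\CPd X(\psi,\mu)=\sub_X(\mu,\psi)$, this equals $\syd_{\CPd X}(\psi)(\mu)$, as desired.

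I do not expect a conceptual obstacle; the entire content is the Yoneda-lemma evaluation of the graph and the cograph. The one place to be careful is the bookkeeping of variances: one must match the source and target of each distributor correctly, select the right member of the adjoint string ($f_\forall$ versus $f^\dag_\forall$, hence $\swarrow$ versus $\searrow$), and respect that the underlying order of $\CPd X$ is the reverse of the pointwise order, so that in part (ii) the final $\sub_X$ appears with its arguments in the order $\sub_X(\mu,\psi)$ rather than $\sub_X(\psi,\mu)$.
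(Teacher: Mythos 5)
Your proof is correct and follows essentially the same route as the paper's: both unfold the Kan-extension formula ($\phi\swarrow(\sy_X)_*$ in (i), $(\syd_X)^*\searrow\psi$ in (ii)), evaluate the graph/cograph via the Yoneda lemma to get the pointwise formula $\inf_{x}(\mu(x)\ra\phi(x))$, and recognize it as the hom of $\CP X$ (resp.\ $\CPd X$). The only difference is organizational — the paper proves (i) and leaves (ii) to duality, while you spell out both, with the variance bookkeeping in (ii) handled correctly.
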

	
	\begin{proof} We prove (i) for example. For all $\gamma\in \CP X$, \begin{align*}\sy_\forall(\phi)(\gamma) &= (\phi\swarrow\sy_*)(\gamma)\\ &= \bw_{x\in X} (\CP X(\sy(x),\gamma)\ra\phi(x))\\ &= \bw_{x\in X}(\gamma(x)\ra\phi(x))  
			\\ &=\CP  X(\gamma,\phi). \qedhere \end{align*}
	\end{proof}
	
	Yoneda embeddings constitute a natural transformation $$\sy\colon{\rm id}\lra\CP$$ from the identity functor   to the presheaf functor $\CP$. Dually, coYoneda embeddings constitute a natural transformation $$\syd\colon{\rm id}\lra\CPd$$ from the identity functor to the copresheaf functor $\CPd$.

	There exist natural connections between  $\CP X$ and $\CPd X$. We present two examples here. The first is the Isbell adjunction. 
	For   $\phi\in\CP X$ and $\psi\in\CPd  X$, let $$\ub\phi=X\swarrow\phi, \quad \lb\psi=\psi\searrow X.$$    $$\bfig \ptriangle(0,0)/>`>`>/<500,500>[X`\star`X;\phi`X`X\swarrow \phi] \place(250,500)[\circ] \place(250,250)[\circ] \place(0,250)[\circ]   
	\qtriangle(1000,0)/>`<-`<-/<500,500>[\star`X`X;\psi`\psi\searrow X`X] \place(1250,500)[\circ] \place(1250,250)[\circ] \place(1500,250)[\circ]  
	\efig$$  
	
	\begin{prop}[Isbell adjunction]\label{prop.Isbell adjunction} The functor  $\ub\colon \CP X\lra\CPd  X $ is left adjoint to the functor $\lb\colon \CPd  X\lra\CP X$.   \end{prop}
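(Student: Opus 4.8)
The plan is to reduce the statement to the exchange law (v) in the $[0,1]$-relation calculus listed just before the definition of real-enriched category. By Theorem \ref{Characterization of adjoints}, to establish $\ub\dashv\lb$ it is enough to verify the single hom-equality
$$\CPd X(\ub\phi,\psi)=\CP X(\phi,\lb\psi)\qquad\text{for all }\phi\in\CP X,\ \psi\in\CPd X;$$
indeed condition (3) of that theorem (with $f=\ub$, $g=\lb$, domain $\CP X$ and codomain $\CPd X$) is exactly such an equality, and the implication $(3)\Rightarrow(1)$ then yields for free both that $\ub$ and $\lb$ are functors and that they form an adjunction, so I need not check functoriality separately.

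First I would unfold both sides. Recall that a weight is a distributor $\phi\colon X\oto\star$ and a coweight a distributor $\psi\colon\star\oto X$, that $\CP X(\phi_1,\phi_2)=\phi_2\swarrow\phi_1$ and $\CPd X(\psi_1,\psi_2)=\psi_2\searrow\psi_1$, and that by definition $\ub\phi=X\swarrow\phi$ and $\lb\psi=\psi\searrow X$. Substituting these gives
$$\CPd X(\ub\phi,\psi)=\psi\searrow(X\swarrow\phi),\qquad \CP X(\phi,\lb\psi)=(\psi\searrow X)\swarrow\phi.$$
Along the way I would note, via Proposition \ref{Distcalcu}(iii), that $X\swarrow\phi$ is genuinely a coweight $\star\oto X$ and $\psi\searrow X$ a weight $X\oto\star$, so that the two expressions above are well-typed elements of $\CPd X$ and $\CP X$ respectively.

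The remaining step is to observe that these two expressions coincide. This is precisely the instance of the calculus identity (v), $(s\searrow t)\swarrow r=s\searrow(t\swarrow r)$, obtained by taking $r=\phi\colon X\rto\star$, $s=\psi\colon\star\rto X$ and $t=X\colon X\rto X$; the typing $r\colon X\rto Y$, $s\colon W\rto Z$, $t\colon X\rto Z$ required by (v) is satisfied with $Y=\star$, $W=\star$ and $Z=X$. I expect the only real care needed to be this bookkeeping of domains and codomains, making sure the roles of $X$ (as hom-distributor) and of the symbols $\swarrow,\searrow$ are matched correctly to (v); once the types line up the equality is immediate and, combined with the reduction above, the proof is complete.
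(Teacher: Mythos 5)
Your proposal is correct and is essentially the paper's own proof: the paper also establishes the adjunction by the single computation $\CPd X(\ub\phi,\psi)=\psi\searrow(X\swarrow\phi)=(\psi\searrow X)\swarrow\phi=\CP X(\phi,\lb\psi)$, which is exactly your instance of identity (v). You merely make explicit what the paper leaves implicit, namely the appeal to Theorem \ref{Characterization of adjoints}\thinspace(3) for functoriality and the well-typedness via Proposition \ref{Distcalcu}\thinspace(iii).
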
 \begin{proof} Because \begin{align*}\CPd  X(\ub\phi,\psi)&=\psi\searrow(X\swarrow\phi)=(\psi\searrow X)\swarrow\phi=\CP X(\phi,\lb\psi).\qedhere \end{align*} \end{proof}
	
	The second is the distributor $$\cpt\colon \CPd X\oto\CP X $$ that assigns to each pair  $(\psi,\phi)\in\CPd X\times\CP X$ the real number $\phi\circ\psi$.    
	A close look at this distributor in a special case may help us understand its role in the theory of real-enriched categories. Let $X$ be a set, viewed as a discrete real-enriched category. Then, $\CP X$ is $([0,1]^X,\sub_X)$ and $\CPd X$ is $([0,1]^X,\sub_X^{\rm op})$.   For each $\phi\in[0,1]^X$, the functor $$\phi\cpt-\colon (\CPd X)^{\rm op}\lra\sV$$ obtained by fixing the second argument of $\cpt\colon \CPd X\oto\CP X$ turns out to be the functor $$\phi\cpt-\colon([0,1]^X,\sub_X)\lra ([0,1],\alpha_L), \quad \psi\mapsto  \sup_{x\in X}\phi(x)\with\psi(x).$$ Intuitively, the value $\phi\cpt\psi=\sup_{x\in X}\phi(x)\with\psi(x)$ 
	measures the truth degree that $\phi$ and $\psi$ have a common point.  
	
	Suppose $\phi$ is a weight and $\psi$ is a coweight of a real-enriched category $X$. For all $p\in[0,1]$,  write $\psi\ra p$ for the weight $\star_p\swarrow\psi$ and write $\phi\ra p$ for the coweight $\phi\searrow\star_p$ of $X$, where $\star_p$ denotes the distributor $\star\oto\star$ with value $p$. 
	
	\begin{lem}\label{sub vs tensor}
		Let $X$ be a real-enriched category.  \begin{enumerate}[label={\rm(\roman*)}] 
			\item For all $\phi\in\CP X$,  $\psi\in\CPd X$ and  $p\in[0,1]$,
			$\CP X(\phi,  \psi\ra p)= \phi\cpt\psi\ra p$. 
			\item For all $\phi\in\CP X$ and  $\psi\in\CPd X$, $\phi\cpt\psi=\inf_{p\in [0,1]}(\CP X(\phi,\psi\ra p)\ra p)$. 
			\item For all $\phi_1,\phi_2\in\CP X$, $\CP X(\phi_1,\phi_2)=\inf_{p\in [0,1]}((\phi_1\cpt(\phi_2\ra p)) \ra p)$. 
		\end{enumerate}
	\end{lem}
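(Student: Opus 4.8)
The plan is to reduce all three statements to pointwise formulas and then apply two standard residuation laws in the quantale $([0,1],\with,1)$. First I would unravel the definitions: viewing a weight $\phi$ and a coweight $\psi$ as $[0,1]$-valued functions on $X$, one has $\phi\cpt\psi=\bv_{x\in X}\phi(x)\with\psi(x)$, $(\psi\ra p)(x)=\psi(x)\ra p$, $(\phi\ra p)(x)=\phi(x)\ra p$, and $\CP X(\phi_1,\phi_2)=\bw_{x\in X}(\phi_1(x)\ra\phi_2(x))$. The two identities I would lean on throughout are the currying law $(a\with b)\ra c=a\ra(b\ra c)$ and the fact that $\ra$ turns joins in its first variable into meets, i.e. $(\bv_i a_i)\ra c=\bw_i(a_i\ra c)$; both are immediate from the adjunction $a\with-\dashv a\ra-$.

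For (i) I would simply compute $(\phi\cpt\psi)\ra p=(\bv_x\phi(x)\with\psi(x))\ra p=\bw_x((\phi(x)\with\psi(x))\ra p)=\bw_x(\phi(x)\ra(\psi(x)\ra p))$, and the last expression is exactly $\CP X(\phi,\psi\ra p)$ by the pointwise formula for $\CP X$. For (ii) I would set $a=\phi\cpt\psi$ and use (i) to rewrite the right-hand side as $\bw_{p\in[0,1]}((a\ra p)\ra p)$; it then suffices to prove the scalar identity $a=\bw_{p}((a\ra p)\ra p)$. The inequality $a\leq(a\ra p)\ra p$ holds for every $p$ because $a\with(a\ra p)\leq p$, giving one direction; for the reverse I would specialize to $p=a$, where $(a\ra a)\ra a=1\ra a=a$, forcing the infimum down to $a$.

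Part (iii) is the same circle of ideas, one layer deeper. Writing out $\phi_1\cpt(\phi_2\ra p)=\bv_x\phi_1(x)\with(\phi_2(x)\ra p)$ and applying the join-to-meet and currying laws as in (i), the $p$-th term becomes $\bw_x(\phi_1(x)\ra((\phi_2(x)\ra p)\ra p))$. I would then interchange the two infima and pull $\phi_1(x)\ra-$ out of the meet over $p$ (legitimate since $\phi_1(x)\ra-$ is a right adjoint, hence meet-preserving), reducing the inner expression for each fixed $x$ to $\phi_1(x)\ra\bw_p((\phi_2(x)\ra p)\ra p)$. Invoking the scalar identity from (ii) with $\phi_2(x)$ in place of $a$ collapses this to $\phi_1(x)\ra\phi_2(x)$, and taking the meet over $x$ recovers $\CP X(\phi_1,\phi_2)$.

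The only genuinely non-formal ingredient is the scalar identity $a=\bw_{p\in[0,1]}((a\ra p)\ra p)$, so I expect that to be the crux; once it is in hand, all three parts are bookkeeping with the two residuation laws. It is worth noting that this identity, and hence the whole lemma, uses nothing beyond the quantale structure of $([0,1],\with,1)$, so no appeal to continuity of the t-norm is required.
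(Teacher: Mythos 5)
Your proof is correct. The paper states Lemma \ref{sub vs tensor} without proof, treating it as a routine verification, and your argument supplies exactly the intended computation: reduce everything to the pointwise formulas $(\psi\ra p)(x)=\psi(x)\ra p$, $\phi\cpt\psi=\bv_x\phi(x)\with\psi(x)$, $\CP X(\phi_1,\phi_2)=\bw_x(\phi_1(x)\ra\phi_2(x))$, apply currying and the join-to-meet law for $\ra$, and close parts (ii) and (iii) with the scalar identity $a=\bw_p((a\ra p)\ra p)$, whose nontrivial direction follows by taking $p=a$. Your closing remark is also accurate: the argument uses only the quantale structure of $([0,1],\with,1)$ (left continuity), not continuity of the t-norm.
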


	\section{Colimit and limit}
	
	A \emph{colimit} of a weight $\phi\colon X\oto\star $ is an element $\colim\phi$ of $X$ such that for all $x\in X$, \[X(\colim\phi,x)=\CP X(\phi,\sy(x)).\]
	It is clear that each weight has, up to isomorphism, at most one colimit. So we shall speak of \emph{the} colimit of a weight.
	
	Since \[\CP X(\phi,\sy(x))=\sy_X(x)\swarrow\phi = (X\swarrow\phi)(x) \] for all $x\in X$, it follows that $\phi$ has a colimit if and only if the coweight $X\swarrow\phi$  
	is representable. A colimit of $\phi$ is then a representation of the coweight $X\swarrow\phi$, i.e. $X(\colim\phi,-)=X\swarrow\phi$. $$\bfig \ptriangle(0,0)/>`>`>/<500,500>[X`\star`X;\phi`X`X\swarrow \phi]
	\place(250,500)[\circ] \place(250,250)[\circ] \place(0,250)[\circ] \place(180,350)[\geq] \efig$$
	
	For each functor $f\colon K\lra X$  and each weight $\phi$  of $K$, the colimit of $\phi\circ f^*$, when exists, is called the \emph{colimit of $f$ weighted by $\phi$}. In particular, $\colim\phi$ is the colimit of the identity functor $X\lra X$ weighted by $\phi$.  
	
	We  write $\colim_\phi f$ for the colimit of $f$ weighted by $\phi$. By definition, $b$ is a colimit of $f\colon K\lra X$  weighted by $\phi\colon K\oto\star$ if and only if for all $x\in X$, $$X(b,x)= \CP X(\phi\circ f^*,X(-,x))=\CP K(\phi,X(f(-),x)). $$ 
	
	\begin{exmp} \label{cpt as colimit} Suppose $f\colon X\lra\sV$ is a functor,   $\sV=([0,1],\alpha_L)$. Then for each weight $\phi\colon X\oto\star$,  the colimit of $f$ weighted by $\phi$ exists and is equal to the composite $\phi\circ f^\sharp$, i.e. $\colim_\phi f =\phi\circ f^\sharp$, where  $f^\sharp\colon\star\oto X$ is the distributor corresponding to $f$.  To see this we  calculate:  for all $r\in[0,1]$, \begin{align*}\CP\sV(  \phi\circ f^*,\sV(-,r))& = \CP X(\phi, \sV(f(-),r))  \\   
			&= \inf_{x\in X}(\phi(x)\ra (f(x)\ra r)) \\ 
			&= \Big(\sup_{x\in X}\phi(x)\with f(x)\Big)\ra r\\ &= \sV(\phi\circ f^\sharp,r),  
		\end{align*} 
		hence $\phi\circ f^\sharp$ is a colimit of $f$ weighted by $\phi$. \end{exmp}
	
	\begin{defn}A real-enriched category $X$ is   cocomplete if every weight of $X$ has a colimit. \end{defn}
	
	It is readily seen that $X$ is cocomplete if and only if all weighted colimits of functors to $X$ exist; that means, for each functor $f\colon K\lra X$ and each weight $\phi$ of $K$, the colimit of $f$ weighted by $\phi$ exists.
	
	\begin{prop}\label{cocomplete via adj} A real-enriched category $X$ is cocomplete   if and only if the Yoneda embedding $\sy\colon X\lra\CP X$ has a left adjoint. \end{prop}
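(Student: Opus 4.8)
The plan is to observe that the defining equation of a colimit and the defining equation of the adjunction are literally the same identity, so the proposition reduces to a direct application of Theorem \ref{Characterization of adjoints}. For a candidate left adjoint $s\colon\CP X\lra X$ of the Yoneda embedding, condition (3) of that theorem reads
\[X(s(\phi),x)=\CP X(\phi,\sy(x))\qquad(\phi\in\CP X,\ x\in X),\]
which is exactly the requirement that $s(\phi)$ be a colimit of the weight $\phi$, in the sense $X(\colim\phi,x)=\CP X(\phi,\sy(x))$. So both directions of the equivalence amount to translating between ``each weight has a colimit'' and ``the hom-equation above holds for some map $s$''.

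For the reverse implication I would assume $\sy$ has a left adjoint $s$. By Theorem \ref{Characterization of adjoints} the adjunction yields the displayed identity for every $\phi$ and $x$; hence $s(\phi)$ is a colimit of $\phi$, every weight has a colimit, and $X$ is cocomplete. For the forward implication I would assume $X$ cocomplete and, for each weight $\phi\in\CP X$, choose an element $s(\phi)$ that is a colimit of $\phi$ (colimits are unique up to isomorphism, so a choice of representatives is needed only when $X$ fails to be separated). By the definition of colimit the displayed identity then holds for all $\phi$ and $x$, so condition (3) of Theorem \ref{Characterization of adjoints} is satisfied, and the theorem delivers at one stroke both that $s$ is a functor and that $s$ is left adjoint to $\sy$.

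The step that might look like an obstacle is functoriality of the map $s$ assembled in the forward direction, since it is defined merely element-by-element as a choice of colimits; but this is precisely what Theorem \ref{Characterization of adjoints} supplies for free, as the verification of the hom-equation (3) makes no prior use of functoriality of $s$. Thus there is no real difficulty: the content is the recognition that the colimit condition and the adjunction condition are the same equation, with the characterization of adjoints handling the passage from a raw equation to an honest enriched Galois connection.
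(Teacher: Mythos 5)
Your proposal is correct and follows essentially the same route as the paper, whose (two-sentence) proof likewise rests on the observation that the colimit equation $X(\colim\phi,x)=\CP X(\phi,\sy(x))$ and the adjunction equation are the same identity. Your explicit appeal to Theorem \ref{Characterization of adjoints} to obtain functoriality of the choice-of-colimits map for free is exactly the detail the paper leaves implicit.
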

	
	\begin{proof} If $X$ is cocomplete,   the correspondence $\phi\mapsto\colim\phi$ defines a left adjoint of $\sy\colon X\lra\CP X$. Conversely,   any left adjoint of $\sy\colon X\lra\CP X$ must send  a weight $\phi$ of $X$ to a colimit of $\phi$. \end{proof}
	
	\begin{exmp}  
		Example \ref{cpt as colimit} shows that   $\sV=([0,1],\alpha_L)$ is cocomplete. The left adjoint of   $\sy\colon\sV\lra\CP\sV$ is given by   $$\CP \sV\lra\sV,\quad \phi\mapsto \phi(1).$$ Or equivalently, for each weight $\phi$ of $\sV$,  $\colim\phi=\phi(1)$.
		
		In fact, since  for all $z\in[0,1]$, $z=1\ra z  \leq   \phi(z)\ra\phi(1)$, then \[\phi(1)=\sup_{z\in [0,1]}\phi(z)\with z=\colim\phi,\]
		the second equality holds by Example \ref{cpt as colimit}. \end{exmp}
	
	\begin{exmp}\label{sup in PX}  
		For each real-enriched category $X$, $\CP X$ is cocomplete. For this consider the Yoneda embedding $\sy_X\colon X\lra \CP X$. Since $$ (\sy_X)_\exists \dashv \sy_X^{-1}\dashv (\sy_X)_\forall $$  and $(\sy_X)_\forall=\sy_{\CP X}$ (Proposition \ref{left and right kan} and Proposition  \ref{kan for yoneda}), it follows that $\sy_{\CP X}$ has a left adjoint, so $\CP X$ is cocomplete. 
		For each weight $\Phi\colon \CP X\oto\star $   of $\CP X$, the   colimit of $\Phi$ is $\sy_X^{-1}(\Phi)$, i.e. $\colim\Phi=\Phi\circ(\sy_X)_*$. $$\bfig \qtriangle(0,0)/>`>`>/<520,500>[X`\CP X`\star ;(\sy_X)_*`\colim\Phi`\Phi]
		\place(250,500)[\circ] \place(260,250)[\circ] \place(520,250)[\circ] \efig$$ 
		
		Since  $\CP \sy_X=(\sy_X)_\exists$,  $\sy_X^{-1}=\colim_{\CP X}$ and $(\sy_X)_\forall=\sy_{\CP X}$, we obtain a string of adjunctions  \[\CP \sy_X\dashv {\colim}_{\CP X}  \dashv \sy_{\CP X}\colon \CP X\lra\CP\CP X.\] \end{exmp}
	
	\begin{exmp}\label{sup in PdX} For each real-enriched category $X$, the real-enriched category $\CPd  X$ is cocomplete. We check that for each weight $\Phi$ of $\CPd  X$,  the coweight $(\syd)^*\swarrow\Phi$ of $X$ is a colimit of $\Phi$.  
		$$\bfig \ptriangle(0,0)/>`>`>/<520,500>[\CPd  X`\star`X;\Phi`(\syd)^\star`\colim\Phi]
		\place(275,500)[\circ] \place(260,250)[\circ] \place(0,250)[\circ] \place(160,330)[\geq] \efig$$  
		
		First we note that for all $\psi \in\CPd X$, $$\psi\searrow (\syd)^*  =\bw_{x\in X}(\psi(x)\ra \CPd X(-,\syd(x))) =\CPd X(-,\psi). $$ $$\bfig \qtriangle(0,0)/>`<-`<-/<520,500>[\star`X`\CPd X;\psi`\psi\searrow (\syd)^*`(\syd)^*]
		\place(230,500)[\circ] \place(260,250)[\circ] \place(520,275)[\circ] \place(360,360)[\leq] \efig$$ Thus for all $\psi\in\CPd X$, 
		\begin{align*} \CPd X((\syd)^*\swarrow\Phi, \psi) &= \psi\searrow ((\syd)^*\swarrow\Phi) \\ 
			& = (\psi\searrow (\syd)^*)\swarrow\Phi \\ 
			& = \CP(\CPd X)(\Phi,\CPd X(-,\psi)),\end{align*} showing that $(\syd)^*\swarrow\Phi$ is a colimit of $\Phi$.
		
		In particular, for each weight $\phi$ of $X$, the colimit of the coYoneda embedding $\syd\colon X\lra \CPd X$ weighted by $\phi$ is $X\swarrow\phi$, because 
		\begin{align*}{\colim}_\phi\thinspace\syd &=  \colim \phi\circ (\syd)^* \\ 
			&= (\syd)^*\swarrow(\phi\circ(\syd)^*)\\ 
			&= ((\syd)^*\swarrow(\syd)^*)\swarrow\phi \\ 
			&= ((\syd)^*\circ(\syd)_*)\swarrow\phi &((\syd)_*\dashv(\syd)^*) \\ 
			&= X\swarrow\phi. &(\text{$\syd$ is fully faithful})\end{align*}
	\end{exmp}
	
	The dual notion of colimit is limit. Precisely, a  \emph{limit} of a coweight $\psi\colon \star\oto X$ is an element $\lim\psi$ in $X$ such that for all $x\in X$, \[\CPd  X(\syd(x),\psi)=X(x,\lim\psi).\] It is obvious that each coweight has at most one limit up to isomorphism.
	
	Since \[\CPd  X(\syd(x),\psi)= \psi\searrow\syd(x) = (\psi\searrow X)(x),\] it follows that $\psi$ has a limit if and only if the weight $\psi\searrow X$  
	is representable. $$\bfig \qtriangle(0,0)/>`<-`<-/<520,500>[\star`X`X;\psi`\psi\searrow X`X]
	\place(230,500)[\circ] \place(260,250)[\circ] \place(520,275)[\circ] \place(360,360)[\leq] \efig$$
	
	\begin{rem}\label{limit vs colimit} Let $X$ be a real-enriched category and let $\psi\colon \star\oto X$ be a coweight of $X$. Then, $a\in X$ is a colimit of the weight $\psi^{\rm op}$ of   $X^{\rm op}$  if and only if for all $x\in X$, \[X^{\rm op}(a,x)=\CP(X^{\rm op})(\psi^{\rm op},X^{\rm op}(-,x)).\] Since \[\CP(X^{\rm op})(\psi^{\rm op},X^{\rm op}(-,x))= \inf_{z\in X}(\psi(z)\ra X(x,z)),\] it follows that $a$ is a colimit of the weight $\psi^{\rm op}$ of   $X^{\rm op}$ if and only if \[X(x,a)=\CPd X(\syd(x),\psi).\] Therefore, a limit of a coweight $\psi$ of   $X$ is precisely a colimit of the weight $\psi^{\rm op}$ of  $X^{\rm op}$.
	\end{rem}
	
	For each functor $f\colon K\lra X$  and each coweight $\psi$  of $K$, the limit of $f^\dag_\exists(\psi)$, when exists, is called the \emph{limit of $f$ weighted by $\psi$}. In particular, $\lim\psi$ is the limit of the identity functor $X\lra X$ weighted by $\psi$.  We  write $\lim_\psi f$ for the limit of $f$ weighted by $\psi$. 
	
	\begin{exmp} \label{sub as limit}  Suppose $X$ is a real-enriched category and  $f\colon X\lra\sV$ is a functor.  Then, for each coweight $\psi$  of  $X$, the limit of $f$ weighted by $\psi$ exists and is given by ${\lim}_\psi f=\CPd X(f^\sharp,\psi)$, where  $f^\sharp\colon\star\oto X$ is the distributor corresponding to $f$. In fact,  for all $r\in[0,1]$, 
		\begin{align*}    \CPd \sV(\syd_\sV(r), f_*\circ\psi) & =  \psi\searrow(f_*\searrow \sV(r,-))  \\   
			&= \inf_{x\in X}(\psi(x)\ra (r\ra f(x))) \\ 
			&= \inf_{x\in X}(r\ra(\psi(x)\ra f(x))) \\ 
			&= \sV(r, \CPd X(f^\sharp,\psi)), \end{align*} which shows $\CPd X(f^\sharp,\psi)$ is a limit of $f$ weighted by $\psi$. \end{exmp}
	
	\begin{defn}A real-enriched category $X$ is complete if each coweight of $X$ has a limit. \end{defn}
	
	It is readily seen that $X$ is complete if and only if for any functor $f\colon K\lra X$ and any coweight $\psi$ of $K$, the limit of $f$ weighted by $\psi$ exists. 
	Dual to Proposition \ref{cocomplete via adj}, we have the following:
	\begin{prop}A real-enriched category $X$ is complete  if and only if the coYoneda embedding $\syd\colon X\lra\CPd  X$ has a right adjoint. \end{prop}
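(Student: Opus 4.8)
The plan is to exploit the direct duality with Proposition \ref{cocomplete via adj}, observing that the defining equation for a right adjoint of $\syd$ coincides \emph{verbatim} with the defining equation of a limit. First I would recall that, by Theorem \ref{Characterization of adjoints}, a map $g\colon\CPd X\lra X$ is a right adjoint of the functor $\syd\colon X\lra\CPd X$ precisely when
\[\CPd X(\syd(x),\psi)=X(x,g(\psi))\]
holds for all $x\in X$ and all $\psi\in\CPd X$. Crucially, condition (3) of that theorem does not presuppose that $g$ is a functor: once this equation holds for a mere map $g$, both $\syd$ and $g$ are automatically functors and form an adjunction. So no separate functoriality check for the candidate right adjoint is needed.

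For the forward direction, I would assume $X$ is complete, so that every coweight $\psi\colon\star\oto X$ has a limit $\lim\psi$, which by definition satisfies $\CPd X(\syd(x),\psi)=X(x,\lim\psi)$ for all $x\in X$. Setting $g(\psi)=\lim\psi$ then yields a map obeying the displayed equation above, whence Theorem \ref{Characterization of adjoints} delivers that $g$ is a functor and that $\syd\dashv g$. Conversely, if $\syd$ has a right adjoint $g$, then the adjunction equation $X(x,g(\psi))=\CPd X(\syd(x),\psi)$ asserts exactly that $g(\psi)$ is a limit of $\psi$; hence each coweight has a limit and $X$ is complete.

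I expect no genuine obstacle here: the proof is a mirror image of Proposition \ref{cocomplete via adj}. The only points demanding care are the \emph{orientation} of the adjunction (a \emph{right} adjoint of $\syd$, matched against the definition of \emph{limit} rather than colimit) and the bookkeeping that the underlying order of $\CPd X$ is the reverse of the pointwise order on coweights; both align immediately once the defining equations are written side by side. As an alternative one could route through Remark \ref{limit vs colimit}, reducing completeness of $X$ to cocompleteness of $X^{\rm op}$ and invoking Proposition \ref{cocomplete via adj}, but this forces an extra identification of $\syd_X$ with the Yoneda embedding of $X^{\rm op}$, so the direct argument above is the cleaner route.
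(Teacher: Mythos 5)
Your proposal is correct and is essentially the paper's own argument: the paper states this proposition as "dual to Proposition \ref{cocomplete via adj}," whose proof is exactly the observation you make — the defining equation of a (co)limit coincides with the adjunction equation, so the correspondence $\psi\mapsto\lim\psi$ is the right adjoint and conversely any right adjoint computes limits. Your extra care in invoking Theorem \ref{Characterization of adjoints}(3) to get functoriality of $g$ for free is a valid and slightly more explicit rendering of the same argument.
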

	
	\begin{exmp} The real-enriched category $\sV=([0,1],\alpha_L)$ is  complete. For each coweight $\psi$ of $\sV$, by Example \ref{sub as limit},  $\inf_{z\in [0,1]}(\psi(z)\ra z)$ is a limit of $\psi$. \end{exmp}

	\begin{exmp}\label{inf in PdX} For each real-enriched category $X$,  $\CPd  X$ is complete. For this consider the coYoneda embedding $\syd_X\colon X\lra\CPd X$. Since $$ (\syd_X)^\dag_\forall\dashv (\syd_X)^{-1}\dashv (\syd_X)^\dag_\exists $$ and $(\syd_X)^\dag_\forall=\syd_{\CPd X}$ (Proposition \ref{left and right kan} and Proposition \ref{kan for yoneda}), it follows that   $\syd_{\CPd X}$ has a right adjoint, so $\CPd X$ is complete.  
		For each coweight $\Psi\colon \star\oto\CPd  X$,  the limit of $\Psi$ is $(\syd_X)^{-1}(\Psi)$, i.e.  $\lim\Psi=(\sy_X^\dag)^*\circ\Psi$. $$\bfig \qtriangle(0,0)/>`>`>/<520,500>[\star`\CPd  X`X;\Psi`\lim\Psi`(\sy_X^\dag)^*]
		\place(260,500)[\circ] \place(260,250)[\circ] \place(520,250)[\circ] \efig$$  
		Thus, we have a string of adjunctions $$ \syd_{\CPd X} \dashv {\lim}_{\CPd X}\dashv \CPd\syd_X\colon \CPd X\lra\CPd\CPd X . $$
	\end{exmp}
	
	\begin{exmp}\label{inf in PX} For each real-enriched category $X$,  the real-enriched category $\CP X$ is complete. For each coweight $\Psi$ of $\CP X$,   the weight $\Psi\searrow\sy_*$ of $X$ is a limit of $\Psi$. $$\bfig \qtriangle(0,0)/>`<-`<-/<520,500>[\star`\CP X`X;\Psi`\lim\Psi`\sy_*]
		\place(230,500)[\circ] \place(260,250)[\circ] \place(520,275)[\circ] \place(360,360)[\leq] \efig$$
		
		In particular, for each coweight $\psi$ of $X$, the limit of the Yoneda embedding $\sy\colon X\lra\CP X$ weighted by $\psi$ is $\psi\searrow X$, i.e. $\lim_\psi\sy=\psi\searrow X$. \end{exmp}

	\begin{thm} \label{complete is self-dual} {\rm (Stubbe \cite{Stubbe2005})} A real-enriched category is complete  if and only if it is cocomplete. \end{thm}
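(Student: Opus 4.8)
The plan is to prove the implication ``cocomplete $\Rightarrow$ complete'' directly and then obtain the converse by duality. By Proposition \ref{cocomplete via adj}, cocompleteness of $X$ means the Yoneda embedding $\sy\colon X\lra\CP X$ has a left adjoint $\colim\colon\CP X\lra X$; dually, $X$ is complete if and only if $\syd\colon X\lra\CPd X$ has a right adjoint, which by the discussion preceding the theorem amounts to every weight of the form $\psi\searrow X$ (with $\psi\in\CPd X$) being representable. So, assuming $X$ cocomplete, I would fix a coweight $\psi$, set $\phi:=\psi\searrow X\in\CP X$ and $m:=\colim\phi$, and try to show $\sy(m)=\phi$, that is, $X(x,m)=\phi(x)$ for all $x$. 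Since $\CPd X(\syd(x),\psi)=(\psi\searrow X)(x)=\phi(x)$, this says precisely $X(x,m)=\CPd X(\syd(x),\psi)$, i.e. $m=\lim\psi$.

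First I would dispatch the easy inequality $\phi(x)\le X(x,m)$: evaluating the defining identity $X(\colim\phi,w)=\CP X(\phi,\sy(w))$ at $w=m$ gives $\CP X(\phi,\sy(m))=X(m,m)=1$, which via $\CP X(\phi,\sy m)=\sub_X(\phi,\sy m)=\inf_x(\phi(x)\ra X(x,m))$ is equivalent to $\phi(x)\le X(x,m)$ for every $x$.

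The hard part will be the reverse inequality $X(x,m)\le\phi(x)=\inf_{u}(\psi(u)\ra X(x,u))$, because a priori the colimit only controls $X(m,-)$, not $X(-,m)$. The key observation I would establish is that nevertheless $\psi(u)\le X(m,u)$ holds for all $u$: the same defining identity (now at $w=u$) gives $X(m,u)=\CP X(\phi,\sy u)=\inf_v(\phi(v)\ra X(v,u))$, and since $\phi(v)=(\psi\searrow X)(v)\le \psi(u)\ra X(v,u)$ the residuation law $a\with(a\ra b)\le b$ yields $\psi(u)\with\phi(v)\le X(v,u)$ for every $v$, whence $\psi(u)\le X(m,u)$. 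Granting this, monotonicity of $\with$ together with transitivity of $X$ gives $X(x,m)\with\psi(u)\le X(x,m)\with X(m,u)\le X(x,u)$, so $X(x,m)\le\psi(u)\ra X(x,u)$ for all $u$ and therefore $X(x,m)\le\phi(x)$. Combining the two inequalities produces $\sy(m)=\phi$, so $\lim\psi$ exists and $X$ is complete.

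Finally, for the converse I would apply the implication just proved to $X^{\rm op}$: by Remark \ref{limit vs colimit}, completeness of $X$ coincides with cocompleteness of $X^{\rm op}$ and completeness of $X^{\rm op}$ with cocompleteness of $X$, so ``cocomplete $\Rightarrow$ complete'' for $X^{\rm op}$ reads ``complete $\Rightarrow$ cocomplete'' for $X$. The only genuinely delicate point is the third paragraph: recognizing that representability of $\psi\searrow X$ can be extracted from the single colimit $m=\colim(\psi\searrow X)$ by playing transitivity of $X$ against the residuation law, which is exactly what upgrades the one-sided universal property of $\colim$ to the two-sided equality $\sy(m)=\phi$.
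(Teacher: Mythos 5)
Your proof is correct, and at its core it is the mirror image of the paper's argument: both take the Isbell transform of the given (co)weight, apply the hypothesis to produce a candidate element, and then verify that this element represents what is required. The differences are in execution. The paper proves ``complete $\Rightarrow$ cocomplete'' directly: for a weight $\phi$ it takes $a$ with $X(-,a)=(X\swarrow\phi)\searrow X$ and concludes in one line from the distributor calculus, $X(a,-)=X\swarrow X(-,a)=X\swarrow((X\swarrow\phi)\searrow X)=X\swarrow\phi$, the last step being the triple-composition identity coming from the Isbell adjunction (Proposition \ref{prop.Isbell adjunction}); the other direction is dispatched with ``likewise''. You prove the opposite implication directly, and where the paper invokes that identity you unpack its dual, $(X\swarrow(\psi\searrow X))\searrow X=\psi\searrow X$, into two elementwise inequalities: $\phi\le\sy(m)$ from the universal property of the colimit, and $\sy(m)\le\phi$ via the key inequality $\psi(u)\le X(m,u)$ combined with transitivity of $X$ and residuation of $\with$. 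This is more elementary and self-contained, at the price of length. A further, minor but genuine, difference is that you obtain the converse by duality through $X^{\rm op}$ using Remark \ref{limit vs colimit} (legitimate, since weights of $X^{\rm op}$ are exactly the opposites of coweights of $X$), rather than repeating the dual computation as the paper implicitly does; the paper's route is slicker where yours makes the self-duality of the situation do the work explicitly.
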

	
	\begin{proof}Let $X$ be a complete real-enriched category. We show that each weight $\phi$   of $X$ has a colimit, so $X$ is cocomplete. Consider the coweight $X\swarrow\phi$ of $X$. Since $X$ is complete, there is some $a\in X$ such that \[X(-,a)= (X\swarrow\phi)\searrow X.\] Then \[X(a,-)= X\swarrow X(-,a) = X\swarrow ((X\swarrow\phi)\searrow X) =X\swarrow\phi,\] showing that $\phi$ has a colimit.
		Likewise for the converse implication. \end{proof}
	
	In the following we examine the relation between functors, colimits of weights and limits of coweights.
	First of all,   for any  functor $f\colon X\lra Y$   and any weight $\phi$   of $X$, if both $\colim \phi$ and $\colim_\phi f   $ exist, then $ {\colim}_\phi f  \sqsubseteq f({\colim} \phi)$, because \begin{align*}1&= X({\colim}\phi,{\colim}\phi)\\ &=\CP X(\phi, X(-,{\colim} \phi))\\ &\leq \CP X(\phi, Y(f(-),f({\colim}\phi))) \\ &= \CP X(\phi, Y(-,f({\colim}\phi))\circ f_*) \\ & =\CP Y(\phi\circ f^*, Y(-,f({\colim}\phi)) &(-\circ f^*\dashv -\circ f_*)\\ &= Y({\colim}_\phi f  ,f({\colim}\phi)).\end{align*}
	
	Suppose $f\colon X\lra Y$ is a functor. We say that  $f$  \emph{preserves the colimit of a weight} $\phi$ of $X$, if $f(\colim\phi)$ is a colimit of $f$ weighted by $\phi$, i.e.  $f(\colim\phi)=\colim_\phi f $, whenever $\colim\phi$ exists. Likewise,   $f$ preserves the limit of a coweight $\psi$ of $X$ if $f(\lim\psi)$ is a limit of $f$ weighted by $\psi$ whenever $\lim\psi$ exists.
	
	We say that a functor  $f\colon X\lra Y$    \emph{preserves colimits} if it preserves all colimits that exist. Likewise,  $f$  \emph{preserves limits} if it preserves all  limits that exist.

	\begin{thm}Each left adjoint preserves colimits. Dually, each right adjoint preserves limits. \end{thm}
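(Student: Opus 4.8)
The plan is to unwind the definitions and let the adjunction do all the work; the verification reduces to three equalities, with essentially one genuine observation. Suppose $f\colon X\lra Y$ is left adjoint to $g\colon Y\lra X$, and let $\phi$ be a weight of $X$ whose colimit $a=\colim\phi$ exists. I must show that $f(a)$ is a colimit of $f$ weighted by $\phi$; by the definition of weighted colimit (applied with ambient category $Y$), this amounts to verifying, for every $y\in Y$,
\[
Y(f(a),y)=\CP X(\phi,\, Y(f(-),y)).
\]

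First I would identify the weight $Y(f(-),y)\colon X^{\rm op}\lra\sV$ occurring on the right. By the adjunction characterization (Theorem \ref{Characterization of adjoints}), $Y(f(x),y)=X(x,g(y))$ for all $x\in X$, so $Y(f(-),y)=X(-,g(y))=\sy_X(g(y))$ as functors $X^{\rm op}\lra\sV$. This identification of $Y(f(-),y)$ with the representable weight $\sy_X(g(y))$ is the only place the hypothesis $f\dashv g$ is used, and it is the one step carrying any content; everything else is formal. With it in hand, the defining equation of $a=\colim\phi$, namely $X(a,x)=\CP X(\phi,\sy_X(x))$, evaluated at $x=g(y)$, together with the adjunction once more, yields
\[
\CP X(\phi,\, Y(f(-),y))=\CP X(\phi,\sy_X(g(y)))=X(a,g(y))=Y(f(a),y),
\]
which is exactly what was required. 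Hence $f(\colim\phi)=\colim_\phi f$ for every weight whose colimit exists, i.e.\ $f$ preserves colimits.

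Finally, I would deduce the dual statement formally rather than repeat the computation. If $g\colon Y\lra X$ is right adjoint to $f$, then passing to opposite categories turns $f\dashv g$ into $g^{\rm op}\dashv f^{\rm op}$: indeed $X^{\rm op}(g(y),x)=X(x,g(y))=Y(f(x),y)=Y^{\rm op}(y,f(x))$, so $g^{\rm op}\colon Y^{\rm op}\lra X^{\rm op}$ is a left adjoint and therefore preserves colimits by the first part. By Remark \ref{limit vs colimit}, a limit of a coweight $\psi$ of $X$ (resp.\ of $Y$) is precisely a colimit of the opposite weight $\psi^{\rm op}$ in $X^{\rm op}$ (resp.\ $Y^{\rm op}$); translating preservation of colimits by $g^{\rm op}$ through this correspondence gives preservation of limits by $g$. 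The only thing needing care is this last bookkeeping between limits and colimits under the passage to opposites, but it is routine given the Remark.
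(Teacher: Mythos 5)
Your proof is correct and takes essentially the same approach as the paper's: both establish $Y(f(\colim\phi),y)=\CP X(\phi,Y(f(-),y))$ for all $y\in Y$ by invoking the adjunction twice together with the defining property of $\colim\phi$ at $g(y)$ (you read the chain of equalities right-to-left where the paper reads it left-to-right), and both dispose of the statement about right adjoints by duality. The only cosmetic difference is that you spell out the opposite-category bookkeeping behind ``by duality,'' which the paper leaves implicit.
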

	
	\begin{proof} Suppose that $f\colon X\lra Y$ is left adjoint to $g\colon Y\lra X$. We show that $f$ preserves colimits and $g$ preserves limits.
		
		Assume that $\phi\colon X\oto\star $ has a colimit.   Since for all $y\in Y$, 
		\begin{align*} Y(f(\colim\phi),y) &= X(\colim\phi,g(y)) &(f\dashv g) \\ 
			&= \CP X(\phi,  X(-,g(y))) &(\text{definition of $\colim\phi$}) \\ 
			&= \CP X(\phi,Y(f(-),y)) &(f\dashv g)\\ 
			&= \CP X(\phi,  Y(-,y)\circ f_*)   \\   
			&= \CP Y(\phi\circ f^*, Y(-,y)), &(-\circ f^*\dashv -\circ f_*) 
		\end{align*} it follows that $f(\colim\phi)$ is a colimit of $f$ weighted by $\phi$.  
		
		By duality, $g$ preserves limits. \end{proof}

	\begin{thm}If $f\colon X\lra Y$ preserves colimits and $X$ is cocomplete, then $f$ is a left adjoint. \end{thm}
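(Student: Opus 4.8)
\emph{Plan.} The strategy is to write down an explicit candidate for the right adjoint $g\colon Y\lra X$ and then verify $Y(f(x),y)=X(x,g(y))$ for all $x,y$, after which Theorem \ref{Characterization of adjoints} immediately yields the adjunction $f\dashv g$. Since $X$ is cocomplete, Proposition \ref{cocomplete via adj} furnishes a left adjoint $\colim\colon\CP X\lra X$ of the Yoneda embedding $\sy_X$, sending each weight to its colimit. Mimicking the ordered case (Theorem \ref{lapj}), where the right adjoint of a join-preserving map is $y\mapsto\sup f^{-1}(\da y)$, I would set
$$g=\colim\circ f^{-1}\circ\sy_Y\colon Y\lra X,\qquad g(y)=\colim(\phi_y),$$
where $\phi_y\coloneqq f^{-1}(\sy_Y(y))=\sy_Y(y)\circ f_*$. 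A short composition computation (take the index $y'=y$ and use transitivity) shows $\phi_y(x)=Y(f(x),y)$, the enriched reading of ``$f(x)\le y$''. Being a composite of the functors $\sy_Y$, $f^{-1}$ and $\colim$, the map $g$ is automatically a functor.

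Next I would establish the counit inequality $Y(f(g(y)),y)=1$, and this is exactly where colimit preservation is the crux. Since $X$ is cocomplete the colimit $\colim\phi_y$ exists, so the hypothesis gives $f(g(y))=f(\colim\phi_y)=\colim_{\phi_y}f$; hence, by the definition of weighted colimit, for every $y'\in Y$,
$$Y(f(g(y)),y')=\CP X\big(\phi_y,\;Y(f(-),y')\big)=\inf_{x\in X}\big(Y(f(x),y)\ra Y(f(x),y')\big).$$
Putting $y'=y$ makes every term $1$, which yields $Y(f(g(y)),y)=1$. For the unit inequality $X(x,g(f(x)))=1$, I would observe that $\phi_{f(x)}(x')=Y(f(x'),f(x))\ge X(x',x)=\sy_X(x)(x')$ since $f$ is a functor, so $\sy_X(x)\sqsubseteq\phi_{f(x)}$ in the underlying order of $\CP X$ (by the Remark on $(\CP X)_0$). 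Applying the functor $\colim$, which preserves the underlying order, and using that the colimit of a representable weight is the point itself — indeed $X(\colim\sy_X(x),x')=\CP X(\sy_X(x),\sy_X(x'))=X(x,x')$ by the Yoneda lemma, so $\colim(\sy_X(x))\cong x$ — gives $x\sqsubseteq g(f(x))$.

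Finally I would assemble the adjunction from the unit and counit by the usual two-line transitivity argument: functoriality of $f$ together with $Y(f(g(y)),y)=1$ gives $X(x,g(y))\le Y(f(x),f(g(y)))\le Y(f(x),y)$, while functoriality of $g$ together with $X(x,g(f(x)))=1$ gives $Y(f(x),y)\le X(g(f(x)),g(y))\le X(x,g(y))$. Hence $Y(f(x),y)=X(x,g(y))$ for all $x,y$, and Theorem \ref{Characterization of adjoints} concludes that $f$ is left adjoint to $g$. The main obstacle is precisely the counit step: the entire argument rests on translating ``$f$ preserves colimits'' into the identity $f(\colim\phi_y)=\colim_{\phi_y}f$ and reading off $Y(f(g(y)),y)=1$ from it; the unit and the final assembly are then routine order-theoretic bookkeeping.
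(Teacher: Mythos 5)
Your proposal is correct and takes essentially the same route as the paper: the same candidate right adjoint $g(y)=\colim\big(Y(f(-),y)\big)=\colim\circ f^{-1}\circ\sy_Y(y)$, with colimit preservation invoked at exactly the same point, namely to obtain the counit inequality $1\leq Y(f(g(y)),y)$. The only difference is organizational — you verify condition (3) of Theorem \ref{Characterization of adjoints} (the enriched hom-equality, assembled from unit and counit), whereas the paper verifies condition (2) (the underlying-order Galois connection), replacing your unit argument by a direct Yoneda-plus-functoriality-of-$\colim$ computation; the two verifications are equivalent in content.
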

	
	\begin{proof} Since $X$ is cocomplete, the Yoneda embedding $\sy_X\colon X\lra\CP X$ has a left adjoint $\colim_X\colon \CP X\lra X$. Let $g$ be the composite \[Y\to^{\sy_Y}\CP Y\to^{f^{-1}}\CP X\to^{{\colim}_X}X;\] that is, $g(y) =\colim_X Y(f(-),y)$. We claim that $g$ is a right adjoint of $f$.
		
		By Theorem \ref{Characterization of adjoints}, we need to check that for all $x\in X$ and $y\in Y$, $f(x)\sqsubseteq y$  if and only if $x\sqsubseteq g(y)$.
		If $f(x)\sqsubseteq y$, then 
		\begin{align*}1&\leq Y(f(x),y)\\ &  = \CP X(X(-,x), Y(f(-),y)))& (\text{Yoneda lemma})\\ 
			&\leq X({\colim}_X X(-,x),{\colim}_X Y(f(-),y))) &(\text{${\colim}_X$ is a  functor})\\ 
			& = X(x,g(y)), &(\text{definition of $g$})
		\end{align*} showing that $x\sqsubseteq g(y)$.
		
		For the converse implication,   first we show  that   $1\leq Y(f(g(y)),y)$  for all $y\in Y$. Since $f$ preserves colimits and $g(y)=   {\colim}_X(Y(-,y)\circ f_*)$, then \[f(g(y))= {\colim}_Y(Y(-,y)\circ f_*\circ f^*)\leq {\colim}_Y  Y(-,y) \cong y,\] so $1\leq Y(f(g(y)),y)$.
		
		Now, suppose that $x\sqsubseteq g(y)$. Then \[1\leq X(x,g(y))\leq Y(f(g(y)),y)\with Y(f(x),f(g(y)))\leq Y(f(x),y),\] hence $f(x)\sqsubseteq y$.
	\end{proof}

	\begin{prop}The Yoneda embedding $\sy\colon X\lra\CP X$ preserves limits; the coYoneda embedding $\syd\colon X\lra\CPd  X$ preserves colimits.  \end{prop}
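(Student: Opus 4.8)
The plan is to prove that $\sy$ preserves limits directly, reading off both sides of the required identity from results already established, and then to obtain the statement for $\syd$ by the evident duality (or by the symmetric computation already carried out).

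First I would fix a coweight $\psi\colon\star\oto X$ of $X$ whose limit $\lim\psi$ exists; by the definition of colimit-preservation I must show that $\sy(\lim\psi)$ is a limit of $\sy$ weighted by $\psi$, i.e.\ that $\sy(\lim\psi)={\lim}_\psi\sy$ as objects of $\CP X$. The key observation is that each side has already been computed. On the one hand, the very definition of $\lim\psi$ says that $\lim\psi$ represents the weight $\psi\searrow X$, that is, $X(x,\lim\psi)=(\psi\searrow X)(x)$ for all $x\in X$; since $\sy(\lim\psi)$ is by definition the weight $x\mapsto X(x,\lim\psi)$, this reads $\sy(\lim\psi)=\psi\searrow X$. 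On the other hand, Example \ref{inf in PX} computes the limit of the Yoneda embedding weighted by $\psi$ to be exactly ${\lim}_\psi\sy=\psi\searrow X$. Comparing the two gives $\sy(\lim\psi)=\psi\searrow X={\lim}_\psi\sy$, which is precisely the assertion that $\sy$ preserves the limit of $\psi$. As $\psi$ was an arbitrary coweight admitting a limit, $\sy$ preserves all limits.

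For the coYoneda embedding I would run the dual argument. Fix a weight $\phi\colon X\oto\star$ whose colimit $\colim\phi$ exists. By the definition of colimit, $\colim\phi$ represents the coweight $X\swarrow\phi$, so $\syd(\colim\phi)$, which is the coweight $x\mapsto X(\colim\phi,x)$, equals $X\swarrow\phi$. Meanwhile Example \ref{sup in PdX} records that the colimit of $\syd$ weighted by $\phi$ is ${\colim}_\phi\syd=X\swarrow\phi$. Hence $\syd(\colim\phi)={\colim}_\phi\syd$, so $\syd$ preserves colimits. Alternatively, this can be deduced from the first part through the duality of Remark \ref{limit vs colimit}, since a limit of a coweight of $X$ is a colimit of the corresponding weight of $X^{\mathrm{op}}$.

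There is essentially no hard step here: the whole content is the bookkeeping identification of the representing object with the value of the (co)Yoneda embedding. The only place demanding care is matching the types of the distributors so that the two expressions $\psi\searrow X$ (respectively $X\swarrow\phi$)—one arising as the object represented by the limit in $X$, the other as the weighted limit computed inside $\CP X$ (respectively $\CPd X$)—are recognized as the same weight (respectively coweight); this is guaranteed by the explicit formulas of Example \ref{inf in PX} and Example \ref{sup in PdX}.
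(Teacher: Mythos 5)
Your proof is correct and takes essentially the same route as the paper: both arguments identify $\sy(\lim\psi)$ with $\psi\searrow X$ via the representability definition of limits, and then match this against the weighted limit ${\lim}_\psi\sy$ computed in Example \ref{inf in PX} (dually, $\syd(\colim\phi)=X\swarrow\phi={\colim}_\phi\syd$ via Example \ref{sup in PdX}). The only difference is that the paper re-derives the ``in particular'' computation of Example \ref{inf in PX} inline, using $\sy_*\dashv\sy^*$ and full faithfulness of $\sy$, whereas you cite it directly.
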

	
	\begin{proof}   For the first conclusion, we need to show that for each coweight $\psi$ of $X$, if $\lim\psi$ exists, then $\sy(\lim\psi)$ is the limit of $\sy_*\circ\psi$ in $\CP X$, i.e. $\sy(\lim\psi)={\lim}_{\CP X}(\sy_*\circ\psi)$. To see this, we calculate:
		\begin{align*}  \lim(\sy_*\circ\psi)& =(\sy_*\circ\psi)\searrow\sy_*  & (\text{Example \ref{inf in PX}})\\ 
			&=\psi\searrow(\sy_*\searrow \sy_*)\\ 
			& = \psi\searrow (\sy^*\circ \sy_*) &(\sy_*\dashv\sy^*)\\ 
			& = \psi\searrow X  &(\text{$\sy$ is fully faithful})\\ 
			&= X(-,\lim\psi) & (\text{definition of $\lim\psi$})\\ 
			&= \sy(\lim\psi).\end{align*}
		
		The second conclusion follows by duality. \end{proof}
	
	A functor  $f\colon X\lra Y$ is   \emph{colimit-dense} if for each $y\in Y$, there is a weight $\phi$ of $X$ such that $y$ is a colimit of $f$ weighted by $\phi$. Likewise, $f$ is  \emph{limit-dense}  if for each $y\in Y$, there is a coweight $\psi$ of $X$ such that $y$ is a limit of $f$ weighted by $\psi$.
	
	There is a nice characterization of colimit-dense and limit-dense functors.
	
	\begin{prop}\label{colimit-dense} {\rm (Lai and Shen \cite{LaiS2017})} Suppose $f\colon X\lra Y$ is a functor. \begin{enumerate}[label={\rm(\roman*)}] 
			\item $f$  is colimit-dense if and only if $f_*\swarrow f_*=Y$. \item $f$ is limit-dense if and only if $f^*\searrow f^* =Y$. \end{enumerate}  \end{prop}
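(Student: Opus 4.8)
The plan is to prove (i) in full and to deduce (ii) by duality. Applying (i) to $f^{\rm op}\colon X^{\rm op}\lra Y^{\rm op}$ and using $(f^{\rm op})_*=(f^*)^{\rm op}$ together with Remark \ref{limit vs colimit} (a limit of a coweight $\psi$ of $X$ is a colimit of the weight $\psi^{\rm op}$ of $X^{\rm op}$), the colimit-density criterion $(f^{\rm op})_*\swarrow(f^{\rm op})_*=Y^{\rm op}$ for $f^{\rm op}$ becomes precisely the limit-density criterion $f^*\searrow f^*=Y$ for $f$. So only (i) needs a genuine argument.

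First I would unwind the definition. By definition ${\colim}_\phi f={\colim}(\phi\circ f^*)$, and a weight $\gamma$ of $Y$ has colimit $y$ precisely when the coweight $Y\swarrow\gamma$ is representable by $y$, i.e. $Y\swarrow\gamma=\syd(y)$. The load-bearing computation is the identity
\[Y\swarrow(\phi\circ f^*)=f_*\swarrow\phi\qquad(\phi\in\CP X):\]
by the identity (iii) of the relation calculus $Y\swarrow(\phi\circ f^*)=(Y\swarrow f^*)\swarrow\phi$, and since $f_*$ is left adjoint to $f^*$ (Example \ref{graph adjoint to cograph}), Lemma \ref{adjoint_arrow_calculation} gives $Y\swarrow f^*=f_*$. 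Hence $y$ is a colimit of $f$ weighted by $\phi$ iff $f_*\swarrow\phi=\syd(y)$, and therefore $f$ is colimit-dense iff every representable coweight $\syd(y)$ ($y\in Y$) lies in the image of the map $\phi\mapsto f_*\swarrow\phi\colon\CP X\lra\CPd Y$.

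Next I would record the always-valid inequality $Y\leq f_*\swarrow f_*$, which is immediate from the property (i) of the calculus and the distributor identity $Y\circ f_*=f_*$. For the implication $f_*\swarrow f_*=Y\Rightarrow$ colimit-dense, I would feed in the canonical weight $\phi_y:=f_*(-,y)=f^{-1}(\sy_Y(y))\in\CP X$; a one-line check shows that $f_*\swarrow\phi_y$ is exactly the row $(f_*\swarrow f_*)(y,-)$, so if $f_*\swarrow f_*=Y$ then $f_*\swarrow\phi_y=Y(y,-)=\syd(y)$, exhibiting $y$ as the colimit of $f$ weighted by $\phi_y$.

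For the converse, assume $f$ is colimit-dense, fix $y\in Y$, and choose $\phi$ with $f_*\swarrow\phi=\syd(y)$. Evaluating at $y$ gives $\inf_x(\phi(x)\ra f_*(x,y))=Y(y,y)=1$, whence $\phi(x)\leq f_*(x,y)=\phi_y(x)$ for all $x$. Antitonicity of $\ra$ in its first variable then yields, for every $y'$,
\[(f_*\swarrow f_*)(y,y')=\inf_x(\phi_y(x)\ra f_*(x,y'))\leq\inf_x(\phi(x)\ra f_*(x,y'))=\syd(y)(y')=Y(y,y'),\]
so $f_*\swarrow f_*\leq Y$; combined with the reverse inequality this gives $f_*\swarrow f_*=Y$. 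The main obstacle is the bookkeeping in the identity $Y\swarrow(\phi\circ f^*)=f_*\swarrow\phi$ — keeping the variances of $\swarrow$ and of the composite straight — together with the two structural insights that make the proof short: that a witnessing weight may always be taken to be the canonical $\phi_y$, and that evaluating the colimit condition at the diagonal point $y$ is exactly what forces $\phi\leq\phi_y$.
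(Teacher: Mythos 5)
Your proof is correct and follows essentially the same route as the paper: the same key identity $Y\swarrow(\phi\circ f^*)=(Y\swarrow f^*)\swarrow\phi=f_*\swarrow\phi$, the same canonical witness $\phi_y=f_*(-,y)$ for one direction, and the same evaluation-at-$y$ argument giving $\phi\leq f_*(-,y)$ for the other. Your only additions are making explicit the automatic inequality $Y\leq f_*\swarrow f_*$ (which the paper uses tacitly via ``it suffices to show $(f_*\swarrow f_*)(y,-)\leq Y(y,-)$'') and spelling out the duality argument for (ii), which the paper leaves to the reader.
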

	
	\begin{proof} We prove (i), leaving (ii) to the reader. Assume that $f$  is colimit-dense. We wish to show   $f_*\swarrow f_*=Y$. It suffices to show that for each $y$ of $Y$, $(f_*\swarrow f_*)(y,-)\leq Y(y,-)$. Since $f$ is colimit-dense, $y=\colim_\phi f$ for some  weight $\phi$ of $X$. So, \[Y(y,-)=Y\swarrow(\phi\circ f^*) = (Y\swarrow f^*)\swarrow\phi= f_*\swarrow\phi,\] hence   $\phi\leq   f_*(-,y)$. Therefore, \[(f_*\swarrow f_*)(y,-)= f_*\swarrow f_*(-,y) \leq f_*\swarrow\phi = Y(y,-).\] Conversely, suppose that $f_*\swarrow f_*=Y$. Then for each $y$ of $Y$, \[Y(y,-)=(f_*\swarrow f_*)(y,-) = (Y\swarrow f^*)\swarrow f_*(-,y)=Y\swarrow(f_*(-,y)\circ f^*),\] showing that $y$ is a colimit of $f$ weighted by $f_*(-,y)$, hence $f$  is colimit-dense.  \end{proof}
	
	\begin{prop}\label{y is colimit dense} For each real-enriched category, the Yoneda embedding is colimit-dense and the coYoneda embedding is limit-dense. \end{prop}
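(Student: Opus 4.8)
The plan is to reduce everything to Proposition \ref{colimit-dense}, which characterizes colimit-dense and limit-dense functors in terms of the distributor calculus. For the Yoneda embedding $\sy\colon X\lra\CP X$ it suffices, by part (i) of that proposition, to verify the single identity $\sy_*\swarrow\sy_*=\CP X$; dually, for the coYoneda embedding $\syd\colon X\lra\CPd X$ it suffices, by part (ii), to verify $(\syd)^*\searrow(\syd)^*=\CPd X$.

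First I would identify the graph $\sy_*\colon X\oto\CP X$ explicitly. By definition $\sy_*(x,\phi)=\CP X(\sy(x),\phi)$, and the Yoneda lemma (part (i)) collapses this to $\sy_*(x,\phi)=\phi(x)$. Feeding this into the formula $(t\swarrow r)(y,z)=\inf_{x}(r(x,y)\ra t(x,z))$ with $r=t=\sy_*$ gives, for weights $\phi,\psi$ of $X$,
$$(\sy_*\swarrow\sy_*)(\phi,\psi)=\inf_{x\in X}(\phi(x)\ra\psi(x))=\sub_X(\phi,\psi)=\CP X(\phi,\psi),$$
which is exactly the required identity. The coYoneda case runs in parallel: the cograph satisfies $(\syd)^*(\psi,x)=\CPd X(\psi,\syd(x))=\psi(x)$ by part (ii) of the Yoneda lemma, and expanding $(\syd)^*\searrow(\syd)^*$ through $(s\searrow t)(x,y)=\inf_{z}(s(y,z)\ra t(x,z))$ yields $\inf_{z}(\psi_2(z)\ra\psi_1(z))$, which is $\sub_X(\psi_2,\psi_1)=\CPd X(\psi_1,\psi_2)$.

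The calculation is short, so the only thing that needs care — and the place I expect to stumble if I am careless — is the bookkeeping of argument positions in the $\swarrow$ and $\searrow$ operators, together with the fact that the underlying order of $\CPd X$ is the \emph{reverse} of the pointwise order of coweights (Remark \ref{limit vs colimit} and the definition of $\CPd X$). In particular I must make sure that after expanding $(\syd)^*\searrow(\syd)^*$ the surviving expression $\inf_z(\psi_2(z)\ra\psi_1(z))$ is matched against $\CPd X(\psi_1,\psi_2)=\psi_2\searrow\psi_1=\sub_X(\psi_2,\psi_1)$ in the correct order, rather than its opposite.

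Alternatively, one can bypass Proposition \ref{colimit-dense} and argue directly: for each $\phi\in\CP X$, the weight $\phi$ (regarded as a weight of $X$) has $\phi$ itself as the colimit of $\sy$ weighted by $\phi$. Indeed, for every $\gamma\in\CP X$ the Yoneda lemma gives $\CP X(\sy(-),\gamma)=\gamma$, so $\CP X(\phi,\CP X(\sy(-),\gamma))=\CP X(\phi,\gamma)$, which is precisely the defining equation $\CP X(b,\gamma)=\CP X(\phi,\CP X(\sy(-),\gamma))$ for $b=\phi$ to be $\colim_\phi\sy$. This exhibits every object of $\CP X$ as a weighted colimit of representables, and dually every object of $\CPd X$ as a weighted limit of representables.
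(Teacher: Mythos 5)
Your proposal is correct, and your primary route is genuinely different from the paper's. The paper proves colimit-density by exhibiting the witnessing weight directly: for each $\phi\in\CP X$ it computes ${\colim}_\phi\thinspace\sy=\colim(\phi\circ\sy^*)=\phi\circ\sy^*\circ\sy_*=\phi$, using the formula $\colim\Phi=\Phi\circ(\sy_X)_*$ from Example \ref{sup in PX} together with full faithfulness of $\sy$ (i.e.\ $\sy^*\circ\sy_*=X$), and leaves the coYoneda half to duality. You instead reduce to the Lai--Shen criterion of Proposition \ref{colimit-dense} and verify the distributor identities $\sy_*\swarrow\sy_*=\CP X$ and $(\syd)^*\searrow(\syd)^*=\CPd X$ by a two-line Yoneda computation; your bookkeeping of the argument order in $\swarrow$, $\searrow$ and of the reversed underlying order on $\CPd X$ is exactly right, and this route has the merit of treating both halves uniformly and explicitly rather than invoking duality. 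The trade-off is that it routes through Proposition \ref{colimit-dense} (legitimately — that result precedes this one and involves no circularity), whereas the paper's computation is self-contained given Example \ref{sup in PX}. Your closing alternative argument — verifying the defining equation $\CP X(\phi,\gamma)=\CP X(\phi,\CP X(\sy(-),\gamma))$ directly from the Yoneda lemma — is essentially the paper's proof in hom-object rather than distributor-composition notation, so you have in effect recovered both proofs.
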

	
	\begin{proof}   For  each $\phi \in\CP X$,  by Example \ref{sup in PX} we have $${\colim}_\phi\thinspace\sy =\colim\phi\circ\sy^*= \phi\circ\sy^*\circ\sy_*=\phi,$$ the last equality holds by full  faithfulness of $\sy$, hence $\sy$ is colimit-dense.  \end{proof}
	
	\begin{thm}\label{free cocompletion} {\rm(Kelly \cite{Kelly})} For each functor $f\colon X\lra Y$ with $Y$ separated and cocomplete, there is a unique left adjoint $\overline{f}\colon \CP X\lra Y$ such that $f=\overline{f}\circ\sy_X$. \end{thm}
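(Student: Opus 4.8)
The plan is to define $\overline{f}$ by weighted colimits and to recognise it as a composite of left adjoints. Concretely, set $\overline{f}(\phi)=\colim_\phi f=\colim(\phi\circ f^*)$ for each weight $\phi\in\CP X$; this colimit exists because $Y$ is cocomplete. Equivalently, $\overline{f}$ is the composite $\CP X\to^{f_\exists}\CP Y\to^{\colim_Y}Y$, since $f_\exists(\phi)=\phi\circ f^*$ and $\colim_Y$ is the left adjoint of $\sy_Y$ guaranteed by Proposition \ref{cocomplete via adj}. Being a composite of functors, $\overline{f}$ is a functor; and being a composite of left adjoints---$f_\exists$ is left adjoint to $f^{-1}$ by Proposition \ref{left and right kan}, and $\colim_Y\dashv\sy_Y$---it is itself a left adjoint. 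The last point is the enriched analogue of Lemma \ref{composite of left adjoints} and is immediate from the defining equation of an adjunction: if $f\dashv g$ and $f'\dashv g'$, then $Z(f'f(x),z)=Y(f(x),g'(z))=X(x,gg'(z))$, so $f'f\dashv gg'$.

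For the factorisation $f=\overline{f}\circ\sy_X$, I would compute $\overline{f}(\sy_X(a))=\colim_{\sy_X(a)}f$ straight from the definition of weighted colimit: an element $b$ equals $\colim_{\sy_X(a)}f$ if and only if $Y(b,y)=\CP X(\sy_X(a),Y(f(-),y))$ for all $y\in Y$. The function $x\mapsto Y(f(x),y)$ is a weight of $X$, since $X(x_1,x_2)\leq Y(f(x_1),f(x_2))$ and transitivity in $Y$ give $Y(f(x_2),y)\with X(x_1,x_2)\leq Y(f(x_1),y)$. Hence the Yoneda lemma evaluates the right-hand side to $Y(f(a),y)$, so $Y(b,y)=Y(f(a),y)$ for all $y$, forcing $b\cong f(a)$; separatedness of $Y$ then yields $\overline{f}(\sy_X(a))=f(a)$, which is exactly $f=\overline{f}\circ\sy_X$.

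Uniqueness is where colimit-density does the work. Suppose $g\colon\CP X\lra Y$ is any left adjoint with $g\circ\sy_X=f$. Since left adjoints preserve colimits and, by Proposition \ref{y is colimit dense}, every $\phi\in\CP X$ satisfies $\phi=\colim_\phi\sy_X$, preservation gives
\[
g(\phi)=g(\colim_\phi\sy_X)=\colim_\phi(g\circ\sy_X)=\colim_\phi f=\overline{f}(\phi),
\]
and separatedness of $Y$ upgrades this isomorphism to an equality, so $g=\overline{f}$.

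The adjunction and colimit-preservation bookkeeping is routine; the one genuinely load-bearing computation is the factorisation, where recognising $x\mapsto Y(f(x),y)$ as a weight of $X$ and applying the Yoneda lemma is the key move. I expect no serious obstacle beyond keeping the variances of $f_*$, $f^*$ and of the weights straight, and remembering that separatedness of $Y$ is what turns the various ``$\cong$'' into genuine equalities in both the factorisation and the uniqueness argument.
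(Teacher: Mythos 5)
Your proposal is correct and follows essentially the same route as the paper: define $\overline{f}$ as the composite $\colim_Y\circ f_\exists$ of left adjoints, and derive uniqueness from colimit-density of the Yoneda embedding ($\phi=\colim_\phi\sy_X$) together with colimit preservation. The only difference is that you spell out the factorisation $f=\overline{f}\circ\sy_X$ via the Yoneda lemma, a verification the paper leaves implicit.
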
 
	
	\begin{proof}   Let $\overline{f}$ be the composite of left adjoints  \[ \CP X\to^{f_\exists}\CP Y\to^{\colim_Y}Y,\] where $\colim_Y\colon\CP  Y\lra Y$ is the left adjoint of  $\sy_Y\colon Y\lra \CP Y$. Then $\overline{f}$ satisfies the requirement. This proves the existence.
		For uniqueness, suppose   $\overline{f}$ satisfies the conditions. For each $\phi\in\CP X$, since $\phi$ is the colimit of the Yoneda embedding $\sy_X\colon X\lra \CP X$ weighted by $\phi$ and $\overline{f}$ preserves colimits, then \[\overline{f}(\phi)= \overline{f}({\colim}_\phi\thinspace\sy_X)={\colim}_\phi(\overline{f}\circ\sy_X)  ={\colim}_\phi\thinspace f,\] showing that $\overline{f}$ is unique.
	\end{proof}

	\section{Tensor and cotensor} \label{tensor and cotensor}
	
	Cocompleteness of a real-enriched category $X$ requires that every weight of $X$ has a colimit. The aim of this section is to show that in order that all weights of a real-enriched category $X$ have   colimits, it suffices to require certain subclass of weights of $X$ have colimits.
	
	Let $X$ be a real-enriched category. For each $x\in X$ and  $r\in [0,1]$, the \emph{tensor of $r$ with $x$}, denoted by $r\otimes  x$, is an element of $X$ such that for all $y\in X$, \[X(r\otimes  x,y)= r\ra X(x,y).\] Tensors are a special kind of colimits. Actually, $r\otimes x$ is the colimit of the functor $s_x\colon\star\lra X$ weighted by $r_\star$, where, $s_x$ is the functor sending the unique object of $\star$ to $x$, and  $r_\star$ is the weight of   $\star$   with value $r$.  Said differently,  \[r\otimes  x=\colim(r\with\sy(x)).\]
	
	For each $y\in X$ and  $r\in [0,1]$, the \emph{cotensor of $r$ with} $y$, denoted by $r\multimap y$,  is an element of $X$ such that for all $x\in X$, \[X(x,r\multimap y)=r\ra X(x,y).\] The cotensor $r\multimap y$ is the limit of the functor $s_y\colon\star\lra X$ weighted by $r_\star$. Said differently,  \[r\multimap y=\lim(r\with\syd(y)).\]
	
	\begin{defn}A real-enriched category $X$ is   tensored if the tensor $r\otimes  x$ exists for all $x\in X$ and $r\in [0,1]$; $X$ is   cotensored if the cotensor $r\multimap y$ exists for all $y\in X$ and $r\in [0,1]$. \end{defn}
	
	It is clear that $X$ is tensored if and only if   its opposite $X^{\rm op}$  is cotensored.  Every complete (hence cocomplete) real-enriched category is   both tensored and  cotensored.
	
	If   $X$  is both tensored and cotensored, then for all $r\in[0,1]$ and $x,y\in X$, \begin{align*}  r\otimes x\sqsubseteq y~~\text{in}~~ X_0&\iff r\leq  X(x,y)\\ &\iff x\sqsubseteq r\multimap y ~~\text{in}~~ X_0.\end{align*} Therefore,   \[r\otimes -\colon X_0\lra X_0\] is  left adjoint to \[r\multimap -\colon X_0\lra X_0.\] This shows that in a both tensored and cotensored  real-enriched category, tensors and cotensors are determined by each other.

	\begin{exmp}The real-enriched category $\sV=([0,1],\alpha_L)$ is tensored and cotensored. For each $r\in [0,1]$ and each $x\in V$,\[r\otimes  x=r\with x, \quad r\multimap x=r\ra x.\]
		
		The real-enriched category $\sV^{\rm op}=([0,1],\alpha_R)$ is also tensored and cotensored. For each $r\in [0,1]$ and each $y\in \sV^{\rm op}$, \[r\otimes  y =r\ra y, \quad r\multimap y=y\with r.\] \end{exmp}
	
	\begin{exmp}\label{tensor and cotensor in PX} Let $X$ be a real-enriched category. In $\CP X$, the tensor and cotensor of $r\in [0,1]$ with $\phi\in\CP X$ are, respectively,   $r\with\phi$ and $r\ra\phi$. In $\CPd X$, the tensor and cotensor of $r\in [0,1]$ with $\psi\in\CPd X$ are  $r\ra  \psi$ and $r\with\psi $, respectively.\end{exmp}

	Let $\mathcal{B}X$ be the subset of $\CP X$ consisting of  weights of the form $r\with\sy(x)$; that is, \[\mathcal{B}X=\{r\with\sy(x)\mid x\in X, r\in [0,1]\}. \] Since for each functor $f\colon X\lra Y$, each $x\in X$ and each $r\in[0,1]$, $f_\exists(r\with\sy_X(x))=r\with\sy_Y(f(x))$, it follows that the assignment $X\mapsto \CB X$ defines a subfunctor of the presheaf functor $\CP$, through which the natural transformation $\sy\colon \id\lra\CP$ factors.  
	
	Write $\mathfrak{i}\colon\CB X\lra\CP X$ for the inclusion functor and ${\sf t}\colon X\lra  \mathcal{B}X$ for the Yoneda embedding with codomain restricted to $\mathcal{B}X$. It is clear that $\sy=\mathfrak{i}\circ{\sf t}$.
	
	\begin{prop}\label{property of tensor} For each  real-enriched category $X$, the following   are equivalent: 
		\begin{enumerate}[label={\rm(\arabic*)}] 
			\item $X$ is tensored. 
			\item The  functor ${\sf t}  \colon X\lra  \mathcal{B}X$ has a left adjoint. 
			\item For each $x\in X$, the functor $X(x,-)\colon X\lra \sV$  is a right adjoint, where $\sV=([0,1],\alpha_L)$.
		\end{enumerate} In this case, $X(x,-)\colon X\lra \sV$ is right adjoint to $-\otimes  x\colon \sV\lra X$.    \end{prop}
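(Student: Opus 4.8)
The plan is to notice that all three conditions are repackagings of the single defining hom-equality of tensors, and to let Theorem~\ref{Characterization of adjoints} handle the bookkeeping: once an equality of the form $Y(f(x),y)=X(x,g(y))$ for all $x,y$ is established, that theorem guarantees automatically that $f$ and $g$ are functors with $f\dashv g$, so I never verify functoriality by hand. The governing identity is that, by definition, $b$ is a tensor $r\otimes x$ exactly when $X(b,y)=r\ra X(x,y)=\sV(r,X(x,y))$ for all $y\in X$, where $\sV(r,s)=r\ra s$.

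For (1)$\Leftrightarrow$(3): if $X$ is tensored, I would define $L\colon\sV\lra X$ by $L(r)=r\otimes x$ and read off $X(L(r),y)=r\ra X(x,y)=\sV(r,(X(x,-))(y))$; by Theorem~\ref{Characterization of adjoints} this is precisely what makes $L\dashv X(x,-)$, so $X(x,-)$ is a right adjoint whose left adjoint is $-\otimes x$, which also delivers the final clause of the statement. Conversely, if $X(x,-)$ has a left adjoint $L$, the adjunction equality $X(L(r),y)=\sV(r,X(x,y))=r\ra X(x,y)$ exhibits $L(r)$ as a tensor $r\otimes x$; letting $x$ range over $X$ gives tensoredness.

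For (1)$\Leftrightarrow$(2): the computation I need is $\CB X(r\with\sy(x),\sy(y))=r\ra X(x,y)$. Since $\CB X$ carries the hom inherited from $\CP X$, and since $r\with\sy(x)$ is exactly the tensor $r\otimes\sy(x)$ in $\CP X$ (Example~\ref{tensor and cotensor in PX}), the defining property of that tensor together with the Yoneda lemma gives
\[\CB X(r\with\sy(x),\sy(y))=\CP X(r\otimes\sy(x),\sy(y))=r\ra\CP X(\sy(x),\sy(y))=r\ra X(x,y).\]
Thus any left adjoint $S$ of ${\sf t}$ must satisfy $X(S(r\with\sy(x)),y)=r\ra X(x,y)$, i.e. $S(r\with\sy(x))$ is a tensor $r\otimes x$. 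Hence if ${\sf t}$ has a left adjoint then every $r\otimes x$ exists; and if $X$ is tensored I define $S$ on each $\xi\in\CB X$ by choosing a presentation $\xi=r\with\sy(x)$ and setting $S(\xi)=r\otimes x$, whereupon the displayed identity is exactly the hom-equality feeding Theorem~\ref{Characterization of adjoints}.

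The one step needing care is the well-definedness of $S$ in (1)$\Rightarrow$(2): an element of $\CB X$ may admit several presentations $r\with\sy(x)$, and $X$ need not be separated. I expect this to be the only real obstacle, and it is mild, because the intended value $X(S(\xi),-)$ is forced to equal $\CB X(\xi,\sy(-))$, which depends on $\xi$ alone; so any two candidate values for $S(\xi)$ are isomorphic, and a choice of representatives yields a genuine functor satisfying the required equality. No other step presents difficulty.
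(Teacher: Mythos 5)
Your proposal is correct and follows essentially the same route as the paper: the paper's (terse) proof says exactly that the tensor $r\otimes x$ is the colimit of the weight $r\with\sy(x)$ — which, unfolded, is your hom-computation $\CB X(r\with\sy(x),\sy(y))=r\ra X(x,y)$ — and that (3) is a restatement of the defining equality $X(r\otimes x,y)=r\ra X(x,y)$. Your additional care about well-definedness of the left adjoint up to isomorphism (since $X$ need not be separated) and your use of Theorem \ref{Characterization of adjoints} to get functoriality for free are just explicit versions of what the paper leaves implicit.
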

	
	\begin{proof} $(1)\Leftrightarrow(2)$ This follows from the fact that the tensor $r\otimes  x$ of $r$ with $x$, if exists, is precisely the colimit  of the weight $r\with\sy(x)$.
		
		$(1)\Leftrightarrow(3)$ A restatement of  the equality $X(r\otimes  x,y)= r\ra X(x,y)$. \end{proof}

	Dual conclusions hold for cotensored real-enriched categories. We spell them out for later use. Let $\mathcal{B}^\dag X$ be the subset of $\CPd  X$ consisting of  coweights of the form $r\with\syd(y) $, $y\in X, r\in[0,1]$. 
	
	Write $\mathfrak{i}^\dag\colon\CB^\dag X\lra\CPd X$ for the inclusion functor and ${\sf t}^\dag\colon X\lra  \mathcal{B}^\dag X$ for   the coYoneda embedding with codomain restricted to $\mathcal{B}^\dag X$.  
	
	\begin{prop}\label{chara of cotensored} For each  real-enriched category $X$, the following   are equivalent: 
		\begin{enumerate}[label={\rm(\arabic*)}]  
			\item $X$ is cotensored. 
			\item The map ${\sf t}^\dag\colon X\lra  \mathcal{B}^\dag X$ has a right adjoint. 
			\item For each $y\in X$, the  map $X(-,y)\colon X\lra \sV^{\rm op} $ is a left adjoint, where $\sV^{\rm op}=([0,1],\alpha_R)$. 
		\end{enumerate}  In this case, $X(-,y)\colon X\lra \sV^{\rm op}$ is left adjoint to $-\multimap  y\colon \sV^{\rm op}\lra X$.    \end{prop}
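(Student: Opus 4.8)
The plan is to dualize the proof of Proposition~\ref{property of tensor}, the only real work being bookkeeping of variance: the underlying order of $\CPd X$ is opposite to the pointwise order on coweights, and $\sV^{\rm op}=([0,1],\alpha_R)$ carries the reversed order, so one must check that ${\sf t}^\dag$ acquires a \emph{right} adjoint and that $X(-,y)$ sits on the \emph{left} of the adjunction with $-\multimap y$.

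For $(1)\Leftrightarrow(2)$ I would first record, for all $x,y\in X$ and $r\in[0,1]$, the identity
\[\CPd X(\syd(x),\, r\with\syd(y)) = r\ra X(x,y).\]
This comes from $\CPd X(\syd(x), r\with\syd(y))=\sub_X(r\with\syd(y),\syd(x))=\inf_{z}\big((r\with X(y,z))\ra X(x,z)\big)=r\ra\inf_z\big(X(y,z)\ra X(x,z)\big)$, using that $(r\with s)\ra t=r\ra(s\ra t)$ and that $r\ra-$ preserves meets, together with $\inf_z(X(y,z)\ra X(x,z))=X(x,y)$ (the inequality $\leq$ by taking $z=y$, the reverse by transitivity). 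Comparing this with the defining equation $X(x,r\multimap y)=r\ra X(x,y)$ of the cotensor shows that $\lim(r\with\syd(y))$ exists iff $r\multimap y$ exists, and then they coincide. Since ${\sf t}^\dag$ is fully faithful and $\mathcal{B}^\dag X$ consists exactly of the coweights $r\with\syd(y)$, a right adjoint of ${\sf t}^\dag$ is precisely a functor sending each such coweight to its limit; hence $X$ is cotensored iff ${\sf t}^\dag$ has a right adjoint.

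For $(1)\Leftrightarrow(3)$ and the final assertion, I note first that $X(-,y)\colon X\lra\sV^{\rm op}$ is a functor because transitivity gives $X(x_1,x_2)\with X(x_2,y)\leq X(x_1,y)$, i.e. $X(x_1,x_2)\leq \alpha_R(X(x_1,y),X(x_2,y))$. Reading the cotensor equation as $X(x,r\multimap y)=\sV^{\rm op}(X(x,y),r)$ exhibits it as the adjunction identity for $X(-,y)\dashv(-\multimap y)$: if $X$ is cotensored the functor $-\multimap y$ exists and this equality is exactly the adjunction, while conversely any right adjoint $G$ of $X(-,y)$ satisfies $X(x,G(r))=\sV^{\rm op}(X(x,y),r)=r\ra X(x,y)$, so $G(r)$ is a cotensor and $X$ is cotensored. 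There is no deep obstacle here; the one place to stay alert is precisely the variance accounting just described, which the reversed orders make easy to get backwards.
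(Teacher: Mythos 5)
Your proof is correct and is essentially the paper's own argument: the paper proves the tensored version (Proposition \ref{property of tensor}) via the identity $\colim(r\with\sy(x))=r\otimes x$ and the equation $X(r\otimes x,y)=r\ra X(x,y)$, and then states the cotensored version as its dual, which is exactly the dualization you carry out — your key identity $\CPd X(\syd(x),r\with\syd(y))=r\ra X(x,y)$ is the statement $\lim(r\with\syd(y))=r\multimap y$ already recorded in the paper, and your reading of the cotensor equation as the adjunction identity for $X(-,y)\dashv(-\multimap y)$ matches the paper's ``restatement'' step. The variance bookkeeping you spell out is sound throughout.
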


	\begin{prop}\label{functor via tensors} Suppose $f\colon X\lra Y$ is a map between real-enriched categories. 
		\begin{enumerate}[label={\rm(\roman*)}] 
			\item If both $X$ and $Y$ are tensored, then $f$ is a functor if and only if $f\colon X_0\lra Y_0$ preserves order and $r\otimes  f(x)\sqsubseteq f(r\otimes  x)$ for all $x\in X$ and $r\in [0,1]$.   \item If both $X$ and $Y$ are cotensored, then $f$ is a functor if and only if $f\colon X_0\lra Y_0$ is order-preserving and $f(r\multimap y)\sqsubseteq  r\multimap f(y)$ for all $y\in X$ and $r\in [0,1]$. \end{enumerate} \end{prop}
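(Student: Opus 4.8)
The plan is to prove (i) directly from the defining property of tensors and then obtain (ii) by dualization. The single fact I will use throughout is the equivalence
\[ r\otimes x\sqsubseteq y\ \text{in}\ X_0 \iff r\leq X(x,y),\]
which is immediate from $X(r\otimes x,y)=r\ra X(x,y)$ upon setting the left-hand side equal to $1$ (recall $r\ra s=1$ iff $r\leq s$); the same equivalence holds in $Y$ since $Y$ is tensored as well. Taking $y=r\otimes x$ in it gives for free the unit inequality $r\leq X(x,r\otimes x)$, and this is essentially all the extra input the argument needs.

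For the forward direction of (i), assume $f$ is a functor. Order preservation of $f\colon X_0\lra Y_0$ is routine: if $x\sqsubseteq y$ then $1=X(x,y)\leq Y(f(x),f(y))$, so $f(x)\sqsubseteq f(y)$. For the tensor condition I will chase
\[ r\leq X(x,r\otimes x)\leq Y(f(x),f(r\otimes x)),\]
the first step being the unit inequality above and the second being functoriality of $f$; by the displayed equivalence applied in $Y$ this reads exactly as $r\otimes f(x)\sqsubseteq f(r\otimes x)$.

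The converse is where the one small but essential choice appears. Suppose $f\colon X_0\lra Y_0$ preserves order and $r\otimes f(x)\sqsubseteq f(r\otimes x)$ for all $r,x$; I want $X(x,y)\leq Y(f(x),f(y))$. Here I will set $r:=X(x,y)$. Then $r\leq X(x,y)$ gives $r\otimes x\sqsubseteq y$, so order preservation yields $f(r\otimes x)\sqsubseteq f(y)$; combining this with the hypothesis $r\otimes f(x)\sqsubseteq f(r\otimes x)$ by transitivity in $Y_0$ gives $r\otimes f(x)\sqsubseteq f(y)$, that is $r\leq Y(f(x),f(y))$. Since $r=X(x,y)$, this is precisely the inequality defining a functor.

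Finally, (ii) follows from (i) applied to the opposite categories, with no new content. A map is a functor $X\lra Y$ iff it is a functor $X^{\rm op}\lra Y^{\rm op}$; $X$ is cotensored iff $X^{\rm op}$ is tensored, with the cotensor $r\multimap y$ in $X$ equal to the tensor of $r$ with $y$ computed in $X^{\rm op}$; and $(X^{\rm op})_0=(X_0)^{\rm op}$, so order preservation is unaffected by passing to opposites. Unwinding these identifications turns the tensor condition of (i) for $X^{\rm op},Y^{\rm op}$ into $f(r\multimap y)\sqsubseteq r\multimap f(y)$ in $Y$, which is exactly the stated condition in (ii). I do not expect a genuine obstacle anywhere; the only delicate point in the whole argument is that the tensor inequalities are one-sided (they run $r\otimes f(x)\sqsubseteq f(r\otimes x)$, not the reverse), so the direction of every $\sqsubseteq$ must be tracked carefully, most of all when reversing order through the dualization used for (ii).
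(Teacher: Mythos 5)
Your proof is correct and follows essentially the same route as the paper: the necessity via the unit inequality $r\leq X(x,r\otimes x)$ and functoriality, and the sufficiency via the key specialization $r:=X(x,y)$ so that $r\otimes x\sqsubseteq y$, combined with order preservation and the tensor hypothesis. The paper likewise proves only (i) and leaves (ii) to duality, which your explicit unwinding of opposites carries out correctly.
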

	
	\begin{proof} We prove (i) for example.
		Suppose  $f$ is a functor. First, it is trivial that $f\colon  X_0\lra Y_0$ preserves order. Next, let $r\in [0,1]$ and  $x\in X$. Since $X$ is tensored and $1\leq X(r\otimes  x, r\otimes  x)$, it follows that $r\leq X(x,r\otimes  x)$, then $r\leq Y(f(x),f(r\otimes  x))$ and consequently, $r\otimes  f(x)\leq f(r\otimes  x)$. This proves the necessity.
		
		Now we prove the sufficiency. Let  $x, y\in X$. Since $X(x, y)\leq X(x, y)$, it follows that  $X(x, y)\otimes  x\leq y$, then \[X(x, y)\otimes  f(x)\leq f(X(x, y)\otimes  x)\leq f(y),\] and consequently $X(x,y)\leq Y(f(x),f(y))$,  so  $f$ is a functor. \end{proof}

	A weight $\phi$ of $X$ is   \emph{conical} if there is a functor $f\colon K\lra X$ such that $\phi=\thinspace!_*\circ f^*$, where $!_*$ is the graph of the unique functor   $!\colon K\lra\star$. Dually, a coweight $\psi$ of $X $ is   \emph{conical} if there is a functor $f\colon K\lra X$ such that $\psi=  f_*\circ\thinspace !^*$, where $!^*$ is the cograph  the unique functor   $!\colon K\lra\star$. It is easily verified that a weight $\phi$ is conical if and only if \[\phi=\sup_{a\in A}\sy(a)\] for a subset $A$ of $X$; a coweight $\psi$ is conical if and only if   \[\psi=\sup_{b\in B}\syd(b)\] for a subset $B$ of $X$.  
	
	\begin{defn} {\rm (Kelly \cite{Kelly}, Stubbe \cite{Stubbe2006})} Suppose  $X$ is a real-enriched category. We say that  
		\begin{enumerate}[label={\rm(\roman*)}]  
			\item $X$ is  conically cocomplete  if every conical weight of $X$ has a colimit. 
			\item $X$ is  conically  complete  if every conical coweight of $X$ has a limit. 
			\item  $X$ is order-complete if the underlying ordered set   $X_0$ is complete. \end{enumerate} \end{defn}
	
	\begin{thm}\label{characterizations of complete Q-orders} {\rm (Kelly \cite{Kelly}, Stubbe \cite{Stubbe2006})} For each real-enriched category $X$, the following statements are equivalent: \begin{enumerate}[label={\rm(\arabic*)}]  
			\item  $X$ is cocomplete. 
			\item  $X$ is complete. 
			\item $X$ is tensored and  conically cocomplete. \item $X$ is cotensored and  conically  complete. 
			\item $X$ is order-complete, tensored and  cotensored.
	\end{enumerate} \end{thm}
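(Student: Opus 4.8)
The plan is to take Theorem \ref{complete is self-dual} as given, so that $(1)\Leftrightarrow(2)$ is free, and then to establish $(1)\Leftrightarrow(3)$, to deduce $(2)\Leftrightarrow(4)$ from it by duality, and finally to prove $(1)\Leftrightarrow(5)$. The duality is the observation that tensors in $X$ are cotensors in $X^{\rm op}$, conical weights of $X$ are conical coweights of $X^{\rm op}$, and colimits in $X$ are limits in $X^{\rm op}$ (Remark \ref{limit vs colimit}); thus ``$(3)$ for $X^{\rm op}$'' reads as ``$(4)$ for $X$'' and ``$(1)$ for $X^{\rm op}$'' as ``$(2)$ for $X$'', so $(4)\Leftrightarrow(2)$ follows once $(3)\Leftrightarrow(1)$ is proved. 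The genuinely nontrivial inclusions are $(3)\Rightarrow(1)$ and $(5)\Rightarrow(1)$; everything else is bookkeeping.

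The computational backbone is the pointwise decomposition of an arbitrary weight into tensored representables: for any $\phi\in\CP X$ one has
\[
\phi=\sup_{x\in X}\bigl(\phi(x)\with\sy(x)\bigr),
\]
since $\phi(x)\with X(z,x)\le\phi(z)$ by the distributor inequality, with equality at $z=x$. The weight $\phi(x)\with\sy(x)$ is precisely the one whose colimit is the tensor $\phi(x)\otimes x$. The easy directions are then immediate: tensors are colimits of weights $r\with\sy(x)$ and conical colimits are colimits of conical weights, so $(1)\Rightarrow(3)$; a complete (hence cocomplete) category is tensored and cotensored as recorded before the statement, while a conical weight $\sup_{a\in A}\sy(a)$ has colimit exactly the join $\bigvee A$ in $X_0$ — one reads off from $X(\colim\phi,y)=\inf_a X(a,y)$ that $\colim\phi$ is the least upper bound of $A$ — so cocompleteness forces $X_0$ to be order-complete by Lemma \ref{join vs meet}, giving $(1)\Rightarrow(5)$.

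The heart of the matter is the commutation of weighted colimits with pointwise suprema. I would prove the lemma: if $\{\phi_i\}_i$ is a family of weights each admitting a colimit, then $\colim(\sup_i\phi_i)$ exists as soon as the conical colimit of $\{\colim\phi_i\}_i$ does, and the two coincide. This is a short calculation: using $(\sup_i a_i)\ra b=\inf_i(a_i\ra b)$ one gets $\CP X(\sup_i\phi_i,\sy(y))=\inf_i\CP X(\phi_i,\sy(y))=\inf_i X(\colim\phi_i,y)$, which by the Yoneda lemma is exactly $X(c,y)$ for $c$ the conical colimit of $\{\colim\phi_i\}_i$. Applying this to the decomposition above with $\phi_i=\phi(x)\with\sy(x)$ and $\colim\phi_i=\phi(x)\otimes x$ proves $(3)\Rightarrow(1)$: tensoredness supplies each $\phi(x)\otimes x$, conical cocompleteness supplies the conical colimit of $\{\phi(x)\otimes x\}_{x\in X}$, and the lemma identifies it with $\colim\phi$.

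It remains to close the loop with $(5)\Rightarrow(1)$, and this is where I expect the only real subtlety. From $(5)$ I would show $X$ is conically cocomplete, so that $(5)\Rightarrow(3)\Rightarrow(1)$ finishes, by proving that for a conical weight $\sup_{a\in A}\sy(a)$ the join $c=\bigvee A$ (available since $X_0$ is order-complete) satisfies the \emph{enriched} equality $X(c,y)=\inf_a X(a,y)$. The inequality $X(c,y)\le\inf_a X(a,y)$ is automatic from $a\sqsubseteq c$ and transitivity. For the reverse, set $r=\inf_a X(a,y)$; then $r\le X(a,y)$, i.e.\ $r\otimes a\sqsubseteq y$ for every $a$. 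The obstacle is to pass from this family of inequalities to $r\otimes c\sqsubseteq y$: this requires $r\otimes-$ to preserve the join $\bigvee A$, which is exactly where \emph{both} halves of $(5)$ enter — tensoredness together with cotensoredness make $r\otimes-\colon X_0\lra X_0$ left adjoint to $r\multimap-$, so by Theorem \ref{lapj} it preserves all joins in the complete lattice $X_0$. Hence $r\otimes c=\bigvee_a(r\otimes a)\sqsubseteq y$, giving $r\le X(c,y)$ and the desired equality. I expect the management of these adjunctions on $X_0$, and keeping straight the reversed order on coweights when dualizing, to be the most error-prone part rather than any deep difficulty.
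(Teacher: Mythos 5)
Your proof is correct, and it is organized differently from the paper's. The paper closes the cycle $(1)\Rightarrow(3)\Rightarrow(5)\Rightarrow(1)$: its step $(3)\Rightarrow(5)$ constructs cotensors out of tensors and conical colimits (the cotensor $r\multimap y$ is obtained as the conical colimit of $\{a\mid r\leq X(a,y)\}$), and its step $(5)\Rightarrow(1)$ verifies directly, via the Yoneda lemma and the fact that $X(-,y)\colon X\lra \sV^{\rm op}$ is a left adjoint (Proposition \ref{chara of cotensored}), that the order-join of $\{\phi(x)\otimes x\}$ represents $X\swarrow\phi$. You instead isolate a commutation lemma --- $\CP X(\sup_i\phi_i,\sy(y))=\inf_i X(\colim\phi_i,y)$, so the colimit of a pointwise supremum of weights is the conical colimit of their colimits --- which, combined with the decomposition $\phi=\sup_x\phi(x)\with\sy(x)$, closes $(3)\Rightarrow(1)$ without ever mentioning cotensors; cotensoredness then enters only in your $(5)\Rightarrow(3)$, through the adjunction $r\otimes-\dashv r\multimap-$ on $X_0$ and Theorem \ref{lapj}, to upgrade the order-theoretic join of a subset to an enriched conical colimit. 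The computational core (Yoneda plus the tensored decomposition of a weight) is shared, but the bookkeeping differs genuinely: your version is more modular and the lemma is reusable, while the paper's detour through $(5)$ buys the extra information that conical cocompleteness together with tensoredness already produces all cotensors --- a fact your argument neither needs nor yields.
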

	
	\begin{proof} By duality and Theorem \ref{complete is self-dual} we only need to check  that (1), (3) and (5) are equivalent.
		
		$(1)\Rightarrow(3)$ Obvious.
		
		$(3)\Rightarrow(5)$ First we show that $X$ is order-complete. Suppose $A$ is a subset of $X$.  We show that each colimit of the conical weight $\phi= \sup_{x\in A}\sy(x)$   is a join of $A$ in $X_0$.  For all $x\in A$,   we have \begin{align*}X(x,\colim\phi)&=   X(\colim\phi,-)\searrow X(x,-) & (\text{Yoneda lemma}) \\ 
			&= (X\swarrow\phi)\searrow X(x,-) & (\text{definition of $\colim\phi$}) \\ 
			&\geq (X\swarrow X(-,x))\searrow X(x,-) &(x\in A)\\ &= X(x,-)\searrow X(x,-) \\ 
			&=1,
		\end{align*} which shows that $\colim\phi$ is an upper bound of $A$ in $X_0$. Suppose that $y$ is another upper bound of $A$ in $X_0$. Then \begin{align*}X(\colim\phi,y) 
			&= X(-,y)\swarrow\phi \\ 
			&=\inf_{x\in A}(X(-,y)\swarrow X(-,x))\\ 
			& = \inf_{x\in A} X(x,y)\\ 
			& = 1, 
		\end{align*} hence $\colim\phi\sqsubseteq y$ in $X_0$. Therefore, $\colim\phi$ is a join of $A$ in $X_0$.
		
		Next we show that $X$ is cotensored. For each $r\in[0,1]$ and $y\in X$, let $$ A=\{a\in X\mid  r\leq X(a,y)\}.$$ We show that the colimit of the conical weight $\phi=\bv_{a\in A}\sy(a)$ is the cotensor of $r$ with $y$; that is, $X(x,\colim\phi)=r\ra X(x,y)$ for all $x\in X$. For all  $s\in[0,1]$, \begin{align*} s\leq r\ra X(x,y) &\iff r\leq s\ra X(x,y) \\ 
			&\iff r\leq X(s\otimes x, y) \\ 
			& \iff  s\otimes x\sqsubseteq\colim\phi &(\text{$\colim\phi$ is a join of $A$ in $X_0$})\\ &\iff s\leq X(x,\colim\phi), 
		\end{align*} hence $\colim\phi$ is the cotensor of $r$ with $y$.
		
		$(5)\Rightarrow(1)$ 
		For a weight $\phi$ of $X$, let $a$ be the join in $X_0$ of the set $A=\{\phi(x)\otimes  x\mid x\in X\}$. We   show that $a$ is a colimit of $\phi$, hence $X$ is cocomplete.   
		For all $y\in X$, since $a$ is a join of $A$ in $X_0$, then 
		\begin{align*}X(a,y)& =\inf_{z\in A}X(z,y) &(\text{Proposition \ref{chara of cotensored}\thinspace(3)})\\ 
			& = \inf_{z\in A}(X(-,y)\swarrow X(-,z)) &(\text{Yoneda lemma}) \\ 
			&= X(-,y)\swarrow\sup_{z\in A}X(-,z)\\  
			& =X(-,y)\swarrow\sup_{x\in X}X(-,\phi(x)\otimes  x)\\  
			& = \inf_{x\in X}(X(-,y)\swarrow X(-,\phi(x)\otimes  x))   \\ 
			& = \inf_{x\in X}   X(\phi(x)\otimes  x,y) &(\text{Yoneda lemma}) \\ 
			& = \inf_{x\in X} (\phi(x)\ra  X(x,y)) &(\text{definition of tensor})\\ &= \CP X (\phi,\sy(y)),
		\end{align*} hence $a$ is a colimit of $\phi$. 
	\end{proof}
	
	\begin{cor}\label{left adjoint via tensor and join} Let $X,Y$ be cocomplete real-enriched categories. Then  a functor $f\colon X\lra Y$ is a left adjoint if and only if 
		\begin{enumerate}[label={\rm(\roman*)}] 
			\item $f\colon X_0\lra Y_0$ preserves joins;     \item $f$ preserves tensors in the sense that $f(r\otimes  x)=r\otimes  f(x)$ for all $r\in [0,1]$ and $x\in X$. 
	\end{enumerate}   \end{cor}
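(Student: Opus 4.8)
The plan is to reduce both directions to the two structural theorems already proved: that every left adjoint preserves colimits, and conversely that a colimit-preserving functor out of a cocomplete real-enriched category is a left adjoint. The conceptual point is that joins (conical colimits) and tensors together generate all weighted colimits, which is made precise by the formula $\colim\phi=\bv_{x\in X}(\phi(x)\otimes x)$ obtained in the proof of Theorem \ref{characterizations of complete Q-orders}. So neither direction should require anything genuinely new.

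For necessity I would argue as follows. If $f$ is a left adjoint in $\QOrd$, then by Theorem \ref{Characterization of adjoints} the underlying map $f\colon X_0\lra Y_0$ is a left adjoint between ordered sets, hence preserves joins by Theorem \ref{lapj}; this is condition (i). For (ii) I would use that a left adjoint preserves all colimits. Since $r\otimes x=\colim(r\with\sy(x))$, preservation gives $f(r\otimes x)=\colim_{r\with\sy_X(x)}f=\colim\big(f_\exists(r\with\sy_X(x))\big)$, and the identity $f_\exists(r\with\sy_X(x))=r\with\sy_Y(f(x))$ noted just before Proposition \ref{property of tensor} turns this into $\colim(r\with\sy_Y(f(x)))=r\otimes f(x)$.

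For sufficiency I would assume (i) and (ii) and show that $f$ preserves the colimit of an arbitrary weight $\phi$ of $X$; the theorem that colimit-preserving functors from cocomplete categories are left adjoints then finishes the proof. Concretely I must verify $Y(f(\colim\phi),y)=\CP X(\phi,Y(f(-),y))$ for all $y\in Y$. Since $X$ is cocomplete it is order-complete, tensored and cotensored, so $\colim\phi=\bv_{x\in X}(\phi(x)\otimes x)$ in $X_0$, and applying (i) and (ii) yields $f(\colim\phi)=\bv_{x\in X}(\phi(x)\otimes f(x))$. Because $Y$ is cotensored, Proposition \ref{chara of cotensored} gives that $Y(-,y)\colon Y\lra\sV^{\rm op}$ is a left adjoint, so its underlying map preserves joins by Theorem \ref{lapj}; as the order on $\sV^{\rm op}$ is reversed, joins there are infima in $[0,1]$. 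Hence
\[Y(f(\colim\phi),y)=\bw_{x\in X}Y(\phi(x)\otimes f(x),y)=\bw_{x\in X}\big(\phi(x)\ra Y(f(x),y)\big)=\CP X(\phi,Y(f(-),y)),\]
where the middle equality is just the defining property of the tensor. This is the universal property of $\colim_\phi f$, so $f(\colim\phi)=\colim_\phi f$.

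I do not expect a serious obstacle here: the argument is essentially a repackaging of the join formula from Theorem \ref{characterizations of complete Q-orders}. The one place demanding care is the bookkeeping around $\sV^{\rm op}$ — both in reading off from Proposition \ref{chara of cotensored} that $Y(-,y)$ lands in $\sV^{\rm op}$ as a \emph{left} adjoint, and in correctly converting ``preserves joins'' into ``sends joins to infima'' because of the reversed underlying order. Once that is handled, every remaining step is a routine manipulation of the distributor calculus.
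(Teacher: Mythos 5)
Your proof is correct, but the sufficiency direction runs along a genuinely different track than the paper's. The paper constructs the right adjoint directly: since $f\colon X_0\lra Y_0$ preserves joins and $X_0$ is complete, Theorem \ref{lapj} yields an order-theoretic right adjoint $g\colon Y_0\lra X_0$; the only remaining work is to check that $g$ is an enriched functor, which by Proposition \ref{functor via tensors} reduces to the single inequality $r\otimes g(y)\sqsubseteq g(r\otimes y)$, verified in one line from $f(r\otimes g(y))=r\otimes f(g(y))\sqsubseteq r\otimes y$; Theorem \ref{Characterization of adjoints} then upgrades the ordered-set adjunction to an enriched one. You instead show that $f$ preserves all weighted colimits --- by decomposing $\colim\phi$ as the join $\bv_{x\in X}(\phi(x)\otimes x)$ in $X_0$ and using cotensoredness of $Y$ to push $Y(-,y)$ through the resulting join --- and then invoke the theorem that a colimit-preserving functor out of a cocomplete category is a left adjoint. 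Both are sound; the paper's argument is shorter and never leaves the underlying order, while yours makes explicit the conceptual point that joins and tensors generate all weighted colimits, in effect re-deriving Corollary \ref{calculation of sup by tensors} (which the paper proves immediately afterwards by the slicker route of $\colim\colon\CP X\lra X$ preserving colimits). Your handling of $\sV^{\rm op}$ --- reading Proposition \ref{chara of cotensored} as saying $Y(-,y)$ is a left adjoint into $\sV^{\rm op}$ and converting joins there into infima in $[0,1]$ --- is exactly the delicate point, and it matches the step cited as Proposition \ref{chara of cotensored}\thinspace(3) in the paper's own proof of Theorem \ref{characterizations of complete Q-orders}. The necessity directions essentially coincide: the paper dismisses it with ``tensors are a special kind of colimits,'' which is the argument you spell out.
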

	
	\begin{proof} Necessity is trivial since tensors are a special kind of colimits. For sufficiency, let $g\colon Y\lra X$ be the map such that $g\colon Y_0\lra X_0$ is right adjoint to $f\colon X_0\lra Y_0$. We claim that $g\colon Y\lra X$ is a right adjoint of $f\colon X\lra Y$. By Theorem \ref{Characterization of adjoints} and Proposition \ref{functor via tensors}, we only need to check that $r\otimes  g(y)\leq g(r\otimes  y)$ for all $r\in [0,1]$ and $y\in Y$. Since \[f(r\otimes  g(y))=r\otimes  f(g(y))\leq r\otimes  y,\] it follows that $r\otimes  g(y)\leq g(r\otimes  y)$, as desired.
	\end{proof}
	
	\begin{cor}\label{calculation of sup by tensors} Let $X$ be a complete real-enriched category. Then for each functor  $f\colon K\lra X$ and each weight $\phi$ of $ K$, the colimit of $f$ weighted by $\phi$  is given by  the join of  $\{\phi(z)\otimes  f(z)\mid z\in K\}$ in $X_0$. Dually,  for each coweight $\psi$ of $K$, the limit of $f$ weighted by $\psi$ is given by  the meet of  $\{\psi(z)\multimap f(z)\mid z\in K\}$ in $X_0$.  \end{cor}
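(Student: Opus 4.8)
The plan is to verify the defining universal property of the weighted colimit directly, reducing everything to the behaviour of the hom-functors $X(-,y)$ under joins. Recall that, by definition, an element $b$ is the colimit of $f$ weighted by $\phi$ precisely when $X(b,y)=\CP K(\phi,X(f(-),y))=\inf_{z\in K}(\phi(z)\ra X(f(z),y))$ for every $y\in X$. So I would set
$$a=\bigvee_{z\in K}\phi(z)\otimes f(z),$$
the join being taken in $X_0$ (this join exists because a complete real-enriched category is order-complete and tensored by Theorem \ref{characterizations of complete Q-orders}), and then show that $X(a,y)$ equals the right-hand side above, which establishes $a=\colim_\phi f$.

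First I would compute $X(a,y)$ using cotensoredness. Since $X$ is complete it is cotensored, so by Proposition \ref{chara of cotensored} the functor $X(-,y)\colon X\lra\sV^{\rm op}$ is a left adjoint; by Theorem \ref{Characterization of adjoints} the underlying map $X(-,y)\colon X_0\lra(\sV^{\rm op})_0$ is then a left adjoint of ordered sets, hence preserves joins. As the underlying order of $\sV^{\rm op}$ is the reverse of the usual order on $[0,1]$, a join in $(\sV^{\rm op})_0$ is an infimum in $[0,1]$; therefore
$$X(a,y)=\inf_{z\in K}X(\phi(z)\otimes f(z),\,y).$$
The defining equation of the tensor gives $X(\phi(z)\otimes f(z),y)=\phi(z)\ra X(f(z),y)$, and substituting yields $X(a,y)=\inf_{z\in K}(\phi(z)\ra X(f(z),y))=\CP K(\phi,X(f(-),y))$, as required. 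This is essentially the same computation carried out inside the proof of $(5)\Rightarrow(1)$ of Theorem \ref{characterizations of complete Q-orders}, now with $f$ inserted.

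The limit statement I would then obtain by duality, working in $X^{\rm op}$: by Remark \ref{limit vs colimit} the limit of $f$ weighted by $\psi$ is the colimit, computed in $X^{\rm op}$, of $f\colon K\lra X^{\rm op}$ weighted by $\psi^{\rm op}$, and cotensors in $X$ are tensors in $X^{\rm op}$; so the colimit formula just established transcribes to the meet of $\{\psi(z)\multimap f(z)\mid z\in K\}$ in $X_0$.

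I do not expect any serious obstacle here; the single point demanding care is the bookkeeping of the two opposite orders — checking that ``$X(-,y)$ is a left adjoint into $\sV^{\rm op}$'' correctly turns the join $a$ in $X_0$ into an \emph{infimum} of reals rather than a supremum. Getting this variance right is the only place an error could realistically creep in.
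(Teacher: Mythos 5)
Your proof is correct, but it takes a genuinely different route from the paper's. The paper argues upstairs in $\CP X$: it observes that the weight $\phi\circ f^*$ is the join in $(\CP X)_0$ of the family $\{\phi(z)\with X(-,f(z))\mid z\in K\}$, and that $\colim\colon\CP X\lra X$, being a left adjoint of the Yoneda embedding, preserves colimits; since the colimit of $\phi(z)\with X(-,f(z))$ is by definition the tensor $\phi(z)\otimes f(z)$, the formula ${\colim}_\phi f=\sup_{z\in K}\phi(z)\otimes f(z)$ drops out in one line, with the limit statement by duality. You instead verify the defining universal property downstairs in $X$: you take the candidate join $a$ and compute $X(a,y)$ using the fact that $X(-,y)\colon X\lra\sV^{\rm op}$ is a left adjoint (Proposition \ref{chara of cotensored}), so that it carries the join in $X_0$ to an infimum in $[0,1]$, and then apply the tensor equation $X(\phi(z)\otimes f(z),y)=\phi(z)\ra X(f(z),y)$ to land on $\CP K(\phi,X(f(-),y))$. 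As you note, this is exactly the computation in $(5)\Rightarrow(1)$ of Theorem \ref{characterizations of complete Q-orders}, carried out relative to $f$; your variance bookkeeping (joins in $(\sV^{\rm op})_0$ are infima of reals) and the dualization for the limit half are both handled correctly. What the paper's route buys is brevity and a clean use of the established machinery (left adjoints preserve colimits; tensors are colimits of the weights $r\with\sy(x)$); what your route buys is a self-contained, element-level argument that makes explicit exactly which ingredients of completeness — order-completeness, tensors, cotensors — are used and where.
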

	
	\begin{proof}  Since $\phi\circ f^*$ is the join  in   $(\CP X)_0$ of the subset $\{\phi(z)\with X(-,f(z))\mid z\in K\},$   the functor $\colim\colon\CP X\lra X$ preserves colimits, it follows that \[{\colim}_\phi f =\colim\phi\circ f^* = \sup_{z\in K}\colim(\phi(z)\with X(-,f(z))) = \sup_{z\in K} \phi(z)\otimes  f(z) . \]
		
		The second conclusion follows by duality. \end{proof} 
	
	\begin{cor}\label{calculation of colimit in PX} Suppose $X$ is a real-enriched category and $f\colon K\lra \CP X$ is a functor.  
		\begin{enumerate}[label={\rm(\roman*)}] 
			\item For each weight $\phi$ of $K$, $\colim_\phi f=\sup_{z\in K}\phi(z)\with f(z)$. 
			\item For each coweight $\psi$ of $K$, $\lim_\psi f=\inf_{z\in K}(\psi(z)\ra f(z))$. 
	\end{enumerate} \end{cor}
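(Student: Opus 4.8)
The plan is to reduce both statements to Corollary \ref{calculation of sup by tensors} applied inside the ambient category $\CP X$. First I would record that $\CP X$ is complete and cocomplete: Example \ref{sup in PX} shows $\CP X$ is cocomplete, Example \ref{inf in PX} shows it is complete, and in any case Theorem \ref{complete is self-dual} guarantees the two coincide. This is exactly the hypothesis under which Corollary \ref{calculation of sup by tensors} is available, so all the weighted colimits and limits of $f\colon K\lra\CP X$ in the statement exist and are computed by that corollary.

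For (i), Corollary \ref{calculation of sup by tensors} expresses $\colim_\phi f$ as the join, taken in the underlying ordered set $(\CP X)_0$, of the family $\{\phi(z)\otimes f(z)\mid z\in K\}$. I then invoke two identifications. First, Example \ref{tensor and cotensor in PX} tells us that the tensor of the scalar $\phi(z)\in[0,1]$ with the weight $f(z)\in\CP X$ is just $\phi(z)\with f(z)$. Second, I need that a join in $(\CP X)_0$ of a family of weights is their pointwise supremum; this is the one point that deserves an explicit word. Recall from the remark following our Convention that the underlying order $\sqsubseteq$ on $\CP X$ is precisely the pointwise order on weights, and that by Proposition \ref{Distcalcu}(ii) the pointwise supremum $\sup_{z\in K}\phi(z)\with f(z)$ of a family of weights is again a weight, hence an element of $\CP X$. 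Being the least pointwise upper bound, this element is the join in $(\CP X)_0$. Combining the two identifications gives $\colim_\phi f=\sup_{z\in K}\phi(z)\with f(z)$, as claimed.

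Part (ii) is entirely dual: Corollary \ref{calculation of sup by tensors} writes $\lim_\psi f$ as the meet in $(\CP X)_0$ of $\{\psi(z)\multimap f(z)\mid z\in K\}$, Example \ref{tensor and cotensor in PX} identifies the cotensor $\psi(z)\multimap f(z)$ with $\psi(z)\ra f(z)$, and Proposition \ref{Distcalcu}(ii) together with the pointwise description of $\sqsubseteq$ shows the meet in $(\CP X)_0$ is the pointwise infimum $\inf_{z\in K}(\psi(z)\ra f(z))$. The only step that is not pure substitution — and therefore the mild ``main obstacle'' — is the verification that joins and meets in $(\CP X)_0$ are formed pointwise, which in turn rests on Proposition \ref{Distcalcu}(ii) to ensure that the pointwise sup and inf stay inside $\CP X$; this is exactly what licenses reading the abstract join and meet of Corollary \ref{calculation of sup by tensors} as the explicit pointwise formulas appearing in the statement.
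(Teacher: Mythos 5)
Your proposal is correct and follows exactly the route the paper intends: the corollary is stated without proof as an immediate specialization of Corollary \ref{calculation of sup by tensors} to the complete category $\CP X$, using Example \ref{tensor and cotensor in PX} for tensors/cotensors and the pointwise description of joins and meets in $(\CP X)_0$. Your explicit justification that pointwise sups and infs of weights remain weights (Proposition \ref{Distcalcu}\thinspace(ii)) and hence realize the order-theoretic join and meet is precisely the detail the paper leaves tacit.
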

	
	Since $\CP\star=\sV$, Example \ref{cpt as colimit} and Example \ref{sub as limit} are then   special case of the above corollary.
	The following conclusion, proved in  Lai and Zhang \cite{LaiZ09}, extends the  Tarski fixed point theorem, which says that every order-preserving map from a complete lattice to itself has a fixed point, to the enriched context.
	
	\begin{thm}  If $X$ is a complete real-enriched category and $f\colon X\lra X$ is a functor, then the subcategory \[{\rm Fix}(f)=\{x\in X\mid f(x)\cong x\}\] of $X$ composed of fixed points of $f$  is   complete. \end{thm}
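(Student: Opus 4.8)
The plan is to verify that ${\rm Fix}(f)$ satisfies condition (5) of Theorem \ref{characterizations of complete Q-orders}, i.e. that, with the enriched structure inherited from $X$, it is order-complete, tensored and cotensored. Throughout I write $\sqsubseteq$ for the underlying order of $X$, and call an element $b$ \emph{pre-fixed} if $f(b)\sqsubseteq b$ and \emph{post-fixed} if $b\sqsubseteq f(b)$. Since $f$ is a functor it preserves $\sqsubseteq$, and since $X$ is complete (hence order-complete, tensored and cotensored) I may freely use the tensors $r\otimes x$, cotensors $r\multimap x$, joins and meets computed in $X$.

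For order-completeness I would run the Knaster--Tarski argument inside the underlying order. Given $S\subseteq{\rm Fix}(f)$, put $a=\sup S$ in $X_0$; because each $s\in S$ is fixed and $s\sqsubseteq a$ gives $s=f(s)\sqsubseteq f(a)$, the join $a$ is post-fixed, i.e. $a\sqsubseteq f(a)$. Then $f$ maps the order-complete up-set $\{x\mid a\sqsubseteq x\}$ into itself, and its least fixed point there (the least pre-fixed point above $a$, which one checks is genuinely fixed) is exactly the join of $S$ in ${\rm Fix}(f)_0$. Meets are obtained dually, so ${\rm Fix}(f)_0$ is complete.

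The heart of the matter is tensoredness and cotensoredness, and here the key observation is that \emph{cotensoring a fixed point produces a pre-fixed point}. Fix $y\in{\rm Fix}(f)$ and $r\in[0,1]$, and set $w=r\multimap y$ (the cotensor in $X$). Using $X(-,y)=X(-,f(y))$, functoriality of $f$ and the defining property of the cotensor, one computes $X(f(w),w)=r\ra X(f(w),y)\geq r\ra X(w,y)=X(w,w)=1$, so $f(w)\sqsubseteq w$. Consequently $f$ maps the order-complete down-set $\{x\mid x\sqsubseteq w\}$ into itself, and I take $z$ to be its greatest fixed point, so that $z\sqsubseteq w$. I then claim $z$ is the cotensor of $r$ with $y$ in ${\rm Fix}(f)$, i.e. $X(x,z)=r\ra X(x,y)$ for every $x\in{\rm Fix}(f)$. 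The inequality $\leq$ is immediate from $z\sqsubseteq w$; for $\geq$, writing $s=r\ra X(x,y)=X(x,w)$, the witness $u=s\otimes x$ is simultaneously below $w$ (since $X(u,w)=s\ra X(x,w)=1$) and post-fixed (by Proposition \ref{functor via tensors}(i), $s\otimes x\cong s\otimes f(x)\sqsubseteq f(s\otimes x)$), hence $u\sqsubseteq z$, which yields $X(x,z)\geq X(x,u)\geq s$. Thus ${\rm Fix}(f)$ is cotensored; applying the same argument to $X^{\rm op}$, which is complete by Theorem \ref{complete is self-dual} and on which $f$ is again a functor, shows it is tensored.

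Then by Theorem \ref{characterizations of complete Q-orders} the category ${\rm Fix}(f)$ is complete. The step I expect to be the main obstacle is precisely the enriched (as opposed to merely order-theoretic) verification of the cotensor equation: producing a greatest fixed point below $w$ is routine order theory, but showing that it carries the correct $[0,1]$-valued hom $X(x,z)=r\ra X(x,y)$, rather than only the right underlying-order behaviour, is what forces the introduction of the tensor witness $u=s\otimes x$ together with the pre-/post-fixed bookkeeping.
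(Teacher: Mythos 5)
Your proof is correct, but it takes a genuinely different route from the paper's. The paper argues by nesting two closure properties: using Corollary \ref{calculation of sup by tensors}, it shows that the subcategory $Y=\{x\in X\mid x\sqsubseteq f(x)\}$ of post-fixed points is closed in $X$ under formation of \emph{all} weighted colimits --- for a weight $\phi$ of $Y$ the colimit $a=\bv\{\phi(x)\otimes x\mid x\in Y\}$ satisfies $a\sqsubseteq f(a)$ by exactly the inequality $\phi(x)\otimes x\sqsubseteq \phi(x)\otimes f(x)\sqsubseteq f(\phi(x)\otimes x)$ that you use for your witness $u$ --- hence $Y$ is cocomplete, hence complete by Theorem \ref{complete is self-dual}; then it restricts $f$ to $Y$ and shows dually that $Z=\{y\in Y\mid f(y)\sqsubseteq y\}$ is closed in $Y$ under limits, and ${\rm Fix}(f)=Z$. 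So where you verify condition (5) of Theorem \ref{characterizations of complete Q-orders} directly on ${\rm Fix}(f)$ --- order-completeness by Knaster--Tarski, cotensors by a greatest-fixed-point construction below $r\multimap y$, tensors by duality --- the paper never invokes a fixed-point theorem at all: the weighted (co)limits of $Y$ and of $Z$ already land in those subcategories, so completeness is inherited rather than rebuilt. Your approach buys an explicit description of joins and cotensors in ${\rm Fix}(f)$ (as least/greatest fixed points above/below the corresponding construction in $X$) and makes the link with the classical Tarski theorem transparent, at the cost of the delicate enriched verification you yourself flagged (the hom-equation for the cotensor, handled by the tensor witness $u=s\otimes x$); the paper's argument is shorter and yields slightly more structural information, namely that the post-fixed points form a colimit-closed subcategory of $X$ and the fixed points a limit-closed subcategory of it. Both proofs ultimately rest on the same two ingredients: Proposition \ref{functor via tensors} and the self-duality of completeness.
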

	
	\begin{proof} First  we show that the subcategory \[Y=\{x\in X\mid x\sqsubseteq f(x) \}\] of $X$ is complete. Let $i\colon Y\lra X$ denote the inclusion. Then for each weight $\phi$ of $Y$, the join  \[a=\bv\{\phi(x)\otimes  x\mid x\in Y\}\] in $X_0$ is a  colimit of the functor $i$ weighted by $\phi$. Since $f$ is a  functor,   \[f(a)\sqsupseteq \bv\{f(\phi(x)\otimes  x)\mid x\in Y\}\sqsupseteq \bv\{ \phi(x)\otimes  f(x)\mid x\in Y\}\sqsupseteq a,\] which shows that $a\in Y$. Therefore, $Y$ is closed in $X$ under formation of colimits. In particular, $Y$ is cocomplete, hence complete.
		
		Next, restricting the domain and codomain of $f$ to $Y$ we obtain a  functor $f\colon  Y\lra Y$. A similar argument shows that \[Z=\{y\in Y\mid f(y)\sqsubseteq y\}\] is closed in $Y$ under formation of limits, hence a complete real-enriched category.
		It is clear that ${\rm Fix}(f)=Z$, the conclusion thus follows. \end{proof}
	
	\begin{defn} Suppose $X$ is a partially ordered set. A (left) action of $[0,1]$  on $X$ is a map $\otimes\colon[0,1]\times X \lra X$ subject to the following conditions:  for all $x\in X$ and $r,s, \in [0,1]$, 
		\begin{enumerate}[label={\rm(\roman*)}] 
			\item $1\otimes  x=x$; 
			\item $s\otimes (r\otimes x) =(s\with r)\otimes  x$; 
			\item $r\otimes-\colon X\lra X$ is a left adjoint;   \item $-\otimes  x\colon [0,1]\lra X$ is a left adjoint.
		\end{enumerate}
	\end{defn}
	
	It is clear that $X$ has a bottom element $\bot$ and that $0\otimes x=\bot=x\otimes 0$ for all $x\in X$. By (iii),  the map $0\otimes-\colon X\lra X$ has a right adjoint, which is denoted by $0\multimap-\colon X\lra X$. Then, for each $x\in X$,  $0\multimap x$ is  a top element of $X$. So, the partially ordered set $X$ is bounded, i.e., it has both a bottom element and a top element.
	
	A \emph{lax morphism} $f\colon (X,\otimes )\lra(Y,\otimes )$ between   $[0,1]$-actions is an order-preserving map $f\colon X\lra Y$ such that \[r\otimes  f(x)\leq f(r\otimes  x)\] for all $r\in [0,1]$ and $x\in X$. \[\bfig\Square[{[0,1]}\times X`X`{[0,1]}\times Y`Y;\otimes `1\times f`f`\otimes ] \place(300,240)[\leq]\efig\]
	
	Let $X$ be   a separated, tensored and cotensored real-enriched category  $X$. Assigning to each pair $(r,x)$   the tensor $r\otimes  x$ defines a  $[0,1]$-action on the partially ordered set $X_0$. Proposition \ref{functor via tensors} says that   $X\mapsto(X_0,\otimes )$ is a functor from the full subcategory   of $\QOrd $ consisting of separated, tensored and cotensored real-enriched categories to the category of $[0,1]$-actions and lax morphisms. This functor   is indeed an isomorphism of categories.
	
	\begin{prop}Suppose $\otimes$ is an action of  $[0,1]$  on  a partially ordered set $(X,\leq)$. For all $x,y\in X$, let \[\alpha_{\otimes}(x,y)=\bv\{r\in [0,1]\mid r\otimes  x\leq y\}. \] Then $(X,\alpha_{\otimes})$ is a separated, tensored and cotensored real-enriched category with $\leq$ being its underlying order.   \end{prop}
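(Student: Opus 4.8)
The plan is to reduce everything to a single Galois-type equivalence
\[
r\leq\alpha_\otimes(x,y)\iff r\otimes x\leq y,
\]
valid for all $x,y\in X$ and all $r\in[0,1]$, from which reflexivity, transitivity, the identification of the underlying order, separatedness, and tensoredness/cotensoredness all drop out by short computations. The implication $r\otimes x\leq y\Rightarrow r\leq\alpha_\otimes(x,y)$ is immediate from the definition of $\alpha_\otimes$ as a supremum. The reverse implication is where the action axioms do the real work, and it is the one step I expect to be the main obstacle. First I would observe that, since $-\otimes x\colon[0,1]\lra X$ is a left adjoint (axiom (iv)), it preserves joins, so the supremum defining $\alpha_\otimes(x,y)$ is attained:
\[
\alpha_\otimes(x,y)\otimes x=\bv\{r\otimes x\mid r\otimes x\leq y\}\leq y.
\]
Then for any $r\leq\alpha_\otimes(x,y)$, monotonicity of $-\otimes x$ gives $r\otimes x\leq\alpha_\otimes(x,y)\otimes x\leq y$. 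This equivalence says precisely that $\alpha_\otimes(x,-)$ is the order-theoretic right adjoint of $-\otimes x$; once it is in place, the rest is bookkeeping.

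With the equivalence available, reflexivity $\alpha_\otimes(x,x)=1$ follows from $1\otimes x=x\leq x$ (axiom (i)). For transitivity I would verify $(\alpha_\otimes(y,z)\with\alpha_\otimes(x,y))\otimes x\leq z$ and then invoke the equivalence; rewriting the left side by axiom (ii) as $\alpha_\otimes(y,z)\otimes(\alpha_\otimes(x,y)\otimes x)$ and using $\alpha_\otimes(x,y)\otimes x\leq y$, $\alpha_\otimes(y,z)\otimes y\leq z$, together with monotonicity of $\alpha_\otimes(y,z)\otimes-$, closes the argument. Thus $(X,\alpha_\otimes)$ is a real-enriched category.

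For the underlying order, $x\sqsubseteq y$ means $\alpha_\otimes(x,y)=1$, which by the equivalence (taking $r=1$) is $1\otimes x\leq y$, i.e.\ $x\leq y$; hence $\sqsubseteq$ coincides with $\leq$, and separatedness is automatic since $\leq$ is antisymmetric. Finally, to see that $X$ is tensored I would check that the action value $r\otimes x$ is the enriched tensor, i.e.\ $\alpha_\otimes(r\otimes x,y)=r\ra\alpha_\otimes(x,y)$: for each $s\in[0,1]$ the equivalence, axiom (ii), and residuation give
\[
s\leq\alpha_\otimes(r\otimes x,y)\iff(s\with r)\otimes x\leq y\iff s\with r\leq\alpha_\otimes(x,y)\iff s\leq r\ra\alpha_\otimes(x,y),
\]
so comparing over all $s$ yields the identity. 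For cotensoredness I would use axiom (iii): writing $r\multimap-$ for the right adjoint of $r\otimes-$, so that $r\otimes z\leq w\iff z\leq r\multimap w$, an analogous chain
\[
s\leq\alpha_\otimes(x,r\multimap y)\iff r\otimes(s\otimes x)\leq y\iff(r\with s)\otimes x\leq y\iff s\leq r\ra\alpha_\otimes(x,y)
\]
shows $\alpha_\otimes(x,r\multimap y)=r\ra\alpha_\otimes(x,y)$, identifying $r\multimap y$ as the enriched cotensor. This establishes all the claimed properties.
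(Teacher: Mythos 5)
Your proof is correct and follows essentially the same route as the paper: establish the Galois equivalence $r\leq\alpha_\otimes(x,y)\iff r\otimes x\leq y$ and then derive reflexivity, transitivity, the underlying order, and tensors/cotensors by the same chains of equivalences. The only difference is cosmetic — you justify the equivalence explicitly via join-preservation of $-\otimes x$ (the paper calls it clear), and you run transitivity with the attained suprema rather than with arbitrary elements below them.
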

	
	\begin{proof}First of all,    it is clear that for all $r\in [0,1]$ and $x,y\in X$, \[r\leq  \alpha_{\otimes}(x,y)\iff r\otimes  x\leq y.\]
		
		Since $1\otimes  x=x$, then $\alpha_{\otimes}(x,x)\geq 1$ for all $x\in X$. If $r\leq \alpha_{\otimes}(x,y)$ and $s\leq \alpha_{\otimes}(y,z)$, then \[(s\with r)\otimes  x=s\otimes (r\otimes  x)\leq s\otimes  y\leq z,\] which shows that $s\with r\leq \alpha_{\otimes}(x,z)$, hence $\alpha_{\otimes}(y,z)\with \alpha_{\otimes}(x,y)\leq \alpha_{\otimes}(x,z)$ and consequently, $(X,\alpha_{\otimes})$ is a real-enriched category. It remains to show that $(X,\alpha_{\otimes})$ is   tensored, cotensored,   separated,  and has $\leq$ as underlying order
		
		For all $r,s\in [0,1]$ and $x,y\in X$, since \begin{align*} s\leq \alpha_{\otimes}(r\otimes  x, y) &\iff s\otimes (r\otimes  x)\leq y\\ 
			&\iff s\with r\leq \alpha_{\otimes}(x, y) \\ 
			&\iff s\leq r\ra\alpha_{\otimes}(x, y)  , \end{align*} then $r\otimes  x$ is a tensor of $r$ with $x$ in $(X,\alpha_{\otimes})$, hence $(X,\alpha_{\otimes})$ is  tensored.
		
		For each $r\in [0,1]$, let $r\multimap-\colon X\lra X$ be the right adjoint of $r\otimes-\colon X\lra X$.  
		For all $x,y\in X$ and all $s\in [0,1]$, since  \begin{align*} s\leq X(x,r\multimap y)&\iff s\otimes  x\leq r\multimap y\\ 
			&\iff  r\otimes (s\otimes  x)\leq y \\ 
			&\iff (r\with s)\otimes  x\leq y\\ 
			&\iff r\with s \leq X(x,y)\\ 
			&\iff s\leq r\ra X(x,y),
		\end{align*} then    
		$r\multimap y$ is a cotensor of $r$ with $y$ in $(X,\alpha_{\otimes})$, hence $(X,\alpha_{\otimes})$ is  cotensored.
		
		Finally, since \[1\leq \alpha_{\otimes}(x, y)\iff 1\otimes  x\leq y\iff x\leq y,\] it follows that $(X,\alpha_{\otimes})$ is separated with $\leq $ being its underlying order. \end{proof}
	
	\begin{prop} The category of  $[0,1]$-actions and lax morphisms is isomorphic to the full subcategory of $\QOrd$ consisting of separated, tensored and cotensored real-enriched categories.
	\end{prop}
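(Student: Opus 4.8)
The plan is to exhibit the isomorphism explicitly from the two constructions already at hand and to verify that they are mutually inverse on the nose. Write $\CA$ for the category of $[0,1]$-actions and lax morphisms, and $\CC$ for the full subcategory of $\QOrd$ spanned by separated, tensored and cotensored real-enriched categories. Define $F\colon\CC\lra\CA$ on objects by $F(X)=(X_0,\otimes)$, where $r\otimes x$ is the tensor, and on an arrow $f$ by $F(f)=f$ (the same underlying map); define $G\colon\CA\lra\CC$ on objects by $G(X,\otimes)=(X,\alpha_{\otimes})$ as in the preceding proposition, and again $G(f)=f$.

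First I would confirm that $F$ and $G$ are well defined. For $F$ on objects the four action axioms for $\otimes$ must be checked: axiom (i) follows from $X(1\otimes x,y)=1\ra X(x,y)=X(x,y)$ together with separatedness; axiom (ii) from $(s\with r)\ra X(x,y)=s\ra(r\ra X(x,y))$, again using separatedness to turn the resulting isomorphism into an equality; axioms (iii) and (iv) are exactly the adjunctions $r\otimes-\dashv r\multimap-$ and $-\otimes x\dashv X(x,-)$ supplied by cotensoredness and by Proposition \ref{property of tensor}. That $F(f)$ is a lax morphism, and conversely that $G(f)$ is a functor, is precisely the content of Proposition \ref{functor via tensors}(i), since the tensors of $G(X,\otimes)=(X,\alpha_{\otimes})$ are the given action (as recorded in the previous proposition), so the functor condition $r\otimes f(x)\sqsubseteq f(r\otimes x)$ coincides verbatim with the lax morphism condition. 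Since both $F$ and $G$ leave underlying maps untouched, preservation of identities and composites is automatic, so both are genuinely functors.

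The heart of the argument is the pair of round-trip identities. For $G\circ F=\id_{\CC}$, start from $X\in\CC$; the underlying order of $(X,\alpha_{\otimes})$ is $\sqsubseteq$ by the previous proposition, so it suffices to show $\alpha_{\otimes}=X$. Here I would compute, using $r\otimes x\sqsubseteq y\iff r\ra X(x,y)=1\iff r\leq X(x,y)$,
\[\alpha_{\otimes}(x,y)=\sup\{r\in[0,1]\mid r\otimes x\sqsubseteq y\}=\sup\{r\in[0,1]\mid r\leq X(x,y)\}=X(x,y).\]
For $F\circ G=\id_{\CA}$, start from $(X,\otimes)\in\CA$; the previous proposition gives that the underlying order of $(X,\alpha_{\otimes})$ is the original order and that $r\otimes x$ is the tensor of $r$ with $x$ in $(X,\alpha_{\otimes})$, so the action extracted by $F$ is again $\otimes$. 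On arrows both composites are the identity, since $F$ and $G$ fix underlying maps.

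I expect the only delicate point to be the systematic use of separatedness to promote the ``up to isomorphism'' uniqueness of tensors and of $1\otimes x$ to strict equalities, so that $F$ and $G$ are inverse as functors rather than merely naturally isomorphic; every other step is a direct unwinding of definitions or a citation of Proposition \ref{functor via tensors} and Proposition \ref{property of tensor}.
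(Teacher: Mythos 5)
Your proposal is correct and follows exactly the route the paper intends: the paper states this proposition without a formal proof, relying on the surrounding text (the functor $X\mapsto(X_0,\otimes)$ via Proposition \ref{functor via tensors}) together with the preceding proposition constructing $(X,\alpha_{\otimes})$ from an action, and your writeup simply makes explicit the two functors, their well-definedness, and the round-trip identities. The verification details — using separatedness to turn the isomorphisms $1\otimes x\cong x$, $s\otimes(r\otimes x)\cong(s\with r)\otimes x$ into equalities, and the computation $\alpha_{\otimes}(x,y)=\sup\{r\mid r\leq X(x,y)\}=X(x,y)$ — are exactly the details the paper leaves to the reader.
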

	
	\section{Cauchy completeness} \label{cauchy complete}  
	
	A weight $\phi\colon X\oto\star$ of a real-enriched category  $X$ is \emph{Cauchy} if it is a right adjoint, as a distributor. That means, there is a distributor $\psi\colon\star\oto X$ such that $\phi\circ\psi\geq1$ and $\psi(y)\with\phi(x)\leq X(x,y)$ for all $x,y\in X$.
	
	Let $f\colon X\lra Y$ be a functor between real-enriched categories and let $\phi$ be a weight of $X$.  If $\phi$ is a Cauchy weight,  as a composite of right adjoints, $\phi\circ f^*$ is a right adjoint, hence a Cauchy weight of $Y$. So,  assigning to each real-enriched category $X$ the subcategory $\CC X$ of $\CP X$ composed of Cauchy weights defines a subfunctor $$\CC\colon\QOrd\lra\QOrd$$ of the presheaf functor $\CP$. Since each representable  weight  is Cauchy, the natural transformation  from the identity functor to $\mathcal{P}$ with Yoneda embeddings as components  factors through the functor $\CC$.
	
	The following conclusion is a special case of the characterization of Cauchy weights of enriched categories in Street \cite{Street83}. 
	
	\begin{prop}\label{absolute sup} Suppose $X$ is a real-enriched category and $\phi$ is a weight of $X$. The following are equivalent: 
		\begin{enumerate}[label={\rm(\arabic*)}]   
			\item $\phi$ is Cauchy. 
			\item The colimit $\colim_\phi f$ of any functor $f\colon X\lra Y$ weighted by $\phi$, when exists, is preserved by every functor $g\colon Y\lra Z$, i.e.  $g(\colim_\phi f)=\colim_\phi(g\circ f)$. \end{enumerate} In particular, every Cauchy weight with a colimit  is representable.
	\end{prop}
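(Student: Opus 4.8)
The plan is to prove the two implications separately, with the structural backbone being the observation that a Cauchy weight which admits a colimit must be representable. I would isolate this as the first step: if $\phi$ is Cauchy with left adjoint $\psi\colon\star\oto X$ and $a=\colim\phi$ exists, then by the definition of the colimit $X\swarrow\phi=X(a,-)=\syd(a)$, while Lemma \ref{adjoint_arrow_calculation}(i) forces $\psi=X\swarrow\phi$; hence $\psi=\syd(a)$ is representable, and Lemma \ref{adjoint_arrow_calculation}(ii) gives $\phi=\psi\searrow X=\syd(a)\searrow X=\sy(a)$ after a short computation with transitivity. This already settles the final assertion that every Cauchy weight with a colimit is representable.

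For $(1)\Rightarrow(2)$ I would reduce to the representable case. Given $f\colon X\lra Y$ with $\colim_\phi f$ existing, note $\colim_\phi f=\colim(\phi\circ f^*)$ and that $\phi\circ f^*$ is again Cauchy, being a composite of right adjoint distributors ($f^*$ is right adjoint to $f_*$ by Example \ref{graph adjoint to cograph}, and $\phi$ to $\psi$; adjoints compose in the quantaloid $[0,1]\text{-}{\bf Dist}$). By the first step $\phi\circ f^*=\sy_Y(b)$ with $b=\colim_\phi f$. Then for any $g\colon Y\lra Z$, $\colim_\phi(g\circ f)=\colim(\phi\circ f^*\circ g^*)=\colim(\sy_Y(b)\circ g^*)=\colim_{\sy_Y(b)}g$, and the colimit of $g$ weighted by a representable weight is just $g(b)$, since by the Yoneda lemma $Z(\colim_{\sy_Y(b)}g,z)=\CP Y(\sy_Y(b),Z(g(-),z))=Z(g(b),z)$. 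Hence $g(\colim_\phi f)=g(b)=\colim_\phi(g\circ f)$.

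The substance lies in $(2)\Rightarrow(1)$. The candidate left adjoint is forced to be $\mu:=X\swarrow\phi\in\CPd X$, and the counit inequality $\mu\circ\phi\le X$ is automatic, because $(\mu\circ\phi)(x,y)=\mu(y)\with\phi(x)\le(\phi(x)\ra X(x,y))\with\phi(x)\le X(x,y)$. Everything therefore reduces to the unit condition $\phi\circ\mu\ge1$. To extract it from absoluteness I would test against the pair $f=\sy_X\colon X\lra\CP X$ and $g=\CP X(\phi,-)\colon\CP X\lra\sV$. The colimit $\colim_\phi\sy_X$ exists since $\CP X$ is cocomplete and equals $\phi$ (Proposition \ref{y is colimit dense}), so hypothesis (2) yields $g(\phi)=\colim_\phi(g\circ\sy_X)$. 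The left side is $\CP X(\phi,\phi)=1$; for the right side, Example \ref{cpt as colimit} computes the $\sV$-valued colimit as $\sup_{x}\phi(x)\with g(\sy(x))$, and a direct unwinding gives $g(\sy(x))=\CP X(\phi,\sy(x))=\sy(x)\swarrow\phi=(X\swarrow\phi)(x)=\mu(x)$. Thus $1=\sup_x\phi(x)\with\mu(x)=\phi\circ\mu$, which is precisely the unit condition, so $\mu\dashv\phi$ and $\phi$ is Cauchy.

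I expect the main obstacle to be locating this test pair: the candidate adjoint $\mu$ and the counit inequality are essentially forced, but the unit condition is the one genuine constraint, and the trick is that feeding the universal colimit $\phi=\colim_\phi\sy_X$ through the representable functor $\CP X(\phi,-)$ converts ``preservation of this colimit'' into the identity $\phi\circ(X\swarrow\phi)=1$. The two routine points to verify carefully are that $\sV$-valued colimits are indeed given by Example \ref{cpt as colimit} (so the right-hand side evaluates cleanly) and that $\CP X(\phi,\sy(x))$ unwinds to $(X\swarrow\phi)(x)$.
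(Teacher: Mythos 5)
Your proof is correct, and its overall strategy matches the paper's; the one genuine divergence is the choice of test pair in $(2)\Rightarrow(1)$. There you apply hypothesis (2) to the Yoneda embedding $\sy_X\colon X\lra\CP X$ together with the representable functor $\CP X(\phi,-)\colon\CP X\lra\sV$, using $\colim_\phi\sy_X=\phi$ (Proposition \ref{y is colimit dense}) and the unwinding $\CP X(\phi,\sy(x))=(X\swarrow\phi)(x)$; the paper instead applies it to the coYoneda embedding $\syd\colon X\lra\CPd X$ together with $\CPd X(\psi,-)\colon\CPd X\lra\sV$, using $\colim_\phi\syd=X\swarrow\phi$ (Example \ref{sup in PdX}) and the Yoneda lemma. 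These are dual realizations of the same trick---push a canonical colimit through a $\sV$-valued hom-functor so that Example \ref{cpt as colimit} converts preservation into the unit equation $\phi\circ(X\swarrow\phi)=1$, the counit inequality being automatic in both cases---and your version is marginally more economical, since it stays inside $\CP X$ and needs only colimit-density of the Yoneda embedding rather than the description of colimits in $\CPd X$. Your $(1)\Rightarrow(2)$, which first isolates representability of Cauchy weights admitting colimits via Lemma \ref{adjoint_arrow_calculation} and then composes with $g^*$, is the paper's argument reorganized: the paper proves the same representability inline for the Cauchy weight $\xi=\phi\circ f^*$ before concluding $\xi\circ g^*=Z(-,g(b))$.
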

	
	\begin{proof} $(1)\Rightarrow(2)$ Since $\colim_\phi f$, when exists, is the colimit of the Cauchy weight $\phi\circ f^*$ of $Y$, it suffices to show that if $b$ is a colimit of a Cauchy weight $\xi$ of $Y$, then for every functor $g\colon Y\lra Z$, $g(b)$ is a colimit of $g$ weighted by $\xi$, i.e. $g(b)=\colim_\xi g$.  By definition of colimit  we have  $Y\swarrow\xi=Y(b,-)$. Since $Y\swarrow\xi$ is   left adjoint to  $\xi$ by Lemma \ref{adjoint_arrow_calculation}, then $$\xi=Y(b,-)\searrow Y =Y(-,b).$$  This  shows  in particular that $\xi$ is representable. Since \[  \xi\circ g^*= Y(-,b)\circ g^* = Z(-,g(b)),\] it follows that $g(b)$ is a colimit of $\xi\circ g^*$, i.e. $g(b)=\colim_\xi g$. 
		
		$(2)\Rightarrow(1)$ We  show that the coweight $\psi\coloneqq X\swarrow\phi$ is left adjoint to $\phi$. Consider the coYoneda embedding $\syd\colon X\lra\CPd X$ and the functor $$g \colon \CPd X\lra\sV, \quad  \xi\mapsto\CPd X(\psi,\xi),$$ where $\sV=([0,1],\alpha_L)$. On the one hand, by Example \ref{sup in PdX} we have $${\colim}_\phi\thinspace \syd= X\swarrow\phi=\psi,$$    
		so $g(\colim_\phi\thinspace \syd)=1$. On the other hand, since for each $x\in X$, $$g\circ \syd(x) =\CPd X(\psi,X(x,-))=\psi(x),$$ i.e. $g\circ \syd=\psi^\sharp \colon X\lra\sV$, then by Example \ref{cpt as colimit} we have $$ {\colim}_\phi(g\circ \syd)={\colim}_\phi\thinspace\psi^\sharp =\phi\circ\psi.$$ Therefore, $\phi\circ\psi=1$. The inequality $\psi\circ\phi\leq X$ is trivial, so $\psi$ is left adjoint to $\phi$, as desired. \end{proof}
	
	The next characterization of Cauchy weights  is a special case of that   in  Kelly and  Schmitt \cite{KS2005}.
	
	\begin{prop}\label{Cauchy weight via sub}  
		Suppose $X$ is a real-enriched category and $\phi$ is a weight of $X$. The following are equivalent: \begin{enumerate}[label={\rm(\arabic*)}]   
			\item $\phi$ is Cauchy. 
			\item The functor $\sub_X(\phi,-)\colon \CP X \lra\sV$ preserves  colimits. 
			\item The functor $\phi\circ-\colon (\CPd X)^{\rm op} \lra\sV$ preserves limits. 
	\end{enumerate} \end{prop}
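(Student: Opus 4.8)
The plan is to single out the canonical adjoint candidate and reduce Cauchyness to one inequality, then prove $(1)\Leftrightarrow(2)$ by a direct computation and $(1)\Leftrightarrow(3)$ by the dual argument. Write $\psi_0\coloneqq \ub\phi = X\swarrow\phi\colon\star\oto X$. Since $-\circ\phi$ is left adjoint to $-\swarrow\phi$, one always has $\psi_0\circ\phi\leq X$; hence $\phi$ is Cauchy if and only if $\psi_0\dashv\phi$, if and only if $\phi\circ\psi_0\geq 1$. Throughout I will use that $\CP X$ is cocomplete (Example~\ref{sup in PX}) and $\CPd X$ is complete (Example~\ref{inf in PdX}), and that colimits in $\CP X$ and in $\sV=\CP\star$ are computed as weighted joins (Corollary~\ref{calculation of colimit in PX}, Example~\ref{cpt as colimit}).

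For $(1)\Rightarrow(2)$ I would invoke Lemma~\ref{adjoint_arrow_calculation}(i): if $\phi$ is Cauchy with left adjoint $\psi_0$, then $\gamma\circ\psi_0 = \gamma\swarrow\phi = \CP X(\phi,\gamma)$ for every $\gamma\in\CP X$, so the functor $\sub_X(\phi,-)=\CP X(\phi,-)$ equals $-\circ\psi_0$. This functor preserves colimits, because a colimit $\colim_\mu f=\sup_z\mu(z)\with f(z)$ in $\CP X$ is sent by $-\circ\psi_0$ to $\sup_z\mu(z)\with(f(z)\circ\psi_0)$ --- composition with a fixed distributor preserves joins and commutes with tensoring by a scalar --- which is exactly the corresponding colimit in $\sV$. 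For the converse $(2)\Rightarrow(1)$ I would use that the Yoneda embedding is colimit-dense with $\colim_\phi\sy=\phi$ (Proposition~\ref{y is colimit dense}). Applying the colimit-preserving functor $F\coloneqq\CP X(\phi,-)$ gives $1=\CP X(\phi,\phi)=F(\colim_\phi\sy)=\colim_\phi(F\circ\sy)$; since $F\circ\sy$ is the functor $x\mapsto\CP X(\phi,\sy(x))=\psi_0(x)$, Example~\ref{cpt as colimit} identifies this colimit with $\phi\circ\psi_0$. Thus $\phi\circ\psi_0=1$ and $\phi$ is Cauchy.

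The equivalence $(1)\Leftrightarrow(3)$ is the formal dual. For $(1)\Rightarrow(3)$, Lemma~\ref{adjoint_arrow_calculation}(ii) gives $\phi\circ\xi=\psi_0\searrow\xi=\CPd X(\xi,\psi_0)$ for every coweight $\xi$, so $\phi\circ-=\CPd X(-,\psi_0)\colon(\CPd X)^{\rm op}\lra\sV$ is representable; it is a right adjoint whose left adjoint is the cotensor $r\mapsto r\rat\psi_0$ in $\CPd X$ (Example~\ref{tensor and cotensor in PX}), hence preserves limits. For $(3)\Rightarrow(1)$ I would apply the dual of the theorem that a colimit-preserving functor out of a cocomplete real-enriched category is a left adjoint: since $(\CPd X)^{\rm op}$ is complete (Theorem~\ref{complete is self-dual}) and $G\coloneqq\phi\circ-$ preserves limits, $G$ is a right adjoint, say $L\dashv G$. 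Reading the adjunction at $r=1$ yields $\phi\circ\xi=\CPd X(\xi,\psi_1)$ for all $\xi$, where $\psi_1\coloneqq L(1)$. Testing $\xi=\psi_1$ gives $\phi\circ\psi_1=\CPd X(\psi_1,\psi_1)\geq1$, while testing $\xi=\syd(x)$ gives $\phi(x)=\inf_z(\psi_1(z)\ra X(x,z))\leq\psi_1(y)\ra X(x,y)$, i.e. $\psi_1\circ\phi\leq X$; together these say $\psi_1\dashv\phi$, so $\phi$ is Cauchy.

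The main obstacle is the bookkeeping of variance and opposites in part $(1)\Leftrightarrow(3)$: one must keep straight that limits in $(\CPd X)^{\rm op}$ are colimits in $\CPd X$, that $\CPd X(-,\psi_0)$ is a covariant representable on $(\CPd X)^{\rm op}$, and that the relevant cotensors live in $\CPd X$, whose underlying order is reversed. A secondary point to verify carefully is that the formula $\phi\circ\xi=\CPd X(\xi,\psi_1)$ extracted from the adjunction genuinely forces both adjunction inequalities --- the first from reflexivity and the second from the representable case $\xi=\syd(x)$ --- rather than only one of them. The remaining ingredient, that composition with a fixed distributor is cocontinuous, underlies $(1)\Rightarrow(2)$ and is immediate from the quantaloid structure of $[0,1]\text{-}{\bf Dist}$.
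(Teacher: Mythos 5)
Your proposal is correct, and three of its four implications coincide with the paper's argument: $(1)\Rightarrow(2)$ via Lemma \ref{adjoint_arrow_calculation} (the paper writes $\sub_X(\phi,\xi)=\xi\swarrow\phi=\xi\circ\psi$ and deduces cocontinuity), $(2)\Rightarrow(1)$ via preservation of the colimit $\phi=\colim_\phi\sy$ (your abstract chain $1=F(\colim_\phi\sy)=\colim_\phi(F\circ\sy)=\phi\circ\psi_0$ is exactly the paper's computation $\phi\circ\psi=\sup_x\phi(x)\with\sub_X(\phi,\sy(x))=\sub_X(\phi,\colim_\phi\sy)=\sub_X(\phi,\phi)=1$ read in the other direction), and $(1)\Rightarrow(3)$ via representability of $\phi\circ-=\CPd X(-,\psi_0)$. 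Where you genuinely diverge is $(3)\Rightarrow(1)$. The paper stays with the canonical candidate $\psi\coloneqq X\swarrow\phi$, observes that $X\swarrow\phi=\bw_{y\in X}(\phi(y)\ra X(y,-))$ is itself a limit in $(\CPd X)^{\rm op}$ (an inf of cotensors of representable coweights) which hypothesis (3) preserves, and concludes in two lines: $\phi\circ\psi=\bw_{y}(\phi(y)\ra\phi\circ X(y,-))=\bw_{y}(\phi(y)\ra\phi(y))=1$. You instead invoke the dual of the adjoint functor theorem (a limit-preserving functor out of the complete category $(\CPd X)^{\rm op}$ is a right adjoint), extract $\psi_1=L(1)$ from the resulting left adjoint, and verify both adjunction inequalities by testing the identity $\phi\circ\xi=\CPd X(\xi,\psi_1)$ at $\xi=\psi_1$ and at $\xi=\syd(x)$; all of these steps check out. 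The trade-off: the paper's computation is shorter and needs only that one specific limit be preserved, whereas your route costs the heavier existence theorem (plus completeness of $(\CPd X)^{\rm op}$, which you correctly secure via Example \ref{inf in PdX} and Theorem \ref{complete is self-dual}) but is more formal --- it manufactures a left adjoint without needing to know in advance that it must be $X\swarrow\phi$, and it makes explicit that the single identity $\phi\circ\xi=\CPd X(\xi,\psi_1)$ already encodes both the unit and the counit inequalities.
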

	
	\begin{proof}
		$(1)\Rightarrow(2)$ If $\psi$ is a left adjoint of $\phi$, then for all $\xi\in\CP X$, $$\sub_X(\phi,\xi)=\xi\swarrow\phi=\xi\circ\psi,$$ from which one deduces that $\sub_X(\phi,-)\colon \CP X \lra\sV$ preserves colimits. 
		
		$(2)\Rightarrow(1)$ Assume that the functor  $\sub_X(\phi,-)$ preserves colimits. We show that $\psi\coloneqq X\swarrow\phi$   is left adjoint to $\phi$, hence $\phi$ is Cauchy. It suffices to check that $\phi\circ\psi\geq1$.   Since $\sub_X(\phi,-)$ preserves colimits and  $$\phi={\colim}_\phi\thinspace\sy=\sup_{x\in X}(\phi(x)\otimes\sy(x)),$$ it follows that \begin{align*}\phi\circ\psi &=\sup_{x\in X}\phi(x)\with\psi(x)\\ 
			&=\sup_{x\in X} (\phi(x)\with \sub_X(\phi,  \sy(x))\\ 
			& = \sub_X(\phi,\colim_\phi\sy) \quad \quad ~(\text{$\sub_X(\phi,-)$ preserves colimits})\\ 
			& =\sub_X(\phi,\phi) \\ 
			& =1.\end{align*}   
		
		$(1)\Rightarrow(3)$ If $\psi$ is a left adjoint of $\phi$, then for all $\lam\in\CPd X$, $$ \phi\circ\lam =\psi\searrow\lam=\CPd X(\lam,\psi),$$ from which one infers that   $\phi\circ-\colon (\CPd X)^{\rm op} \lra\sV$ preserves limits.
		
		$(3)\Rightarrow(1)$ Assume that the functor $\phi\circ-\colon (\CPd X)^{\rm op} \lra\sV$ preserves limits. We show that $\psi\coloneqq X\swarrow\phi$   is left adjoint to $\phi$. It suffices to check that $\phi\circ\psi\geq1$. Since $$X\swarrow\phi =\bw_{y\in X}(\phi(y)\ra X(y,-)) $$ and $\phi\circ -\colon (\CPd X)^{\rm op} \lra\sV$ preserves limits, it follows that \begin{align*}\phi\circ\psi &= \bw_{y\in X}(\phi(y)\ra \phi\circ X(y,-))= \bw_{y\in X}(\phi(y)\ra  \phi(y))=1.  \end{align*} 
		
		The proof is completed. \end{proof}
	
	\begin{defn}A real-enriched category $X$ is  Cauchy complete  if it is separated and all of its  Cauchy weights are representable. \end{defn}
	
	In other words, $X$ is Cauchy complete if each of its Cauchy weight has a unique colimit. Suppose $\phi\colon X\oto\star$ is a Cauchy weight of $X$, and $\psi\colon\star\oto X$ is a left adjoint of $\phi$. Then $\psi^{\rm op}\colon X^{\rm op}\oto\star$ is a Cauchy weight of $X^{\rm op}$. Since all Cauchy weights of $X^{\rm op}$ arise in this way, it follows that the notion of Cauchy completeness is self-dual; that is, if $X$ is Cauchy complete, then so is its opposite $X^{\rm op}$. 
	
	Each  functor $f\colon X\lra Y$ generates an adjunction of distributors  $f_*\dashv f^*$. The following proposition shows that when $Y$ is Cauchy complete,   every such adjunction  arises in this way.
	
	\begin{prop}{\rm(Lawvere \cite{Lawvere1973})} Suppose $Y$ is a real-enriched category. The following are equivalent: 
		\begin{enumerate}[label={\rm(\arabic*)}] 
			\item $Y$ is Cauchy complete. 
			\item For any pair of distributors $\phi\colon X\oto Y$ and $\psi\colon Y\oto X$, if $\phi$ is left adjoint to $\psi$, then there is a unique functor  $f\colon X\lra Y$ such that $\phi=f_*$ and $\psi=f^*$.     
	\end{enumerate} \end{prop}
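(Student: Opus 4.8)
The plan is to prove the two implications separately, exploiting the fact that for a functor $f$ the graph is left adjoint to the cograph ($f_*\dashv f^*$), so that $(2)$ asserts that over a Cauchy complete $Y$ every adjunction of distributors into $Y$ already arises from a functor. The engine in one direction is the definition of Cauchy completeness (representability of Cauchy weights plus separatedness), and in the other direction it is the existence-and-uniqueness clause of $(2)$ applied to representable data.

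For $(1)\Rightarrow(2)$, suppose $Y$ is Cauchy complete and $\phi\colon X\oto Y$ is left adjoint to $\psi\colon Y\oto X$, so that $X\leq\psi\circ\phi$ and $\phi\circ\psi\leq Y$. The key first step is to decompose this adjunction pointwise. Fixing $x\in X$, the distributor axioms make $\phi(x,-)\colon\star\oto Y$ a coweight and $\psi(-,x)\colon Y\oto\star$ a weight, and I would check that $\phi(x,-)$ is left adjoint to $\psi(-,x)$: evaluating the unit $X\leq\psi\circ\phi$ at $(x,x)$ gives $\sup_{y}\psi(y,x)\with\phi(x,y)\geq1$, while the counit $\phi\circ\psi\leq Y$ restricted to the summand $x$ gives $\phi(x,y_2)\with\psi(y_1,x)\leq Y(y_1,y_2)$; these are exactly the two conditions making $\psi(-,x)$ a Cauchy weight of $Y$. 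Since $Y$ is Cauchy complete, $\psi(-,x)$ is representable, so there is a unique $f(x)\in Y$ with $\psi(-,x)=\sy(f(x))$, and uniqueness of adjoints of distributors (recorded after Lemma \ref{adjoint_arrow_calculation}) forces $\phi(x,-)=\syd(f(x))$.

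The remaining steps are then routine. The identities $\phi(x,y)=Y(f(x),y)$ and $\psi(y,x)=Y(y,f(x))$ say precisely that $\phi=f_*$ and $\psi=f^*$; and because $\phi=f_*$ is a distributor, its first-variable condition evaluated at $y=f(x_2)$ yields $X(x_1,x_2)\leq Y(f(x_1),f(x_2))$, so $f$ is a functor. Separatedness of $Y$ makes $f$ single-valued, and gives uniqueness: any functor $g$ with $g_*=f_*$ satisfies $g(x)\cong f(x)$, hence $g=f$. Conversely, for $(2)\Rightarrow(1)$ I must extract both halves of Cauchy completeness. Given a Cauchy weight $w\colon Y\oto\star$ with left adjoint coweight $v\colon\star\oto Y$, I apply $(2)$ with $X=\star$ to $v\dashv w$; the resulting functor $\star\lra Y$ selects an object $a$ with $w=f^*=\sy(a)$, so $w$ is representable. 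For separatedness I feed the uniqueness clause an isomorphism $y_1\cong y_2$: then $\syd(y_1)=\syd(y_2)$ and $\sy(y_1)=\sy(y_2)$, so the two point-functors $\star\lra Y$ selecting $y_1$ and $y_2$ have equal graphs and cographs, whence uniqueness in $(2)$ gives $y_1=y_2$.

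I expect the main work to lie in $(1)\Rightarrow(2)$, and within it in the variance bookkeeping: keeping straight that a distributor $\phi\colon X\oto Y$ restricts in its first variable to a coweight of $Y$ and that $\psi$ restricts in its second variable to a weight, and matching the left adjoint $\phi(x,-)$ with the correct slot of the Cauchy weight $\psi(-,x)$. Once the pointwise adjunction $\phi(x,-)\dashv\psi(-,x)$ is verified cleanly, everything else is a direct application of Cauchy completeness together with the uniqueness of distributor adjoints.
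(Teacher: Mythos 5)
Your proposal is correct and follows essentially the same route as the paper: the paper's entire proof is the single observation that an adjunction $\phi\dashv\psi$ of distributors restricts pointwise to adjunctions $\phi(x,-)\dashv\psi(-,x)$, which is exactly your key step. You have simply filled in the details the paper leaves implicit (representability via Cauchy completeness, functoriality of $f$, uniqueness from separatedness, and the converse via $X=\star$), all of which check out.
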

	
	\begin{proof} It suffices to observe that if $\phi\colon X\oto Y$ is left adjoint to $\psi\colon Y\oto X$, then for each $x\in X$,   the distributor $\phi(x,-)\colon\star\oto Y$   is left adjoint to  $\psi(-,x)\colon Y\oto\star$. \end{proof} 
	
	For a real-enriched category $X$, let $\CC X$ be the subcategory of $\CP X$ composed of Cauchy weights. Let $\mathfrak{i}\colon\CC X\lra\CP X$ be the inclusion functor, and let ${\sf c}\colon X\lra  \mathcal{C}X$ be the Yoneda embedding with codomain restricted to $\mathcal{C}X$. The composite $ \mathfrak{i}\circ{\sf c}$ is  then the Yoneda embedding $\sy\colon X\lra\CP X$.  
	
	\begin{lem}\label{X is isomorphic to CX} {\rm (Stubbe \cite{Stubbe2005})} Suppose $X$ is a real-enriched category. Then $${\sf c}_*\circ{\sf c}^*=\CC X\quad \text{and}\quad {\sf c}^*\circ {\sf c}_*=X.$$  Therefore, $X$ is isomorphic to $\CC X$ in the category of distributors.     \end{lem}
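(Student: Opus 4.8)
The plan is to prove the two distributor equalities separately and then read off the isomorphism, since an isomorphism in $[0,1]\text{-}{\bf Dist}$ is precisely a pair of distributors composing to the identity (i.e.\ hom) distributors on both sides. The equality ${\sf c}^*\circ{\sf c}_*=X$ is the easy half: by the criterion recorded earlier, a functor $f$ is fully faithful if and only if $f^*\circ f_*$ equals the hom of its domain, so it suffices to check that ${\sf c}\colon X\lra\CC X$ is fully faithful. First I would note that $\CC X$ is a \emph{full} subcategory of $\CP X$, so the inclusion $\mathfrak{i}$ satisfies $\CC X(\phi,\phi')=\CP X(\phi,\phi')$. Combining this with the full faithfulness of the Yoneda embedding $\sy=\mathfrak{i}\circ{\sf c}$ gives $X(x_1,x_2)=\CP X(\sy(x_1),\sy(x_2))=\CC X({\sf c}(x_1),{\sf c}(x_2))$, so ${\sf c}$ is fully faithful and the first equality follows.

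The substance is the equality ${\sf c}_*\circ{\sf c}^*=\CC X$, which I would verify pointwise on a pair of Cauchy weights $\phi,\phi'$. Unwinding the definitions of graph and cograph with the Yoneda lemma gives ${\sf c}_*(x,\phi')=\CC X({\sf c}(x),\phi')=\phi'(x)$ and ${\sf c}^*(\phi,x)=\CC X(\phi,{\sf c}(x))=\CP X(\phi,\sy(x))=\sy(x)\swarrow\phi$. Here is where I would invoke Cauchyness of $\phi$: letting $\psi=X\swarrow\phi$ be its left adjoint coweight, Lemma \ref{adjoint_arrow_calculation}(i) yields $\sy(x)\swarrow\phi=\sy(x)\circ\psi=\psi(x)$, so the cograph evaluates to the left adjoint, ${\sf c}^*(\phi,-)=\psi$. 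Then
\[ ({\sf c}_*\circ{\sf c}^*)(\phi,\phi')=\sup_{x\in X}\phi'(x)\with\psi(x)=\phi'\circ\psi=\phi'\swarrow\phi=\CP X(\phi,\phi')=\CC X(\phi,\phi'), \]
where the third equality is again Lemma \ref{adjoint_arrow_calculation}(i) applied to $\psi\dashv\phi$. This establishes ${\sf c}_*\circ{\sf c}^*=\CC X$, and together with the first equality it shows that ${\sf c}_*\colon X\oto\CC X$ and ${\sf c}^*\colon\CC X\oto X$ are mutually inverse, whence $X\cong\CC X$ in $[0,1]\text{-}{\bf Dist}$.

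The step carrying the real weight is the second equality, and inside it the use of Cauchyness. Everything reduces to recognizing that ${\sf c}^*(\phi,-)$ is exactly the left adjoint coweight $\psi$ of $\phi$, so that the two separate occurrences of $\swarrow$ collapse through Lemma \ref{adjoint_arrow_calculation}. By contrast, the same computation for the unrestricted Yoneda embedding $\sy\colon X\lra\CP X$ breaks down precisely because a general weight need not be a right adjoint and no such $\psi$ exists; cutting down to $\CC X$ is exactly what forces the composite back to the hom. Finally, I would flag that the argument uses no separatedness hypothesis on $X$, as the equalities are between distributors and do not require isomorphic elements to coincide.
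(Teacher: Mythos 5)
Your proposal is correct and follows essentially the same route as the paper: both halves rest on the same two ingredients, full faithfulness of ${\sf c}$ for ${\sf c}^*\circ{\sf c}_*=X$, and the pointwise Yoneda computation combined with Lemma \ref{adjoint_arrow_calculation} (via the left adjoint $X\swarrow\phi$ of a Cauchy weight $\phi$) for ${\sf c}_*\circ{\sf c}^*=\CC X$. The only cosmetic difference is that you invoke Lemma \ref{adjoint_arrow_calculation} twice, first to identify ${\sf c}^*(\phi,-)$ with $X\swarrow\phi$ (which the paper gets directly from the definition of $\swarrow$) and then to collapse $\phi'\circ(X\swarrow\phi)$ to $\phi'\swarrow\phi$, exactly as in the paper's final step.
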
    
	
	\begin{proof} The equality ${\sf c}^*\circ {\sf c}_*=X$ follows from that ${\sf c}   \colon X\lra \CC X$ is fully faithful. For the equality ${\sf c}_*\circ{\sf c}^*=\CC X$, let $\phi,\psi\in\CC X$. Since  $X\swarrow \phi$ is left adjoint to $\phi$ by Lemma \ref{adjoint_arrow_calculation}, then   \begin{align*}{\sf c}_*\circ{\sf c}^*(\phi,\psi)
			&= \sup_{x\in X} {\sf c}_*(x,\psi)\with{\sf c}^*(\phi,x) \\ 
			&= \sup_{x\in X}\CC X(\sy  (x),\psi)\with\CC X(\phi,\sy  (x))\\ 
			&=\sup_{x\in X}\psi(x)\with(X(-,x)\swarrow \phi) \\ &= \sup_{x\in X}\psi(x)\with(X\swarrow\phi)(x)   \\  &= \psi\circ(X\swarrow\phi)  \\ 
			&= \psi\swarrow\phi  \\ 
			& =\CC X(\phi,\psi).  \qedhere\end{align*} 
	\end{proof}
	
	\begin{thm}\label{univeral property of CX} {\rm (Kelly \cite{Kelly})} Suppose  $X$ is a real-enriched category. 
		\begin{enumerate}[label={\rm(\roman*)}] 
			\item   $\CC X$ is Cauchy complete. 
			\item For each functor $f\colon X\lra Y$ with $Y$ Cauchy complete, there is a unique functor $\overline{f}\colon\CC X\lra Y$ that extends $f$. Furthermore, if $f$ is fully faithful, then so is $\overline{f}$. 
	\end{enumerate} \end{thm}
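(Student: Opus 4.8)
The whole argument rests on Lemma \ref{X is isomorphic to CX}, which supplies mutually inverse distributors ${\sf c}_*\colon X\oto\CC X$ and ${\sf c}^*\colon\CC X\oto X$ (so that ${\sf c}_*\circ{\sf c}^*=\CC X$ and ${\sf c}^*\circ{\sf c}_*=X$), read together with the Yoneda lemma; here ${\sf c}\colon X\lra\CC X$ is the embedding from the statement, with graph ${\sf c}_*$ and cograph ${\sf c}^*$. For part (i), separatedness of $\CC X$ is immediate, since it carries the hom inherited from the separated category $\CP X$. To show that every Cauchy weight is representable, the plan is to take a Cauchy weight $\Phi\colon\CC X\oto\star$ with distributor left adjoint $\Psi\colon\star\oto\CC X$ and transport it along the isomorphism, setting $\phi=\Phi\circ{\sf c}_*$ and $\psi={\sf c}^*\circ\Psi$. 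Using the two identities above one checks $\phi\circ\psi=\Phi\circ\Psi\geq 1$ and $\psi\circ\phi\leq X$, so $\psi\dashv\phi$; hence $\phi$ is a Cauchy weight of $X$, that is, an object of $\CC X$.

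The key move is to read ${\sf c}_*\circ{\sf c}^*=\CC X$ pointwise. Since ${\sf c}_*(x,\chi)=\CC X(\sy_X(x),\chi)=\chi(x)$ by Yoneda, the value $({\sf c}_*\circ{\sf c}^*)(\chi,\phi)$ unwinds to $\sup_{x}\phi(x)\with{\sf c}^*(\chi,x)=(\phi\circ{\sf c}^*)(\chi)$, so Lemma \ref{X is isomorphic to CX} gives $\phi\circ{\sf c}^*=\CC X(-,\phi)=\sy_{\CC X}(\phi)$ for every $\phi\in\CC X$. Feeding this back, $\Phi=\Phi\circ({\sf c}_*\circ{\sf c}^*)=\phi\circ{\sf c}^*=\sy_{\CC X}(\phi)$, so $\Phi$ is representable and (i) is finished. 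The same identity yields a dividend I will reuse in (ii): $\colim_\phi{\sf c}=\colim(\phi\circ{\sf c}^*)=\colim\sy_{\CC X}(\phi)=\phi$, so every object of $\CC X$ is the colimit of ${\sf c}$ weighted by itself.

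For part (ii) I would assemble $\overline{f}$ from known functors rather than build it elementwise. As $\CC$ is a subfunctor of $\CP$, restricting $f_\exists=\CP f$ to Cauchy weights gives a functor $\CC f\colon\CC X\lra\CC Y$, $\phi\mapsto\phi\circ f^*$. Because $Y$ is Cauchy complete and separated, ${\sf c}_Y\colon Y\lra\CC Y$ is fully faithful and bijective on objects, hence an isomorphism whose inverse sends each Cauchy weight to its colimit. I then set $\overline{f}={\sf c}_Y^{-1}\circ\CC f$, a functor with $\overline{f}(\phi)=\colim(\phi\circ f^*)=\colim_\phi f$. That $\overline{f}$ extends $f$ reduces to $\sy_X(x)\circ f^*=\sy_Y(f(x))$, i.e. the cograph identity $(s_x)^*\circ f^*=(f\circ s_x)^*$, which gives $\CC f({\sf c}(x))={\sf c}_Y(f(x))$ and hence $\overline{f}\circ{\sf c}=f$.

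It remains to handle uniqueness and the preservation of full faithfulness. For uniqueness, suppose $g\colon\CC X\lra Y$ also satisfies $g\circ{\sf c}=f$; using $\phi=\colim_\phi{\sf c}$ from the second paragraph and the fact that $\phi$ is a \emph{Cauchy} weight, Proposition \ref{absolute sup} makes this colimit absolute, so $g(\phi)=g(\colim_\phi{\sf c})=\colim_\phi(g\circ{\sf c})=\colim_\phi f=\overline{f}(\phi)$, and separatedness of $Y$ forces $g=\overline{f}$. For full faithfulness, when $f$ is fully faithful Proposition \ref{left and right kan extension} gives $f^{-1}\circ f_\exists=\id$, which combined with $f_\exists\dashv f^{-1}$ (Proposition \ref{left and right kan}) yields $\CP Y(f_\exists\phi_1,f_\exists\phi_2)=\CP X(\phi_1,f^{-1}f_\exists\phi_2)=\CP X(\phi_1,\phi_2)$; thus $f_\exists$, and with it its restriction $\CC f$, is fully faithful, and composing with the isomorphism ${\sf c}_Y^{-1}$ keeps $\overline{f}$ fully faithful. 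I expect the genuine obstacle to lie precisely in this uniqueness step: since $Y$ is only Cauchy complete and not cocomplete, the free-cocompletion argument of Theorem \ref{free cocompletion} is unavailable, so the absoluteness of Cauchy-weighted colimits provided by Proposition \ref{absolute sup} is what must do the work.
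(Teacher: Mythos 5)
Your proof is correct and follows essentially the same route as the paper's: part (i) rests on transporting the adjunction $\Psi\dashv\Phi$ along the isomorphism of Lemma \ref{X is isomorphic to CX} (the identities ${\sf c}_*\circ{\sf c}^*=\CC X$ and ${\sf c}^*\circ{\sf c}_*=X$) exactly as the paper does, and part (ii) uses the same extension $\phi\mapsto\colim_\phi f$, the same absoluteness argument via Proposition \ref{absolute sup} for uniqueness, and the same $f^*\circ f_*=X$ computation for full faithfulness. The only cosmetic differences are that you conclude (i) by exhibiting $\Phi=\sy_{\CC X}(\phi)$ directly rather than phrasing it as closedness of $\CC X$ under colimits of Cauchy weights in $\CP X$, and that you package $\overline{f}$ as ${\sf c}_Y^{-1}\circ\CC f$ rather than defining it elementwise.
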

	
	\begin{proof}  
		(i) We  show that for each Cauchy weight $\Phi$ of $\CC X$, the colimit $\phi$ of the inclusion functor  $\mathfrak{i}\colon \CC X\lra\CP X$ weighted by $\Phi$ is a Cauchy weight of $X$, which implies that $X$ is Cauchy complete. In other words, we show that $\CC X$ is closed in $\CP X$ under formation of colimits of Cauchy weights.   Since the composite $\mathfrak{i}\circ{\sf c}$ is the Yoneda embbeding, from Example \ref{sup in PX} it follows that $$\phi={\colim}_\Phi\mathfrak{i} =\colim \Phi\circ\mathfrak{i}^*= \Phi\circ \mathfrak{i}^*\circ\mathfrak{i}_*\circ {\sf c}_*= \Phi\circ{\sf c}_*.$$  
		Let $\psi$ be the coweight  \[ {\sf c}^*\circ \Psi\colon *\oto\CC X \oto X,\] where
		$\Psi\colon \star\oto\CC X$ is the left adjoint of $\Phi\colon\CC X\oto\star$. We claim that $\psi$ is a left adjoint of $\phi$, hence $\phi$ is Cauchy. 
		
		On the one hand, since ${\sf c}\colon X\lra\CC X$ is fully faithful, we have \[\psi\circ\phi = {\sf c}^* \circ\Psi\circ \Phi\circ   {\sf c}_*\leq {\sf c}^*\circ {\sf c}_*= X.  \] On the other hand, by  Lemma \ref{X is isomorphic to CX}, we have  \[\phi\circ\psi = \Phi\circ {\sf c}_*\circ {\sf c}^* \circ\Psi=   \Phi\circ  \Psi   \geq 1.\]  Therefore, $\psi$ is   left adjoint to $\phi$, as claimed.
		
		(ii) It is readily verified that the functor \[\overline{f}\colon \CC X\lra Y, \quad \phi\mapsto {\colim}_\phi f \]   extends $f$. Uniqueness follows from that  each Cauchy weight $\phi$ is the colimit of ${\sf c}\colon X\lra\CC X$ weighted by $\phi$ and that colimits of Cauchy weights are preserved by all functors.
		
		Finally, we check that if $f$ is fully faithful, then  so is $\overline{f}$; that is, $\CC X(\phi_1,\phi_2)= Y(\overline{f}(\phi_1), \overline{f}(\phi_2))$ for all Cauchy weights $\phi_1,\phi_2$ of $X$.  Since \begin{align*}\CP X(\phi_1,\phi_2) &\leq \CP Y(\phi_1\circ f^*,\phi_2\circ f^*)\\ 
			& \leq \CP X(\phi_1\circ f^*\circ f_*,\phi_2\circ f^*\circ f_*)\\ 
			&=\CP X(\phi_1,\phi_2), 
		\end{align*} it follows that $$\CC X (\phi_1,\phi_2) = \CP Y(\phi_1\circ f^*,\phi_2\circ f^*) = Y(\overline{f}(\phi_1), \overline{f}(\phi_2)),$$ where the last equality holds because for $i=1,2$, the weight $\phi_i\circ f^*$ is Cauchy, hence represented by $\overline{f}(\phi_i)$.
	\end{proof} 
	
	An immediate corollary of the above theorem is that the full subcategory of $\QOrd$ composed of Cauchy complete real-enriched categories is reflective, with reflection given by $\sy\colon X\lra\CC X$. 
	The real-enriched category $\CC X$ is called the  \emph{Cauchy completion}  of $X$.

	\begin{cor}Cauchy completion is idempotent in the sense that $\CC X=\CC(\CC X)$ for every real-enriched category $X$. \end{cor}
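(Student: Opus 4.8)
The plan is to deduce idempotency from the universal property of the Cauchy completion recorded in Theorem \ref{univeral property of CX}, together with the observation that the Cauchy completion of an already Cauchy complete category is, up to the canonical isomorphism, itself.

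First I would note that $\CC X$ is Cauchy complete by Theorem \ref{univeral property of CX}(i); in particular it is separated. Writing $Y\coloneqq\CC X$, the assertion $\CC(\CC X)=\CC X$ becomes the statement that, for a Cauchy complete $Y$, the Yoneda embedding ${\sf c}_Y\colon Y\lra\CC Y$ (with codomain restricted to Cauchy weights) is an isomorphism. The displayed equality should then be read as this canonical isomorphism, since the objects of $\CC(\CC X)$ are weights of $\CC X$ rather than literally weights of $X$.

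The cleanest route is via the universal property. Since $Y$ is Cauchy complete, Theorem \ref{univeral property of CX}(ii) applied to $\id_Y\colon Y\lra Y$ yields a unique functor $r\colon\CC Y\lra Y$ with $r\circ{\sf c}_Y=\id_Y$, giving one composite. For the other composite ${\sf c}_Y\circ r\colon\CC Y\lra\CC Y$, I would invoke the uniqueness clause of Theorem \ref{univeral property of CX}(ii) again, this time for the functor ${\sf c}_Y\colon Y\lra\CC Y$ (whose codomain $\CC Y$ is Cauchy complete by part (i)): both $\id_{\CC Y}$ and ${\sf c}_Y\circ r$ extend ${\sf c}_Y$ along ${\sf c}_Y$, because $({\sf c}_Y\circ r)\circ{\sf c}_Y={\sf c}_Y\circ(r\circ{\sf c}_Y)={\sf c}_Y$. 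Hence ${\sf c}_Y\circ r=\id_{\CC Y}$, so ${\sf c}_Y$ and $r$ are mutually inverse and ${\sf c}_Y$ is an isomorphism.

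Alternatively, and even more directly, one argues that ${\sf c}_Y$ is fully faithful by the Yoneda lemma and surjective on objects because, by the very definition of Cauchy completeness, every Cauchy weight of $Y$—i.e. every object of $\CC Y$—is representable and so lies in the image of ${\sf c}_Y$; separatedness of $Y$ makes the representation unique, so ${\sf c}_Y$ is a bijective fully faithful functor between separated categories, hence an isomorphism. I expect no genuine obstacle here: the entire content is packaged in Theorem \ref{univeral property of CX}, and the only point requiring care is the bookkeeping that turns ``all Cauchy weights are representable'' into ``the embedding ${\sf c}_Y$ is invertible.''
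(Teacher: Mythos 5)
Your proposal is correct, and your second, ``more direct'' argument is exactly the paper's proof: the paper disposes of the corollary in one line, observing that since $\CC X$ is Cauchy complete all of its Cauchy weights are representable, which is precisely your statement that ${\sf c}_{\CC X}$ is surjective, with full faithfulness coming from the Yoneda lemma and injectivity from separatedness. Your primary route through the universal property of Theorem \ref{univeral property of CX}\thinspace(ii) is genuinely different and also valid, but it is worth noting what makes it work: the uniqueness clause of that theorem holds among \emph{all} functors extending $f$, a feature special to Cauchy weights because their colimits are absolute (Proposition \ref{absolute sup}). If one tried the same two-step argument for, say, the ideal completion $\CI$, it would fail at the second step: the uniqueness in Theorem \ref{univeral property of IX}\thinspace(ii) is only among Yoneda continuous extensions, and the embedding $Y\lra\CI Y$ need not be Yoneda continuous; this is consistent with the fact that $\CI$ is \emph{not} idempotent in general (Proposition \ref{characterizing Smyth completable}). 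So the universal-property route is more formal but its applicability is narrower than it looks, while the representability argument (yours and the paper's) makes the essential input --- that Cauchy weights of a Cauchy complete category are representable --- completely explicit.
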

	
	\begin{proof}Since $\CC X$ is Cauchy complete,   all of its Cauchy weights are representable, then the conclusion follows. \end{proof}  
	
	The following characterization of Cauchy completion is taken from Hofmann \cite{Hof2013}, Hofmann and Tholen \cite{HT2010}.
	\begin{prop} \label{CC as equaliser} For each real-enriched category $X$, the Cauchy completion of $X$ is the equalizer in $\QOrd$  of the parallel pair  \begin{equation*}\bfig
			\morphism(0,0)|a|/@{->}@<3pt>/<500,0>[\CP X`\CP\CP X;\CP\sy_X]
			\morphism(0,0)|b|/@{->}@<-3pt>/<500,0>[\CP X`\CP \CP X; \sy_{\CP X} ] 
			\efig\end{equation*} \end{prop}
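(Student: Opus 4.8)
The plan is to prove that a weight $\phi\in\CP X$ lies in the indicated equalizer exactly when it is a Cauchy weight; since $\CC X$ is the full subcategory of $\CP X$ on the Cauchy weights, with $\mathfrak i$ as inclusion, an equality of underlying sets will identify $\CC X$ with the equalizer together with its comparison functor. First I would pin the equalizer down concretely. The underlying-set functor $\QOrd\lra{\bf Set}$ has a left adjoint, namely the functor sending a set to the discrete real-enriched category on it, so it preserves limits; hence the equalizer of $\CP\sy_X$ and $\sy_{\CP X}$ is the full sub-real-enriched-category of $\CP X$ spanned by those $\phi$ with $\CP\sy_X(\phi)=\sy_{\CP X}(\phi)$, with the inclusion as comparison. (As $\CP\CP X$ is separated, for its objects ``isomorphic'' means ``equal''.) It therefore suffices to show, for $\phi\in\CP X$, that $\CP\sy_X(\phi)=\sy_{\CP X}(\phi)$ if and only if $\phi$ is Cauchy.

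Next I would unwind both legs as weights of $\CP X$, evaluated at an arbitrary $\gamma\in\CP X$. By definition of the Yoneda embedding of $\CP X$, $\sy_{\CP X}(\phi)(\gamma)=\CP X(\gamma,\phi)$. On the other side $\CP\sy_X(\phi)=(\sy_X)_\exists(\phi)=\phi\circ(\sy_X)^*$, and since $(\sy_X)^*(\gamma,x)=\CP X(\gamma,\sy_X(x))$ this gives $\CP\sy_X(\phi)(\gamma)=\sup_{x}\phi(x)\with\CP X(\gamma,\sy_X(x))$. The crucial observation is that this last expression is a colimit: applying Example \ref{cpt as colimit} to the functor $\CP X(\gamma,-)\circ\sy_X\colon X\lra\sV$ identifies it with $\colim_\phi(\CP X(\gamma,-)\circ\sy_X)$, while $\CP X(\gamma,\phi)=\CP X(\gamma,-)(\colim_\phi\sy_X)$ because $\colim_\phi\sy_X=\phi$ (Proposition \ref{y is colimit dense}). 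Thus the equalizer condition $\CP\sy_X(\phi)=\sy_{\CP X}(\phi)$ says precisely that the functor $\CP X(\gamma,-)\colon\CP X\lra\sV$ preserves the colimit $\colim_\phi\sy_X$, for every $\gamma\in\CP X$.

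With this reformulation both implications are short. If $\phi$ is Cauchy, then by Proposition \ref{absolute sup} the colimit $\colim_\phi\sy_X$ is preserved by every functor out of $\CP X$, in particular by each $\CP X(\gamma,-)$, so $\CP\sy_X(\phi)=\sy_{\CP X}(\phi)$ and $\phi$ lies in the equalizer. Conversely, if $\phi$ lies in the equalizer I would test the identity $\sup_{x}\phi(x)\with\CP X(\gamma,\sy_X(x))=\CP X(\gamma,\phi)$ at the single point $\gamma=\phi$. Since $\CP X(\phi,\sy_X(x))=\sub_X(\phi,\sy_X(x))=(X\swarrow\phi)(x)$ and $\CP X(\phi,\phi)=1$, this reads $\phi\circ(X\swarrow\phi)=1$; combined with the counit inequality $(X\swarrow\phi)\circ\phi\le X$ of the adjunction $-\circ\phi\dashv-\swarrow\phi$, it exhibits the coweight $X\swarrow\phi$ as a left adjoint of $\phi$, i.e. $\phi$ is Cauchy.

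The main obstacle is conceptual rather than computational: recognizing the supremum formula defining $\CP\sy_X(\phi)$ as the colimit $\colim_\phi\sy_X$ pushed through $\CP X(\gamma,-)$, which is exactly the step that translates the equalizer condition into the absoluteness of $\phi$ encoded by Proposition \ref{absolute sup}. Once that bridge is in place the forward implication is immediate, and the backward implication requires only the single evaluation at $\gamma=\phi$ together with the defining adjunction of $\swarrow$.
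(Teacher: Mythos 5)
Your proof is correct, and while it agrees with the paper's on the overall reduction, it takes a genuinely different route at the key step. Both arguments reduce the proposition to showing that a weight $\phi$ of $X$ satisfies $\CP\sy_X(\phi)=\sy_{\CP X}(\phi)$ if and only if $\phi$ is Cauchy, and both prove the implication from the equalizer condition to Cauchyness by exhibiting $\psi\coloneqq X\swarrow\phi$ as a left adjoint of $\phi$: your evaluation of the equalizer condition at the single weight $\gamma=\phi$ is a pointwise rendering of the paper's distributor computation $\phi\circ\psi=\CP X(-,\phi)\circ\CP X(\phi,-)=1$, and is if anything slightly more elementary, since it avoids the identity $X\swarrow\phi=\sy_X^*\swarrow\CP X(-,\phi)$ and Lemma \ref{adjoint_arrow_calculation}. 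The genuine divergence is in the converse implication. The paper proves that a Cauchy $\phi$ satisfies the equalizer condition by another direct calculation in the distributor calculus, again using the adjunction $\CP X(\phi,-)\dashv\CP X(-,\phi)$. You instead observe, via Example \ref{cpt as colimit} and the identity $\colim_\phi\sy_X=\phi$ (Proposition \ref{y is colimit dense}), that the equalizer condition says precisely that every covariant representable $\CP X(\gamma,-)\colon\CP X\lra\sV$ preserves the colimit $\colim_\phi\sy_X$; Cauchyness then yields the condition instantly from the absoluteness characterization of Cauchy weights (Proposition \ref{absolute sup}). Your reformulation is the more conceptual one: it makes visible why the equalizer isolates exactly the Cauchy weights (they are the weights whose colimits are absolute, so in particular are detected by representables), at the cost of invoking Proposition \ref{absolute sup} and of the preliminary verification---routine, and correctly handled in your first paragraph---that equalizers in $\QOrd$ are full subcategories on the equalizing objects, a point the paper passes over silently. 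The paper's argument stays entirely inside the distributor calculus and is shorter in symbols, but hides this conceptual content.
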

	
	\begin{proof} We show that a weight $\phi$ of $X$ is Cauchy if and only if $\CP\sy_X(\phi)=\sy_{\CP X}(\phi)$. 
		
		For sufficiency consider the coweight $\psi=X\swarrow\phi=\sy_X^*\swarrow\CP X(-,\phi)$.   We wish to show that $\phi$ is right adjoint to $\psi$, hence a Cauchy weight.  Since $\CP\sy_X(\phi)=\sy_{\CP X}(\phi)=\CP X(-,\phi)$, then 
		\begin{align*} \phi\circ\psi &=\phi\circ  (X\swarrow\phi)\\ 
			& =\phi\circ(\sy_X^*\swarrow\CP X(-,\phi))\\ 
			&= \phi\circ \sy_X^*\circ \CP X(\phi,-) &(\CP X(\phi,-)\dashv \CP X(-,\phi))\\ 
			&=  \CP\sy_X(\phi)\circ \CP X(\phi,-)  \\ 
			& = \CP X(-,\phi)\circ \CP X(\phi,-)  \\ 
			&=1. \end{align*} The inequality  $\psi\circ\phi\leq X$ is trivial. Thus, $\phi$ is right adjoint to $\psi$. 
		
		For necessity suppose $\phi$ is a Cauchy weight. Then $\psi\coloneqq X\swarrow\phi$ is left adjoint to $\phi$, hence $$ 1=\phi\circ\psi=\phi\circ \sy_X^*\circ\CP X(\phi,-)= \CP\sy_X(\phi)\circ \CP X(\phi,-),$$ therefore \begin{align*}\sy_{\CP X}(\phi) &= \CP\sy_X(\phi)\circ \CP X(\phi,-)\circ \CP X(-,\phi)= \CP\sy_X(\phi). \qedhere \end{align*}
	\end{proof}
	
	In the following we   characterize Cauchy completeness in terms of topological property.
	Let $X$ be a real-enriched category. For each $x\in X$ and $r<1$, the set $$B(x,r)\coloneqq\{y\in X\mid X(x,y)>r\}$$ is called the \emph{open ball} of $X$ with center $x$ and radius $r$. The collection $\{B(x,r)\mid x\in X, r<1\}$  is a base for a topology on $X$, the resulting topology is called the \emph{open ball topology} of $X$. 
	
	For each real-enriched category $(X,\alpha)$, the  \emph{symmetrization}  of  $(X,\alpha)$  refers to the symmetric (in an evident sense) real-enriched category $(X,S(\alpha))$, where $S(\alpha)(x,y)=\min\{\alpha(x,y),\alpha(y,x)\}$. 
	
	It is clear that the open ball topology of the symmetrization of $X$ is the least common refinement of the open ball topology of $X$ and that of $X^{\rm op}$.
	
	\begin{prop} \label{limit in open ball top} A net $\{x_i\}_{i\in D}$ of a real-enriched category $X$ converges to $x$ w.r.t the open ball topology if and only if $\bv_{i\in D}\inf_{j\geq i}X(x,x_j)=1$. \end{prop}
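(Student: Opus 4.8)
The plan is to reduce net convergence, which a priori involves every basic open set $B(y,s)$ containing $x$ with an \emph{arbitrary} centre $y$, to the much simpler condition that the net be eventually in each ball $B(x,r)$ centred at $x$ itself, and then to translate this condition directly into the $\sup$-$\inf$ formula. So the first thing I would establish is the reduction lemma: convergence of $\{x_i\}_{i\in D}$ to $x$ is equivalent to the requirement that for every $r<1$ the net is eventually in $B(x,r)$, i.e.\ there is some $i_0\in D$ with $X(x,x_j)>r$ for all $j\geq i_0$.

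One direction of the lemma is immediate, since each $B(x,r)$ is itself a neighbourhood of $x$ (because $X(x,x)=1>r$), so a convergent net must be eventually inside it. For the converse I would take an arbitrary basic neighbourhood $B(y,s)$ of $x$, so that $X(y,x)>s$, and combine the transitivity axiom in the form $X(x,x_j)\with X(y,x)\leq X(y,x_j)$ with the behaviour of the t-norm. Since $r\mapsto r\with X(y,x)$ is continuous (indeed left-continuity is all that is needed) and equals $X(y,x)>s$ at $r=1$, there exists $r<1$ with $r\with X(y,x)>s$. For $j$ beyond the index witnessing eventual membership in $B(x,r)$, monotonicity of $\with$ then gives $X(y,x_j)\geq X(x,x_j)\with X(y,x)\geq r\with X(y,x)>s$, so the net is eventually in $B(y,s)$ as well.

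Finally I would match the reduced condition to the formula. Writing $L=\bv_{i\in D}\bw_{j\geq i}X(x,x_j)$, note $L\leq 1$ always. If the net is eventually in each $B(x,r)$, then for fixed $r<1$ the corresponding tail satisfies $\bw_{j\geq i_0}X(x,x_j)\geq r$, whence $L\geq r$ for every $r<1$ and so $L=1$. Conversely, if $L=1$, then for each $r<1$ the definition of supremum yields an index $i_0$ with $\bw_{j\geq i_0}X(x,x_j)>r$, so $X(x,x_j)>r$ for all $j\geq i_0$ and the net is eventually in $B(x,r)$. The only genuinely non-routine point is the converse half of the reduction lemma, where eventual membership in balls centred at $x$ must be propagated to balls centred at an arbitrary $y$; this is precisely where the t-norm structure is used, continuity absorbing the gap between $X(y,x)$ and $s$ into a radius $r<1$, with transitivity carrying the estimate across. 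The remaining steps are straightforward bookkeeping with suprema and infima.
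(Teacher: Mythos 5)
Your proof is correct, and it is the natural argument for this statement (the paper itself states Proposition \ref{limit in open ball top} without proof). You correctly isolate the only non-routine point — reducing convergence to eventual membership in balls centred at $x$ itself — and your use of transitivity $X(x,x_j)\with X(y,x)\leq X(y,x_j)$ together with (left-)continuity of $\with$ to choose $r<1$ with $r\with X(y,x)>s$ is exactly what is needed; the translation into the $\sup$-$\inf$ formula is then routine and handled correctly.
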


	Assigning open ball topology to real-enriched categories yields  a functor from   real-enriched categories to  topological spaces. 
	
	\begin{defn}
		Suppose $\{x_i\}_{i\in D}$ is a net and $a$ is an element of a real-enriched category $X$.  We say that 
		\begin{enumerate}[label=\rm(\roman*)]   
			\item $\{x_i\}_{i\in D}$   is   Cauchy (also called  biCauchy) if \[\sup_{i\in D}\inf_{j,k\geq i}X(x_j,x_k)= 1.\]  
			\item $a$ is a bilimit  of $\{x_i\}_{i\in D}$ if for all $x\in X$, \[\sup_{i\in D}\inf_{j\geq i}X(x,x_j)=X(x,a),\quad  \sup_{i\in D}\inf_{j\geq i}X(x_j,x)=X(a,x).  \]  
	\end{enumerate} \end{defn}

	It is clear that a net has at most one bilimit up to isomorphism.
	
	\begin{lem}\label{bilimit of Cauchy net} Let $\{x_i\}_{i\in D}$ be a Cauchy net and $a$ be an element of a real-enriched category $X$. The  following   are equivalent:  
		\begin{enumerate}[label={\rm(\arabic*)}]   
			\item $a$ is a bilimit of $\{x_i\}_{i\in D}$.  
			\item   $\sup_{i\in D}\inf_{j\geq i}X(a,x_j)= 1$ and $ \sup_{i\in D}\inf_{j\geq i}X(x_j,a)= 1$. 
			\item  $\{x_i\}_{i\in D}$ converges to $a$ in the open ball topology of the symmetrization of $X$. \end{enumerate} \end{lem}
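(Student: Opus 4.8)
The plan is to treat condition $(2)$ as a hub and prove $(1)\Rightarrow(2)$, $(2)\Rightarrow(1)$, and $(2)\Leftrightarrow(3)$. Throughout I would abbreviate $u_i=\inf_{j\ge i}X(a,x_j)$ and $v_i=\inf_{j\ge i}X(x_j,a)$; since enlarging $i$ shrinks the index set of the infimum, both $\{u_i\}_{i\in D}$ and $\{v_i\}_{i\in D}$ are non-decreasing along the directed set $D$, a fact I will lean on repeatedly. The implication $(1)\Rightarrow(2)$ is immediate: specializing the two bilimit identities to $x=a$ and using reflexivity $X(a,a)=1$ gives exactly $\sup_i u_i=1$ and $\sup_i v_i=1$.

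For $(2)\Leftrightarrow(3)$ I would apply Proposition \ref{limit in open ball top} to the symmetrization $(X,S(\alpha))$: convergence of $\{x_i\}_{i\in D}$ to $a$ in the open ball topology of the symmetrization says precisely $\sup_i\inf_{j\ge i}\min\{X(a,x_j),X(x_j,a)\}=1$. Pulling $\min$ through the infimum rewrites the inner term as $\min\{u_i,v_i\}$, so $(3)$ reads $\sup_i\min\{u_i,v_i\}=1$. One direction is trivial since $\min\{u_i,v_i\}\le u_i$ and $\le v_i$; for the converse I would use monotonicity together with directedness, namely that for $i_1,i_2$ with $u_{i_1}$ and $v_{i_2}$ near $1$, any common upper bound $i_3$ satisfies $u_{i_3}\ge u_{i_1}$ and $v_{i_3}\ge v_{i_2}$, so $\min\{u_{i_3},v_{i_3}\}$ is near $1$.

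The substance is $(2)\Rightarrow(1)$. Fix $x\in X$ and set $p_i(x)=\inf_{j\ge i}X(x,x_j)$; I must show $\sup_i p_i(x)=X(x,a)$, the companion identity $\sup_i\inf_{j\ge i}X(x_j,x)=X(a,x)$ being entirely dual. For the lower bound, transitivity in the form $X(a,x_j)\with X(x,a)\le X(x,x_j)$ gives $p_i(x)\ge u_i\with X(x,a)$; taking the supremum over $i$, using that $-\with X(x,a)$ preserves joins and that $\sup_i u_i=1$, yields $\sup_i p_i(x)\ge X(x,a)$. For the upper bound, transitivity in the form $X(x,x_j)\with X(x_j,a)\le X(x,a)$ combined with $p_i(x)\le X(x,x_j)$ and $v_i\le X(x_j,a)$ gives $p_i(x)\with v_i\le X(x,a)$ for every $i$; taking suprema and using $\sup_i v_i=1$ then gives $\sup_i p_i(x)\le X(x,a)$.

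The main obstacle is the interchange $\sup_i\bigl(p_i(x)\with v_i\bigr)=\bigl(\sup_i p_i(x)\bigr)\with\bigl(\sup_i v_i\bigr)$ in the last step, and this is exactly where the quantale structure is indispensable: because $\with$ preserves joins in each variable, $\bigl(\sup_i p_i(x)\bigr)\with\bigl(\sup_k v_k\bigr)=\sup_{i,k}\bigl(p_i(x)\with v_k\bigr)$, and monotonicity of both families along the directed set $D$ collapses the double supremum to the diagonal one. I would also be careful to select the correct one of the two transitivity inequalities for each of the four bounds. Finally, the same join-preservation fact applied to $u_i\with v_i$ shows en passant that $(2)$ already forces $\sup_i\inf_{j,k\ge i}X(x_j,x_k)=1$, so the standing Cauchy hypothesis is implied by — and hence automatically consistent with — each of the three conditions.
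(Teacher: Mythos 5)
Your proposal is correct and follows essentially the same route as the paper: $(1)\Rightarrow(2)$ by reflexivity, $(2)\Leftrightarrow(3)$ via Proposition \ref{limit in open ball top} applied to the symmetrization together with the identification of the inner infimum with $\min\{u_i,v_i\}$, and $(2)\Rightarrow(1)$ by the same quantale computation (transitivity, join-preservation of $\with$, and collapsing the double supremum along the directed index set), merely organized as two separate bounds instead of the paper's single chain of inequalities.
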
 
	
	\begin{proof} $(1)\Rightarrow(2)$   trivial.
		
		$(2)\Leftrightarrow(3)$ This follows from Proposition \ref{limit in open ball top} and  the fact  that the requirement in (2) is equivalent to that $$\sup_{i\in D}\inf_{j\geq i}\min\{X(a,x_j),X(x_j,a)\}= 1.$$
		
		$(2)\Rightarrow(1)$ For all $x\in X$, we calculate: \begin{align*} X(x,a)&= \Big(\sup_{i\in D}\inf_{j\geq i}X(a,x_j)\Big)\with X(x,a) \\ &\leq  \sup_{i\in D}\inf_{j\geq i} X(a,x_j) \with X(x,a)  \\ &\leq  \sup_{i\in D}\inf_{j\geq i}X(x,x_j)   \\ &=  \Big( \sup_{i\in D}\inf_{j\geq i}X(x_j,a)\Big) \with \Big(\sup_{i\in D}\inf_{j\geq i}X(x,x_j)\Big) \\ &= \sup_{i\in D}\inf_{j\geq i} X(x_j,a)\with X(x,x_j)    \\ &\leq X(x,a),\end{align*} which implies   $\sup_{i\in D}\inf_{j\geq i}X(x,x_j)=X(x,a)$.  The other equality is verified likewise. So  $a$ is a bilimit of $\{x_i\}_{i\in D}$.  \end{proof}

	Proposition \ref{limit in open ball top}  implies that if a net of $X$ converges in the open ball topology of the symmetrization of $X$, then it is a Cauchy net of $X$.

	\begin{exmp}Let $\with$ be the product t-norm. Consider the real-enriched category $\sV=([0,1],\alpha_L)$. Then, a sequence $\{x_n\}_{n\in\mathbb{N}}$  is Cauchy if and only if it is  eventually constant with value $0$ or converges in the usual sense to some point other than $0$. Likewise for   $\sV^{\rm op}=([0,1],\alpha_R)$. \end{exmp}

	\begin{prop} \label{colimit =bilimit for Cauchy net}  
		For each real-enriched category $X$, the  following are equivalent: \begin{enumerate}[label={\rm(\arabic*)}]   \item Every Cauchy weight of $X$ has a colimit. \item Every Cauchy net of $X$ has a bilimit. \item  Every Cauchy sequence of $X$ has a bilimit.  \end{enumerate} \end{prop}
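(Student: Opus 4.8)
The plan is to establish the cycle $(1)\Rightarrow(2)\Rightarrow(3)\Rightarrow(1)$. The implication $(2)\Rightarrow(3)$ is immediate, since a Cauchy sequence is in particular a Cauchy net (indexed by $\mathbb{N}$). All the substance lies in passing between Cauchy weights and Cauchy nets, and the bridge in both directions is the pair of functions
\[\phi(x)=\sup_{i\in D}\inf_{j\geq i}X(x,x_j),\qquad \psi(x)=\sup_{i\in D}\inf_{j\geq i}X(x_j,x)\]
attached to a net $\{x_i\}_{i\in D}$, together with the observation that $a$ is a bilimit of the net precisely when $\phi=\sy(a)$ and $\psi=\syd(a)$ as a weight and coweight.

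For $(1)\Rightarrow(2)$ I would start from a Cauchy net $\{x_i\}_{i\in D}$ and form $\phi,\psi$ as above. A routine check, using transitivity of $X$ and that $\with$ preserves suprema, shows that $\phi$ is a weight and $\psi$ a coweight. I would then verify $\psi\dashv\phi$: the inequality $\psi(y)\with\phi(x)\leq X(x,y)$ follows by picking, for fixed tail-indices, a common larger index $m$ and applying transitivity $X(x_m,y)\with X(x,x_m)\leq X(x,y)$; and $\phi\circ\psi\geq 1$ follows from the Cauchy condition, since $\phi(x_i)\with\psi(x_i)\geq c_i\with c_i$ with $c_i=\inf_{j,k\geq i}X(x_j,x_k)$ and $\sup_i(c_i\with c_i)=1$ by continuity of $\with$. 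Thus $\phi$ is a Cauchy weight, so by hypothesis $(1)$ it has a colimit $a$; Proposition \ref{absolute sup} makes $\phi$ representable, $\phi=\sy(a)$, while Lemma \ref{adjoint_arrow_calculation} and the definition of colimit give $\psi=X\swarrow\phi=X(a,-)=\syd(a)$. These two equalities are exactly the bilimit identities, so $a$ is a bilimit of the net.

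For $(3)\Rightarrow(1)$, given a Cauchy weight $\phi$ with left adjoint $\psi=X\swarrow\phi$, the inequality $\phi\circ\psi\geq 1$ reads $\sup_{x}\phi(x)\with\psi(x)=1$, so I can select points $x_n$ with $\phi(x_n)\with\psi(x_n)\geq t_n$ for some sequence $t_n\nearrow 1$; since $r\with s\leq\min\{r,s\}$ this forces $\phi(x_n)\geq t_n$ and $\psi(x_n)\geq t_n$. The adjunction inequality $\psi(x_k)\with\phi(x_m)\leq X(x_m,x_k)$ then yields $\inf_{m,k\geq n}X(x_m,x_k)\geq t_n\with t_n$, whence $\{x_n\}$ is a Cauchy sequence; by $(3)$ it has a bilimit $a$, with $X(x,a)=\sup_n\inf_{m\geq n}X(x,x_m)$. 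It then remains to identify this with $\phi$, i.e.\ to show $\phi=\sy(a)$, which gives $\colim\phi=a$.

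I expect this last identification to be the main obstacle. For $\phi(x)\leq X(x,a)$ I would use $\psi(x_m)\with\phi(x)\leq X(x,x_m)$ together with $\psi(x_m)\geq t_m$, giving $X(x,a)\geq\sup_n(t_n\with\phi(x))=\phi(x)$. For the reverse inequality I would use functoriality of the weight, $\phi(x_m)\with X(x,x_m)\leq\phi(x)$, hence $X(x,x_m)\leq t_m\ra\phi(x)$, and then the t-norm computation $\inf_n(t_n\ra\phi(x))=\phi(x)$: writing $u=\inf_k(t_k\ra\phi(x))$, one has $t_n\with u\leq t_n\with(t_n\ra\phi(x))\leq\phi(x)$ for every $n$, so $\sup_n(t_n\with u)=u\leq\phi(x)$, while $u\geq\phi(x)$ is clear. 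Combining the two bounds (and the monotonicity of the inner infima) yields $\phi=\sy(a)$, so $\phi$ is representable and $\colim\phi$ exists. The delicate points throughout are the exchanges of $\sup$ and $\inf$ with $\with$ and $\ra$, all of which rest on continuity of the t-norm.
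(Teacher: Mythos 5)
Your proposal is correct and follows essentially the same route as the paper: the cycle $(1)\Rightarrow(2)\Rightarrow(3)\Rightarrow(1)$, with the tail-weight $\phi$ and tail-coweight $\psi$ of the net as the bridge, selection of points $x_n$ where $\phi(x_n)\with\psi(x_n)$ approaches $1$ to build a Cauchy sequence, and a two-sided estimate (resting on continuity of the t-norm) identifying $\phi$ with $\sy(a)$. The only cosmetic differences are that the paper defers the adjunction $\psi\dashv\phi$ to Proposition \ref{Cauchy net implies Cauchy weight} and phrases your residuated bound $X(x,x_m)\leq t_m\ra\phi(x)$ multiplicatively as $(1-1/k)\with\sup_n\inf_{m\geq n}X(x,x_m)\leq\phi(x)$.
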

	
	\begin{proof}$(1)\Rightarrow(2)$ Suppose   $\{x_i\}_{i\in D}$ is a Cauchy net of $X$. Then the weight \[\phi =\sup_{i\in D}\inf_{j\geq i}X(-,x_j)\] is Cauchy with a left adjoint given by the coweight  \[ \psi=\sup_{i\in D}\inf_{j\geq i}X(x_j,-). \]   Actually, a slightly stronger conclusion will be proved in Proposition \ref{Cauchy net implies Cauchy weight}. Let $a$ be a colimit of the Cauchy weight $\phi$. Then $\phi(x)=X(x,a)$ and $\psi(x)=X(a,x)$  for all $x\in X$, which implies that $a$ is a bilimit of $\{x_i\}_{i\in D}$.  
		
		$(2)\Rightarrow(3)$ Trivial.  
		
		$(3)\Rightarrow(1)$  
		Suppose $\phi\colon X\oto\star$ is a Cauchy weight, with a left adjoint  $\psi\colon\star\oto X$. Then, $\sup_{x\in X} \phi(x)\with \psi(x) \geq 1$ and $\psi(y)\with\phi(x)\leq X(x,y)$ for all $x,y\in X$. 
		
		For each $n\geq1$, pick some $x_n$ such that \[\phi(x_n)\with \psi(x_n) \geq  1-1/n.\] Since for all $n,m\geq1$ we have \[X(x_n,x_m)\geq \psi(x_m)\with\phi(x_n)\geq (1-1/m)\with (1-1/n),\]  then $\{x_n\}_{n\geq1}$  is a Cauchy sequence, hence has a bilimit, say $a$. This means that  for all $x\in X$,  \[\sup_{n\geq1}\inf_{m\geq n}X(x,x_m)=X(x,a), \quad  \sup_{n\geq1}\inf_{m\geq n}X(x_m,x)=X(a,x).  \]
		
		We claim that $a$ is a colimit of $\phi$. For this it suffices to show that \[\phi =\sup_{n\geq1}\inf_{m\geq n}X(-,x_m).\]  For all $x\in X$ and  $k\geq1$, since \[(1-1/k)\with \sup_{n\geq1}\inf_{m\geq n}X(x,x_m)\leq   \sup_{n\geq k}\inf_{m\geq n}\phi(x_m)\with X(x,x_m)\leq\phi(x),\] it follows that $$\sup_{n\geq1}\inf_{m\geq n}X(x,x_m)\leq\phi(x)$$ by arbitrariness of $k$.     On the other hand, since $$X(x,x_m)\geq \psi(x_m)\with\phi(x)\geq (1-1/m)\with\phi(x)$$  for all $m\geq1$, then \begin{align*} \sup_{n\geq1}\inf_{m\geq n}X(x,x_m)&\geq  \sup_{n\geq1}\inf_{m\geq n}(1-1/m)\with\phi(x)\geq \phi(x). \qedhere \end{align*} \end{proof}
	
	The following conclusion was first observed by Lawvere \cite{Lawvere1973} for metric spaces.
	
	\begin{thm} \label{complete metric}   A  real-enriched category $X$  is Cauchy complete if and only if  each of its Cauchy sequences converges uniquely in the open ball topology of its symmetrization. \end{thm}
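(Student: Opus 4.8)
The plan is to reduce both directions to the Cauchy‑net machinery already in place, using Proposition \ref{colimit =bilimit for Cauchy net} to pass between Cauchy weights and Cauchy sequences, and Lemma \ref{bilimit of Cauchy net} to identify bilimits with limits in the open ball topology of the symmetrization. The only genuinely new bookkeeping concerns \emph{uniqueness} of the limit, which I expect to correspond exactly to separatedness of $X$.

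For the forward implication, I would start from the definition: if $X$ is Cauchy complete then $X$ is separated and every Cauchy weight is representable, hence in particular has a colimit. By Proposition \ref{colimit =bilimit for Cauchy net} this means every Cauchy sequence of $X$ has a bilimit, and by the equivalence $(1)\Leftrightarrow(3)$ of Lemma \ref{bilimit of Cauchy net} a bilimit of a Cauchy sequence is precisely a limit in the open ball topology of the symmetrization. Thus every Cauchy sequence converges. For uniqueness I would use that a net has at most one bilimit up to isomorphism: any two limit points of a convergent Cauchy sequence are bilimits by Lemma \ref{bilimit of Cauchy net}, hence isomorphic, hence equal because $X$ is separated.

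For the converse, assume every Cauchy sequence converges uniquely in the symmetrized open ball topology. First I would extract separatedness directly from uniqueness: given $a\cong b$, the constant sequence $x_n=a$ is trivially Cauchy, and applying Proposition \ref{limit in open ball top} to $(X,S(\alpha))$ shows that it converges to a point $c$ iff $S(\alpha)(c,a)=\min\{\alpha(c,a),\alpha(a,c)\}=1$; this holds for both $c=a$ and $c=b$ precisely because $\alpha(a,b)=\alpha(b,a)=1$, so uniqueness forces $a=b$. Next, since each Cauchy sequence converges, Lemma \ref{bilimit of Cauchy net} shows each has a bilimit, whence by Proposition \ref{colimit =bilimit for Cauchy net} every Cauchy weight has a colimit, and the final clause of Proposition \ref{absolute sup} upgrades this to representability. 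Together with separatedness, this is exactly Cauchy completeness.

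The main obstacle I anticipate is not the existence of limits, which is handed to us by the two cited results, but cleanly matching the \emph{topological} notion of a unique limit to the \emph{enriched} notion of separatedness. The constant‑sequence argument isolates separatedness from unique convergence, and the ``up to isomorphism'' uniqueness of bilimits supplies the reverse link; once these are in hand the rest is a chain of already‑proved equivalences.
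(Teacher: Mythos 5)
Your proposal is correct and follows exactly the route the paper intends: Theorem \ref{complete metric} is stated as an immediate consequence of Proposition \ref{colimit =bilimit for Cauchy net} and Lemma \ref{bilimit of Cauchy net} (with Proposition \ref{absolute sup} supplying representability), and your argument assembles precisely these ingredients. Your explicit handling of the uniqueness--separatedness correspondence via the constant-sequence argument is a detail the paper leaves implicit, but it is the right way to close that gap.
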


	\section{Yoneda completeness}
	This section deals with an important notion in the theory of real-enriched categories, that of Yoneda completeness. Yoneda complete enriched categories may be viewed as analogue of directed partially ordered sets in the enriched context. 
	
	The following definition is taken from Bonsangue,   van Breugel and Rutten \cite{BBR98},  and Wagner \cite{Wagner97}.
	
	\begin{defn}  
		Suppose $\{x_i\}_{i\in D}$ is a net and $b$ is an element of  a real-enriched category $X$. We say that 
		\begin{enumerate}[label=\rm(\roman*)]  
			\item   $\{x_i\}_{i\in D}$ is   forward Cauchy  if \[\sup_{i\in D}\inf_{k\geq j \geq i}X(x_j,x_k)= 1.\]  
			\item   $b$   is a Yoneda limit  of   $\{x_i\}_{i\in D}$ if for all $y\in X$, \[X(b,y)=\sup_{i\in D}\inf_{i\leq j}X(x_j,y). \]  
	\end{enumerate} \end{defn} 
	
	Every Cauchy net is forward Cauchy; every bilimit is a Yoneda limit; and every net has at most one Yoneda limit up to isomorphism.   
	The following proposition says that for Cauchy nets,  Yoneda limits coincides with bilimits.
	
	\begin{prop} Suppose $\{x_i\}_{i\in D}$ is a Cauchy net and $b$ is an element of a real-enriched category $X$. Then $b$ of $X$  is a Yoneda limit of $\{x_i\}_{i\in D}$  if and only if it is a bilimit of $\{x_i\}_{i\in D}$. \end{prop}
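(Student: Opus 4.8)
The proposition is an equivalence, and the implication from bilimit to Yoneda limit costs nothing beyond what is already recorded: if $b$ is a bilimit of $\{x_i\}_{i\in D}$, then by definition $X(b,y)=\sup_{i\in D}\inf_{j\geq i}X(x_j,y)$ for all $y\in X$, which is exactly the defining condition of a Yoneda limit. (This uses neither the Cauchy hypothesis nor the first of the two bilimit equations.) So the entire content lies in the converse, and my plan is to reduce that converse to uniqueness of adjoint distributors.

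Suppose then that $b$ is a Yoneda limit of the Cauchy net $\{x_i\}_{i\in D}$. Following Proposition~\ref{colimit =bilimit for Cauchy net}, I would introduce the weight $\phi=\sup_{i\in D}\inf_{j\geq i}X(-,x_j)$ and the coweight $\psi=\sup_{i\in D}\inf_{j\geq i}X(x_j,-)$. The Yoneda-limit identity reads $X(b,-)=\psi$, equivalently $\psi=\syd(b)$; and this is already the \emph{second} of the two equations required of a bilimit, namely $\sup_{i\in D}\inf_{j\geq i}X(x_j,x)=X(b,x)$ for all $x$. Hence the only thing left to establish is the \emph{first}, $\sup_{i\in D}\inf_{j\geq i}X(x,x_j)=X(x,b)$, which in this notation is the single equation $\phi=\sy(b)$.

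This is where the Cauchy hypothesis is used. For a Cauchy net the pair $(\phi,\psi)$ forms an adjunction of distributors $\psi\dashv\phi$, as asserted in the proof of Proposition~\ref{colimit =bilimit for Cauchy net} and sharpened in Proposition~\ref{Cauchy net implies Cauchy weight}; so $\phi$ is \emph{a} right adjoint of $\psi$. On the other hand, the representable coweight $\syd(b)$ is always left adjoint to the representable weight $\sy(b)$. Since $\psi=\syd(b)$, this exhibits $\phi$ and $\sy(b)$ as two right adjoints of one and the same distributor $\psi$; because a distributor has at most one right adjoint (Lemma~\ref{adjoint_arrow_calculation} and the remark following it), I conclude $\phi=\sy(b)$, which is precisely the missing equation, so that $b$ is a bilimit. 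The only genuine input is the adjunction $\psi\dashv\phi$ for Cauchy nets; the rest is matching the two halves of the bilimit condition against the two halves $X(b,-)=\psi$ and $X(-,b)=\phi$ of the statement that $b$ represents the adjoint pair. I expect the main obstacle to be bookkeeping rather than mathematics: one must keep the variance of $\phi$ and $\psi$ and the left/right orientation of the adjunction straight, since misaligning them would swap the roles of $\phi$ and $\psi$ and break the appeal to uniqueness; once the orientation is fixed, uniqueness of adjoints closes the argument with no further computation.
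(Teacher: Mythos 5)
Your proposal is correct, but it takes a genuinely different route from the paper. The paper's necessity argument stays entirely at the level of the net: it inserts the Yoneda-limit identity $X(b,x_j)=\sup_{h\in D}\inf_{k\geq h}X(x_k,x_j)$ into $\sup_{i\in D}\inf_{j\geq i}X(b,x_j)$, collapses the nested suprema and infima using directedness, and bounds the result below by the Cauchy condition to get $\sup_{i\in D}\inf_{j\geq i}X(b,x_j)\geq 1$; since $\sup_{i\in D}\inf_{j\geq i}X(x_j,b)=X(b,b)=1$ is immediate from the Yoneda-limit hypothesis, Lemma~\ref{bilimit of Cauchy net} (condition (2) implies (1)) then finishes the proof. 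You instead lift everything to the distributor level: the Cauchy hypothesis enters exactly once, through the adjunction $\psi\dashv\phi$, after which the argument is purely formal --- $\psi=\syd(b)$ from the Yoneda-limit hypothesis, $\syd(b)\dashv\sy(b)$ always, and uniqueness of right adjoints forces $\phi=\sy(b)$. This buys conceptual clarity (a bilimit of a Cauchy net is precisely a representation of the adjoint pair, and the Yoneda-limit condition pins down the left adjoint), and it bypasses Lemma~\ref{bilimit of Cauchy net} and its computation entirely; the paper's route buys elementarity, using only sup/inf manipulation and a lemma already in hand. One remark on logical ordering: the adjunction you invoke is proved in Proposition~\ref{Cauchy net implies Cauchy weight}, which appears \emph{after} this proposition in the paper, though it is asserted earlier (with a forward reference) in the proof of Proposition~\ref{colimit =bilimit for Cauchy net}. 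Since the proof of Proposition~\ref{Cauchy net implies Cauchy weight} is a direct computation from the Cauchy condition and does not use the present statement, there is no circularity, but a self-contained write-up should either cite that computation explicitly or reproduce the few lines establishing $\phi\circ\psi\geq 1$ and $\psi\circ\phi\leq X$.
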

	
	\begin{proof} Sufficiency is trivial, for necessity we  show  $\sup_{i\in D}\inf_{j\geq i}X(b,x_j)\geq 1.$ Since $b$ is a Yoneda limit of $\{x_i\}_{i\in D}$, then  \begin{align*}\sup_{i\in D}\inf_{j\geq i}X(b,x_j) &= \sup_{i\in D}\inf_{j\geq i} \sup_{h\in D}\inf_{k\geq h} X(x_k,x_j)\\ &\geq \sup_{i\in D}\inf_{k,j\geq i} X(x_k,x_j) \\ &\geq 1,\end{align*} which completes the proof.
	\end{proof}
	
	\begin{defn}A real-enriched category  is Yoneda complete  if  each of its forward Cauchy nets has a unique Yoneda limit. A functor between real-enriched categories (not necessarily Yoneda complete) is Yoneda continuous if it preserves Yoneda limits of all forward Cauchy nets. \end{defn} 
	
	It is clear that  $X$ is Yoneda complete if it is separated and every forward Cauchy net of $X$ has a  Yoneda limit.

	Suppose $(X,\leq)$ is an ordered set, $\{x_i\}_{i\in D}$ is a net and $b$ is an element of $X$. It is readily verified that \begin{enumerate}[label={\rm(\roman*)}]  
		\item $\{x_i\}_{i\in D}$ is forward Cauchy in  the real-enriched category $\omega(X,\leq)$ if and only if it is eventually monotone. 
		\item $b$ is a Yoneda limit of $\{x_i\}_{i\in D}$ if and only if it is the least eventual upper bound of $\{x_i\}_{i\in D}$. \end{enumerate} 
	Therefore, $\omega(X,\leq)$ is Yoneda complete whenever  $(X,\leq)$ is a dcpo. Furthermore, an order-preserving map  $f\colon(X,\leq)\lra(Y,\leq)$ is   Scott continuous if and only if   $f\colon \omega(X,\leq)\lra\omega(Y,\leq)$ is Yoneda continuous. 
	
	\begin{lem}\label{yoneda limit in Q} Suppose  $\{a_i\}_{i\in D}$ is a forward Cauchy net of $\sV=([0,1],\alpha_L)$.   \begin{enumerate}[label=\rm(\roman*)] 
			\item $\sup_{i\in D}\inf_{j\geq i}a_j$ is a Yoneda limit of $\{a_i\}_{i\in D}$, so $\sV$ is Yoneda complete. 
			\item  $\{a_i\}_{i\in D}$ is order convergent; that is, $\sup_{i\in D}\inf_{j\geq i}a_j=\inf_{i\in D}\sup_{j\geq i}a_j$.  \end{enumerate} \end{lem}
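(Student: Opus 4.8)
The plan is to distill a single uniform estimate out of the forward Cauchy condition and then read off both assertions from it. Write $b=\sup_{i\in D}\inf_{j\geq i}a_j$ for the prospective Yoneda limit; since $\sV$ is separated, it suffices to show that $b$ is a Yoneda limit and that $b=\inf_{i\in D}\sup_{j\geq i}a_j$ (the inequality $b\leq\inf_{i}\sup_{j\geq i}a_j$ being automatic). The first step I would carry out is the estimate: for every $\epsilon\in(0,1)$ there is an $i_0\in D$ with $a_j\with(1-\epsilon)\leq b$, equivalently $a_j\leq(1-\epsilon)\ra b$, for all $j\geq i_0$. This is immediate from forward Cauchyness: choose $i_0$ so that $a_j\ra a_k\geq 1-\epsilon$, i.e. $a_j\with(1-\epsilon)\leq a_k$, for all $k\geq j\geq i_0$; fixing such a $j$ and taking the infimum over $k\geq j$ gives $a_j\with(1-\epsilon)\leq\inf_{k\geq j}a_k\leq b$.

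For part (ii) I would then note that the estimate yields $\sup_{j\geq i_0}a_j\leq(1-\epsilon)\ra b$, hence $\inf_{i}\sup_{j\geq i}a_j\leq(1-\epsilon)\ra b$ for every $\epsilon$. It remains to verify $\inf_{\epsilon>0}\big((1-\epsilon)\ra b\big)=b$, which follows from the divisibility identity $(1-\epsilon)\with\big((1-\epsilon)\ra b\big)=\min\{1-\epsilon,b\}\leq b$ together with left continuity of $\with$: writing $v$ for the infimum, $(1-\epsilon)\with v\leq b$ for all $\epsilon$, and supping over $\epsilon$ inside the tensor gives $v=1\with v\leq b$. This proves order convergence, $\sup_{i}\inf_{j\geq i}a_j=\inf_{i}\sup_{j\geq i}a_j$.

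For part (i) I must establish $b\ra y=\sup_i\inf_{j\geq i}(a_j\ra y)$ for every $y$. The upper bound holds for an arbitrary net: using distributivity of $\with$ over suprema, the identity $a\with(a\ra y)=\min\{a,y\}$, and $b\leq\sup_{j\geq i}a_j$, I would bound $b\with\sup_i\inf_{j\geq i}(a_j\ra y)$ above by $y$ and conclude by adjunction. The lower bound is where the estimate of the first paragraph is indispensable: for $j\geq i_0$ it gives $a_j\ra y\geq\big((1-\epsilon)\ra b\big)\ra y$, so $\sup_i\inf_{j\geq i}(a_j\ra y)\geq\big((1-\epsilon)\ra b\big)\ra y$; and for any $w\leq b\ra y$ the associativity computation $\big((1-\epsilon)\ra b\big)\with\big(w\with(1-\epsilon)\big)=\min\{1-\epsilon,b\}\with w\leq b\with w\leq y$ shows $w\with(1-\epsilon)\leq\big((1-\epsilon)\ra b\big)\ra y$. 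Supping over $\epsilon$ recovers $w=w\with 1$ by left continuity, and supping over $w\leq b\ra y$ delivers the lower bound.

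The main obstacle is that the implication $x\mapsto x\ra y$ is not continuous in general (for the G\"{o}del t-norm it jumps along the diagonal), so one cannot simply push limits through $\ra$; a naive ``$a_j\to b$ implies $a_j\ra y\to b\ra y$'' argument is false. The device that circumvents this is precisely the quantitative estimate $a_j\leq(1-\epsilon)\ra b$, fed into the divisibility identity $x\with(x\ra y)=\min\{x,y\}$ and the left continuity of $\with$. Continuity of the implication is never invoked; only continuity of $\with$ is used, which is exactly what the standing convention guarantees.
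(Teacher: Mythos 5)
Your proof is correct, and its engine is the same as the paper's: from forward Cauchyness you extract that for every $\epsilon$ there is $i_0$ with $(1-\epsilon)\with a_j\leq b$ for all $j\geq i_0$, which by the adjunction between $\with$ and $\ra$ is exactly the identity $\sup_{i\in D}\inf_{j\geq i}(a_j\ra b)=1$ on which the paper's proof runs; the rest is quantale calculus (adjunction, distributivity of $\with$ over suprema). The organization, however, is genuinely different. The paper proves (i) first: the hard inequality $b\ra x\leq\sup_{i\in D}\inf_{j\geq i}(a_j\ra x)$ follows in one line by tensoring $b\ra x$ with $\sup_{i\in D}\inf_{j\geq i}(a_j\ra b)=1$ and composing implications via $(b\ra x)\with(a_j\ra b)\leq a_j\ra x$; part (ii) is then an immediate corollary, obtained by combining the Yoneda-limit identity with $\sup_{i}\inf_{j\geq i}(a_j\ra x)\leq\big(\inf_{i}\sup_{j\geq i}a_j\big)\ra x$ and putting $x=b$. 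You do the reverse: (ii) is proved directly from the $\epsilon$-estimate (independently of (i)), and the hard direction of (i) detours through the auxiliary element $\big((1-\epsilon)\ra b\big)\ra y$ and the computation $w\with(1-\epsilon)\leq\big((1-\epsilon)\ra b\big)\ra y$, which the paper's implication-composition renders unnecessary. Your route buys a self-contained proof of order convergence at the price of extra bookkeeping in (i). One correction to your closing remarks: in every place where you invoke the divisibility identity $x\with(x\ra y)=\min\{x,y\}$, the counit inequality $x\with(x\ra y)\leq y$ already suffices; so your argument, like the paper's, uses only the left continuity of $\with$ (the quantale structure), and continuity of the t-norm plays no essential role in this lemma.
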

	
	\begin{proof}  The first conclusion is contained in Wagner \cite{Wagner97}, the second in Lai, D\thinspace\&{\thinspace}G Zhang \cite{LZZ2020}.
		
		(i)  We wish to show that for each $x\in [0,1]$, \[ \Big(\sup_{i\in D}\inf_{j\geq i}a_j\Big)\ra x  = \sup_{i\in D}\inf_{j\geq i}(a_j\ra x).\]
		
		Since $\{a_i\}_{i\in D}$ is forward Cauchy,  \begin{align*} 1 &= \sup_{i\in D}\inf_{j\geq i}\inf_{k\geq j}(a_j\ra a_k)\\ 
			& \leq \sup_{i\in D}\inf_{j\geq i}\sup_{l\in D}\inf_{k\geq l}(a_j\ra a_k) \\ 
			& \leq \sup_{i\in D}\inf_{j\geq i}\Big[a_j\ra\Big(\sup_{l\in D}\inf_{k\geq l}a_k\Big)\Big],
		\end{align*} then \begin{align*} \Big(\sup_{i\in D}\inf_{j\geq i}a_j\Big)\ra x  
			&\leq \Big[\Big(\sup_{l\in D}\inf_{k\geq l}a_k\Big)\ra x\Big]\with\sup_{i\in D}\inf_{j\geq i}\Big[a_j\ra\Big(\sup_{l\in D}\inf_{k\geq l}a_k\Big)\Big]\\ 
			&  \leq \sup_{i\in D}\inf_{j\geq i}(a_j\ra x).
		\end{align*}
		
		For the converse inequality, since the index set $D$ is directed, then $$\Big(\sup_{i\in D}\inf_{j\geq i}a_j\Big)\with\Big(\sup_{i\in D}\inf_{j\geq i}(a_j\ra x) \Big) \leq\sup_{i\in D}\inf_{j\geq i}a_j\with(a_j\ra x) \leq x,$$ 
		hence \[\sup_{i\in D}\inf_{j\geq i}(a_j\ra x)\leq \Big(\sup_{i\in D}\inf_{j\geq i}a_j\Big)\ra x .\]

		(ii) Since $\sup_{i\in D}\inf_{j\geq i}a_j$ is a Yoneda limit of $\{a_i\}_{i\in D}$, it follows that for all $x\in [0,1]$, \begin{align*} \Big(\sup_{i\in D}\inf_{j\geq i}a_j\Big)\ra x & = \sup_{i\in D}\inf_{j\geq i}(a_j\ra x)   \leq   \Big(\inf_{i\in D}\sup_{j\geq i}a_j\Big)\ra x. \end{align*} Putting $x= \sup_{i\in D}\inf_{j\geq i}a_j$ we obtain that  \[\sup_{i\in D}\inf_{j\geq i}a_j\geq \inf_{i\in D}\sup_{j\geq i}a_j.\]  The converse inequality is trivial.
	\end{proof}
	
	The following lemma, from Lai, D\thinspace\&{\thinspace}G Zhang \cite{LZZ2020}, concerns Yoneda limits in $\sV^{\rm op}$. We note that it is not the dual of Lemma \ref{yoneda limit in Q}. 
	
	\begin{lem}\label{order convergence}   Suppose  $\{a_i\}_{i\in D}$ is a forward Cauchy net of $\sV^{\rm op}=([0,1],\alpha_R)$.   
		\begin{enumerate}[label=\rm(\roman*)] 
			\item $\inf_{i\in D}\sup_{j\geq i}a_j$ is a Yoneda limit of $\{a_i\}_{i\in D}$, so $\sV^{\rm op}$ is Yoneda complete. 
			\item  $\{a_i\}_{i\in D}$ is order convergent; that is,  $\sup_{i\in D}\inf_{j\geq i}a_j=\inf_{i\in D}\sup_{j\geq i}a_j$.  \end{enumerate} 
	\end{lem}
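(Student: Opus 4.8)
The plan is to identify the limit superior $U=\inf_{i\in D}\sup_{j\geq i}a_j$ as the Yoneda limit and to extract order convergence as a by-product of the same estimate. Throughout I abbreviate $c_i=\inf_{j\geq i}a_j$ and $d_i=\sup_{j\geq i}a_j$, so that $L:=\sup_i c_i$ is the limit inferior and $U:=\inf_i d_i$ the limit superior, with $L\leq U$ holding for any net. I also set $\theta_i=\inf_{k\geq j\geq i}(a_k\ra a_j)$; since $\sV^{\rm op}(a_j,a_k)=a_k\ra a_j$, the forward Cauchy hypothesis in $\sV^{\rm op}$ unwinds exactly to $\sup_i\theta_i=1$. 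The asymmetry with Lemma \ref{yoneda limit in Q} (it is \emph{not} its dual) is precisely that here the relevant quantity is the limit superior together with the $\sV^{\rm op}$-reading of forward Cauchyness.

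The crucial step, and the one I expect to be the main obstacle, is the inequality $\theta_i\with U\leq c_i$ for every $i$. To obtain it I would first note that $a_k\ra a_j\geq\theta_i$ for all $k\geq j\geq i$ translates by adjunction into $\theta_i\with a_k\leq a_j$; taking the supremum over $k\geq j$ and using that $\theta_i\with-$ preserves suprema gives $\theta_i\with d_j\leq a_j$ for each $j\geq i$. Since $\{d_j\}_j$ is decreasing and $D$ is directed, $\inf_{j\geq i}d_j=\inf_j d_j=U$, so monotonicity of $\theta_i\with-$ yields $\theta_i\with U\leq\inf_{j\geq i}(\theta_i\with d_j)\leq\inf_{j\geq i}a_j=c_i$.

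From this inequality both conclusions drop out by quantale calculus. For order convergence I would take the supremum over $i$: since $-\with U$ preserves suprema, $U=1\with U=(\sup_i\theta_i)\with U=\sup_i(\theta_i\with U)\leq\sup_i c_i=L$, which together with $L\leq U$ forces $L=U$, proving (ii). For the Yoneda limit statement I must check $y\ra U=\sup_i\inf_{j\geq i}(y\ra a_j)$ for all $y$, this being exactly $\sV^{\rm op}(U,y)=\sup_i\inf_{j\geq i}\sV^{\rm op}(a_j,y)$. The inequality $\leq$ I would get from $\theta_i\with(y\ra U)\leq y\ra(\theta_i\with U)\leq y\ra c_i$ (the first step being the standard $\theta\with(y\ra U)\leq y\ra(\theta\with U)$, valid because $y\with\theta\with(y\ra U)\leq\theta\with U$), followed by a supremum over $i$ and $\sup_i\theta_i=1$. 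The reverse inequality is routine: since $y\ra-$ preserves meets, $\inf_{j\geq i}(y\ra a_j)=y\ra c_i$, and $c_i\leq L\leq U$ gives $y\ra c_i\leq y\ra U$, so the supremum over $i$ is at most $y\ra U$. Assembling these yields the equality for every $y$, whence $U$ is a Yoneda limit of $\{a_i\}_{i\in D}$ and $\sV^{\rm op}$ is Yoneda complete.
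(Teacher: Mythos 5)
Your proof is correct and in substance the same as the paper's: your key inequality $\theta_i\with U\leq c_i$ is, via the adjunction between $\with$ and $\ra$, exactly the paper's central display $\sup_{i\in D}\inf_{j\geq i}\bigl(\bigl(\inf_{h\in D}\sup_{k\geq h}a_k\bigr)\ra a_j\bigr)\geq 1$, and the surrounding quantale calculus (transitivity estimate plus $\sup_i\theta_i=1$) matches the paper's computation step for step. The only organizational difference is that you derive (ii) directly from the key inequality, whereas the paper deduces (ii) from (i) by putting $x=1$.
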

	
	\begin{proof}
		(i) We wish to show that for all $x\in [0,1]$, \[x\ra  \inf_{i\in D}\sup_{j\geq i}a_j=\sup_{i\in D}\inf_{j\geq i}(x\ra a_j).\]
		
		Since $\{a_i\}_{i\in D}$ is a forward Cauchy net of $\sV^{\rm op}$,  \[1\leq \sup_{i\in D}\inf_{k\geq j\geq i}\alpha_R(a_j,a_k)=\sup_{i\in D}\inf_{k\geq j\geq i}(a_k\ra a_j),\] then
		\[\sup_{i\in D}\inf_{j\geq i}\Big[\sup_{h\in D}\inf_{k\geq h}(a_k\ra a_j)\Big]\geq 1,\] hence  \[\sup_{i\in D}\inf_{j\geq i}\Big[\Big(\inf_{h\in D}\sup_{k\geq h}a_k\Big)\ra a_j\Big]\geq 1,\] and consequently, \begin{align*}x\ra\inf_{i\in D}\sup_{j\geq i}a_j &\leq \Big[x\ra\inf_{h\in D}\sup_{k\geq h}a_k\Big]\with \sup_{i\in D}\inf_{j\geq i}\Big[\Big(\inf_{h\in D}\sup_{k\geq h}a_k\Big)\ra a_j\Big] \\ 
			&\leq  \sup_{i\in D}\inf_{j\geq i}(x\ra a_j). \end{align*}
		
		On the other hand, since for each $i\in D$ we  always have \[\sup_{h\in D}\inf_{k\geq h}(x\ra a_k)\leq x\ra \sup_{j\geq i}a_j,\] then \begin{align*}x\ra\inf_{i\in D}\sup_{j\geq i}a_j & =\inf_{i\in D}\Big(x\ra\sup_{j\geq i}a_j\Big)\\ & \geq \sup_{h\in D}\inf_{k\geq h}(x\ra a_k)\\ & =\sup_{i\in D}\inf_{j\geq i}(x\ra a_j). \end{align*}
		
		Therefore, $\inf_{i\in D}\sup_{j\geq i}a_j$ is a Yoneda limit of $\{a_i\}_{i\in D}$ in $\sV^{\rm op}$.

		(ii) Since $\inf_{i\in D}\sup_{j\geq i}a_j$ is a Yoneda limit of $\{a_i\}$ in $([0,1],\alpha_R)$, it follows that for all $x\in [0,1]$, \begin{align*} x\ra \inf_{i\in D}\sup_{j\geq i}a_j     &=\sup_{i\in D}\inf_{j\geq i}(x\ra a_j)  \leq x\ra \sup_{i\in D}\inf_{j\geq i}a_j. \end{align*} Putting  $x=1$  we obtain that \[\inf_{i\in D}\sup_{j\geq i}a_j\leq \sup_{i\in D}\inf_{j\geq i}a_j.\]
		The converse inequality is trivial. \end{proof}
	
	\begin{rem} Lemma \ref{yoneda limit in Q} implies that every forward Cauchy net of $\sV=([0,1],\alpha_L)$ is   order convergent;
		Lemma \ref{order convergence} implies that every forward Cauchy net of $\sV^{\rm op}=([0,1],\alpha_R)$ is order convergent too.   But,  $\sV$ and $\sV^{\rm op}$ may have different forward Cauchy nets. For example, in the case that $\with$ is the product t-norm, the sequence  $\{1/n\}_{n\geq1}$ is forward Cauchy in $\sV^{\rm op}$ but not in $\sV$. \end{rem}
	
	\begin{lem} \label{Yoneda limits in PX} {\rm (Wagner \cite{Wagner97})} Suppose $X$ is a real-enriched category and   $\{\phi_i\}_{i\in D}$ is a forward Cauchy net of $\CP X$. Then,  $\sup_{i\in D}\inf_{j\geq i}\phi_j$ is a Yoneda limit of $\{\phi_i\}_{i\in D}$. In particular, $\CP X$ is Yoneda complete. \end{lem}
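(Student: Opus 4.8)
The plan is to verify directly that $\phi\coloneqq\sup_{i\in D}\inf_{j\geq i}\phi_j$ is a Yoneda limit of $\{\phi_i\}_{i\in D}$; the assertion that $\CP X$ is Yoneda complete then follows at once, since $\CP X$ is separated and a net has at most one Yoneda limit. First I would record two structural facts that make $\CP X$ tractable. By Proposition \ref{Distcalcu}(ii) the pointwise meet $\inf_{j\geq i}\phi_j$ and the pointwise join of weights are again weights, so $\phi$ is a genuine element of $\CP X$; moreover the underlying order of $\CP X$ is the pointwise order, so joins and meets in $(\CP X)_0$ are computed pointwise. Consequently the whole problem reduces to proving, for every $\gamma\in\CP X$, the single identity
\[ \CP X(\phi,\gamma)=\sup_{i\in D}\inf_{j\geq i}\CP X(\phi_j,\gamma), \]
where throughout $\CP X(\mu,\nu)=\inf_{x\in X}(\mu(x)\ra\nu(x))$.

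The inequality $\sup_{i}\inf_{j\geq i}\CP X(\phi_j,\gamma)\leq\CP X(\phi,\gamma)$ I expect to be the routine half, obtained exactly as the second inequality in Lemma \ref{yoneda limit in Q}(i). Fixing $x$ and a pair of indices, directedness of $D$ supplies a common upper bound $m$, and then $\inf_{k\geq i'}\phi_k(x)\leq\phi_m(x)$ while $\inf_{j\geq i}\CP X(\phi_j,\gamma)\leq\CP X(\phi_m,\gamma)\leq\phi_m(x)\ra\gamma(x)$, so their product is at most $\phi_m(x)\with(\phi_m(x)\ra\gamma(x))\leq\gamma(x)$. After distributing $\with$ over the joins this yields $\phi(x)\with\sup_{i}\inf_{j\geq i}\CP X(\phi_j,\gamma)\leq\gamma(x)$ for every $x$, and taking the infimum over $x$ gives the claim.

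The opposite inequality is where the forward Cauchy hypothesis enters, and the plan is to transcribe the first half of Lemma \ref{yoneda limit in Q}(i) from $\sV$ to $\CP X$. The one genuinely new ingredient — and the step I expect to be the main obstacle — is the structural estimate
\[ \CP X(\phi_j,\phi)\ \geq\ \sup_{l\in D}\inf_{k\geq l}\CP X(\phi_j,\phi_k). \]
This holds precisely because infima in $\CP X$ are pointwise and each $a\ra-\colon[0,1]\lra[0,1]$, being a right adjoint, preserves meets: for fixed $x$ one has $\phi_j(x)\ra\sup_l\inf_{k\geq l}\phi_k(x)\geq\sup_l\inf_{k\geq l}(\phi_j(x)\ra\phi_k(x))$, and taking $\inf_x$ together with the general inequality $\inf_x\sup_l\geq\sup_l\inf_x$ yields the estimate.

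Granting it, forward Cauchiness of $\{\phi_i\}$ reads $\sup_{i}\inf_{j\geq i}\inf_{k\geq j}\CP X(\phi_j,\phi_k)=1$, so the displayed estimate (with $l=j$) gives $\sup_{i}\inf_{j\geq i}\CP X(\phi_j,\phi)=1$. Finally I would combine this with the transitivity axiom of the real-enriched category $\CP X$, namely $\CP X(\phi,\gamma)\with\CP X(\phi_j,\phi)\leq\CP X(\phi_j,\gamma)$, together with distributivity of $\with$ over joins and the inequality $c\with\inf_{j}d_j\leq\inf_{j}(c\with d_j)$:
\[ \CP X(\phi,\gamma)=\CP X(\phi,\gamma)\with1\leq\sup_{i}\inf_{j\geq i}\bigl(\CP X(\phi,\gamma)\with\CP X(\phi_j,\phi)\bigr)\leq\sup_{i}\inf_{j\geq i}\CP X(\phi_j,\gamma), \]
which is the remaining inequality. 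This closes the identity, and hence shows $\phi$ is the Yoneda limit. The crux throughout is that $\CP X$, although a general real-enriched category, has pointwise suprema and infima and an implication that preserves meets, so that all the $\sV$-level manipulations of Lemma \ref{yoneda limit in Q}(i) survive the passage to $\CP X$.
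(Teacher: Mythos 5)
Your proof is correct, and it reaches the defining identity $\CP X(\phi,\gamma)=\sup_{i\in D}\inf_{j\geq i}\CP X(\phi_j,\gamma)$ by a genuinely different route from the paper's. The paper first reduces to the scalar case: for each $x$ the net $\{\phi_i(x)\}_{i\in D}$ is forward Cauchy in $\sV$, so Lemma \ref{yoneda limit in Q} yields the pointwise identity $\bigl(\sup_{i}\inf_{j\geq i}\phi_j(x)\bigr)\ra\gamma(x)=\sup_{i}\inf_{j\geq i}(\phi_j(x)\ra\gamma(x))$; all remaining effort then goes into the interchange $\inf_x\sup_i\inf_{j\geq i}(\cdots)=\sup_i\inf_{j\geq i}\inf_x(\cdots)$, whose nontrivial half is a chain of inequalities obtained by multiplying with the Cauchy quantity $\sup_i\inf_{k\geq j\geq i}\CP X(\phi_j,\phi_k)=1$. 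You bypass Lemma \ref{yoneda limit in Q} entirely and argue at the level of homs of $\CP X$: the easy inequality by directedness and $a\with(a\ra b)\leq b$, and the hard inequality by first isolating the intermediate fact $\sup_i\inf_{j\geq i}\CP X(\phi_j,\phi)=1$ (your structural estimate, which needs only that $a\ra-$ preserves meets together with the min-max inequality) and then invoking the transitivity axiom $\CP X(\phi,\gamma)\with\CP X(\phi_j,\phi)\leq\CP X(\phi_j,\gamma)$. At bottom both proofs multiply by a Cauchy-derived unit and redistribute, but your decomposition is self-contained, is conceptually parallel to the classical metric argument (the net is eventually hom-close to its liminf, then the triangle inequality), and isolates a statement of independent use; the paper's decomposition buys economy, since the $\sV$-computation of Lemma \ref{yoneda limit in Q} is proved once and simply cited here, leaving only the quantifier interchange as new work.
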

	
	\begin{proof} We wish to show that for each  $\phi\in\CP X$, \[\CP X\Big(\sup_{i\in D}\inf_{j\geq i}\phi_j,\phi\Big)
		=\sup_{i\in D}\inf_{j\geq i}\CP X(\phi_j,\phi). \] 
		
		For each $x\in X$, it is clear that $\{\phi_i(x)\}_{i\in D}$ is a forward Cauchy net of  $([0,1],\alpha_L)$, hence \[  \Big(\sup_{i\in D}\inf_{j\geq i}\phi_j(x)\Big)\ra\phi(x) = \sup_{i\in D}\inf_{j\geq i}( \phi_j(x) \ra\phi(x)).\] Thus, we only need to check that \[\inf_{x\in X}\sup_{i\in D}\inf_{j\geq i}(  \phi_j(x) \ra\phi(x)) = \sup_{i\in D}\inf_{j\geq i}\CP X(\phi_j,\phi).\]
		
		Since $\{\phi_i\}_{i\in D}$ is forward Cauchy,  
		\begin{align*}&\quad \inf_{x\in X}\sup_{i\in D}\inf_{j\geq i}( \phi_j(x) \ra\phi(x))\\
			&\leq\Big[\inf_{x\in X}\sup_{h\in D}\inf_{k\geq h}(\phi_k(x)\ra\phi(x) )\Big]\with \Big[\sup_{i\in D}\inf_{j\geq i} \inf_{k\geq j}\CP X(\phi_j, \phi_k)\Big]\\
			&\leq \sup_{i\in D}\inf_{j\geq i}\inf_{x\in X}\Big[\Big(\sup_{h\in D}\inf_{k\geq h}(\phi_k(x)\ra\phi(x) )\Big)\with\Big(\inf_{k\geq j}\CP X(\phi_j, \phi_k)\Big)\Big]\\
			&\leq \sup_{i\in D}\inf_{j\geq i}\inf_{x\in X} \sup_{h\in D}\inf_{k\geq h,j}  ( \phi_k(x)\ra\phi(x) ) \with (\phi_j(x)\ra \phi_k(x))   \\
			&\leq \sup_{i\in D}\inf_{j\geq i}\inf_{x\in A}(  \phi_j(x)\ra\phi(x) )\\ &= \sup_{i\in D}\inf_{j\geq i}\CP X(\phi_j,\phi). \end{align*} The converse inequality is trivial.  \end{proof}

	\begin{prop}\label{Cauchy net implies Cauchy weight} A forward Cauchy net $\{x_i\}_{i\in D}$  of a real-enriched category $X$  is Cauchy if and only if  the weight $\phi\coloneqq\sup_{i\in D}\inf_{j\geq i}X(-,x_j)$ is   Cauchy. \end{prop}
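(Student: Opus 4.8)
The plan is to pair the weight $\phi\coloneqq\sup_{i\in D}\inf_{j\geq i}X(-,x_j)$ with the coweight $\psi\coloneqq\sup_{i\in D}\inf_{j\geq i}X(x_j,-)$ and to show that, for a forward Cauchy net, $\psi$ is forced to be the left adjoint of $\phi$ exactly when the net is Cauchy. I would first record three facts about this pair. \textbf{(A)} One always has $\psi\circ\phi\leq X$: writing the left-hand side as $\psi(y)\with\phi(x)$ and combining the two suprema over the directed set $D$ into a single sup, each resulting term is dominated by some $X(x_m,y)\with X(x,x_m)\leq X(x,y)$ by transitivity; equivalently $\psi\leq X\swarrow\phi$. \textbf{(B)} $\phi\circ\psi=1$ if and only if the net is Cauchy (biCauchy): combining suprema over $D$ and using transitivity in the form $X(x,x_j)\with X(x_k,x)\leq X(x_k,x_j)$ gives $\sup_i\inf_{j,k\geq i}X(x_j,x_k)\geq\phi\circ\psi$, while evaluating $\phi(x)\with\psi(x)$ at the net points $x=x_i$ and using continuity of $\with$ gives the reverse inequality, so the two scalars coincide. \textbf{(C)} For a forward Cauchy net one has $X\swarrow\phi=\psi$; by (A) only the inequality $X\swarrow\phi\leq\psi$ needs proof.

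For (C) the key observation is that the forward Cauchy condition yields the auxiliary equality $\sup_i\inf_{j\geq i}\phi(x_j)=1$, because $\phi(x_j)\geq\inf_{k\geq j}X(x_j,x_k)$ and the iterated infimum $\sup_i\inf_{j\geq i}\inf_{k\geq j}X(x_j,x_k)$ is precisely the defining quantity of forward Cauchyness. Writing $L\coloneqq X\swarrow\phi$, residuation gives $L(x)\with\phi(x_j)\leq X(x_j,x)$ for every $j$; multiplying $L(x)$ by the identity $\sup_i\inf_{j\geq i}\phi(x_j)=1$ and distributing $\with$ over the supremum then forces $L(x)\leq\sup_i\inf_{j\geq i}X(x_j,x)=\psi(x)$, as desired.

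With these facts the proposition assembles quickly. If the net is biCauchy, then $\phi\circ\psi=1$ by (B) and $\psi\circ\phi\leq X$ by (A), so $\psi\dashv\phi$ as distributors and $\phi$ is Cauchy by definition. Conversely, if $\phi$ is Cauchy then it has a left adjoint, which by Lemma \ref{adjoint_arrow_calculation} must equal $X\swarrow\phi$; since the net is forward Cauchy, (C) identifies this with $\psi$, so $\phi\circ\psi=\phi\circ(X\swarrow\phi)=1$, and (B) yields that the net is biCauchy.

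The main obstacle is step (C). Facts (A) and (B) are symmetric transitivity-and-directedness computations that hold for arbitrary nets, and (A) already supplies the inequality $\psi\leq X\swarrow\phi$; but that inequality points the wrong way for the reverse implication, since from $\phi$ being Cauchy one only learns $\phi\circ(X\swarrow\phi)=1$, not $\phi\circ\psi=1$. It is precisely the identification $X\swarrow\phi=\psi$, and hence the auxiliary equality $\sup_i\inf_{j\geq i}\phi(x_j)=1$, that makes genuine use of the forward Cauchy hypothesis and closes the argument.
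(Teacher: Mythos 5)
Your proof is correct, and its skeleton is the same as the paper's: both pair $\phi$ with the coweight $\psi=\sup_{i\in D}\inf_{j\geq i}X(x_j,-)$, prove the adjunction inequalities $\psi\circ\phi\leq X$ and $\phi\circ\psi\geq 1$ by the directedness-plus-transitivity computations in your (A) and (B), and reduce the converse to identifying the left adjoint of a Cauchy $\phi$ with $\psi$. The genuine difference lies in how that identification is made. The paper invokes Lemma \ref{adjoint_arrow_calculation} to get that the left adjoint is $X\swarrow\phi$, and then appeals to Lemma \ref{Yoneda limits in PX}: since $\phi$ is a Yoneda limit of the forward Cauchy net $\{X(-,x_i)\}_{i\in D}$ in $\CP X$, one computes $(X\swarrow\phi)(x)=\CP X(\phi,X(-,x))=\sup_{i\in D}\inf_{k\geq i}\CP X(X(-,x_k),X(-,x))=\sup_{i\in D}\inf_{k\geq i}X(x_k,x)=\psi(x)$. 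Your step (C) replaces this appeal to Wagner's Yoneda-limit lemma by a self-contained residuation argument: forward Cauchyness gives the auxiliary equality $\sup_{i\in D}\inf_{j\geq i}\phi(x_j)=1$, and writing $L=X\swarrow\phi$, distributing $\with$ over the supremum in $L(x)=L(x)\with\sup_{i\in D}\inf_{j\geq i}\phi(x_j)$ together with $L(x)\with\phi(x_j)\leq X(x_j,x)$ forces $L\leq\psi$, while your (A) gives $\psi\leq L$. What your route buys is independence from the Yoneda-completeness machinery (Lemma \ref{Yoneda limits in PX}, and behind it Lemma \ref{yoneda limit in Q}), so the proposition becomes an elementary consequence of residuation and continuity of the t-norm; what the paper's route buys is that the identification appears as an instance of a general, reusable fact about Yoneda limits in presheaf categories rather than an ad hoc computation. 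Both arguments are complete and correct.
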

	
	\begin{proof} The conclusion is contained in Hofmann and Reis \cite{HR2013}, Li and Zhang \cite{LiZ2018a}.  
		Suppose  $\{x_i\}_{i\in D}$ is a Cauchy net.  We show that $\phi$ is right adjoint to   \[ \psi\coloneqq\sup_{i\in D}\inf_{j\geq i}X(x_j,-); \] that is, $\phi\circ\psi\geq1$ and $\psi\circ\phi(x,y)\leq X(x,y)$ for all $x,y\in X$.
		For this we calculate:  
		\begin{align*}\phi\circ\psi & =\sup_{x\in X} \phi(x)\with\psi(x) \\ 
			&=\sup_{x\in X}\Big[\Big(\sup_{i\in D}\inf_{j\geq i}X(x,x_j)\Big) \with \Big(\sup_{i\in D}\inf_{j\geq i}X(x_j,x)\Big)\Big]\\ 
			&=\sup_{x\in X}\sup_{i\in D}\Big[\Big(\inf_{j\geq i}X(x,x_j)\Big) \with \Big(\inf_{j\geq i}X(x_j,x)\Big)\Big] &\text{($D$ is directed)}  \\ 
			&\geq \sup_{i\in D} \Big[\Big(\inf_{j\geq i}X(x_i,x_j)\Big) \with \Big(\inf_{j\geq i}X(x_j,x_i)\Big)\Big] &\text{(let $x=x_i$ for each $i$)} \\ 
			&= \Big(\sup_{i\in D}\inf_{j\geq i}X(x_i,x_j)\Big) \with \Big(\sup_{i\in D}\inf_{j\geq i}X(x_j,x_i)\Big) &\text{($D$ is directed)} \\ 
			&= 1; &\text{($\{x_i\}_{i\in D}$ is Cauchy)}
		\end{align*} 
		and  
		\begin{align*}\psi\circ\phi(x,y)&= \Big(\sup_{i\in D}\inf_{j\geq i}X(x_j,y)\Big) \with \Big(\sup_{i\in D}\inf_{j\geq i}X(x,x_j)\Big)\\ 
			&= \sup_{i\in D}\Big[\Big(\inf_{j\geq i}X(x_j,y)\Big) \with \Big(\inf_{j\geq i}X(x,x_j)\Big)\Big] &\text{($D$ is directed)} \\ 
			& \leq \sup_{i\in D} X(x_i,y)\with X(x,x_i) \\ 
			&\leq X(x,y).\end{align*}   
		
		Conversely, suppose  $\phi$ is a Cauchy weight. By Proposition \ref{adjoint_arrow_calculation}, 
		the left adjoint of $\phi$ is   $\psi\coloneqq X\swarrow\phi$.   By Lemma \ref{Yoneda limits in PX},  $\phi$ is a Yoneda limit of the forward Cauchy net $\{X(-,x_i)\}_{i\in D}$ of $\CP X$, then for all $x\in X$, 
		\begin{align*}
			\psi(x) &=\CP X(\phi,X(-,x)) \\
			&=\sup_{i\in D}\inf_{k\geq i}\CP X(X(-,x_k),X(-,x))\\
			&=\sup_{i\in D}\inf_{k\geq i}X(x_k,x).
		\end{align*}
		Therefore,
		\begin{align*}\sup_{i\in D}\inf_{k,j\geq i}X(x_k,x_j)
			&\geq \sup_{x\in X}\sup_{i\in D}\inf_{k,j\geq i}X(x,x_j)\with X(x_k,x)\\ 
			&\geq \sup_{x\in X}\Big[\Big(\sup_{i\in D}\inf_{j\geq i}X(x,x_j)\Big)\with \Big(\sup_{i\in D}\inf_{k\geq i}X(x_k,x)\Big)\Big]\\ 
			& =\sup_{x\in X} \phi(x)\with \psi(x)\\ & = 1,\end{align*} which shows that the net $\{x_i\}_{i\in D}$  is   Cauchy.
	\end{proof}
	
	In the following we show that Yoneda limits of forward Cauchy nets can be characterized as colimits of a special kind of weights --- ideals of real-enriched categories. 
	
	We say that a functor $f\colon X\lra Y$ between real-enriched categories \emph{preserves finite  colimits} if for any finite real-enriched category $K$ (that means, $K$ has a finite number of objects), any functor $h\colon K\lra X$, and any weight $\phi$ of $K$, $f(\colim_\phi h)$ is a colimit of $f\circ h$ weighted by $\phi$ whenever $\colim_\phi h$ exists; that is to say, $f(\colim_\phi h)=\colim_\phi (f\circ h)$.

	\begin{defn} \label{defn of ideal} Let $X$ be a real-enriched category. A weight  $\phi$ of $X$ is called an ideal if the functor $$\sub_X(\phi,-)\colon  \CP X \lra \sV$$ preserves   finite  colimits, where $\sV=([0,1],\alpha_L)$.  \end{defn}
	
	The term \emph{ideal} is chosen because of the fact that an ideal of an ordered set $P$  (i.e. a  directed lower set of $P$) is exactly a non-empty join-irreducible element in the set of lower sets of $P$.
	
	Every Cauchy weight is clearly an ideal; in particular, every representable weight  is an ideal. Since $\CP X$ is complete, it follows from Corollary \ref{calculation of sup by tensors} that  a weight $\phi$ of $X$ is an ideal if and only if it satisfies the following conditions:  
	\begin{enumerate}[label=\rm(I\arabic*)]  
		\item  $\sub_X(\phi,-)\colon  \CP X \lra \sV$ preserves tensors; that is, $$\sub_X(\phi,r\with\lam) =r\with\sub_X(\phi,\lam)$$ for all  $\lam\in\CP X$ and all $r\in[0,1]$. 
		\item   $\sub_X(\phi,-)\colon (\CP X)_0\lra[0,1]$ preserves finite joins; that is, $$\sub_X(\phi,\lam\vee\mu)=\sub_X(\phi,\lam)\vee\sub_X(\phi,\mu)$$  for all  $\lam,\mu\in\CP X$. \end{enumerate}    
	
	Suppose $\phi$ is an ideal of $X$. Since $\sub_X(\phi,1_X)=1$, it follows that $\sub_X(\phi,p_X)=p$ for all $p\in[0,1]$ and that $\phi$ is \emph{inhabited} in the sense that $\sup_{x\in X}\phi(x)=1$. 
	
	For each real-enriched category $X$, let $$\CI X  $$ be the subcategory of $\CP X$ composed of  ideals of $X$. Suppose $f\colon X\lra Y$ is a functor. By the equality $$\sub_Y(f_\exists(\phi),\mu) =\sub_X(\phi,f^{-1}(\mu))$$ for every weight $\phi$ of $X$ and every weight $\mu$ of $Y$, one readily verifies that if $\phi$ is an ideal of $X$, then   $f_\exists(\phi)$ is an ideal of $Y$. So the assignment $X\mapsto\CI X$ defines a  functor $$\CI\colon\QOrd\lra\QOrd,$$ which is a subfunctor of the presheaf functor $\CP\colon\QOrd\lra\QOrd$.  
	
	Proposition \ref{colimit =bilimit for Cauchy net} shows that for each real-enriched category $X$, every Cauchy weight of $X$ has a colimit if, and only if, every Cauchy net of $X$ has a bilimit.  A parallel result holds for Yoneda limits and colimits of ideals.
	
	\begin{thm}\label{Yoneda com via ideals} Let $X$ be a real-enriched category. Then, every forward Cauchy net of $X$ has a Yoneda limit  if and only if every ideal of $X$ has a colimit. Therefore, $X$ is Yoneda complete if and only if each of its ideals  has a unique colimit. \end{thm}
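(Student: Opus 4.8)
The plan is to set up a dictionary between forward Cauchy nets of $X$ and ideals of $X$ under which Yoneda limits of nets coincide with colimits of the associated ideals. Once this is in place both implications follow formally, and the final ``Therefore'' sentence is immediate: $X$ is Yoneda complete precisely when it is separated and every forward Cauchy net has a Yoneda limit, while colimits of weights (hence of ideals) are unique up to isomorphism; separatedness turns ``unique up to isomorphism'' into genuine uniqueness on both sides.

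First I would treat the passage from nets to ideals. Given a forward Cauchy net $\{x_i\}_{i\in D}$, set $\phi:=\sup_{i\in D}\inf_{j\ge i}\sy(x_j)$, which is a weight of $X$ by Proposition \ref{Distcalcu}. Since $\sy$ is fully faithful, $\CP X(\sy(x_j),\sy(x_k))=X(x_j,x_k)$, so $\{\sy(x_i)\}_{i\in D}$ is forward Cauchy in $\CP X$; by Lemma \ref{Yoneda limits in PX} the weight $\phi$ is its Yoneda limit, whence, using the Yoneda lemma $\CP X(\sy(x_j),\lambda)=\lambda(x_j)$,
\[\sub_X(\phi,\lambda)=\sup_{i\in D}\inf_{j\ge i}\lambda(x_j)\qquad(\lambda\in\CP X).\]
To see that $\phi$ is an ideal I would observe that for any weight $\lambda$ the net $\{\lambda(x_i)\}_{i\in D}$ is forward Cauchy in $\sV$ — indeed $\lambda(x_j)\ra\lambda(x_k)\ge X(x_j,x_k)$ because $\lambda$ is a weight — so by Lemma \ref{yoneda limit in Q} it is order convergent and $\sub_X(\phi,\lambda)=\lim_i\lambda(x_i)$ is its ordinary limit in $[0,1]$. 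Applying this to the weights $r\with\lambda$ and $\lambda\vee\mu$ and invoking continuity of $\with$ and of $\max$ then yields $\sub_X(\phi,r\with\lambda)=r\with\sub_X(\phi,\lambda)$ and $\sub_X(\phi,\lambda\vee\mu)=\sub_X(\phi,\lambda)\vee\sub_X(\phi,\mu)$, i.e. conditions (I1) and (I2), so $\phi$ is an ideal. Finally, taking $\lambda=\sy(y)$ gives $X(b,y)=\CP X(\phi,\sy(y))=\sup_i\inf_{j\ge i}X(x_j,y)$ for any candidate $b$, so $b$ is a colimit of $\phi$ if and only if $b$ is a Yoneda limit of $\{x_i\}_{i\in D}$. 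This already proves that if every ideal has a colimit then every forward Cauchy net has a Yoneda limit.

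For the converse I must realize an arbitrary ideal $\phi$ as the weight attached to some forward Cauchy net. Recalling that $r\with\sy(a)\le\phi$ iff $r\le\phi(a)$, I would take the index set $D=\{(a,r)\mid a\in X,\ r<\phi(a)\}$, ordered by $(a,r)\preceq(b,s)$ iff $r\le s$ and $r\le X(a,b)$, with net $x_{(a,r)}:=a$. Inhabitedness of $\phi$ (which follows from $\sub_X(\phi,1_X)=1$) gives $\sup\{r\mid (a,r)\in D\}=\sup_a\phi(a)=1$, and from the defining order one reads off $X(x_\beta,x_\gamma)\ge s$ whenever $\gamma\succeq\beta=(b,s)$, so that $\sup_\alpha\inf_{\gamma\succeq\beta\succeq\alpha}X(x_\beta,x_\gamma)\ge\sup_{(a,r)\in D}r=1$; hence the net is forward Cauchy, provided $D$ is directed. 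Granting directedness, a computation using the weight inequality $\phi(a)\with X(z,a)\le\phi(z)$ together with the identity $\phi=\sup_a\phi(a)\with\sy(a)$ identifies $\sup_\alpha\inf_{\beta\succeq\alpha}\sy(x_\beta)$ with $\phi$, so that a Yoneda limit of this net is exactly a colimit of $\phi$; this supplies the remaining implication.

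The main obstacle is the directedness of $D$: given $(a,r),(b,s)\in D$ one must produce $c\in X$ with $X(a,c)\ge r$, $X(b,c)\ge s$ and $\phi(c)\ge\max\{r,s\}$. This is precisely the ``binary directedness'' encoded in the join-primeness condition (I2), read through the remark that an ideal is a nonempty join-irreducible element of $\CP X$; I would extract it by applying (I1) and (I2) to the tensored representables $r\with\sy(a)$ and $s\with\sy(b)$ lying below $\phi$. Verifying this flatness reformulation of (I2), and checking the identity $\sup_\alpha\inf_{\beta\succeq\alpha}\sy(x_\beta)=\phi$ in full, is the one genuinely computational part of the argument; everything else is formal once the net--ideal dictionary and the order-convergence results of Lemma \ref{yoneda limit in Q} and Lemma \ref{Yoneda limits in PX} are in hand.
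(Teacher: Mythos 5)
Your proposal follows the paper's own route: the theorem is obtained there by combining Lemma \ref{yoneda limit as colimits} (an element is a Yoneda limit of a forward Cauchy net if and only if it is a colimit of the weight the net generates) with Proposition \ref{characterization of ideal} (ideals are exactly the weights generated by forward Cauchy nets), and your two directions reproduce the implications $(4)\Rightarrow(1)$ and $(1)\Rightarrow(3)\Rightarrow(4)$ of that proposition. However, two of your steps are wrong as stated. The first is a contravariance slip: since $\lambda$ is a weight, the defining inequality is $\lambda(x_k)\with X(x_j,x_k)\leq\lambda(x_j)$, i.e. $X(x_j,x_k)\leq\lambda(x_k)\ra\lambda(x_j)$, not $\lambda(x_j)\ra\lambda(x_k)\geq X(x_j,x_k)$ (take $X=\sV$ with the {\L}ukasiewicz t-norm, $\lambda=\sy(0)$, $x_j=0$, $x_k=1$: the claimed inequality reads $0\geq 1$). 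Hence $\{\lambda(x_i)\}_{i\in D}$ is forward Cauchy in $\sV^{\rm op}$, not in $\sV$, so Lemma \ref{yoneda limit in Q} does not apply; the order convergence you need is Lemma \ref{order convergence}(ii), which is exactly what the paper invokes. This is not cosmetic: $\sV$ and $\sV^{\rm op}$ have different forward Cauchy nets in general (for the product t-norm, $\{1/n\}$ is forward Cauchy in $\sV^{\rm op}$ but not in $\sV$).

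The second error is more serious: your index set for the ideal-to-net direction is not a directed set, because the relation $(a,r)\preceq(b,s)$ iff $r\leq s$ and $r\leq X(a,b)$ is not transitive. Composing only yields $X(a,c)\geq X(b,c)\with X(a,b)\geq s\with r$, which can be strictly smaller than $r$ when $r$ is not idempotent. Concretely, in $\sV$ with the {\L}ukasiewicz t-norm take the ideal $\phi=\sy(1)\equiv 1$ and the points $a=0.9$, $b=0.5$, $c=0.1$; then $(a,0.5)\preceq(b,0.55)\preceq(c,0.6)$ inside your $D$, but $(a,0.5)\not\preceq(c,0.6)$ since $X(a,c)=0.2$. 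So $\{x_{(a,r)}\}$ is not a net, and neither the forward Cauchy condition nor the identity $\sup_\alpha\inf_{\beta\succeq\alpha}\sy(x_\beta)=\phi$ is well-posed for it. The repair is the order the paper actually uses in Proposition \ref{characterization of ideal}(3): the formal-ball order $(a,r)\sqsubseteq(b,s)$ iff $r\leq s\with X(a,b)$, which is precisely the order inherited from $(\CP X)_0$ under $(a,r)\mapsto r\with\sy(a)$ and is transitive; your directedness argument via (I1)--(I2) (the paper's $(2)\Rightarrow(3)$) produces upper bounds for this order as well. With these two corrections your argument coincides with the paper's proof.
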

	
	\begin{proof} 
		This is a  consequence of  Lemma \ref{yoneda limit as colimits} and Proposition \ref{characterization of  ideal} below. \end{proof}
	
	The following lemma, from  Flagg,   S\"{u}nderhauf and  Wagner \cite{FSW}, relates Yoneda limits  of forward Cauchy nets to colimits of  weights.
	
	\begin{lem} 
		\label{yoneda limit as colimits} Suppose $\{x_i\}_{i\in D}$ is a forward Cauchy net of a real-enriched category   $X$.  Then, an element $b$ of $X$ is a Yoneda limit of $\{x_i\}_{i\in D}$ if and only if $b$ is a colimit of the  weight $\sup_{i\in D}\inf_{j\geq i}X(-,x_j)$. \end{lem}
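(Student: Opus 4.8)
The plan is to collapse the stated equivalence to a single pointwise identity of $\sV$-valued functions and then to verify that identity with the usual transitivity and forward-Cauchy estimates. Throughout write $\phi=\sup_{i\in D}\inf_{j\geq i}X(-,x_j)$ for the weight in question.

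First recall the two characterizations involved. By the discussion of colimits, $b$ is a colimit of $\phi$ exactly when $X(b,-)=X\swarrow\phi$, where
\[(X\swarrow\phi)(y)=\inf_{x\in X}\bigl(\phi(x)\ra X(x,y)\bigr);\]
and by definition $b$ is a Yoneda limit of $\{x_i\}_{i\in D}$ exactly when $X(b,y)=\sup_{i\in D}\inf_{j\geq i}X(x_j,y)$ for every $y$. Hence it suffices to prove, for each $y\in X$, that
\[\sup_{i\in D}\inf_{j\geq i}X(x_j,y)=\inf_{x\in X}\bigl(\phi(x)\ra X(x,y)\bigr).\]
Writing $L(y)$ and $R(y)$ for the left and right sides, once $L=R$ the two defining conditions on $b$ coincide verbatim, so $b$ is a Yoneda limit iff it is a colimit of $\phi$, independently of $b$, which is the assertion.

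For $L(y)\leq R(y)$ I would fix $x\in X$ and bound $\phi(x)\with L(y)$. Since $D$ is directed and $\with$ preserves directed suprema, this product equals $\sup_{i\in D}\bigl[\bigl(\inf_{j\geq i}X(x,x_j)\bigr)\with\bigl(\inf_{j\geq i}X(x_j,y)\bigr)\bigr]$; for each $i$, choosing any $j\geq i$ and applying transitivity gives $\bigl(\inf_{j\geq i}X(x,x_j)\bigr)\with\bigl(\inf_{j\geq i}X(x_j,y)\bigr)\leq X(x,x_j)\with X(x_j,y)\leq X(x,y)$, so $\phi(x)\with L(y)\leq X(x,y)$ and therefore $L(y)\leq\phi(x)\ra X(x,y)$. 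Taking the infimum over $x$ yields $L(y)\leq R(y)$.

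The reverse inequality $R(y)\leq L(y)$ is where forward Cauchyness is used, and it is the main obstacle. The key point is that $\phi(x_k)$ is forced close to $1$ at late indices: given $c<1$, forward Cauchyness provides $i_0\in D$ with $\inf_{k\geq j\geq i_0}X(x_j,x_k)>c$, and then for every $k\geq i_0$ one reads off $\phi(x_k)\geq\inf_{j\geq k}X(x_k,x_j)\geq c$ (apply the bound with the pair $(x_k,x_j)$, $j\geq k\geq i_0$). Since $R(y)\leq\phi(x_k)\ra X(x_k,y)$ for every $k$, monotonicity of $\with$ gives $R(y)\with c\leq R(y)\with\phi(x_k)\leq X(x_k,y)$ for all $k\geq i_0$, hence $R(y)\with c\leq\inf_{k\geq i_0}X(x_k,y)\leq L(y)$. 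Letting $c$ increase to $1$ and invoking left continuity of $\with$, so that $\sup_{c<1}(R(y)\with c)=R(y)\with 1=R(y)$, gives $R(y)\leq L(y)$ and completes the identity, and with it the lemma. The only delicate points are the index bookkeeping needed to extract the lower bound $\phi(x_k)\geq c$ from the forward-Cauchy condition, and the appeal to left continuity of the t-norm to pass from $R(y)\with c$ back to $R(y)$.
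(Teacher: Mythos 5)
Your proof is correct. Both your argument and the paper's ultimately rest on the same key identity, namely that for every $y\in X$,
\[
\CP X\Big(\sup_{i\in D}\inf_{j\geq i}X(-,x_j),\, X(-,y)\Big)\;=\;\sup_{i\in D}\inf_{j\geq i}X(x_j,y),
\]
which is exactly your $L(y)=R(y)$; once this is in hand, the definitions of Yoneda limit and of colimit of $\phi$ coincide verbatim, as you note. The difference lies in how the identity is obtained. The paper derives it in two lines from previously established machinery: since $\sy$ is fully faithful, $\{X(-,x_i)\}_{i\in D}$ is forward Cauchy in $\CP X$; Lemma \ref{Yoneda limits in PX} (Wagner's lemma) then says $\phi$ is a Yoneda limit of this net in $\CP X$, and the Yoneda lemma converts $\CP X(X(-,x_j),X(-,y))$ into $X(x_j,y)$. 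You instead prove the identity from scratch: the inequality $L(y)\leq R(y)$ via directedness, transitivity, and join-continuity of $\with$; and $R(y)\leq L(y)$ via the forward-Cauchy estimate $\phi(x_k)\geq c$ at late indices together with left continuity of the t-norm. In effect you re-derive, for the special case $\phi_j=X(-,x_j)$ and $\phi'=X(-,y)$, the computation that the paper has already packaged into Lemma \ref{Yoneda limits in PX} (whose proof reduces pointwise to Lemma \ref{yoneda limit in Q} and runs along the same lines as yours). What the paper's route buys is brevity and a conceptual placement of the lemma inside the presheaf-category framework; what your route buys is a self-contained elementary argument that makes explicit exactly where each hypothesis — directedness, forward Cauchyness, continuity of $\with$ — enters.
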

	
	\begin{proof}  Since   $\sy \colon X\lra\CP X$ is fully faithful,  $\{X(-,x_i)\}_{i\in D}$ is a forward Cauchy net of
		$\CP X$. By  Lemma \ref{Yoneda limits in PX}, the weight $$\phi\coloneqq\sup_{i\in D}\inf_{j\geq i}X(-,x_j)$$ is a Yoneda limit of $\{X(-,x_i)\}_{i\in D}$, thus for each $x\in X$, \[\CP X(\phi, X(-,x))=\sup_{i\in D}\inf_{j\geq i}\CP X(X(-,x_j), X(-,x))= \sup_{i\in D}\inf_{j\geq i}X(x_j,x). \]
		Therefore,    
		\begin{align*} & \quad\quad\quad \text{$b$ is a Yoneda limit of $\{x_i\}_{i\in D}$}\\ 
			&\iff \forall x\in X,~ X(b, x)=\sup_{i\in D}\inf_{j\geq i}X(x_j,x) \\ 
			&\iff \forall x\in X,~   X(b, x)=\CP X(\phi, X(-,x))  \\ 
			&\iff \text{$b$ is a colimit of $\phi$}.\qedhere\end{align*}
	\end{proof} 
	
	For every dcpo $(X,\leq)$, the real-enriched category $\omega(X,\leq)$ is Yoneda complete. Conversely, as a consequence of the above lemma we show that the underlying order of a Yoneda complete real-enriched category is  directed complete.
	
	\begin{cor}\label{underlying order of Yoneda} The underlying ordered set of each Yoneda complete real-enriched category is a dcpo.  
	\end{cor}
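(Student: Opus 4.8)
The plan is to realize directed joins in $X_0$ as Yoneda limits of a canonical net. Let $X$ be Yoneda complete and let $D$ be a directed subset of the underlying ordered set $X_0=(X,\sqsubseteq)$. I would regard $D$ itself as a directed index set under $\sqsubseteq$ and consider the net $\{x_i\}_{i\in D}$ given by the inclusion $D\hookrightarrow X$, i.e. $x_i=i$. The aim is to show that this net is forward Cauchy and that its Yoneda limit is precisely the join of $D$ in $X_0$. Since a Yoneda complete category is separated, $X_0$ is then a partial order in which every directed subset has a join, hence a dcpo.

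First I would check that the net is forward Cauchy, which is immediate from the definition of the underlying order: whenever $i\sqsubseteq j\sqsubseteq k$ in $D$ we have $X(x_j,x_k)=X(j,k)=1$, so $\inf_{k\geq j\geq i}X(x_j,x_k)=1$ for every $i$, and a fortiori $\sup_{i}\inf_{k\geq j\geq i}X(x_j,x_k)=1$. Yoneda completeness then supplies a unique element $b$ with $X(b,y)=\sup_{i\in D}\inf_{j\geq i}X(x_j,y)$ for all $y\in X$.

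It remains to identify $b$ with $\sup D$ in $X_0$. That $b$ lies below every upper bound is the easy half: if $X(x,u)=1$ for all $x\in D$, then $X(x_j,u)=1$ for every $j\in D$, whence $X(b,u)=\sup_i\inf_{j\geq i}X(x_j,u)=1$, i.e. $b\sqsubseteq u$. For the upper bound property I would fix $x_0\in D$ and combine transitivity with directedness: for $j\geq x_0$ one has $X(x_0,x_j)=1$, so the transitivity axiom gives $X(x_0,b)\geq X(x_j,b)\with X(x_0,x_j)=X(x_j,b)$. Since every $i\in D$ admits some $i'\in D$ with $i'\geq i$ and $i'\geq x_0$, this yields $\inf_{j\geq i}X(x_j,b)\leq\inf_{j\geq i'}X(x_j,b)\leq X(x_0,b)$ for all $i$, and therefore $X(x_0,b)\geq\sup_i\inf_{j\geq i}X(x_j,b)=X(b,b)=1$. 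Hence $x_0\sqsubseteq b$ for every $x_0\in D$, so $b$ is the least upper bound of $D$.

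The only delicate point is this upper-bound half: one must feed the directedness of $D$ into the double supremum--infimum to pass from the pointwise inequalities $X(x_0,b)\geq X(x_j,b)$, valid only for $j\geq x_0$, to the global bound $X(x_0,b)\geq\sup_i\inf_{j\geq i}X(x_j,b)$. Everything else is a direct unwinding of the definitions. Alternatively, one may route the whole argument through Lemma~\ref{yoneda limit as colimits}, identifying $b$ with the colimit of the weight $\sup_{i\in D}\inf_{j\geq i}X(-,x_j)$ and reading the join off the defining property of $\colim$; this repackages the same computation but still hinges on the same cofinality step.
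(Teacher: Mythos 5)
Your proof is correct, and its skeleton is the same as the paper's: regard the directed subset $D$ of $X_0$ as a forward Cauchy net (trivially so, since all the relevant hom-values equal $1$), take its Yoneda limit $b$, and identify $b$ with the join of $D$; separatedness, which Yoneda completeness supplies, then upgrades this to a dcpo. Where you diverge is in how the identification $b=\sup D$ is justified. The paper quotes Lemma~\ref{yoneda limit as colimits} to recognize $b$ as the colimit of the weight $\sup_{x\in D}\inf_{y\geq x}X(-,y)$, simplifies this to the conical weight $\sup_{x\in D}X(-,x)$, and then invokes the argument of Theorem~\ref{characterizations of complete Q-orders} that colimits of conical weights are joins in the underlying order. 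You instead verify the least-upper-bound property directly from the defining equation $X(b,y)=\sup_{i}\inf_{j\geq i}X(x_j,y)$: the ``$b$ below every upper bound'' half is immediate, and for the ``upper bound'' half you correctly handle the cofinality step, using directedness to choose $i'\geq i,x_0$ and transitivity to get $\inf_{j\geq i}X(x_j,b)\leq X(x_{i'},b)\leq X(x_0,b)$, whence $X(x_0,b)\geq X(b,b)=1$. Your route is more self-contained (no appeal to the colimit machinery), at the cost of redoing by hand a computation the paper has already packaged; the paper's route is shorter and makes visible the structural fact that directed joins in $X_0$ are colimits of conical weights, which is reused elsewhere. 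Your closing remark that the whole argument can be re-routed through Lemma~\ref{yoneda limit as colimits} is precisely the paper's proof.
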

	
	\begin{proof}Suppose $X$ is   Yoneda complete,  $D$ is a directed subset of $X_0$.  We view $D$ as a forward Cauchy net of $X$ and let $b$ be its Yoneda limit.  By Lemma \ref{yoneda limit as colimits}, $b$ is a colimit of the weight $\sup_{x\in D}\inf_{y\geq x}X(-,y)$. Since $\inf_{y\geq x}X(-,y)=X(-,x)$ for every $x\in D$, it follows that $b$ is a colimit of the conical weight $\sup_{x\in D}X(-,x)$, hence a join of $D$ in $X_0$ by the argument of Theorem \ref{characterizations of complete Q-orders}. \end{proof}

	To state and prove   Proposition \ref{characterization of  ideal}, we need the notion of formal balls of a real-enriched category.   Suppose  $X$ is a real-enriched category. A \emph{formal ball} of $X$ is a pair $(x,r)$ with $x\in X$ and $r\in[0,1]$, $x$ is called the center and $r$ the radius. For  formal balls  $(x,r)$ and $(y,s)$, let \[(x,r)\sqsubseteq  (y,s) \quad\text{if}~ r\leq s\with X(x,y).\] The relation $\sqsubseteq $ is reflexive and transitive, hence an order. We write $\mathrm{B}X$ for the set of  formal balls of $X$  ordered by $\sqsubseteq $. It is obvious that $(x,0)\sqsubseteq(y,s)$ for all $x,y\in X$ and $s\in[0,1]$, so any formal ball with radius $0$ is a bottom element of $\mathrm{B}X$. 
	
	The ordered set of formal balls is introduced by Weihrauch and Schreiber \cite{WS} for metric spaces. It has been extended to quasi-metric spaces and quantale-enriched categories in general, and has been investigated extensively,  see e.g. Edalat and Heckmann \cite{EH98},   Goubault-Larrecq and Ng \cite{Goubault-Ng}, Kostanek and Waszkiewicz \cite{KW2011},  Rutten \cite{Rutten98}.
	
	The ordered set $\mathrm{B}X$ of formal balls is closely related to $\CP X$. For all $x,y\in X$ and $r,s\in[0,1]$, since $$\CP X(r\with\sy(x),s\with\sy(y))=r\ra\CP X(\sy(x),s\with\sy(y))  =r\ra (s\with X(x,y)), $$  it follows that \[(x,r)\sqsubseteq(y,s)\iff 1\leq \CP X(r\with\sy(x),s\with\sy(y)). \] So, if we identify a  formal ball $(x,r)$ with the weight $r\with\sy(x)$,  then the order relation between formal balls is  the order relation inherited from $(\CP X)_0$.  
	
	The following proposition  is an improvement of related results in Lai, D\thinspace\&{\thinspace}G Zhang \cite{LZZ2020}. It shows in particular that ideals of a real-enriched category $X$ are precisely those weights generated by forward Cauchy nets.
	
	\begin{prop}   \label{characterization of  ideal} For each weight $\phi$ of a real-enriched category $X$, the following are equivalent: 
		\begin{enumerate}[label=\rm(\arabic*)]   
			\item  $\phi$ is an   ideal of $X$. 
			\item   $\sup_{x\in X}\phi(x)=1$ and $\phi$ is a coprime in $(\CP X)_0$ in the sense that  $$\phi\leq\psi_1\vee\psi_2\implies \phi\leq\psi_1\text{ or } \phi\leq\psi_2  $$ for all weights $\psi_1,\psi_2$ of $X$. \item   $\sup_{x\in X}\phi(x)=1$ and   $\mathrm{B}\phi\coloneqq\{(x,r)\in \mathrm{B}X \mid   \phi(x)>r\}$ is a directed subset of $\mathrm{B}X$. 
			\item  $\phi=\sup_{i\in D}\inf_{j\geq i} X(-,x_j)$ for some forward Cauchy net $\{x_i\}_{i\in D}$ of $X$.
	\end{enumerate} \end{prop}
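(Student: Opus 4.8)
The plan is to prove the cycle $(1)\Rightarrow(2)\Rightarrow(3)\Rightarrow(4)\Rightarrow(1)$, using throughout the reformulation of ``ideal'' as conditions (I1) and (I2) together with the fact, already recorded before the statement, that every ideal is inhabited. The implication $(1)\Rightarrow(2)$ is immediate: inhabitedness is the cited remark, and coprimeness falls out of (I2), since $\phi\leq\psi_1\vee\psi_2$ gives $1=\sub_X(\phi,\psi_1\vee\psi_2)=\sub_X(\phi,\psi_1)\vee\sub_X(\phi,\psi_2)$, forcing $\sub_X(\phi,\psi_i)=1$, i.e. $\phi\leq\psi_i$, for some $i$.

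For $(2)\Rightarrow(3)$, nonemptiness of $\mathrm{B}\phi$ follows from $\bv_{x}\phi(x)=1$, and for directedness I argue by contradiction. Given $(x,r),(y,s)\in\mathrm{B}\phi$ with no common upper bound in $\mathrm{B}\phi$, I set $\psi_1=\bv\{t\with\sy(z)\mid (z,t)\in\mathrm{B}\phi,\ (x,r)\not\sqsubseteq(z,t)\}$ and define $\psi_2$ symmetrically from $(y,s)$; both are weights by Proposition \ref{Distcalcu}. Writing $\phi=\bv_{(z,t)\in\mathrm{B}\phi} t\with\sy(z)$, every $(z,t)\in\mathrm{B}\phi$ must fail to lie above $(x,r)$ or above $(y,s)$, for otherwise it would be the forbidden common upper bound; hence $t\with\sy(z)\leq\psi_1\vee\psi_2$ and so $\phi\leq\psi_1\vee\psi_2$. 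On the other hand each summand of $\psi_1$ satisfies $t\with X(x,z)<r$, so $\psi_1(x)\leq r<\phi(x)$, whence $\phi\not\leq\psi_1$, and likewise $\phi\not\leq\psi_2$, contradicting coprimeness.

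For $(3)\Rightarrow(4)$ I take the directed set $\mathrm{B}\phi$ as index set and let $\{x_{(z,t)}\}=\{z\}_{(z,t)\in\mathrm{B}\phi}$ be the net of centres. Since $\bv_x\phi(x)=1$, the set $\mathrm{B}\phi$ contains balls of radius arbitrarily close to $1$, and as $(z,t)\sqsubseteq(z',t')$ forces $X(z,z')\geq t$, the net is forward Cauchy. The crux is the identity $\phi=\bv_{i}\bw_{j\geq i}X(-,x_j)$: the inequality $\geq$ follows by testing at the balls $(w,t)\in\mathrm{B}\phi$, while $\leq$ is exactly where directedness is indispensable. Given any $(x_0,r_0)\in\mathrm{B}\phi$, I merge it with a high-radius ball to produce $(y,s)\sqsupseteq(x_0,r_0)$ with $\phi(y)$ close to $1$; then the weight inequality $\phi(y)\with X(w,y)\leq\phi(w)$ bounds the inner infimum by $t\ra\phi(w)$, hence by $\phi(w)$ in the limit.

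For $(4)\Rightarrow(1)$ I invoke Lemma \ref{Yoneda limits in PX} to recognize $\phi$ as the Yoneda limit of $\{\sy(x_i)\}$ in $\CP X$, giving $\sub_X(\phi,\lam)=\bv_i\bw_{j\geq i}\lam(x_j)$ for every $\lam\in\CP X$. For each $\lam$ the net $\{\lam(x_j)\}$ is forward Cauchy in $\sV^{\rm op}$, hence order convergent by Lemma \ref{order convergence}; continuity of $\with$ then lets $r\with-$ and $\max$ commute with the operation $\bv_i\bw_{j\geq i}$, which is precisely (I1) and (I2), so $\phi$ is an ideal and the cycle closes. The main obstacle is the identity in $(3)\Rightarrow(4)$, whose $\leq$ half genuinely needs directedness of $\mathrm{B}\phi$ to replace an arbitrary ball by one whose centre has $\phi$-value near $1$; the deduction of (I1)/(I2) from order convergence and continuity of the t-norm in $(4)\Rightarrow(1)$ is the other spot where the structure of $[0,1]$ is essential.
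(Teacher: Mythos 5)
Your proposal is correct, and it follows the paper's own cycle $(1)\Rightarrow(2)\Rightarrow(3)\Rightarrow(4)\Rightarrow(1)$; moreover $(1)\Rightarrow(2)$, $(3)\Rightarrow(4)$ and $(4)\Rightarrow(1)$ are essentially the paper's arguments: indexing $\mathrm{B}\phi$ by itself and bounding radii from below for forward Cauchyness, proving one half of $\phi=\bv_i\bw_{j\geq i}X(-,x_j)$ by viewing each $(w,t)\in\mathrm{B}\phi$ as an index of the net, and the other half by merging a given index with a high-radius ball (this is exactly where directedness and $\sup_{x}\phi(x)=1$ enter), and then, for $(4)\Rightarrow(1)$, combining Lemma \ref{Yoneda limits in PX}, the Yoneda lemma, Lemma \ref{order convergence} and continuity of $\with$ just as the paper does. (Your labels ``$\geq$''/``$\leq$'' in $(3)\Rightarrow(4)$ read the identity from right to left --- testing at balls proves $\phi\leq\bv_i\bw_{j\geq i}X(-,x_j)$, merging proves the reverse --- but the two techniques are paired with the correct inequalities, so this is a notational ambiguity, not a gap.) The genuinely different step is $(2)\Rightarrow(3)$. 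The paper argues directly: it first upgrades coprimeness to the inequality $\sub_X(\phi,\psi_1\vee\psi_2)\leq \sub_X(\phi,\psi_1)\vee\sub_X(\phi,\psi_2)$, using that cotensors in $\CP X$ are computed pointwise and that $[0,1]$ is a chain, and then applies this to the implication weights $X(x,-)\ra r$ and $X(y,-)\ra s$ to manufacture an explicit common upper bound $(z,t)$ of $(x,r)$ and $(y,s)$. You instead argue by contradiction: assuming no common upper bound exists, you decompose $\phi=\bv_{(z,t)\in \mathrm{B}\phi}\,t\with\sy(z)$ and let $\psi_1$ (resp.\ $\psi_2$) be the join of those ball weights not above $(x,r)$ (resp.\ $(y,s)$), so that $\phi\leq\psi_1\vee\psi_2$ while $\psi_1(x)\leq r<\phi(x)$ and $\psi_2(y)\leq s<\phi(y)$, contradicting coprimeness in its raw form. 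Both routes are valid: yours is more elementary and self-contained, needing only closure of weights under arbitrary joins and the ball decomposition of $\phi$, with no appeal to cotensor weights or to Lemma \ref{sub vs tensor}; the paper's is constructive (it exhibits the upper bound) and isolates along the way the binary-join preservation of $\sub_X(\phi,-)$, which is precisely the substance of condition (I2).
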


	\begin{proof} $(1)\Rightarrow(2)$ Obvious. 
		
		$(2)\Rightarrow(3)$ First we show that for all weights $\psi_1,\psi_2$ of $X$, $$\sub_X(\phi,\psi_1\vee\psi_2)\leq \sub_X(\phi,\psi_1)\vee \sub_X(\phi,\psi_2).$$   Let $r=\sub_X(\phi,\psi_1\vee\psi_2)$. Since cotensors in $\CP X$ are computed pointwise (Example \ref{tensor and cotensor in PX}) and $[0,1]$ is linearly ordered, it follows that \begin{align*} \phi& \leq r\ra (\psi_1\vee\psi_2) = (r\ra\psi_1)\vee(r\ra\psi_2),
		\end{align*} then either $\phi\leq r\ra\psi_1$ or $\phi\leq r\ra\psi_2$, hence $r\leq \sub_X(\phi,\psi_1)\vee \sub_X(\phi,\psi_2)$. 
		
		Now we prove that $(\mathrm{B}\phi,\sqsubseteq)$ is  directed. For $(x,r)$ and $(y,s)$ of $\mathrm{B}\phi$, consider the weights  \[\phi_1=  X(x,-)\ra r \quad \text{and}\quad \phi_2 =  X(y,-)\ra s.\] Since   \begin{align*} \CP X(\phi,\phi_1\vee\phi_2) &\leq \CP X(\phi,\phi_1)\vee \CP X(\phi,\phi_2) =(\phi(x)\ra r)\vee(\phi(y)\ra s)<1, \end{align*}  there exists   $z\in X$ such that \[ \phi(z)>(X(x,z)\ra r)\vee(X(y,z)\ra s)  \] because $\sup_{x\in X}\phi(x)=1$. Pick   $t\in[0,1]$ such that \[ \phi(z)>t>(X(x,z)\ra r)\vee(X(y,z)\ra s).\] Then    $(z,t)\in \mathrm{B}\phi$. We assert that $(z,t)$ is an upper bound of $(x,r)$ and $ (y,s)$, hence $(\mathrm{B}\phi,\sqsubseteq)$  is directed. To see this, let $u= X(x,z)\ra r$. Then $u$ is by definition the largest element of $[0,1]$ with $u\with X(x,z)\leq r$. Since $t>u$, it follows that $r< t\with X(x,z)$, then $(x,r)\sqsubseteq(z,t)$. Likewise, $(y,s)\sqsubseteq(z,t)$.

		$(3)\Rightarrow(4)$ Index the directed set $(\mathrm{B}\phi,\sqsubseteq)$ by itself as $\{(x_i,r_i)\}_{i\in D}$; that means, $(x_i,r_i)\sqsubseteq (x_j,r_j)$ if and only if $i\leq j$.    We show in two steps that the net  $\{x_i\}_{i\in D}$ satisfies the requirement.
		
		{\bf Step 1}.  $\{x_i\}_{i\in D}$ is  forward Cauchy. 
		
		Let $r<1$. Pick   $(x_i,r_i)\in \mathrm{B}\phi$ with $r_i>r$. Then    $$r\leq r_i\leq r_j \leq r_k\with X(x_j,x_k)\leq X(x_j,x_k)$$ whenever $i\leq j\leq k$. Thus,  $\{x_i\}_{i\in D}$ is forward Cauchy  by arbitrariness of $r$.
		
		{\bf Step 2}. For all $x\in X$,  \[ \phi(x)=\sup_{i\in D} \inf_{j\geq i}X(x,x_j). \]
		
		If $r< \phi(x)$, then $(x,r)\in \mathrm{B}\phi$, so  $(x,r)=(x_k,r_k)$ for some   $k\in D$.  Therefore, \begin{align*}r&\leq\bw_{j\geq k}X(x_k,x_j) \leq \bv_{i\in D} \bw_{j\geq i}X(x_i,x_j),\end{align*}   and consequently, \[\phi(x)\leq \bv_{i\in D} \bw_{j\geq i}X(x,x_j) \] by arbitrariness of $r$.
		
		Conversely, if \[r<\bv_{i\in D} \bw_{j\geq i}X(x,x_j),\] then there exist $k\in D$ and $s\in[0,1]$ such that $$r< s\leq\bw_{j\geq k}X(x,x_j).$$ Since $\sup_{x\in X}\phi(x)=1$, there is $l\in D$ such that   $r\leq s\with r_l$.   Take an upper bound $(x_h,r_h)$   of $(x_k,r_k)$ and $(x_l,r_l)$. Then \begin{align*}r&\leq s\with r_l \leq s\with r_h \leq X(x,x_h)\with\phi(x_h) \leq\phi(x).\end{align*}    Therefore,  $$\phi(x)  \geq\bv_{i\in D} \bw_{j\geq i}X(x,x_j) $$ by arbitrariness of $r$.

		$(4)\Rightarrow(1)$ Suppose $\{x_i\}_{i\in D}$ is a forward Cauchy net of $X$.  We wish to show that $$\phi\coloneqq \sup_{i\in D}\inf_{j\geq i}X(-,x_j)$$ is an  ideal.

		{\bf Step 1}. We show that for each weight $\lam$ of $X$, \[\sub_X(\phi,\lam)= \inf_{i\in D}\sup_{j\geq i}\lam(x_j).\]
		
		Since $\lam$ is a weight,   $\{\lam(x_i)\}_{i\in D}$ is a forward Cauchy net of $\sV^{\rm op}$, 
		then \begin{align*}\sub_X(\phi,\lam)&= \sub_X\Big(\sup_{i\in D}\inf_{j\geq i} X(-,x_j),\lam\Big) \\ 
			&
			=\sup_{i\in D}\inf_{j\geq i}\CP X(X(-,x_j),\lam) & \text{(Lemma \ref{Yoneda limits in PX})}\\ 
			&= \sup_{i\in D}\inf_{j\geq i}\lam(x_j)& \text{(Yoneda lemma)}\\ 
			&= \inf_{i\in D}\sup_{j\geq i}\lam(x_j).& \text{(Lemma \ref{order convergence}\thinspace(ii))} \end{align*}
		
		{\bf Step 2}. For all  $\lam \in\CP X$ and all $r\in[0,1]$,    $\sub_X(\phi,r\with\lam)=r\with\sub_X(\phi,\lam)$. 
		
		By  Step 1, we have \begin{align*}\sub_X(\phi,r\with\lam) 
			&= \inf_{i\in D}\sup_{j\geq i}r\with\lam(x_j) \\ 
			&= r\with \inf_{i\in D}\sup_{j\geq i}\lam(x_j) &\text{($\with$ is continuous)} \\ 
			&= r\with\sub_X(\phi,\lam).\end{align*}
		
		{\bf Step 3}. For all  weights  $\lam,\mu$ of $X$, $\sub_X(\phi, \lam\vee\mu)=\sub_X(\phi, \lam)\vee\sub_X(\phi, \mu)$.
		
		For this we calculate:   \begin{align*}&~\quad  \sub_X(\phi, \lam)\vee\sub_X(\phi, \mu)\\  
			&= \inf_{i\in D}\sup_{j\geq i}\lam(x_j)\vee \inf_{i\in D}\sup_{j\geq i}\mu(x_j)& \text{(Step 1)} \\ 
			& = \inf_{i\in D}\sup_{j\geq i}(\lam(x_j)\vee\mu(x_j))  \\ 
			& = \sub_X(\phi, \lam\vee\mu).& \text{(Step 1)}\end{align*}
		
		The proof is completed.
	\end{proof}
	
	Suppose  $\{(x_i,r_i)\}_{i\in D}$ is a directed subset of the ordered set $\mathrm{B}X$ of formal balls of a real-enriched category $X$. If $r_i$ tends to $1$,  the net $\{x_i\}_{i\in D}$ is clearly forward Cauchy. Such   nets are said to be \emph{weightable} in Goubault-Larrecq \cite{Goubault}. 
	
	We list here some consequences of Lemma \ref{yoneda limit as colimits} and Proposition \ref{characterization of  ideal}. \begin{enumerate}[label=\rm(\roman*)]  
		\item  A functor $f\colon X\lra Y$ is Yoneda continuous if and only if it preserves colimits of all ideals.  
		\item  For each forward Cauchy net $\{x_i\}_{i\in D}$, there is a weightable forward Cauchy net $\{y_j\}_{j\in E}$ such that $\{x_i\}_{i\in D}$ and $\{y_j\}_{j\in E}$ have the same Yoneda limits. In particular, a real-enriched category is Yoneda complete  if and only if each of its weightable forward  Cauchy nets has a unique Yoneda limit.  
	\end{enumerate}
	
	
	
	Another consequence of Proposition \ref{characterization of  ideal} is that for real-enriched categories, ideals in the sense of Definition \ref{defn of ideal}  coincide with those in the sense of Flagg,  S\"{u}nderhauf and Wagner \cite[Definition 10]{FSW}.  
	
	\begin{cor} Suppose $\phi$ is a weight   of a real-enriched category  $X$. Then  $\phi$ is   an ideal of $X$ if and only if it  satisfies the following conditions:  
		\begin{enumerate}[label=\rm(\roman*)]  
			\item $\bv_{x\in X}\phi(x)=1$.
			\item If $r< 1$, $s_1<\phi(x_1)$ and $s_2<\phi(x_2)$, then  there exists  $x\in X$   such that $r<\phi(x)$, $s_1< X(x_1,x) $ and   $s_2< X(x_2,x) $. 
		\end{enumerate} 
	\end{cor}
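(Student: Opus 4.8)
The plan is to read this off the equivalence $(1)\Leftrightarrow(3)$ of Proposition \ref{characterization of ideal}, which says that $\phi$ is an ideal exactly when $\sup_{x\in X}\phi(x)=1$ and the set $\mathrm B\phi=\{(x,r)\in\mathrm BX\mid\phi(x)>r\}$ is a directed subset of the formal balls of $X$. Since condition (i) of the corollary is literally $\sup_{x\in X}\phi(x)=1$, the whole statement reduces to showing that, \emph{assuming} $\sup_{x\in X}\phi(x)=1$, directedness of $\mathrm B\phi$ is equivalent to condition (ii). Throughout I would use the description $(x,r)\sqsubseteq(y,s)\iff r\le s\with X(x,y)$ of the formal-ball order, the fact that $[0,1]$ is linearly and densely ordered, and continuity of $\with$.

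For the implication ``directed $\Rightarrow$ (ii)'', suppose $r<1$, $s_1<\phi(x_1)$ and $s_2<\phi(x_2)$. First I would interpolate, choosing $s_i'$ with $s_i<s_i'<\phi(x_i)$ and $r'$ with $r<r'<1$, and use $\sup_{x\in X}\phi(x)=1$ to pick $x_3$ with $\phi(x_3)>r'$. Then $(x_1,s_1')$, $(x_2,s_2')$, $(x_3,r')$ all lie in $\mathrm B\phi$, and a common upper bound $(x,t)\in\mathrm B\phi$ (directedness for pairs extends to finite families) satisfies $\phi(x)>t$ together with $r'\le t\with X(x_3,x)\le t$ and $s_i'\le t\with X(x_i,x)\le X(x_i,x)$. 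Hence $r<r'\le t<\phi(x)$ and $s_i<s_i'\le X(x_i,x)$, so this $x$ witnesses (ii).

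The reverse implication ``(ii) $\Rightarrow$ directed'' is the main obstacle, because directedness requires the \emph{non-strict} relations $r_i\le t\with X(x_i,x)$ with $t<\phi(x)$, whereas (ii) only outputs strict inequalities; moreover a naive application is circular, since the admissible radius $t$ depends on $X(x_i,x)$, which depends on the point produced by (ii). I would break the circularity by decoupling the threshold from the chosen point. Given $(x_1,r_1),(x_2,r_2)\in\mathrm B\phi$, interpolate $r_i<s_i<\phi(x_i)$. Because $s_i>r_i$ and $\with$ is continuous, the set $\{t\in[0,1]\mid t\with s_i\ge r_i\}$ has the form $[t_i^*,1]$ with $t_i^*<1$, and crucially $t_i^*$ depends only on the fixed numbers $r_i,s_i$. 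Setting $t^*=\max\{t_1^*,t_2^*\}<1$ and applying (ii) with $t^*$ in the role of $r$ produces an $x$ with $\phi(x)>t^*$, $X(x_1,x)>s_1$, $X(x_2,x)>s_2$; then $(x,t^*)\in\mathrm B\phi$ and, by monotonicity of $\with$, $t^*\with X(x_i,x)\ge t^*\with s_i\ge r_i$, i.e. $(x_i,r_i)\sqsubseteq(x,t^*)$. Nonemptiness of $\mathrm B\phi$ follows from $\sup_{x\in X}\phi(x)=1>0$.

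In short, the hard point is precisely this strict/non-strict mismatch in the second direction; the device of bounding $X(x_i,x)$ below by the interpolated value $s_i$, so that the threshold radius $t^*$ becomes independent of the point to be chosen, is what makes the argument non-circular, and it is exactly where continuity of the t-norm is used. The first direction, by contrast, needs only density of $[0,1]$ together with the hypothesis $\sup_{x\in X}\phi(x)=1$ to supply the auxiliary ball $(x_3,r')$.
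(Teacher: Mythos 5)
Your proposal is correct and follows exactly the route the paper intends: the paper states this corollary as an immediate consequence of Proposition \ref{characterization of  ideal}, and your argument is precisely the bridge between condition (3) of that proposition (inhabitedness plus directedness of $\mathrm{B}\phi$) and the strict-inequality formulation (i)--(ii). The reconciliation of strict versus non-strict inequalities via interpolation and the continuity of $\with$ is carried out correctly in both directions.
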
 
	
	
	\begin{cor} \label{ideals in [0,1]} 
		\begin{enumerate}[label={\rm(\roman*)}]  
			\item A weight $\phi$  of $\sV=([0,1],\alpha_L)$ is an   ideal if and only if either  $\phi(x)=x \ra a$ for some $a\in [0,1]$ or $\phi(x)=\sup_{b<a}(x\ra b)$ for some $ a>0$. 
			\item A weight $\psi$ of  $\sV^{\rm op}=([0,1],\alpha_R)$  is an   ideal if and only if either  $\psi(x)=a \ra x$ for some $a\in [0,1]$ or $\psi(x)=\sup_{b>a}(b\ra x)$ for some $ a<1$. \end{enumerate} \end{cor}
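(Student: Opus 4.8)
The plan is to derive both characterizations from Proposition~\ref{characterization of  ideal}, which tells us that a weight is an ideal precisely when it is generated by a forward Cauchy net, i.e. $\phi=\sup_{i\in D}\inf_{j\geq i}X(-,x_j)$. For $X=\sV$ this reads $\phi(x)=\sup_{i\in D}\inf_{j\geq i}(x\ra x_j)$, and the whole argument reduces to massaging this expression using the adjunction properties of $\with$ and $\ra$.

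For the forward implication of (i), I would first simplify. Since $x\ra-\colon[0,1]\lra[0,1]$ is a right adjoint it preserves infima, so $\inf_{j\geq i}(x\ra x_j)=x\ra m_i$ where $m_i\coloneqq\inf_{j\geq i}x_j$. The net $\{m_i\}$ is monotone increasing; set $a=\sup_i m_i$, so that $\phi(x)=\sup_i(x\ra m_i)$. Everything then hinges on whether the supremum $a=\sup_i m_i$ is attained. If $m_i=a$ cofinally, then since $x\ra-$ is order-preserving $\phi(x)=x\ra a$, a representable weight $\sy(a)$. If it is not attained, the values $m_i$ are cofinal in $[0,a)$, whence monotonicity of $x\ra-$ gives $\phi(x)=\sup_{b<a}(x\ra b)$; moreover $a>0$ here, because $a=0$ would force $m_i\equiv 0$, an attained supremum. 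This exhausts the two forms.

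For the converse I would exhibit, for each form, a forward Cauchy net generating it. The weight $x\ra a=\sy(a)$ is representable, hence an ideal (every representable weight is an ideal). For $\phi(x)=\sup_{b<a}(x\ra b)$ with $a>0$, take the net indexed by $[0,a)$ with $x_b=b$; it is monotone increasing, hence forward Cauchy, and the computation $\inf_{c\geq b}(x\ra x_c)=x\ra b$ followed by $\sup_{b<a}$ shows it generates $\phi$, so Proposition~\ref{characterization of  ideal} makes $\phi$ an ideal. Part (ii) is entirely parallel, carried out in $\sV^{\rm op}$, where the generated weight is $\psi(x)=\sup_i\inf_{j\geq i}(x_j\ra x)$; the key change is that $-\ra x$ converts suprema into infima, so $\inf_{j\geq i}(x_j\ra x)=M_i\ra x$ with $M_i\coloneqq\sup_{j\geq i}x_j$ monotone decreasing. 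Writing $a=\inf_i M_i$ and splitting on attainment gives $\psi(x)=a\ra x$ (the representable $\sy(a)$ of $\sV^{\rm op}$) or $\psi(x)=\sup_{b>a}(b\ra x)$ with $a<1$, the latter generated by the monotone decreasing net $x_b=b$, $b\in(a,1]$.

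The main obstacle I anticipate is purely bookkeeping: cleanly handling the attained-versus-not-attained dichotomy, in particular justifying the cofinality step that replaces $\sup_i$ over the net by $\sup_{b<a}$ (respectively $\sup_{b>a}$), and correctly pinning down the boundary cases $a=0$ in part (i) and $a=1$ in part (ii), where the ``strict'' form collapses back into the representable one.
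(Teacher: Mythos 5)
Your proposal is correct and follows essentially the same route as the paper: both reduce to Proposition \ref{characterization of  ideal}, use that $x\ra-$ preserves infima to write $\phi(x)=\sup_i\bigl(x\ra\inf_{j\geq i}x_j\bigr)$, set $a=\sup_i\inf_{j\geq i}x_j$, and split on attainment of this supremum, with sufficiency handled by exhibiting a generating forward Cauchy net (the paper uses the sequence $\{a-1/n\}_{n\geq1}$ where you use the net indexed by $[0,a)$, an immaterial difference). Your write-up is in fact slightly more explicit than the paper's, which compresses the attained/not-attained dichotomy into a single ``so, either \dots or \dots''.
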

	
	\begin{proof}We prove (i) for example. Sufficiency is easy since the weight $\sup_{b<a}(x\ra b)$ is generated by the forward Cauchy sequence $\{a-1/n\}_{n\geq 1}$. For  necessity suppose  $\phi$ is an ideal of $([0,1],\alpha_L)$. Then there is a forward Cauchy net $\{x_i\}_{i\in D}$ of $([0,1],\alpha_L)$ such that \[\phi(x)=\sup_{i\in D}\inf_{j\geq i}(x\ra x_j).\] Let $a= \sup_{i\in D}\inf_{j\geq i}x_j$. Then \[\phi(x)=\sup_{i\in D}\inf_{j\geq i}(x\ra x_j)=\sup_{i\in D}\Big(x\ra \inf_{j\geq i}x_j\Big), \] so, either $\phi(x)=x \ra  a$   or $\phi(x)=\sup_{b<a}(x\ra b)$.  \end{proof}

	\begin{cor}\label{orde pres is limi cont}Every functor $f\colon X\lra Y$ with $X$ symmetric  is Yoneda continuous. \end{cor}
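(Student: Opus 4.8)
The plan is to reduce Yoneda continuity of $f$ to the behaviour of $f$ on bilimits of Cauchy nets, exploiting that symmetry collapses the distinction between forward Cauchy nets and Cauchy (biCauchy) nets.

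First I would establish the key reduction: in a symmetric real-enriched category $X$, every forward Cauchy net is already a Cauchy net. Given a forward Cauchy net $\{x_i\}_{i\in D}$, symmetry gives $X(x_j,x_k)=X(x_k,x_j)$, so for each $i$ the two infima $\inf_{i\le j\le k}X(x_j,x_k)$ and $\inf_{j,k\ge i}X(x_j,x_k)$ coincide (each unordered pair contributes the same value in either order, and the diagonal contributes $1$). Hence $\sup_{i}\inf_{j,k\ge i}X(x_j,x_k)=\sup_i\inf_{i\le j\le k}X(x_j,x_k)=1$, i.e. $\{x_i\}_{i\in D}$ is Cauchy. By the proposition asserting that Yoneda limits and bilimits coincide for Cauchy nets, the Yoneda limit $b$ of $\{x_i\}_{i\in D}$ is then exactly a bilimit of $\{x_i\}_{i\in D}$.

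Next I would push everything through $f$ by pure functoriality, using $X(u,v)\le Y(f(u),f(v))$ for all $u,v\in X$. Applying this inequality to the Cauchy condition shows that $\{f(x_i)\}_{i\in D}$ is a Cauchy net of $Y$; applying it to the two equalities characterizing $b$ as a bilimit in the form of condition (2) of Lemma \ref{bilimit of Cauchy net}, namely $\sup_i\inf_{j\ge i}X(b,x_j)=1=\sup_i\inf_{j\ge i}X(x_j,b)$, yields $\sup_i\inf_{j\ge i}Y(f(b),f(x_j))=1=\sup_i\inf_{j\ge i}Y(f(x_j),f(b))$. By the implication (2)$\Rightarrow$(1) of Lemma \ref{bilimit of Cauchy net}, $f(b)$ is a bilimit of $\{f(x_i)\}_{i\in D}$, hence, every bilimit being a Yoneda limit, a Yoneda limit of $\{f(x_i)\}_{i\in D}$. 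Since Yoneda limits are unique up to isomorphism, this says precisely that $f$ preserves the Yoneda limit of $\{x_i\}_{i\in D}$; as the net was arbitrary, $f$ is Yoneda continuous.

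The only real content is the first step: the observation that symmetry forces forward Cauchy nets to be biCauchy. Once that is in hand, the remaining steps are immediate from functoriality and Lemma \ref{bilimit of Cauchy net}, with no weight- or colimit-level computation needed. I would emphasize that no completeness hypothesis on $X$ or $Y$ is required, since the argument works directly with the given Yoneda limit $b$ rather than producing one from a colimit.
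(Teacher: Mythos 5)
Your overall reduction is sound, but the justification of the step you yourself identify as ``the only real content'' --- that in a symmetric category every forward Cauchy net is Cauchy --- is incorrect as written. You claim that for each $i$ the infima $\inf_{i\le j\le k}X(x_j,x_k)$ and $\inf_{j,k\ge i}X(x_j,x_k)$ coincide because each unordered pair contributes the same value in either order. This overlooks that the index set $D$ is merely directed, not totally ordered: pairs $j,k\ge i$ with $j$ and $k$ incomparable occur in the second infimum but in \emph{neither} order in the first, so symmetry alone does not identify the two. Indeed they can differ: symmetry and transitivity only give $X(x_j,x_k)\ge X(x_k,x_l)\with X(x_j,x_l)$ for a common upper bound $l$, and with the {\L}ukasiewicz t-norm $s\with s=2s-1$ is strictly below $s$, so controlling all ``ordered'' values by $s$ controls incomparable pairs only by $s\with s$. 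The fact itself is true, but the proof must route through an upper bound: given $r<1$, pick $s<1$ with $s\with s\ge r$ (possible since $\with$ is left continuous, so $\sup_{s<1}s\with s=1$), pick $i_0$ with $X(x_j,x_k)\ge s$ whenever $i_0\le j\le k$; then for arbitrary $j,k\ge i_0$ choose $l\ge j,k$ by directedness and compute $X(x_j,x_k)\ge X(x_l,x_k)\with X(x_j,x_l)=X(x_k,x_l)\with X(x_j,x_l)\ge s\with s\ge r$. Note that the paper asserts this implication without proof inside its own argument, so this is precisely the point where the care is needed.

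Once that step is repaired, the remainder of your argument is correct, and it takes a genuinely different route from the paper. The paper works at the level of weights: symmetry makes the net Cauchy, Proposition \ref{Cauchy net implies Cauchy weight} then makes the generated weight $\phi=\sup_{i\in D}\inf_{j\ge i}X(-,x_j)$ a Cauchy weight, and Proposition \ref{absolute sup} (colimits of Cauchy weights are absolute) gives preservation by every functor in one stroke. You instead stay at the level of nets: the Yoneda limit of a Cauchy net is a bilimit, bilimits of Cauchy nets are captured by the two convergence equalities in condition (2) of Lemma \ref{bilimit of Cauchy net}, and those equalities are manifestly preserved by functoriality, after which (2)$\Rightarrow$(1) of that lemma closes the loop. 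Your route is more elementary, avoiding the distributor calculus and the adjunction $\psi\dashv\phi$ entirely; the paper's route is shorter given its machinery and exposes the structural reason behind the corollary, namely the absoluteness of colimits of Cauchy weights.
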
 
	
	\begin{proof}Let $b$ be a Yoneda limit of a forward Cauchy net $\{x_i\}_{i\in D}$ of $X$. From symmetry of $X$ one infers that $\{x_i\}_{i\in D}$ is a Cauchy net, hence  $\phi=\sup_{i\in D}\inf_{j\geq i}X(-,x_j)$ is a Cauchy weight by Proposition \ref{Cauchy net implies Cauchy weight} and consequently, $f$ preserves the colimit of $\phi$ by Proposition \ref{absolute sup}. \end{proof}
	
	\begin{thm}\label{univeral property of IX}Let $X$ be a real-enriched category. Then 
		\begin{enumerate}[label={\rm(\roman*)}] 
			\item   $\CI X$ is Yoneda complete. 
			\item For each functor $f\colon X\lra Y$ with $Y$ Yoneda complete, there is a unique Yoneda continuous functor $\overline{f}\colon\CI X\lra Y$ that extends $f$.  \end{enumerate} \end{thm}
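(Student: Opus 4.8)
The plan is to mimic the treatment of the Cauchy completion in Theorem \ref{univeral property of CX}, with ideals playing the role of Cauchy weights throughout. For (i) I would show that $\CI X$ is closed in $\CP X$ under Yoneda limits of forward Cauchy nets, and for (ii) I would set $\overline f(\phi)=\colim_\phi f$ and verify the universal property exactly as in the Cauchy case.

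For (i): since $\CI X\subseteq\CP X$ is a full subcategory and $\CP X$ is separated, $\CI X$ is separated, so it suffices to produce, for each forward Cauchy net $\{\phi_i\}_{i\in D}$ of $\CI X$, a Yoneda limit lying in $\CI X$. The net is forward Cauchy in $\CP X$ too, so Lemma \ref{Yoneda limits in PX} gives the Yoneda limit $\phi=\sup_{i\in D}\inf_{j\geq i}\phi_j$ in $\CP X$; and because the defining equation of a Yoneda limit quantifies over all of $\CP X$, once $\phi\in\CI X$ is known it is automatically a Yoneda limit inside $\CI X$. Thus everything reduces to the closure statement: the Yoneda limit of a forward Cauchy net of ideals is again an ideal, which I check through conditions (I1) and (I2). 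Since $\phi$ is the Yoneda limit, $\sub_X(\phi,\lambda)=\sup_i\inf_{j\geq i}\sub_X(\phi_j,\lambda)$ for every $\lambda\in\CP X$. For (I1) I pull the scalar $r$ through the $\sup$ and the $\inf$, which is legitimate because $r\with-$ preserves both suprema and infima (continuity of $\with$), and then invoke (I1) for each $\phi_j$. For (I2) I note that $\CP X(-,\lambda)\colon\CP X\lra\sV^{\rm op}$ is a functor, so each of $\{\sub_X(\phi_j,\lambda)\}$, $\{\sub_X(\phi_j,\mu)\}$ and their pointwise join is forward Cauchy in $\sV^{\rm op}$, hence order convergent by Lemma \ref{order convergence}; rewriting the Yoneda limits as $\inf_i\sup_{j\geq i}$, using $\sup_{j\geq i}(b_j\vee c_j)=(\sup_{j\geq i}b_j)\vee(\sup_{j\geq i}c_j)$ and continuity of $\vee$, yields (I2). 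Making (I1)--(I2) survive the passage to the Yoneda limit is the main obstacle of the whole proof.

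For (ii): set $\overline f(\phi)=\colim_\phi f=\colim(\phi\circ f^*)$. This is well defined because $\phi\circ f^*=f_\exists(\phi)$ is an ideal of $Y$ (functoriality of $\CI$) and $Y$, being Yoneda complete, assigns it a unique colimit by Theorem \ref{Yoneda com via ideals}. Factoring $\overline f=\colim\circ f_\exists$ through $f_\exists\colon\CI X\lra\CI Y$, I verify that $\colim\colon\CI Y\lra Y$ is a functor using $\psi(w)\leq Y(w,\colim\psi)$, which follows from $Y(\colim\psi,\colim\psi)=1$; hence $\overline f$ is a functor. It extends $f$ because $\sy_X(a)\circ f^*=\sy_Y(f(a))$ and the colimit of a representable weight is its representing object, so $\overline f(\sy_X(a))=f(a)$. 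Yoneda continuity of $\overline f$ comes from the factorization: $f_\exists$ is a left adjoint (Proposition \ref{left and right kan}), hence preserves all colimits, in particular colimits of ideals, so it is Yoneda continuous by the remarks following Proposition \ref{characterization of ideal}; and $\colim\colon\CI Y\lra Y$ is Yoneda continuous by the direct computation identifying $Y(\colim\psi,y)$ with $\sup_i\inf_{j\geq i}Y(\colim\psi_j,y)$ via the Yoneda-limit formula in $\CP Y$. Finally, uniqueness follows since every ideal $\phi$ is, by Proposition \ref{characterization of ideal}(4) together with part (i), the Yoneda limit in $\CI X$ of the representables $\{\sy_X(x_j)\}$ of a forward Cauchy net; any Yoneda continuous extension of $f$ must send this to the Yoneda limit of $\{f(x_j)\}$ in $Y$, which is unique by separatedness of $Y$.
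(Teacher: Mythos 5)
Your proposal is correct, and its overall skeleton coincides with the paper's: for (i) one proves that $\CI X$ is closed in $\CP X$, and for (ii) one sets $\overline f(\phi)=\colim_\phi f$, factors it as $\colim\circ f_\exists\colon \CI X\lra\CI Y\lra Y$, gets Yoneda continuity from closure of $\CI X,\CI Y$ plus the fact that $f_\exists\colon\CP X\lra\CP Y$ and $\colim\colon\CI Y\lra Y$ preserve the relevant colimits, and gets uniqueness from the density of representables (you phrase this as ``each ideal is the Yoneda limit of the representables of its generating forward Cauchy net'', the paper as ``each ideal $\phi$ is the colimit of ${\sf t}$ weighted by $\phi$'' --- equivalent statements). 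Where you genuinely diverge is the key closure step in (i). The paper takes an ideal $\Lambda$ of $\CI X$, computes $\colim_\Lambda\mathfrak{i}=\Lambda\circ{\sf t}_*$ by the distributor calculus, and verifies (I1)--(I2) through the single identity $\sub_X(\Lambda\circ{\sf t}_*,\phi)=\sub_{\CI X}(\Lambda,\sub_X(-,\phi))$, exploiting the ideal property of $\Lambda$ at the outer level and of each element of $\CI X$ at the inner level; no order-theoretic analysis is needed. You instead work at the level of forward Cauchy nets (which is legitimate: the paper itself records that closure under ideal-weighted colimits is equivalent to closure under Yoneda limits of forward Cauchy nets), take the explicit Yoneda limit $\phi=\sup_i\inf_{j\geq i}\phi_j$ from Lemma \ref{Yoneda limits in PX}, and check (I1) via continuity of $\with$ (which indeed makes $r\with-$ preserve arbitrary sups and infs) and (I2) via order convergence in $\sV^{\rm op}$ (Lemma \ref{order convergence}) together with distributivity of $\sup$ over $\vee$ and continuity of $\vee$ on decreasing nets --- essentially re-running the technique of Proposition \ref{characterization of  ideal} $(4)\Rightarrow(1)$ one level up. Your handling of the $\sup$--$\inf$ interchange in (I2), which you rightly flag as the main obstacle, is sound. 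The trade-off: the paper's calculation is shorter and purely categorical; yours is more concrete and makes visible exactly where continuity of the t-norm and the order-convergence property of $\sV^{\rm op}$ enter the closure argument.
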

	
	\begin{proof} Write $\mathfrak{i}\colon\CI X\lra\CP X$ for the inclusion functor and write ${\sf t}\colon X\lra  \mathcal{I}X$ for the Yoneda embedding with codomain restricted to $\mathcal{I}X$. The composite $ \mathfrak{i}\circ{\sf t}$ is then the Yoneda embedding $\sy\colon X\lra\CP X$. 
		
		(i) We  show that $\CI X$ is closed in $\CP X$ under formation of colimits of ideals; or equivalently, $\CI X$ is closed in $\CP X$ under Yoneda limits of forward Cauchy nets. That means, for each ideal $\Lambda$ of $\CI X$, the colimit  of the inclusion functor  $\mathfrak{i}\colon \CI X\lra\CP X$ weighted by $\Lambda$ is an ideal of $X$.
		Since the composite $ \mathfrak{i}\circ{\sf t}$ is the Yoneda embedding $\sy\colon X\lra\CP X$, from Example \ref{sup in PX} it follows that $${\colim}_\Lambda\mathfrak{i} =\colim \Lambda\circ \mathfrak{i}^*= \Lambda\circ \mathfrak{i}^*\circ\mathfrak{i}_*\circ {\sf t}_*= \Lambda\circ{\sf t}_*,$$  hence for each  weight $\phi$ of $X$,   \begin{align*}\sub_X({\colim}_\Lambda\mathfrak{i},\phi)=\sub_X(\Lambda\circ{\sf t}_*,\phi)  = \sub_{\CI X}(\Lambda,\sub_X(-,\phi)). \end{align*} 
		Then, for each  weight $\phi$ of $X$ and each $r\in[0,1]$,  \begin{align*} 
			\sub_X({\colim}_\Lambda\mathfrak{i},  r\with\phi)  
			& = \sub_{\CI X}(\Lambda,\sub_X(-,r\with\phi))\\ 
			&=\sub_{\CI X}(\Lambda, r\with\sub_X (-, \phi) ) 
			\\
			&= r\with\sub_{\CI X}(\Lambda, \sub_X (-, \phi) ) 
			\\
			&=r\with \sub_X({\colim}_\Lambda\mathfrak{i},  \phi), 
		\end{align*}  showing that $\sub_X({\colim}_\Lambda\mathfrak{i}, -)$ preserves tensor. 
		In the same fashion one verifies that $\sub_X({\colim}_\Lambda\mathfrak{i}, -)$ preserves finite join. So ${\colim}_\Lambda\mathfrak{i}$ is an ideal of $X$, as desired.  
		
		(ii) The functor \[\overline{f}\colon \CI X\lra Y, \quad \phi\mapsto {\colim}_ \phi \] clearly  extends $f$. It remains to check that $\overline{f}$ preserves colimits of ideals. It is clear that $\overline{f}$ is equal to the composite $\colim\circ  f_\exists\colon\CI X\lra\CI Y\lra Y$. Since $\colim\colon\CI Y\lra Y$ is left adjoint to  ${\sf t}\colon Y\lra\CI Y$,  it preserves colimits. Since $\CI X$ and $\CI Y$ are, respectively,   closed in $\CP X$ and $\CP Y$  under formation of colimits of ideals, and $f_\exists\colon\CP X\lra\CP Y$ preserves all colimits, it follows that the composite $\overline{f}$ preserves colimits of all ideals.
		Uniqueness of $\overline{f}$ follows from that  each ideal $\phi$ of $X$ is the colimit of ${\sf t}\colon X\lra\CI X$ weighted by $\phi$.\end{proof}

	Proposition \ref{characterization of  ideal} shows that forward Cauchy nets are closely related to directed subsets of the set of formal balls. Actually, there exist interesting interactions between Yoneda completeness of a real-enriched category and directed completeness of its set of formal balls. 
	
	\begin{thm}\label{yoneda via formal ball} If the continuous t-norm $\&$ is Archimedean, then for each real-enriched category $X$, the following are equivalent: \begin{enumerate}[label=\rm(\arabic*)]\setlength{\itemsep}{0pt} \item  Every forward Cauchy net of  $X$ has a Yoneda limit. \item Every directed subset of the ordered set  $\mathrm{B}X $ of formal balls has a join.
	\end{enumerate} \end{thm}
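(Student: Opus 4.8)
The plan is to convert statement (1) into a statement about colimits of ideals and then recognize both conditions as two faces of the classical formal-ball characterization of completeness. By Theorem \ref{Yoneda com via ideals}, every forward Cauchy net of $X$ has a Yoneda limit if and only if every ideal of $X$ has a colimit, and by Proposition \ref{characterization of ideal} a weight $\phi$ is an ideal exactly when $\sup_x\phi(x)=1$ and $\mathrm{B}\phi=\{(x,r)\in\mathrm{B}X\mid\phi(x)>r\}$ is directed. Identifying a formal ball $(x,r)$ with the weight $r\with\sy(x)$ realizes $\mathrm{B}X$ as a sub-ordered-set of $(\CP X)_0$, so the directed sets of the form $\mathrm{B}\phi$ are precisely the \emph{weightable} ones, whose radii approach $1$; a general directed family $\{(x_i,r_i)\}_{i\in D}$ has increasing radii with supremum $c=\sup_i r_i\le 1$. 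This radius dichotomy ($c=1$ versus $c<1$) organizes the argument.

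The next step is to linearize the radius bookkeeping using the Archimedean hypothesis. Since $\with$ is continuous and Archimedean, Theorem \ref{representation} (equivalently Corollary \ref{CA=L or P}, which presents $\with$ as the product or the {\L}ukasiewicz t-norm) furnishes an additive generator $t$ with $x\with y=t^\dashv(t(x)+t(y))$. Putting $D(x,y)=-t(X(x,y))$ turns $X$ into an ordinary quasi-metric space, and $(x,r)\mapsto(x,-t(r))$ identifies $\mathrm{B}X$ (away from the radius-$0$ bottom balls) with the classical poset of formal balls ordered by $D(x,y)\le \bar\rho_x-\bar\rho_y$; under Proposition \ref{characterization of ideal} and Lemma \ref{yoneda limit as colimits} forward Cauchy nets and Yoneda limits match Cauchy-weightable nets and $d$-limits. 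The decisive gain is that in the $\bar\rho$-coordinate the order uses honest subtraction, hence is \emph{cancellative}, whereas in the original $r$-coordinate the asymmetry between $\with$ and $\ra$ obstructs direct manipulation.

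With this in hand I would treat the two directions. For $(1)\Rightarrow(2)$, take a directed family with radii $\bar\rho_i\downarrow\bar\rho=-t(c)$; subtracting $\bar\rho$ (i.e.\ replacing $r_i$ by the unique $r_i'$ with $c\with r_i'=r_i$, legitimate by cancellativity) yields a weightable family with the \emph{same} centres, whose net is genuinely forward Cauchy. Its Yoneda limit $b$ exists by (1), and adding $\bar\rho$ back shows $(b,c)$ is the join of the original family. For $(2)\Rightarrow(1)$, an ideal $\phi$ gives a directed $\mathrm{B}\phi$ whose join $(b,1)$ exists by (2); one checks $b$ is order-least with $\phi\le\sy(b)$, and then upgrades this to the enriched universal property $X(b,-)=X\swarrow\phi$ (so $b=\colim\phi$) exactly as in the classical lemma that the centre of the join of a directed set of formal balls is the $d$-limit, the comparison being carried out against test balls in the cancellative radius variable.

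The main obstacle is the $(2)\Rightarrow(1)$ step together with the bounded-radius case $c<1$: passing from an order-theoretic least upper bound in $\mathrm{B}X$ to the genuinely enriched colimit, and renormalizing radii that do not reach $1$, both require cancellation of $\with$ away from $0$, which is precisely what the single-block additive generator supplies. This is also where the Archimedean hypothesis is essential and cannot be dropped: for a non-Archimedean continuous t-norm the ordinal-sum $\min$-summands make radius combination non-cancellative and the equivalence fails. Accordingly I would write the generator set-up and the radius-shift lemma with care, handling the radius-$0$ bottom balls and, in the {\L}ukasiewicz case, the finite clamp $-t(0)$, so that every radius manipulation reduces to addition on $[-\infty,0]$ (or $[t(0),0]$); the remaining verifications, including that $b=\colim\phi$ is a join in $X_0$, then follow the pattern of Theorem \ref{characterizations of complete Q-orders}.
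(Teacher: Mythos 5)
Your proposal is correct in substance and, if carried out, would prove the theorem; but it is organized rather differently from the paper's argument. You reduce (1) to ``every ideal has a colimit'' (Theorem \ref{Yoneda com via ideals}) and identify ideals with weightable directed families of balls (Proposition \ref{characterization of ideal}) exactly as the paper does, but then you transport the whole problem along an additive generator into the classical quasi-metric formal-ball setting and re-run the Edalat--Heckmann/Kostanek--Waszkiewicz argument there, so that cancellation becomes literal subtraction. The paper instead stays intrinsic to the t-norm: for $(1)\Rightarrow(2)$ it shows directly that Archimedean-ness forces the centres of \emph{any} directed family (with some positive radius) to form a forward Cauchy net (Lemma \ref{directed set is forward Cauchy}\thinspace(ii)), and then applies Lemma \ref{join of directed set in BX}, valid for every continuous t-norm, to conclude that $(b,\bv_{i\in D}r_i)$ is the join; for $(2)\Rightarrow(1)$ it isolates the needed cancellation as \emph{property {\rm(R)}}, proves that Archimedean is equivalent to every category having property {\rm(R)} (Proposition \ref{Archi vs reciprocal}), and then proves Lemma \ref{yoneda complete = dcpo}, whose hypothesis (iii) --- that each scaled family $\{(x_i,t\with r_i)\}_{i\in D}$ has a join --- is automatic under (2). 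Your ``test balls in the cancellative radius variable'' are precisely these scaled families, so the two proofs share the same mathematical core. What your route buys is the ability to quote classical metric results nearly verbatim; what the paper's splitting buys is a clean record of exactly where Archimedean-ness enters (only in Lemma \ref{directed set is forward Cauchy}\thinspace(ii) and property {\rm(R)}), with the remaining lemmas valid for arbitrary continuous t-norms --- which the paper then exploits in the remark after the theorem and in the counterexamples (Examples \ref{Example-G} and \ref{exmp3}).

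Two caveats for a careful write-up. First, your renormalization in $(1)\Rightarrow(2)$ is an avoidable detour (the original net of centres is already forward Cauchy by the Archimedean hypothesis), and your phrase ``adding $\bar\rho$ back'' cannot by itself establish that $(b,c)$ is the \emph{least} upper bound: a competing upper bound $(y,s)$ has $s\leq c$, so shifting its radius by $\bar\rho$ produces a negative radius, and the comparison must instead go through the Yoneda-limit property of $b$, as in the proof of Lemma \ref{join of directed set in BX}; also, ``the unique $r_i'$ with $c\with r_i'=r_i$'' is not unique when $r_i=0$ in the {\L}ukasiewicz case, so bottom balls need separate (trivial) treatment. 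Second, in $(2)\Rightarrow(1)$ the step you call ``upgrading'' the order-theoretic join to the enriched colimit is the entire content of Lemma \ref{yoneda complete = dcpo}: one must show that the join of each scaled family $\{(x_i,t\with r_i)\}_{i\in D}$ is $(b,t)$, and it is exactly there that cancellation is used; as in the paper, one should restrict to cofinally large radii so that the {\L}ukasiewicz clamp never fires.
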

	
	Theorem \ref{yoneda via formal ball} shows that, when $\&$ is a continuous Archimedean t-norm, Yoneda completeness of a real-enriched category is equivalent to directed completeness of its set of formal balls. This is first proved in Edalat and Heckmann \cite[Theorem 6]{EH98} for metric spaces,  then extended to categories enriched over a ``value quantale'' in Kostanek and Waszkiewicz \cite[Theorem 7.1]{KW2011}. The form stated here  appears in Yang and Zhang \cite{YangZ2023}. 
	
	To prove Theorem \ref{yoneda via formal ball} we need several lemmas. In the following, we agree by convention that  a directed subset $\{(x_i,r_i)\}_{i\in D}$ of $\mathrm{B}X$  is always indexed by itself; that means, $(x_i,r_i)\sqsubseteq(x_j,r_j)$ if and only if $i\leq j$.

	\begin{lem}\label{join of directed set in BX}  
		Suppose  $X$ is a real-enriched category and   $\{(x_i,r_i)\}_{i\in D}$ is a directed subset of $\mathrm{B}X $. If  $x$ is a Yoneda limit of  $\{x_i\}_{i\in D}$  and  $r=\bv_{i\in D}r_i$,  then $(x,r)$ is a join of  $\{(x_i,r_i)\}_{i\in D}$ in $\mathrm{B}X $. \end{lem}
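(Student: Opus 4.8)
The plan is to verify directly that $(x,r)$ satisfies the two defining properties of a join in $\mathrm{B}X$: that it is an upper bound of $\{(x_i,r_i)\}_{i\in D}$, and that it lies below every other upper bound. Throughout I would use only the defining equation of a Yoneda limit, namely $X(x,y)=\bv_{i\in D}\bw_{j\geq i}X(x_j,y)$ for all $y\in X$, together with the standing assumption that $\with$ is continuous; in particular I would not need to check separately that $\{x_i\}_{i\in D}$ is forward Cauchy. I would also record at the outset that, since $(x_i,r_i)\sqsubseteq(x_j,r_j)$ unfolds to $r_i\leq r_j\with X(x_i,x_j)$ for $i\leq j$, the net $\{r_i\}$ is monotone, so that $\bw_{j\geq i}r_j=r_i$, and in particular $r=\bv_{i\in D}r_i\geq r_k$ for all $k$.

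For the upper-bound property I must show $r_i\leq r\with X(x_i,x)$ for each $i$. First, taking $y=x$ in the Yoneda-limit equation and using $X(x,x)=1$ gives $\bv_{j\in D}\bw_{k\geq j}X(x_k,x)=1$, whence $\bv_{k\geq i}X(x_k,x)=1$ for every $i$ (the relevant tails are cofinal). Then, for each $k\geq i$, transitivity yields $X(x_i,x)\geq X(x_k,x)\with X(x_i,x_k)$; combining $r\geq r_k$ with $r_k\with X(x_i,x_k)\geq r_i$ I obtain $r\with X(x_i,x)\geq X(x_k,x)\with r_i$. Taking the supremum over $k\geq i$ and distributing $\with$ over joins gives $r\with X(x_i,x)\geq r_i\with\bv_{k\geq i}X(x_k,x)=r_i$, which is exactly $(x_i,r_i)\sqsubseteq(x,r)$.

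The least-upper-bound property is the main step. Let $(y,s)$ be any upper bound, so $r_k\leq s\with X(x_k,y)$ for all $k$; I must show $r\leq s\with X(x,y)$, i.e. $(x,r)\sqsubseteq(y,s)$. Using the Yoneda-limit equation and the fact that $s\with-$ preserves joins, I would write $s\with X(x,y)=\bv_{j\in D}\bigl(s\with\bw_{k\geq j}X(x_k,y)\bigr)$. The crucial point is that $s\with-\colon[0,1]\lra[0,1]$, being continuous and monotone, preserves infima of subsets of $[0,1]$, so $s\with\bw_{k\geq j}X(x_k,y)=\bw_{k\geq j}\bigl(s\with X(x_k,y)\bigr)\geq\bw_{k\geq j}r_k=r_j$. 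Taking the supremum over $j$ then yields $s\with X(x,y)\geq\bv_{j}r_j=r$, as desired.

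I expect the only genuine obstacle to be this interchange of $s\with-$ with the infimum in the second part: for a merely left-continuous $\with$ the map $s\with-$ need not preserve infima, so the argument really relies on the \emph{continuity} hypothesis on the t-norm. The remaining manipulations are routine applications of transitivity, of the monotonicity of $\{r_i\}$, and of the quantale law that $\with$ distributes over joins.
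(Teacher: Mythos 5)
Your proposal is correct and follows essentially the same route as the paper's proof: the same two-part verification (upper bound, then least upper bound), resting on the same key facts — monotonicity of the radii, transitivity, and, in the least-upper-bound step, the interchange of $s\with-$ with the infimum, which is exactly where the paper also (implicitly) uses continuity of $\with$ rather than mere left continuity. The only divergence is cosmetic: for the upper-bound half you finish with join-distributivity of $\with$ applied to the cofinal tail $\bv_{k\geq i}X(x_k,x)=1$, whereas the paper runs an $\epsilon$-argument via continuity; your variant is a touch cleaner (that half then needs only left continuity), but the decomposition and the essential content are the same.
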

	
	\begin{proof} The conclusion is  contained in Kostanek and Waszkiewicz \cite[Lemma 7.7]{KW2011}, also in Ali-Akbari,   Honari,  Pourmahdian and Rezaii \cite[Theorem 3.3]{Ali} and Goubault-Larrecq \cite[Lemma 7.4.25]{Goubault}. First we show that $(x,r)$ is an upper bound of $\{(x_i,r_i)\}_{i\in D}$; that is, $r_i\leq r\with X(x_i,x)$ for all $i\in D$.
		
		Since $x$ is a Yoneda limit of  $\{x_i\}_{i\in D}$, then \[1=X(x,x)= \bv_{k\in D}\bw_{j\geq k}X(x_j,x).\]
		For each $i\in D$ and each $\epsilon< r_i$, by continuity of $\&$ there is some $k\in D$ such that $\epsilon\leq r_i\with X(x_j,x)$ whenever $j\geq k$.  Thus, for all $j\geq i,k$  we have \begin{align*}\epsilon&\leq r_i\with X(x_j,x) \leq r_j\with X(x_i,x_j)\with X(x_j,x) \leq r\with X(x_i,x).\end{align*} By   arbitrariness of $\epsilon$  we obtain that $r_i\leq r\with X(x_i,x)$.
		
		Next  we show that $(x,r)\sqsubseteq (y,s)$  whenever $(y,s)$ is an  upper bound of $\{(x_i,r_i)\}_{i\in D}$.
		Since $(y,s)$ is an upper bound,
		then  $r_i\leq r_j\leq s\with X(x_j,y)$ whenever $i\leq j$, hence \begin{align*} r&=\bv_{i\in D}r_i  \leq \bv_{i\in D}\bw_{j\geq i}s\with X(x_j,y)  = s\with X(x,y),\end{align*} which shows that $(x,r)\sqsubseteq (y,s)$, as desired. \end{proof}
	
	\begin{lem}\label{directed set is forward Cauchy} Suppose  $X$ is a real-enriched category and   $\{(x_i,r_i)\}_{i\in D}$ is a directed subset of $\mathrm{B}X $ with  $r_i>0$ for some $i\in D$.  
		\begin{enumerate}[label=\rm(\roman*)] 
			\item  If $\bv_{i\in D}r_i=1$, then the net $\{x_i\}_{i\in D}$ is  forward Cauchy. 
			\item If $\&$ is Archimedean, then the net $\{x_i\}_{i\in D}$ is  forward Cauchy. 
	\end{enumerate}     \end{lem}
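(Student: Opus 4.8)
The plan is to unpack the indexing convention stated just before the lemma: writing the directed subset as $\{(x_i,r_i)\}_{i\in D}$ indexed by itself means $(x_i,r_i)\sqsubseteq(x_j,r_j)$ exactly when $i\leq j$, i.e. $r_i\leq r_j\with X(x_i,x_j)$ whenever $i\leq j$. From this single inequality I would extract two facts to use throughout. First, since $r_j\with X(x_i,x_j)\leq r_j$, the radii form an increasing net: $r_i\leq r_j$ for $i\leq j$. Second, for $j\leq k$ the relation $r_j\leq r_k\with X(x_j,x_k)$ together with $r_k\leq 1$ gives the crude bound $X(x_j,x_k)\geq r_j$, and more usefully, by monotonicity of $\with$, whenever $X(x_j,x_k)\leq\lambda$ one gets $r_j\leq r_k\with\lambda\leq r\with\lambda$ where $r=\sup_{i\in D}r_i$.

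Part (i) is then immediate and uses neither continuity nor the Archimedean hypothesis. For $k\geq j\geq i$ we have $X(x_j,x_k)\geq r_j\geq r_i$, so $\inf_{k\geq j\geq i}X(x_j,x_k)\geq r_i$, whence $\sup_{i\in D}\inf_{k\geq j\geq i}X(x_j,x_k)\geq\sup_{i\in D}r_i=1$ by hypothesis; thus the net is forward Cauchy.

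For part (ii) I set $r=\sup_{i\in D}r_i$, so that $r_i>0$ for some $i$ forces $r>0$, though now $r$ may be strictly below $1$ and the bound from part (i) is useless. The key lemma I would isolate is that for a continuous Archimedean t-norm, $\lambda\with r<r$ whenever $\lambda<1$ and $r>0$. I would prove it by contradiction: if $\lambda\with r=r$ then iterating gives $\lambda^n\with r=r$ for all $n$, and since $\lambda<1$ forces $\lim_n\lambda^n=0$ (the Archimedean property), continuity of $\with$ yields $r=\lim_n(\lambda^n\with r)=0\with r=0$, a contradiction (the case $\lambda=0$ being trivial). With this in hand, fix any $\lambda<1$. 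Because the radii increase to $r$ and $r\with\lambda<r$, there is an index $i_0$ with $r_{i_0}>r\with\lambda$. Then for all $k\geq j\geq i_0$, if we had $X(x_j,x_k)\leq\lambda$ the second remark above would give $r_j\leq r\with\lambda<r_{i_0}\leq r_j$, a contradiction; hence $X(x_j,x_k)>\lambda$. Therefore $\inf_{k\geq j\geq i_0}X(x_j,x_k)\geq\lambda$, so $\sup_{i\in D}\inf_{k\geq j\geq i}X(x_j,x_k)\geq\lambda$, and since $\lambda<1$ was arbitrary this supremum equals $1$.

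The main obstacle is exactly the Archimedean lemma $\lambda\with r<r$: this is the only place the hypothesis enters, and it is genuinely necessary. For the G\"odel t-norm $\min$, which is not Archimedean, a directed family of balls all of radius a fixed $r<1$ with $X(x_j,x_k)=r$ for distinct indices satisfies $(x_j,r)\sqsubseteq(x_k,r)$ yet has $\sup_{i}\inf_{k\geq j\geq i}X(x_j,x_k)=r<1$, so the generated net fails to be forward Cauchy. Everything else is routine rearrangement of the defining inequality of $\sqsubseteq$ and the monotonicity of $\with$.
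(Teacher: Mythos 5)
Your proof is correct. Part (i) is essentially identical to the paper's argument: both extract $X(x_j,x_k)\geq r_j\geq r_i$ for $k\geq j\geq i$ from the defining inequality of $\sqsubseteq$ and let $\sup_{i\in D}r_i=1$ finish the job. For part (ii), however, you take a genuinely different route. The paper first notes that for an Archimedean continuous t-norm, $0<r\leq s\with t$ implies $t\geq s\ra r$, deduces $X(x_i,x_j)\geq r_j\ra r_i$ for $i\leq j$, and then concludes from the fact that the implication operator of a continuous Archimedean t-norm is continuous except possibly at $(0,0)$ --- a fact resting on the representation theorem, i.e.\ that every such t-norm is isomorphic to the {\L}ukasiewicz or the product t-norm. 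You instead isolate the elementary lemma that $\lambda\with r<r$ whenever $\lambda<1$ and $r>0$, proved directly from the Archimedean property ($\lambda^n\to 0$) together with continuity of $\with$, and then run a ``for every $\lambda<1$'' argument on the increasing radii; you never touch the implication operator, the auxiliary residuation inequality, or the classification of Archimedean t-norms, and you also avoid the paper's reduction to the case where all $r_i>0$. Your argument is therefore more self-contained and elementary, while the paper's is shorter given that the structure theory of continuous t-norms was already developed in its Section 3. Both approaches pinpoint the same obstruction in the non-Archimedean case, namely a nontrivial idempotent, which your closing G\"{o}del-t-norm example and the paper's Example \ref{Example-G} illustrate in the same spirit.
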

	
	\begin{proof} (i) Since $r_i \leq r_j\with X(x_i,x_j)$ whenever  $i\leq j$, it follows that \[1=\bv_{i\in D}r_i\leq\bv_{i\in D}\bw_{i\leq j}r_j \leq \bv_{i\in D}\bw_{i\leq j\leq k}X(x_j,x_k),\] then $\{x_i\}_{i\in D}$ is   forward Cauchy.  
		
		(ii) If $\&$ is  Archimedean, it is readily verified that for all $r,s,t\in[0,1]$, $$0<r\leq s\with t\implies t\geq s\rightarrow r.$$
		Without loss of generality we assume that   $r_i>0$ for all $i\in D$. Since $r_i\leq r_j\with X(x_i,x_j)$ whenever $i\leq j$, then  $X(x_i,x_j)\geq r_j\rightarrow r_i$ whenever  $i\leq j$. Since the net $\{r_i\}_{i\in D}$ converges to its join and the implication operator of an Archimedean   continuous t-norm is continuous except possibly at $(0,0)$, it follows that $X(x_i,x_j)$ tends to $1$, so $\{x_i\}_{i\in D}$ is forward Cauchy.
	\end{proof}
	
	We say that a real-enriched category $X$ has  \emph{property {\rm(R)}} if, for each pair $(s,t)$ of elements of $[0,1]$ with $0<s\leq t$, there exists $r<1$ such that  \[(x,t\with r')\sqsubseteq (y,s)\iff(x,r')\sqsubseteq (y,t\rightarrow s)\]  for all $x,y\in X$  and   all $r'\geq r$.
	
	\begin{lem}\label{expansion stable} Let $X$ be a real-enriched category  with property {\rm(R)}. Then for all $x,y\in X$, the following are equivalent: 
		\begin{enumerate}[label=\rm(\arabic*)] 
			\item $(x,1)\sqsubseteq(y,1)$. \item $(x,s)\sqsubseteq(y,s)$ for all $s\not=0$. 
			\item $(x,s)\sqsubseteq(y,s)$ for some $s\not=0$.  \end{enumerate}\end{lem}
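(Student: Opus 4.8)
The plan is to run the cycle $(1)\Rightarrow(2)\Rightarrow(3)\Rightarrow(1)$, with all of the substance concentrated in the last implication. First I would unwind the formal-ball order: by definition $(x,r)\sqsubseteq(y,s)$ means $r\le s\with X(x,y)$, so for a single radius $\sigma$ the relation $(x,\sigma)\sqsubseteq(y,\sigma)$ says exactly $\sigma\le\sigma\with X(x,y)$, while $(x,1)\sqsubseteq(y,1)$ says exactly $X(x,y)=1$. Granting this, $(1)\Rightarrow(2)$ is immediate — if $X(x,y)=1$ then $\sigma\with X(x,y)=\sigma$ for every $\sigma$ — and $(2)\Rightarrow(3)$ requires nothing.

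For $(3)\Rightarrow(1)$ I would fix $s\neq 0$ with $(x,s)\sqsubseteq(y,s)$. Since $X(x,y)\le 1$ gives $s\with X(x,y)\le s$, the hypothesis $s\le s\with X(x,y)$ sharpens to the equality $s\with X(x,y)=s$. The key move is to apply property {\rm(R)} to the degenerate pair $(s,t)=(s,s)$, which is allowed because the defining condition only requires $0<s\le t$. This produces some $r<1$ such that, for all $r'\ge r$,
\[(x,s\with r')\sqsubseteq(y,s)\iff(x,r')\sqsubseteq(y,s\rightarrow s).\]
Now $s\rightarrow s=1$, so the right-hand side is just $r'\le X(x,y)$; and the left-hand side is $s\with r'\le s\with X(x,y)=s$, which holds for \emph{every} $r'$ because $s\with r'\le s$ always. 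Hence $r'\le X(x,y)$ for all $r'\ge r$, and taking $r'=1$ (legitimate since $r<1$) yields $X(x,y)=1$, i.e. $(x,1)\sqsubseteq(y,1)$.

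The obstacle here is conceptual rather than computational: one must notice that the entire force of property {\rm(R)} is needed only in the single instance $t=s$, where the implication $s\rightarrow s=1$ collapses the target ball to radius $1$ and the hypothesis renders the left-hand inequality vacuous, so that the biconditional reads off $X(x,y)\ge r'$ for radii arbitrarily close to $1$. It is worth remarking that the ``only if'' half of the biconditional in {\rm(R)} is in fact automatic for a continuous (hence divisible) t-norm, since $t\with(t\rightarrow s)=\min\{t,s\}=s$ when $s\le t$; thus property {\rm(R)} is really a constraint on the forward implication alone, and it is precisely this forward implication that is exploited above.
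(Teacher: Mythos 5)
Your proof is correct and follows essentially the same route as the paper's own argument: both apply property (R) to the degenerate pair $t=s$, use $s\rightarrow s=1$ to collapse the right-hand ball to radius $1$, and then set $r'=1$ to read off $X(x,y)=1$. The only cosmetic difference is that the paper plugs $r'=1$ directly into the left-hand side (which holds by hypothesis (3)), whereas you first observe the left-hand side holds for all $r'$ and then specialize; the substance is identical.
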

	
	\begin{proof}It suffices to check $(3)\Rightarrow(1)$. Since $0<s\leq s$, there is some $r<1$ such that  \[(x,s\with r')\sqsubseteq (y,s)\iff(x,r')\sqsubseteq (y,s\rightarrow s)\] whenever $r'\geq r$. Putting $r'=1$ gives that $(x,1)\sqsubseteq (y,1)$. \end{proof}

	\begin{prop} \label{Archi vs reciprocal} {\rm (Yang and Zhang \cite{YangZ2023})} The following are equivalent: \begin{enumerate}[label=\rm(\arabic*)] 
			\item The continuous t-norm $\with$ is Archimedean. \item Every real-enriched category has property {\rm(R)}.  \end{enumerate}    \end{prop}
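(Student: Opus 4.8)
The plan is to translate property (R) into an elementary inequality about the t-norm and then settle each implication separately. Unravelling the order on formal balls, the equivalence
\[(x, t\with r')\sqsubseteq(y,s)\iff (x,r')\sqsubseteq(y,t\ra s)\]
reads, after writing $a=X(x,y)$, as
\[t\with r'\leq s\with a\iff r'\leq (t\ra s)\with a.\]
Since a two-object real-enriched category can carry an arbitrary value $a=X(x,y)\in[0,1]$ (reflexivity and transitivity impose no constraint on the off-diagonal entries), the assertion ``every real-enriched category has property (R)'' is \emph{equivalent} to: for all $0<s\leq t$ there is $r<1$ so that the displayed biconditional holds for all $r'\geq r$ and all $a\in[0,1]$. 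First I would dispose of the implication $\Leftarrow$, which holds unconditionally: tensoring $r'\leq(t\ra s)\with a$ with $t$ and using $t\with(t\ra s)=\min\{t,s\}=s$ (divisibility of a continuous t-norm) gives $t\with r'\leq s\with a$. Thus only the forward implication carries content.

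For $(1)\Rightarrow(2)$, assume $\with$ is Archimedean. Put $q=t\ra s$, so that $s=t\with q$, and set $b=q\with a$; the forward implication becomes the cancellation statement $t\with r'\leq t\with b\Rightarrow r'\leq b$. Here I would invoke the fact recorded in the proof of Lemma \ref{directed set is forward Cauchy}, valid for Archimedean $\with$, that $0<\rho\leq\sigma\with\tau$ implies $\tau\geq\sigma\ra\rho$. Applying it with $\rho=t\with r'$, $\sigma=t$, $\tau=b$ gives, whenever $t\with r'>0$, the inequality $b\geq t\ra(t\with r')\geq r'$, the last step by adjunction. It remains to choose the threshold: since $t\geq s>0$ and $t\with-$ is continuous with $t\with 1=t>0$, the supremum $r_0$ of $\{r':t\with r'=0\}$ is strictly below $1$, so any $r\in(r_0,1)$ forces $t\with r'>0$ for all $r'\geq r$, and the cancellation goes through.

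For $(2)\Rightarrow(1)$ I would argue by contraposition. If $\with$ is not Archimedean it has an idempotent $e\in(0,1)$. Take $X$ the two-point category with $X(x,y)=e$ and put $s=t=e$, so $t\ra s=e\ra e=1$. Then $t\with r'=e\with r'\leq e=e\with e=s\with X(x,y)$ holds for \emph{every} $r'$, so the left-hand ball relation is always valid; but $(x,r')\sqsubseteq(y,t\ra s)$ amounts to $r'\leq 1\with e=e$. Hence for any prospective threshold $r<1$ one may pick $r'\in(\max\{r,e\},1)$, and the biconditional fails for this $r'$. Therefore $X$ does not have property (R), which is what we wanted.

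The main obstacle is the forward direction: the naive cancellation $t\with r'\leq t\with b\Rightarrow r'\leq b$ is false near $0$ precisely because of nilpotency (for instance for the {\L}ukasiewicz t-norm), and the whole point of the threshold $r<1$ in property (R) is to push $r'$ into the region where $t\with-$ is injective. Getting this threshold argument to run uniformly, rather than splitting into the product and {\L}ukasiewicz cases via Corollary \ref{CA=L or P}, is exactly what the Archimedean cancellation fact from Lemma \ref{directed set is forward Cauchy} buys us.
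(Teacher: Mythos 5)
Your proof is correct, and on the forward implication it takes a genuinely different route from the paper. The paper proves $(1)\Rightarrow(2)$ by invoking the classification of continuous Archimedean t-norms (Corollary \ref{CA=L or P}) and then verifying property (R) by an explicit computation for the {\L}ukasiewicz t-norm, with threshold $r\in(1-t,1)$, leaving the product case to the reader. You instead isolate the only nontrivial half of the biconditional (the right-to-left half is indeed unconditional by divisibility, just as the paper notes in Corollary \ref{characeterizing R} that the direction $\Longleftarrow$ is trivial) and prove it uniformly for every continuous Archimedean t-norm, reducing it to the cancellation $0<t\with r'\leq t\with b\Rightarrow r'\leq b$ and citing the inequality $0<\rho\leq\sigma\with\tau\Rightarrow\tau\geq\sigma\ra\rho$ recorded in the proof of Lemma \ref{directed set is forward Cauchy}\thinspace(ii); your threshold $r>\sup\{r'\mid t\with r'=0\}$ specializes exactly to the paper's $1-t$ in the {\L}ukasiewicz case. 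What this buys is a case-free argument; the price is that all Archimedean-specific content is concentrated in that cited inequality, which the paper itself only asserts as ``readily verified,'' so the classification (or a direct strict-monotonicity argument) still lurks in its verification. For $(2)\Rightarrow(1)$ your argument is essentially the paper's: both use a two-point category whose off-diagonal hom is a nontrivial idempotent $e$; the paper routes the contradiction through Lemma \ref{expansion stable}, while you unpack the same computation directly at $s=t=e$, $t\ra s=1$. Two cosmetic points: specify $X(y,x)$ in your two-point category (any value, say $e$ or $0$, satisfies transitivity), and note that the nontrivial direction of your opening ``equivalence'' with a statement quantified over all $a\in[0,1]$ is never actually used, since your counterexample in $(2)\Rightarrow(1)$ is a concrete category.
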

	
	\begin{proof} $(1)\Rightarrow(2)$ Since $\with$ is   Archimedean, it is either isomorphic to the \L ukasiewicz t-norm or to the product t-norm. In the following we check the conclusion for the case that $\with$ is isomorphic to the  \L ukasiewicz t-norm, leaving the other case  to the reader.
		Without loss of generality, we assume that $\with$ is, not only isomorphic to, the  \L ukasiewicz t-norm; that is, $r\with s=\max\{0,r+s-1\}$.  
		
		Suppose $X$ is a real-enriched category and $0<s\leq t$. If $t=1$,   it is trivial  that \[(x,t\with r')\sqsubseteq (y,s)\iff(x,r')\sqsubseteq (y,t\rightarrow s) \]  for all $r'>0$ and  $x,y\in X$, so each $r>0$ satisfies the requirement. If $t<1$, pick $r\in(1-t,1)$. Then  for all $r'\geq r$ and   $x,y\in X$, \begin{align*}  (x,t\with r')\sqsubseteq (y,s)  &\iff r'+t-1\leq s+X(x,y)-1 \\ &\iff r'\leq s-t+X(x,y)\\ &\iff (x,r')\sqsubseteq(y,t\rightarrow s),\end{align*} which shows that $r$ satisfies the requirement.
		
		$(2)\Rightarrow(1)$ We wish to show that $\with$ has no nontrivial idempotent element. It suffices to show that for all $s\not=0$ and $q\in[0,1]$, if $s\leq s\with q$ then $q=1$. Consider the real-enriched category  $X=\{x,y\}$ with $X(x,x)=X(y,y)=1$ and $X(x,y)=q=X(y,x)$. Since $(x,s)\sqsubseteq (y,s)$ and $X$ has property   {\rm(R)}, then $(x,1)\sqsubseteq(y,1)$, hence $1\leq 1\with X(x,y)=q$. 
	\end{proof}
	
	\begin{cor}\label{characeterizing R} For each real-enriched category $X$, the following are equivalent: 
		\begin{enumerate}[label=\rm(\arabic*)] 
			\item  $X$ has property {\rm (R)}. 
			\item For all $x,y\in X$, if $X(x,y)\geq p$  for some idempotent element $p>0$, then $X(x,y)=1$. 
	\end{enumerate}    \end{cor}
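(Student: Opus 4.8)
The plan is to prove the two implications separately, disposing of $(1)\Rightarrow(2)$ in one line via Lemma \ref{expansion stable} and reserving the actual work for $(2)\Rightarrow(1)$. For $(1)\Rightarrow(2)$, suppose $X$ has property (R) and $X(x,y)\geq p$ for some idempotent $p>0$. Since $p$ is idempotent and $p\leq X(x,y)\leq 1$, monotonicity of $\with$ gives $p=p\with p\leq p\with X(x,y)\leq p$, so $p\with X(x,y)=p$; equivalently $(x,p)\sqsubseteq(y,p)$ in $\mathrm{B}X$. As $p\neq0$, the implication $(3)\Rightarrow(1)$ of Lemma \ref{expansion stable} yields $(x,1)\sqsubseteq(y,1)$, i.e. $X(x,y)=1$, which is exactly (2).

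For $(2)\Rightarrow(1)$ I would fix $s,t$ with $0<s\leq t$ and produce $r<1$ making the biconditional of property (R) hold for all $x,y$ and $r'\geq r$. Writing $q=X(x,y)$ and unravelling the order on formal balls, the two sides read $t\with r'\leq s\with q$ and $r'\leq(t\ra s)\with q$. The direction $\Leftarrow$ holds for every $r'$ and every $q$: since $\with$ is continuous, hence divisible, $t\with(t\ra s)=\min\{t,s\}=s$, so $r'\leq(t\ra s)\with q$ forces $t\with r'\leq t\with(t\ra s)\with q=s\with q$. Thus the whole content is the direction $\Rightarrow$, and here I would invoke hypothesis (2): every value $q=X(x,y)$ is either $1$ or admits no positive idempotent below it (a \emph{small} value). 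For $q=1$ the required implication $t\with r'\leq s\Rightarrow r'\leq t\ra s$ is just the adjunction and holds for all $r'$.

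The crux is therefore the small values $q<1$, and my plan is to choose $r$ so close to $1$ that for each small $q$ the hypothesis $t\with r'\leq s\with q$ becomes \emph{impossible}, making $\Rightarrow$ vacuously true. If $\with$ has no nontrivial idempotent it is Archimedean and property (R) already holds by Proposition \ref{Archi vs reciprocal}; so assume nontrivial idempotents exist and set $m=\inf\{p>0\mid p\text{ idempotent}\}$, which is idempotent since the idempotents form a closed set. Consider first $m>0$: then $e\coloneqq m$ is the least positive idempotent with $0<e<1$, smallness of $q$ means exactly $q\in[0,e)$, and $[0,e]$ carries an Archimedean block of $\with$ by Corollary \ref{restriction of CT}. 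Taking any $r\in[e,1)$ and $r'\geq r$, I would split on $t$. If $t\geq e$, then $t\with r'\geq e\with e=e$ while $s\with q\leq q<e$, so $t\with r'>s\with q$. If $t<e$, then $t\leq e\leq r'$ gives $t\with r'=\min\{t,r'\}=t$ by Corollary \ref{idempotent between}(i), while $s,q\in[0,e)$ sit in the Archimedean block and $s>0$ forces $s\with q<s\leq t$ (else $s=s\with q=s\with q^n$ with $q^n\to0$ would give $s=s\with 0=0$); again $t\with r'>s\with q$. In both cases the hypothesis fails.

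The remaining case $m=0$ (positive idempotents accumulating at $0$, so the only small value is $q=0$) I would handle by choosing $r<1$ with $t\with r>0$, possible since $t\with1=t>0$ and $\with$ is continuous; then $t\with r'\geq t\with r>0=s\with q$ again kills the hypothesis. The hard part is precisely this last direction: although the skeleton is a clean vacuity argument, it hinges on pushing $r'$ above the least positive idempotent and exploiting both the plateau $t\with r'=t$ (when $t<e$) and the strict contractivity $s\with q<s$ inside the bottom Archimedean block, and on obtaining a \emph{single} $r<1$ valid uniformly over all small $q$ while correctly covering the boundary $t=e$, the Archimedean t-norm, and the accumulation case $m=0$. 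This is exactly where Corollary \ref{idempotent between} and the ordinal-sum structure of continuous t-norms are indispensable.
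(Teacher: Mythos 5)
Your proof is correct. The direction $(1)\Rightarrow(2)$ is identical to the paper's: take $s=t=p$, note $(x,p)\sqsubseteq(y,p)$, and apply Lemma \ref{expansion stable}. For $(2)\Rightarrow(1)$ you and the paper share the same skeleton — dispose of the Archimedean case via Proposition \ref{Archi vs reciprocal}, get $\Longleftarrow$ for free from divisibility ($t\with(t\ra s)=s$), and settle $\Longrightarrow$ by a threshold sitting above an idempotent together with Corollary \ref{idempotent between} and the Archimedean-block structure — but the decomposition differs. The paper fixes an \emph{arbitrary} nontrivial idempotent $q$ and takes $r>q$; in both of its cases ($q\leq t$ and $q>t$) the hypothesis $(x,t\with r')\sqsubseteq(y,s)$ is shown to force $X(x,y)$ above some positive idempotent, whence $X(x,y)=1$ by (2) and the conclusion follows. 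You instead anchor at $e=\inf\{p>0\mid p\text{ idempotent}\}$ and argue by vacuity: every hom-value other than $1$ lies below $e$, and for $r'\geq e$ the hypothesis then cannot hold (via the plateau $t\with r'=t$ when $t<e$ and strict contractivity $s\with q<s$ in the bottom block, resp. $t\with r'\geq e>q$ when $t\geq e$). Your framing makes the role of hypothesis (2) transparent and treats the $X(x,y)=1$ case uniformly by adjunction, but it costs an extra case — $m=0$, positive idempotents accumulating at $0$ — which the paper's choice of an arbitrary nontrivial idempotent sidesteps entirely; conversely, the paper's second case silently relies on the nontrivial block fact that $s\with X(x,y)=s$ with $s>0$ pushes $X(x,y)$ up to the least idempotent above $t$, which you avoid. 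The two arguments are of comparable weight and rest on the same lemmas. One cosmetic point: Corollary \ref{restriction of CT} only gives that the restriction of $\with$ to $[0,e]$ is a continuous t-norm; that this restriction is Archimedean (equivalently, $q^n\to 0$ for $q<e$, which is what you actually use) additionally needs that $e$ is the \emph{least} positive idempotent, so that $\lim_n q^n$, being an idempotent $\leq q<e$, must equal $0$ — worth one extra sentence in a final write-up.
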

	
	\begin{proof} $(1)\Rightarrow(2)$ Suppose   $X(x,y)\geq p>0$ with $p$  idempotent. Let $s=t=p$. Since $t=p= s\with X(x,y)$, it follows that $(x,p)\sqsubseteq (y,p)$. Then by property {\rm (R)} one gets that $(x,1)\sqsubseteq(y,1)$, hence $X(x,y)=1$.  
		
		$(2)\Rightarrow(1)$  By Proposition \ref{Archi vs reciprocal} we may assume that $\&$ is non-Archimedean with   a nontrivial idempotent $q$. Let $X$ be a real-enriched category such that $X(x,y)=1$ whenever $X(x,y)\geq p$  for some idempotent element $p>0$.  We wish to show that $X$ has property {\rm (R)}.  For this we show that if $0<s\leq t$ and $r>q$, then    \[(x,t\with r)\sqsubseteq (y,s)\iff(x,r)\sqsubseteq (y,t\rightarrow s)\]   
		for all  $x,y\in X$. 
		The direction $\Longleftarrow$ is trivial. For the other direction, 
		suppose $(x,t\with r)\sqsubseteq (y,s)$. We proceed with two cases.
		If  $q\leq t$, then  $q\leq t\with r\leq s\with X(x,y)$, so $X(x,y)=1$ by assumption,  hence $t\with r\leq s$. Therefore $r\leq t\ra s$ and consequently, $(x,r)\sqsubseteq (y,t\rightarrow s)$.
		If $q>t$, then $t=t\with r\leq s\with X(x,y)$, so $t=s$ and $X(x,y)\geq p$, where $p$ is the least idempotent element in $[t,1]$. By assumption we have $X(x,y)=1$,  then $(x,r) \sqsubseteq (y,t\rightarrow s)$.  
	\end{proof} 
	
	\begin{lem}\label{yoneda complete = dcpo} Suppose $X$ is a real-enriched category  with property {\rm(R)},   $\{(x_i,r_i)\}_{i\in D}$ is a directed subset of $\mathrm{B}X $.  If 
		\begin{enumerate}[label=\rm(\roman*)] 
			\item $\bv_{i\in D}r_i=1$, 
			\item $(b,1)$ is a join of   $\{(x_i,r_i)\}_{i\in D}$, and \item for each $t>0$ the directed set $\{(x_i,t\with r_i)\}_{i\in D}$ of $\mathrm{B}X $ has a join, 
		\end{enumerate} then  $b$ is a Yoneda limit  of   $\{x_i\}_{i\in D}$.
	\end{lem}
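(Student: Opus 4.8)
The plan is to verify directly the defining identity for a Yoneda limit, namely $X(b,y)=\sup_{i\in D}\inf_{j\geq i}X(x_j,y)$ for every $y\in X$, by establishing the two inequalities separately. Write $\lambda(y)=\sup_{i\in D}\inf_{j\geq i}X(x_j,y)$. Recall that, by the indexing convention, $(x_i,r_i)\sqsubseteq(x_j,r_j)$ iff $i\leq j$, so the radii are monotone and $\inf_{j\geq i}r_j=r_i$; also $\{x_i\}_{i\in D}$ is forward Cauchy by Lemma~\ref{directed set is forward Cauchy}(i). The inequality $X(b,y)\leq\lambda(y)$ is the easy half and uses only that $(b,1)$ is an upper bound: from $(x_j,r_j)\sqsubseteq(b,1)$ one gets $r_j\leq X(x_j,b)$, whence $X(x_j,y)\geq X(b,y)\with X(x_j,b)\geq X(b,y)\with r_j$; taking $\inf_{j\geq i}$, then $\sup_i$, and using $\sup_i r_i=1$ with continuity of $\with$ gives $\lambda(y)\geq X(b,y)$.

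The substantial half is $X(b,y)\geq\lambda(y)$, and this is where hypothesis (iii) and property~{\rm(R)} enter. First I would fix $y$ with $\ell:=\lambda(y)>0$ and an arbitrary $\ell'\in(0,\ell)$, and choose $i_1$ with $X(x_j,y)\geq\ell'$ for all $j\geq i_1$. Then I consider the shrunk family $S=\{(x_i,\ell'\with r_i)\}_{i\in D}$, which is directed and, by (iii), has a join $(b_{\ell'},c_{\ell'})$ in $\mathrm{B}X$. The decisive point — and the step I expect to be the main obstacle — is to pin the radius of this join to be exactly $\ell'$: one always has $c_{\ell'}\geq\sup_i(\ell'\with r_i)=\ell'$, while $(b,\ell')$ is an upper bound of $S$ (since $r_i\leq X(x_i,b)$ gives $\ell'\with r_i\leq\ell'\with X(x_i,b)$), so $(b_{\ell'},c_{\ell'})\sqsubseteq(b,\ell')$ forces $c_{\ell'}\leq\ell'\with X(b_{\ell'},b)\leq\ell'$. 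Hence $c_{\ell'}=\ell'$, which is exactly what makes property~{\rm(R)} applicable below; without this collapse the cancellation step cannot be triggered, because property~{\rm(R)} only converts relations whose target radius does not exceed the shrink factor.

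With the join of $S$ equal to $(b_{\ell'},\ell')$, I would apply property~{\rm(R)} to the pair $(s,t)=(\ell',\ell')$: there is a threshold $r_0<1$ such that, for every $i$ with $r_i\geq r_0$, the relation $(x_i,\ell'\with r_i)\sqsubseteq(b_{\ell'},\ell')$ (which holds since $(x_i,\ell'\with r_i)\in S$) is equivalent to $(x_i,r_i)\sqsubseteq(b_{\ell'},\ell'\ra\ell')=(b_{\ell'},1)$, i.e. $r_i\leq X(x_i,b_{\ell'})$. As $\sup_i r_i=1$, the indices with $r_i\geq r_0$ contain a cofinal up-set, so $(b_{\ell'},1)$ is an upper bound of a cofinal subfamily and therefore of all of $\{(x_i,r_i)\}$; leastness of the join $(b,1)$ then yields $(b,1)\sqsubseteq(b_{\ell'},1)$, that is $b\sqsubseteq b_{\ell'}$. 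On the other hand $(y,1)$ is an upper bound of $S$ (for $j\geq i_1$ one has $\ell'\with r_j\leq\ell'\leq X(x_j,y)$, and this tail is cofinal in $S$), so $(b_{\ell'},\ell')\sqsubseteq(y,1)$ gives $\ell'\leq X(b_{\ell'},y)$. Combining, $X(b,y)\geq X(b_{\ell'},y)\with X(b,b_{\ell'})=X(b_{\ell'},y)\geq\ell'$, and taking the supremum over $\ell'<\ell$ gives $X(b,y)\geq\ell=\lambda(y)$.

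In summary, I would spend the hypotheses as follows: (ii) serves both as an upper bound (for the easy inequality and for $c_{\ell'}=\ell'$) and, through its leastness, to produce $b\sqsubseteq b_{\ell'}$; (iii) supplies the shrunk join at each scale $\ell'$; (i) provides $\sup_i r_i=1$ and the cofinality of the tails; and property~{\rm(R)} performs the radius-cancellation turning a statement about $S$ into one about the original family. The only genuinely clever move is the auxiliary upper bound $(b,\ell')$ that collapses $c_{\ell'}$ to $\ell'$, after which the argument is routine manipulation of the order on formal balls and cofinal tails.
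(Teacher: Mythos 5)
Your proof is correct and takes essentially the same route as the paper's: shrink the family by a factor, pin the radius of the shrunk join by sandwiching it between $\sup_i(\ell'\with r_i)$ and the upper bound $(b,\ell')$, apply property (R) at the diagonal pair to un-shrink and compare with the least upper bound $(b,1)$, and finally use $(y,1)$ as an upper bound of the shrunk family. The only difference is technical: the paper works at $t=\lambda(y)$ exactly, showing $(y,1)$ bounds every $(x_i,t\with r_i)$ via the inequality $r_i\leq r_j\with X(x_i,x_j)$, whereas you work at approximants $\ell'<\lambda(y)$ with a cofinal-tail argument and pass to the supremum --- both executions are sound.
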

	
	\begin{proof}   
		We wish to show that for all $y\in X$, \[X(b,y)=\bv_{i\in D}\bw_{j\geq i}X(x_j,y).\]
		
		Fix $i\in D$. For each $j\geq i$, since $(x_j,r_j)\sqsubseteq(b,1)$, then \begin{align*}r_i\with X(b,y)&\leq r_j\with X(b,y) 
			\leq X(x_j,b)\with X(b,y)
			\leq X(x_j,y),\end{align*} hence $$r_i\with X(b,y)\leq \bw_{j\geq i}X(x_j,y).$$  
		Therefore,  \begin{align*}X(b,y)&=\bv_{i\in D}(r_i\with X(b,y))  \leq \bv_{i\in D}\bw_{j\geq i}X(x_j,y).\end{align*}
		
		For the converse inequality, let $$t=\bv_{i\in D}\bw_{j\geq i}X(x_j,y).$$ We  show that $t\leq X(b,y)$. We assume   $t>0$ and finish the proof in two steps. 
		
		{\bf Step 1}. $(b,t)$ is a join of $\{(x_i, t\with r_i)\}_{i\in D}$. 
		
		By assumption $\{(x_i, t\with r_i)\}_{i\in D}$  has a join, say $(z,s)$. We only need to show that   $(b,t)$ is equivalent to $(z,s)$.    Since $(b,t)$ is an upper bound of   $\{(x_i, t\with r_i)\}_{i\in D}$, then $(z,s)\sqsubseteq (b,t)$, hence $s\leq t$. Since $r_i$ tends to $1$ and $t\with r_i\leq s\with X(x_i,z)$ for all $i\in D$, it follows that $t\leq s$.  Therefore $0<s=t$.
		Since $r_i$ tends to $1$, we may assume that  all $r_i$ are large enough. Since $(z,t)$ is a join of $\{(x_i, t\with r_i)\}_{i\in D}$,    $t\with r_i\leq t\with X(x_i,z)$  for all $i\in D$, then by property {\rm(R)} of $X$, $r_i\leq(t\rightarrow t)\with X(x_i,z)=X(x_i,z)$ for all $i\in D$. This shows that $(z,1)$ is an upper bound of $\{(x_i,r_i)\}_{i\in D}$, so   $(b,1)\sqsubseteq (z,1)$,  and  then  $(b,t)\sqsubseteq (z,t)$. Therefore, $(b,t)$ is equivalent to $(z,s)$.
		
		{\bf Step 2}.   $t\leq X(b,y)$. 
		
		For each $i\in D$,    \begin{align*}r_i\with t&= r_i\with\bv_{k\in D}\bw_{j\geq k}X(x_j,y) \\
			&= r_i\with\bv_{k\geq i}\bw_{j\geq k}X(x_j,y)  \\
			&= \bv_{k\geq i}\bw_{j\geq k}r_i\with X(x_j,y) \\
			&\leq \bv_{k\geq i}\bw_{j\geq k}r_j\with X(x_i,x_j)\with X(x_j,y)\\ &\leq X(x_i,y),\end{align*} then $(x_i, t\with r_i)\sqsubseteq (y,1)$. This shows that $(y,1)$ is an upper bound of  $\{(x_i, t\with r_i)\}_{i\in D}$, then $(b,t)\sqsubseteq(y,1)$  and   $t\leq X(b,y)$, as desired. \end{proof}
	
	The condition (iii) in Lemma \ref{yoneda complete = dcpo} cannot be dropped. 
	
	\begin{exmp} \label{second example} (Yang and Zhang \cite{YangZ2023})  Suppose $\&$ is the \L ukasiewicz t-norm or the product t-norm. 
		Consider the real-enriched category $X$, where $X=\{1\}\cup\{1-1/n\mid n\geq2\}$ and  \[X(x,y)=\begin{cases}1/2 &x=1, y\not=1,\\ x\ra y&{\rm otherwise}.\end{cases}\] We assert that $X$ is not Yoneda complete, but every directed subset $\{(x_i,r_i)\}_{i\in D}$   of $\mathrm{B} X$ with $\bv_{i\in D}r_i=1$ has a join. 
		
		The sequence $\{1-1/n\}_{n\geq2}$ is forward Cauchy but has no Yoneda limit, so $X$ is not Yoneda complete. Now we show that every directed subset $\{(x_i,r_i)\}_{i\in D}$   of $\mathrm{B}X$ with $\bv_{i\in D}r_i=1$ has a join. Since   $\{x_i\}_{i\in D}$ is   forward Cauchy, either  $\{x_i\}_{i\in D}$ is  eventually constant or  $\{x_i\}_{i\in D}$ converges to $1$ (in the usual sense).

		Case 1. $\{x_i\}_{i\in D}$ converges to $1$. Then we claim that $(1,1)$ is the only upper bound, hence a join of $\{(x_i,r_i)\}_{i\in D}$. Since $X(z,1)=1$ for all $z\in X$, then $r_i\leq 1= 1\with X(x_i,1)$ for all $i\in D$,  hence $(1,1)$ is an upper bound of $\{(x_i,r_i)\}_{i\in D}$. Let $(y,s)$ be an upper bound of $\{(x_i,r_i)\}_{i\in D}$. Then $s=1$ and  $r_i\leq X(x_i,y)$ for all $i\in D$. Since both  $\{r_i\}_{i\in D}$  and $\{x_i\}_{i\in D}$ converge to $1$, we must have $y=1$. Therefore, $(1,1)$ is the only upper bound  of $\{(x_i,r_i)\}_{i\in D}$.
		
		Case 2. $\{x_i\}_{i\in D}$ is  eventually constant. Then there is some $b\in X$ and some $i\in D$ such that $x_j=b$ whenever $j\geq i$. It is clear that $(b,1)$ is a join of $\{(x_i,r_i)\}_{i\in D}$.  \end{exmp}
	
	Now we are able to prove Theorem \ref{yoneda via formal ball}.
	
	\begin{proof}[Proof of Theorem \ref{yoneda via formal ball}] $(1)\Rightarrow(2)$  Let $\{(x_i,r_i)\}_{i\in D}$ be a directed subset of $\mathrm{B}X $. If $r_i=0$ for all $i\in D$, then  $(x,0)$ is a join of $\{(x_i,r_i)\}_{i\in D}$ for any $x\in X$. If  $r_i>0$ for some $i\in D$, by Lemma \ref{directed set is forward Cauchy}\thinspace(ii) the net $\{x_i\}_{i\in D}$ is forward Cauchy, so it has a Yoneda limit, say $x$. By Lemma \ref{join of directed set in BX}, $(x,r)$ is a join of $\{(x_i,r_i)\}_{i\in D}$, where $r=\bv_{i\in D}r_i$. This shows that $\mathrm{B}X $   is directed complete.
		
		$(2)\Rightarrow(1)$ We show that every ideal $\phi$ of $X$ has a colimit. By Proposition \ref{characterization of  ideal}, there is a directed subset  $\{(x_i,r_i)\}_{i\in D}$  of $\mathrm{B}X$ such that $\bv_{i\in D}r_i=1$ and $$\phi=\bv_{i\in D}\bw_{j\geq i}X(-,x_j).$$ By assumption $\{(x_i,r_i)\}_{i\in D}$ has a join, say $(b,1)$. By Proposition \ref{Archi vs reciprocal} and Lemma \ref {yoneda complete = dcpo}, $b$ is a Yoneda limit of the forward Cauchy net $\{x_i\}_{i\in D}$, hence a colimit of $\phi$ by Lemma \ref{yoneda limit as colimits}. \end{proof}
	
	The argument of  Theorem \ref{yoneda via formal ball} actually proves that for each continuous t-norm and each real-enriched category $X$,  it holds that 
	\begin{enumerate}[label=\rm(\roman*)]\setlength{\itemsep}{0pt} \item If $X$ is  Yoneda complete, then each directed subset $(x_i,r_i)_{i\in D}$ of $\mathbf{B}X $ with $\sup_{i\in D}r_i=1$ has a join. \item If $X$ has  property (R) and  $\mathbf{B} X$ is directed complete, then $X$ is Yoneda complete. \end{enumerate}  
	
	The following examples show that in Theorem \ref{yoneda via formal ball} the requirement that $\with$ is Archimedean  is indispensable.  
	
	\begin{exmp}\label{Example-G} (Yang and Zhang \cite{YangZ2023})  This example shows that if $\with$ is non-Archimedean, then there is a real-enriched category that is Yoneda complete, but its set of formal balls is not directed complete.
		Since $\with$ is  non-Archimedean, there is some $b\in(0,1)$ such that $b\with b=b$. Consider the  real-enriched category $X$, where  $X=(0,b)$ and   \[X(x,y)=\begin{cases}1 & x=y,\\
			\min\{x\rightarrow y,y\rightarrow x\} & x\not=y.\end{cases}\]
		
		Since $X(x,y)\leq b$ whenever $x\not=y$, every forward Cauchy net of $X$ is eventually constant, so $X$ is Yoneda complete. In the following we show that $\mathrm{B}X$ is not directed complete. Pick a strictly increasing sequence $(x_n)_{n\geq1}$  in $(0,b)$ that converges to $b$. For each $n$ let $r_n=x_n$. We claim that the subset $(x_n,r_n)_{n\geq1}$ of $\mathrm{B}X$ is directed and has no join.
		
		By continuity of  $\with$, for all $n\leq m$, we have  \begin{align*}r_n&=x_n =x_m\with(x_m\rightarrow x_n) = r_m\with X(x_n,x_m),\end{align*} so $(x_n,r_n)\sqsubseteq(x_m,r_m)$  and consequently, $(x_n,r_n)_{n\geq1}$ is directed.

		Next  we show that $(x_n,r_n)_{n\geq1}$ does not have a join. Suppose on the contrary that $(x,r)$ is a join of $(x_n,r_n)_{n\geq1}$. Since $x<b$, there is some $n_0$ such that $x<x_m$ whenever $m\geq n_0$.   Since $(x,r)$ is an upper bound  of $(x_n,r_n)_{n\geq1}$, for each $m\geq n_0$  we have \[r_m\leq r\with X(x_m,x)\leq x_m\rightarrow x,\] this is impossible since the left side tends to $b$, while the right side tends to $x$.\end{exmp}
	
	\begin{exmp}\label{exmp3} (Yang and Zhang \cite{YangZ2023}) If $\with$ is the G\"{o}del t-norm, then there is a real-enriched category  that is not Yoneda complete, but its set  of formal balls is   directed complete.  Consider the  real-enriched category $X$, where $X=\{1\}\cup\{1-1/n\mid n\geq2\}$ and  $$X(x,y)=\begin{cases}1 & x=y,\\ 1/3 & x=1, y\not=1,\\ \min\{x,y\} &{\rm otherwise}.  \end{cases}$$  We claim that  $X$ is not Yoneda complete, but $\mathrm{B}X$   is directed complete.
		
		For each $n\geq 2$, let $x_n=1-1/n$. It is readily verified that the sequence $(x_n)_{n\geq2}$ is  forward Cauchy and  has no Yoneda limit, so $X$ is not Yoneda complete.  It remains to check that $\mathrm{B}X$   is directed complete. Let $\{(x_i,r_i)\}_{i\in D}$ be a directed subset  of $\mathrm{B}X$. Then $\{r_i\}_{i\in D}$ is a monotone net of real numbers. Let $r=\bv_{i\in D}r_i$. We distinguish two cases.
		
		Case 1. The net $\{x_i\}_{i\in D}$ is eventually constant; that is, there exist $b\in X$ and   $i\in D$ such that $x_j=b$ whenever $j\geq i$.  In this case   $(b,r)$ is a join of $\{(x_i,r_i)\}_{i\in D}$.
		
		Case 2. The net $\{x_i\}_{i\in D}$ is not eventually constant. Then for each $i$ there is some $j\geq i$ such that $x_i\not=x_j$, hence $$r_i\leq r_j\with X(x_i,x_j)\leq \min\{ r_j,x_i,x_j\}\leq x_i.$$ Let  $b=\min\{x\in X\mid r\leq x\}.$
		Then $(b, r)$ is a join of $\{(x_i,r_i)\}_{i\in D}$. \end{exmp}
	
	
	\section{Flat completeness} \label{flat complete}
	
	\begin{defn}
		Suppose $\phi$ is a weight of a real-enriched category $X$.  \begin{enumerate}[label={\rm(\roman*)}] 
			\item  $\phi$   is  conically flat if the functor $\phi\cpt-\colon (\CPd X)^{\rm op}\lra\sV$ preserves limits of finite conical coweights, where $\sV=([0,1],\alpha_L)$. \item  $\phi$   is  flat if $\phi\cpt-\colon (\CPd X)^{\rm op}\lra\sV$ preserves all finite  limits. \end{enumerate} \end{defn}
	
	The definition of flat weights is based on the notion of \emph{flat distributors} in  B\'{e}nabou \cite[Section 6]{Benabou2000} and the characterization of flat functors in Borceux \cite{Borceux1994a}. When the continuous t-norm is isomorphic to the product t-norm, conically flat weights  appear as \emph{flat left modules of quasi-metric spaces} in  Vickers \cite{Vickers2005}. 
	
	As we shall see below, in contrast to the characterization of Cauchy weights in Proposition \ref{Cauchy weight via sub}, flat weights and ideals are not the same things in general. 
	
	We remind the reader that the underlying order of $\CPd X$ is opposite to the pointwise order of coweights,   the underlying order of $(\CPd X)^{\rm op}$ agrees with the pointwise order,   the cotensor of $r$ with $\psi$ in $(\CPd X)^{\rm op}$ is given by $r\ra\psi$.  
	
	Consider the following conditions: 
	\begin{enumerate}[label={\rm(\alph*)}]  
		\item $1=\phi\cpt1_X= \sup_{x\in X}\phi(x)$; that is, $\phi$ is inhabited. 
		\item For all coweights $\psi_1$ and $\psi_2$ of $X$, $\phi\cpt(\psi_1\wedge\psi_2)= (\phi\cpt\psi_1)  \wedge(\phi\cpt\psi_2) $. 
		\item For all $r\in[0,1]$ and all coweights $\psi$ of $X$, $\phi\cpt(r\ra\psi)=r\ra\phi\cpt\psi$. \end{enumerate} 
	Then, $\phi$ is conically flat if and only if it satisfies (a) and (b); $\phi$ is flat if and only if it satisfies (a)--(c). 
	
	The following characterization of conically flat weights is due to Guti\'{e}rrez Garc\'{i}a,   H\"{o}hle and  Kubiak \cite{GHK2022}.
	
	\begin{prop}  Suppose $\phi$ is a weight of a real-enriched category $X$. Then, $\phi$ is conically flat if and only if it is inhabited and $$(p_1\with \phi(x_1))\wedge(p_2\with \phi(x_2))=\sup_{x\in X}((p_1\with X(x_1,x))\wedge (p_2\with X(x_2,x)))\with\phi(x) $$ for all $x_1,x_2\in X$ and all $p_1,p_2\in[0,1]$. \end{prop}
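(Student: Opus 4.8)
The plan is to invoke the characterization displayed just before the proposition: $\phi$ is conically flat exactly when it is inhabited (condition~(a)) and $\phi\cpt(\psi_1\wedge\psi_2)=(\phi\cpt\psi_1)\wedge(\phi\cpt\psi_2)$ holds for \emph{all} coweights $\psi_1,\psi_2$ of $X$ (condition~(b)), where $\phi\cpt\psi=\sup_{x\in X}\phi(x)\with\psi(x)$ and the meet of coweights is computed pointwise, i.e. $(\psi_1\wedge\psi_2)(x)=\min\{\psi_1(x),\psi_2(x)\}$. Since inhabitedness is condition~(a) verbatim, the whole content lies in showing that condition~(b) is equivalent to the displayed identity. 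The bridge is that the displayed identity is precisely condition~(b) evaluated at the \emph{basic} coweights $\psi_i\coloneqq p_i\with\syd(x_i)$. Indeed $\syd(x_i)(x)=X(x_i,x)$, so $\psi_i(x)=p_i\with X(x_i,x)$; using that $\with$ preserves joins together with $\phi=\phi\circ X$ (equivalently the Yoneda lemma) one gets $\phi\cpt\psi_i=p_i\with\sup_{x}\phi(x)\with X(x_i,x)=p_i\with\phi(x_i)$, whereas $\phi\cpt(\psi_1\wedge\psi_2)=\sup_{x}\bigl((p_1\with X(x_1,x))\wedge(p_2\with X(x_2,x))\bigr)\with\phi(x)$. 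Thus both sides of~(b) for these $\psi_i$ are exactly the two sides of the displayed identity.

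This settles necessity at once: conical flatness gives~(a), hence inhabitedness, and~(b) applied to $\psi_i=p_i\with\syd(x_i)$ gives the identity. The work is sufficiency, namely promoting the identity from basic coweights to arbitrary $\psi_1,\psi_2$. For this I would first record that every coweight is a join of basic ones: the coweight inequality $X(x,y)\with\psi(x)\leq\psi(y)$ together with $\psi(y)\with X(y,y)=\psi(y)$ yields $\psi(y)=\sup_{x}\psi(x)\with X(x,y)$, i.e. $\psi=\sup_{x}\psi(x)\with\syd(x)$ pointwise. Writing $\psi_1=\sup_a\psi_1(a)\with\syd(a)$ and $\psi_2=\sup_b\psi_2(b)\with\syd(b)$ and using that binary meet $\wedge=\min$ distributes over arbitrary joins in $[0,1]$, one obtains $(\psi_1\wedge\psi_2)(y)=\sup_{a,b}\bigl((\psi_1(a)\with X(a,y))\wedge(\psi_2(b)\with X(b,y))\bigr)$.

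Finally I would compute $\phi\cpt(\psi_1\wedge\psi_2)$ by moving $\phi(y)\with(-)$ inside the double join (legitimate since $\with$ preserves joins) and interchanging the joins, so that for each fixed pair $(a,b)$ the inner supremum over $y$ is exactly the right-hand side of the hypothesized identity with $p_1=\psi_1(a),\,x_1=a,\,p_2=\psi_2(b),\,x_2=b$. This rewrites $\phi\cpt(\psi_1\wedge\psi_2)$ as $\sup_{a,b}\bigl((\psi_1(a)\with\phi(a))\wedge(\psi_2(b)\with\phi(b))\bigr)$, and one further use of distributivity of $\min$ over joins turns this into $\bigl(\sup_a\psi_1(a)\with\phi(a)\bigr)\wedge\bigl(\sup_b\psi_2(b)\with\phi(b)\bigr)=(\phi\cpt\psi_1)\wedge(\phi\cpt\psi_2)$, which is~(b). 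The main obstacle is purely bookkeeping: justifying each interchange of the nested suprema with the binary meet. It works because the truth-value set $[0,1]$ is completely distributive, so that \emph{both} $\with$ (the quantale multiplication) and $\wedge=\min$ distribute over arbitrary joins --- this is exactly the structural feature of the real enrichment that lets the finitary identity on basic coweights bootstrap to all coweights.
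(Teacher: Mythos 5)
Your proposal is correct and takes essentially the same route as the paper: the paper's (very terse) proof obtains necessity by specializing condition (b) to the basic coweights $p_i\with\syd(x_i)$, and obtains sufficiency from the decomposition $\psi=\sup_{x\in X}\psi(x)\with X(x,-)$ of an arbitrary coweight into basic ones --- exactly your bootstrap argument. Your write-up simply makes explicit the join-interchange and distributivity steps (of $\with$ and $\wedge=\min$ over suprema in $[0,1]$) that the paper leaves to the reader.
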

	
	\begin{proof} Necessity is obvious, sufficiency follows from   that   $$\psi=\sup_{x\in X}\psi(x)\with X(x,-) $$ for each coweight $\psi$ of $X$. \end{proof}
	
	We leave it to the reader to check that when the continuous t-norm $\with$ is the G\"{o}del t-norm,  $\phi$ is conically flat if and only if it is inhabited and satisfies $$\phi(x_1)\wedge\phi(x_2)=\sup_{x\in X}X(x_1,x)\wedge X(x_2,x)\wedge\phi(x).$$

	Proposition \ref{Cauchy weight via sub} shows that every Cauchy weight   is flat. The following conclusion, due to Lai, D\thinspace{\&}\thinspace{G} Zhang \cite{LZZ2020}, says that every ideal is conically flat.
	
	\begin{prop}\label{irr is flat}  Every ideal is conically flat. \end{prop}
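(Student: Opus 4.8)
The plan is to verify the two conditions that, by the discussion immediately preceding the proposition, characterize conical flatness: condition (a), that $\phi$ is inhabited, i.e. $\sup_{x\in X}\phi(x)=1$; and condition (b), that $\phi\cpt(\psi_1\wedge\psi_2)=(\phi\cpt\psi_1)\wedge(\phi\cpt\psi_2)$ for all coweights $\psi_1,\psi_2$ of $X$, where the meet is the one in $(\CPd X)^{\rm op}$, computed pointwise as $\min$. Condition (a) is free of charge, since every ideal is inhabited, as noted after Definition \ref{defn of ideal}. Hence the whole content lies in establishing (b) from the ideal hypothesis, which I would take in the form (I1)--(I2): that $\sub_X(\phi,-)=\CP X(\phi,-)$ preserves tensors and finite joins.

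The key idea, and the only real obstacle, is to transport the identity across the residuation $-\ra p$, which turns meets of coweights into joins of weights and thereby lets the ideal condition (I2)---phrased for joins of weights---do the work for meets of coweights. First I would record that the weight $\psi\ra p=\star_p\swarrow\psi$ is computed pointwise by $(\psi\ra p)(x)=\psi(x)\ra p$; since $[0,1]$ is linearly ordered and $a\mapsto a\ra p$ reverses order, this gives $(\psi_1\wedge\psi_2)\ra p=(\psi_1\ra p)\vee(\psi_2\ra p)$ in $\CP X$ for every $p$. Applying Lemma \ref{sub vs tensor}(i), which asserts $\CP X(\phi,\psi\ra p)=(\phi\cpt\psi)\ra p$, together with preservation of the finite join $(\psi_1\ra p)\vee(\psi_2\ra p)$ by $\sub_X(\phi,-)$, I obtain for each $p\in[0,1]$
\[(\phi\cpt(\psi_1\wedge\psi_2))\ra p=\CP X(\phi,(\psi_1\ra p)\vee(\psi_2\ra p))=((\phi\cpt\psi_1)\ra p)\vee((\phi\cpt\psi_2)\ra p).\]
Using linearity of $[0,1]$ once more, the right-hand side equals $((\phi\cpt\psi_1)\wedge(\phi\cpt\psi_2))\ra p$.

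Finally I would conclude with the elementary cancellation fact that if $a\ra p=b\ra p$ for all $p\in[0,1]$, then $a=b$: taking $p=a$ gives $1=a\ra a=b\ra a$, whence $b\le a$, and taking $p=b$ gives $a\le b$. Setting $a=\phi\cpt(\psi_1\wedge\psi_2)$ and $b=(\phi\cpt\psi_1)\wedge(\phi\cpt\psi_2)$ then yields (b), and with (a) in hand $\phi$ is conically flat. I expect the displayed duality $(\psi_1\wedge\psi_2)\ra p=(\psi_1\ra p)\vee(\psi_2\ra p)$, resting on the total order of the truth-value lattice, to be the crux; everything else is a formal manipulation of Lemma \ref{sub vs tensor} and the ideal axioms. (It is worth noting that only (I2), not (I1), enters the verification of (b), while (I1) is what secures inhabitedness and hence (a).)
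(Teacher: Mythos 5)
Your proof is correct, but it takes a genuinely different route from the paper's. The paper argues via the net representation of ideals: it invokes Proposition \ref{characterization of  ideal} to write $\phi=\sup_{i\in D}\inf_{j\geq i}X(-,x_j)$ for a forward Cauchy net $\{x_i\}_{i\in D}$, then establishes the auxiliary formula $\phi\cpt\psi=\sup_{i\in D}\inf_{j\geq i}\psi(x_j)$ for every coweight $\psi$ (using Lemma \ref{sub vs tensor}, Lemma \ref{Yoneda limits in PX}, the Yoneda lemma and Lemma \ref{yoneda limit in Q}), from which preservation of binary pointwise meets follows since such liminf-type expressions over a directed index set commute with finite mins. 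You instead argue purely formally from the ideal axioms: residuate into $[0,1]$, use linearity of the truth-value set to turn the meet of coweights into a join of weights via $(\psi_1\wedge\psi_2)\ra p=(\psi_1\ra p)\vee(\psi_2\ra p)$, apply (I2) together with Lemma \ref{sub vs tensor}\thinspace(i), convert back by the same linearity argument, and conclude with the cancellation fact that $a\ra p=b\ra p$ for all $p$ forces $a=b$. All of these steps are sound: pointwise meets and joins of (co)weights are (co)weights by Proposition \ref{Distcalcu}, the pointwise formula $(\psi\ra p)(x)=\psi(x)\ra p$ holds, and your order-reversal argument for the two linearity identities is valid. As for what each approach buys: yours is more elementary and self-contained, showing that conical flatness of ideals is a formal consequence of (I2) and the total order of $[0,1]$, without appealing to the (nontrivial) net characterization of ideals; the paper's heavier route produces as a by-product the explicit formula $\phi\cpt\psi=\sup_{i\in D}\inf_{j\geq i}\psi(x_j)$, which it reuses immediately afterwards, notably in the proof of Theorem \ref{flat=ideal}.
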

	
	\begin{proof} Suppose $\phi$ is an ideal of a real-enriched category $X$.  We wish to show that $\phi$ is conically flat. Since every ideal is inhabited, we only need to check that  for all coweights $\psi_1$ and $\psi_2$ of $X$, $\phi\cpt(\psi_1\wedge\psi_2)= (\phi\cpt\psi_1)  \wedge(\phi\cpt\psi_2) $. 
		
		By Proposition \ref{characterization of  ideal} there is a forward Cauchy net $\{x_i\}_{i\in D}$   such that \[\phi=\bv_{i\in D}\bw_{j\geq i}X(-,x_j).\]   We claim that for each coweight $\psi$ of $X$, \[\phi\cpt\psi=\bv_{i\in D}\bw_{j\geq i}\psi(x_j).\]
		Since $\psi$ is a coweight,   $\{\psi(x_i)\}_{i\in D}$ is a forward Cauchy net of $\sV=([0,1],\alpha_L)$, then 
		\begin{align*}\phi\cpt\psi &= \bw_{p\in[0,1]}(\sub_X(\phi,\psi\ra p)\ra p) &(\text{Lemma \ref{sub vs tensor}})\\ 
			&=  \bw_{p\in[0,1]}\Big(\sub_X\Big(\bv_{i\in D}\bw_{j\geq i}X(-,x_j),\psi\ra p\Big)\ra p\Big) \\ &=  \bw_{p\in[0,1]}\Big(\Big(\bv_{i\in D}\bw_{j\geq i}\sub_X \big(X(-,x_j),\psi\ra p \big)\Big)\ra p\Big)&(\text{Lemma \ref{Yoneda limits in PX}}) \\ &=  \bw_{p\in[0,1]}\Big(\Big(\bv_{i\in D}\bw_{j\geq i}(\psi(x_j)\ra p)\Big)\ra p\Big) &(\text{Yoneda lemma})\\ 
			&=  \bw_{p\in[0,1]}\Big(\Big(\bv_{i\in D}\bw_{j\geq i}\psi(x_j)\ra p\Big)\ra p\Big) &(\text{Lemma \ref{yoneda limit in Q}})\\ 
			&= \bv_{i\in D}\bw_{j\geq i}\psi(x_j). \end{align*}
		
		Now for any coweights $\psi_1$ and $\psi_2$ of $X$, we have \begin{align*}(\phi\cpt \psi_1)\wedge(\phi\cpt\psi_2) 
			&=\bv_{i\in D}\bw_{j\geq i} \psi_1(x_j) \wedge\bv_{i\in D}\bw_{j\geq i} \psi_2(x_j)  \\ 
			&   = \bv_{i\in D}\bw_{j\geq i}(\psi_1(x_j)\wedge\psi_2(x_j))  \\ 
			&  = \phi\cpt(\psi_1\wedge\psi_2),   \end{align*}
		which completes the proof.
	\end{proof}
	
	In 2005, Vickers \cite{Vickers2005} proved that every flat left module of a generalized metric space is generated by a forward Cauchy net. Said differently, when the continuous t-norm is isomorphic to the product t-norm, a weight of a real-enriched category is conically flat if and only if it is an ideal.  The following theorem  says that a necessary and sufficient condition for ideals, flat weights, and conically flat weights to coincide with each other is that the continuous t-norm is Archimedean.
	
	\begin{thm}\label{flat=ideal}  {\rm (Lai, D\thinspace{\&}\thinspace{G} Zhang \cite{LZZ2020})} The following are equivalent: 
		\begin{enumerate}[label=\rm(\arabic*)]  
			\item The continuous t-norm $\&$ is Archimedean.  
			\item  Every ideal is a flat weight.
			\item Every conically flat weight is an ideal.
		\end{enumerate} In this case, ideals, flat weights, and conically flat weights coincide with each other. \end{thm}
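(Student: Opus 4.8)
The plan is to prove the two forward implications $(1)\Rightarrow(2)$ and $(1)\Rightarrow(3)$ together with their converses $(2)\Rightarrow(1)$ and $(3)\Rightarrow(1)$, and then read off the final sentence. Two inclusions are automatic and independent of the t-norm: an ideal is conically flat by Proposition \ref{irr is flat}, and a flat weight is conically flat straight from the definitions (flatness is conditions (a)--(c), conical flatness is (a)--(b)). Hence once $(1)$ is assumed and both $(2)$ and $(3)$ are established, the chain
\[\text{ideal}\ \Longrightarrow\ \text{flat}\ \Longrightarrow\ \text{conically flat}\ \Longrightarrow\ \text{ideal}\]
closes up, yielding the coincidence asserted in the last line.

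For $(1)\Rightarrow(2)$ I would start from an ideal $\phi$, which is already conically flat, so only condition (c), $\phi\cpt(r\ra\psi)=r\ra(\phi\cpt\psi)$, needs checking. Writing $\phi=\sup_i\inf_{j\geq i}X(-,x_j)$ for a forward Cauchy net (Proposition \ref{characterization of  ideal}) and using the formula $\phi\cpt\lambda=\sup_i\inf_{j\geq i}\lambda(x_j)$ proved inside Proposition \ref{irr is flat}, condition (c) reduces to the identity
\[\sup_i\inf_{j\geq i}(r\ra\psi(x_j))=r\ra\sup_i\inf_{j\geq i}\psi(x_j)\]
in $\sV$. Since $r\ra-$ is a right adjoint it commutes with the inner infima, so the claim is exactly that $r\ra-$ preserves the directed supremum $\sup_i\bigl(\inf_{j\geq i}\psi(x_j)\bigr)$; and for an Archimedean continuous t-norm the implication $r\ra-$ is continuous in its second variable (it is the product or the {\L}ukasiewicz implication), hence preserves directed suprema. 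This is precisely where Archimedean-ness enters.

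For $(1)\Rightarrow(3)$ --- the hard direction --- I would take a conically flat $\phi$ and show it is an ideal by verifying that $\mathrm{B}\phi=\{(x,r)\mid \phi(x)>r\}$ is directed (Proposition \ref{characterization of  ideal}(3)); inhabitedness is condition (a). Testing meet-preservation (b) against the representable coweights $\syd(x_1),\syd(x_2)$ gives
\[\sup_{z}\phi(z)\with\min(X(x_1,z),X(x_2,z))=\min(\phi(x_1),\phi(x_2)),\]
which already produces a common upper bound of $(x_1,r_1),(x_2,r_2)$ whenever $\min(\phi(x_1),\phi(x_2))>\max(r_1,r_2)$. The genuine obstacle is the mixed-radius case $r_1<\phi(x_1)\le r_2<\phi(x_2)$, where the two balls sit at incomparable radius levels and the naive test is too weak. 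Here I would invoke property {\rm (R)}, which holds precisely because the t-norm is Archimedean (Proposition \ref{Archi vs reciprocal}): its formal-ball equivalence $(x,t\with r')\sqsubseteq(y,s)\iff(x,r')\sqsubseteq(y,t\ra s)$ trades a radius for a hom-level, so that I can rescale the two radii to a common level and then apply the representable form of (b). Making this rescaling precise --- and checking it really yields a ball of $\mathrm{B}\phi$ dominating both --- is the main technical hurdle; the role of Archimedean-ness is exactly the cancellation $t\with r'=s$ that fails at a nontrivial idempotent.

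Finally, for the converses I would argue contrapositively, using that a non-Archimedean continuous t-norm has a nontrivial idempotent $b\in(0,1)$. To refute $(2)$ I exhibit an ideal that is not flat: on $X=\sV$ take the ideal $\phi(x)=\sup_{c<b}(x\ra c)$ generated by $\{b-1/n\}_n$ (Corollary \ref{ideals in [0,1]}), the coweight $\psi=\mathrm{id}_{[0,1]}$, and $r=b$; using $b\ra c=c$ for $c<b$ (Corollary \ref{idempotent between}) one computes $\phi\cpt(b\ra\psi)=b$ while $b\ra(\phi\cpt\psi)=b\ra b=1$, so (c) fails. To refute $(3)$ I construct a conically flat weight that is not coprime: a weight approaching a ``phantom join at level $b$'', realized on a countable real-enriched category built from a sequence whose relevant hom-values and $\phi$-values increase to $b$ without attaining it, so the binary-meet test (b) cannot separate the two radii yet $\mathrm{B}\phi$ is not directed. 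Verifying conical flatness of this weight while keeping $\mathrm{B}\phi$ non-directed is the delicate point of the converse.
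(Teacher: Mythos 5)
Your overall skeleton is sound and matches the paper: the cycle ideal $\Rightarrow$ flat $\Rightarrow$ conically flat $\Rightarrow$ ideal under (1), plus the two contrapositives; and your $(1)\Rightarrow(2)$ and $(2)\Rightarrow(1)$ are essentially the paper's own arguments (the net formula $\phi\cpt\lambda=\sup_i\inf_{j\geq i}\lambda(x_j)$ combined with continuity of $r\ra-$ for Archimedean $\with$, and the idempotent counterexample on $\sV$, where your $\psi=\mathrm{id}$ plays the role of the paper's $\alpha_L(b,-)$). The genuine gap is in $(1)\Rightarrow(3)$, precisely at the point you flag and defer. Testing condition (b) only against the plain representables $\syd(x_1),\syd(x_2)$ guarantees, for every $\epsilon>0$, a witness $z$ with $\phi(z)\with\bigl(X(x_1,z)\wedge X(x_2,z)\bigr)>\min(\phi(x_1),\phi(x_2))-\epsilon$, i.e.\ witnesses \emph{only at the level} $\min(\phi(x_1),\phi(x_2))$; but directedness of $\mathrm{B}\phi$ needs a single $z$ and $t<\phi(z)$ with $r_1\leq t\with X(x_1,z)$ \emph{and} $r_2\leq t\with X(x_2,z)$, and in the mixed case $r_2\geq\min(\phi(x_1),\phi(x_2))$ the guaranteed level is too low: for the product t-norm with $\phi(x_1)=0.5$, $r_1=0.4$, $\phi(x_2)=0.9$, $r_2=0.8$, the plain test is compatible with every available witness satisfying $\phi(z)\with X(x_2,z)<0.8$, and such a $z$ can never dominate $(x_2,r_2)$. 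Property (R) cannot repair this: it is a biconditional about the order $\sqsubseteq$ between \emph{already given} formal balls and produces no new witnesses, nor can it increase the hom-value $X(x_2,z)$ of an inadequate one. The paper's repair stays inside conical flatness but tests (b) against \emph{scaled} representables, $\psi_1=\max\{r\ra 0,s\}\with X(x,-)$ and $\psi_2=\max\{s\ra 0,r\}\with X(y,-)$ (legitimate test coweights; compare the characterization of conical flatness via scaled representables due to Guti\'{e}rrez Garc\'{i}a, H\"{o}hle and Kubiak stated before Proposition \ref{irr is flat}). The scalings are engineered so that $\phi\cpt\psi_1,\phi\cpt\psi_2>r\with s$; condition (b) then yields one $z$ above the threshold $r\with s$ for both, and the two Archimedean facts (strict monotonicity of $\with$ where $r\with s>0$, and $0<t\leq r\with s\Rightarrow r\ra t\leq s$) convert this into $X(x,z)>r$, $X(y,z)>s$ and an explicit radius $t$, defined by residuation, with $(z,t)\in\mathrm{B}\phi$ dominating both balls. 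This is where Archimedean-ness actually enters the witness production; replacing your property-(R) step by this use of scaled coweights is necessary, not optional.

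Your $(3)\Rightarrow(1)$ is also incomplete: the ``phantom join'' construction is only gestured at, and the part you call delicate (verifying conical flatness while keeping $\mathrm{B}\phi$ non-directed) is exactly what is missing. The paper's counterexample needs no auxiliary category and no analysis of $\mathrm{B}\phi$: on $\sV$ itself, with $b$ a nontrivial idempotent and $a\in(0,b)$, take $\phi(x)=b\vee(x\ra a)$. It is not an ideal because it fails coprimality in Proposition \ref{characterization of  ideal}: $\phi$ equals the join of the constant weight $b$ and $\sy(a)$, yet lies below neither. And it is conically flat by a one-line computation using $b\with b=b$: for every coweight $\psi$ of $\sV$ one gets $\phi\cpt\psi=(b\vee\psi(a))\wedge\psi(1)$, an expression that preserves binary meets in $\psi$ because $[0,1]$ is a chain. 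I recommend adopting this example; with that and the corrected $(1)\Rightarrow(3)$, your closing deduction of the coincidence of the three classes under (1) goes through as you state it.
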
 
	
	\begin{proof} $(1)\Rightarrow(2)$  It suffices to show that if $\phi$ is an ideal of a real-enriched category $X$, then $\phi\cpt(r\ra\psi)=r\ra\phi\cpt\psi$ for  all $r\in[0,1]$ and all coweights $\psi$ of $X$. First we note that when $\with$ is   continuous and Archimedean,  for each $r\in[0,1]$ the function $r\ra-\colon[0,1]\lra[0,1]$ is continuous. Pick a forward Cauchy net $\{x_i\}_{i\in D}$   such that \[\phi=\bv_{i\in D}\bw_{j\geq i}X(-,x_j).\] Then by the argument of Proposition \ref{irr is flat} we have $$\phi\cpt(r\ra\psi)= \bv_{i\in D}\bw_{j\geq i}(r\ra\psi(x_j))= r\ra \bv_{i\in D}\bw_{j\geq i}\psi(x_j)=r\ra\phi\cpt\psi,$$  the second equality holds due to the continuity of $r\ra-\colon[0,1]\lra[0,1]$.
		
		$(2)\Rightarrow(1)$  Suppose on the contrary that $\&$ is not Archimedean. Take an  idempotent element $b$ of $\with$ other than $0$ and $1$,   consider the ideal $$\phi=\bv_{x<b}\alpha_L(-, x)$$ and the coweight $\psi=\alpha_L(b,-)$ of $\sV=([0,1],\alpha_L)$. Then $$\phi\cpt(b\ra\psi)=\bv_{x<b}(b\ra(b\ra x))= b<1= b\ra \bv_{x<b}x = b\ra\phi\cpt\psi,$$ contradicting that $\phi$ is flat.
		
		$(1)\Rightarrow(3)$ Since a continuous Archimedean t-norm is either isomorphic to the product t-norm or  isomorphic to the \L ukasiewicz t-norm, one readily verifies 
		\begin{itemize}
			\item if $0< r\with s$, then $\with$ is strictly monotone at   $(r,s)$; 
			\item if $0<t\leq r\with s$, then $r\ra t\leq s$. \end{itemize}
		
		Suppose $\phi$ is a flat weight of a real-enriched category $X$. Let  \[\mathrm{B}\phi= \{(x,r)\in X\times (0,1]\mid \phi(x)>r\}.  \]  We assert that $\mathrm{B}\phi$ is a directed subset of the ordered set $\mathrm{B}X$ of formal balls of $X$.  
		
		It is clear that $\mathrm{B}\phi$ is not empty.  For  $(x,r)$ and $(y,s)$ of $\mathrm{B}\phi$, consider the coweights: \[\psi_1 = \max\{r\ra0,s\}\with X(x,-),\quad  \psi_2 = \max\{s\ra0,r\}\with X(y,-).\] Since $\phi(x)>r$ and $\phi(y)>s$, it follows that \[\phi\cpt\psi_1=\max\{r\ra0,s\}\with \phi(x) >r\with  s\] and \[\phi\cpt\psi_2=\max\{s\ra0,r\}\with\phi(y) >r\with s,\] hence \[\phi\cpt(\psi_1\wedge\psi_2)>r\with s.\] So there exists   $z\in X$ such that \[\min\{\phi(z)\with\psi_1(z),\phi(z)\with\psi_2(z)\})>r\with  s.\] This implies   \[\phi(z)\with(\max\{r\ra0,s\}\with X(x,z))>r\with s\] and   \[\phi(z)\with(\max\{s\ra0,r\}\with X(y,z))>r\with s,\] in particular, $X(x,z)> r$ and $X(y,z)> s$. Let $t$ be the larger one of $$ (\max\{r\ra0,s\}\with X(x,z))\ra(r\with s)$$ and $$(\max\{s\ra0,r\}\with X(y,z))\ra(r\with s).$$ Then it is routine to check that \[\phi(z)>t,\quad r\leq t\with X(x,z),\quad s\leq t\with X(y,z),\] which show that $(z,t)$  is an upper bound of $(x,r)$ and $(y,s)$ in $\mathrm{B}\phi$.
		
		
		Index the directed set $\mathrm{B}\phi$ by itself as   $\{(x_i,r_i)\}_{i\in D}$. Then by the argument in Proposition \ref{characterization of  ideal} one sees that the net $\{x_i\}_{i\in D}$ is  forward Cauchy  and    \[ \phi(x)= \sup_{i\in D}\inf_{j\geq i }X(x,x_j) \]  for all $x\in X$,  so $\phi$ is an ideal.

		$(3)\Rightarrow(1)$  Suppose on the contrary that $b$ is a  non-trivial idempotent element of $\&$. Pick $a\in (0,b)$. Consider the weight $\phi$ of $\sV$ given by $\phi(x)=b\vee(x\ra a)$. Since neither $\phi(x)\leq b$ for all $x$ nor $\phi(x)\leq x\ra a$ for all $x$, then $\phi$ is not an ideal. In the following we derive a contradiction by showing that  $\phi$ is conically flat.
		
		It is clear that $\phi$ is inhabited. It remains to check that for all coweights   $\psi_1,\psi_2 $  of $\sV$, $\phi\cpt(\psi_1\wedge\psi_2)= (\phi\cpt\psi_1)\wedge(\phi\cpt\psi_2)$.
		Since $b$ is idempotent and $\psi_i$  ($i=1,2$) is monotone,  then 
		\begin{align*} \phi\cpt \psi_i 
			&= \sup_{x\in [0,1]}((b\with \psi_i(x))\vee((x\ra a)\with \psi_i(x)))\\ 
			&= (b\wedge\psi_i(1))\vee \psi_i(a)\\ 
			& = (b\vee\psi_i(a))\wedge\psi_i(1),   
		\end{align*} therefore 
		\begin{align*}(\phi\cpt\psi_1)\wedge(\phi\cpt\psi_2) 
			&= (b\vee\psi_1(a)) \wedge\psi_1(1) \wedge(b\vee\psi_2(a))\wedge\psi_2(1)\\ 
			&= (b\vee(\psi_1(a)\wedge\psi_2(a))\wedge (\psi_1(1)\wedge\psi_2(1))\\ 
			&= \phi\cpt(\psi_1\wedge\psi_2).    \qedhere \end{align*}    \end{proof}
	
	\begin{defn}A real-enriched category is (conically, resp.) flat complete if it is separated and each of its (conically, resp.) flat weights has a colimit. \end{defn}
	
	For each real-enriched category $X$, let $$\CF X$$ be the subcategory of $\CP X$ composed  of flat weights of $X$. The assignment $X\mapsto \CF X$ defines a functor $\CF\colon \QOrd\lra\QOrd$, which is a subfunctor of the presheaf functor $\CP\colon \QOrd\lra\QOrd$.

	\begin{thm}\label{univeral property of FX}Suppose $X$ is a real-enriched category. Then \begin{enumerate}[label={\rm(\roman*)}] 
			\item   $\CF X$ is flat complete. 
			\item For each functor $f\colon X\lra Y$ with $Y$ flat complete, there is a unique   functor $\overline{f}\colon\CF X\lra Y$ that extends $f$ and preserves colimits of flat weights.  
	\end{enumerate} \end{thm}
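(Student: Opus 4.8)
The plan is to follow the pattern of the proofs of Theorem \ref{univeral property of CX} and Theorem \ref{univeral property of IX}. Write $\mathfrak{i}\colon\CF X\lra\CP X$ for the inclusion functor and ${\sf e}\colon X\lra\CF X$ for the Yoneda embedding with codomain restricted to $\CF X$; this is legitimate because every representable weight is Cauchy and hence flat by Proposition \ref{Cauchy weight via sub}, and $\mathfrak{i}\circ{\sf e}=\sy$. For (i) the strategy is to show that $\CF X$ is closed in $\CP X$ under the formation of colimits of flat weights. Since $\CP X$ is separated, this gives separatedness of $\CF X$, and it shows that every flat weight of $\CF X$ has a colimit (computed in $\CP X$ but landing in $\CF X$), i.e.\ that $\CF X$ is flat complete. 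Concretely, given a flat weight $\Phi$ of $\CF X$, I would first compute, exactly as in Theorem \ref{univeral property of IX}, that the colimit of $\mathfrak{i}$ weighted by $\Phi$ is $\phi:=\Phi\circ{\sf e}_*$: by Example \ref{sup in PX} one has $\colim_\Phi\mathfrak{i}=(\Phi\circ\mathfrak{i}^*)\circ(\sy_X)_*$, and since $\sy_X=\mathfrak{i}\circ{\sf e}$ with $\mathfrak{i}$ fully faithful, $\mathfrak{i}^*\circ(\sy_X)_*=\mathfrak{i}^*\circ\mathfrak{i}_*\circ{\sf e}_*={\sf e}_*$.

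The crux is to verify that $\phi=\Phi\circ{\sf e}_*$ is again flat. Here I would exploit the identity ${\sf e}_*(x,\mu)=\mu(x)$ (Yoneda lemma), which yields, for every coweight $\psi$ of $X$, that $({\sf e}_*\circ\psi)(\mu)=\sup_{x}\mu(x)\with\psi(x)=\mu\cpt\psi$, and hence $\phi\cpt\psi=\Phi\cpt({\sf e}_*\circ\psi)$. Writing $G:=\CPd{\sf e}\colon\CPd X\lra\CPd\CF X$, $\psi\mapsto{\sf e}_*\circ\psi$, the key observation is that because every $\mu\in\CF X$ is itself a flat weight, it satisfies conditions (a)--(c): this forces $G$ to send the inhabited top coweight $1_X$ to $1_{\CF X}$ (via (a) for each $\mu$), to preserve pointwise binary meets (via (b) for each $\mu$), and to preserve pointwise cotensors $r\ra\psi$ (via (c) for each $\mu$). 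Consequently $\phi\cpt-=(\Phi\cpt-)\circ G^{\rm op}$ is, on $(\CPd X)^{\rm op}$, a composite of maps preserving the relevant finite limits — the factor $\Phi\cpt-$ because $\Phi$ is flat — so $\phi\cpt-$ preserves finite limits and $\phi$ is flat. This transfer of flatness of the individual objects $\mu$ through $G$ is the step I expect to be the main obstacle; the rest is formal.

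For (ii) I would set $\overline{f}(\phi):=\colim_\phi f$. This is well defined because $\CF$ is a subfunctor of $\CP$, so $f_\exists(\phi)=\phi\circ f^*$ is a flat weight of $Y$ whenever $\phi$ is, and flat completeness of $Y$ then supplies the colimit. That $\overline{f}$ extends $f$ follows from $\overline{f}({\sf e}(x))=\colim_{\sy(x)}f=f(x)$, as representable weights are Cauchy (Proposition \ref{absolute sup}). To see that $\overline{f}$ preserves colimits of flat weights, I would factor it as $\overline{f}=\colim_Y\circ\,(f_\exists|_{\CF X})$, where $\colim_Y\colon\CF Y\lra Y$, $\nu\mapsto\colim\nu$, is left adjoint to the restricted Yoneda embedding ${\sf e}_Y\colon Y\lra\CF Y$ by the very definition of colimit and hence preserves colimits; $f_\exists\colon\CP X\lra\CP Y$ preserves all colimits, being a left adjoint by Proposition \ref{left and right kan}, and restricts to $\CF X\lra\CF Y$ by subfunctoriality; and both $\CF X$ and $\CF Y$ are closed under colimits of flat weights by part (i). Uniqueness follows as in Theorem \ref{univeral property of IX}: each flat weight $\phi$ is the colimit of ${\sf e}$ weighted by $\phi$ (the restriction to $\CF X$ of the colimit-density of $\sy$, Proposition \ref{y is colimit dense}), so any colimit-of-flat-weight-preserving extension is forced to send $\phi$ to $\colim_\phi f$.
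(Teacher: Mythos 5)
Your proposal is correct and follows essentially the same route as the paper: both prove (i) by showing $\CF X$ is closed in $\CP X$ under colimits of flat weights, computing $\colim_\Phi\mathfrak{i}=\Phi\circ{\sf e}_*$ via Example \ref{sup in PX}, and then verifying conditions (a)--(c) for $\Phi\circ{\sf e}_*$ by exploiting flatness of $\Phi$ together with pointwise flatness of the objects of $\CF X$ (your functor $G=\CPd{\sf e}$ is just an explicit packaging of the paper's coweight $-\cpt\psi$ of $\CF X$); and both prove (ii) by factoring $\overline{f}=\colim_Y\circ f_\exists$ and using colimit-density of ${\sf e}$ for uniqueness. No gaps.
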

	
	\begin{proof} Write $\mathfrak{i}\colon\CF X\lra\CP X$ for the inclusion functor and ${\sf t}\colon X\lra  \mathcal{F}X$ for the functor  obtained by restricting the codomain of the Yoneda embedding $\sy\colon X\lra\CP X$ to $\mathcal{F}X$. It is clear that the composite $ \mathfrak{i}\circ{\sf t}$ is the Yoneda embedding $\sy\colon X\lra\CP X$.  
		
		(i) We  show that $\CF X$ is closed in $\CP X$ under formation of colimits of flat weights. That means, for each flat weight $\Phi$ of $\CF X$, the colimit  of the inclusion functor  $\mathfrak{i}\colon \CF X\lra\CP X$ weighted by $\Phi$ is a flat weight of $X$. Since the composite $\mathfrak{i}\circ{\sf t}$ is the Yoneda embbeding, by Example \ref{sup in PX} it holds that $${\colim}_\Phi\mathfrak{i} = \Phi\circ \mathfrak{i}^*\circ\mathfrak{i}_*\circ {\sf t}_*= \Phi\circ{\sf t}_*.$$   
		We wish to show that $\Phi\circ{\sf t}_*$ is a flat weight  of $X$.   
		
		{\bf Step 1}.   $\Phi\circ{\sf t}_*$ is inhabited. This is easy, since $$\bv_{x\in X}\Phi\circ{\sf t}_*(x)=\bv_{x\in X}\bv_{\phi\in\CF X}\Phi(\phi)\with\phi(x) = \bv_{\phi\in\CF X}\Big(\Phi(\phi)\with\bv_{x\in X}\phi(x)\Big)=1.$$ 
		
		{\bf Step 2}. For any coweights $\psi_1$ and $\psi_2$ of $X$,  $$\Phi\circ{\sf t}_* \cpt(\psi_1\wedge \psi_2)= (\Phi\circ{\sf t}_*\cpt\psi_1)\wedge (\Phi\circ{\sf t}_*\cpt\psi_2).$$  
		
		To see this, for each coweight $\psi$ of $X$ consider the coweight  \[-\cpt\psi \colon \CF X\lra\sV  \]   of $\CF X$.
		Then \begin{align*}\Phi\cpt(-\cpt \psi)
			&= \sup_{\phi\in\CF X}\Big(\Phi(\phi)\with \sup_{x\in X}(\phi(x)\with  \psi(x))\Big)\\ &=\sup_{x\in X}  \sup_{\phi\in\CF X}(\Phi(\phi)\with  \phi(x))\with  \psi(x) \\ &=\sup_{x\in X} (\Phi\circ{\sf t}_*)(x)\with  \psi(x)  \\ 
			& =\Phi\circ{\sf t}_*\cpt\psi. 
		\end{align*} Since $\Phi$ is flat and each element of $\mathcal{F}X$ is flat, it follows that \begin{align*}  
			\Phi\circ{\sf t}_*\cpt(\psi_1\wedge \psi_2)  
			&=\Phi\cpt( -\cpt(\psi_1 \wedge  \psi_2))\\
			&=\Phi\cpt((-\cpt \psi_1)\wedge (-\cpt\psi_2))  \\
			&= (\Phi\cpt(-\cpt \psi_1))\wedge (\Phi\cpt(-\cpt\psi_2)) \\
			&=(\Phi\circ{\sf t}_*\cpt\psi_1)\wedge( \Phi\circ{\sf t}_*\cpt\psi_2).
		\end{align*} 
		
		{\bf Step 3}. Similar to Step 2, one verifies that for all $r$ of $[0,1]$ and all coweight $\psi$  of $X$,  it holds that $$\Phi\circ{\sf t}_* \cpt(r\ra \psi)= r\ra\Phi\circ{\sf t}_*\cpt\psi.$$ 
		
		Therefore, $\Phi\circ{\sf t}_*$ is a flat weight  of $X$.
		
		(ii) The functor \[\overline{f}\colon \CF X\lra Y, \quad \phi\mapsto {\colim}_ \phi  f  
		\] clearly  extends $f$. It remains to check that $\overline{f}$ preserves colimits of flat weights. It is clear that $\overline{f}$ is the composite $\colim\circ  f_\exists\colon\CF X\lra\CF Y\lra Y$. Since $\colim\colon\CF Y\lra Y$ is left adjoint to  ${\sf t}\colon Y\lra\CF Y$,  it preserves colimits. Since $\CF X$ and $\CF Y$ are   closed in $\CP X$ and $\CP Y$  under formation of colimits of flat weights, respectively, and $f_\exists\colon\CP X\lra\CP Y$ preserves all colimits, it follows that the composite $\overline{f}$ preserves colimits of all flat weights.
		Uniqueness of $\overline{f}$ follows from  that  each flat weight $\phi$ of $X$ is the colimit of ${\sf t}\colon X\lra\CF X$ weighted by $\phi$.\end{proof} 
	
	For conically flat weights, there is a similar result. For each real-enriched category $X$, let $$\CF_{\rm c} X$$ be the subcategory of $\CP X$ composed  of conically flat weights of $X$. The assignment $X\mapsto \CF_{\rm c} X$ defines a functor $\CF_{\rm c}\colon \QOrd\lra\QOrd$, which is a subfunctor of the presheaf functor $\CP\colon \QOrd\lra\QOrd$.

	\begin{thm}\label{univeral property of FCX}Suppose $X$ is a real-enriched category. Then \begin{enumerate}[label={\rm(\roman*)}] 
			\item   $\CF_{\rm c} X$ is conically flat complete. \item For each functor $f\colon X\lra Y$ with $Y$ conically flat complete, there is a unique   functor $\overline{f}\colon\CF_{\rm c} X\lra Y$ that extends $f$ and preserves colimits of conically flat weights.  
	\end{enumerate} \end{thm}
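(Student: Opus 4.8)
The plan is to follow the proof of Theorem \ref{univeral property of FX} almost verbatim, the only change being that the definition of conical flatness is carved out by conditions (a) and (b) alone, so the step dealing with condition (c) simply disappears. As before, I would write $\mathfrak{i}\colon\CF_{\rm c}X\lra\CP X$ for the inclusion and ${\sf t}\colon X\lra\CF_{\rm c}X$ for the corestriction of the Yoneda embedding, so that $\mathfrak{i}\circ{\sf t}=\sy$. Everything hinges on showing that $\CF_{\rm c}X$ is closed in $\CP X$ under formation of colimits of conically flat weights.

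For (i), given a conically flat weight $\Phi$ of $\CF_{\rm c}X$, Example \ref{sup in PX} together with full faithfulness of ${\sf t}$ gives ${\colim}_\Phi\mathfrak{i}=\Phi\circ\mathfrak{i}^*\circ\mathfrak{i}_*\circ{\sf t}_*=\Phi\circ{\sf t}_*$, exactly as in the flat case, so it remains to verify that $\Phi\circ{\sf t}_*$ satisfies (a) and (b). Inhabitedness (a) is the one-line computation $\bv_{x\in X}(\Phi\circ{\sf t}_*)(x)=\bv_{\phi\in\CF_{\rm c}X}(\Phi(\phi)\with\bv_{x\in X}\phi(x))=1$. For (b) I would introduce, for each coweight $\psi$ of $X$, the coweight $-\cpt\psi\colon\CF_{\rm c}X\lra\sV$ of $\CF_{\rm c}X$ and record the identity $\Phi\cpt(-\cpt\psi)=\Phi\circ{\sf t}_*\cpt\psi$; since every $\phi\in\CF_{\rm c}X$ preserves binary meets of coweights one has $(-\cpt\psi_1)\wedge(-\cpt\psi_2)=-\cpt(\psi_1\wedge\psi_2)$, and then conical flatness of $\Phi$ yields $\Phi\circ{\sf t}_*\cpt(\psi_1\wedge\psi_2)=(\Phi\circ{\sf t}_*\cpt\psi_1)\wedge(\Phi\circ{\sf t}_*\cpt\psi_2)$. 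The point worth flagging is that no analogue of ``Step 3'' is needed, because condition (c) is not part of the definition of conical flatness; this is the only place where the argument genuinely diverges from the flat case, and it diverges by being shorter.

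For (ii), I would define $\overline{f}\colon\CF_{\rm c}X\lra Y$ by $\phi\mapsto{\colim}_\phi f$; it extends $f$ because ${\sf t}$ is fully faithful, and it coincides with the composite $\colim\circ f_\exists\colon\CF_{\rm c}X\lra\CF_{\rm c}Y\lra Y$. Here $\colim\colon\CF_{\rm c}Y\lra Y$ is the left adjoint of ${\sf t}\colon Y\lra\CF_{\rm c}Y$, which exists precisely because $Y$ is conically flat complete and hence preserves colimits; $f_\exists$ preserves all colimits and, $\CF_{\rm c}$ being a subfunctor of $\CP$, restricts to a functor $\CF_{\rm c}X\lra\CF_{\rm c}Y$; and part (i) guarantees that colimits of conically flat weights in $\CF_{\rm c}X$ and $\CF_{\rm c}Y$ are computed as in $\CP X$ and $\CP Y$. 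Combining these three facts shows $\overline{f}$ preserves colimits of conically flat weights, and uniqueness follows from the fact that each conically flat weight $\phi$ is the colimit of ${\sf t}$ weighted by $\phi$. I expect no serious obstacle: the content of the theorem reduces to the meet-preservation bookkeeping in (i), which is purely formal once the identity $\Phi\cpt(-\cpt\psi)=\Phi\circ{\sf t}_*\cpt\psi$ is established.
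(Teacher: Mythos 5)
Your proposal is correct and matches the paper's intent exactly: the paper states this theorem without proof as ``a similar result'' to Theorem \ref{univeral property of FX}, and the intended argument is precisely your adaptation — compute ${\colim}_\Phi\mathfrak{i}=\Phi\circ{\sf t}_*$ via Example \ref{sup in PX}, verify conditions (a) and (b) using the identity $\Phi\cpt(-\cpt\psi)=\Phi\circ{\sf t}_*\cpt\psi$, drop the cotensor step, and run the same $\overline{f}=\colim\circ f_\exists$ argument for the universal property. The only nitpick is your justification that $\overline{f}$ extends $f$ ``because ${\sf t}$ is fully faithful''; the actual reason is that $\sy_X(x)\circ f^*=\sy_Y(f(x))$ is representable, so ${\colim}_{{\sf t}(x)}f=f(x)$, but this does not affect the correctness of the argument.
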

	
	\section{Smyth completeness} \label{Smyth complete}

	Smyth completeness originated in the works of Smyth \cite{Smyth88,Smyth94} on quasi-uniform spaces. The following postulation, which extends that of Smyth complete quasi-metric spaces in  Goubault-Larrecq \cite{Goubault},  K\"{u}nzi and Schellekens \cite{KS2002}, is based on the characterization of Smyth complete quasi-uniform spaces in S\"{u}nderhauf \cite{Sunderhauf1995}.
	
	\begin{defn}A real-enriched category  is Smyth complete if each of its forward Cauchy nets  converges uniquely in the open ball topology of its symmetrization. \end{defn}
	
	Proposition \ref{limit in open ball top}  implies that if a net of $X$ converges in the open ball topology of its symmetrization, then it is a Cauchy net of $X$.  Therefore,
	
	\begin{prop}\label{FC in Smyth is C} If a real-enriched category $X$ is Smyth complete, then it is separated and all of its forward Cauchy nets are Cauchy.  \end{prop}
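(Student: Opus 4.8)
The plan is to read both conclusions straight off the definition of Smyth completeness, whose two ingredients---\emph{existence} and \emph{uniqueness} of limits of forward Cauchy nets in the open ball topology of the symmetrization---supply the two halves of the statement. The one external input I would use is the remark recorded just above the proposition: by Proposition \ref{limit in open ball top}, any net that converges in the open ball topology of the symmetrization is automatically a Cauchy net of $X$. The whole argument is then a direct translation, with no genuine calculation beyond unfolding a single $\min$.

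First I would settle the claim that every forward Cauchy net is Cauchy. Let $\{x_i\}_{i\in D}$ be forward Cauchy. Since $X$ is Smyth complete, $\{x_i\}_{i\in D}$ converges in the open ball topology of the symmetrization of $X$; by the remark above, such convergence forces $\{x_i\}_{i\in D}$ to be a Cauchy net. This uses only the existence clause of Smyth completeness together with Proposition \ref{limit in open ball top}, so it is immediate.

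Next I would establish separatedness by exploiting the uniqueness clause. Suppose $a$ and $b$ are isomorphic, that is $X(a,b)=1=X(b,a)$, and consider the constant net at $a$, which is trivially forward Cauchy. Reading Proposition \ref{limit in open ball top} off the symmetrization, where $S(\alpha)(x,y)=\min\{\alpha(x,y),\alpha(y,x)\}$, this net converges to $a$ because $S(\alpha)(a,a)=1$, and it \emph{also} converges to $b$ because $S(\alpha)(b,a)=\min\{X(b,a),X(a,b)\}=1$. Since Smyth completeness demands that a forward Cauchy net have a \emph{unique} limit in this topology, we conclude $a=b$, whence $X$ is separated.

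I do not expect a serious obstacle: both parts are formal consequences of the definition. The only point requiring a little care is the correct unfolding of convergence in the symmetrized topology---one must extract both $\sup_{i}\inf_{j\geq i}X(x,x_j)=1$ and $\sup_{i}\inf_{j\geq i}X(x_j,x)=1$ from the single condition $\sup_{i}\inf_{j\geq i}S(\alpha)(x,x_j)=1$---and this is precisely the place where the remark preceding the proposition (itself resting on transitivity and the continuity of $\with$) does the real work for the first claim.
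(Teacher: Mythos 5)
Your proposal is correct and follows essentially the same route as the paper, which deduces the proposition directly from the remark that convergence in the open ball topology of the symmetrization forces a net to be Cauchy (via Proposition \ref{limit in open ball top} and continuity of $\with$). The paper leaves the separatedness half implicit, and your constant-net argument exploiting the uniqueness clause of Smyth completeness is exactly the intended way to fill it in.
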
	
	
	\begin{exmp}\label{When V is Smyth complete} This example concerns Smyth completeness of the real-enriched categories  ${\sf V}=([0,1],\alpha_L)$ and ${\sf V}^{\rm op}=([0,1],\alpha_R)$.  We distinguish three cases.  
		
		{Case 1}. The continuous t-norm $\&$ is not Archimedean. Pick some $a\in[0,1]$ with $0<a\leq a^+<1$, where $a^+$ is the least idempotent element in $[a,1]$. Since $\bv_{x<a}(a\ra x)\leq a^+$, then $[a,1]$ is open in $\sV$ and $[0,a]$ is open in $\sV^{\rm op}$, hence the singleton set $\{a\}$ is open in the symmetrization of $\sV$.  Since $\{a-1/n\}_n$ is forward Cauchy in $\sV$ and $\{a+1/n\}_n$ is forward Cauchy in $\sV^{\rm op}$, it follows that neither $\sV$ nor $\sV^{\rm op}$ is Smyth complete.
		
		{Case 2}. $\&$ is isomorphic to the {\L}ukasiewicz t-norm. In this case, the open ball topology of $\sV$ is $\{\varnothing,[0,1]\}\cup\{(a,1]\mid a<1\}$, the open ball topology of $\sV^{\rm op}$ is $\{\varnothing,[0,1]\}\cup\{[0,a)\mid a>0\}$.  It is readily verified that a net $\{x_i\}_{i\in D}$ is forward Cauchy in $\sV$ if and only if it is  Cauchy in the usual sense, if and only if it  is forward Cauchy in $\sV^{\rm op}$. Thus, both $\sV$ and $\sV^{\rm op}$ are Smyth complete. 
		
		{Case 3}. $\&$ is isomorphic to the product t-norm. In this case, the open ball topology of $\sV$ is $\{\varnothing,[0,1]\}\cup\{(a,1]\mid a<1\}$, the open ball topology of $\sV^{\rm op}$ is $\{\varnothing,\{0\},[0,1]\}\cup\{[0,a)\mid a>0\}$. Since the sequence  $\{1/n\}_{n\geq1}$ is forward Cauchy  in $\sV^{\rm op}$, but not convergent in its symmetrization, $\sV^{\rm op}$ is not Smyth complete. Since a net $\{x_i\}_{i\in D}$ is forward Cauchy in $\sV$ if and only if it is eventually constant or converges (in the usual sense) to some element other than $0$, it follows that $\sV$ is Smyth complete. \end{exmp} 
	
	

	The following theorem says that Smyth completeness of a real-enriched category can be characterized purely in terms of its categorical structure, without resort to its symmetrization and   open ball topology.
	
	\begin{thm}\label{Smyth via bilimit} {\rm (Yu and Zhang \cite{YuZ2024})} For each real-enriched category $X$, the following are equivalent: \begin{enumerate}[label=\rm(\arabic*)] 
			\item $X$ is Smyth complete. \item Every forward Cauchy net $\{x_i\}_{i\in D}$ of $X$ has a unique bilimit.  \item $X$ is separated and every ideal of $X$ is representable.
			\item $X$ is Cauchy complete and every ideal of $X$ is a Cauchy weight.  
		\end{enumerate} In this case, $X$ is Yoneda complete. \end{thm}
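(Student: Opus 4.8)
The plan is to run the proof as a chain of pairwise equivalences $(1)\Leftrightarrow(2)\Leftrightarrow(3)\Leftrightarrow(4)$, translating at each step between the topological language of Smyth completeness and the categorical language of ideals, Cauchy weights and (bi)limits, using the dictionary assembled in the previous two sections. The organizing principle is that, under any of these hypotheses, a forward Cauchy net is forced to be a \emph{Cauchy} (biCauchy) net, and once a net is Cauchy, Lemma~\ref{bilimit of Cauchy net} identifies ``convergence in the open ball topology of the symmetrization'' with ``being a bilimit''. So every implication really hinges on first closing the gap between being forward Cauchy and being Cauchy.

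For $(1)\Leftrightarrow(2)$ I would first record the elementary observation that a net with a bilimit $a$ is automatically Cauchy: from $X(x_j,x_k)\geq X(a,x_k)\with X(x_j,a)$ and directedness of $D$ one gets $\sup_{i}\inf_{j,k\geq i}X(x_j,x_k)\geq\big(\sup_i\inf_{k\geq i}X(a,x_k)\big)\with\big(\sup_i\inf_{j\geq i}X(x_j,a)\big)=1$, the two factors being $1$ because $a$ is a bilimit. Granting this, if $X$ is Smyth complete then by Proposition~\ref{FC in Smyth is C} it is separated and every forward Cauchy net is Cauchy, so Lemma~\ref{bilimit of Cauchy net} converts unique convergence in the symmetrization into a unique bilimit, yielding $(2)$. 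Conversely, if every forward Cauchy net has a unique bilimit, then every such net is Cauchy by the observation above, separatedness follows by testing the constant net (whose sole bilimit must be unique), and Lemma~\ref{bilimit of Cauchy net} turns the unique bilimit back into unique convergence, giving $(1)$.

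The heart of the argument is $(2)\Leftrightarrow(3)$, and the key point to get right is that representability of the ideal generated by a forward Cauchy net encodes \emph{both} halves of the bilimit condition. By Proposition~\ref{characterization of  ideal} every ideal has the form $\phi=\sup_{i\in D}\inf_{j\geq i}X(-,x_j)$ for some forward Cauchy net, and conversely. If such $\phi$ is representable, say $\phi=\sy(a)=X(-,a)$, then on the one hand $\sup_i\inf_{j\geq i}X(x,x_j)=\phi(x)=X(x,a)$ is exactly the first bilimit equation, while on the other hand $a=\colim\phi$ (the colimit of a representable weight is its representing object), so Lemma~\ref{yoneda limit as colimits} makes $a$ a Yoneda limit, i.e. $X(a,y)=\sup_i\inf_{j\geq i}X(x_j,y)$, the second bilimit equation; hence $a$ is a bilimit, and with separatedness this gives $(3)\Rightarrow(2)$. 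In the other direction a bilimit $a$ satisfies $\phi(x)=X(x,a)$ by its first defining equation, so $\phi=\sy(a)$ is representable, and separatedness is inherited from $(2)$, giving $(2)\Rightarrow(3)$.

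Finally $(3)\Leftrightarrow(4)$ is bookkeeping with the inclusions of weight classes recorded earlier: Cauchy weights are ideals and representable weights are Cauchy. Thus ``separated and every ideal representable'' is interchangeable with ``every Cauchy weight representable (Cauchy completeness) together with every ideal Cauchy''. The closing assertion that $X$ is then Yoneda complete follows because $(2)$ supplies a bilimit for each forward Cauchy net, every bilimit is a Yoneda limit, and separatedness forces uniqueness; equivalently, by $(3)$ every ideal is representable, hence has the unique colimit $a$, and Theorem~\ref{Yoneda com via ideals} applies. The main obstacle I anticipate is precisely this forward-Cauchy-versus-Cauchy gap: all the topological input concerns forward Cauchy nets, whereas the clean dictionary of Lemma~\ref{bilimit of Cauchy net} and the bilimit notion live in the Cauchy world, so each direction must be opened by the ``bilimit $\Rightarrow$ Cauchy'' or Proposition~\ref{FC in Smyth is C} step before the categorical machinery can be brought to bear.
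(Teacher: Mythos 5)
Your proof is correct, and it draws on the same dictionary as the paper's own proof --- Proposition \ref{FC in Smyth is C}, Lemma \ref{bilimit of Cauchy net}, Proposition \ref{characterization of ideal}, Lemma \ref{yoneda limit as colimits} --- but it is organized along a genuinely different route. The paper establishes the single cycle $(1)\Rightarrow(2)\Rightarrow(3)\Rightarrow(4)\Rightarrow(1)$, whereas you prove three two-way equivalences, so your $(2)\Rightarrow(1)$, $(3)\Rightarrow(2)$ and $(4)\Rightarrow(3)$ have no direct counterpart in the paper and must be (and are) supplied explicitly. The substantive divergence is in $(2)\Leftrightarrow(3)$: the paper's $(2)\Rightarrow(3)$ runs through the Cauchy-weight machinery --- the bilimit makes the generating net a Cauchy net, Proposition \ref{Cauchy net implies Cauchy weight} then makes the ideal a Cauchy weight, the argument of Proposition \ref{colimit =bilimit for Cauchy net} produces its colimit, and representability follows via Proposition \ref{absolute sup} --- whereas you read representability straight off the first bilimit equation $\phi(x)=\sup_{i}\inf_{j\geq i}X(x,x_j)=X(x,a)$, and recover the second bilimit equation in the converse direction from Lemma \ref{yoneda limit as colimits}; adjoint distributors then enter only as bookkeeping in your $(3)\Leftrightarrow(4)$. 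Your route is more elementary at this node, at the cost of six implications instead of four; the paper's detour through Cauchy weights is exactly what makes its $(3)\Rightarrow(4)$ a one-word step and its $(4)\Rightarrow(1)$ short. Two details you make explicit that the paper leaves tacit are worth noting: the elementary observation that any net possessing a bilimit is automatically Cauchy (the paper uses this only implicitly, via ``putting $x=a$''), and the derivation of separatedness in $(2)$ by testing constant nets, which is needed for $(2)\Rightarrow(3)$ and which the paper's proof does not address.
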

	
	\begin{proof} $(1)\Rightarrow(2)$ By Proposition \ref{FC in Smyth is C}, every forward Cauchy net of a Smyth complete real-enriched category is a Cauchy net, then the conclusion follows from Lemma \ref{bilimit of Cauchy net}.

		$(2)\Rightarrow(3)$ It suffices to show that every ideal $\phi$ of $X$ is a Cauchy weight and has a colimit. By Proposition \ref{characterization of ideal}  there is a forward Cauchy net $\{x_i\}_{i\in D}$ of $X$ such that $$\phi=\sup_{i\in D}\inf_{j\geq i} X(-,x_j).$$ By assumption, there is some $a\in X$ such that for all $x\in X$, \[\sup_{i\in D}\inf_{j\geq i}X(x,x_j)=X(x,a), \quad   \sup_{i\in D}\inf_{j\geq i}X(x_j,x)=X(a,x).  \]   
		Putting $x=a$ one sees that $\{x_i\}_{i\in D}$ is a Cauchy net  of $X$, then $\phi$ is a Cauchy weight by Lemma \ref{Cauchy net implies Cauchy weight}, and consequently,  $a$ is a colimit of  $\phi$ by the argument of Proposition \ref{colimit =bilimit for Cauchy net}.

		$(3)\Rightarrow(4)$  Obvious.
		
		$(4)\Rightarrow(1)$ Suppose $\{x_i\}_{i\in D}$ is a forward Cauchy net  of $X$. Since every ideal of $X$ is a Cauchy weight, by Lemma \ref{Cauchy net implies Cauchy weight}   the net $\{x_i\}_{i\in D}$ is Cauchy, so it has a unique bilimit by Cauchy completeness of $X$ and the argument of Proposition \ref{colimit =bilimit for Cauchy net}, then by  Lemma \ref{bilimit of Cauchy net}, it converges uniquely in the open ball topology of the symmetrization of $X$. 
	\end{proof} 
	
	\begin{cor}A Yoneda complete real-enriched category $X$ is Smyth complete if and only if $\sy\colon X\lra\CI X$ is Yoneda continuous. \end{cor}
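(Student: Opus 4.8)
The plan is to reduce both sides of the equivalence to the single statement that every forward Cauchy net of $X$ has a bilimit, invoking Theorem \ref{Smyth via bilimit} on the Smyth side and an explicit computation of Yoneda limits in $\CI X$ on the continuity side. First I would fix a forward Cauchy net $\{x_i\}_{i\in D}$ of $X$; since $X$ is Yoneda complete it has a unique Yoneda limit $b$, characterized by $X(b,y)=\sup_{i\in D}\inf_{j\geq i}X(x_j,y)$ for all $y\in X$. Because $\sy\colon X\lra\CI X$ is fully faithful, $\{\sy(x_i)\}_{i\in D}$ is again forward Cauchy in $\CI X$. By Lemma \ref{Yoneda limits in PX} its Yoneda limit computed in $\CP X$ is the weight $\phi\coloneqq\sup_{i\in D}\inf_{j\geq i}X(-,x_j)$; by Proposition \ref{characterization of  ideal} this $\phi$ is an ideal generated by $\{x_i\}_{i\in D}$, and since $\CI X$ is closed in $\CP X$ under Yoneda limits of forward Cauchy nets (Theorem \ref{univeral property of IX}(i)), $\phi$ is also the Yoneda limit of $\{\sy(x_i)\}_{i\in D}$ in $\CI X$. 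As $\CP X$, hence $\CI X$, is separated, this Yoneda limit is unique, so $\sy$ preserves the Yoneda limit of $\{x_i\}_{i\in D}$ precisely when $\sy(b)=\phi$, that is, when $X(x,b)=\sup_{i\in D}\inf_{j\geq i}X(x,x_j)$ for all $x\in X$.

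The key observation I would then isolate is that these two displayed equalities are exactly the two clauses in the definition of a bilimit: the equality $X(b,y)=\sup_{i}\inf_{j\geq i}X(x_j,y)$ expresses that $b$ is a Yoneda limit, while the equality $\sy(b)=\phi$, i.e. $X(x,b)=\sup_{i}\inf_{j\geq i}X(x,x_j)$, is the remaining clause. Hence, granting that $b$ is already a Yoneda limit of $\{x_i\}_{i\in D}$, the embedding $\sy$ preserves that Yoneda limit if and only if $b$ is in fact a bilimit of $\{x_i\}_{i\in D}$.

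With this bookkeeping in place both implications are immediate. For necessity I would argue that if $X$ is Smyth complete, then by Theorem \ref{Smyth via bilimit} every forward Cauchy net has a bilimit $b$; a bilimit is in particular a Yoneda limit, and Yoneda limits are unique, so $b$ is the Yoneda limit, whence $\sy(b)=\phi$ by the observation and $\sy$ preserves every Yoneda limit of a forward Cauchy net, i.e. is Yoneda continuous. For sufficiency, if $\sy$ is Yoneda continuous then for each forward Cauchy net the Yoneda limit $b$ satisfies $\sy(b)=\phi$, so $b$ is a bilimit; since $X$ is separated this bilimit is unique, and Theorem \ref{Smyth via bilimit} ((2)$\Rightarrow$(1)) yields Smyth completeness.

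The main obstacle is not computational but conceptual: the whole argument rests on the clean identification of the Yoneda limit of $\{\sy(x_i)\}_{i\in D}$ inside $\CI X$ with the ideal $\phi$, together with the matching of the representability equation $\sy(b)=\phi$ against the ``forward'' clause of the bilimit definition. I would take particular care that this Yoneda limit is genuinely formed in $\CI X$ rather than merely in $\CP X$, which is precisely what the closure statement of Theorem \ref{univeral property of IX}(i) provides; once that is secured the equivalence falls out with no further work.
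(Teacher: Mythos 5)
Your proof is correct. The paper states this corollary without an explicit proof, as a direct consequence of Theorem \ref{Smyth via bilimit}, and your argument is a valid way to extract it --- but you route it through condition (2) of that theorem (unique bilimits of forward Cauchy nets), which forces you to rebuild the dictionary between nets and weights: you must identify the Yoneda limit of $\{\sy(x_i)\}_{i\in D}$ inside $\CI X$ with the ideal $\phi=\sup_{i\in D}\inf_{j\geq i}X(-,x_j)$ (via Lemma \ref{Yoneda limits in PX}, Proposition \ref{characterization of  ideal} and the closure statement of Theorem \ref{univeral property of IX}), and then match the equation $\sy(b)=\phi$ against the missing clause of the bilimit definition. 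The more economical route targets condition (3) of the theorem (every ideal is representable) and works entirely at the level of weights: Yoneda continuity of a functor is equivalent to preservation of colimits of ideals; the colimit of $\sy$ weighted by an ideal $\phi$, computed in $\CI X$, is always $\phi$ itself (colimit-density of the Yoneda embedding, Proposition \ref{y is colimit dense}, together with closure of $\CI X$ under such colimits); and since Yoneda completeness gives every ideal a colimit, preservation says exactly $\sy(\colim\phi)=\phi$, i.e.\ that $\phi$ is representable, which with separation is condition (3). The two derivations carry the same content --- your equation $\sy(b)=\phi$, with $b=\colim\phi$ by Lemma \ref{yoneda limit as colimits}, is literally the representability of $\phi$ --- but the weight-level version is shorter and avoids re-proving inside $\CI X$ facts that the cited lemmas already package.
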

	
	\begin{cor} For each real-enriched category $X$, the following are equivalent: \begin{enumerate}[label=\rm(\arabic*)] 
			\item $X$ is Cauchy complete. \item The symmetrization of $X$ is Smyth complete.  \item The symmetrization of $X$ is Cauchy complete.  
	\end{enumerate} \end{cor}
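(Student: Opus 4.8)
The plan is to reduce all three conditions to the net-theoretic characterizations already established, using the single elementary observation that the biCauchy condition on a net is insensitive to passing to the symmetrization. Write $X_s=(X,S(\alpha))$ for the symmetrization. Then $X_s$ is symmetric and its own symmetrization is $X_s$ again, so the open ball topology of the symmetrization of $X$ and that of the symmetrization of $X_s$ are one and the same topology $\tau_s$ on the underlying set. First I would record the key bookkeeping lemma: a net $\{x_i\}_{i\in D}$ is (bi)Cauchy in $X$ if and only if it is (bi)Cauchy in $X_s$. Indeed, since $\min\{X(x_j,x_k),X(x_k,x_j)\}$ is infimized over \emph{all} ordered pairs $j,k\geq i$, one has $\inf_{j,k\geq i}S(\alpha)(x_j,x_k)=\inf_{j,k\geq i}X(x_j,x_k)$, so the two biCauchy conditions agree verbatim. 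Moreover, because $X_s$ is symmetric, the forward Cauchy condition $\sup_i\inf_{k\geq j\geq i}X_s(x_j,x_k)=1$ coincides with the biCauchy condition, so in $X_s$ the forward Cauchy nets are exactly the Cauchy nets (as already noted in the proof of Corollary \ref{orde pres is limi cont}).

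For $(1)\Leftrightarrow(3)$ I would invoke Theorem \ref{complete metric}: a real-enriched category is Cauchy complete precisely when each of its Cauchy sequences converges uniquely in the open ball topology of its symmetrization. Applying this to $X$ gives unique convergence of the Cauchy sequences of $X$ in $\tau_s$; applying it to $X_s$ gives unique convergence of the Cauchy sequences of $X_s$ in $\tau_s$. By the lemma the two families of Cauchy sequences coincide, and topological convergence depends only on $\tau_s$, so the two conditions are literally identical. Hence $X$ is Cauchy complete if and only if $X_s$ is.

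For $(2)\Leftrightarrow(3)$ I would use the characterization of Smyth completeness in Theorem \ref{Smyth via bilimit}, in the form that a real-enriched category is Smyth complete if and only if it is Cauchy complete and every one of its ideals is a Cauchy weight. The point is that the second clause is automatic for the symmetric category $X_s$: by Proposition \ref{characterization of ideal} every ideal of $X_s$ has the form $\sup_{i\in D}\inf_{j\geq i}X_s(-,x_j)$ for some forward Cauchy net $\{x_i\}_{i\in D}$, which by the lemma is a Cauchy net, whence the weight is Cauchy by Proposition \ref{Cauchy net implies Cauchy weight}. Thus Smyth completeness of $X_s$ collapses to Cauchy completeness of $X_s$, giving $(2)\Leftrightarrow(3)$.

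The main obstacle---really the only nontrivial point---is making the ``same topology, same Cauchy sequences'' bookkeeping for $(1)\Leftrightarrow(3)$ airtight: one must verify that passing from $X$ to $X_s$ neither creates nor destroys Cauchy sequences and leaves the convergence topology fixed, which is exactly what the infimum identity for $S(\alpha)$ delivers. Everything else is a direct citation of the net-based theorems proved earlier, so no further calculation is needed.
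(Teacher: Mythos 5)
Your proof is correct, and since the paper states this corollary without its own proof, your assembly is exactly the intended derivation from the surrounding results: $(1)\Leftrightarrow(3)$ follows from Theorem \ref{complete metric} once one observes that $X$ and its symmetrization $X_s$ have the same Cauchy sequences and that both theorems refer to the same open ball topology (the symmetrization of $X_s$ being $X_s$ itself), while $(2)\Leftrightarrow(3)$ follows from condition (4) of Theorem \ref{Smyth via bilimit} together with the fact that every ideal of the symmetric category $X_s$ is a Cauchy weight, via Proposition \ref{characterization of ideal} and Proposition \ref{Cauchy net implies Cauchy weight}. The only wording to tighten is your claim that for $X_s$ the forward Cauchy condition ``coincides'' with the biCauchy condition: for general nets these are equivalent rather than verbatim identical, since incomparable indices $j,k\geq i$ are not covered by the forward infimum and one needs directedness, transitivity and continuity of $\with$ to pass between them; but this equivalence is precisely what the paper asserts in the proof of Corollary \ref{orde pres is limi cont}, which you cite, so nothing is missing.
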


	A separated real-enriched category $X$ is \emph{Smyth completable} if there is a fully faithful functor $f\colon X\lra Y$ with $Y$ Smyth complete.

	\begin{prop}\label{characterizing Smyth completable} {\rm (Yu and Zhang \cite{YuZ2024})} For a separated real-enriched category $X$, the following are equivalent:  \begin{enumerate}[label={\rm(\arabic*)}] 
			\item $X$ is Smyth completable.  \item Every ideal of $X$ is a Cauchy weight. \item Every forward Cauchy net of $X$ is a Cauchy net.  \item Every ideal of $\CI X$ is representable, hence $\CI(\CI X)=\CI X$.  \end{enumerate} \end{prop}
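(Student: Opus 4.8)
The plan is to establish the cycle $(1)\Rightarrow(3)\Leftrightarrow(2)$ together with $(3)\Rightarrow(4)\Rightarrow(1)$. The conceptual backbone is that condition (4) is nothing but the assertion that $\CI X$ is Smyth complete: since $\CI X$ is separated (being a subcategory of the separated $\CP X$), Theorem \ref{Smyth via bilimit} identifies ``$\CI X$ is Smyth complete'' with ``every ideal of $\CI X$ is representable'', and the idempotency $\CI(\CI X)=\CI X$ is then immediate. Moreover the restricted Yoneda embedding ${\sf t}\colon X\lra\CI X$ is fully faithful, so $\CI X$ being Smyth complete exhibits $X$ as Smyth completable. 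Thus (1) and (4) are tied together at the outset, and the genuine content is transferring the ``forward Cauchy $=$ Cauchy'' property between $X$ and $\CI X$.

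The two cheap steps come first. For $(2)\Leftrightarrow(3)$: by Proposition \ref{characterization of  ideal} the ideals of $X$ are exactly the weights $\sup_{i\in D}\inf_{j\geq i}X(-,x_j)$ generated by forward Cauchy nets, and by Proposition \ref{Cauchy net implies Cauchy weight} such a weight is Cauchy if and only if the generating net is a Cauchy net; hence (2) and (3) are the same statement. For $(1)\Rightarrow(3)$: given a fully faithful $f\colon X\lra Y$ with $Y$ Smyth complete and a forward Cauchy net $\{x_i\}_{i\in D}$ of $X$, the image $\{f(x_i)\}_{i\in D}$ is forward Cauchy in $Y$ because $f$ is a functor, hence a Cauchy net of $Y$ by Proposition \ref{FC in Smyth is C}; full faithfulness, i.e.\ $Y(f(x_j),f(x_k))=X(x_j,x_k)$, then transports Cauchyness back to $\{x_i\}_{i\in D}$.

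The crux is $(3)\Rightarrow(4)$, which I would prove by showing $\CI X$ satisfies clause (4) of Theorem \ref{Smyth via bilimit}. That $\CI X$ is Cauchy complete holds for every $X$: it is Yoneda complete (Theorem \ref{univeral property of IX}), so by Theorem \ref{Yoneda com via ideals} each of its ideals—in particular each of its Cauchy weights—has a colimit, and a Cauchy weight with a colimit is representable (Proposition \ref{absolute sup}). The real work is to prove that, assuming (2), every forward Cauchy net $\{\phi_i\}_{i\in D}$ of $\CI X$ is Cauchy. Here is the route: its Yoneda limit $\phi_\infty=\sup_i\inf_{j\geq i}\phi_j$, computed pointwise by Lemma \ref{Yoneda limits in PX}, lies in $\CI X$ (Theorem \ref{univeral property of IX}), hence is an ideal of $X$ and so, by (2), a Cauchy weight with left adjoint coweight $\psi_\infty=X\swarrow\phi_\infty$. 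The Yoneda-limit equation gives $\sup_i\inf_{j\geq i}\sub_X(\phi_j,\phi_\infty)=1$ for free (set the test weight equal to $\phi_\infty$); the point is the ``backward'' estimate $\sup_i\inf_{j\geq i}\sub_X(\phi_\infty,\phi_j)=1$. Using Lemma \ref{adjoint_arrow_calculation} to write $\sub_X(\phi_\infty,\phi_j)=\phi_j\circ\psi_\infty=\sup_{x}\phi_j(x)\with\psi_\infty(x)$, I would expand $1=\phi_\infty\circ\psi_\infty=\sup_x\bigl(\sup_i\inf_{j\geq i}\phi_j(x)\bigr)\with\psi_\infty(x)$, push the scalar $\psi_\infty(x)$ inside by left continuity of $\with$, interchange the two suprema, and then use $\sup\inf\leq\inf\sup$ to land on $\sup_i\inf_{j\geq i}\sup_x\phi_j(x)\with\psi_\infty(x)$. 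Finally the triangle inequality $\sub_X(\phi_j,\phi_k)\geq\sub_X(\phi_\infty,\phi_k)\with\sub_X(\phi_j,\phi_\infty)$ in $\CI X$, together with the two estimates, yields $\sup_i\inf_{j,k\geq i}\sub_X(\phi_j,\phi_k)=1$, so the net is Cauchy. This supremum/infimum interchange, powered by Cauchyness of the Yoneda limit $\phi_\infty$, is the heart of the proof and the step I expect to demand the most care.

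Finally $(4)\Rightarrow(1)$ closes the loop: assuming (4), every ideal of $\CI X$ is representable, so by Theorem \ref{Smyth via bilimit} the separated category $\CI X$ is Smyth complete, and since ${\sf t}\colon X\lra\CI X$ is fully faithful, $X$ is Smyth completable. Combined with $(2)\Leftrightarrow(3)$, $(1)\Rightarrow(3)$ and $(3)\Rightarrow(4)$, this renders all four conditions equivalent, and the idempotency $\CI(\CI X)=\CI X$ recorded in (4) follows from representability of the ideals of $\CI X$.
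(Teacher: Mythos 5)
Your proof is correct, but it diverges from the paper's at two of the four implications, so a comparison is in order. For $(1)\Rightarrow$(forward-Cauchy-is-Cauchy), the paper works at the level of distributors, proving $(1)\Rightarrow(2)$ directly: given a fully faithful $f\colon X\lra Y$ with $Y$ Smyth complete and an ideal $\phi$ of $X$, it pushes forward to the ideal (hence Cauchy weight) $\phi\circ f^*$ of $Y$, takes its left adjoint $\psi$, and verifies via $f^*\circ f_*=X$ that $f^*\circ\psi$ is a left adjoint of $\phi$; you instead prove $(1)\Rightarrow(3)$ at the level of nets, which is equally valid and arguably more elementary. The bigger difference is the bridge to $(4)$: the paper proves $(2)\Rightarrow(4)$ almost for free by observing that under $(2)$ one has $\CI X=\CC X$, and then using Lemma \ref{X is isomorphic to CX} (the isomorphism ${\sf c}_*\colon X\oto\CC X$ in the distributor category) to transport any ideal of $\CC X$ back to an ideal of $X$, which is Cauchy by hypothesis, and forward again, so that it is Cauchy on $\CC X$ and hence representable by Cauchy completeness of $\CC X$. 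You instead re-derive the key property for $\CI X$ by hand: pointwise Yoneda limit $\phi_\infty$ of a forward Cauchy net of $\CI X$, hypothesis $(2)$ to get a left adjoint $\psi_\infty=X\swarrow\phi_\infty$, the identity $\sub_X(\phi_\infty,\phi_j)=\phi_j\circ\psi_\infty$ from Lemma \ref{adjoint_arrow_calculation}, and a sup/inf interchange to get the backward estimate — I checked this computation and it is sound (the interchange only needs monotonicity, $\sup\inf\leq\inf\sup$, and left continuity of $\with$, all available). So your route trades the paper's structural shortcut (the Morita-type isomorphism $X\cong\CC X$ in $[0,1]$-{\bf Dist}) for a longer but self-contained liminf computation; the paper's argument is shorter and displays the conceptual reason $(2)\Rightarrow(4)$ holds, while yours avoids invoking Lemma \ref{X is isomorphic to CX} altogether.
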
 
	
	\begin{proof}The equivalence $(2)\Leftrightarrow(3)$ follows from Proposition \ref{characterization of ideal} and Lemma \ref{Cauchy net implies Cauchy weight}.
		
		$(1)\Rightarrow(2)$ Suppose  $f\colon X\lra Y$ is fully faithful with $Y$ being Smyth complete. Let $\phi$ be an ideal of $X$. Then $\phi\circ f^*$ is an ideal, hence a Cauchy weight of $Y$. Let $\psi\colon\star\oto Y$ be a left adjoint of $\phi\circ f^*\colon Y\oto\star$. We  show that $f^*\circ\psi\colon \star\oto X$ is a left adjoint of $\phi\colon X\oto\star$. First, since $\psi\dashv \phi\circ f^*$, then $\phi\circ(f^*\circ\psi) =(\phi\circ f^*)\circ\psi\geq1$. Next, \begin{align*}\psi\dashv \phi\circ f^* &\implies \psi\circ\phi\circ f^*\leq Y \\ &\implies \psi\circ\phi\leq f_* &(f^*\circ f_*=X) \\ &\implies (f^*\circ\psi)\circ\phi\leq X. &(f^*\circ f_*=X)\end{align*} Therefore, $f^*\circ\psi$ is a left adjoint of $\phi$.
		
		$(2)\Rightarrow(4)$ Since $\CI X=\CC X$ by assumption, it suffices to check that every ideal of $\CC X$ is representable. Suppose $\phi\colon\CC X\oto\star$ is an ideal of $\CC X$. By Lemma \ref{X is isomorphic to CX}, ${\sf c}_*\colon X\oto\CC X$ is an isomorphism in the category of distributors, it follows that $\phi\circ {\sf c}_*\colon X\oto\CC X\oto\star$ is an ideal, hence a Cauchy weight of $X$. Again by that ${\sf c}_*$ is an isomorphism, $\phi$ is a Cauchy weight of $\CC X$. Therefore, $\phi$ is representable since $\CC X$ is Cauchy complete.

		$(4)\Rightarrow(1)$ Since every ideal of $\CI X$ is representable, then $\CI X$ is Smyth complete and consequently, $X$ is Smyth completable. \end{proof}
	
	\begin{cor}A separated real-enriched category  is Smyth completable if and only if its Cauchy completion is Smyth complete. \end{cor}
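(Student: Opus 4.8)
The plan is to prove the two implications separately, observing that one of them is essentially definitional while the other merely repackages the characterizations already established. For the backward implication I would argue directly: by hypothesis $X$ is separated, and the embedding ${\sf c}\colon X\lra\CC X$ into its Cauchy completion is fully faithful by Theorem \ref{univeral property of CX}. Thus if $\CC X$ is Smyth complete, then ${\sf c}$ exhibits a fully faithful functor from $X$ into a Smyth complete real-enriched category, which is exactly the definition of $X$ being Smyth completable. No further work is needed in this direction.

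For the forward implication, suppose $X$ is Smyth completable. First I would invoke Proposition \ref{characterizing Smyth completable} to pass from completability to the internal condition that every ideal of $X$ is a Cauchy weight. Since every Cauchy weight is an ideal, this forces the two subcategories $\CC X$ and $\CI X$ of $\CP X$ to coincide. Next I would use the fourth clause of the same proposition, which asserts that every ideal of $\CI X$ is representable; under the identification $\CI X=\CC X$ this says precisely that every ideal of $\CC X$ is representable. Finally, since $\CC X$ is Cauchy complete by Theorem \ref{univeral property of CX} and hence separated, the equivalence of clauses (1) and (3) in Theorem \ref{Smyth via bilimit} lets me conclude that $\CC X$ is Smyth complete.

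The only genuine subtlety, and the step I would take care over, is the transfer of the characterizations from $X$ to $\CC X$: one must check that an ideal of $\CI X$ is the same datum as an ideal of $\CC X$ once these two real-enriched categories are recognized as equal. This holds because the notion of ideal in Definition \ref{defn of ideal} is formulated intrinsically from the enriched structure, and both $\CI X$ and $\CC X$ carry the restriction of the enrichment of $\CP X$; thus their coincidence as subcategories makes their ideals literally the same. Beyond this bookkeeping, the corollary is a formal consequence of Proposition \ref{characterizing Smyth completable} and Theorem \ref{Smyth via bilimit}, requiring no new computation.
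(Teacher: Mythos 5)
Your proof is correct, and it differs from the paper's in a meaningful structural way in the forward direction. The backward direction is identical to the paper's (``sufficiency is clear'': the unit ${\sf c}\colon X\lra\CC X$ is fully faithful into a Smyth complete category). For necessity, the paper takes an arbitrary fully faithful $f\colon X\lra Y$ with $Y$ Smyth complete, uses the universal property of the Cauchy completion (Theorem \ref{univeral property of CX}\thinspace(ii)) to extend it to a fully faithful $\overline{f}\colon\CC X\lra Y$, concludes that $\CC X$ is itself Smyth completable, and only then applies Proposition \ref{characterizing Smyth completable} (to $\CC X$) together with Cauchy completeness. You instead apply Proposition \ref{characterizing Smyth completable} to $X$ directly: clause (2) gives $\CI X=\CC X$ (since Cauchy weights are always ideals), clause (4) gives that every ideal of $\CI X=\CC X$ is representable, and Theorem \ref{Smyth via bilimit}\thinspace(3)$\Rightarrow$(1), together with separatedness of the Cauchy complete category $\CC X$, finishes the argument. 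Your route bypasses the extension theorem entirely; this is legitimate because the content you need is already packaged in clause (4) -- indeed the paper's proof of (2)$\Rightarrow$(4) is exactly where the identification $\CI X=\CC X$ and Cauchy completeness are exploited, so no circularity arises. What each approach buys: yours is more economical and stays internal to $X$; the paper's detour through $\overline{f}$ records the independently useful fact that any fully faithful embedding into a Smyth complete category induces a fully faithful embedding of the Cauchy completion. Your closing remark on the bookkeeping point (that ideals of $\CI X$ and of $\CC X$ are literally the same data once the two subcategories of $\CP X$ coincide) is exactly the right thing to check and is handled correctly.
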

	\begin{proof} Sufficiency is clear, for necessity assume that $f\colon X\lra Y$ is a fully faithful functor with $Y$ Smyth complete. Since $Y$ is Cauchy complete, there is a  unique functor  $\overline{f}\colon\CC X\lra Y$ that extends $f$. It is not hard to check that $\overline{f}$ is also fully faithful, hence $\CC X$ is Smyth completable. By Theorem \ref{characterizing Smyth completable} every ideal of $\CC X$ is Cauchy, hence representable by Cauchy completeness of $\CC X$, and consequently, $\CC X$ is Smyth complete. \end{proof}

	\section{The presheaf monad}
	
	In this section we show that the presheaf functor $\CP$   is a monad in $\QOrd$,  the subfunctors $\CC$ and $\CI$ are its submonads.
	
	For each real-enriched category $X$, let ${\sf m}_X$ be the functor $${\colim}\colon \CP\CP X\lra \CP X$$ that maps each weight $\Phi$ of $\CP X$ to its colimit. Then, ${\sf m}$ is a natural transformation $\CP^2\lra\CP$ and the triple  \[\mathbb{P}=(\CP,{\sf m},\sy)\] is a monad, where $\sy$ denotes the natural transformation from the identity functor to $\CP$ given by Yoneda embeddings. We call $\mathbb{P}$ the presheaf monad in $\QOrd$. Instead of checking directly that   $(\CP,{\sf m},\sy)$ satisfies the requirements for a monad, we show that it arises from an adjunction.

	Let \[\QSup \] denote the category with separated  and cocomplete real-enriched categories as objects and left adjoints as morphisms.

	Theorem \ref{free cocompletion} implies that the assignment \[f\colon X\lra Y ~ \mapsto ~ f_\exists\colon\CP  X\lra\CP Y\] defines a functor \[\CP\colon  \QOrd \lra \QSup \] that is left adjoint to the forgetful functor $$U\colon\QSup \lra \QOrd .$$    The monad in $\QOrd$ defined by the adjunction  $\CP\dashv U$ is precisely the presheaf monad $\mathbb{P}=(\CP,{\sf m},\sy)$. 
	
	The monad  $\mathbb{P}$ is a typical example of KZ-monads in a locally ordered category. A category $\mathscr{C}$ is \emph{locally ordered} if (i) the home-set $\mathscr{C}(A,B)$  is an ordered set for any objects $A$ and $B$; and (ii) the composition preserves order.  A 2-functor between locally ordered categories is a functor that preserves order on hom-sets.  The functor $\CP\colon\QOrd\lra\QOrd$ is clearly a 2-functor. Locally ordered categories are special cases of 2-categories,  we refer to Borceux \cite{Borceux1994a} or Lack \cite{Lack} for 2-categories and 2-functors.
	
	In a locally ordered category, we say that a morphism $f\colon A\lra B$ is left adjoint to a morphism $g\colon B\lra A$ if $g\circ f\geq 1_A$ and $f\circ g\leq 1_B$. In this case, we write $f\dashv g$. The following
	proposition, extracted from Kock \cite{Kock} and Z\"{o}berlein \cite{Zo76}, is taken from Hofmann \cite{Hof2013}.

	\begin{prop}\label{kz}
		Let $ (T,m,e)$ be a monad in a locally ordered category $\mathscr{C}$ with $T$  a $2$-functor.
		Then the following are equivalent:
		\begin{enumerate}[label={\rm(\arabic*)}] 
			\item $Te_X\leq e_{TX}$ for all objects $X$.
			\item  $Te_X\dashv m_X$ for all objects $X$.
			\item  $m_X\dashv e_{TX}$ for all objects $X$.
		\end{enumerate} 
	\end{prop}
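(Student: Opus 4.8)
The plan is to treat condition (1) as the hub and prove the two equivalences $(1)\Leftrightarrow(2)$ and $(1)\Leftrightarrow(3)$ separately, exploiting that in a locally ordered category an adjunction $f\dashv g$ is merely the conjunction of two inequalities. First I would observe that the two unit laws of the monad, $m_X\circ Te_X=1_{TX}=m_X\circ e_{TX}$, already supply one half of each candidate adjunction for free: for (2) the inequality $1_{TX}\le m_X\circ Te_X$ holds with equality, so (2) reduces to the single relation $Te_X\circ m_X\le 1_{T^2X}$; dually (3) reduces to $1_{T^2X}\le e_{TX}\circ m_X$. Thus, up to these free inequalities, each of the three statements amounts to a single order relation, and the whole proposition becomes a matter of deriving one inequality from another.

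For the directions $(2)\Rightarrow(1)$ and $(3)\Rightarrow(1)$ I would feed the unit law back into the assumed adjunction. Assuming (2), the chain
\[
Te_X \;\le\; Te_X\circ(m_X\circ e_{TX}) \;=\; (Te_X\circ m_X)\circ e_{TX}\;\le\; e_{TX}
\]
yields (1): the first step uses $1_{TX}\le m_X\circ e_{TX}$ (a unit law) together with the fact that composition preserves order, and the last step uses the counit $Te_X\circ m_X\le 1_{T^2X}$ of (2). Symmetrically, assuming (3) one computes $e_{TX}=(e_{TX}\circ m_X)\circ Te_X\ge Te_X$, now invoking the other unit law $m_X\circ Te_X=1_{TX}$ and the unit $1_{T^2X}\le e_{TX}\circ m_X$ of (3). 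These are genuinely one-line arguments once the bracketing is arranged correctly.

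The substance lies in $(1)\Rightarrow(2)$ and $(1)\Rightarrow(3)$, where I would rewrite the critical composite using naturality and then transport (1) along $T$. For (2): naturality of $m$ at $e_X\colon X\to TX$ gives $Te_X\circ m_X=m_{TX}\circ T^2e_X$; applying the $2$-functor $T$ to the instance $Te_X\le e_{TX}$ of (1) gives $T^2e_X\le Te_{TX}$, whence $Te_X\circ m_X\le m_{TX}\circ Te_{TX}=1_{T^2X}$ by the unit law at $TX$. For (3): naturality of $e$ at $m_X\colon T^2X\to TX$ gives $e_{TX}\circ m_X=Tm_X\circ e_{T^2X}$; the instance of (1) at the object $TX$ reads $Te_{TX}\le e_{T^2X}$, so $e_{TX}\circ m_X\ge Tm_X\circ Te_{TX}=T(m_X\circ e_{TX})=1_{T^2X}$.

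The main obstacle is not depth but bookkeeping: one must track the direction of each inequality (upper bounds for the counits, lower bounds for the units) and, crucially, apply the correct instance of (1) — sometimes the instance at $X$ after whiskering by $T$, sometimes the instance at $TX$ directly — pairing it with the matching unit law. A reassuring sanity check I would record is that the monad associativity law is never needed; only the two unit laws, naturality of $e$ and $m$, and $2$-functoriality of $T$ enter, which is exactly what one expects for a lax-idempotent (KZ) monad.
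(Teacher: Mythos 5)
Your proof is correct and takes essentially the same route as the paper's: one half of each adjunction is the free unit law, $(1)\Rightarrow(2)$ goes through naturality of $m$ at $e_X$ plus $2$-functoriality of $T$, $(2)\Rightarrow(1)$ composes the counit with $e_{TX}$, and $(1)\Leftrightarrow(3)$ is the dual computation, which the paper simply declares ``similar'' and you spell out correctly (via naturality of $e$ at $m_X$ and the instance of (1) at $TX$). No gaps; your observation that associativity of the monad is never used is also accurate.
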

	
	\begin{proof}$(1)\Rightarrow(2)$ Since $m_X\circ Te_X=1_{TX}$, it remains to check that $Te_X\circ m_X\leq 1_{T^2X}$.
		This is easy since
		$$Te_X\circ m_X=m_{TX}\circ T^2e_X\leq m_{TX}\circ Te_{TX}=1_{T^2X}.$$
		
		$(2)  \Rightarrow  (1)$ Since $Te_X\dashv m_X$, then
		$$Te_X=Te_X\circ m_X\circ e_{TX}\leq 1_{T^2X}\circ e_{TX}=e_{TX}.$$
		
		$(1)  \Leftrightarrow  (3)$  Similar to   $(1)  \Leftrightarrow  (2)$.
	\end{proof}
	
	A monad in a locally-ordered category is   of \emph{Kock-Z\"{o}berlein type}, or a KZ-monad, if it satisfies one (hence all) of the equivalent conditions in Proposition \ref{kz}. A KZ-monad is also called a \emph{KZ-doctrine} in the literature.  
	
	\begin{prop} Suppose $ (T,m,e)$ is a KZ-monad in a locally ordered category. Then,  for each object $X$ and each morphism $h\colon  TX\lra X$, the pair $(X,h)$ is a $T$-algebra if and only if $h\circ e_X=1_X$; in which case  $h\dashv e_X$.\end{prop}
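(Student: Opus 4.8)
The plan is to prove both directions by exploiting the KZ-monad structure, specifically the adjunction $m_X \dashv e_{TX}$ guaranteed by Proposition~\ref{kz}(3), together with the standard monad axioms. The statement to prove is an ``if and only if'' characterizing $T$-algebra structures among morphisms $h\colon TX \lra X$: the pair $(X,h)$ is a $T$-algebra precisely when $h\circ e_X = 1_X$, and in that case $h \dashv e_X$ automatically. I would begin by recalling that a $T$-algebra is by definition a morphism $h\colon TX \lra X$ satisfying the unit law $h\circ e_X = 1_X$ and the associativity law $h\circ m_X = h\circ Th$. The unit law is exactly the stated condition, so the content of the proposition is that in a KZ-monad the associativity law is \emph{automatic} once the unit law holds.

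\textbf{Forward direction.} The forward implication ($T$-algebra $\Rightarrow$ $h\circ e_X = 1_X$) is immediate, since the unit law is part of the definition of a $T$-algebra. So no work is needed there beyond citing the definition.

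\textbf{Converse direction.} For the converse, I would assume $h\circ e_X = 1_X$ and aim to derive the associativity law $h\circ m_X = h\circ Th$. The key observation is that the hypothesis $h\circ e_X = 1_X$ makes $h$ a split epimorphism with $e_X$ a section, and in the locally ordered setting it should let me establish $h \dashv e_X$ directly. To see $h\dashv e_X$, I need $e_X\circ h \leq 1_{TX}$ and $h\circ e_X \geq 1_X$; the latter is the hypothesis (in fact equality). For the former, I would use the KZ property: since $Te_X \leq e_{TX}$ (Proposition~\ref{kz}(1)) and $m_X \dashv e_{TX}$ (Proposition~\ref{kz}(3)), I can manipulate $e_X\circ h$ by applying $T$ and using naturality of $e$, namely $e_{TX}\circ h = Th \circ e_{TX}$ and $e_X \circ h = Th\circ e_{TX}$-type identities, to squeeze $e_X\circ h$ below $1_{TX}$. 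Once $h\dashv e_X$ is in hand, the associativity law follows because both $h\circ m_X$ and $h\circ Th$ can be shown equal by comparing them through the adjunction: each is characterized as a suitable composite, and the uniqueness of adjoints (a distributor/morphism has at most one adjoint in a locally ordered category) forces the two to agree.

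\textbf{The main obstacle} I anticipate is establishing the inequality $e_X\circ h \leq 1_{TX}$ cleanly, since this is where the KZ hypothesis must be genuinely used rather than just the bare monad axioms. The trick will be to apply $T$ to the unit-law equation and combine with $Te_X \leq e_{TX}$ and the triangle identities $m_X\circ e_{TX} = 1_{TX} = m_X\circ Te_X$. Concretely, from $h\circ e_X = 1_X$ one applies $T$ to get $Th\circ Te_X = 1_{TX}$; then using $Te_X \leq e_{TX}$ and monotonicity of composition, together with naturality $m_X\circ T^2 h = h$-flavored identities, I expect to bound $e_X\circ h$ by $m_X\circ Te_{TX}\circ\cdots = 1_{TX}$. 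Once the adjunction $h\dashv e_X$ is secured, verifying the associativity law and concluding $(X,h)$ is a $T$-algebra is routine. I would present the argument so that the final clause ``in which case $h\dashv e_X$'' is obtained \emph{as part of} the converse direction, since it is precisely the adjunction used to force associativity.
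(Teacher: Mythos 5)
Your forward direction and your overall framing (the unit law is definitional; the content is that associativity is automatic) are fine, but the converse as written contains a genuine error: you have the adjunction inequalities backwards. Under the paper's convention for locally ordered categories ($f\dashv g$ means $g\circ f\geq 1$ and $f\circ g\leq 1$), the claim $h\dashv e_X$ requires the \emph{unit} inequality $1_{TX}\leq e_X\circ h$ together with $h\circ e_X\leq 1_X$. What you set out to prove, namely $e_X\circ h\leq 1_{TX}$ and $h\circ e_X\geq 1_X$, is the condition for $e_X\dashv h$ --- the co-KZ variance --- and the inequality $e_X\circ h\leq 1_{TX}$ that you single out as your ``main obstacle'' is false in general: combined with the true inequality $e_X\circ h\geq 1_{TX}$ it would make $e_X$ and $h$ mutually inverse up to equivalence, forcing $X\simeq TX$. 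Already for the presheaf monad $\bbP$ in $\QOrd$ it fails: $\sy_X\circ\colim\leq 1_{\CP X}$ would say $X(-,\colim\phi)\leq\phi$ for every weight $\phi$, which is false, e.g., for a constant weight of $\sV$ with value $1/2$. Ironically, the computation you sketch --- apply $T$ to $h\circ e_X=1_X$, use $Te_X\leq e_{TX}$ and naturality $e_X\circ h=Th\circ e_{TX}$ --- is exactly the paper's derivation of the \emph{correct} inequality: $e_X\circ h=Th\circ e_{TX}\geq Th\circ Te_X=T(h\circ e_X)=1_{TX}$. So your ingredients are right but aimed at an impossible target.

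There is a second gap in how you pass from the adjunction to associativity: ``uniqueness of adjoints forces the two to agree'' is never backed by identifying what $h\circ m_X$ and $h\circ Th$ are adjoint to. With the direction righted this can be repaired, and it then gives a genuinely different proof from the paper's: since $T$ is a 2-functor, $h\dashv e_X$ yields $Th\dashv Te_X$, hence $h\circ Th\dashv Te_X\circ e_X$; composing $m_X\dashv e_{TX}$ with $h\dashv e_X$ yields $h\circ m_X\dashv e_{TX}\circ e_X$; and naturality of $e$ at the morphism $e_X$ gives $Te_X\circ e_X=e_{TX}\circ e_X$, so both morphisms are left adjoint to the same morphism and therefore coincide (strictly, up to equivalence; equality needs antisymmetry of the hom-order, which the paper's own argument also tacitly uses). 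The paper instead never invokes uniqueness of adjoints: it proves $h\circ Th\leq h\circ m_X$ from $1\leq e_{TX}\circ m_X$ and $h\circ Th\geq h\circ m_X$ from $Te_X\circ m_X\leq 1$, obtaining associativity directly, and only afterwards checks $h\dashv e_X$. As submitted, however, your key step would fail, so the proposal is not correct.
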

	
	\begin{proof}   If $(X,h)$ is a $T$-algebra, then $h\circ e_X=1_X$ by definition of $T$-algebra. Conversely, suppose that $h\circ e_X=1_X$. Then
		$$h\circ Th\leq h\circ Th\circ e_{TX}\circ m_X=h\circ e_X\circ h\circ m_X=h\circ m_X.$$
		Since $T$ is a KZ-monad, it follows that $Te_X\circ m_X\leq 1_{TX}$, hence
		$$h\circ Th\geq h\circ Th\circ Te_X\circ m_X=h\circ T(h\circ e_X)\circ m_X=h\circ m_X.$$ Therefore, $h\circ Th=h\circ m_X$ and consequently, $(X,h)$ is a
		$T$-algebra.
		
		It remains to check that  $h\dashv e_X$ for each $T$-algebra $(X,h)$. Since $T$ is of Kock-Z\"{o}berlein type,
		$$e_X\circ h=Th\circ e_{TX}\geq Th\circ Te_X=T(h\circ e_X)=1_{TX}.$$
		Therefore  $h\dashv e_X$, as desired.
	\end{proof}
	
	\begin{prop}
		\label{pkz}
		The presheaf monad  $\mathbb{P}=(\CP,{\sf m},\sy)$ is  a KZ-monad in the locally ordered category $\QOrd$.
	\end{prop}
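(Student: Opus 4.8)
The plan is to apply Proposition \ref{kz} to the presheaf monad $\mathbb{P}=(\CP,\sfm,\sy)$, which is legitimate since $\CP\colon\QOrd\lra\QOrd$ is a $2$-functor (it preserves the pointwise order on hom-sets). By that proposition it suffices to verify any one of the three equivalent conditions; the most economical is condition (1), namely that $\CP\sy_X\leq\sy_{\CP X}$ in the hom-order of $\QOrd(\CP X,\CP\CP X)$, i.e. that $\CP\sy_X(\phi)\sqsubseteq\sy_{\CP X}(\phi)$ in $(\CP\CP X)_0$ for every weight $\phi$ of $X$.

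First I would unwind both sides explicitly. By definition of the presheaf functor, $\CP\sy_X=(\sy_X)_\exists$, so $\CP\sy_X(\phi)=\phi\circ\sy_X^*$. On the other side, $\sy_{\CP X}(\phi)=\CP X(-,\phi)$ is the representable weight of $\CP X$ determined by $\phi$. Thus the task reduces to comparing, as weights of $\CP X$, the two functors
\[
\phi\circ\sy_X^*\colon\CP X\oto\star\quad\text{and}\quad\CP X(-,\phi)\colon\CP X\oto\star.
\]
Recalling from the Remark following the Yoneda lemma that the underlying order of $\CP(\CP X)$ is the pointwise order on distributors $\CP X\oto\star$, the required inequality $\CP\sy_X(\phi)\sqsubseteq\sy_{\CP X}(\phi)$ amounts to the pointwise estimate $(\phi\circ\sy_X^*)(\gamma)\leq\CP X(\gamma,\phi)$ for all $\gamma\in\CP X$.

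The key computation is then entirely routine given the machinery already in place. Using Proposition \ref{kan for yoneda}, we have the identity $(\sy_X)_\forall=\sy_{\CP X}$, and by Proposition \ref{left and right kan} the string $(\sy_X)_\exists\dashv\sy_X^{-1}\dashv(\sy_X)_\forall$. Since left adjoints lie below right adjoints in the hom-order (for any adjunction $f\dashv g$ between functors that are mutually comparable through the unit/counit, the comparison $\CP\sy_X=(\sy_X)_\exists\leq(\sy_X)_\forall=\sy_{\CP X}$ follows because both sides are sections of $\sy_X^{-1}$, with $(\sy_X)_\exists$ the least and $(\sy_X)_\forall$ the greatest such section by Proposition \ref{surjective right adjoint} applied to the fully faithful $\sy_X$), we obtain $\CP\sy_X\leq\sy_{\CP X}$ directly. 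Concretely, from $(\sy_X)_\exists\dashv(\sy_X)_\forall$ restricted through the idempotent $\sy_X^{-1}$, one reads off $\CP\sy_X(\phi)=\phi\circ\sy_X^*\leq\CP X(-,\phi)=\sy_{\CP X}(\phi)$ via the Yoneda lemma, which gives $(\phi\circ\sy_X^*)(\gamma)=\gamma\swarrow\phi$ evaluated appropriately against $\CP X(\gamma,\phi)=\phi\swarrow\gamma$.

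The one point demanding care—and the place I expect the only genuine friction—is pinning down the correct variance and direction of the inequality, since the underlying order of $\CP(\CP X)$ and the adjunction directions must be tracked precisely to avoid reversing the comparison. Once the identifications $\CP\sy_X=(\sy_X)_\exists$ and $\sy_{\CP X}=(\sy_X)_\forall$ are in hand (both supplied verbatim by Propositions \ref{left and right kan} and \ref{kan for yoneda}), the inequality $(\sy_X)_\exists\leq(\sy_X)_\forall$ is forced by the fact that a left adjoint section is pointwise below the corresponding right adjoint section. Invoking the equivalence (1)$\Leftrightarrow$(2)$\Leftrightarrow$(3) of Proposition \ref{kz} then completes the proof that $\mathbb{P}$ is a KZ-monad.
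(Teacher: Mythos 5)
Your proof is correct, but it exits Proposition \ref{kz} through a different door than the paper. The paper's proof is a one-liner: Example \ref{sup in PX} already records the string of adjunctions $\CP\sy_X\dashv{\sf m}_X\dashv\sy_{\CP X}$, which is literally conditions (2) and (3) of Proposition \ref{kz}, so nothing remains to be checked. You instead verify condition (1), $\CP\sy_X\leq\sy_{\CP X}$. Your opening slogan (``left adjoints lie below right adjoints'') is false as a general statement, but the parenthetical argument that follows it is sound and is what carries the proof: since $\sy_X$ is fully faithful, Proposition \ref{left and right kan extension} gives $\sy_X^{-1}\circ(\sy_X)_\exists=\id=\sy_X^{-1}\circ(\sy_X)_\forall$, and Proposition \ref{surjective right adjoint} (applicable because $\CP X$ and $\CP\CP X$ are separated, so their underlying orders are partial orders) then identifies $(\sy_X)_\exists(\phi)$ and $(\sy_X)_\forall(\phi)$ as the least and greatest elements of the fiber of $\sy_X^{-1}$ over $\phi$; combined with $(\sy_X)_\forall=\sy_{\CP X}$ from Proposition \ref{kan for yoneda}, this yields $\CP\sy_X=(\sy_X)_\exists\leq(\sy_X)_\forall=\sy_{\CP X}$. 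Both proofs thus rest on exactly the same two facts (Propositions \ref{left and right kan} and \ref{kan for yoneda}, i.e.\ the content of Example \ref{sup in PX}); yours is not more general, and it pays for condition (1) with a detour through fibers, though it does make the order-theoretic meaning of that condition explicit.

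One sentence of yours is actually false, though harmlessly so: the ``concrete'' identity $(\phi\circ\sy_X^*)(\gamma)=\gamma\swarrow\phi$ does not hold in general. For any continuous t-norm, take $X$ a two-element discrete category and $\phi=\gamma$ the constant weight $1$; then
\[
(\phi\circ\sy_X^*)(\gamma)=\sup_{x\in X}\phi(x)\with\CP X(\gamma,\sy(x))=0,
\quad\text{while}\quad
\gamma\swarrow\phi=\CP X(\phi,\gamma)=1.
\]
What is true is only the inequality
\[
(\phi\circ\sy_X^*)(\gamma)=\sup_{x\in X}\CP X(\sy(x),\phi)\with\CP X(\gamma,\sy(x))\leq\CP X(\gamma,\phi),
\]
which follows from the Yoneda lemma and transitivity in $\CP X$ --- and this one line is itself a direct verification of condition (1) that would let you bypass the fiber argument and Propositions \ref{surjective right adjoint} and \ref{left and right kan extension} altogether. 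Since that sentence is offered only as an illustration and the parenthetical argument preceding it already establishes the inequality, this is a blemish rather than a gap.
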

	\begin{proof} By Example \ref{sup in PX}, for each real-enriched category $X$   we have
		$$\CP \sy_X\dashv {\sf m}_X  \dashv \sy_{\CP X},$$  the conclusion thus follows.
	\end{proof}
	
	Since $(\CP,{\sf m},\sy)$ is  a KZ-monad, for each real-enriched category $A$, the  following   are equivalent: \begin{enumerate}[label={\rm(\arabic*)}] 
		\item  There is a functor $h\colon \CP A\lra A$ such that   $(A,h)$ is a $\mathbb{P}$-algebra. \item The Yoneda embedding $\sy_A\colon A\lra\CP A$ has a left inverse. \item $A$ is separated and the Yoneda embedding $\sy_A$ has a left adjoint.\end{enumerate} In this case, the $\mathbb{P}$-algebra structure map $h\colon \CP A\lra A$ is a left adjoint of $\sy_A$, hence $h$ maps each weight of $A$ to its colimit. Therefore, a $\mathbb{P}$-algebra is   a separated and cocomplete real-enriched category;   a $\mathbb{P}$-homomorphism  is   a left adjoint between separated cocomplete real-enriched categories. All told,
	\[\mathbb{P}\text{-}{\sf Alg}=\QSup.\]
	From this fact one immediately infers:
	
	\begin{prop}
		\label{PAlg is monadic over QOrd}
		The forgetful functor $\PAlg\lra\QOrd$ is monadic.\end{prop}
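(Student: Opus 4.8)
The plan is to read off monadicity directly from the construction of $\mathbb{P}$, without invoking Beck's theorem. The point is that $\PAlg$ is, tautologically, the Eilenberg--Moore category $\QOrd^{\mathbb{P}}$ of the monad $\mathbb{P}=(\CP,{\sf m},\sy)$, and for \emph{any} monad the forgetful functor out of its Eilenberg--Moore category is monadic, with induced monad the original one. Thus the only thing to verify is that the adjunction we actually have in hand, namely $\CP\dashv U$ with $U\colon\QSup\lra\QOrd$ the forgetful functor, is --- after the identification $\mathbb{P}\text{-}{\sf Alg}=\QSup$ of the previous proposition --- the canonical free/forgetful adjunction for $\mathbb{P}$.

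Concretely, I would denote by $K\colon\QSup\lra\QOrd^{\mathbb{P}}$ the comparison functor induced by $\CP\dashv U$; it sends a separated cocomplete $A$ to the algebra $(A,\epsilon_A)$, where $\epsilon_A\colon\CP A\lra A$ is the counit, and sends a left adjoint to itself as a map of $\QOrd$. Monadicity of $U$ is by definition the assertion that $K$ is an equivalence. The previous proposition, which identifies $\mathbb{P}$-algebras with the objects of $\QSup$ and $\mathbb{P}$-homomorphisms with left adjoints, is exactly the statement that $K$ is bijective on objects and fully faithful; so I would simply assemble those two facts to conclude that $K$ is an isomorphism of categories, whence $U$ (i.e.\ $\PAlg\lra\QOrd$) is monadic.

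The one step that needs genuine checking --- and the main obstacle --- is to confirm that the structure map $K$ attaches to $A$ really is the colimit functor, i.e.\ that the counit $\epsilon_A$ equals the left adjoint ${\colim}_A$ of $\sy_A$. For this I would appeal to Theorem \ref{free cocompletion}: applied to $\id_A\colon A\lra A$ with $A$ separated and cocomplete it produces the unique left adjoint $\overline{\id_A}\colon\CP A\lra A$ with $\overline{\id_A}\circ\sy_A=\id_A$, and this is precisely $\epsilon_A={\colim}_A$. Since $\mathbb{P}$ is a KZ-monad (Proposition \ref{pkz}), any functor $h\colon\CP A\lra A$ with $h\circ\sy_A=\id_A$ is automatically left adjoint to $\sy_A$ and hence equals ${\colim}_A$ by essential uniqueness of adjoints, which pins down the object part of $K$. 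On morphisms one checks that a functor commuting with the two colimit structure maps is exactly a colimit-preserving functor, and colimit-preserving functors out of a cocomplete category are left adjoints; this matches the morphisms and completes the verification that $K$ is an isomorphism, so that $\PAlg\lra\QOrd$ is monadic.
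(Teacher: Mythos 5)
Your proposal is correct and takes essentially the same approach as the paper: the paper infers monadicity immediately from the identification $\PAlg=\QSup$ (obtained via the KZ property of $\bbP$) together with the adjunction $\CP\dashv U$ that induces $\bbP$. Your explicit check that the counit of $\CP\dashv U$ is the colimit functor, so that the comparison functor $K$ is an isomorphism, merely spells out what the paper treats as immediate.
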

	
	Dually, we have a copresheaf monad $\bbP^\dag=(\CPd,{\sf m}^\dag,\syd)$ in $\QOrd$. For each real-enriched category $X$, let \[{\sf m}^\dag_X\colon \CPd\CPd X\lra \CPd X \] be the functor that sends each coweight   of $\CPd X$ to its limit. Then $${\sf m}^\dag\colon (\CPd)^2\lra \CPd$$ is a natural transformation and $$\bbP^\dag=(\CPd,{\sf m}^\dag,\syd)$$ is a monad  in $\QOrd$, called the copresheaf monad. The copresheaf monad $\bbP^\dag$ is   co-KZ  in the sense that for each real-enriched category $X$, \[\CPd\syd_X\vdash{\sf m}^\dag_X\vdash\syd_{\CPd X}\colon \CPd X\lra\CPd\CPd X.\] 
	
	Since $\bbP^\dag$ is  co-KZ, a $\bbP^\dag$-algebra is a separated real-enriched category $A$ such that the coYoneda embedding $\syd_A\colon A\lra\CPd A$ has a right adjoint.  Therefore,  the category \[\PdAlg\] of $\bbP^\dag$-algebras and $\bbP^\dag$-homomorphisms is precisely  the category of separated and complete real-enriched categories   and right adjoint functors.
	Mapping each  $f\colon A\lra B$ in $\PAlg$ to its right adjoint $\overline{f}\colon B\lra A$ defines an isomorphism of categories  $$(\PAlg)^{\rm op}\cong\PdAlg.$$ 
	
	Theorem \ref{P-algebras as modules} below characterizes $\bbP$-algebras as (left) $[0,1]$-modules.
	
	\begin{defn} (Joyal and Tierney \cite{Joyal-Tierney}) A (left) $[0,1]$-module is a  pair $(X,\otimes )$, where $X$ is a complete lattice and $\otimes$ is a $[0,1]$-action  on $X$. A  $[0,1]$-module homomorphism  $f\colon (X,\otimes )\lra(Y,\otimes )$  is a join-preserving map $f\colon X\lra Y$ such that $r\otimes  f(x)= f(r\otimes  x)$ for all $r\in [0,1]$ and $x\in X$.\end{defn}
	
	With $[0,1]$-modules and $[0,1]$-module homomorphisms we have a category $$\QMod.$$
	
	\begin{thm} \label{P-algebras as modules} {\rm (Stubbe \cite{Stubbe2006})}
		The category $\QMod$   is isomorphic to   $\PAlg$. \end{thm}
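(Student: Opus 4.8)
The plan is to identify the two categories directly on underlying sets and maps, using the bridge between tensors/cotensors and $[0,1]$-actions developed in Section~\ref{tensor and cotensor}. Recall that $\PAlg=\QSup$ consists of separated and cocomplete real-enriched categories with left adjoints as morphisms, and that by Theorem~\ref{characterizations of complete Q-orders} a separated real-enriched category is cocomplete if and only if it is order-complete, tensored and cotensored. Thus for a $\mathbb{P}$-algebra $X$ the underlying ordered set $X_0$ is a complete lattice, and assigning to each pair $(r,x)$ the tensor $r\otimes x$ yields a $[0,1]$-action on $X_0$; since $X_0$ is a complete lattice, $(X_0,\otimes)$ is a $[0,1]$-module. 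This defines the object part of a functor $\PAlg\lra\QMod$.

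Conversely, given a $[0,1]$-module $(X,\otimes)$, the proposition of Section~\ref{tensor and cotensor} produces a separated, tensored and cotensored real-enriched category $(X,\alpha_\otimes)$ whose underlying order is the given complete lattice order $\leq$; being order-complete, tensored and cotensored, it is cocomplete by the implication $(5)\Rightarrow(1)$ of Theorem~\ref{characterizations of complete Q-orders}, hence a $\mathbb{P}$-algebra. First I would check that these two assignments are mutually inverse on objects: this is exactly the object-level isomorphism already established between $[0,1]$-actions and separated tensored and cotensored real-enriched categories, restricted to those whose underlying order is complete, so no new computation is required.

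For the morphisms, I would invoke Corollary~\ref{left adjoint via tensor and join}: between cocomplete real-enriched categories $X$ and $Y$, a functor $f\colon X\lra Y$ is a left adjoint (i.e.\ a $\mathbb{P}$-homomorphism) if and only if $f\colon X_0\lra Y_0$ preserves joins and $f(r\otimes x)=r\otimes f(x)$ for all $r\in[0,1]$ and $x\in X$. This is precisely the definition of a $[0,1]$-module homomorphism $(X_0,\otimes)\lra(Y_0,\otimes)$. Since both constructions act as the identity on underlying sets and on underlying maps, composition and identities are visibly preserved, so the two assignments constitute an isomorphism of categories.

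The only genuine content, and thus the main point to pin down, is the bijection on hom-sets: one must confirm that every join- and tensor-preserving map is automatically a functor of the underlying real-enriched categories, and conversely that every left adjoint functor restricts to a module homomorphism. The first direction follows from Proposition~\ref{functor via tensors}, since a join-preserving (hence order-preserving) map satisfying $r\otimes f(x)=f(r\otimes x)$ certainly satisfies the lax inequality $r\otimes f(x)\sqsubseteq f(r\otimes x)$ and so is a functor between tensored categories; the second direction is immediate from Corollary~\ref{left adjoint via tensor and join}. Once these identifications are in place, the isomorphism $\QMod\cong\PAlg$ follows immediately, with essentially no calculation beyond assembling the cited results.
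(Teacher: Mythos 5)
Your proposal is correct and follows essentially the same route as the paper's proof: both directions identify $\bbP$-algebras with $[0,1]$-modules via the tensor structure on $X_0$ and the construction $(X,\alpha_\otimes)$, and both handle morphisms by combining Proposition \ref{functor via tensors} with Corollary \ref{left adjoint via tensor and join}. Your explicit remark that a join- and tensor-preserving map is automatically a functor is the same implicit step the paper's appeal to Corollary \ref{left adjoint via tensor and join} relies on, so there is no substantive difference.
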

	
	\begin{proof}If $X$ is a $\bbP$-algebra, then the $[0,1]$-action $(X_0,\otimes )$ is a $[0,1]$-module. If $f\colon X\lra Y$ is a homomorphism between $\bbP$-algebras,   then $f\colon (X_0,\otimes )\lra (Y_0,\otimes )$ is  a $[0,1]$-module homomorphism since  any left adjoint   preserves  tensors.
		Conversely, if $(X,\otimes)$ is a $[0,1]$-module, then the   real-enriched category $(X,\alpha_{\otimes})$ is separated,  order-complete, tensored and cotensored, hence a $\bbP$-algebra. If $f\colon (X,\otimes )\lra(Y,\otimes )$ is a $[0,1]$-module homomorphism, then the map $f\colon (X,\alpha_{\otimes}) \lra(Y,\alpha_{\otimes})$ preserves tensors and   joins (with respect to the underlying order), hence  a left adjoint by Corollary \ref{left adjoint via tensor and join}.
		The conclusion thus follows. \end{proof}
	
	Every $\bbP$-algebra $A$ is clearly a quotient of the free algebra $\CP A$ in $\PAlg$. The following result shows that whether every $\bbP$-algebra can be written as a subalgebra of a free one depends on the structure of the truth-value set $([0,1],\&,1)$.
	
	\begin{prop} {\rm (Lai and Zhang \cite{LaiZ09})} \label{subalgebra of a free one} The following   are equivalent: \begin{enumerate}[label={\rm(\arabic*)}] \setlength{\itemsep}{0pt}
			\item The continuous t-norm $\&$ is isomorphic to the \L ukasiewicz t-norm. \item Each $\mathbb{P}$-algebra $A$ is a subalgebra  of   $\CP X$ in $\PAlg$ for some real-enriched category $X$.    \end{enumerate} \end{prop}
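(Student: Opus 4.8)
The plan is to prove the two implications separately, using the identification $\PAlg=\QSup$ (so a $\mathbb{P}$-algebra is a separated cocomplete, equivalently complete, real-enriched category and a subalgebra inclusion is a fully faithful left adjoint). First I would record a reduction useful for both directions: for every real-enriched category $X$ the inclusion of $\CP X$ into the power $\sV^X$ (weights sitting inside all functions $X\to[0,1]$) preserves pointwise joins and tensors and is fully faithful, hence is itself a subalgebra inclusion; since $\sV^S=\CP(S)$ for a discrete set $S$, being a subalgebra of some $\CP X$ is the same as being a subalgebra of some power $\sV^S$ of $\sV$. So (2) is equivalent to: every $\mathbb{P}$-algebra embeds as a sub-$[0,1]$-module into a power of $\sV$.

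For $(1)\Rightarrow(2)$, let $A$ be a $\mathbb{P}$-algebra. The coYoneda embedding $\syd_A\colon A\to\CPd A$ is fully faithful and preserves colimits, and since $A$ is cocomplete it is therefore a left adjoint (by the theorem that a colimit-preserving functor out of a cocomplete category is a left adjoint), hence a subalgebra inclusion into the $\mathbb{P}$-algebra $\CPd A$. It remains to identify $\CPd A$ with $\CP A$ when $\&$ is isomorphic to the \L ukasiewicz t-norm. In that case, by Proposition \ref{MV=Luka} the map $\neg=(-\ra0)\colon\sV\to\sV^{\rm op}$ is an isomorphism of real-enriched categories, so postcomposition $\psi\mapsto\neg\circ\psi$ turns a coweight of $A$ into a weight of $A$; using the MV-identity $\neg u\ra\neg v=v\ra u$ one checks $\CP A(\neg\psi_1,\neg\psi_2)=\CPd A(\psi_1,\psi_2)$, and since $\neg$ is involutive this is a fully faithful bijection, i.e. an isomorphism $\Theta\colon\CPd A\xrightarrow{\ \cong\ }\CP A$, in particular a $\mathbb{P}$-homomorphism. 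The composite $\Theta\circ\syd_A\colon A\to\CP A$ is then the required subalgebra inclusion, with $X=A$.

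For $(2)\Rightarrow(1)$ I would argue the contrapositive using the single test algebra $A=\sV^{\rm op}$ (which is complete, hence a $\mathbb{P}$-algebra). Assume $\&$ is not isomorphic to the \L ukasiewicz t-norm. If $\sV^{\rm op}$ were a subalgebra of some $\sV^S$, composing the inclusion with the projections would yield $\mathbb{P}$-homomorphisms $g_s\colon\sV^{\rm op}\to\sV$ with $\alpha_R(x,y)=\inf_s\bigl(g_s(x)\ra g_s(y)\bigr)$ for all $x,y$. Each $g_s$ is a left adjoint, so it sends the bottom of $\sV^{\rm op}$ (which is $1$) to $0$ and preserves tensors; as tensors are $r\otimes y=r\ra y$ in $\sV^{\rm op}$ and $r\with(-)$ in $\sV$ (Corollary \ref{left adjoint via tensor and join}), this gives $g_s(r\ra y)=r\with g_s(y)$ for all $r,y$. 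The plan is to show that these two constraints force every $g_s$ to collapse, contradicting that the family reflects $\alpha_R$.

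The collapse is where the t-norm structure enters, and this is the step I expect to be the main obstacle; I would split according to Proposition \ref{MV=Luka} and Corollary \ref{CA=L or P}. If $\&$ is Archimedean and not \L ukasiewicz, it is isomorphic to the product t-norm, so $\neg r=0$ for every $r>0$; then $g_s(0)=g_s(\neg r)=r\with g_s(0)=r\cdot g_s(0)$ for all $r\in(0,1)$ forces $g_s(0)=0$, and since $0$ is the top of $\sV^{\rm op}$ and $g_s$ preserves the underlying order, $g_s\equiv0$. If instead $\&$ has a nontrivial idempotent $b\in(0,1)$, then for each $y\in[b,1]$ we have $b\ra y=1$, whence $0=g_s(1)=g_s(b\ra y)=b\with g_s(y)$, and Corollary \ref{idempotent between}(i) (giving $b\with t=t$ for $t\le b$ and $b\with t=b$ for $t>b$) forces $g_s(y)=0$. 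In either case all $g_s$ vanish on a pair of non-isomorphic points — any $x<y$ in the product case, or $x=b$ with any $y\in(b,1]$ in the idempotent case — so that $\inf_s\bigl(g_s(x)\ra g_s(y)\bigr)=1$ whereas $\alpha_R(x,y)=y\ra x<1$, the desired contradiction. Thus $\sV^{\rm op}$ is a subalgebra of no power of $\sV$, hence of no $\CP X$, which proves $(2)\Rightarrow(1)$. The delicate points to get right are the arithmetic of $\with$ and $\ra$ near idempotents (invoking Corollary \ref{idempotent between}) and the verification that the product case and the nontrivial-idempotent case exhaust all continuous t-norms that are not \L ukasiewicz.
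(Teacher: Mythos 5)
Your proof is correct, and your $(1)\Rightarrow(2)$ is essentially the paper's argument: embed $A$ via the coYoneda embedding $\syd_A\colon A\lra\CPd A$ (a fully faithful left adjoint since $A$ is cocomplete) and then identify $\CPd A\cong\CP A$ via $\psi\mapsto\psi\ra 0$, which is an isomorphism exactly because $\with$ is \L ukasiewicz. Your $(2)\Rightarrow(1)$, however, takes a genuinely different route. The paper argues directly and uniformly: given a fully faithful left adjoint $f\colon \sV^{\rm op}\lra\CP X$, it notes $f(1)=0_X$, uses preservation of tensors to get $f(r)\leq r\ra 0_X$, and then computes $f\big(((r\ra 0)\ra 0)\ra r\big)\leq 0_X$, so injectivity of $f$ yields the law of double negation $(r\ra 0)\ra 0\leq r$; Proposition \ref{MV=Luka} then finishes. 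No classification of t-norms is needed, no reduction to powers of $\sV$, and only injectivity (not full faithfulness) of $f$ is used. You instead first reduce subalgebras of $\CP X$ to subalgebras of powers $\sV^S$ (a correct and tidy observation, since $\CP X\hookrightarrow\sV^{|X|}$ preserves pointwise joins and tensors), project onto coordinates to get homomorphisms $g_s\colon\sV^{\rm op}\lra\sV$, and then run a case analysis on non-\L ukasiewicz continuous t-norms — product-isomorphic (via Corollary \ref{CA=L or P}) versus existence of a nontrivial idempotent — showing in each case that every $g_s$ collapses on a nondegenerate interval, contradicting full faithfulness. Your case split is exhaustive (a continuous t-norm is Archimedean iff it has no nontrivial idempotent) and the arithmetic near idempotents via Corollary \ref{idempotent between} is right. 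What the paper's route buys is brevity and uniformity — one algebraic identity replaces the structure theory — and a slightly stronger conclusion (injectivity suffices); what your route buys is a concrete picture of \emph{how} the embedding fails coordinatewise, and it too can be weakened to use only injectivity, since the collapse of all $g_s$ on $[b,1]$ (or on all of $[0,1]$ in the product case) already makes $f$ non-injective.
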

	\begin{proof} $(1)\Rightarrow(2)$ Let $A$ be a $\mathbb{P}$-algebra. Since the coYoneda embedding $\syd\colon A\lra\CPd  A$ is a fully faithful   left adjoint, it suffices to show that $\CPd  A$ is isomorphic to $\CP A$. Since  $\&$ is isomorphic to the \L ukasiewicz t-norm, it is readily verified that \[\CPd  A\lra\CP A, \quad  \psi\mapsto  \psi\ra0\] is an isomorphism.
		
		$(2)\Rightarrow(1)$ It suffices to show that $ (r\ra0)\ra0 \leq r$ for all $r\in [0,1]$. Consider the $\mathbb{P}$-algebra $\sV^{\rm op}=([0,1],\alpha_R)$. By assumption, there is a real-enriched category $X$ and a fully faithful left adjoint $f\colon \sV^{\rm op}\lra\CP X$.     Since $f$ is a left adjoint and $1$ is the least element  of $\sV^{\rm op}_0$,   $f(1)$ must be the least element  of $(\CP X)_0$, namely the weight $0_X$ of $X$ with constant value $0$. Since $f$ preserves tensor and the tensor of $r$ with $x$ in $\sV^{\rm op}$ is $r\ra x$, it follows that for each $r\in [0,1]$,   \[r\otimes f(r)= f(r\ra r)= f(1)=0_X,\] where $\otimes $ denotes  tensor in $\CP X$. Thus, $f(r)\leq r\ra0_X$  and consequently, \begin{align*}f(((r\ra0)\ra0)\ra r)    &=((r\ra0)\ra0)\otimes f(r)\\ & \leq((r\ra0)\ra0)\otimes (r\ra0_X)\\ &\leq 0_X, \end{align*} where the last inequality holds because $r\ra0_X$ is the constant function with value $r\ra0$. Since $f$ is injective, one has $((r\ra0)\ra0)\ra r=1$, hence $(r\ra0)\ra0\leq r$, as desired.   \end{proof}

	A \emph{class of weights} is defined to be a subfunctor $\CT$ of the presheaf functor $\CP$ through which the natural transformation $\sy\colon\id\lra\CP$ factors. In other words, a class of weights is  a functor $$\CT\colon \QOrd \lra\QOrd $$ subject to the following conditions: \begin{enumerate}[label={\rm(\roman*)}]  \item for each real-enriched category $X$, $\CT X $ is a subset of $\CP X$ together with  category structure inherited from $\CP X$;
		\item for each real-enriched category $X$, $\CT X $ contains all representable weights of $X$;
		\item for each functor $f\colon  X\lra Y$ and each  $\phi\in\CT X $, $\CT(f)(\phi)= f_\exists(\phi)$. \end{enumerate}
	
	Condition (ii) implies that for each real-enriched category $X$, the Yoneda embedding $\sy_X\colon X\lra\CP X $ factors through $\CT X$.  
	
	Suppose $\CT$ is a class of weights on $\QOrd$ and     $\mathfrak{i}\colon \CT \lra\CP $ is the inclusion natural transformation.  If the natural transformation ${\sf m}\circ(\mathfrak{i} *\mathfrak{i} )$ factors (uniquely) through $\mathfrak{i}$, where $\mathfrak{i} *\mathfrak{i}$ is the horizontal composite of $\mathfrak{i}$ with itself, then we say that $\CT$ is \emph{closed under multiplication}. \[\bfig \Square [\CT^2`\CT `\CP^2 `\CP ; `\mathfrak{i} *\mathfrak{i} `\mathfrak{i}` {\sf m}] \efig\]  In this case, we also use the symbol ${\sf m}$ to denote the unique natural transformation $\CT^2\lra\CT$ that makes the above square commutative.
	
	If $\CT$ is closed under multiplication, then $(\CT,{\sf m},\sy )$ is a submonad of $(\CP,{\sf m},\sy)$, with the inclusion transformation $\mathfrak{i}$ being a monad morphism, see e.g. Manes \cite{Manes2003}. In this case,   we also say  simply that the functor $\CT$ is a submonad of   $(\CP,{\sf m},\sy)$.

	\begin{prop} \label{equivalents for submonad} Suppose $\CT$ is a class of weights  on $\QOrd$. The  following  are equivalent: \begin{enumerate}[label=\rm(\arabic*)]   \item  $ \CT$ is closed under multiplication, hence a submonad $(\CP,{\sf m},\sy)$.  
			\item For each real-enriched category $X$, $\CT X$ is closed in $\CP X$ under formation of $\CT$-colimits; that is, for each $\Phi\in\CT\CT X$, the colimit of the inclusion functor $\mathfrak{i}_X\colon \CT X\lra \CP X$ weighted by $\Phi$ belongs to $\CT X$.  
			\item For each real-enriched category $X$ and each $\Phi\in\CT\CT X$, it holds that $\Phi\circ\sy_*\in\CT X$, where $\sy\colon X\lra\CT X$ is the Yoneda embedding with codomain restricted to $\CT X$.  
		\end{enumerate}
	\end{prop}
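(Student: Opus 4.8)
The plan is to reduce all three conditions to a single statement about one explicitly computed weight. Write $\mathfrak{i}_X\colon\CT X\lra\CP X$ for the inclusion and ${\sf t}\colon X\lra\CT X$ for the Yoneda embedding with codomain restricted to $\CT X$ (this is the functor denoted $\sy$ in (3)), so that $\mathfrak{i}_X\circ{\sf t}=\sy_X$ is the Yoneda embedding into $\CP X$ and ${\sf t}_*=\sy_*$ in the notation of (3). Since $\CT X$ carries the real-enriched structure inherited from $\CP X$, the inclusion $\mathfrak{i}_X$ is fully faithful, whence $\mathfrak{i}_X^*\circ(\mathfrak{i}_X)_*=\CT X$. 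The first task is therefore to pin down, for each $\Phi\in\CT\CT X$, the two weights appearing in (1) and (2).

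First I would record the component of the horizontal composite, $(\mathfrak{i}*\mathfrak{i})_X=\CP(\mathfrak{i}_X)\circ\mathfrak{i}_{\CT X}$, so that for $\Phi\in\CT\CT X$, using $\CP(\mathfrak{i}_X)=(\mathfrak{i}_X)_\exists$ and $(\mathfrak{i}_X)_\exists(-)=-\circ\mathfrak{i}_X^*$, one gets $(\mathfrak{i}*\mathfrak{i})_X(\Phi)=\Phi\circ\mathfrak{i}_X^*$, a weight of $\CP X$. Applying ${\sf m}_X=\colim$ and the formula $\colim\Psi=\Psi\circ(\sy_X)_*$ from Example \ref{sup in PX}, together with $(\sy_X)_*=(\mathfrak{i}_X)_*\circ{\sf t}_*$ and the full-faithfulness identity $\mathfrak{i}_X^*\circ(\mathfrak{i}_X)_*=\CT X$, I obtain
$${\sf m}_X\big((\mathfrak{i}*\mathfrak{i})_X(\Phi)\big)=\Phi\circ\mathfrak{i}_X^*\circ(\sy_X)_*=\Phi\circ{\sf t}_*.$$
Exactly the same chain of equalities shows that the colimit of $\mathfrak{i}_X$ weighted by $\Phi$, namely $\colim(\Phi\circ\mathfrak{i}_X^*)$, equals $\Phi\circ{\sf t}_*$; this is the computation already carried out in the proofs of Theorem \ref{univeral property of CX} and Theorem \ref{univeral property of IX}. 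Thus both the value of ${\sf m}\circ(\mathfrak{i}*\mathfrak{i})$ at $\Phi$ and the $\CT$-weighted colimit of $\mathfrak{i}_X$ collapse to the single weight $\Phi\circ{\sf t}_*$.

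With this identity in hand the equivalences are immediate. Condition (3) is literally the assertion that $\Phi\circ{\sf t}_*\in\CT X$ for every $X$ and every $\Phi\in\CT\CT X$. By the computation, (2) says the same thing about $\colim_\Phi\mathfrak{i}_X=\Phi\circ{\sf t}_*$, so (2)$\Leftrightarrow$(3). For (1), the natural transformation ${\sf m}\circ(\mathfrak{i}*\mathfrak{i})$ has $X$-component $\Phi\mapsto\Phi\circ{\sf t}_*$; since $\mathfrak{i}$ is componentwise a fully faithful inclusion, hence a monomorphism in the functor category, this transformation factors through $\mathfrak{i}$ precisely when each component takes values in $\CT X$ — again the statement of (3) — and the factorization is then automatically unique and natural. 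Hence (1)$\Leftrightarrow$(3), and the promised submonad structure follows from the general fact about submonads cited from Manes. The only genuinely delicate point, and the one I would verify most carefully, is the bookkeeping in the key computation: correctly identifying the component of the Godement product $(\mathfrak{i}*\mathfrak{i})_X$ and matching it against the weighted-colimit formula, so that both (1) and (2) really do reduce to the same weight $\Phi\circ{\sf t}_*$.
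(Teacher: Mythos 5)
Your proof is correct and follows essentially the same route as the paper's: both reduce all three conditions to the single identity $(\mathfrak{i}*\mathfrak{i})_X(\Phi)=\Phi\circ\mathfrak{i}_X^*$ together with the computation ${\colim}_\Phi\mathfrak{i}_X=\Phi\circ\mathfrak{i}_X^*\circ(\mathfrak{i}_X)_*\circ{\sf t}_*=\Phi\circ{\sf t}_*$ via Example \ref{sup in PX} and full faithfulness of the inclusion. Your additional remarks on uniqueness and naturality of the factorization through the componentwise-injective $\mathfrak{i}$ are details the paper leaves implicit, not a different argument.
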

	\begin{proof}This follows directly from   that for each $\Phi\in\CT\CT X$,
		\[(\mathfrak{i} *\mathfrak{i} )_X(\Phi)=\Phi\circ \mathfrak{i}_X^* \] and that \begin{align*}{\colim}_\Phi\mathfrak{i}_X &= {\colim} (\Phi\circ \mathfrak{i}_X^*)=\Phi\circ\mathfrak{i}_X^*\circ(\mathfrak{i}_X)_*\circ  \sy_* = \Phi\circ\sy_*.  \qedhere \end{align*} \end{proof}
	
	\begin{defn}A class of weights $\CT\colon \QOrd \lra\QOrd $ is  saturated  
		if it satisfies one, hence all, of the equivalent conditions in Proposition \ref{equivalents for submonad}. \end{defn}  
	
	In other words, a saturated class of weights on $\QOrd$ is a submonad of the presheaf monad $\mathbb{P}=(\CP,{\sf m},\sy)$. 
	For a saturated class of weights $\CT$, we write $\mathbb{T}=(\CT,{\sf m},\sy)$ for the corresponding monad, which is a submonad of the preheaf monad. The monad $\bbT$ is of Kock-Z\"{o}berlein type since  so is $\mathbb{P}$. Therefore, for the Eilenberg-Moore category $\TAlg$: \begin{enumerate}[label={\rm(\roman*)}] 
		\item  an object  is  a separated real-enriched category $A$ such that every $\phi\in\CT A$ has a colimit; \item a morphism is a functor  that preserves $\CT$-colimits.  \end{enumerate}   
	
	\begin{prop} \label{retract of T-alg} Suppose $\CT$ is a saturated class of weights on $\QOrd$.  Then, every retract of a $\bbT$-algebra in  $\QOrd$ is a $\bbT$-algebra, where $\bbT$ denotes the monad $(\CT,{\sf m},\sy)$. \end{prop}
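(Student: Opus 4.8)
The plan is to exploit that $\bbT=(\CT,{\sf m},\sy)$ is a KZ-monad (it is a submonad of the KZ-monad $\mathbb{P}$), so that a $\bbT$-algebra structure on an object is a \emph{property} rather than extra data. By the characterization preceding this proposition, a separated real-enriched category $B$ underlies a $\bbT$-algebra if and only if the corestricted Yoneda embedding $\sy_B\colon B\lra\CT B$ admits a left inverse, i.e.\ a functor $h_B\colon\CT B\lra B$ with $h_B\circ\sy_B=\id_B$. Thus it suffices to (a) check that $B$ is separated and (b) produce such an $h_B$ from the retraction data. Throughout, let $s\colon B\lra A$ and $r\colon A\lra B$ be functors with $r\circ s=\id_B$ and $A$ a $\bbT$-algebra whose structure map $h_A\colon\CT A\lra A$ satisfies $h_A\circ\sy_A=\id_A$.

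For separatedness, suppose $b_1,b_2\in B$ are isomorphic, i.e.\ $B(b_1,b_2)=1=B(b_2,b_1)$. Since $s$ is a functor, $A(sb_1,sb_2)=1=A(sb_2,sb_1)$, so $sb_1\cong sb_2$ in $A$; as $A$ is separated this forces $sb_1=sb_2$, whence $b_1=rsb_1=rsb_2=b_2$. Hence $B$ is separated.

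To build $h_B$, apply the functor $\CT$ to the retraction: functoriality gives $\CT r\circ\CT s=\CT(r\circ s)=\id_{\CT B}$, so $(\CT s,\CT r)$ is again a retraction. Set $h_B\coloneqq r\circ h_A\circ\CT s\colon\CT B\lra\CT A\lra A\lra B$, which is a functor as a composite of functors. Using naturality of $\sy$ (namely $\CT s\circ\sy_B=\sy_A\circ s$) together with $h_A\circ\sy_A=\id_A$ and $r\circ s=\id_B$, I would compute
\begin{align*}
 h_B\circ\sy_B = r\circ h_A\circ\CT s\circ\sy_B = r\circ h_A\circ\sy_A\circ s = r\circ\id_A\circ s = \id_B.
\end{align*}
By the KZ characterization of algebras (the proposition asserting $(X,h)$ is a $T$-algebra iff $h\circ e_X=1_X$), the pair $(B,h_B)$ is then a $\bbT$-algebra, so $B$ is a $\bbT$-algebra as claimed.

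I expect the only real conceptual point—rather than a genuine computational obstacle—to be the recognition that the argument must route through the KZ property. For a general monad, retracts of algebras need not be algebras; it is precisely the lax-idempotence of $\bbT$, encoded in the clean criterion $h\circ\sy=\id$, that makes the guess $h_B=r\circ h_A\circ\CT s$ succeed with only naturality and functoriality as input. Everything else is a short diagram chase, so no lengthy calculation is required.
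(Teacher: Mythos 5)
Your proposal is correct and follows essentially the same route as the paper: the structure map is the same composite ($r\circ h_A\circ\CT s$, matching the paper's $r\circ{\colim}_B\circ\CT s$ up to the swapped naming of the retract and the algebra), verified by the same naturality-of-$\sy$ computation and concluded via the same KZ criterion ($h\circ\sy=\id$ suffices). The separatedness check is redundant — it already follows from the existence of a left inverse of $\sy_B$, as in the KZ characterization — but it is harmless.
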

	
	\begin{proof}Suppose   $B$ is a $\bbT$-algebra;  $s\colon A\lra B$ and $r\colon B\lra A$ are  functors with $r\circ s=1_A$. We wish to show that $A$ is a $\bbT$-algebra. Let $\sy_A\colon A\lra\CT A$ be   the Yoneda embedding with codomain restricted to $\CT A$; and let $\colim_A$ be the composite \[\CT A\to^{\CT s}\CT B\to^{{\colim}_B}B\to^r A.\] Then   \[{\colim}_A\circ\sy_A= r\circ{\colim}_B\circ\CT s\circ \sy_A  = r\circ{\colim}_B\circ\sy_B\circ s =r\circ s =1_A,\]   hence $\colim_A$ is a left inverse of  $\sy_A$ and consequently, $A$ is a $\bbT$-algebra. \end{proof}
	
	For each real-enriched category $X$, since $\CT X$ is closed in $\CP X$ with respect to $\CT$-colimits, it follows that $\CT X$ is a $\bbT$-algebra and the the assignment \[f\colon  X\lra Y\quad\mapsto\quad f_\exists\colon\CT X\lra\CT Y\] defines a functor   \[\CT\colon \QOrd\lra\TAlg .\]

	\begin{prop} \label{free T-algebra} Suppose $\CT$ is a saturated class of weights on $\QOrd$. Then, the functor \[\CT\colon \QOrd \lra\TAlg \] is left adjoint to the forgetful functor \[U\colon \TAlg \lra \QOrd .\] Hence  for each real-enriched category $X$, $\CT X$ is  the free $\CT$-cocompletion of $X$.
	\end{prop}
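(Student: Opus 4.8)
The plan is to show that for each real-enriched category $X$ the restricted Yoneda embedding $\sy_X\colon X\lra\CT X$ is a universal arrow from $X$ to the forgetful functor $U$, which yields the adjunction $\CT\dashv U$ (and the free-cocompletion statement is then a restatement of this universal property). Fix a $\bbT$-algebra $A$, whose structure map is the functor $\colim_A\colon\CT A\lra A$ sending each $\CT$-weight of $A$ to its colimit; since $\bbT$ is of Kock--Z\"oberlein type, $\colim_A$ is a left adjoint of $\sy_A$ and $\colim_A\circ\sy_A=1_A$. Given a functor $f\colon X\lra A$, I would define its extension as the composite of left adjoints
\[\overline{f}\coloneqq \colim_A\circ\,\CT f\colon \CT X\to^{f_\exists}\CT A\to^{\colim_A}A,\]
recalling that $\CT f=f_\exists$. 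First I would check that $\overline{f}$ extends $f$: by naturality of the unit $\sy$ we have $\CT f\circ\sy_X=\sy_A\circ f$, whence $\overline{f}\circ\sy_X=\colim_A\circ\sy_A\circ f=f$.

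Next I would verify that $\overline{f}$ is a $\bbT$-homomorphism, i.e.\ that it preserves $\CT$-colimits. The functor $f_\exists$ is a left adjoint by Proposition \ref{left and right kan}, so it preserves all colimits; since $\CT X$ and $\CT A$ are closed in $\CP X$, $\CP A$ under formation of $\CT$-colimits (Proposition \ref{equivalents for submonad}), its restriction $\CT f$ preserves $\CT$-colimits, exactly as argued in Theorem \ref{univeral property of IX} and Theorem \ref{univeral property of FX}. As $\colim_A$ is also a left adjoint, the composite $\overline{f}$ preserves $\CT$-colimits and hence is a morphism in $\TAlg$. For uniqueness I would use that every $\phi\in\CT X$ is the colimit of $\sy_X$ weighted by $\phi$, i.e.\ $\colim_\phi\sy_X=\phi$ computed inside $\CT X$: then any $\bbT$-homomorphism $g\colon\CT X\lra A$ with $g\circ\sy_X=f$ satisfies $g(\phi)=g(\colim_\phi\sy_X)=\colim_\phi(g\circ\sy_X)=\colim_\phi f=\overline{f}(\phi)$, forcing $g=\overline{f}$.

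The one step that deserves care, and which I expect to be the main obstacle, is the colimit-density identity $\colim_\phi\sy_X=\phi$ carried out in $\CT X$ rather than in $\CP X$. The point is that the weight $\phi\circ(\sy_X)^*$ of $\CT X$ is exactly $\CT(\sy_X)(\phi)\in\CT\CT X$, so it is a genuine $\CT$-weight; since $\CT X$ is closed under $\CT$-colimits (Proposition \ref{equivalents for submonad}), this colimit coincides with the one computed in $\CP X$, which equals $\phi$ by colimit-density of the Yoneda embedding (Proposition \ref{y is colimit dense}, Example \ref{sup in PX}). Granting this, the three facts above show $\sy_X$ is universal, giving the natural bijection $\TAlg(\CT X,A)\cong\QOrd(X,UA)$ and hence $\CT\dashv U$; as $\CT X$ is the free $\bbT$-algebra on $X$, this is precisely its universal property. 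Alternatively, once $\TAlg$ has been identified with the Eilenberg--Moore category of the monad $\bbT=(\CT,{\sf m},\sy)$, as done in the excerpt, one may simply invoke the standard fact that the free functor of a monad is left adjoint to its forgetful functor; the direct argument above is, however, more in the spirit of the preceding theorems and reuses machinery already at hand.
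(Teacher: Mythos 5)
Your proposal is correct and takes essentially the same route as the paper: your $\overline{f}=\colim_A\circ\,\CT f$ is exactly the paper's assignment $\phi\mapsto\colim_\phi f$ (since $\colim_\phi f=\colim(\phi\circ f^*)=\colim_A(f_\exists(\phi))$), and your uniqueness argument via $\colim_\phi\sy_X=\phi$ is the same one the paper invokes. The paper compresses the verifications into ``it is readily verified''; you fill them in with precisely the machinery (closure of $\CT X$ under $\CT$-colimits, left adjoints preserving colimits, colimit-density of the Yoneda embedding) used in the paper's Theorems \ref{univeral property of IX} and \ref{univeral property of FX}, including the correct justification that the density identity holds inside $\CT X$.
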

	
	\begin{proof}
		Let $f\colon X\lra Y$ be a functor  with $Y$   a  $\bbT$-algebra. It is readily verified that  \[\overline{f}\colon \CT X\lra Y, \quad \phi\mapsto {\colim}_\phi  f  \] is the unique $\bbT$-homomorphism  that makes the   diagram \[\bfig\qtriangle[X`\CT X`Y;\sy `f`\overline{f}]\efig\] commutative, the conclusion thus follows.
	\end{proof}  
	
	The monad defined by the adjunction $\CT\dashv U$ is precisely  $\bbT=(\CT,{\sf m},\sy)$. Furthermore,   the forgetful functor $U\colon\TAlg\lra\QOrd$ is monadic.  
	
	\begin{exmp}\label{exmp of submonads} \begin{enumerate}[label={\rm(\roman*)}] 
			\item By Theorem \ref{univeral property of CX} the class of Cauchy weights is saturated. 
			The resulting submonad $\bbC=(\CC,{\sf m},\sy)$   of $\bbP=(\CP,{\sf m},\sy)$ is called the \emph{Cauchy weight monad} in $\QOrd$. A $\bbC$-algebra is   a Cauchy complete real-enriched category; a $\bbC$-homomorphism is just a functor between Cauchy complete real-enriched categories (colimits of Cauchy weights are preserved by all functors). 
			\item   By Theorem \ref{univeral property of IX}, ideals of real-enriched categories constitute a saturated   class of weights. The   monad $\bbI=(\CI,{\sf m},\sy)$ is called  the \emph{ideal monad} in $\QOrd$. An  algebra of   $\bbI$ is   a Yoneda complete real-enriched category; an $\bbI$-homomorphism is a Yoneda continuous functor between Yoneda complete real-enriched categories. Corollary  \ref{underlying order of Yoneda} shows that restricting  domains and codomains of the functors   $\omega$ and $(\text{-})_0$ gives us an adjunction between the category of dcpos and that of $\bbI$-algebras.
			\item By Theorem \ref{univeral property of FX} the class of flat weights is saturated. The resulting submonad $\bbF=(\CF,{\sf m},\sy)$  is called the \emph{flat weight monad} in $\QOrd$. An $\bbF$-algebra is a flat complete real-enriched category; an $\bbF$-homomorphism is   a functor between flat complete real-enriched categories that preserves colimits of flat weights. Likewise, by Theorem \ref{univeral property of FCX} the class of conically flat weights is saturated. The resulting submonad $\bbF_{\rm c}=(\CF_{\rm c},{\sf m},\sy)$  is called the \emph{conically flat weight monad} in $\QOrd$.
			\item For each real-enriched category $X$, let $\CB X=\{r\with\sy(x)\mid x\in X,r\in[0,1]\}$. Then $\CB$ is a saturated class of weights on $\QOrd$. To see this, it suffices to observe that for all $x\in X$ and all $r,s\in[0,1]$, the colimit of the inclusion functor $\CB X\lra\CP X$  weighted by $s\with \CB X(-, r\with\sy(x))$  is  the weight $(s\with r)\with\sy(x)$ of $X$. Write   $\bbB=(\CB,{\sf m},\sy)$ for the resulting monad. 
			A $\bbB$-algebra is precisely a separated and tensored real-enriched category; a $\bbB$-homomorphism is a functor that preserves tensors.
			\item (Stubbe \cite{Stubbe2010}) The class of conical weights is saturated, algebras for the resulting monad are the separated and conically cocomplete real-enriched categories.
	\end{enumerate}  \end{exmp} 
	
	\begin{thm}[Morita equivalence] {\rm(Kelly \cite{Kelly})} \label{Morita equiv} For all real-enriched categories $X$ and $Y$, the following  are equivalent:
		\begin{enumerate}[label={\rm(\arabic*)}] 
			\item $X$ is isomorphic to $Y$   in  $[0,1]$-{\bf Dist}.  \item $\CC X$   is isomorphic to $\CC Y$ in  $\QOrd$, hence in $\bbC$-{\bf Alg}.  \item
			$\CP X$ is isomorphic to $\CP Y$ in  $\bbP$-{\bf Alg}.\end{enumerate} \end{thm}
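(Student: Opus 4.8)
The plan is to prove the cycle $(1)\Rightarrow(3)\Rightarrow(2)\Rightarrow(1)$, which is economical: it lets me sidestep Lawvere's characterization of Cauchy complete categories together with the attendant separatedness bookkeeping, and it concentrates the genuine Morita content in the single step $(3)\Rightarrow(2)$. Throughout I will freely use the distributor calculus, the graph/cograph formalism, and the identification $\QSup=\PAlg$ together with the fact that $\CP X$ and $\CC X$ are separated objects of the respective algebra categories.

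For $(1)\Rightarrow(3)$, suppose $\Phi\colon X\oto Y$ is an isomorphism in $[0,1]$-{\bf Dist} with inverse $\Psi\colon Y\oto X$, so that $\Psi\circ\Phi=X$ and $\Phi\circ\Psi=Y$. I would define $F\colon\CP X\lra\CP Y$ by $F(\phi)=\phi\circ\Psi$ and $G\colon\CP Y\lra\CP X$ by $G(\gamma)=\gamma\circ\Phi$. Associativity of distributor composition gives $GF=\id$ and $FG=\id$ on the nose. That $F$ (and symmetrically $G$) is a functor follows from the counit inequality $(\phi_2\swarrow\phi_1)\circ\phi_1\le\phi_2$, which after composing on the right with $\Psi$ yields $\CP X(\phi_1,\phi_2)=\phi_2\swarrow\phi_1\le F(\phi_2)\swarrow F(\phi_1)=\CP Y(F\phi_1,F\phi_2)$. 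Thus $F$ is an isomorphism in $\QOrd$; since its two-sided inverse $G$ is again a functor, $F$ is in particular a left adjoint, hence a morphism of $\QSup=\bbP$-{\bf Alg}, and likewise $G$. Therefore $\CP X\cong\CP Y$ in $\bbP$-{\bf Alg}.

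The step $(3)\Rightarrow(2)$ is the heart of the matter, and here I would exploit the intrinsic characterization of Cauchy weights from Proposition \ref{Cauchy weight via sub}: a weight $\phi$ of $X$ lies in the full subcategory $\CC X\subseteq\CP X$ precisely when the corepresentable functor $\CP X(\phi,-)=\sub_X(\phi,-)\colon\CP X\lra\sV$ preserves colimits. Let $F\colon\CP X\lra\CP Y$ be an isomorphism in $\bbP$-{\bf Alg} with inverse $G$. Being an isomorphism in $\QOrd$, $F$ is fully faithful, so $\CP Y(F\phi,-)=\CP X(\phi,G(-))$; and being morphisms of $\bbP$-{\bf Alg}, both $F$ and $G$ preserve colimits. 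Hence $\CP X(\phi,-)$ preserves colimits iff $\CP Y(F\phi,-)$ does, i.e. $\phi$ is Cauchy iff $F\phi$ is Cauchy. Consequently $F$ restricts to a fully faithful bijection $\CC X\lra\CC Y$, that is, an isomorphism in $\QOrd$. Since $\CC X$ and $\CC Y$ are Cauchy complete and every functor between Cauchy complete real-enriched categories is a $\bbC$-homomorphism (Example \ref{exmp of submonads}(i)), this restricted isomorphism is automatically an isomorphism in $\bbC$-{\bf Alg}, which establishes both readings of $(2)$.

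Finally, for $(2)\Rightarrow(1)$ I would promote an isomorphism $H\colon\CC X\lra\CC Y$ in $\QOrd$ to the distributor isomorphism $H_*\colon\CC X\oto\CC Y$, whose inverse is $H^*=(H^{-1})_*$ because $H_*\circ(H^{-1})_*=(H\circ H^{-1})_*=\CC Y$ and symmetrically. Composing with the distributor isomorphisms $X\cong\CC X$ and $Y\cong\CC Y$ supplied by Lemma \ref{X is isomorphic to CX} then gives $X\cong\CC X\cong\CC Y\cong Y$ in $[0,1]$-{\bf Dist}. I expect the only real obstacle to be the verification in $(3)\Rightarrow(2)$ that a $\bbP$-isomorphism detects Cauchy weights; once that is in place through Proposition \ref{Cauchy weight via sub}, all remaining steps are formal manipulations of graphs, cographs, and the $\swarrow$/$\searrow$ adjunctions.
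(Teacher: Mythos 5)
Your proof is correct, but it routes the hard implication differently from the paper. The paper proves the two bi-implications $(1)\Leftrightarrow(2)$ and $(1)\Leftrightarrow(3)$ separately, and its only non-formal step is $(3)\Rightarrow(1)$, which rests on Lemma \ref{P is full and faithful}: the functor $\mathfrak{P}\colon([0,1]\text{-}{\bf Dist})^{\rm op}\lra\PAlg$ is full and faithful, so an isomorphism $\CP X\cong\CP Y$ in $\PAlg$ is induced by a pair of distributors which, by faithfulness, compose to identities. You instead close the cycle $(1)\Rightarrow(3)\Rightarrow(2)\Rightarrow(1)$ and place the real content in $(3)\Rightarrow(2)$, where you never reconstruct the underlying distributor: you observe that Cauchy-ness of $\phi\in\CP X$ is, by Proposition \ref{Cauchy weight via sub}, the property that $\sub_X(\phi,-)$ preserves colimits, which is manifestly invariant under any isomorphism of $\PAlg$ (using $\CP Y(F\phi,-)=\CP X(\phi,G(-))$ and that $F,G$ preserve colimits), so $F$ restricts to an isomorphism $\CC X\cong\CC Y$; then $(2)\Rightarrow(1)$ follows from Lemma \ref{X is isomorphic to CX} exactly as in the paper, and your appeal to Example \ref{exmp of submonads}(i) correctly disposes of the ``hence in $\bbC$-{\bf Alg}'' clause. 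The trade-off: the paper's lemma is the stronger, reusable statement (it is exactly what powers the Kelly--Schmitt generalization in Theorem \ref{generalized morita}, where the full-and-faithful argument reappears for $\CT$-distributors), whereas your argument is more economical and self-contained for this theorem, exhibiting Morita equivalence as an immediate consequence of the fact that the Cauchy completion is an isomorphism-invariant full subcategory of the free cocompletion.
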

	
	Suppose $\phi\colon X\oto Y$ is a distributor. Since the functor \[\phi_\exists\colon\CP  Y\lra \CP X,\quad \xi\mapsto \xi\circ\phi\] is left adjoint to \[\phi_\forall\colon\CP  X\lra\CP Y, \quad \lam\mapsto \lam\swarrow\phi,\] it follows that the assignment \[\phi\colon X\oto Y~ \mapsto ~ \phi_\exists  \colon\CP  Y\lra \CP X \] defines a functor \[\mathfrak{P}\colon  ([0,1]\text{-}{\bf Dist})^{\rm op}\lra \PAlg .\]
	
	\begin{lem}\label{P is full and faithful} The functor $\mathfrak{P}\colon  ([0,1]\text{-}{\bf Dist})^{\rm op}\lra \PAlg $ is full and faithful. \end{lem}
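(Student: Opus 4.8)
The plan is to prove full faithfulness by showing that for all real-enriched categories $X$ and $Y$ the assignment
$$[0,1]\text{-}{\bf Dist}(X,Y)\lra\PAlg(\CP Y,\CP X),\qquad \phi\mapsto\phi_\exists$$
is a bijection, where I read a morphism $Y\lra X$ of $([0,1]\text{-}{\bf Dist})^{\rm op}$ as a distributor $\phi\colon X\oto Y$ and recall that $\mathfrak{P}(X)=\CP X$. The guiding idea is that a left adjoint out of the free cocompletion $\CP Y$ is completely determined by its restriction along the Yoneda embedding $\sy_Y$, by Theorem \ref{free cocompletion}, so everything will be reconstructed from the values on representable weights.

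First I would record the key computation that $\phi_\exists$ sends a representable weight to a ``column'' of $\phi$. Since $\phi$ is a distributor, for every $y\in Y$ one has $\sup_{y'\in Y}Y(y',y)\with\phi(x,y')=\phi(x,y)$ (the inequality $\leq$ is the right $Y$-action, and $y'=y$ gives $\geq$), so
$$\phi_\exists(\sy_Y(y))=\sy_Y(y)\circ\phi=\phi(-,y),$$
the weight $x\mapsto\phi(x,y)$. In particular $\phi(x,y)=\phi_\exists(\sy_Y(y))(x)$, which recovers $\phi$ from $\phi_\exists$; this yields faithfulness at once.

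For fullness, given a left adjoint $F\colon\CP Y\lra\CP X$ I would define a candidate distributor by $\phi(x,y)\coloneqq F(\sy_Y(y))(x)$ and check the two distributor axioms. The left $X$-action is automatic: $F(\sy_Y(y))$ lies in $\CP X$, so $\phi(x_2,y)\with X(x_1,x_2)\leq\phi(x_1,y)$ holds by definition of a weight. For the right $Y$-action I would use that $F$ is a functor together with the Yoneda lemma: $Y(y_1,y_2)=\CP Y(\sy_Y(y_1),\sy_Y(y_2))\leq\CP X(F\sy_Y(y_1),F\sy_Y(y_2))=\sub_X(\phi(-,y_1),\phi(-,y_2))$, which is exactly $Y(y_1,y_2)\with\phi(x,y_1)\leq\phi(x,y_2)$ for all $x$. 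Hence $\phi$ is a genuine distributor.

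It then remains to verify $\phi_\exists=F$. By construction $\phi_\exists\circ\sy_Y=F\circ\sy_Y$, both being $y\mapsto\phi(-,y)$; since $\CP X$ is separated and cocomplete and both $\phi_\exists$ (which is $\dashv\phi_\forall$) and $F$ are left adjoints, the uniqueness clause of Theorem \ref{free cocompletion} forces $\phi_\exists=F$. Alternatively one may argue directly: $\sy_Y$ is colimit-dense (Proposition \ref{y is colimit dense}), so every $\xi\in\CP Y$ equals $\sup_{y}\xi(y)\with\sy_Y(y)$, and applying the colimit-preserving $F$ and comparing with the pointwise formula for $\xi\circ\phi$ gives the same value. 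I expect the only real work to be the bookkeeping in checking the distributor axioms for the candidate $\phi$; the identification $\phi_\exists=F$ is then immediate from the universal property, so there is no serious obstacle beyond lining these verifications up correctly.
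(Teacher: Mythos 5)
Your proposal is correct and follows essentially the same route as the paper: faithfulness by evaluating $\phi_\exists$ at representables to recover $\phi(x,y)=\phi_\exists(\sy_Y(y))(x)$, and fullness by defining $\phi(x,y)\coloneqq F(\sy_Y(y))(x)$ and identifying $\phi_\exists=F$ via colimit-density of $\sy_Y$ and colimit preservation. The only cosmetic difference is that the paper sees the distributor axioms for $\phi$ instantly by writing it as the composite $\sy_Y^*\circ F^*\circ(\sy_X)_*$ of distributors, whereas you verify them by hand, which is fine.
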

	
	\begin{proof} Since for each distributor $\phi\colon X\oto Y$, each $x\in X$ and each $y\in Y$, \[\phi_\exists(\sy_Y(y))(x)=\sy_Y(y)\circ\phi(x)=\phi(x,y),\] it follows that $\mathfrak{P}$ is faithful.
		
		To see that $\mathfrak{P}$ is full, assume that $f\colon \CP Y\lra \CP X$ is a left adjoint. Define a distributor  $\phi\colon X\oto Y$ by $\phi=\sy_Y^*\circ f^*\circ(\sy_X)_* $; that is, \[\phi(x,y)= f(\sy_Y(y))(x).\] Then, for all $y\in Y$, \[\phi_\exists(\sy_Y(y))=\sy_Y(y)\circ\phi=f(\sy_Y(y)).\] Since the Yoneda embedding $\sy_Y\colon Y\lra \CP Y$ is colimit-dense and both $\phi_\exists$ and $f$ preserve colimits, it follows that $f=\phi_\exists$. \end{proof}

	\begin{proof}[Proof of Theorem \ref{Morita equiv}] Firsr  we show that (1) is equivalent to (2). If $\phi\colon X\oto Y$ is an isomorphism in $[0,1]$-{\bf Dist}, then \[-\circ\phi\colon \CC Y\lra \CC X\] is an isomorphism in  $\QOrd$. Conversely, if $f\colon \CC X\lra\CC Y$ is an isomorphism in  $\QOrd$, then the composite \[X\to^{(\sy  _X)_*}\CC X\to^{f_*} \CC Y\to^{\sy  _Y^*}Y\] is an isomorphism in $[0,1]$-{\bf Dist}, where $\sy_X$ is the Yoneda embedding for $X$ with codomain resricted to $\CC X$; likewise for $\sy_Y$.
		
		Next  we show that   (1) is equivalent to (3). If $\phi\colon X\oto Y$ is an isomorphism in $[0,1]$-{\bf Dist}, then \[\phi_\exists\colon\CP  Y\lra\CP X\] is an isomorphism in  $\PAlg$.

		Conversely,  assume that $f\colon \CP X\lra\CP Y$ is an isomorphism  in $\PAlg$. Since the functor $\mathfrak{P}$ is full,  there exist  distributors $\phi\colon Y\oto X$ and  $\psi\colon X\oto Y$ such that $f= \phi_\exists$ and $f^{-1}= \psi_\exists$. Since the functor $\mathfrak{P}$ is  faithful, it follows that $\psi\circ\phi=Y$ and $\phi\circ\psi=X$. Therefore $X$ and $Y$ are isomorphic in  $[0,1]$-{\bf Dist}. \end{proof}
	
	The Morita equivalence can be strengthened to the following:
	
	\begin{thm}{\rm (Kelly and Schmitt \cite{KS2005})}
		\label{generalized morita} Suppose $\CT$  is a saturated class of weights  that contains all Cauchy weights. Then,  for all real-enriched categories $X$ and $Y$, the following  are equivalent:
		\begin{enumerate}[label={\rm(\arabic*)}] 
			\item $X$ is isomorphic to $Y$   in  $[0,1]$-{\bf Dist}.     \item
			$\CT X$ is isomorphic to $\CT Y$ in  $\TAlg$. \end{enumerate} \end{thm}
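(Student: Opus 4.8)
The plan is to show that for every real-enriched category $X$ one has an isomorphism $\CT X\cong\CT(\CC X)$ in $\TAlg$, and then to feed this into the Morita equivalence of Theorem \ref{Morita equiv}, which already identifies condition $(1)$ with $\CC X\cong\CC Y$ in $\QOrd$. Granting the isomorphism $\CT X\cong\CT(\CC X)$, the implication $(1)\Rightarrow(2)$ becomes immediate: if $X$ and $Y$ are isomorphic in $[0,1]\text{-}{\bf Dist}$, then $\CC X\cong\CC Y$ in $\QOrd$ by Theorem \ref{Morita equiv}, applying the functor $\CT\colon\QOrd\lra\TAlg$ of Proposition \ref{free T-algebra} gives $\CT(\CC X)\cong\CT(\CC Y)$ in $\TAlg$, and the two instances of the isomorphism $\CT(-)\cong\CT(\CC(-))$ then yield $\CT X\cong\CT Y$.

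First I would establish the key isomorphism. Write ${\sf c}\colon X\lra\CC X$ for the Yoneda embedding into the Cauchy completion, so that by Lemma \ref{X is isomorphic to CX} the graph ${\sf c}_*\colon X\oto\CC X$ and cograph ${\sf c}^*\colon\CC X\oto X$ satisfy ${\sf c}_*\circ{\sf c}^*=\CC X$ and ${\sf c}^*\circ{\sf c}_*=X$. The functor ${\sf c}_\exists=\CT({\sf c})\colon\CT X\lra\CT(\CC X)$ is a $\bbT$-homomorphism sending $\phi\mapsto\phi\circ{\sf c}^*$, and on the level of presheaves it is injective with left inverse $\xi\mapsto\xi\circ{\sf c}_*$, since $\phi\circ{\sf c}^*\circ{\sf c}_*=\phi$. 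The whole point is surjectivity onto $\CT(\CC X)$: for $\xi\in\CT(\CC X)$ the candidate preimage is $\xi\circ{\sf c}_*$, and it remains to see this lies in $\CT X$. Here I would use that $\CT$ contains all Cauchy weights, so $\CC X\subseteq\CT X$; writing $j\colon\CC X\lra\CT X$ for the (fully faithful) inclusion and ${\sf t}=j\circ{\sf c}\colon X\lra\CT X$ for the restricted Yoneda embedding, the element $j_\exists(\xi)=\xi\circ j^*$ lies in $\CT\CT X$, and saturation (Proposition \ref{equivalents for submonad}(3)) gives $j_\exists(\xi)\circ{\sf t}_*\in\CT X$. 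A short computation using ${\sf t}_*=j_*\circ{\sf c}_*$ and $j^*\circ j_*=\CC X$ collapses this to $\xi\circ{\sf c}_*$, proving $\xi\circ{\sf c}_*\in\CT X$. Thus ${\sf c}_\exists$ is a bijective $\bbT$-homomorphism whose underlying map is an isomorphism of real-enriched categories, hence an isomorphism in $\TAlg$.

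For $(2)\Rightarrow(1)$ I would first use the key isomorphism to replace $X,Y$ by their Cauchy completions: from $\CT X\cong\CT Y$ in $\TAlg$ we get $\CT A\cong\CT B$ in $\TAlg$ with $A=\CC X$ and $B=\CC Y$ Cauchy complete, and by Theorem \ref{Morita equiv} it suffices to produce an isomorphism $A\cong B$ in $\QOrd$. Since $A$ is Cauchy complete, its Cauchy weights are exactly the representables (Proposition \ref{absolute sup}), so the representable weights form a fully faithful copy of $A$ inside $\CT A$. The strategy is to characterize this copy intrinsically, as the full subcategory of \emph{absolute} objects: those $m$ for which $\CT A(m,-)\colon\CT A\lra\sV$ preserves $\CT$-colimits. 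Any isomorphism $F\colon\CT A\lra\CT B$ in $\TAlg$ preserves and reflects $\CT$-colimits and is fully faithful, so it carries absolute objects to absolute objects bijectively; once the absolute objects of $\CT A$ are identified with the representables, $F$ restricts to a fully faithful bijection between the representables of $A$ and of $B$, i.e. to an isomorphism $A\cong B$.

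The main obstacle is precisely the identification of the absolute objects of $\CT A$ with the representables when $A$ is Cauchy complete. That a representable (equivalently, Cauchy) weight $\phi$ gives an absolute object follows from Proposition \ref{Cauchy weight via sub}, since $\sub_A(\phi,-)$ then preserves all colimits of $\CP A$ and a fortiori the $\CT$-colimits computed inside $\CT A$. The delicate converse---that an object $\phi\in\CT A$ whose hom-functor preserves only $\CT$-colimits must already be a Cauchy weight of $A$---is where the hypotheses $\CC\subseteq\CT$ and the Cauchy completeness of $A$ must be combined; I expect this to require showing that $\phi=\colim_\phi{\sf t}$ is an absolute colimit of representables, and then invoking Proposition \ref{absolute sup} to conclude that $\phi$ is Cauchy, hence representable. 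This is the technical heart of the argument and the step I would spend the most care on.
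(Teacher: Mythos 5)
Your route is genuinely different from the paper's, and much of it is sound. The key lemma $\CT X\cong\CT(\CC X)$ in $\TAlg$ is correct as you argue it: the left inverse comes from ${\sf c}^*\circ{\sf c}_*=X$ and ${\sf c}_*\circ{\sf c}^*=\CC X$ (Lemma \ref{X is isomorphic to CX}), and surjectivity from saturation in the form of Proposition \ref{equivalents for submonad}(3), via the collapse $j_\exists(\xi)\circ{\sf t}_*=\xi\circ j^*\circ j_*\circ{\sf c}_*=\xi\circ{\sf c}_*$. With this, your $(1)\Rightarrow(2)$ is complete and correct. (The paper instead transports $\xi\in\CT X$ along the isomorphism $\psi$ directly, writing $\xi\circ\psi$ as the $\CT$-colimit, weighted by $\xi$, of the functor $x\mapsto\psi(-,x)$, whose values are Cauchy weights and hence lie in $\CT Y$; saturation then places this colimit in $\CT Y$.)

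The genuine gap is in $(2)\Rightarrow(1)$, at exactly the step you flag: that an object $\phi$ of $\CT A$ with $\CT A(\phi,-)$ preserving $\CT$-colimits must be representable. You leave this unproven, and the tool you name, Proposition \ref{absolute sup}, does not apply as stated: its condition (2) concerns preservation of $\phi$-weighted colimits of arbitrary functors by arbitrary functors, and its proof runs through the diagram $\syd\colon A\lra\CPd A$ and a functor out of $\CPd A$ --- none of which is reachable from your hypothesis, which only concerns $\CT$-colimits of diagrams inside $\CT A$ and the single functor $\CT A(\phi,-)$. The statement is nevertheless true, and the correct repair is to relativize the proof of Proposition \ref{Cauchy weight via sub}, $(2)\Rightarrow(1)$: since $\phi\in\CT A$, the colimit $\phi={\colim}_\phi{\sf t}$ (Proposition \ref{y is colimit dense}, together with closedness of $\CT A$ under $\CT$-colimits in $\CP A$) is itself a $\CT$-colimit of a diagram in $\CT A$, so your preservation hypothesis applies to this one colimit; using $\CT A(\phi,{\sf t}(a))=(A\swarrow\phi)(a)$ and Example \ref{cpt as colimit} it gives
\[
1=\CT A(\phi,\phi)=\CT A\bigl(\phi,{\colim}_\phi{\sf t}\bigr)={\colim}_\phi\,\CT A(\phi,{\sf t}(-))=\phi\circ(A\swarrow\phi),
\]
and since $(A\swarrow\phi)\circ\phi\leq A$ always holds, $A\swarrow\phi$ is left adjoint to $\phi$; thus $\phi$ is Cauchy, hence representable because $A$ is Cauchy complete. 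With this insertion your argument closes. For comparison, the paper's $(2)\Rightarrow(1)$ avoids Cauchy completions and absolute objects altogether: it introduces distributors of type $\CT$ (closure under composition being again saturation) and shows that $\phi\mapsto-\circ\phi$ is a full and faithful functor $(\CT\text{-}{\bf Dist})^{\rm op}\lra\TAlg$ extending Lemma \ref{P is full and faithful}; fullness and faithfulness then convert an isomorphism $\CT X\cong\CT Y$ directly into an inverse pair of distributors. Your approach is longer but yields the structural byproduct that Cauchy completion leaves $\CT(-)$ unchanged; the paper's is more economical.
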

	
	\begin{proof} $(1)\Rightarrow(2)$ 
		Consider the isomorphism  \[\psi_\exists\colon\CP X\lra \CP Y, \quad \xi\mapsto\xi\circ\psi\]   in the proof of Theorem \ref{Morita equiv}. If we can prove that $\xi\circ\psi\in \CT Y$ for all $\xi\in \CT X$, then restricting the domain and the codomain of $\psi_\exists$ would yield  an isomorphism $\CT X\lra \CT Y$.
		
		For each $x\in X$, let $f(x)$ be the weight $\psi(-,x)$ of $Y$. Since $\psi$ is an isomorphism in $[0,1]$-{\bf Dist}, $f(x)$ is a Cauchy weight. Hence we obtain a functor $f\colon X\lra\CT Y$. Since $\CT$ is saturated, the colimit of the composite functor $ \mathfrak{i}_Y\circ f\colon X\lra \CT Y\lra\CP Y$ weighted by $\xi$ belongs to $\CT Y$,  
		then  \begin{align*}\xi\circ\psi= \sup_{x\in X}\xi(x)\with \psi(-,x) =\sup_{x\in X}\xi(x)\with f(x) =\colim_\xi (\mathfrak{i}_Y\circ f)\in\CT Y, \end{align*}  as desired.
		
		$(2)\Rightarrow(1)$   We say that a distributor $\phi\colon X\oto Y$ is  of type $\CT$  if $\phi(-,y)\in \CT X$ for all $y\in Y$. In particular, a weight $\xi\colon X\oto*$ is  of type $\CT$ if and only if $\xi\in \CT X$. Since $\CT$  contains Cauchy weights, every isomorphism in $[0,1]$-{\bf Dist} is  of type $\CT$. Since $\CT$ is saturated,   $\CT$-distributors  of type $\CT$ are closed under composition. Therefore, they form a subcategory of $[0,1]$-{\bf Dist}, denoted by $\CT\text{-}{\bf Dist}$. 
		A slight improvement of the argument in Lemma \ref{P is full and faithful} shows that the correspondence \[\phi\colon X\oto Y~ \mapsto ~ -\mathrel{\circ} \phi \colon\CT Y \lra \CT X\] defines a full and faithful functor \[ (\CT\text{-}{\bf Dist})^{\rm op}\lra \TAlg ,\] which implies that if $\CT X$ is isomorphic to $\CT Y$ in $\TAlg$, then $X$ is isomorphic to $Y$   in  $[0,1]$-{\bf Dist}.
	\end{proof}
	
	The characterization of Cauchy completion in Proposition \ref{CC as equaliser} can be strengthened as follows. 
	Suppose $\CT$  is a saturated class of weights  that contains all Cauchy weights. For each real-enriched category $X$, let ${\sf t}_X\colon X\lra\CT X$ be the Yoneda embedding with codomain restricted to $\CT X$. Then, by commutativity of the diagram \begin{equation*}\bfig
		\morphism(0,0)|a|/@{->}@<3pt>/<600,0>[\CT X`\CT\CT X;\CT{\sf t}_X]
		\morphism(0,0)|b|/@{->}@<-3pt>/<600,0>[\CT X`\CT \CT X; {\sf t}_{\CT X} ] 
		\morphism(0,-500)|a|/@{->}@<3pt>/<600,0>[\CP X`\CP\CP X;\CP\sy_X]
		\morphism(0,-500)|b|/@{->}@<-3pt>/<600,0>[\CP X`\CP \CP X; \sy_{\CP X} ] 
		\morphism(0,0)|l|/@{->}/<0,-500>[\CT X`\CP X;\mathfrak{i}_X]
		\morphism(600,0)|r|/@{->}/<0,-500>[\CT\CT X`\CP\CP X;(\mathfrak{i}*\mathfrak{i})_X]
		\efig\end{equation*}  
	one sees that the Cauchy completion of $X$ is the equalizer  of   $\CT{\sf t}_X$ and ${\sf t}_{\CT X}$ in the category $\QOrd$. 
	
	\section{The fuzzy powerset monad} \label{fuzzy powerset monad}
	
	Proposition \ref{PAlg is monadic over QOrd} says that the forgetful functor $\PAlg\lra\QOrd$ is monadic. This section shows that the forgetful functor $\PAlg\lra{\bf Set}$ is also monadic. We show that $\bbP$-algebras are precisely the Eilenberg-Moore algebras of the  fuzzy powerset monad in the category of sets.
	
	The (covariant)  fuzzy powerset  monad (see e.g.  H\"{o}hle \cite{Hoehle2001} and Manes \cite{Manes1982}) \[ (\mathscr{P},{\sf m},{\sf e})\] in the category of sets is defined   as follows: 
	\begin{itemize} 
		\item the (covariant) fuzzy powerset functor $\mathscr{P}$ sends each set $X$ to $[0,1]^X$ and sends each map  $f\colon X\lra Y$ to   $\mathscr{P}(f)\colon \sQ^X\lra \sQ^Y$, where for all $\gamma\in[0,1]^X$ and $y\in Y$,  $$\mathscr{P}(f)(\gamma)(y)=f_\exists(\gamma)(y)=\sup\{\gamma(x)\mid f(x)=y\};$$ 
		
		\item   for each set $X$, the unit ${\sf e}_X\colon X\lra \sQ^X$  sends each $x\in X$ to $ 1_x$;
		
		\item  for each set $X$,  the multiplication ${\sf m}_X\colon  \sQ^{\sQ^X} \lra  \sQ^X $ sends each $\Lambda\colon \sQ^X\lra\sQ $ to $$\bv_{\gamma\in\sQ^X}\Lambda(\gamma)\with\gamma.$$   \end{itemize}

	Instead of verifying directly that the triple $(\mathscr{P},{\sf m},{\sf e})$ is a monad, we show that it is induced by an adjunction between the categories of sets and $[0,1]$-modules. 
	
	For each set $X$, consider the action $$\otimes\colon [0,1]\times [0,1]^X \lra [0,1]^X, \quad (r,\lam)\mapsto r\with\lam.$$ Then $([0,1]^X,\otimes)$ is a $[0,1]$-module and the assignment $X\mapsto ([0,1]^X,\otimes)$ defines a functor $$F\colon{\bf Set}\lra\QMod$$ that is left adjoint to the forgetful functor  $$U\colon \QMod\lra{\bf Set}.$$   It is readily seen that $(\mathscr{P},{\sf m},{\sf e})$ is  the monad induced by the adjunction $F\dashv U$.   
	
	Suppose $(X,h)$ is an algebra of the monad $(\mathscr{P},{\sf m},{\sf e})$. Since $(\mathscr{P},{\sf m},{\sf e})$  contains the usual (covariant) powerset monad in {\bf Set} as a submonad, then $X$ together with the restriction of $h$ to $2^X\subseteq [0,1]^X$ is an algebra of the usual powerset monad, hence $X$ is a complete lattice and  $h$  maps each subset of $X$ to its join (see e.g. Mac{\thinspace}Lane \cite[page 142]{MacLane1998}).  Define $\otimes\colon \sQ\times X\lra X$ by \(r\otimes x=h(r_x).\)   Then $(X,\otimes)$ is a $\sQ$-module. Therefore, an algebra of   the fuzzy powerset monad $(\mathscr{P},{\sf m},{\sf e})$ is essentially a $\sQ$-module, hence a $\bbP$-algebra. This proves:
	
	\begin{thm}\label{PAlg is monadic over Set} {\rm (Pedicchio and Tholen \cite{PT1989})}
		The forgetful functor $\PAlg\lra{\bf Set}$ is monadic. \end{thm}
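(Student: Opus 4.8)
The plan is to exploit the isomorphism $\PAlg\cong\QMod$ of Theorem \ref{P-algebras as modules}, which reduces the claim to showing that the forgetful functor $U\colon\QMod\lra{\bf Set}$ is monadic. As recalled just above the statement, $U$ has a left adjoint $F\colon{\bf Set}\lra\QMod$ sending a set $X$ to the free module $([0,1]^X,\otimes)$ with $r\otimes\lam=r\with\lam$, and the monad induced on ${\bf Set}$ by $F\dashv U$ is precisely the fuzzy powerset monad $(\mathscr{P},{\sf m},{\sf e})$. By definition, $U$ is monadic exactly when the comparison functor $K\colon\QMod\lra{\bf Set}^{(\mathscr{P},{\sf m},{\sf e})}$ into the Eilenberg--Moore category is an equivalence; I would in fact show it is an \emph{isomorphism} of categories by exhibiting an explicit inverse, which is cleaner here than invoking Beck's theorem (though the latter would also apply, since $U$ manifestly reflects isomorphisms).

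First I would compute $K$ on objects. For a module $(M,\otimes)$ the counit $\epsilon_M\colon FUM\lra M$ is the unique module homomorphism extending the identity of the underlying set; since it preserves joins and the action and every $\lam\in[0,1]^{M}$ is the pointwise join $\bv_{x\in M}\lam(x)\otimes 1_x$ of fuzzy points, one gets $h\coloneqq U\epsilon_M\colon[0,1]^{M}\lra M,\ h(\lam)=\bv_{x\in M}\lam(x)\otimes x$. Thus $K(M,\otimes)=(M,h)$ with $h$ of this shape, and $K$ is the identity on underlying maps. The task is to reverse this: starting from an arbitrary Eilenberg--Moore algebra $(X,h)$, to recover a $[0,1]$-module whose associated structure map is exactly $h$, and to check that algebra homomorphisms coincide with module homomorphisms.

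The reconstruction follows the steps indicated in the text. Since the ordinary covariant powerset monad sits inside $(\mathscr{P},{\sf m},{\sf e})$ as a submonad via the crisp subsets $2^X\subseteq[0,1]^X$, restricting $h$ to $2^X$ endows $X$ with a complete-lattice structure in which $h|_{2^X}$ is the join map. Next I would set $r\otimes x=h(r_x)$, with $r_x$ the fuzzy point of value $r$ at $x$ and $0$ elsewhere, and verify the module axioms: the unit law $1\otimes x=x$ is $h\circ{\sf e}_X=\id$, associativity $s\otimes(r\otimes x)=(s\with r)\otimes x$ unpacks from $h\circ\mathscr{P}(h)=h\circ{\sf m}_X$ applied to suitable fuzzy points, and the join-preservation of $r\otimes-$ and of $-\otimes x$ (conditions (iii),(iv) of a $[0,1]$-action) follow from the pointwise joins in $[0,1]^X$ together with continuity of $\with$ and the fact that $h$ preserves the joins that define the lattice order. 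Finally one checks that $h(\lam)=\bv_x\lam(x)\otimes x$ again from the associativity axiom, so $K$ applied to the reconstructed module returns $(X,h)$, and that commuting with the structure maps is the same as being join- and action-preserving.

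The main obstacle is the bookkeeping in the last step: confirming that the two Eilenberg--Moore identities translate, term by term, into the full list of $[0,1]$-action axioms and join-completeness, and conversely that any module yields a genuine algebra. The delicate point is extracting the complete-lattice structure purely from the set map $h$ — this is exactly where the submonad observation carries the weight — and then ensuring the action $r\otimes x=h(r_x)$ is compatible with \emph{arbitrary} joins, not merely finite ones, which once more rests on $h$ preserving all joins of fuzzy subsets.
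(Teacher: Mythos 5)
Your proposal is correct and follows essentially the same route as the paper: reduce via the isomorphism $\PAlg\cong\QMod$ to the adjunction $F\dashv U$ inducing the fuzzy powerset monad, then identify its Eilenberg--Moore algebras with $[0,1]$-modules by using the ordinary powerset submonad to extract the complete-lattice structure and setting $r\otimes x=h(r_x)$. The only difference is one of exposition — you spell out the comparison functor and the axiom-checking that the paper leaves implicit — not of mathematical substance.
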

	
	In the following we consider the double contravriant fuzzy powerset monad in the category of sets. The \emph{contravriant fuzzy powerset functor} $$\exp\colon {\bf Set}^{\rm op}\lra{\bf Set}$$   maps each $f\colon X\lra Y$ to   $$\exp f \colon [0,1]^Y\lra [0,1]^X, \quad \mu\mapsto \mu\circ f.$$ It is readily verified that $\exp\colon {\bf Set}^{\rm op}\lra{\bf Set}$ is right adjoint to its opposite $\exp^{\rm op}\colon {\bf Set}\lra{\bf Set}^{\rm op}$. The unit and the counit of the adjunction $\exp^{\rm op}\dashv\exp$ are given respectively by  $$\eta_X\colon X\lra \exp\exp^{\rm op}X, \quad \eta_X(x)(\lam)=\lam(x) $$ and $$\epsilon_X\colon X\lra \exp^{\rm op}\exp X, \quad \epsilon_X(x)(\mu)=\mu(x). $$
	The monad  induced by  the adjunction $\exp^{\rm op}\dashv\exp$ is called \emph{the double contravariant fuzzy powerset monad} in the category of sets. As we shall see, this monad contains the monad $(\mathscr{P},{\sf m},{\sf e})$ as a submonad, so we write   $ (\mathscr{E}, \sfm,{\sf e})$ for the monad defined by   $\exp^{\rm op}\dashv\exp$. Explicitly, for each set $X$ and each function $f\colon X\lra Y$, 
	\begin{itemize} 
		\item $\mathscr{E}X$ is the set of all functions $ [0,1]^X\to[0,1]$; 
		\item   $ \mathscr{E}(f)\colon \mathscr{E}X\lra \mathscr{E}Y $ is given by $\mathscr{E}(f)(\Lambda)(\mu)= \Lambda(\mu\circ f)$ for all $\Lambda\in  \mathscr{E}X$ and $\mu\in[0,1]^Y$; 
		\item the unit ${\sf e}_X\colon X\lra \mathscr{E}(X)$ is given by ${\sf e}_X(x)(\lam)=\lam(x)$ for all $x\in X$  and $\lam\in [0,1]^X$; 
		\item  the multiplication $\sfm_X\colon \mathscr{E}^2X\lra \mathscr{E}X$ is given by $\sfm_X=\exp(\epsilon_{\exp^{\rm op}  X})$; explicitly, $\sfm_X(\mathcal{H})(\lam)= \mathcal{H}(\widehat{\lam})$ for all $\lam\in [0,1]^X$ and   $\mathcal{H}\colon [0,1]^{\mathscr{E}X}\lra [0,1]$,   where  $\widehat{\lam}\colon \mathscr{E}X\lra [0,1]$ sends  $\Lambda\in  \mathscr{E}X$ to $\Lambda(\lam)$. 
	\end{itemize}
	
	The (covariant) fuzzy powerset functor $\mathscr{P}\colon{\sf Set}\to{\sf Set}$ is closely related to the contravariant fuzzy powerset functor  $\exp\colon {\sf Set}^{\rm op}\to{\sf Set}$. Of particular interest is the following 
	
	\begin{lem} \label{relation between P and exp} \begin{enumerate}[label={\rm(\roman*)}] 
			\item A map $f\colon X\lra Y$ between sets is injective if and only if $\exp(f)\circ \mathscr{P}(f) =1_{\exp X}$. \item For any pullback square in the category of sets, as displayed on the left,
			$$\bfig
			\square[A`C`B`D;f`h`j`g] \square(1100,0)<650,500>[\sQ^C`\sQ^A`\sQ^D`\sQ^B;\exp  f`\mathscr{P} (j)`\mathscr{P} (h)
			`\exp  g]
			\efig$$
			the right square is commutative. \end{enumerate} \end{lem}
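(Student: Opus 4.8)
The plan is to reduce both parts to a direct computation of the relevant composites from the pointwise formulas for $\mathscr{P}$ and $\exp$. First I would record that, for every $\gamma\in\sQ^X$ and $x\in X$,
\[(\exp(f)\circ\mathscr{P}(f))(\gamma)(x)=\mathscr{P}(f)(\gamma)(f(x))=\sup\{\gamma(x')\mid f(x')=f(x)\},\]
since $\exp(f)$ precomposes with $f$ and $\mathscr{P}(f)$ is the existential image. For part (i), if $f$ is injective then the fibre $\{x'\mid f(x')=f(x)\}$ is the singleton $\{x\}$, so the supremum above equals $\gamma(x)$ and the composite is $1_{\exp X}$. Conversely, if $f$ is not injective, choose $x_1\neq x_2$ with $f(x_1)=f(x_2)$ and evaluate the composite on the function $1_{x_1}\in\sQ^X$ at the point $x_2$: the fibre of $f(x_2)$ contains $x_1$ with $1_{x_1}(x_1)=1$, so the value is $1$, whereas $1_{x_1}(x_2)=0$. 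Hence the composite is not the identity, which gives the contrapositive.

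For part (ii) I would realise the pullback concretely as $A=\{(b,c)\in B\times C\mid g(b)=j(c)\}$ with $f$ and $h$ the two projections, so that $j\circ f=g\circ h$, and then compare the two routes around the square at an arbitrary $\mu\in\sQ^C$ and $b\in B$. The lower-left route gives
\[(\exp g\circ\mathscr{P}(j))(\mu)(b)=\mathscr{P}(j)(\mu)(g(b))=\sup\{\mu(c)\mid j(c)=g(b)\},\]
while the upper-right route gives
\[(\mathscr{P}(h)\circ\exp f)(\mu)(b)=\sup\{(\exp f)(\mu)(a)\mid h(a)=b\}=\sup\{\mu(f(a))\mid h(a)=b\}.\]
These two suprema coincide once one establishes the set equality $\{f(a)\mid h(a)=b\}=\{c\in C\mid j(c)=g(b)\}$, which is precisely what the pullback supplies: an $a$ with $h(a)=b$ is of the form $a=(b,c)$ with $g(b)=j(c)$, and then $f(a)=c$ ranges exactly over those $c\in C$ with $j(c)=g(b)$.

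The main point to handle with care is this Beck--Chevalley set equality: I would verify both inclusions explicitly and allow the fibres to be empty, in which case both suprema are $0$ and commutativity still holds. I would also note that the displayed equality of images is invariant under isomorphism of cones, so nothing is lost by computing with the standard set-theoretic pullback rather than an abstract one. Apart from this, the whole lemma is a routine unwinding of the pointwise descriptions of the direct and inverse fuzzy images, and no appeal to the monad structure $(\mathscr{E},\sfn,{\sf e})$ or to $(\mathscr{P},{\sf m},{\sf e})$ is required.
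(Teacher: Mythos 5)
Your proof is correct. The paper states this lemma without any proof, treating both parts as routine consequences of the pointwise formulas for $\mathscr{P}$ and $\exp$; your computation — the fibrewise description $(\exp(f)\circ\mathscr{P}(f))(\gamma)(x)=\sup\{\gamma(x')\mid f(x')=f(x)\}$ with the characteristic-function counterexample for (i), and the Beck--Chevalley set equality $\{f(a)\mid h(a)=b\}=\{c\in C\mid j(c)=g(b)\}$ supplied by the standard set-theoretic pullback for (ii) — is exactly the argument being taken for granted, including the correct handling of empty fibres (both suprema are $0$) and the reduction, via functoriality, from an arbitrary pullback cone to the standard one.
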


	It is known that the covariant powerset monad in the category of sets is a submonad of the double contravariant powerset monad, see e.g. Manes \cite[page 79]{Manes2003}, the following proposition says this is also true for fuzzy powersets. For each set $X$ consider the map $$j_X\colon [0,1]^X\to[0,1]^{[0,1]^X}, \quad  j_X(\lambda)(\gamma)=\sub_X(\lambda,\gamma).$$  Then the assignment $X\mapsto j_X$ is a natural transformation from the functor $\mathscr{P}$ to the functor $\mathscr{E}$, since  for each map $f\colon X\to Y$ we have  
	\begin{align*}j_Y\circ\mathscr{P} (f)(\lambda)(\gamma)&=\sub_Y( f_\exists(\lambda),\gamma)) =\sub_X(\lambda,\gamma\circ f)   =  \mathscr{E}(f)(j_X(\lambda))(\gamma).  \end{align*} 
	
	\begin{prop}{\rm(Lu and Zhang \cite{LuZ2023})} The natural transformation $j$ exhibits   $(\mathscr{P},{\sf m},{\sf e})$ as a submonad of $(\mathscr{E}, \sfm,{\sf e})$. \end{prop}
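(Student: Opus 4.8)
The plan is to show that $j$ is a morphism of monads whose components are monomorphisms, since a submonad is exactly a monad morphism that is pointwise monic. Naturality of $j\colon\mathscr P\Rightarrow\mathscr E$ has already been verified in the excerpt, so what remains is the unit law $j_X\circ{\sf e}_X={\sf e}_X$, the multiplication law $j_X\circ{\sf m}_X=\sfm_X\circ\mathscr E(j_X)\circ j_{\mathscr P X}$ (with the two occurrences of ${\sf e}$, and the two multiplications, referring to the respective monads), and injectivity of each $j_X$. I would begin with the unit law, which is immediate: for $x\in X$ and $\gamma\in[0,1]^X$ one has $j_X(1_x)(\gamma)=\sub_X(1_x,\gamma)=\inf_{z\in X}(1_x(z)\ra\gamma(z))=\gamma(x)$, and $\gamma\mapsto\gamma(x)$ is precisely ${\sf e}_X(x)$ for the monad $\mathscr E$.

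The multiplication law is the heart of the argument, and I would prove it by showing that both sides send $\Lambda\in\mathscr P\mathscr P X$ and $\mu\in[0,1]^X$ to $\inf_{\gamma\in[0,1]^X}(\Lambda(\gamma)\ra\sub_X(\gamma,\mu))$. On the left, $j_X({\sf m}_X(\Lambda))(\mu)=\sub_X\!\big(\bv_{\gamma}\Lambda(\gamma)\with\gamma,\ \mu\big)$; using that $\sub_X(-,\mu)=\CP X(-,\mu)$ turns joins into meets and, by Example \ref{tensor and cotensor in PX}, tensors into cotensors (so $\sub_X(r\with\gamma,\mu)=r\ra\sub_X(\gamma,\mu)$), this collapses to the claimed expression. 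On the right I would unfold the multiplication of $\mathscr E$, namely $\sfm_X(\mathcal H)(\mu)=\mathcal H(\widehat\mu)$ with $\widehat\mu(\Xi)=\Xi(\mu)$, together with $\mathscr E(j_X)(\mathcal K)(\Xi)=\mathcal K(\Xi\circ j_X)$ and $j_{\mathscr P X}(\Lambda)(\Gamma)=\sub_{\mathscr P X}(\Lambda,\Gamma)$. The key intermediate identification is that $\widehat\mu\circ j_X$ is the function $\nu\mapsto\sub_X(\nu,\mu)$ on $\mathscr P X$; granting this, the right-hand side becomes $\sub_{\mathscr P X}\!\big(\Lambda,\ \sub_X(-,\mu)\big)=\inf_{\nu}(\Lambda(\nu)\ra\sub_X(\nu,\mu))$, which agrees with the left-hand side verbatim.

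For injectivity I would observe that $j_X$ is, up to the codomain being the full function space $[0,1]^{[0,1]^X}$, the coYoneda embedding $\syd$ of the separated real-enriched category $\mathscr P X=([0,1]^X,\sub_X)$, so it is injective by full faithfulness; a direct check also works, since $j_X(\lambda)=j_X(\lambda')$ forces $\sub_X(\lambda,\lambda')=\sub_X(\lambda',\lambda)=1$ and hence $\lambda=\lambda'$ by separatedness. The main obstacle I anticipate is not any inequality but the bookkeeping in the multiplication law: the right-hand side sits after three successive applications of function-space functors, so the real work is threading the types correctly through $j_{\mathscr P X}$, $\mathscr E(j_X)$ and $\sfm_X$ and pinning down $\widehat\mu\circ j_X$. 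Once that identification is in hand, everything reduces to the two structural properties of $\sub_X$ (joins to meets, tensors to cotensors) and the proof closes.
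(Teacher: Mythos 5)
Your proposal is correct and follows essentially the same route as the paper: both verify the submonad conditions by direct pointwise computation, with each side of the multiplication square collapsing to $\inf_{\gamma\in[0,1]^X}(\Lambda(\gamma)\ra\sub_X(\gamma,\mu))$. The only cosmetic differences are that you compute the horizontal composite $j*j$ as $\mathscr{E}(j_X)\circ j_{\mathscr{P}X}$ while the paper uses the other (equal, by naturality) whiskering $j_{\mathscr{E}X}\circ\mathscr{P}(j_X)$, and that you spell out the unit law and the injectivity of $j_X$, both of which the paper treats as clear.
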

	
	\begin{proof} 
		It is clear that $j_X$ is an injection for each set $X$,   so it suffices to show that for each set $X$  the following square  is commutative: \[\bfig\square<600,500>[\mathscr{P}^2X`\mathscr{E}^2 X` \mathscr{P} X`\mathscr{E} X;(j*j)_X`\sfm_X` \sfm_X`j_X]\efig\] For this we compute:   for all $\Lambda\in\sQ^{\sQ^X}$ and   $\lambda\in\sQ^X$,
		\begin{align*} &~ \hskip 10pt \sfm_X\circ(j*j)_X(\Lambda)(\lambda)\\ &= \sfm_X\circ j_{\mathscr{E}X}\circ \mathscr{P} (j_X) (\Lambda)(\lambda)  & (\text{Definition of $j*j$})
			\\ &=j_{\mathscr{E}X}((j_X)_\exists (\Lambda))(\widehat{\lambda}) 
			\\ &= \bw_{\Xi\in \mathscr{E}X} ((j_X)_\exists (\Lambda)(\Xi)\ra \Xi(\lambda)) & (\text{Definition of $j_{\mathscr{E}X}$})
			\\ &= \bw_{\gamma\in\sQ^X}(\Lambda(\gamma)\ra \sub_X(\gamma,\lambda)) &  ((j_X)_\exists (\Lambda)(\Xi)=\bv_{j_X(\gamma)=\Xi} \Lambda(\gamma))
			\\ &= \sub_X\Big(\bv_{\gamma\in\sQ^X}\Lambda(\gamma)\with\gamma, \lambda\Big) \\ &= j_X\circ\sfm_X(\Lambda)(\lambda).   \end{align*}  The proof is completed. \end{proof}
	
	For each   set $B$, $(\exp B,\exp(\epsilon_{B}))$ is an algebra of the monad $(\mathscr{E}, \sfm,{\sf e})$. The following theorem says that all  algebras of $(\mathscr{E}, \sfm,{\sf e})$ are of this form.
	
	\begin{prop}\label{algebra of the double fuzzy powerset monad} {\rm(Pu and Zhang \cite{PZ2015})}
		Every algebra of the double contravariant fuzzy powerset monad $(\mathscr{E}, \sfm,{\sf e})$ is of the form $(\exp B,\exp(\epsilon_{B}))$ for some set $B$. \end{prop}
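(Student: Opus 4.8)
The plan is to recognize the statement as the essential surjectivity of the Eilenberg--Moore comparison functor for the adjunction $\exp^{\rm op}\dashv\exp$, and to obtain it by proving that $\exp\colon{\bf Set}^{\rm op}\lra{\bf Set}$ is monadic. For this adjunction the induced monad is exactly $(\mathscr E,\sfm,{\sf e})$ with $\mathscr E=\exp\circ\exp^{\rm op}$, and the comparison functor $K\colon{\bf Set}^{\rm op}\lra{\bf Set}^{\mathscr E}$ sends a set $B$ to the algebra $(\exp B,\exp(\epsilon_B))$, precisely the algebras named in the statement. Thus if $K$ is an equivalence --- in particular essentially surjective --- then every $(\mathscr E,\sfm,{\sf e})$-algebra is isomorphic to some $(\exp B,\exp(\epsilon_B))$, which is the claim. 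I would establish monadicity through the crude (reflexive) monadicity theorem (see Borceux \cite{Borceux1994a}, Mac\thinspace Lane \cite{MacLane1998}): since $\exp$ already has the left adjoint $\exp^{\rm op}$, and ${\bf Set}^{\rm op}$ has coequalizers of reflexive pairs (these are equalizers in ${\bf Set}$), it remains to check that $\exp$ reflects isomorphisms and preserves coequalizers of reflexive pairs.

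Reflection of isomorphisms is the easy half: a morphism of ${\bf Set}^{\rm op}$ is a function $g\colon C\lra B$, and $\exp$ turns it into the precomposition map $-\circ g\colon[0,1]^B\lra[0,1]^C$. Because $[0,1]$ has at least two points it is a cogenerator of ${\bf Set}$, so $-\circ g$ is injective exactly when $g$ is surjective and surjective exactly when $g$ is injective; hence $-\circ g$ bijective forces $g$ bijective. The substantial step is preservation of reflexive coequalizers. In the ${\bf Set}$ picture this means: for a coreflexive pair $u,v\colon C\rightrightarrows D$ (a common retraction $s$ with $su=sv=\id_C$) with equalizer $e\colon E\hookrightarrow C$, the restriction map $-\circ e\colon[0,1]^C\lra[0,1]^E$ must be the coequalizer of $-\circ u,\,-\circ v\colon[0,1]^D\rightrightarrows[0,1]^C$.

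The key observation that makes this work --- and which I expect to be the main obstacle --- is that coreflexivity forces the witnesses to exist in a single step. The map $-\circ e$ is clearly a surjection that coequalizes the pair, so it suffices to show that whenever $\phi_1,\phi_2\colon C\lra[0,1]$ agree on $E$ there is a single $\psi\colon D\lra[0,1]$ with $\psi\circ u=\phi_1$ and $\psi\circ v=\phi_2$. The constraints ``$\psi(u(c))=\phi_1(c)$'' and ``$\psi(v(c))=\phi_2(c)$'' are consistent precisely because $s$ separates the images: applying $s$ to a coincidence $u(c)=v(c')$ gives $c=s(u(c))=s(v(c'))=c'$, so any overlap occurs only at $c=c'\in E$, where $\phi_1(c)=\phi_2(c)$ by hypothesis, while injectivity of $u$ and of $v$ rules out the remaining clashes. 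Defining $\psi$ by these values on $u(C)\cup v(C)$ and arbitrarily elsewhere yields $\phi_1\mathrel{R}\phi_2$ directly, so the equivalence relation generated by the pair coincides with the kernel of $-\circ e$, and $-\circ e$ is the coequalizer. Here Lemma \ref{relation between P and exp} is convenient: part (i) underwrites the injectivity/retraction bookkeeping and part (ii) records the pullback-preservation behaviour of $\exp$.

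With these conditions verified, the crude monadicity theorem gives that $K$ is an equivalence; its essential surjectivity is the assertion, since $K(B)=(\exp B,\exp(\epsilon_B))$. I expect the only delicate point to be the reflexive-coequalizer computation above; everything else is formal once the adjunction $\exp^{\rm op}\dashv\exp$ and the identification $\mathscr E=\exp\circ\exp^{\rm op}$ are in place.
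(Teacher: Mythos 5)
Your proof is correct, but it takes a genuinely different route from the paper's. You verify the hypotheses of the crude (reflexive) monadicity theorem for $\exp\colon{\bf Set}^{\rm op}\lra{\bf Set}$: reflection of isomorphisms, which works because $[0,1]$ has at least two points and is therefore a cogenerator, and preservation of reflexive coequalizers, which is exactly your single-witness extension argument --- given $\phi_1,\phi_2$ agreeing on the equalizer $E$, coreflexivity lets you glue them into one $\psi$ on $u(C)\cup v(C)$, so the one-step relation $\{(\psi\circ u,\psi\circ v)\mid\psi\in[0,1]^D\}$ already equals the kernel of $-\circ e$. Both checks are sound, and the theorem then gives that the comparison functor $B\mapsto(\exp B,\exp(\epsilon_B))$ is an equivalence, hence essentially surjective, which is the statement (read, as in the paper's own proof, up to isomorphism of algebras). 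The paper instead carries out the canonical presentation by hand: given an algebra $(A,h)$ it forms the pullback of $\exp^{\rm op}(h)$ and $\epsilon_{\exp^{\rm op}A}$, observes via the triangle identity and the unit law $h\circ\eta_A=1_A$ that the two projections coincide --- this is where your coreflexive retraction appears in disguise, since $\exp^{\rm op}(\eta_A)$ is precisely a common retraction of the pair being equalized --- so that $B$ is an equalizer, and then verifies directly that $k_A=\exp(i)\circ\eta_A$ is an algebra isomorphism with explicit inverse $h\circ\mathscr{P}(i)$, using Lemma \ref{relation between P and exp}. What your route buys: it is modular, it delivers the stronger conclusion that $\exp$ is monadic (the comparison is full and faithful as well as essentially surjective), and it isolates two reusable facts about $[0,1]$-valued function spaces. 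What the paper's route buys: it is self-contained --- it never invokes Beck's theorem, which is not proved in these notes --- and it produces the set $B$ and the mutually inverse maps explicitly, needing only the particular instances of coequalizer preservation that arise in the canonical presentation rather than all of them.
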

	
	\begin{proof} Suppose  $(A,h)$ is an  algebra of $(\mathscr{E}, \sfm,{\sf e})$; that means, $A$ is a  set, $h\colon  \mathscr{E}A \lra A$ is a   map  such that $h\circ\eta_A=1_A$  and that $h\circ\sfm_ A=h\circ \mathscr{E}(h)$. We show that there is some  set $B$ such that $(A,h)$ is isomorphic to $(\exp B,\exp(\epsilon_{B}))$.

		Consider the pullback $$\bfig \Square [B`\exp^{\rm op} A`\exp^{\rm op} A`\exp^{\rm op}\exp\exp^{\rm op} A; i`i'`\exp^{\rm op}(h)`\epsilon_{\exp^{\rm op}A}] \efig$$ in the category of sets. We prove in three steps that $B$ satisfies the requirement.  
		
		\textbf{Step 1}. $i=i'$.  This follows immediately from the  triangular identity $$\exp^{\rm op} (\eta_ A)\circ \epsilon_{\exp^{\rm op} A}=1_{\exp^{\rm op} A}$$ and the equality $$\exp^{\rm op}(\eta_A)\circ \exp^{\rm op}(h) = \exp^{\rm op}(h\circ\eta_A) = 1_{\exp^{\rm op}A}.$$  So $i$ is an equalizer of  $\exp^{\rm op}(h)$ and $\epsilon_{\exp^{\rm op}A}$.

		\textbf{Step 2}. $k_A\coloneqq \exp(i)\circ \eta_A\colon ( A,h)\to (\exp B,\exp(\epsilon_B))$ is a homomorphism of algebras; that means, $k_A\circ h=\exp(\epsilon_{B})\circ \mathscr{E}(k_A)$. To see this, we compute:
		\begin{align*}
			k_A\circ h&=\exp(i) \circ \eta_ A\circ h\\
			&=\exp(i) \circ  \mathscr{E}(h)\circ \eta_{\mathscr{E} A} & (\text{naturailty of $\eta$})\\
			&=\exp(\exp^{\rm op}(h))\circ i)\circ\eta_{\mathscr{E} A} & (\mathscr{E}=\exp\circ\exp^{\rm op})\\
			&=\exp(\epsilon_{\exp^{\rm op}  A}\circ i)\circ \eta_{\mathscr{E} A} & (\text{$i$ equalizes $\exp^{\rm op}(h)$ and $\epsilon_{\exp^{\rm op}  A}$})\\
			&=\exp(i) \circ \exp(\epsilon_{\exp^{\rm op}  A})\circ \eta_{\mathscr{E} A}\\
			&=\exp(i) \circ \sfm_ A\circ \eta_{\mathscr{E} A} &(\sfm_A=\exp(\epsilon_{\exp^{\rm op}  A}))\\
			&=\exp(i) &(\sfm_A\circ\eta_{\mathscr{E} A}=1_{\mathscr{E} A}) \\
			&=\exp(i) \circ\mathscr{E}(h)\circ \mathscr{E}(\eta_A) & (h\circ\eta_A=1_A)\\
			&=\exp(\epsilon_{\exp^{\rm op}  A}\circ i)\circ \mathscr{E}(\eta_A) & (\text{$i$ equalizes $\exp^{\rm op}(h)$ and $\epsilon_{\exp^{\rm op}  A}$})\\
			&=\exp(\exp^{\rm op}\exp(i)\circ\epsilon_B)\circ \mathscr{E}(\eta_A) & (\text{naturailty  of $\epsilon$}) \\
			&=\exp(\epsilon_B)\circ \mathscr{E}(\exp(i))\circ \mathscr{E}(\eta_A) \\
			&=\exp(\epsilon_B)\circ \mathscr{E}(k_A).
		\end{align*}
		
		\textbf{Step 3}. $k_A \colon (A,h)\lra (\exp B,\exp(\epsilon_B))$ is an isomorphism of algebras. It suffices to check that $k_A\colon A\to\exp B$ is a bijection.
		
		Let $l_A=h\circ\mathscr{P}(i)$. On the one hand, by the computation in Step 2 we have $k_ A\circ h=\exp(i) $, then \begin{align*}
			k_A\circ l_A =k_ A\circ h\circ \mathscr{P}(i)  =\exp(i)\circ \mathscr{P}(i) =1_{\exp B},
		\end{align*}
		where the last equality holds because $i\colon B\lra \exp^{\rm op}  A$ is injective.
		
		On the other hand, by virtue of Step 1 and Lemma \ref{relation between P and exp}\thinspace(ii)  one has that
		$$\mathscr{P}(i)\circ \exp(i)=\exp(\epsilon_{\exp^{\rm op}}  A)\circ \mathscr{P}(\exp^{\rm op}(h))
		=\sfm_ A\circ \mathscr{P}(\exp^{\rm op}(h)).$$
		Since  $h\circ\eta_A=1_A$, 
		it follows that  $\exp^{\rm op}(h)$ is an injection,
		hence $$\mathscr{E}(h)\circ \mathscr{P}(\exp^{\rm op}(h))= \exp(\exp^{\rm op}(h))\circ \mathscr{P}(\exp^{\rm op}(h))=1_{\mathscr{E} A}$$ by Lemma \ref{relation between P and exp}\thinspace(i). Consequently, \begin{align*}
			l_A\circ k_A &= h\circ \mathscr{P}(i)\circ\exp(i)\circ\eta_ A\\
			&= h\circ\sfm_ A\circ \mathscr{P}(\exp^{\rm op}(h))\circ \eta_ A\\
			&= h\circ\mathscr{E}(h)\circ \mathscr{P}(\exp^{\rm op}(h))\circ \eta_ A & \text{($(A,h)$ is an algebra)}\\
			&= h\circ\eta_ A & (\mathscr{E}(h)\circ \mathscr{P}(\exp^{\rm op}(h))=1_{\mathscr{E} A})\\
			&= 1_ A.
		\end{align*}
		
		Therefore, $k_A\colon A\to\exp B$ is a bijection, as desired.
	\end{proof}
	
	\section{Categories of \texorpdfstring{$\bbT$}{}-algebras}
	
	In this section, unless otherwise specified, $\CT$ always denotes a saturated class of weights, $\bbT=(\CT,{\sf m},\sy)$ denotes the corresponding submonad of the presheaf monad $\bbP=(\CP,{\sf m},\sy)$.
	This section concerns function spaces in the category $\TAlg$ of $\bbT$-algebras. 
	
	For any real-enriched categories $X$ and $Y$, let $[X,Y]$ be the set of all functors from $X$ to $Y$. For $f,g\in[X,Y]$,  let \[[X,Y](f,g)=\inf_{x\in X}Y(f(x),g(x)).\] Then $[X,Y]$ becomes a real-enriched category, called the functor category.
	
	For any functors $f\colon X_2\lra X_1$ and $g\colon Y_1\lra Y_2$,  the assignment  $h\mapsto g\circ h\circ f$ defines a functor $[X_1,Y_1]\lra[X_2,Y_2]$. In this way we obtain a bifunctor \[[-,-]\colon (\QOrd)^{\rm op}\times\QOrd \lra \QOrd .\]

	For any real-enriched categories $X$ and $Y$, it is readily verified that $(X\times Y,\alpha)$ is a real-enriched category, where $$ \alpha\colon X\times Y\rto X\times Y, \quad \alpha((x_1,y_1),(x_2,y_2))= X(x_1,x_2)\with Y(y_1,y_2).$$   The category $(X\times Y,\alpha)$ is called the tensor product of $X$ and $Y$ and is denoted  by $X\otimes  Y$.\footnote{The symbol $\otimes$ is used to denote both the  tensor in a real-enriched category (i.e. $r\otimes x$)  and the tensor product of  real-enriched categories. Fortunately, it is easy to detect from the context which one is meant.} The assignment $(X,Y)\mapsto X\otimes Y$ defines a bifunctor \[\otimes\colon \QOrd \times\QOrd \lra \QOrd .\]
	
	It is easily seen that the tensor product is symmetric and has the terminal real-enriched category $\star$ as unit, and that for each real-enriched category $X$, the functor $$-\otimes X\colon \QOrd \lra \QOrd $$  is left adjoint to  $$[X,-]\colon \QOrd \lra \QOrd .$$  Therefore, the triple $(\QOrd,\otimes,\star)$ is a symmetric monoidal closed category. 
	
	Given  real-enriched categories $X$ and $Y$, $\phi\in\CP X$ and $\psi\in\CP Y$, the function $$\phi\otimes\psi\colon X\times Y\lra[0,1],\quad (x,y)\mapsto\phi(x)\with\psi(y)$$ is  a weight of $X\otimes Y$. By continuity of the t-norm $\&$ one verifies that the assignment $(\phi,\psi)\mapsto\phi\otimes\psi$ defines a functor $\CP X\otimes\CP Y\lra \CP(X\otimes Y) .$ So, the presheaf functor $\CP$ is  a lax functor on the symmetric monoidal closed category $(\QOrd,\otimes,\star)$.

	\begin{thm}\label{function space} {\rm(Lai and Zhang \cite{LaiZ2007})} Assigning to each pair $(X,Y)$ of $\bbT$-algebras the subcategory $\hom(X,Y)$ of  $[X,Y]$ composed of $\bbT$-homomorphisms  gives rise to a bifunctor \[\hom\colon (\TAlg)^{\rm op}\times\TAlg\lra \TAlg.\]    \end{thm}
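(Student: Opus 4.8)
The plan is to show that, for $\bbT$-algebras $X$ and $Y$, the functor category $[X,Y]$ admits colimits of all $\CT$-weights \emph{computed pointwise}, and that the full subcategory $\hom(X,Y)$ of $\bbT$-homomorphisms is closed under these colimits. By the Kock--Z\"{o}berlein property of $\bbT$ (Proposition \ref{kz}), the objects of $\TAlg$ are exactly the separated real-enriched categories in which every $\CT$-weight has a colimit, and the morphisms are exactly the $\CT$-cocontinuous functors; so it suffices to make $\hom(X,Y)$ such a category and to check that the action on arrows lands in $\TAlg$. First I would record that $\hom(X,Y)$ is separated, being a full subcategory of $[X,Y]$: if $[X,Y](f,g)=[X,Y](g,f)=1$ then $Y(f(x),g(x))=Y(g(x),f(x))=1$, so $f(x)\cong g(x)$ and hence $f(x)=g(x)$ for every $x$ by separatedness of $Y$.

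For the colimits, fix $\Phi\in\CT(\hom(X,Y))$ and, for each $x\in X$, consider the evaluation functor $\mathrm{ev}_x\colon\hom(X,Y)\lra Y$, $h\mapsto h(x)$, which is a functor since $\hom(X,Y)(h,h')=\inf_{x'}Y(h(x'),h'(x'))\leq Y(h(x),h'(x))$. As $\CT$ is a class of weights, $\Phi_x\coloneqq(\mathrm{ev}_x)_\exists\Phi\in\CT Y$, so $\Phi_x$ has a colimit in the $\bbT$-algebra $Y$, and I would define the candidate $g\colon X\lra Y$ by $g(x)=\colim\Phi_x$. That $g$ is a functor and that it is the colimit of $\Phi$ \emph{in} $[X,Y]$ is a direct residuation computation: using the unit inequality $\Phi_x(y)\leq Y(y,g(x))$ valid for any colimit, together with the Yoneda lemma, one verifies $X(x_1,x_2)\with\Phi_{x_1}(y)\leq Y(y,g(x_2))$ and, along the same lines, $[X,Y](g,h)=\CP[X,Y](\Phi,\sy(h))$ for all $h$. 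This exhibits the $[X,Y]$-colimit of $\Phi$ as the pointwise one, so in particular each $\mathrm{ev}_x$ preserves $\CT$-colimits.

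The heart of the argument, and the step I expect to be the main obstacle, is showing that $g$ is itself a $\bbT$-homomorphism, for this is what makes $\hom(X,Y)$ closed under the colimits just constructed. This is an interchange-of-colimits statement: for $\phi\in\CT X$ with colimit $a=\colim\phi$ in $X$ (existing because $X$ is a $\bbT$-algebra), I must prove $g(a)=\colim_\phi g$. Expanding $Y(g(a),y)=\inf_{y'}(\Phi_a(y')\ra Y(y',y))$ and pushing the supremum defining $\Phi_a$ through the residuation reduces the left-hand side to $\inf_h(\Phi(h)\ra Y(h(a),y))$; here one uses \emph{crucially} that every $h$ in the support of $\Phi$ is a $\bbT$-homomorphism, so $h(a)=\colim_\phi h$ and hence $Y(h(a),y)=\CP X(\phi,Y(h(-),y))$. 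Re-associating the resulting nested infima, and recognising $\inf_h(\Phi(h)\ra Y(h(x),y))=Y(g(x),y)$, yields precisely $\CP X(\phi,Y(g(-),y))$, the required colimit identity. With this closure in hand, the saturation pattern of Proposition \ref{equivalents for submonad} (exactly as in Theorems \ref{univeral property of CX}, \ref{univeral property of IX} and \ref{univeral property of FX}) identifies $g=\colim_\Phi\mathfrak{i}$, where $\mathfrak{i}\colon\hom(X,Y)\lra[X,Y]$ is the inclusion, as the colimit of $\Phi$ inside the full subcategory $\hom(X,Y)$; thus $\hom(X,Y)$ is $\CT$-cocomplete, hence a $\bbT$-algebra.

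Finally, for functoriality I would restrict the bifunctor $[-,-]\colon(\QOrd)^{\rm op}\times\QOrd\lra\QOrd$ already available. Given $\bbT$-homomorphisms $f\colon X_2\lra X_1$ and $g\colon Y_1\lra Y_2$, the assignment $h\mapsto g\circ h\circ f$ carries $\bbT$-homomorphisms to $\bbT$-homomorphisms, since preservation of $\CT$-colimits is stable under composition; so $[f,g]$ restricts to a functor $\hom(X_1,Y_1)\lra\hom(X_2,Y_2)$. That this restriction is again a $\bbT$-homomorphism follows from the pointwise nature of the colimits: the identity $\mathrm{ev}_x\circ[f,g]=g\circ\mathrm{ev}_{f(x)}$ writes each evaluation of $[f,g]$ as a composite of the $\CT$-cocontinuous functors $g$ and $\mathrm{ev}_{f(x)}$, and since equality in the separated algebra $\hom(X_2,Y_2)$ can be tested at every $x$, it follows that $[f,g]$ preserves $\CT$-colimits. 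Preservation of identities and composites is inherited verbatim from $[-,-]$, which completes the plan.
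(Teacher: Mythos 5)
Your proposal is correct and takes essentially the same route as the paper: pointwise computation of $\CT$-colimits in the functor category, closure of $\hom(X,Y)$ under such colimits using crucially that its members preserve $\CT$-colimits (these two steps are the paper's Lemma \ref{T-cocts}), and bifunctoriality via evaluation identities such as $\mathrm{ev}_x\circ[f,g]=g\circ\mathrm{ev}_{f(x)}$ (the paper checks post-composition $\hom(X,f)$ and pre-composition $\hom(g,Z)$ separately, which your combined argument subsumes). Your explicit check of separatedness of $\hom(X,Y)$ is a small point the paper leaves implicit.
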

	
	\begin{lem}\label{T-cocts} Suppose $X,Y$ are real-enriched categories and $Y$ is  $\CT$-cocomplete.   \begin{enumerate}[label={\rm(\roman*)}] 
			\item  $[X,Y]$ is $\CT$-cocomplete and  $\CT$-colimits in $[X,Y]$ are computed pointwise;   that means, for each $\Phi\in\CT[X,Y]$,  the map \[h\colon X\lra Y, \quad h(x)={\colim}_\Phi[x]\] is a colimit of $\Phi$, where $[x]$ is the projection $[X,Y]\lra Y,~ f\mapsto f(x)$. 
			\item The subcategory $\hom(X,Y)$  of $[X,Y]$ consisting of functors that preserve $\CT$-colimits  is closed under  $\CT$-colimits, hence  $\CT$-cocomplete.
		\end{enumerate}
	\end{lem}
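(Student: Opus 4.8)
The plan is to reduce both parts to the defining universal property of weighted colimits together with elementary residuation identities in $[0,1]$; every colimit that appears will exist because $Y$ is $\CT$-cocomplete and $\CT$ is stable under $f_\exists$. First I would record that for each $x\in X$ the projection $[x]\colon[X,Y]\lra Y$, $f\mapsto f(x)$, is a functor, since $[X,Y](f,g)=\inf_{x'\in X}Y(f(x'),g(x'))\le Y(f(x),g(x))$. Consequently, for any $\Phi\in\CT[X,Y]$ we have $[x]_\exists\Phi=\Phi\circ[x]^*=\CT([x])(\Phi)\in\CT Y$, so $\colim_\Phi[x]$ exists by $\CT$-cocompleteness of $Y$; set $h(x)\coloneqq\colim_\Phi[x]$.

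The next step is to check that $h$ is a functor. Putting $y=\colim_\Phi[x]$ into the defining equation $Y(\colim_\Phi[x],y)=\CP[X,Y](\Phi,Y((-)(x),y))$ yields the cocone inequality $\Phi(f)\le Y(f(x),h(x))$ for all $f$; combining it with the fact that each $f$ is a functor gives, for every $f$,
$$\Phi(f)\with X(x_1,x_2)\le Y(f(x_2),h(x_2))\with Y(f(x_1),f(x_2))\le Y(f(x_1),h(x_2)),$$
and taking $\inf_f(\Phi(f)\to-)$ forces $X(x_1,x_2)\le Y(h(x_1),h(x_2))$. (Equivalently, $x\mapsto[x]$ is a functor $X\lra[[X,Y],Y]$ and the same computation shows $k\mapsto\colim_\Phi k$ is a functor $[[X,Y],Y]\lra Y$, so $h$ is their composite.) Finally I verify the universal property: for $g\in[X,Y]$, substituting $y=g(x)$ into the formula for $h(x)$ and using that $r\to-$ preserves infima to interchange the two infima gives
$$[X,Y](h,g)=\inf_x\inf_f(\Phi(f)\to Y(f(x),g(x)))=\inf_f\Big(\Phi(f)\to\inf_x Y(f(x),g(x))\Big)=\CP[X,Y](\Phi,\sy(g)),$$
since $\sy(g)=[X,Y](-,g)$. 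This says $h=\colim\Phi$, so $[X,Y]$ is $\CT$-cocomplete with colimits computed pointwise, proving (i).

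For (ii), let $\mathfrak{i}\colon\hom(X,Y)\lra[X,Y]$ be the inclusion and take $\Phi\in\CT\hom(X,Y)$; its colimit $h$ in $[X,Y]$ is $\colim_{\mathfrak{i}}\Phi=\colim(\mathfrak{i}_\exists\Phi)$. By (i) and the cograph identity $([x]\circ\mathfrak{i})^*=\mathfrak{i}^*\circ[x]^*$ one gets the pointwise formula $h(x)=\colim_\Phi\mathrm{ev}_x$, where $\mathrm{ev}_x=[x]\circ\mathfrak{i}$ is evaluation at $x$ on $\hom(X,Y)$. I must show $h\in\hom(X,Y)$, i.e. $h$ preserves $\CT$-colimits. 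So let $c=\colim_\phi k$ be any existing $\CT$-colimit in $X$ (with $k\colon K\lra X$, $\phi\in\CT K$). Each $f\in\hom(X,Y)$ preserves it, so $Y(f(c),y)=\inf_t(\phi(t)\to Y(f(k(t)),y))$; substituting this into $Y(h(c),y)=\inf_f(\Phi(f)\to Y(f(c),y))$ and applying the identities $a\to(b\to c)=b\to(a\to c)$ together with commutation of infima yields
$$Y(h(c),y)=\inf_t\Big(\phi(t)\to\inf_f(\Phi(f)\to Y(f(k(t)),y))\Big)=\inf_t(\phi(t)\to Y(h(k(t)),y)),$$
which is exactly $Y(\colim_\phi(h\circ k),y)$. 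Hence $h(c)=\colim_\phi(h\circ k)$, so $h\in\hom(X,Y)$; thus $\hom(X,Y)$ is closed under $\CT$-colimits and, with (i), is $\CT$-cocomplete.

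The only genuinely non-formal point, and hence the main obstacle, is the functoriality of $h$ in (i): the assertion that pointwise weighted colimits assemble into a functor, which is the enriched analogue of ``colimits are functorial in the diagram.'' Everything else is residuation bookkeeping. Some care is required to track cographs when restricting from $[X,Y]$ to $\hom(X,Y)$ — the identity $(g\circ f)^*=f^*\circ g^*$ is what transports the pointwise formula of (i) to $\mathrm{ev}_x$ — and to confirm that every weight appearing ($[x]_\exists\Phi$, $\mathfrak{i}_\exists\Phi$, and the evaluated diagrams) lies in the appropriate $\CT$, so that each invoked colimit is guaranteed to exist.
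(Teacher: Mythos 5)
Your proof is correct and follows essentially the same route as the paper's: both parts rest on the pointwise-colimit formula, with functoriality of $h$ and the universal property in (i), and preservation of $\CT$-colimits in (ii), all verified by residuation computations and interchange of infima. The only cosmetic differences are that you derive functoriality of $h$ from the cocone inequality $\Phi(f)\leq Y(f(x),h(x))$ rather than from functoriality of $\colim$ on weights, and you make explicit the existence checks ($[x]_\exists\Phi\in\CT Y$, $\mathfrak{i}_\exists\Phi\in\CT[X,Y]$) and the identification $h(x)={\colim}_\Phi \mathrm{ev}_x$ that the paper leaves implicit.
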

	
	\begin{proof}(i) We prove the conclusion in two steps.

		{\bf Step 1}. $h$ is a functor. Let $x_1,x_2\in X$. For each $y\in Y$, since \[\Phi\circ[x_1]^*(y)= \sup_{f\in[X,Y]}\Phi(f)\with\thinspace[x_1]^*(y,f)= \sup_{f\in[X,Y]}\Phi(f)\with Y(y,f(x_1)), \] it follows that 
		\begin{align*}&\quad\quad \Phi\circ[x_1]^*(y)\ra\Phi\circ[x_2]^*(y) \\ 
			&\geq\inf_{f\in[X,Y]}(\Phi(f)\with Y(y,f(x_1))\ra \Phi(f)\with Y(y,f(x_2)))\\  
			&\geq\inf_{f\in[X,Y]}(Y(y,f(x_1))\ra Y(y,f(x_2)))\\ &\geq X(x_1,x_2). 
		\end{align*} Therefore, \begin{align*}Y(h(x_1),h(x_2)) 
			& = Y({\colim}_\Phi[x_1], {\colim}_\Phi[x_2])\\ 
			&\geq \CP Y(\Phi\circ[x_1]^*, \Phi\circ[x_2]^*)  \\
			&= \inf_{y\in Y} (\Phi\circ[x_1]^*(y)\ra\Phi\circ[x_2]^*(y)) \\ 
			&\geq X(x_1,x_2),\end{align*}   so $h$  is a functor.
		
		{\bf Step 2}. $h$ is a colimit of $\Phi$. For each $g\in[X,Y]$, 
		\begin{align*} [X,Y](h,g)&= \inf_{x\in X}Y(h(x),g(x))\\ 
			&= \inf_{x\in X}Y({\colim}_\Phi[x],g(x))\\ 
			&= \inf_{x\in X}\CP Y(\Phi\circ[x]^*,Y(-,g(x)))\\ 
			&= \inf_{x\in X}\CP[X,Y](\Phi, Y(-,g(x))\circ[x]_*) &(-\circ[x]^*\dashv-\circ[x]_*)\\ 
			&= \inf_{x\in X}\inf_{f\in[X,Y]}(\Phi(f)\ra Y(f(x),g(x)))\\ 
			&= \inf_{f\in[X,Y]}(\Phi(f)\ra[X,Y](f,g))\\ 
			&= [X,Y](-,g)\swarrow\Phi,\end{align*} so $h$ is a colimit of $\Phi$.
		
		(ii) We prove that for each $\Phi\in\CT\hom(X,Y)$, the functor  \[h\colon X\lra Y, \quad h(x)={\colim}_\Phi[x]\] preserves $\CT$-colimits, where $[x]$ denotes the projection \[\hom(X,Y)\lra Y,\quad f\mapsto f(x).\]
		
		Let $\phi\in\CT X $ and $a$ be a colimit of $\phi$. We show that $h(a)$ is a colimit of $h_\exists(\phi)$. On the one hand, \begin{align*}Y(h(a),y)&= Y({\colim}_\Phi[a],y)\\ 
			&= \CP Y(\Phi\circ[a]^*, Y(-,y)) \\ 
			&= \CP(\hom(X,Y))(\Phi,Y(-,y)\circ[a]_*) \\ 
			&= \inf_{f\in\hom(X,Y)}(\Phi(f)\ra Y(f(a),y)) \\ 
			&= \inf_{f\in\hom(X,Y)}(\Phi(f)\ra Y({\colim}_\phi f,y))\\ 
			&= \inf_{f\in\hom(X,Y)}(\Phi(f)\ra \CP Y(\phi\circ f^*,Y(-,y)))\\ &= \inf_{f\in\hom(X,Y)}(\Phi(f)\ra \CP X(\phi,Y(-,y)\circ f_*)) \\ 
			&= \inf_{f\in\hom(X,Y)}\inf_{x\in X}(\Phi(f)\ra (\phi(x)\ra Y(f(x),y) ))\\ 
			&= \inf_{f\in\hom(X,Y)}\inf_{x\in X}(\Phi(f)\with\phi(x)\ra Y(f(x),y)).\end{align*} On the other hand,
		\begin{align*}\CP Y(h_\exists(\phi),Y(-,y))
			&= \CP X(\phi,h^{-1}(Y(-,y)))\\ 
			&=  \inf_{x\in X} (\phi(x)\ra  Y(h(x), y)) \\ 
			&=  \inf_{x\in X} (\phi(x)\ra \CP Y(\Phi\circ[x]^*,Y(-,y))) \\ 
			&=  \inf_{x\in X} (\phi(x)\ra \CP(\hom(X,Y))(\Phi,Y(-,y)\circ[x]_*))\\ 
			&=\inf_{x\in X}\inf_{f\in\hom(X,Y)}(\Phi(f)\with\phi(x)\ra Y(f(x),y)). \end{align*} Therefore  $Y(h(a),y)=\CP Y(h_\exists(\phi),Y(-,y))$, so  $h(a)$ is a colimit of $h_\exists(\phi)$.\end{proof}
	
	\begin{proof}[Proof of Theorem \ref{function space}]  By lemma \ref{T-cocts}\thinspace(ii), it suffices to show that \begin{enumerate}[label={\rm(\roman*)}] 
			\item for any $\bbT$-algebras $X,Y,Z$ and any  $\bbT$-homomorphism   $f\colon Y\lra Z$, the functor \[\hom(X,f)\colon \hom(X,Y)\lra\hom(X,Z), \quad h\mapsto f\circ h\] preserves $\CT$-colimits;   \item for any $\bbT$-algebras $X,Y,Z$ and any $\bbT$-homomorphism $g\colon X\lra Y$,  the  functor \[\hom(g,Z)\colon \hom(Y,Z)\lra\hom(X,Z), \quad h\mapsto h\circ g\] preserves $\CT$-colimits. \end{enumerate}
		
		For (i), assume that $\Phi\in\CT\hom(X,Y)$. For each $x\in X$ and each $h\in\hom(X,Y)$,  since  \[[x]\circ \hom(X,f)(h)= f\circ h(x) =f\circ[x](h)\] and \begin{align*} \hom(X,f)(\colim\Phi)(x)&= f\circ\colim\Phi(x)\\ &= f({\colim}_\Phi[x]) &\text{(definition of $\colim\Phi$)}\\ &= {\colim}_\Phi(f\circ[x]), & \text{($f$ preserves $\CT$-colimits)}
		\end{align*}   it follows that \begin{align*} ({\colim}_\Phi\hom(X,f))(x)&= {\colim}_\Phi([x]\circ\hom(X,f))\\  
			&=  {\colim}_\Phi(f\circ[x]) \\ &=\hom(X,f)(\colim\Phi)(x), \end{align*} hence ${\colim}_\Phi\hom(X,f) =\hom(X,f)(\colim\Phi)$, 
		as desired. 
		
		The assertion (ii) can be verified in a similar way. \end{proof}
	
	Theorem \ref{function space} leads to an interesting question in the theory of real-enriched categories:
	
	\begin{ques}\label{closed TAlg} Does there exist a bifunctor \[\Box\colon \TAlg \times\TAlg \lra \TAlg\] and a bijection \[\hom(Z\Box X, Y)\cong \hom (Z,\hom(X,Y))\]  natural in all variables $X,Y$ and $Z$?  \end{ques}
	If the answer is positive and $\Box$ has a unit, say $I$, then the triple $(\TAlg,\Box,I)$ is a monoidal closed category.
	A  general problem   is to find  subcategories $\mathscr{A}$ of $\TAlg$ that satisfy the following conditions: \begin{enumerate}[label={\rm(\roman*)}]  \item Restricting the domain of the bifunctor \[\hom\colon (\TAlg)^{\rm op}\times\TAlg\lra \TAlg \]  yields a bifunctor \[\hom\colon \mathscr{A}^{\rm op}\times\mathscr{A}\lra \mathscr{A}.\]
		That means, for any objects $X,Y,Z$  and any morphisms $f\colon  Y\lra Z$, $g\colon X\lra Y$ of $\mathscr{A}$, \begin{itemize} \item the real-enriched category $\hom(X,Y)$ composed of $\mathscr{A}$-morphisms  belongs to  $\mathscr{A}$; \item both  \[\hom(X,f)\colon \hom(X,Y)\lra\hom(X,Z), \quad h\mapsto f\circ h\] and \[\hom(g,Z)\colon \hom(Y,Z)\lra\hom(X,Z), \quad h\mapsto h\circ g\] are morphisms of $\mathscr{A}$.\end{itemize}
		\item There exists  a bifunctor \[\Box\colon \mathscr{A} \times\mathscr{A} \lra \mathscr{A}\] and a bijection \[\hom(Z\Box X, Y)\cong \hom (Z,\hom(X,Y))\]  natural in all variables $X,Y$ and $Z$. \end{enumerate}
	
	By a result in category theory, see e.g. Mac\thinspace Lane \cite[page 102, Theorem 3]{MacLane1998}, in the presence of (i),  condition (ii) is equivalent to that  for each object $X$ of $\mathscr{A}$, the functor \[\hom(X,-)\colon \mathscr{A}\lra \mathscr{A}\] has a left adjoint.

	For the class  of Cauchy weights, the class  of ideals, and the largest class  of  weights, the answer to Question \ref{closed TAlg} is   positive, as we see below.
	
	\begin{prop}\label{CAlg is a monoidal closed} 
		The triple $(\CAlg,\otimes,\star)$ is a symmetric monoidal closed category. \end{prop}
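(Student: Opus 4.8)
The plan is to obtain $(\CAlg,\otimes,\star)$ by restricting to $\CAlg$ the symmetric monoidal closed structure that $(\QOrd,\otimes,\star)$ already carries. Recall from Example \ref{exmp of submonads}(i) that $\CAlg$ is identified with the \emph{full} subcategory of $\QOrd$ whose objects are the Cauchy complete real-enriched categories, a $\bbC$-homomorphism being nothing but a functor; in particular $\hom(X,Y)$ coincides with the internal hom $[X,Y]$ of $\QOrd$. It therefore suffices to verify three closure facts: that the unit $\star$ lies in $\CAlg$; that $\CAlg$ is closed under $\otimes$; and that $[X,Y]\in\CAlg$ whenever $X,Y\in\CAlg$. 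Granting these, for $X,Y,Z\in\CAlg$ one has $Z\otimes X\in\CAlg$ and $[X,Y]\in\CAlg$, so the adjunction $\QOrd(Z\otimes X,Y)\cong\QOrd(Z,[X,Y])$ of the closed structure of $\QOrd$ becomes, by fullness, a natural bijection $\CAlg(Z\otimes X,Y)\cong\CAlg(Z,[X,Y])$; the associativity, symmetry and unit isomorphisms are isomorphisms of $\QOrd$ between objects of $\CAlg$, hence isomorphisms of $\CAlg$, and the coherence axioms are inherited automatically.

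Two of the three checks are immediate. The unit $\star$ is a one-object category, trivially Cauchy complete. The exponential-ideal condition follows from Lemma \ref{T-cocts}(i) (equivalently Theorem \ref{function space}) specialised to $\CT=\CC$: for \emph{any} real-enriched category $X$ and any $\bbC$-algebra $Y$, the functor category $[X,Y]$ is separated and $\CC$-cocomplete, i.e. Cauchy complete, with Cauchy colimits formed pointwise; in particular $[X,Y]\in\CAlg$ for $X,Y\in\CAlg$.

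The real work, and the step I expect to demand the most care, is closure under $\otimes$: I must show $X\otimes Y$ is Cauchy complete when $X$ and $Y$ are. Separatedness of $X\otimes Y$ is routine, since $a\with b=1$ forces $a=b=1$. For completeness I would use the topological characterisation of Theorem \ref{complete metric}: it suffices that every Cauchy (biCauchy) sequence $\{(x_n,y_n)\}$ of $X\otimes Y$ have a unique bilimit. From $(X\otimes Y)((x_j,y_j),(x_k,y_k))=X(x_j,x_k)\with Y(y_j,y_k)\le X(x_j,x_k)$ one reads off that the marginal sequences $\{x_n\}$ and $\{y_n\}$ are Cauchy in $X$ and $Y$, so by hypothesis they have unique bilimits $a$ and $b$, and I claim $(a,b)$ is the unique bilimit of $\{(x_n,y_n)\}$.

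The delicate point is interchanging $\with$ with the $\sup$-$\inf$ that defines a bilimit. Fixing $(x,y)$, functoriality together with the biCauchy condition shows that $\{X(x,x_m)\}_m$ and $\{Y(y,y_m)\}_m$ are genuine Cauchy sequences of real numbers; being convergent they are order convergent (Lemma \ref{yoneda limit in Q}), with limits $X(x,a)$ and $Y(y,b)$. Continuity of $\with$ then yields $\lim_m\bigl(X(x,x_m)\with Y(y,y_m)\bigr)=X(x,a)\with Y(y,b)$, whence $\sup_n\inf_{m\ge n}\bigl(X(x,x_m)\with Y(y,y_m)\bigr)=(X\otimes Y)((x,y),(a,b))$; the symmetric computation treats $\sup_n\inf_{m\ge n}(X\otimes Y)((x_m,y_m),(x,y))$. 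Thus $(a,b)$ is a bilimit, and uniqueness holds because the marginals of any bilimit of $\{(x_n,y_n)\}$ are bilimits of $\{x_n\}$ and $\{y_n\}$, hence agree with $a,b$ by uniqueness in $X,Y$ and separatedness. By Theorem \ref{complete metric} (through Lemma \ref{bilimit of Cauchy net} and Proposition \ref{colimit =bilimit for Cauchy net}, which reduce completeness to sequences) this gives $X\otimes Y\in\CAlg$, completing the argument.
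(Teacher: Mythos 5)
Your proposal is correct and follows essentially the same route as the paper: both restrict the symmetric monoidal closed structure of $(\QOrd,\otimes,\star)$ to the full subcategory $\CAlg$, obtain Cauchy completeness of $[X,Y]$ from Theorem \ref{function space}, and verify Cauchy completeness of $X\otimes Y$ via the Cauchy-sequence/bilimit characterization of Proposition \ref{colimit =bilimit for Cauchy net}. The only difference is that you spell out the details (separatedness of $X\otimes Y$, the continuity-of-$\with$ interchange of $\sup$-$\inf$, and uniqueness of the bilimit) which the paper dismisses as ``readily verified''.
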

	
	\begin{proof} Since $\CAlg$ is a full subcategory of the closed category $(\QOrd,\otimes, \star)$ and the terminal real-enriched category $\star$ is Cauchy complete, we only need to check  that for all Cauchy complete real-enriched categories $X$ and $Y$, both the tensor product  $X\otimes Y$ and the functor category $[X,Y]$  are Cauchy complete. Cauchy completeness of $[X,Y]$ follows from Theorem \ref{function space}. In the following we use the characterizations in Proposition \ref{colimit =bilimit for Cauchy net}  to verify that $X\otimes Y$ is Cauchy complete. Suppose $\{(x_n,y_n)\}_{n\in\bbN}$ is a Cauchy sequence of $X\otimes Y$. It is clear that $\{x_n\}_{n\in\bbN}$ is a Cauchy sequence of $X$ and $\{y_n\}_{n\in\bbN}$ is a Cauchy sequence of $Y$. Suppose   $\{x_n\}_{n\in\bbN}$ converges to $a$ and $\{y_n\}_{n\in\bbN}$ converges to $b$. It is readily verified that $\{(x_n,y_n)\}_{n\in\bbN}$ converges to $(a,b)$, so $X\otimes Y$ is Cauchy complete. \end{proof}
	
	\begin{thm}\label{mono clos of lim} {\rm (Lai and Zhang \cite{LaiZ2016})} The triple  $(\IAlg, \otimes, \star)$ is a symmetric monoidal closed category. \end{thm}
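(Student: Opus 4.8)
The plan is to show that the symmetric monoidal closed structure $(\QOrd,\otimes,\star)$ established just above restricts to the subcategory $\IAlg$ of Yoneda complete real-enriched categories and Yoneda continuous functors. Concretely I must verify four things: that $\star$ is an $\bbI$-algebra; that $X\otimes Y$ is an $\bbI$-algebra whenever $X$ and $Y$ are, and that $\otimes$ carries $\bbI$-homomorphisms to $\bbI$-homomorphisms; that the internal hom $\hom(X,Y)$ of $\bbI$-homomorphisms is again an $\bbI$-algebra; and finally that the adjunction $-\otimes X\dashv\hom(X,-)$ valid in $\QOrd$ descends to $\IAlg$, i.e. that currying and uncurrying preserve Yoneda continuity. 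The coherence data (associator, symmetry, unitors) are isomorphisms in $\QOrd$, hence automatically Yoneda continuous, so they need no separate treatment; and $\star$ is Yoneda complete because its only forward Cauchy nets are eventually constant.

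The first substantial step is that $X\otimes Y$ is Yoneda complete. Given a forward Cauchy net $\{(x_i,y_i)\}_{i\in D}$ of $X\otimes Y$, the inequality $r\with s\leq\min\{r,s\}$ (a t-norm lies below the G\"odel t-norm) applied to $(X\otimes Y)((x_j,y_j),(x_k,y_k))=X(x_j,x_k)\with Y(y_j,y_k)$ shows immediately that the projected nets $\{x_i\}$ and $\{y_i\}$ are forward Cauchy; let $a$ and $b$ be their Yoneda limits. I claim $(a,b)$ is a Yoneda limit of $\{(x_i,y_i)\}$, which unwinds to the identity
\[X(a,x)\with Y(b,y)=\sup_{i\in D}\inf_{j\geq i}\big(X(x_j,x)\with Y(y_j,y)\big)\]
for all $(x,y)$. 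Here each of $\{X(x_j,x)\}_j$ and $\{Y(y_j,y)\}_j$ is forward Cauchy in $\sV^{\rm op}$, hence order convergent by Lemma \ref{order convergence}. Monotonicity together with the left continuity of $\with$ (preservation of directed suprema in each variable) yields the inequality ``$\geq$'' after collapsing onto the diagonal, while order convergence rewrites the relevant $\limsup$ as $\inf_i\sup_{j\geq i}$ and, again via continuity of $\with$, delivers ``$\leq$''. Separatedness of $X\otimes Y$ is inherited coordinatewise. Bifunctoriality on $\IAlg$ is then routine, since for Yoneda continuous $f,g$ the map $f\otimes g$ sends the coordinatewise Yoneda limit $(a,b)$ to $(f(a),g(b))$, which is again a coordinatewise Yoneda limit.

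For the internal hom I invoke Theorem \ref{function space} with $\bbT=\bbI$, which already supplies the bifunctor $\hom\colon(\IAlg)^{\rm op}\times\IAlg\to\IAlg$, and Lemma \ref{T-cocts}, which computes Yoneda limits in $\hom(X,Y)$ pointwise. It remains to descend the adjunction. Currying is the easy half: given an $\bbI$-homomorphism $h\colon Z\otimes X\to Y$, evaluating $h$ on the frozen-coordinate net $\{(z,x_i)\}$ (whose Yoneda limit is $(z,a)$ by the coordinatewise computation) shows each $h(z,-)$ is Yoneda continuous, and evaluating on $\{(z_i,x)\}$ together with the pointwise formula for Yoneda limits in $\hom(X,Y)$ shows $z\mapsto h(z,-)$ is Yoneda continuous into $\hom(X,Y)$. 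Uncurrying factors as $Z\otimes X\xrightarrow{g\otimes\id_X}\hom(X,Y)\otimes X\xrightarrow{\mathrm{ev}}Y$, so by bifunctoriality it reduces to proving that evaluation $\mathrm{ev}$ is Yoneda continuous.

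The evaluation step is the main obstacle. For a forward Cauchy net $\{(h_i,x_i)\}$ of $\hom(X,Y)\otimes X$ with coordinatewise Yoneda limit $(h,x)$, one has $Y(h(x),w)=\sup_i\inf_{j\geq i}Y(h_j(x),w)$ (pointwise Yoneda limit at $x$), while Yoneda continuity of each $h_j$ gives $Y(h_j(x),w)=\sup_k\inf_{l\geq k}Y(h_j(x_l),w)$; the crux is that this iterated Yoneda limit of the double net $\{Y(h_j(x_l),w)\}_{(j,l)}$ coincides with the diagonal Yoneda limit $\sup_i\inf_{j\geq i}Y(h_j(x_j),w)$, the latter being the Yoneda limit of the forward Cauchy net $\{h_i(x_i)\}=\{\mathrm{ev}(h_i,x_i)\}$. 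This interchange is where all the hypotheses are spent: forward Cauchyness of $\{h_i\}$ in $\hom(X,Y)$ makes $Y(h_j(x'),h_k(x'))$ large \emph{uniformly in} $x'$ for large $i\leq j\leq k$, forward Cauchyness of $\{x_i\}$ controls $X(x_j,x_k)$, and feeding both through transitivity of $Y$ and the left continuity of $\with$ collapses the double net onto its diagonal. I expect this uniform-in-$x'$ collapse to be the only genuinely delicate computation. Once it is established, the bijection $\IAlg(Z\otimes X,Y)\cong\IAlg(Z,\hom(X,Y))$ is natural in all variables because it is precisely the restriction of the corresponding natural bijection in $\QOrd$, and the proof is complete.
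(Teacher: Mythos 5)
Your proposal is correct, and its skeleton is the same as the paper's: establish Yoneda completeness of $X\otimes Y$, get Yoneda completeness of $\hom(X,Y)$ from Theorem \ref{function space}, and then show the $\QOrd$-adjunction $-\otimes X\dashv[X,-]$ descends. The one real difference is how you package the descent step. The paper proves Lemma \ref{bimor=YC} (a functor $X\otimes Y\lra Z$ between Yoneda complete categories is Yoneda continuous iff it is separately Yoneda continuous), and both currying and uncurrying then restrict at once. You instead prove currying directly via frozen-coordinate nets (this is exactly the easy, joint-implies-separate direction of that lemma) and reduce uncurrying to Yoneda continuity of $\mathrm{ev}\colon\hom(X,Y)\otimes X\lra Y$. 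The "uniform-in-$x'$ diagonal collapse" you defer is precisely the sufficiency computation of Lemma \ref{bimor=YC} specialized to $\mathrm{ev}$, so it does go through: $\mathrm{ev}$ is separately Yoneda continuous (in the first variable because Yoneda limits in $\hom(X,Y)$ are computed pointwise, Lemma \ref{T-cocts}; in the second because elements of $\hom(X,Y)$ are Yoneda continuous by definition), and the double-net interchange is carried out in the paper using the combined Cauchy condition $\sup_{i}\inf_{i\leq j,k\leq l}Z((x_j,y_k),(x_l,y_l))=1$ together with continuity of $\with$. The paper's organization is slightly more economical, since the single lemma serves both halves of the adjunction and other purposes, whereas your route proves the easy direction ad hoc and the hard direction only for $\mathrm{ev}$; but the mathematical content is identical. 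Your proof of Yoneda completeness of $X\otimes Y$ via order convergence in $\sV^{\rm op}$ (Lemma \ref{order convergence}) is likewise a valid minor variant of the paper's direct computation in Lemma \ref{tens is limi comp}.
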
 
	
	\begin{proof} 
		Since $(\QOrd,\otimes, \star)$ is a monoidal closed category and $\star$ is Yoneda complete, it suffices to show that  for all Yoneda complete real-enriched categories $X,Y,Z$, it holds that
		
		\begin{enumerate}[label={\rm(\roman*)}] 
			\item $X\otimes Y$ is Yoneda complete;
			\item the subset $\hom(X,Y)$ of $[X,Y]$ consisting of Yoneda continuous maps is Yoneda complete;
			\item $f\colon X\otimes Y\lra Z$ is  Yoneda continuous if and only if $f\colon X\times Y\lra Z$ is  Yoneda continuous separately.
		\end{enumerate}
		
		Assertion (ii) is contained in Theorem \ref{function space}. Assertions (i) and (iii) are contained, respectively, in Lemma  \ref{tens is limi comp} and Lemma \ref{bimor=YC} below.
	\end{proof}
	
	The following fact will be used in Lemma  \ref{tens is limi comp} and Lemma \ref{bimor=YC}. Suppose $X,Y$ are real-enriched categories and $\{(x_i,y_i)\}_{i\in D}$ is a net of $X\otimes Y$. Then $\{(x_i,y_i)\}_{i\in D}$  is a forward Cauchy net of $X\otimes Y$ if and only if  $\{x_i\}_{i\in D}$ is a forward Cauchy net of $X$ and $\{y_i\}_{i\in D}$ is a forward Cauchy net of $Y$, in which case, \begin{align*}  
		\sup_{i\in D}\inf_{i\leq j,k\leq l}X(x_j,x_l)\with Y(y_k,y_l)&=1, \end{align*}  where ``$i\leq j,k\leq l$'' means ``$i\leq j\leq l$ and $i\leq k\leq l$''
	
	\begin{lem}\label{tens is limi comp} If both of the real-enriched categories $X$ and $Y$ are  Yoneda complete,  then so is the tensor product  $X\otimes Y$.  \end{lem}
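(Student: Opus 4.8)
The plan is to show directly that the pair of coordinate Yoneda limits serves as a Yoneda limit in $X\otimes Y$. First I would take an arbitrary forward Cauchy net $\{(x_i,y_i)\}_{i\in D}$ of $X\otimes Y$. By the fact recorded immediately before the statement, the coordinate nets $\{x_i\}_{i\in D}$ and $\{y_i\}_{i\in D}$ are then forward Cauchy in $X$ and in $Y$ respectively, so by Yoneda completeness of $X$ and $Y$ they have (unique) Yoneda limits, say $a$ and $b$. Since $r\with s=1$ forces $r=s=1$ in $[0,1]$, separatedness of $X$ and $Y$ passes to $X\otimes Y$, so it remains only to verify that $(a,b)$ is a Yoneda limit of $\{(x_i,y_i)\}_{i\in D}$. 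Unravelling the definition of the tensor product, this amounts to the scalar identity
\[
X(a,x)\with Y(b,y)=\sup_{i\in D}\inf_{l\geq i}\bigl(X(x_l,x)\with Y(y_l,y)\bigr)
\]
for every $(x,y)\in X\otimes Y$.

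The key idea, and the step I expect to be the main obstacle, is to reduce this to ordinary convergence of scalar nets rather than to manipulate the $\sup\inf$ of a product directly; indeed the naive splitting of $\inf_{l\geq i}(X(x_l,x)\with Y(y_l,y))$ into a product of infima goes the wrong way, and for general forward Cauchy nets $\liminf(u_l\with v_l)$ can strictly exceed $\liminf u_l\with\liminf v_l$. The device that rules this out is order convergence. Since $X(-,x)=\sy(x)$ is a weight of $X$, the net $\{X(x_l,x)\}_{l\in D}$ is a forward Cauchy net of $\sV^{\rm op}$ (the weight axiom gives $X(x_j,x_l)\leq X(x_l,x)\ra X(x_j,x)$, which is precisely the $\sV^{\rm op}$-Cauchy condition along $j\leq l$). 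Hence by Lemma \ref{order convergence}(ii) it is order convergent, $\sup_{i}\inf_{l\geq i}X(x_l,x)=\inf_{i}\sup_{l\geq i}X(x_l,x)$, and because $a$ is the Yoneda limit of $\{x_l\}$ this common value is $X(a,x)$. Thus both the liminf and the limsup of $\{X(x_l,x)\}_l$ equal $X(a,x)$, so the net converges to $X(a,x)$ in the usual topology of $[0,1]$. The same argument applied to $Y(-,y)$ shows $Y(y_l,y)\to Y(b,y)$.

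Finally I would invoke continuity of the t-norm $\with\colon[0,1]\times[0,1]\lra[0,1]$: since $X(x_l,x)\to X(a,x)$ and $Y(y_l,y)\to Y(b,y)$, the diagonal product net converges, $X(x_l,x)\with Y(y_l,y)\to X(a,x)\with Y(b,y)$. For a convergent net in $[0,1]$ the liminf coincides with the limit, so
\[
\sup_{i\in D}\inf_{l\geq i}\bigl(X(x_l,x)\with Y(y_l,y)\bigr)=X(a,x)\with Y(b,y),
\]
which is exactly the required identity. This establishes that $(a,b)$ is a Yoneda limit of $\{(x_i,y_i)\}_{i\in D}$; uniqueness follows from separatedness, and therefore $X\otimes Y$ is Yoneda complete.
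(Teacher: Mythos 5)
Your proof is correct, but it takes a genuinely different route from the paper's. The paper proves the key identity $X(a,x)\with Y(b,y)=\sup_{i\in D}\inf_{j\geq i}X(x_j,x)\with Y(y_j,y)$ by two direct sup--inf manipulations: the inequality $\leq$ comes from distributing $\with$ over the directed suprema $\sup_i\inf_{j\geq i}X(x_j,x)$ and $\sup_i\inf_{j\geq i}Y(y_j,y)$, and the harder inequality $\geq$ is obtained by multiplying through with the joint forward-Cauchy term $\sup_{i\in D}\inf_{i\leq j,k\leq l}X(x_j,x_l)\with Y(y_k,y_l)=1$ and reshuffling indices. You instead reduce everything to scalar convergence: the nets $\{X(x_l,x)\}_l$ and $\{Y(y_l,y)\}_l$ are forward Cauchy in $\sV^{\rm op}$ (your verification via the weight axiom is exactly right), hence order convergent by Lemma \ref{order convergence}(ii), hence convergent in the usual topology to $X(a,x)$ and $Y(b,y)$, these being their liminf values by the definition of Yoneda limit; continuity of the t-norm then transports this to the product net, and the liminf of a convergent net is its limit. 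Both arguments are sound. The paper's is self-contained algebra, handling the two coordinates jointly and never leaving the lattice-theoretic setting; yours is shorter and makes the role of \emph{continuity} (not merely left continuity) of $\with$ completely transparent, at the modest cost of invoking Lemma \ref{order convergence}(ii) plus the standard fact that order convergence of a $[0,1]$-valued net coincides with topological convergence --- a reduction of the same kind the paper itself performs in Step 1 of the proof of Proposition \ref{characterization of ideal}. A further small difference: you address separatedness of $X\otimes Y$ and uniqueness of the Yoneda limit explicitly, whereas the paper leaves that implicit in this lemma and absorbs it into Theorem \ref{mono clos of lim}.
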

	
	\begin{proof} We prove that for any real-enriched categories $X$ and $Y$ and any  forward Cauchy net $\{(x_i,y_i)\}_{i\in D}$  of $X\otimes Y$, if $a$  is a Yoneda limit of $\{x_i\}_{i\in D}$ and $b$ is a Yoneda limit of $\{y_i\}_{i\in D}$, then $(a,b)$ is a Yoneda limit of $\{(x_i,y_i)\}_{i\in D}$. For this we check that for all $x\in X$ and $y\in Y$,
		\[ X\otimes Y((a,b),(x,y))=\sup_{i\in D}\inf_{j\geq i}X(x_j,x)\with Y(y_j,y). \]

		On the one hand,
		\begin{align*}
			X\otimes Y((a,b),(x,y))&= X(a,x)\with  Y(b,y)\\
			&= \Big(\sup_{i\in D}\inf_{j\geq i}X(x_j,x)\Big)\with
			\Big(\sup_{i\in D}\inf_{j\geq i}Y(y_j,y)\Big)\\
			&= \sup_{i\in D}\Big[\Big(\inf_{j\geq i}X(x_j,x)\Big)\with
			\Big(\inf_{j\geq i}Y(y_j,y)\Big)\Big]\\
			&\leq \sup_{i\in D}\inf_{j\geq i}X(x_j,x)\with Y(y_j,y).  
		\end{align*} 
		On the other hand, 
		since   both $\{x_i\}_{i\in D}$ and $\{y_i\}_{i\in D}$ are forward Cauchy,  then  \begin{align*}
			& \quad\quad ~ \sup_{i\in D}\inf_{j\geq i}X(x_j,x)\with Y(y_j,y) \\
			&=\Big(\sup_{i\in D}\inf_{j\geq i}X(x_j,x)\with Y(y_j,y)\Big)\with
			\Big(\sup_{i\in D}\inf_{i\leq j,k\leq l}X(x_j,x_l)\with Y(y_k,y_l)\Big)\\ 
			&\leq  \sup_{i\in D}
			\inf_{i\leq j,k\leq l} X(x_l,x)\with Y(y_l,y) \with X(x_j,x_l)\with Y(y_k,y_l) \\
			&\leq \sup_{i\in D}\inf_{j\geq i}\inf_{k\geq i} X(x_j,x)\with Y(y_k,y) \\ 
			&= \sup_{i\in D}\Big[\Big(\inf_{j\geq i}X(x_j,x)\Big)\with\Big(\inf_{k\geq i}Y(y_k,y)\Big)\Big] \\ 
			&= \Big(\sup_{i\in D}\inf_{j\geq i}X(x_j,x)\Big)\with\Big(\sup_{i\in D}\inf_{k\geq i}Y(y_k,y)\Big) \\ 
			&= X(a,x)\with Y(b,y)\\
			& = X\otimes Y((a,b),(x,y)). \qedhere
		\end{align*}
	\end{proof}
	
	\begin{lem}\label{bimor=YC} Suppose  $X,Y,Z$ are Yoneda complete real-enriched categories. Then,  a functor $f\colon X\otimes Y\lra Z$  is Yoneda continuous if and only if $f\colon X\times Y\lra Z$ is Yoneda continuous separately.
	\end{lem}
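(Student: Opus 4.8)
The statement is a two-variable version of the familiar ``separately continuous versus jointly continuous'' dichotomy, and the plan is to reduce everything to the one-variable notion of Yoneda continuity together with the description of forward Cauchy nets of a tensor product recorded just before Lemma \ref{tens is limi comp}.

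For the forward implication I would argue directly. Assume $f\colon X\otimes Y\lra Z$ is Yoneda continuous and fix $x\in X$. Given a forward Cauchy net $\{y_i\}_{i\in D}$ of $Y$ with Yoneda limit $b$, I would form the net $\{(x,y_i)\}_{i\in D}$ in $X\otimes Y$. Since the constant net $\{x\}_{i\in D}$ is forward Cauchy with Yoneda limit $x$, the cited characterization shows $\{(x,y_i)\}_{i\in D}$ is forward Cauchy, and Lemma \ref{tens is limi comp} identifies its Yoneda limit as $(x,b)$. Applying Yoneda continuity of $f$ then gives that $f(x,b)$ is a Yoneda limit of $\{f(x,y_i)\}_{i\in D}$, which is exactly the assertion that $f(x,-)\colon Y\lra Z$ preserves this Yoneda limit; as $\{y_i\}$ was arbitrary, $f(x,-)$ is Yoneda continuous, and symmetrically for $f(-,y)$.

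The converse is the substantial direction. Take a forward Cauchy net $\{(x_i,y_i)\}_{i\in D}$ of $X\otimes Y$; its components are forward Cauchy, with Yoneda limits $a$ and $b$ (both exist by Yoneda completeness), and $(a,b)$ is the Yoneda limit of $\{(x_i,y_i)\}$ by Lemma \ref{tens is limi comp}. Fixing $z\in Z$, I must prove $Z(f(a,b),z)=\sup_{i}\inf_{j\ge i}Z(f(x_j,y_j),z)$. Writing $g(j)=Z(f(x_j,b),z)$ and $d(j)=Z(f(x_j,y_j),z)$, Yoneda continuity of $f(-,b)$ gives $Z(f(a,b),z)=\sup_i\inf_{j\ge i}g(j)=:P$, while continuity of each $f(x_j,-)$ gives $g(j)=\sup_h\inf_{k\ge h}Z(f(x_j,y_k),z)$. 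The goal becomes $P=Q$, where $Q:=\sup_i\inf_{j\ge i}d(j)$. I would establish the two inequalities separately, and this is where the forward Cauchy data enters. For $P\ge Q$, functoriality of $f$ yields $Z(f(x_j,y_k),z)\ge d(k)\with X(x_j,x_k)$, and since $\{x_i\}$ is forward Cauchy the factors $X(x_j,x_k)$ (for $k\ge j$) can be made uniformly close to $1$, forcing $g(j)\gtrsim Q$ eventually. For $Q\ge P$, functoriality gives $d(j)\ge g(j)\with Y(y_j,b)$, and the identity $\sup_i\inf_{j\ge i}Y(y_j,b)=Y(b,b)=1$ (a consequence of $b$ being the Yoneda limit) makes $Y(y_j,b)$ uniformly close to $1$; both estimates are closed off by appealing to continuity of $\with$.

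The main obstacle is precisely the passage $P=Q$: it is an interchange of two nested ``$\sup\inf$'' limits along the single directed set $D$, and it works only because the forward Cauchy condition supplies the two uniformity estimates $X(x_j,x_k)\to 1$ and $Y(y_j,b)\to1$ (in the $\sup\inf$ sense) that let one slide from the off-diagonal terms $Z(f(x_j,y_k),z)$ back onto the diagonal. I would phrase each ``uniformly close to $1$'' step through an $\varepsilon$-argument justified by continuity of the t-norm, exactly in the style of the estimates used for Proposition \ref{Cauchy net implies Cauchy weight} and Lemma \ref{join of directed set in BX}, rather than relying on any order-theoretic shortcut that is unavailable in the enriched setting.
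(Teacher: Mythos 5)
Your proof is correct, and the necessity direction is essentially the paper's (the paper verifies directly that $(x_j,b)$ has Yoneda limit $(a,b)$ rather than citing Lemma \ref{tens is limi comp}, but the content is identical). In the sufficiency direction, however, you take a genuinely different route. The paper first invokes Yoneda completeness of $Z$ to produce a Yoneda limit $c$ of the image net $\{f(x_j,y_j)\}_{j\in D}$, and then proves $c\cong f(a,b)$ by two order-theoretic inequalities: $c\sqsubseteq f(a,b)$ falls out of functoriality and the limit property of $(a,b)$, while $f(a,b)\sqsubseteq c$ uses separate continuity plus the forward Cauchy property, with the sup--inf interchange performed only at the single element $c$. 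You instead verify the Yoneda limit formula $Z(f(a,b),z)=\sup_i\inf_{j\geq i}Z(f(x_j,y_j),z)$ directly for every $z\in Z$, reducing it to the interchange $P=Q$ via the two estimates $Z(f(x_j,y_k),z)\geq d(k)\with X(x_j,x_k)$ and $d(j)\geq g(j)\with Y(y_j,b)$; both inequalities close correctly. What your route buys: it never uses Yoneda completeness of $Z$, so it actually proves the sufficiency direction for an arbitrary codomain, a slightly stronger statement than the lemma as stated. What the paper's route buys: by working at the single comparison point $c$, all estimates are pure quantale algebra (directedness of $D$ plus join-preservation of $\with$), with no $\varepsilon$-arguments. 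On that last point, note that your appeals to ``continuity of the t-norm'' are dispensable: each step of the form $(\sup_i a_i)\with(\sup_i b_i)=\sup_i(a_i\with b_i)$, for tail-infima $a_i,b_i$ increasing along the directed set $D$, needs only left continuity of $\with$, exactly as in the paper's own computations; so your argument can be phrased without the continuity hypothesis at all.
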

	
	\begin{proof}
		For necessity  we check that $f$ is Yoneda continuous in the first variable for example.  Let $b\in Y$ and let $\{x_i\}_{i\in D}$ be a forward Cauchy net of $X$ with   a Yoneda limit $a$. We  show that $f(a,b)$ is a Yoneda limit of $\{f(x_i,b)\}_{i\in D}$. Because $f\colon X\otimes Y\lra Z$ is Yoneda continuous, it suffices to check that $(a,b)$ is a Yoneda limit of the forward Cauchy net $\{(x_i,b)\}_{i\in D}$ of $X\otimes Y$. This follows from that for all $x\in X$ and $ y\in Y$,
		\begin{align*}
			X\otimes Y((a,b),(x,y))&=  X(a,x)\with Y(b,y)\\
			&= \Big(\sup_{i\in D}\inf_{j\geq i}X(x_j,x)\Big)\with Y(b,y)  \\
			&= \sup_{i\in D} \inf_{j\geq i}(X(x_j,x) \with Y(b,y) ) 
			\\ &= \sup_{i\in D} \inf_{j\geq i}X\otimes Y((x_j, b), (x,y)).
		\end{align*}

		For sufficiency suppose $\{(x_i,y_i)\}_{i\in D}$ is a forward Cauchy net of $X\otimes Y$ and  $(a,b)$ is a Yoneda limit of $\{(x_i,y_i)\}_{i\in D}$.   We   show that $f(a,b)$ is a Yoneda limit of $\{f(x_i,y_i)\}_{i\in D}$.  Since $Z$ is Yoneda complete,  $\{f(x_i,y_i)\}_{i\in D}$  has a Yoneda limit, say $c$. We wish to show that $c$ is isomorphic to $f(a,b)$. 
		
		Since \begin{align*}
			Z(c,f(a,b))
			&= \sup_{i\in D}\inf_{j\geq i}Z(f(x_j,y_j),f(a,b))\\
			&\geq \sup_{i\in D}\inf_{j\geq i} X\otimes Y( (x_j,y_j), (a,b))\\
			&=   X\otimes Y((a,b),(a,b))\\
			&=  1,
		\end{align*} it follows that $c\sqsubseteq f(a,b)$.
		It remains to check that $f(a,b)\sqsubseteq c$. On the one hand,  since   $\{x_i\}_{i\in D}$ is a forward Cauchy net of $X$ and $\{y_i\}_{i\in D}$ is a forward Cauchy net of $Y$,  by Yoneda completeness of $X,Y$ and   Lemma \ref{tens is limi comp} one sees that $a$ is a Yoneda limit of $\{x_i\}_{i\in D}$,  $b$ is a Yoneda limit of $\{y_i\}_{i\in D}$,  then \begin{align*}
			Z(f(a,b),c) 
			&= \sup_{i\in D}\inf_{j\geq i}Z(f(x_j,b),c)  
			\\
			&= \sup_{i\in D}\inf_{j\geq i}\sup_{h\in D}\inf_{k\geq h}Z(f(x_j,y_k),c)  
			\\
			&\geq \sup_{i\in D}\inf_{j\geq i}\inf_{k\geq i}
			Z(f(x_j,y_k),c).
		\end{align*} On the other hand, since $\{(x_i,y_i)\}_{i\in D}$ is a forward Cauchy net of $X\otimes Y$ and $c$ is a Yoneda limit of the net $\{f(x_i,y_i)\}_{i\in D}$, then  
		\begin{align*}
			1&= Z(c,c) \\ &= \sup_{i\in D}\inf_{l\geq i}Z(f(x_l,y_l),c)\\
			&= \Big(\sup_{i\in D}\inf_{l\geq i}Z(f(x_l,y_l),c)\Big)\with \Big(\sup_{i\in D}\inf_{i\leq j,k\leq l}X\otimes Y((x_j,y_k),(x_l,y_l))\Big)\\
			&\leq \Big(\sup_{i\in D}\inf_{l\geq i}Z(f(x_l,y_l),c)\Big)\with \Big(\sup_{i\in D}\inf_{i\leq j,k\leq l} Z(f(x_j,y_k),f(x_l,y_l))\Big)\\
			&\leq  \sup_{i\in D}\inf_{i\leq j,k\leq l}Z(f(x_l,y_l),c)\with
			Z(f(x_j,y_k),f(x_l,y_l)) \\
			&\leq \sup_{i\in D}\inf_{j\geq i}\inf_{k\geq i}Z(f(x_j, y_k),c).
		\end{align*}
		Therefore, $Z(f(a,b),c)=1$ and consequently, $f(a,b)\sqsubseteq c$.
	\end{proof}
	
	Proposition \ref{CAlg is a monoidal closed} and Theorem \ref{mono clos of lim} show that the category $\CAlg$ of $\bbC$-algebras and the category $\IAlg$ of $\bbI$-algebras are closed in the monoidal closed category $(\QOrd,\otimes, \star)$ with respect to various operations, so they are themselves monoidal closed categories. But, the situation for the largest class of weights, i.e. the class $\CP$ of all weights, is quite different. 
	
	\begin{exmp} Let $\&$ be the product t-norm on $[0,1]$. Then the tensor product $\sV\otimes \sV$ of the complete real enriched category $\sV=([0,1],\alpha_L)$ with itself is not tensored. Otherwise, there is some $(a,b)\in[0,1]\times[0,1]$, the tensor of $1/2$ with $(1/2,1/2)$, such that $$ \sV\otimes\sV((a,b),(x,y))  = 1/2\ra\sV(1/2,x)\with\sV(1/2,y) $$
		for all $(x,y)\in[0,1]\times[0,1]$. 
		Putting $x=1/4$ and $y=1/2$ gives that $a\leq 1/4$; putting $x=1/2$ and $y=1/4$ gives that $b\leq 1/4$. Then $$\sV\otimes\sV((a,b),(1/4,1/4))=(a\ra 1/4)\with (b\ra 1/4)=1,$$ but $$1/2\ra\sV(1/2,1/4)\with\sV(1/2,1/4)
		=1/2,$$ a contradiction. \end{exmp} 
	
	\begin{thm}\label{PAlg is a monoidal closed} {\rm(Joyal and Tierney \cite[page 9]{Joyal-Tierney})}
		For each $\bbP$-algebra $X$, the functor $\hom(X,-) \colon\PAlg\lra\PAlg$ has a left adjoint. \end{thm}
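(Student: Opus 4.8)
The plan is to produce the left adjoint of $\hom(X,-)$ as ``tensoring with $X$'', i.e. to construct a bifunctor $\Box\colon\PAlg\times\PAlg\lra\PAlg$ together with a natural isomorphism $\hom(Z\Box X,Y)\cong\hom(Z,\hom(X,Y))$; then $-\Box X$ is the required left adjoint. Throughout I would identify $\PAlg$ with $\QMod$ via Theorem \ref{P-algebras as modules}, so that a $\bbP$-algebra is a $[0,1]$-module $(X,\otimes)$ and, by Corollary \ref{left adjoint via tensor and join}, a $\bbP$-homomorphism is exactly a map of the underlying complete lattices that preserves joins and tensors.

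The first and easiest step is currying. Call a functor $b\colon Z\otimes X\lra Y$ a \emph{bimorphism} if $b(z,-)$ and $b(-,x)$ are $\bbP$-homomorphisms for all $z\in Z$ and $x\in X$. Since by Lemma \ref{T-cocts}(i) joins and tensors in $\hom(X,Y)$ are computed pointwise, a functor $f\colon Z\lra\hom(X,Y)$ is a $\bbP$-homomorphism precisely when its transpose $\hat f(z,x)=f(z)(x)$ is a bimorphism; conversely every bimorphism transposes to a $\bbP$-homomorphism into $\hom(X,Y)$, which is itself a $\bbP$-algebra by Theorem \ref{function space}. This establishes a natural isomorphism $\hom(Z,\hom(X,Y))\cong\mathrm{Bim}(Z,X;Y)$, where $\mathrm{Bim}(Z,X;Y)$ is the real-enriched category of bimorphisms, equipped with the hom inherited from $[Z\otimes X,Y]$. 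It remains to represent the functor $Y\mapsto\mathrm{Bim}(Z,X;Y)$.

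For this I would build $Z\Box X$ as the universal recipient of a bimorphism out of $Z\otimes X$. Concretely, the category $\PAlg$ is cocomplete (it is monadic over $\mathbf{Set}$ by Theorem \ref{PAlg is monadic over Set}, hence an algebraic, cocomplete category), and the free functor $\CP\colon\QOrd\lra\PAlg$ is lax monoidal via the comparison $\CP Z\otimes\CP X\lra\CP(Z\otimes X)$. Mimicking Joyal and Tierney's construction of the tensor product of sup-lattices, I would present $Z\Box X$ as a reflexive coequalizer of free $\bbP$-algebras that forces the universal map $Z\otimes X\lra Z\Box X$ to be a bimorphism and imposes no further relations; the universal property then gives $\hom(Z\Box X,Y)\cong\mathrm{Bim}(Z,X;Y)$ naturally in $Y$. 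Combining this with the currying isomorphism yields $\hom(Z\Box X,Y)\cong\hom(Z,\hom(X,Y))$, natural in $Z$ and $Y$, which is exactly the statement that $-\Box X\dashv\hom(X,-)$.

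The main obstacle is the construction and verification of $Z\Box X$: one must pin down the correct congruence on the free algebra so that homomorphisms out of the quotient correspond \emph{exactly} to bimorphisms, and here the continuity of the t-norm $\&$ is essential, since it is what makes tensoring $r\otimes-$ distribute over the joins used to form the quotient and interact symmetrically in the two variables. An alternative, more abstract route avoids this explicit construction: $\hom(X,-)$ preserves limits, because limits in $\PAlg$ are created by the forgetful functor to $\QOrd$ (Proposition \ref{PAlg is monadic over QOrd}), are computed pointwise there, and $[X,-]$ is a right adjoint on $(\QOrd,\otimes,\star)$; since $\PAlg$ is complete, well-powered and locally presentable, the adjoint functor theorem then delivers the left adjoint directly. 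Either way the essential content is the representability of bimorphisms, which is the step I expect to require the most care.
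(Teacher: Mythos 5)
Your main route is correct in outline but genuinely different from the paper's. The paper follows Joyal and Tierney's duality trick: it first proves the auxiliary lemma $\hom(X,Y)\cong\hom(Y^{\rm op},X^{\rm op})$ (send $f$ to the opposite of its right adjoint), then \emph{defines} $Z\Box X=\hom(Z,X^{\rm op})^{\rm op}$ outright, and obtains $\hom(Z\Box X,Y)\cong\hom(Z,\hom(X,Y))$ by an explicit chain: take the right adjoint of $f\colon\hom(Z,X^{\rm op})^{\rm op}\lra Y$, pass to opposites, switch variables via $\hom(Z,\hom(X,Y))\cong\hom(X,\hom(Z,Y))$, and apply the lemma. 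What this buys is that the hard step in your plan disappears entirely: no cocompleteness of $\PAlg$, no coequalizer, no congruence to pin down, and no verification that homomorphisms out of a quotient correspond exactly to bimorphisms. The price is that the argument leans completely on the self-duality special to this sup-lattice-like setting ($X^{\rm op}$ of a $\bbP$-algebra is again a $\bbP$-algebra, and $(\PAlg)^{\rm op}\cong\PdAlg\cong\PAlg$), whereas your generators-and-relations construction is the one that generalizes to settings without such duality. Your currying step (homomorphisms $Z\lra\hom(X,Y)$ correspond to bimorphisms $Z\otimes X\lra Y$, using pointwise colimits in $\hom(X,Y)$) is correct and is implicitly at work in the paper's switch-variables isomorphism as well.

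Two points need repair. First, your fallback via the adjoint functor theorem rests on a false premise: $\PAlg$ is \emph{not} locally presentable. It is self-dual, and a category that is locally presentable together with its opposite is a preorder (Gabriel--Ulmer), which $\PAlg$ is not. The fallback can be saved by the Special Adjoint Functor Theorem instead: $\PAlg$ is locally small, complete, well-powered (monos are the injective homomorphisms, by monadicity over ${\bf Set}$), and has a cogenerator, namely $\sV^{\rm op}$, the image under the duality $(\PAlg)^{\rm op}\cong\PAlg$ of the generator $\sV=\CP\star$; combined with your limit-preservation argument for $\hom(X,-)$ this yields the left adjoint. Second, a minor inaccuracy: it is not the \emph{continuity} of $\with$ that makes $r\with-$ distribute over the joins used in forming your quotient --- that is just the quantale (left-continuity) structure, and the tensor product of modules exists over any commutative unital quantale; continuity of the t-norm plays no special role in this theorem.
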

	
	\begin{lem} For all $\bbP$-algebras $X$ and $Y$,   the real-enriched categories $\hom(X,Y)$ and   $\hom(Y^{\rm op},X^{\rm op})$ are isomorphic.\end{lem}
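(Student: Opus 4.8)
The plan is to exhibit an explicit isomorphism in $\QOrd$. Recall that a $\bbP$-algebra is a separated and complete (equivalently cocomplete) real-enriched category and that a $\bbP$-homomorphism is a left adjoint functor, so $\hom(X,Y)$ is the real-enriched category of left adjoints $X\lra Y$ with $\hom(X,Y)(f_1,f_2)=\inf_{x\in X}Y(f_1(x),f_2(x))$. Since separatedness is evidently self-dual and completeness is self-dual by Theorem \ref{complete is self-dual}, both $X^{\rm op}$ and $Y^{\rm op}$ are again $\bbP$-algebras, so $\hom(Y^{\rm op},X^{\rm op})$ is defined. First I would introduce a map $\Psi\colon\hom(X,Y)\lra\hom(Y^{\rm op},X^{\rm op})$ sending each left adjoint $f$ to $g^{\rm op}$, where $g\colon Y\lra X$ is the unique right adjoint of $f$.

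To see that $\Psi$ is well defined, I would begin from the defining identity $Y(f(x),y)=X(x,g(y))$ and pass to opposites: it reads $Y^{\rm op}(y,f(x))=X^{\rm op}(g(y),x)$ for all $x,y$, which by Theorem \ref{Characterization of adjoints} (applied in the opposite categories) says precisely that $g^{\rm op}$ is left adjoint to $f^{\rm op}$. Hence $g^{\rm op}$ is a $\bbP$-homomorphism $Y^{\rm op}\lra X^{\rm op}$, with right adjoint $f^{\rm op}$. The very same computation shows that applying the analogous construction to $g^{\rm op}$ returns $(f^{\rm op})^{\rm op}=f$ under the identification $X^{\rm op\,op}=X$; thus $\Psi$ is an involution, and in particular a bijection on objects.

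The crux is to show that $\Psi$ preserves hom-values exactly, that is,
\[\inf_{x\in X}Y(f_1(x),f_2(x))=\inf_{y\in Y}X(g_2(y),g_1(y)),\]
for then, being a bijective functor realizing this equality, $\Psi$ is an isomorphism (its inverse automatically preserves hom-values). Writing $r$ for the left-hand side, I would prove $r\leq X(g_2(y),g_1(y))$ for each $y$ by rewriting $X(g_2(y),g_1(y))=Y(f_1(g_2(y)),y)$ via $f_1\dashv g_1$, and then combining the counit equality $Y(f_2(g_2(y)),y)=1$ (from $f_2\dashv g_2$) with transitivity $Y(f_2g_2(y),y)\with Y(f_1g_2(y),f_2g_2(y))\leq Y(f_1g_2(y),y)$ and the hypothesis $Y(f_1g_2(y),f_2g_2(y))\geq r$. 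The reverse inequality is dual: rewrite $Y(f_1(x),f_2(x))=X(x,g_1(f_2(x)))$ via $f_1\dashv g_1$ and use the unit equality $X(x,g_2(f_2(x)))=1$ together with transitivity. Finally, since $\hom(Y^{\rm op},X^{\rm op})(g_1^{\rm op},g_2^{\rm op})=\inf_{y\in Y}X^{\rm op}(g_1(y),g_2(y))=\inf_{y\in Y}X(g_2(y),g_1(y))$, the displayed equality is exactly the required preservation of hom-values, and the proof concludes. I expect this mate identity to be the main obstacle, since it is the only place where the unit, the counit, and transitivity must be played off against one another; the well-definedness and bijectivity of $\Psi$ are then formal.
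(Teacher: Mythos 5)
Your proof is correct, and it takes a genuinely different route from the paper's in the decisive step. Both arguments begin with the same map (your $\Psi$, the paper's $G$), sending a left adjoint $f$ to the opposite of its unique right adjoint, and both use the mate computation to see that this is a well-defined involution, hence a bijection. Where you differ is in how this bijection is promoted to an isomorphism of real-enriched categories. The paper checks that $G$ preserves tensors and joins with respect to the underlying order (for tensors: the right adjoint of the pointwise tensor $r\otimes f$ is $y\mapsto r\multimap g(y)$), and then leans on the machinery already in place --- $\hom(X,Y)$ is cocomplete with pointwise colimits by Theorem \ref{function space} and Lemma \ref{T-cocts}, and a join- and tensor-preserving map between cocomplete separated categories is a left adjoint functor by Proposition \ref{functor via tensors} and Corollary \ref{left adjoint via tensor and join} --- so that $G$ and, by the symmetry of the construction, its inverse are functors. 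You instead prove directly that $\Psi$ preserves hom-values exactly, i.e. $\inf_{x\in X}Y(f_1(x),f_2(x))=\inf_{y\in Y}X(g_2(y),g_1(y))$, by playing the unit and counit equalities against transitivity (your two inequalities are both valid as stated); bijectivity then immediately yields the isomorphism. Your route is more elementary and self-contained: it needs nothing beyond Theorem \ref{Characterization of adjoints}, plus Theorem \ref{complete is self-dual} to know the opposite categories are $\bbP$-algebras, and it yields the sharper explicit fact that $\Psi$ is a hom-isometry. The paper's route is shorter given the surrounding apparatus and has the side benefit of exhibiting $G$ as a morphism of $\bbP$-algebras (a left adjoint), which is the form in which the lemma is exploited in the proof of Theorem \ref{PAlg is a monoidal closed}.
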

	
	\begin{proof} For each $f\in\hom(X,Y)$, let $G(f)\colon Y^{\rm op}\lra X^{\rm op}$ be the opposite of the right adjoint of $f$. We claim that $G\colon\hom(X,Y)\lra\hom(Y^{\rm op},X^{\rm op})$ is an isomorphism. For this we   check that $G$ preserves tensors and  joins (w.r.t. the underlying order).
		
		We check that $G$ preserves tensors for example.  Let  $f\in\hom(X,Y)$ and $r\in[0,1]$. It is readily verified that if $g\colon Y\lra X$ is a right adjoint of $f$,   the correspondence $y\mapsto r\multimap g(y)$ defines a right adjoint of $r\otimes f$, which shows that $G$ preserves tensors. \end{proof}

	\begin{proof}[Proof of Theorem \ref{PAlg is a monoidal closed}] First of all, we note that  \begin{equation*}\label{swith variables} \hom(Z,\hom(X,Y)) \cong \hom(X,\hom(Z,Y)) \end{equation*} for  all $\bbP$-algebras $X,Y$ and $Z$.

		For each $\bbP$-algebra $Y$, let \[Y\Box X = \hom(Y,X^{\rm op})^{\rm op}. \] For each morphism   $f\colon Y\lra Z$ in $\PAlg$, let \[f\Box\id_X\colon Y\Box X\lra Z\Box X\]  be the opposite of the right adjoint of \[\hom(f,X^{\rm op})\colon  \hom(Z,X^{\rm op}) \lra \hom(Y,X^{\rm op}) .\]
		The correspondence   $f\mapsto f\Box\id_X$ gives rise to a functor \[-\Box X\colon  \PAlg\lra\PAlg.\]
		
		We claim that this functor is   left adjoint to \[\hom(X,-)\colon  \PAlg\lra\PAlg.\]
		For this, we show that for any $Y,Z$ of $\PAlg$, there is a  bijection \[\hom(Z\Box X, Y)\cong \hom (Z,\hom(X,Y)) \] natural in both $Y$ and $Z$. The correspondence $f\mapsto \overline{f}$ displayed below exhibits one such:  
		\begin{align*}f\colon Z\Box X\lra Y & \\
			f\colon \hom(Z,X^{\rm op})^{\rm op}\lra Y & \quad\quad \text{definition of $Z\Box X$} \\ g\colon  Y\lra\hom(Z,X^{\rm op})^{\rm op}  & \quad\quad \text{the right adjoint of $f$} \\ g^{\rm op}\colon  Y^{\rm op}\lra\hom(Z,X^{\rm op})  & \quad\quad \text{the opposite of $g$} \\ h\colon  Z\lra\hom(Y^{\rm op},X^{\rm op}) & \quad\quad \text{switch $Y^{\rm op}$ and $Z$}
			\\ \overline{f}\colon  Z\lra\hom(X,Y) & \quad\quad \hom(Y^{\rm op},X^{\rm op})\cong\hom(X,Y) \qedhere \end{align*}
	\end{proof}
	
	The bifunctor $\Box$ is symmetric, associative, and has $\sV=([0,1],\alpha_L)$ as  unit. All told,
	
	\begin{prop} The triple $(\PAlg,\Box,\sV)$ is a symmetric monoidal closed category. \end{prop}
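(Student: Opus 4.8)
The plan is to regard the closed structure as already established---the lemma and Theorem \ref{PAlg is a monoidal closed} supply, for every $\bbP$-algebra $X$, the internal hom $\hom(X,-)$ together with its left adjoint $-\Box X$ and the natural bijection $\hom(Z\Box X,Y)\cong\hom(Z,\hom(X,Y))$---so that the only remaining task is to produce the braiding, the associator and the unitors and to check their coherence. My strategy is to reduce all three structural isomorphisms to data available above: the defining formula $Y\Box X=\hom(Y,X^{\rm op})^{\rm op}$; the natural isomorphism $\hom(A,B)\cong\hom(B^{\rm op},A^{\rm op})$ from the lemma accompanying Theorem \ref{PAlg is a monoidal closed}; the ``switch'' isomorphism $\hom(Z,\hom(X,Y))\cong\hom(X,\hom(Z,Y))$ noted in its proof; and the computation $\hom(\sV,B)\cong B$ coming from $\sV=\CP\star$ being the free $\bbP$-algebra on $\star$.

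First I would treat symmetry and the unit. For the braiding, $X\Box Y=\hom(X,Y^{\rm op})^{\rm op}$ and $Y\Box X=\hom(Y,X^{\rm op})^{\rm op}$, so the lemma applied with $(A,B)=(X,Y^{\rm op})$ gives $\hom(X,Y^{\rm op})\cong\hom(Y,X^{\rm op})$ and hence a natural isomorphism $X\Box Y\cong Y\Box X$; since the lemma's isomorphism sends a homomorphism to the opposite of its right adjoint, applying it twice returns the identity, so the braiding is an involution. For the unit I would verify $\hom(\sV,B)\cong B$ naturally: a left adjoint $f\colon\sV\lra B$ is determined by $f(1)$ (as $\sV$ is generated from $1$ under tensors and joins), and $\hom(\sV,B)(f,g)=\inf_{r}B(f(r),g(r))=B(f(1),g(1))$, so $f\mapsto f(1)$ is an isomorphism. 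Taking $B=X^{\rm op}$ yields $\sV\Box X=\hom(\sV,X^{\rm op})^{\rm op}\cong(X^{\rm op})^{\rm op}=X$, and a further application of the lemma gives $X\Box\sV\cong X$.

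For associativity I would compute the associator explicitly, functorially in all variables:
\begin{align*}
(X\Box Y)\Box Z &=\hom\bigl(\hom(X,Y^{\rm op})^{\rm op},Z^{\rm op}\bigr)^{\rm op}
\cong\hom(Z,\hom(X,Y^{\rm op}))^{\rm op}\\
&\cong\hom(X,\hom(Z,Y^{\rm op}))^{\rm op},
\end{align*}
where the first isomorphism is the lemma and the second is the switch, while on the other side $X\Box(Y\Box Z)=\hom(X,\hom(Y,Z^{\rm op}))^{\rm op}$. A final application of the lemma gives $\hom(Z,Y^{\rm op})\cong\hom(Y,Z^{\rm op})$, and inserting this inside $\hom(X,-)^{\rm op}$ produces the natural isomorphism $(X\Box Y)\Box Z\cong X\Box(Y\Box Z)$.

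The main obstacle is not the existence of these isomorphisms but the coherence axioms---the pentagon for the associator, the triangle linking associator and unitors, and the hexagon linking associator and braiding. Since every structural map above is assembled from just the lemma's isomorphism and the switch, I would in principle verify each axiom by chasing both legs through the explicit formulas and reducing them to equal composites of these canonical maps; this is routine, but the bookkeeping of the iterated $(-)^{\rm op}$'s is the genuine hazard. The conceptually cleanest closer, matching the cited Joyal--Tierney reference, is to observe that the adjunction $\hom(Z\Box X,Y)\cong\hom(Z,\hom(X,Y))$ exhibits $Z\Box X$ as the object classifying bimorphisms out of $Z\times X$ (maps that are $\bbP$-homomorphisms in each variable separately); thus $\Box$ is precisely the tensor product of $[0,1]$-modules over the commutative quantale $([0,1],\with,1)$, whose symmetric monoidal closed structure with unit $\sV$ is classical, and all coherence then follows for the same reasons as for modules over a commutative ring.
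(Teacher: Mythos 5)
Your proposal is correct, and its skeleton matches the paper's: the paper derives closedness from Theorem \ref{PAlg is a monoidal closed} and then simply asserts, without proof, that ``the bifunctor $\Box$ is symmetric, associative, and has $\sV$ as unit'' before stating the proposition. What you add is genuine content that the paper omits. Your braiding via the lemma $\hom(A,B)\cong\hom(B^{\rm op},A^{\rm op})$ (an involution, since taking the opposite of the right adjoint twice returns the original map), your unitor via $\hom(\sV,B)\cong B$ (valid: a homomorphism $f\colon\sV\lra B$ satisfies $f(r)=r\otimes f(1)$, and $B(r\otimes f(1),r\otimes g(1))\geq B(f(1),g(1))$ with equality at $r=1$, so evaluation at $1$ is an isomorphism onto $B$), and your associator computed from the lemma plus the switch isomorphism are all correct; note that the switch applies because $\bbP$-algebras are closed under $(-)^{\rm op}$ (cocomplete $=$ complete) and under $\hom$ (Theorem \ref{function space}). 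Your closing move is also the right one and is exactly in the spirit of the cited Joyal--Tierney reference: since homomorphisms $Z\lra\hom(X,Y)$ correspond to maps $Z\times X\lra Y$ that are homomorphisms in each variable separately (colimits in $\hom(X,Y)$ being computed pointwise), the adjunction exhibits $Z\Box X$ as the bimorphism classifier, hence as the tensor product of $[0,1]$-modules under the isomorphism $\PAlg\cong\QMod$ of Theorem \ref{P-algebras as modules}, and the coherence axioms then follow from the classical module-theoretic theory rather than from a by-hand diagram chase. The one place to be careful, as you flag yourself, is that the pentagon and hexagon are not automatic from the mere existence of the structural isomorphisms; your module identification is what actually discharges them, so it should be regarded as the proof proper rather than an optional ``cleaner closer.''
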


	Let $\mathbb{F}$ be the monad $(\CF,{\sf m},\sy)$  in $\QOrd$. As a special case of
	Question \ref{closed TAlg} we ask the following:
	
	\begin{ques}Is the category $\mathbb{F}\text{-}{\bf Alg}$ of $\mathbb{F}$-algebras   monoidal closed?  \end{ques}
	
	\section{Continuous \texorpdfstring{$\bbT$}{}-algebras}
	
	As in the previous section,  unless otherwise specified, in this section $\CT$ always denotes a saturated class of weights on $\QOrd$, and $\bbT$ denotes the corresponding submonad of the presheaf monad in $\QOrd$.
	
	\begin{defn}\label{T-continuous} Suppose $A$ is a $\bbT$-algebra. We say that $A$ is continuous if the left adjoint $\colim\colon\CT A\lra A$ of $\sy\colon A\lra\CT A$  has a left adjoint. In other words, $A$ is continuous if there exists a string of adjunctions
		\[\wayb \dashv{\colim} \dashv\sy\colon A\lra\CT A.\]
	\end{defn}
	
	Suppose $A$ is a continuous $\bbT$-algebra. We claim that $\colim\wayb x=x$ for each   $x\in A$. On the one hand, since $$A(x,\colim\wayb x)=\CT A(\wayb x,\wayb x)=1,$$ then $x\sqsubseteq\colim\wayb x$, where $\sqsubseteq$ is the underlying order of $A$. On the other hand, since $$A(\colim\wayb x,x)=\CT A(\wayb x,\sy(x))=A(x,\colim\sy(x))=A(x,x)=1,$$ then $\colim\wayb x\sqsubseteq x$.  Therefore, $\colim\wayb x=x$ since $A$ is separated.
	
	\begin{exmp}Consider the saturated class $\CC$ of Cauchy weights and the monad $\bbC=(\CC,{\sf m},\sy)$. Then every $\bbC$-algebra $A$ is continuous, because every Cauchy weight of $A$ is representable,  so $\sy\colon A\lra\CC A$ is an isomorphism.  \end{exmp}
	
	\begin{prop}\label{TA is T-continuous}  For every real-enriched category $A$, the  $\bbT$-algebra $\CT A$ is continuous.
	\end{prop}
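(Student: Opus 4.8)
The plan is to deduce continuity of the free algebra $\CT A$ directly from the Kock-Z\"{o}berlein character of the monad $\bbT=(\CT,{\sf m},\sy)$. By Definition \ref{T-continuous} applied to the $\bbT$-algebra $\CT A$, I must produce a string of adjunctions $\wayb\dashv\colim\dashv\sy_{\CT A}\colon \CT A\lra\CT\CT A$; equivalently, I must show that the left adjoint $\colim\colon\CT\CT A\lra\CT A$ of the Yoneda embedding $\sy_{\CT A}\colon\CT A\lra\CT\CT A$ itself admits a left adjoint. This mirrors Example \ref{sup in PX}, where for the full presheaf monad one has the string $\CP\sy_A\dashv\colim\dashv\sy_{\CP A}$; the task is to transport this picture along the inclusion of the submonad $\bbT$ into $\bbP$.

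First I would identify the relevant $\colim$. Since $\CT A$ is the free $\bbT$-algebra on $A$, its algebra structure map is the multiplication ${\sf m}_A\colon\CT\CT A\lra\CT A$, which sends a weight $\Phi\in\CT\CT A$ to its colimit. As $\bbT$ is a submonad of $\bbP$ and $\bbP$ is a KZ-monad (Proposition \ref{pkz}), the monad $\bbT$ is itself of Kock-Z\"{o}berlein type. Hence by Proposition \ref{kz}(3) we have ${\sf m}_A\dashv\sy_{\CT A}$, so that ${\sf m}_A$ is precisely the left adjoint $\colim$ of $\sy_{\CT A}$ appearing in Definition \ref{T-continuous}.

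It then remains to exhibit a left adjoint of ${\sf m}_A$. This is exactly condition (2) of Proposition \ref{kz}: for a KZ-monad one has $\CT\sy_A\dashv{\sf m}_A$, where $\CT\sy_A=(\sy_A)_\exists\colon\CT A\lra\CT\CT A$ is the image of the Yoneda embedding under the functor $\CT$. Combining the two adjunctions yields the desired string
\[\CT\sy_A\dashv{\sf m}_A\dashv\sy_{\CT A}\colon\CT A\lra\CT\CT A,\]
that is, $\wayb\dashv\colim\dashv\sy$ with $\wayb=\CT\sy_A$ and $\colim={\sf m}_A$. By Definition \ref{T-continuous}, $\CT A$ is therefore a continuous $\bbT$-algebra.

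There is no serious obstacle here: the argument is a clean application of the KZ structure, and the only point requiring care is the bookkeeping that the structure map $\colim$ of the free algebra $\CT A$ coincides with the monad multiplication ${\sf m}_A$, and that the left adjoint of ${\sf m}_A$ supplied by the KZ condition lands in $\CT A$ (which is automatic, since $\CT\sy_A$ is by definition valued in $\CT\CT A$). I would spell out this identification, invoking the essential uniqueness of adjoints together with separatedness of $\CT A$ to conclude that the left adjoint of $\sy_{\CT A}$ is literally ${\sf m}_A$.
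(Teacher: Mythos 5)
Your proof is correct and follows essentially the same route as the paper: the paper's proof likewise observes that $\bbT$ is a KZ-monad (being a submonad of the KZ-monad $\bbP$) and invokes the resulting string of adjunctions $\CT\sy_A\dashv{\sf m}_A\dashv\sy_{\CT A}$ to conclude continuity of $\CT A$. Your additional bookkeeping identifying the structure map $\colim$ of the free algebra with the multiplication ${\sf m}_A$ just makes explicit what the paper leaves implicit.
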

	
	\begin{proof} Since $\bbT=(\CT,{\sf m},\sy)$ is a KZ-monad,  for every real-enriched category $A$, there is a string of adjunctions \[\CT\sy_A\dashv{\sf m}_A\dashv\sy_{\CT A}\colon \CT A\lra\CT\CT A,\] which shows that $\CT A$ is   continuous. \end{proof}
	
	For a $\bbT$-algebra $A$,   the distributor \[\mathfrak{t}\coloneqq{\colim}^*\searrow\sy_*\colon A\oto A\] is called the \emph{$\CT$-below distributor} on $A$.
	$$\bfig \qtriangle(0,0)/>`>`<-/<520,500>[A`\CT A`A;\sy_*`\mathfrak{t}`{\colim}^*]
	\place(230,500)[\circ] \place(260,250)[\circ] \place(520,275)[\circ]   \efig$$
	Explicitly, for all $x,y\in A$, \[\mathfrak{t}(y,x)=\inf_{\phi\in\CT A}(A(x,\colim\phi)\ra\phi(y)).\]
	Putting $\phi=\sy(x)$  gives that  $\mathfrak{t}(y,x)\leq A(y,x)$.
	
	Suppose $X,Y$ are real-enriched categories. For each functor $f\colon Y\lra \CP X$, the distributor  $$X\oto Y, \quad (x,y)\mapsto f(y)(x)$$ is called the  \emph{transpose}  of $f$. In particular,  the transpose of the  Yoneda embedding $\sy\colon X\lra\CP X$ is the identity distributor $X\oto X$. Conversely, for each distributor $\phi\colon X\oto Y$, the assignment $y\mapsto\phi(-,y)$ defines a functor $Y\lra\CP X$, called the \emph{transpose} of $\phi$. Taking transpose defines a natural bijection between   functors $Y\lra\CP X$  and   distributors $X\oto Y$.
	
	\begin{prop}\label{way below as left adjoint} Suppose $A$ is a $\bbT$-algebra. Then $A$ is continuous   if and only if for all $x\in A$, the weight $\mathfrak{t}(-,x)$ belongs to $\CT A$ and has $x$ as  colimit. In this case,  the left adjoint $\wayb\colon A\lra\CT A$ and the $\CT$-below distributor $\mathfrak{t}\colon A\oto A$ are transpose of each other.  
	\end{prop}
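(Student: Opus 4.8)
The plan is to exploit the natural bijection between distributors $A\oto A$ and functors $A\to\CP A$: the candidate left adjoint $\wayb$ should be nothing but the transpose of the $\CT$-below distributor $\mathfrak{t}$, so the whole statement amounts to showing that this transpose takes values in $\CT A$ and is left adjoint to $\colim$ precisely when $A$ is continuous. I would prove the ``if'' direction first, since there the functor $\wayb$ is produced by hand, and then the ``only if'' direction together with the transpose claim.

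For the converse, assume $\mathfrak{t}(-,x)\in\CT A$ and $\colim\mathfrak{t}(-,x)=x$ for every $x\in A$. The assignment $x\mapsto\mathfrak{t}(-,x)$ is a functor into $\CP A$ by the transpose bijection, and by hypothesis it corestricts to a functor $\wayb\colon A\to\CT A$. To see $\wayb\dashv\colim$ I would invoke Theorem \ref{Characterization of adjoints} to pass to the underlying orders and then Theorem \ref{Galois connection via unit}, reducing the adjunction to the two inequalities $x\sqsubseteq\colim\wayb x$ and $\wayb\colim\phi\sqsubseteq\phi$. The first is immediate from $\colim\mathfrak{t}(-,x)=x$. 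The second unwinds to $\mathfrak{t}(z,\colim\phi)\le\phi(z)$ for all $z$, which drops out of the defining formula $\mathfrak{t}(z,\colim\phi)=\inf_{\psi\in\CT A}(A(\colim\phi,\colim\psi)\ra\psi(z))$ by specializing $\psi=\phi$. This yields continuity, and $\wayb$ and $\mathfrak{t}$ are transpose by construction.

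For the forward direction, assume $A$ is continuous with $\wayb\dashv\colim\dashv\sy$. The key is the distributor identity $\mathfrak{t}=\wayb^*\circ\sy_*$. Indeed, $\wayb\dashv\colim$ gives $\wayb_*=\colim^*$ (by the proposition characterizing adjoint functors through their graphs and cographs), so $\mathfrak{t}=\colim^*\searrow\sy_*=\wayb_*\searrow\sy_*$, and since $\wayb_*\dashv\wayb^*$, Lemma \ref{adjoint_arrow_calculation}(ii) rewrites $\wayb_*\searrow\sy_*$ as $\wayb^*\circ\sy_*$. Computing this composite with $\sy_*(x,\phi)=\phi(x)$ (Yoneda lemma) and the elementary identity $\sup_{\phi}\CT A(\sy x,\phi)\with\CT A(\phi,\wayb z)=\CT A(\sy x,\wayb z)$ (transitivity, with the supremum attained at $\phi=\sy x$) gives $\mathfrak{t}(x,z)=\CT A(\sy x,\wayb z)=(\wayb z)(x)$. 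Thus $\mathfrak{t}$ is the transpose of $\wayb$; in particular $\mathfrak{t}(-,x)=\wayb x\in\CT A$ and $\colim\mathfrak{t}(-,x)=\colim\wayb x=x$ by the remark following Definition \ref{T-continuous}.

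The one genuinely fiddly point is the variance bookkeeping in the forward direction when applying Lemma \ref{adjoint_arrow_calculation} to obtain $\mathfrak{t}=\wayb^*\circ\sy_*$; once that identity is in place, both directions collapse to the single observation that specializing the auxiliary weight to $\phi$ (respectively the auxiliary object via $\phi=\sy x$) trivializes the infima and suprema defining $\mathfrak{t}$. Before citing the ordered-set adjunction criterion I would also confirm the two routine points that the corestriction of the transpose to $\CT A$ is legitimate (guaranteed by the standing hypothesis $\mathfrak{t}(-,x)\in\CT A$) and that $\colim$ and $\wayb$, being functors, preserve the underlying orders.
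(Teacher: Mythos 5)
Your proof is correct, but it routes through different machinery than the paper's, most visibly in the ``only if'' direction. There the paper establishes $\mathfrak{t}(-,x)=\wayb x$ by two elementary pointwise inequalities: specializing $\phi=\wayb x$ in the infimum defining $\mathfrak{t}$ gives $\mathfrak{t}(-,x)\leq\wayb x$, while the adjunction equality $A(x,\colim\phi)=\CT A(\wayb x,\phi)$ together with $\CT A(\wayb x,\phi)\with\wayb x\leq\phi$ gives $\wayb x\leq\mathfrak{t}(-,x)$. You instead prove the distributor identity $\mathfrak{t}={\colim}^*\searrow\sy_*=\wayb_*\searrow\sy_*=\wayb^*\circ\sy_*$, using $\wayb_*={\colim}^*$ and Lemma \ref{adjoint_arrow_calculation}\thinspace(ii), and then evaluate the composite via the Yoneda lemma; this is more conceptual and makes the transpose claim immediate, at the cost of the variance bookkeeping you acknowledge. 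In the ``if'' direction the difference is mostly cosmetic: the paper verifies the enriched hom-equality $A(x,\colim\phi)=\CT A(\wayb x,\phi)$ directly (one inequality from the definition of $\mathfrak{t}$, the other from functoriality of $\colim$ and $\colim\wayb x=x$), whereas you reduce to underlying orders via Theorem \ref{Characterization of adjoints} and check the unit/counit inequalities of Theorem \ref{Galois connection via unit}; your specialization $\psi=\phi$ in the infimum is exactly the paper's first inequality in order-theoretic dress. Both routes are sound: yours buys a cleaner, calculus-of-distributors derivation of the transpose identity, while the paper's stays self-contained at the level of $[0,1]$-valued inequalities.
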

	
	\begin{proof} We prove the necessity first. Given a continuous $\bbT$-algebra $A$, we show that  for each $x\in A$, $\mathfrak{t}(-,x)=\wayb x$,  so $\mathfrak{t}(-,x)$ belongs to $\CT A$ and has $x$ as  colimit.   On the one hand, since $\colim\wayb x=x$, then $$\mathfrak{t}(-,x)\leq A(x,\colim\wayb x)\ra\wayb x= \wayb x.$$ On the other hand, since  for all   $\phi\in\CT A$ we have $$A(x,\colim\phi)\with\wayb x = \CT A(\wayb x,\phi)\with\wayb x\leq \phi,$$   it follows that $\wayb x\leq A(x,\colim\phi)\ra\phi$ for all $\phi\in\CT A$, hence $\wayb x\leq \mathfrak{t}(-,x)$.  
		
		As for the sufficiency, we show that the assignment $x\mapsto \wayb x\coloneqq \mathfrak{t}(-,x)$ defines a  left  adjoint of $\colim\colon\CT A\lra A$. Let $x\in A$ and $\phi\in\CT A$. By definition of  $\mathfrak{t}(-,x)$, we have $\wayb x\leq   A(x,\colim\phi)\ra\phi$, so $A(x,\colim\phi)\leq \CT A(\wayb x,\phi)$. Since $\colim$ is a functor and $\colim\wayb x=x$, then $\CT A(\wayb x,\phi)\leq A(\colim\wayb x,\colim\phi)=A(x,\colim\phi)$. Therefore, $\wayb$ is a left adjoint of $\colim$ and $A$ is continuous. \end{proof} 
	
	\begin{prop}Suppose $A$ is a continuous $\bbT$-algebra. Then the $\CT$-below distributor $\mathfrak{t}\colon A\oto A$   interpolates in the sense that $\mathfrak{t}\circ \mathfrak{t}=\mathfrak{t}$. \end{prop}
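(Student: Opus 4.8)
The plan is to reduce the identity $\mathfrak{t}\circ\mathfrak{t}=\mathfrak{t}$ to a statement about the left adjoint $\wayb$ and then to exploit the fact that $\CT A$ is closed in $\CP A$ under $\CT$-colimits. First I would unwind the composite: by definition of the composition of distributors, $(\mathfrak{t}\circ\mathfrak{t})(z,x)=\sup_{y\in A}\mathfrak{t}(y,x)\with\mathfrak{t}(z,y)$, and by Proposition \ref{way below as left adjoint} we have $\mathfrak{t}(-,x)=\wayb x$ together with $\colim\wayb x=x$. Hence it suffices to prove, for each $x\in A$, the equality of weights $\wayb x=\sup_{y\in A}(\wayb x)(y)\with\wayb y$ in $\CP A$, since evaluating the right-hand side at $z$ returns exactly $(\mathfrak{t}\circ\mathfrak{t})(z,x)$.

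Next I would recognise the right-hand side as a weighted colimit. Writing $\mathfrak{i}\colon\CT A\lra\CP A$ for the inclusion, Corollary \ref{calculation of sup by tensors}, applied in the complete category $\CP A$ where the tensor of $r$ with $\psi$ is $r\with\psi$, identifies $\sup_{y}(\wayb x)(y)\with\wayb y$ with the colimit of $\mathfrak{i}\circ\wayb\colon A\lra\CP A$ weighted by $\wayb x$, that is with $\colim(\wayb x\circ(\mathfrak{i}\wayb)^*)=\colim(\wayb x\circ\wayb^*\circ\mathfrak{i}^*)$. The crux is then to compute this same colimit a second way, inside $\CT A$. I would observe that $\Phi\coloneqq\wayb x\circ\wayb^*=(\CT\wayb)(\wayb x)$ lies in $\CT\CT A$, because the functor $\CT\wayb\colon\CT A\lra\CT\CT A$ sends the $\CT$-weight $\wayb x$ to $\wayb_\exists(\wayb x)=\wayb x\circ\wayb^*$. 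Saturation of $\CT$ (Proposition \ref{equivalents for submonad}) then guarantees that the colimit of $\mathfrak{i}$ weighted by $\Phi$ already belongs to $\CT A$, so that $\mathfrak{i}$ preserves it and the value computed in $\CP A$ agrees with the colimit of $\Phi$ formed inside $\CT A$. On the other hand this latter colimit is precisely $\colim_{\wayb x}\wayb$, and since $\wayb$ is a left adjoint it preserves colimits, whence $\colim_{\wayb x}\wayb=\wayb(\colim\wayb x)=\wayb x$. Chaining these identifications yields $\sup_{y}(\wayb x)(y)\with\wayb y=\wayb x$, which is the desired equality; in particular both inequalities fall out at once.

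The main obstacle I anticipate is exactly this matching step. The colimit computed pointwise in $\CP A$ is what manufactures the composite $\mathfrak{t}\circ\mathfrak{t}$, whereas the colimit computed in $\CT A$ reproduces $\mathfrak{t}(-,x)=\wayb x$, and the two coincide only because $\Phi$ is a genuine $\CT$-weight of $\CT A$, so that the inclusion $\CT A\hookrightarrow\CP A$ preserves the relevant colimit. If one only has the lax comparison $\colim_{\Phi}\mathfrak{i}\sqsubseteq\mathfrak{i}(\colim\Phi)$ available for arbitrary weights, one recovers merely the trivial direction $\mathfrak{t}\circ\mathfrak{t}\le\mathfrak{t}$ (which also follows immediately from $\mathfrak{t}\le A$ and $\mathfrak{t}\circ A=\mathfrak{t}$); the real content is therefore the verification that $\Phi\in\CT\CT A$ and that the two colimits are literally the same element of $\CT A$. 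Once that is in place, the remaining manipulations — unwinding the cograph $(\mathfrak{i}\wayb)^*=\wayb^*\circ\mathfrak{i}^*$ and invoking that left adjoints preserve colimits — are routine.
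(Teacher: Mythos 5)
Your proposal is correct and follows essentially the same route as the paper's proof: reduce to the weight identity $\wayb x=\sup_{z\in A}\wayb x(z)\with\wayb z$, obtain $\wayb x={\colim}_{\wayb x}\wayb$ from $\colim\wayb x=x$ and colimit-preservation of the left adjoint $\wayb$, and then use closure of $\CT A$ under $\CT$-colimits (saturation) together with the pointwise colimit formula in $\CP A$ to identify this with the supremum. Your explicit verification that $\wayb x\circ\wayb^*=(\CT\wayb)(\wayb x)\in\CT\CT A$ merely spells out the step the paper states tersely as ``$\CT A$ is closed under $\CT$-colimits in $\CP A$''.
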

	
	\begin{proof} 
		Before proving the conclusion, we would like to note that the conclusion (or for some specific class of weights) has been proved in several places, e.g. Hofmann and Waskiewicz \cite{HW2011,HW2012}, Stubbe \cite{Stubbe2007}, and Waskiewicz \cite{Waskiewicz09}. 
		
		Since $\mathfrak{t}$ is the transpose of the left adjoint $\wayb\colon A\lra\CT A$,
		it suffices to show that for all $x\in A$, \[\wayb x=\sup_{z\in A}\wayb x(z)\with \wayb  z.\] 
		
		Since $A$ is a continuous $\bbT$-algebra,   $\colim\wayb x =x$ for all $x\in A$. Since $\wayb \colon A\lra\CT A$ is a left adjoint, it preserves colimits, so $\wayb x$ is the colimit of the functor $\wayb \colon A\lra\CT A$ weighted by $\wayb x$, i.e. \[\wayb x ={\colim}_{\wayb x}\wayb.\]  Since  $\CT A$ is closed under $\CT$-colimits in $\CP A$, then ${\colim}_{\wayb x}\wayb={\colim}_{\wayb x}(\mathfrak{i}\circ\wayb)$, where $\mathfrak{i}\colon\CT X\lra\CP X$ is the inclusion functor. Thus, by Corollary \ref{calculation of colimit in PX}, \begin{align*}\wayb x&={\colim}_{\wayb x}(\mathfrak{i}\circ\wayb) = \sup_{z\in A}\wayb x(z)\with \wayb z.\qedhere \end{align*}  
	\end{proof}
	
	\begin{prop} \label{retract of continuous T-algebra} 
		Every retract  of a continuous $\bbT$-algebra in    $\TAlg$  is a continuous $\bbT$-algebra. Furthermore, a $\bbT$-algebra $A$ is continuous if and only if it is a retract of   $\CT A$ in $\TAlg$.
	\end{prop}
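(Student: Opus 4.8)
The plan is to prove the two assertions together, reducing everything to a single explicit construction of a left adjoint, and to observe that the second assertion is largely a consequence of the first. For the forward implication of the equivalence, suppose $A$ is a continuous $\bbT$-algebra, so that $\wayb\dashv\colim\dashv\sy\colon A\lra\CT A$. Here $\colim\colon\CT A\lra A$ is a left adjoint (of $\sy$) and $\wayb\colon A\lra\CT A$ is a left adjoint (of $\colim$); since left adjoints preserve all colimits, both preserve $\CT$-colimits and are therefore $\bbT$-homomorphisms. As recorded after Definition \ref{T-continuous}, $\colim\circ\wayb=1_A$, so the pair $(\wayb,\colim)$ exhibits $A$ as a retract of $\CT A$ in $\TAlg$. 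Conversely, if $A$ is a retract of $\CT A$ in $\TAlg$, then since $\CT A$ is a continuous $\bbT$-algebra by Proposition \ref{TA is T-continuous}, the first assertion immediately yields that $A$ is continuous. Thus the crux is the first assertion.

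To prove the first assertion, let $B$ be a continuous $\bbT$-algebra and let $s\colon A\lra B$, $r\colon B\lra A$ be $\bbT$-homomorphisms with $r\circ s=1_A$; write $\wayb_B\dashv\colim_B$ for the adjunction witnessing continuity of $B$. The key step is to guess the correct left adjoint of $\colim_A$ and then verify the two defining inequalities of an adjunction in the locally ordered category $\QOrd$. I propose the candidate
\[
\wayb_A\coloneqq\CT r\circ\wayb_B\circ s\colon A\lra\CT A,
\]
obtained by conjugating the way-below map of $B$ by the retraction data, and claim that $\wayb_A\dashv\colim_A$, which is exactly continuity of $A$.

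The verification will use only the homomorphism identities $r\circ\colim_B=\colim_A\circ\CT r$ and $s\circ\colim_A=\colim_B\circ\CT s$ (that is, that $r,s$ preserve $\CT$-colimits), the functoriality of $\CT$, and the two triangle inequalities for $\wayb_B\dashv\colim_B$. For the unit, $\colim_A\circ\wayb_A=\colim_A\circ\CT r\circ\wayb_B\circ s=r\circ\colim_B\circ\wayb_B\circ s=r\circ s=1_A$, using $\colim_B\circ\wayb_B=1_B$; in particular $\colim_A\circ\wayb_A\geq 1_A$. For the counit, $\wayb_A\circ\colim_A=\CT r\circ\wayb_B\circ\colim_B\circ\CT s\leq\CT r\circ\CT s=\CT(r\circ s)=1_{\CT A}$, where the inequality is the counit $\wayb_B\circ\colim_B\leq 1_{\CT B}$ pre- and post-composed with $\CT s$ and $\CT r$, composition being order-preserving. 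Together these give $\wayb_A\dashv\colim_A$, hence $A$ is continuous, and the backward direction of the equivalence follows as indicated above.

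I do not expect any single step to be a genuine obstacle once the candidate $\wayb_A=\CT r\circ\wayb_B\circ s$ is written down; the only points requiring care are (i) confirming that being a morphism in $\TAlg$ amounts to commuting with the structure maps $\colim$, so that the homomorphism identities above are available, and (ii) treating the counit as an \emph{inequality} rather than an equality, since the adjunction $\wayb_A\dashv\colim_A$ in a locally ordered category is by definition $\colim_A\circ\wayb_A\geq 1_A$ together with $\wayb_A\circ\colim_A\leq 1_{\CT A}$, with separatedness of $A$ upgrading the first to the equality $\colim_A\wayb_A x=x$. Identifying the right candidate is the one genuinely creative move; everything else is a short diagram chase.
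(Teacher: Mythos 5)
Your proof is correct and follows essentially the same route as the paper: the same candidate left adjoint $\wayb_A=\CT r\circ\wayb_B\circ s$, the same verification of $\colim_A\circ\wayb_A=1_A$ and $\wayb_A\circ\colim_A\leq 1_{\CT A}$ via the homomorphism identities and the counit of $\wayb_B\dashv\colim_B$, and the same observation that the equality $\colim\circ\wayb=1_A$ (with $\wayb$ and $\colim$ being left adjoints, hence $\bbT$-homomorphisms) yields the retract characterization.
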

	\begin{proof} Suppose  $B$ is a continuous $\bbT$-algebra;  $s\colon A\lra B$ and $r\colon B\lra A$ are $\mathbb{T}$-homomorphisms with $r\circ s=1_A$. We show that   $$\wayb_A\coloneqq \CT r\circ \wayb_B\circ\thinspace s\colon A\lra B\lra\CT B\lra \CT A$$ is left adjoint to $\colim_A\colon\CT  A\lra A$, so $A$ is a continuous $\bbT$-algebra. 
		
		On the one hand,
		\begin{align*}{\colim}_A\circ \wayb_A&={\colim}_A\circ\CT r\circ \wayb_B\circ s\\
			&=r\circ{\colim}_B\circ \wayb_B\circ\thinspace s &(\mbox{$r$ is a $\mathbb{T}$-homomorphism})\\
			&=r\circ\thinspace s &({\colim}_B\circ \wayb_B=\id_B)\\
			&=1_A.
		\end{align*}
		
		On the other hand,
		\begin{align*}\wayb_A\circ\thinspace{\colim}_A&=\CT r\circ \wayb_B\circ\thinspace s\circ{\colim}_A\\
			&=\CT r\circ \wayb_B \circ\thinspace  {\colim}_B \circ\CT s&(\mbox{$s$ is a $\mathbb{T}$-homomorphism})\\
			&\leq\CT r\circ\CT s &(\wayb_B\dashv{\colim}_B)\\
			&=1_{\CT A}.
		\end{align*}
		
		Thus, $\wayb_A$ is left adjoint to $\colim_A$, as desired. 
		
		The second conclusion follows from that $\colim_A\circ\wayb_A=\id_A$ for every continuous $\bbT$-algebra $A$.  \end{proof}
	
	\begin{cor}\label{cc}
		Suppose  $A,B$ are separated real-enriched categories, $g\colon A\lra B$  is a right adjoint  preserving $\CT$-colimits.
		\begin{enumerate}[label=\rm(\roman*)] 
			\item If $g$ is surjective and  $A$ is a
			continuous $\bbT$-algebra, then so is $B$.
			\item If $g$ is injective  and  $B$ is
			continuous $\bbT$-algebra, then so is $A$.
		\end{enumerate}
	\end{cor}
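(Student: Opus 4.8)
The plan is to realize $B$ (in part (i)) and $A$ (in part (ii)) as retracts of the given continuous $\bbT$-algebra, and then to invoke Proposition \ref{retract of continuous T-algebra}. First I would name the left adjoint: since $g\colon A\lra B$ is a right adjoint, write $f\colon B\lra A$ for its left adjoint, so $f\dashv g$. Being a left adjoint, $f$ preserves all colimits, in particular $\CT$-colimits. By Theorem \ref{Characterization of adjoints} the adjunction $f\dashv g$ descends to an adjunction $f_0\dashv g_0$ between the underlying partially ordered sets $B_0$ and $A_0$ (these are genuine posets since $A$ and $B$ are separated), so I may apply the dictionary of Proposition \ref{surjective right adjoint}: surjectivity of $g$ forces $gf=1_B$, while injectivity of $g$ forces $fg=1_A$. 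As a functor is determined by its underlying map of sets, these identities of underlying maps are identities of functors.

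For part (i), the identity $gf=1_B$ means the pair $f\colon B\lra A$ (section) and $g\colon A\lra B$ (retraction) exhibits $B$ as a retract of $A$ in $\QOrd$. Since $A$ is a $\bbT$-algebra, Proposition \ref{retract of T-alg} shows that $B$ is a $\bbT$-algebra. Now both $f$ and $g$ are $\bbT$-homomorphisms: $g$ preserves $\CT$-colimits by hypothesis, and $f$ does so because it is a left adjoint. Hence the same pair exhibits $B$ as a retract of $A$ in $\TAlg$, and since $A$ is continuous, Proposition \ref{retract of continuous T-algebra} yields that $B$ is a continuous $\bbT$-algebra.

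Part (ii) is symmetric. From $fg=1_A$ the pair $g\colon A\lra B$ (section) and $f\colon B\lra A$ (retraction) realizes $A$ as a retract of $B$ in $\QOrd$; since $B$ is a $\bbT$-algebra, Proposition \ref{retract of T-alg} makes $A$ a $\bbT$-algebra. The maps $g$ and $f$ again preserve $\CT$-colimits, so this is a retract diagram in $\TAlg$, and continuity of $B$ gives continuity of $A$ via Proposition \ref{retract of continuous T-algebra}.

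The argument is short once the two retract propositions are in hand, so there is no deep obstacle; the only points demanding care are bookkeeping ones. I must establish that the new category is a $\bbT$-algebra first, using the weaker $\QOrd$-level Proposition \ref{retract of T-alg}, before I am entitled to speak of a retract in $\TAlg$ and apply Proposition \ref{retract of continuous T-algebra}; and I must keep straight which composite of $f$ and $g$ collapses to an identity under surjectivity versus injectivity, which is exactly the content delivered by Proposition \ref{surjective right adjoint}.
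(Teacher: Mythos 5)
Your proposal is correct and follows essentially the same route as the paper: the paper's proof also takes the left adjoint $f$ of $g$, uses surjectivity/injectivity to get $g\circ f=1_B$ (resp.\ $f\circ g=1_A$), and concludes via Proposition \ref{retract of T-alg} and Proposition \ref{retract of continuous T-algebra} that the retract is a continuous $\bbT$-algebra. Your extra bookkeeping (descending to the underlying posets via Theorem \ref{Characterization of adjoints} to invoke Proposition \ref{surjective right adjoint}, and checking that $f$ preserves $\CT$-colimits so the retraction lives in $\TAlg$) is exactly what the paper leaves implicit.
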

	\begin{proof}
		(i) Let $f\colon B\lra A$ be the left adjoint of $g$. Then $g\circ f=1_B$, so $B$ is a $\mathbb{T}$-algebra by Proposition \ref{retract of T-alg}, hence a retract of $A$ in $\TAlg$, and consequently, a continuous $\bbT$-algebra by the above proposition.
		
		(ii) Let $f\colon B\lra A$ be the left adjoint of $g$. Then $f\circ g=1_A$, so $ A$ is a retract of $B$ in $\TAlg$, hence a continuous $\bbT$-algebra. \end{proof}
	

	Let $\mathscr{A}$ be a subcategory (not necessarily full) of $\QOrd$. If  \begin{itemize}\item for each morphism $f\colon A\lra B$ of $\mathscr{A}$, $\CT f\colon \CT A\lra\CT B$ is a morphism of $\mathscr{A}$; and \item for each object $A$ of $\mathscr{A}$,  the Yoneda embedding $\sy_A$   with codomain restricted to $\CT A$ is a morphism of $\mathscr{A}$,  \end{itemize}
	then we say that (the restriction of)  $\CT$ is \emph{a class of weights on $\mathscr{A}$}.

	Let  $$\TCL$$ be the category   of complete and continuous   $\bbT$-algebras and functors preserving $\CT$-colimits and  arbitrary enriched limits. 
	
	\begin{prop}\label{restriction to QInf}  
		The following  are equivalent:
		\begin{enumerate}[label=\rm(\arabic*)] 
			\item The restriction of $\CT$ is a class of weights on the category of $\bbP^\dag$-algebras.
			
			\item For each $\bbP^\dag$-algebra $A$, $\CT A$ is a $\bbP^\dag$-algebra.
			\item  For each $\bbP^\dag$-algebra $A$,
			the inclusion $\CT A\lra\CP A$ is a right adjoint.
		\end{enumerate}
		
		In this case, $\mathbb{T}=(\CT,{\sf m},\sy)$ is a monad in the category $\PdAlg$ of $\bbP^\dag$-algebras and the forgetful functor $\TCL\lra \PdAlg$ is monadic.
	\end{prop}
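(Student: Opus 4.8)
The plan is to prove the cycle $(1)\Rightarrow(2)\Rightarrow(3)\Rightarrow(1)$ and then deduce the two supplementary assertions. Throughout I regard $\CT A$ as a \emph{full} sub-real-enriched-category of $\CP A$ and write $\mathfrak{i}\colon\CT A\lra\CP A$ for the inclusion; I will use freely that $\CP A$ is complete (Theorem \ref{complete is self-dual}), that conical limits in $\CP A$ are pointwise infima and cotensors are $r\ra\phi$ (Example \ref{tensor and cotensor in PX}), and that $\CT A$ is always separated and contains every representable weight. Implication $(1)\Rightarrow(2)$ is then immediate: if $\CT$ restricts to a class of weights on $\PdAlg$, the corestricted Yoneda embedding $\sy_A\colon A\lra\CT A$ must be a morphism of $\PdAlg$, which forces its codomain $\CT A$ to be an object of $\PdAlg$, i.e. a $\bbP^\dag$-algebra, which is exactly $(2)$.

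The heart of the matter is $(2)\Leftrightarrow(3)$. For $(2)\Rightarrow(3)$ I would show that once $\CT A$ is complete it is in fact closed in $\CP A$ under cotensors and conical limits, so that $\mathfrak{i}$ preserves all limits and hence is a right adjoint (by the dual of the theorem that a colimit-preserving functor out of a cocomplete category is a left adjoint, using completeness of $\CT A$). The closure statements are where the Yoneda lemma does the work. If $c$ denotes the cotensor $r\rat\phi$ computed inside the complete category $\CT A$, its universal property gives $\CT A(\psi,c)=r\ra\CT A(\psi,\phi)$ for all $\psi\in\CT A$; testing against a representable $\psi=\sy(x)$ and using fullness together with the Yoneda lemma ($\CT A(\sy(x),-)=\CP A(\sy(x),-)=$ evaluation at $x$) yields $c(x)=r\ra\phi(x)$ for every $x$, so $c=r\ra\phi$ already lies in $\CT A$. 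Applying the same device to the enriched product $m$ of a family $\{\phi_i\}\subseteq\CT A$, whose universal property reads $\CT A(\psi,m)=\bw_i\CT A(\psi,\phi_i)$, gives $m(x)=\bw_i\phi_i(x)$, identifying the $\CT A$-meet with the pointwise infimum. Since by Corollary \ref{calculation of sup by tensors} every weighted limit is a meet of cotensors, closure under these two operations shows $\CT A$ is closed under all limits of $\CP A$ and that $\mathfrak{i}$ preserves them. Conversely $(3)\Rightarrow(2)$ is the standard fact that a reflective full subcategory of a complete category is complete: a left adjoint to $\mathfrak{i}$ exhibits $\CT A$ as reflective in the complete, separated $\CP A$, whence $\CT A$ is a $\bbP^\dag$-algebra.

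Finally $(3)\Rightarrow(1)$ requires checking the two morphism clauses. The corestricted $\sy_A\colon A\lra\CT A$ preserves limits because $\sy\colon A\lra\CP A$ does and $\CT A$ is limit-closed, and a limit-preserving functor between complete categories is a right adjoint, hence a $\PdAlg$-morphism. The clause needing a genuine idea is functoriality on morphisms: a $\PdAlg$-morphism $f\colon A\lra B$ is a right adjoint, say $e\dashv f$ with $e\colon B\lra A$, and I must see that $\CT f=f_\exists$ is \emph{again} a right adjoint even though $f_\exists$ is a priori only a left adjoint. The key observation is that $e\dashv f$ forces $e_*=f^*$, so that $f_\exists(\phi)=\phi\circ f^*=\phi\circ e_*=e^{-1}(\phi)$; thus $f_\exists$ coincides with the pullback functor $e^{-1}$, which is a right adjoint by the string $e_\exists\dashv e^{-1}\dashv e_\forall$ of Proposition \ref{left and right kan}. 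Restricting $e^{-1}$ to the limit-closed subcategories $\CT A,\CT B$ keeps it limit-preserving, so $\CT f$ is a $\PdAlg$-morphism, establishing $(1)$.

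For the closing assertions, since $\bbT$ is inherited as a KZ-monad the multiplication ${\sf m}_A\colon\CT\CT A\lra\CT A$ is the right adjoint of $\CT\sy_A$ in the string $\CT\sy_A\dashv{\sf m}_A\dashv\sy_{\CT A}$, hence a right adjoint and so a $\PdAlg$-morphism; combined with $(1)$ this makes $\bbT=(\CT,{\sf m},\sy)$ a (still KZ) monad on $\PdAlg$. Its Eilenberg--Moore algebras are precisely those $\bbP^\dag$-algebras $A$ whose structure map $\colim\colon\CT A\lra A$ is a $\PdAlg$-morphism, i.e. a right adjoint; by Definition \ref{T-continuous} this says exactly that $A$ is a complete and continuous $\bbT$-algebra, while the homomorphisms are the functors preserving both $\CT$-colimits and enriched limits. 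Hence $\TCL$ is the category of $\bbT$-algebras over $\PdAlg$ and its forgetful functor is monadic. I expect the main obstacle to be $(2)\Leftrightarrow(3)$, and within it the reduction showing that completeness of $\CT A$ forces its limits to be computed pointwise; the identification $f_\exists=e^{-1}$ is the secondary subtlety that rescues $(3)\Rightarrow(1)$.
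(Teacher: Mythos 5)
Your proof is correct and follows essentially the same route as the paper: the same cycle $(1)\Rightarrow(2)\Rightarrow(3)\Rightarrow(1)$, the same Yoneda-against-representables computation showing that $\CT A$ is closed in $\CP A$ under cotensors and meets (hence that the inclusion is a right adjoint), and the same KZ-string argument $\CT\sy_A\dashv{\sf m}_A\dashv\sy_{\CT A}$ for the multiplication and the monadicity claim. The only local difference is in $(3)\Rightarrow(1)$: where the paper simply notes that $\CT$, being a subfunctor of the 2-functor $\CP$, preserves the adjunction $e\dashv f$, so that $\CT f$ is a right adjoint outright, you instead identify $f_\exists$ with $e^{-1}$ via $e_*=f^*$ and invoke the Kan string $e_\exists\dashv e^{-1}\dashv e_\forall$ before restricting to the limit-closed subcategories --- a slightly more computational justification of the same step.
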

	\begin{proof}
		$(1)\Rightarrow(2)$ Obvious.
		
		$(2)\Rightarrow(3)$ By the dual of Corollary \ref{left adjoint via tensor and join}, it suffices to check that $\CT A$ is closed in $\CP A$ with respect to   cotensors and   meets.
		For   $r\in[0,1]$ and $\phi\in\CT A$, let $r\multimap\phi$ be the cotensor of $r$ and $\phi$ in $\CT A$. Then  for all $x\in A$,  \[(r\multimap\phi)(x)=\CT A(\sy (x),r\multimap\phi)=r\ra\CT A(\sy (x),\phi)=r\ra\phi(x),\] thus, $\CT A$ is closed in $\CP A$ with respect to  cotensors.
		Let $\phi$ be the meet of a subset $\{\phi_i\}_{i\in I}$ of $\CT A$. Since $\CT A$ is complete,  by Proposition \ref{property of tensor} for all $x\in A$, \[\phi(x)=\CT A(\sy_A (x),\phi)=\inf_{i\in I}\CT A(\sy_A (x),\phi_i)=\inf_{i\in I}\phi_i(x),\] so $\CT A$ is closed in $\CP A$ with respect to meets.
		
		$(3)\Rightarrow(1)$ First, we show that if $f\colon A\lra B$ is a morphism of $\PdAlg$, then so is $\CT f\colon \CT A\lra\CT B$.    Since the inclusion $\CT A\lra\CP A$ is a right adjoint,  $\CT A$ is a retract of   $\CP A$ in $\QOrd$,  it is a $\bbP$-algebra by Proposition \ref{retract of T-alg}, then a $\bbP^\dag$-algebra. Since $\CT$, being a subfunctor of $\CP$,  preserves adjunctions,  it follows that $\CT f\colon \CT A\lra\CT B$ is a right adjoint, hence a  morphism  of $\PdAlg$.  
		Second, we show that for each  $\bbP^\dag$-algebra $A$, the Yoneda embedding $\sy\colon A\lra\CT A$ is a morphism in $\PdAlg$. This follows from that it is right adjoint to the functor $\colim\colon\CT A\lra   A$. Therefore, the restriction of $\CT$ is a class of weights on  $\PdAlg$.
		
		Finally, we show that   $\bbT=(\CT,{\sf m},\sy)$ is a monad in $\PdAlg$. It suffices to check that for each object $A$ of $\PdAlg$, the component ${\sf m}_A\colon\CT\CT A\lra\CT A$ of the multiplication ${\sf m}\colon\CT^2\lra\CT$ at $A$ is a morphism of  $\PdAlg$.
		Since $\bbT$ is a KZ-monad in $\QOrd$, for each real-enriched category $X$, \[{\sf m}_X={\colim} \colon \CT\CT X\lra\CT X\] is both a left   and a right adjoint.
		In particular,  for each $\bbP^\dag$-algebra $A$, the functor ${\sf m}_A\colon\CT\CT A\lra\CT A$ is a morphism of  $\PdAlg$. 
	\end{proof}
	
	By a \emph{lifting}  of the monad $\bbT=(\CT,{\sf m},\sy)$ along the forgetful functor $$U\colon \PdAlg \lra\QOrd$$ we mean a monad $\widetilde{\bbT}=(\widetilde{\CT}, \widetilde{{\sf m}},\widetilde{\sy})$ in $\PdAlg$ such that \begin{equation*}\label{lifting to QInfII} \bfig
		\Square[\PdAlg`\PdAlg`\QOrd`\QOrd;
		(\widetilde{\CT}, \widetilde{{\sf m}},\widetilde{\sy})`U`U`(\CT,{\sf m},\sy)]
		\efig \end{equation*}  \[U\circ \widetilde{\CT} = \CT\circ U,~~ U\circ\widetilde{{\sf m}}={\sf m}\circ U,~~ U\circ \widetilde{\sy}=\sy\circ U.\]
	
	It is clear that such a lifting exists if and only if \begin{itemize}\item for each morphism $f\colon A\lra B$ of $\PdAlg$, $\CT f\colon \CT A\lra\CT B$ is a morphism of $\PdAlg$; \item for each object $A$ of $\PdAlg$,     $\sy_A\colon A\lra \CT A$ is a morphism of $\PdAlg$.  \end{itemize}   
	Therefore, such a lifting exists if and only if  the restriction of $\CT$ is a class of weights on $\PdAlg$. In this case the lifting is exactly the restriction of $\bbT$ to the subcategory $\PdAlg$. This implies that the lifting of $\bbT$ along the forgetful functor, when exists, is unique.
	
	If $\widetilde{\bbT}=(\widetilde{\CT}, \widetilde{{\sf m}},\widetilde{\sy})$ is a lifting of $\bbT=(\CT,{\sf m},\sy)$ along   $U\colon \PdAlg \lra\QOrd$, the monad in $\QOrd$ defined by the composite   adjunction 
	\begin{equation*}\bfig
		\morphism(700,0)|b|/@{->}@<4pt>/<-700,0>[\PdAlg`(\PdAlg)^{\widetilde{\bbT}};\CT]
		\morphism(0,0)|a|/@{->}@<5.5pt>/<700,0>[(\PdAlg)^{\widetilde{\bbT}}`\PdAlg; U ] \place(360,0)[\rotatebox{90}{$\dashv$}] 
		\morphism(1400,0)|b|/@{->}@<4pt>/<-700,0>[\QOrd`\PdAlg;\CPd]
		\morphism(700,0)|a|/@{->}@<5.5pt>/<700,0>[ \PdAlg`\QOrd; U ] \place(1025,0)[\rotatebox{90}{$\dashv$}]
		\efig\end{equation*}
	is called the composite of $\bbP^\dag$ and $\bbT$, denoted by  $\bbT\circ\bbP^\dag$, where $(\PdAlg)^{\widetilde{\bbT}}$ denotes the Eilenberg-Moore  category of the monad $\widetilde{\bbT}$.  
	
	It is known in category theory, see e.g. \cite[II.3.8]{Monoidal top}, that a lifting of the monad $\bbT=(\CT,{\sf m},\sy)$  along the forgetful functor $U\colon \PdAlg \lra\QOrd$ determines, and is determined by,  a unique distributive law  of the monad $\bbP^\dag=(\CPd,{\sf m}^\dag,\syd)$  over the monad $\mathbb{T}=(\CT,{\sf m},\sy)$. A \emph{distributive law} of  $\bbP^\dag $ over   $\mathbb{T} $ is a natural transformation $\delta\colon\CPd\CT\lra\CT\CPd$   satisfying certain conditions,   details are omitted here.  Since the lifting  of $\bbT $ along the forgetful functor, when exists, is unique, there is at most one distributive law of $\bbP^\dag$ over $\bbT$, it is safe for us to  say  that ``$\bbP^\dag$ distributes over $\bbT$''.  
	
	From Proposition \ref{restriction to QInf} the following conclusion follows immediately.  
	
	\begin{prop}\label{comp mond}  
		The following are equivalent:
		\begin{enumerate}[label=\rm(\arabic*)]  \item The copresheaf monad $\bbP^\dag$ distributes over $\bbT$.  
			\item The monad  $\bbT$ in $\QOrd$ has a  lifting to $\PdAlg$. 
			\item For every $\bbP^\dag$-algebra $A$, $\CT A$ is a complete real-enriched category.
			\item  For every $\bbP^\dag$-algebra $A$,     the inclusion $\CT A\lra\CP A$ is a right adjoint.
			\item The restriction of $\CT$ is a class of weights on  $\PdAlg$.
		\end{enumerate}
	\end{prop}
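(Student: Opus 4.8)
The plan is to derive all five equivalences by assembling Proposition \ref{restriction to QInf} with the two general facts recorded in the paragraphs immediately preceding the statement, so that essentially no fresh computation is needed. Concretely, I would arrange the argument as the single chain
\[
(1)\Leftrightarrow(2)\Leftrightarrow(5)\Leftrightarrow(4)\Leftrightarrow(3),
\]
and treat each link separately.

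First I would establish $(1)\Leftrightarrow(2)$. This is purely the distributive-law/lifting correspondence: a distributive law $\delta\colon\CPd\CT\lra\CT\CPd$ of $\bbP^\dag$ over $\bbT$ is precisely the datum of a lifting $\widetilde{\bbT}$ of $\bbT$ along $U\colon\PdAlg\lra\QOrd$, as cited from \cite[II.3.8]{Monoidal top}. Since the discussion above the statement already shows that such a lifting is unique when it exists, I only need the existence half of the correspondence and need not verify the coherence axioms for $\delta$ by hand. Next I would establish $(2)\Leftrightarrow(5)$, which is already spelled out in the preceding paragraph: a lifting exists if and only if $\CT f$ is a $\bbP^\dag$-morphism for every $\bbP^\dag$-morphism $f$ and $\sy_A$ is a $\bbP^\dag$-morphism for every $\bbP^\dag$-algebra $A$, and this is exactly the statement that the restriction of $\CT$ is a class of weights on $\PdAlg$.

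It then remains to treat $(3)\Leftrightarrow(4)\Leftrightarrow(5)$. Here conditions (4) and (5) of the present statement are literally conditions (3) and (1) of Proposition \ref{restriction to QInf}, so those two equivalences are immediate. The only point that needs a word is matching (3) with condition (2) of Proposition \ref{restriction to QInf}, which reads ``$\CT A$ is a $\bbP^\dag$-algebra''. Recalling that $\bbP^\dag$-algebras are exactly the separated and complete real-enriched categories, I would observe that $\CT A$, carrying the hom inherited from $\CP A$, is automatically separated, whence ``$\CT A$ is a $\bbP^\dag$-algebra'' reduces for $\CT A$ to ``$\CT A$ is complete''. This identifies (3) with condition (2) of Proposition \ref{restriction to QInf} and closes the chain.

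The only genuinely delicate step is this last separatedness remark, and even it is short: isomorphic weights $\phi_1,\phi_2$ in $\CP A$ satisfy $\sub_A(\phi_1,\phi_2)=1=\sub_A(\phi_2,\phi_1)$, hence coincide pointwise, so $\CP A$ and every full subcategory of it (in particular $\CT A$) is separated. Everything else is a bookkeeping assembly of results already in hand, so I expect no real obstacle beyond keeping the identifications between the two propositions' numbered conditions straight.
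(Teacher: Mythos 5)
Your proposal is correct and is essentially the paper's own argument: the paper gives no separate proof, stating only that the proposition "follows immediately" from Proposition \ref{restriction to QInf} together with the two preceding paragraphs (the lifting-exists-iff-class-of-weights observation and the cited lifting/distributive-law correspondence), which is exactly the chain $(1)\Leftrightarrow(2)\Leftrightarrow(5)\Leftrightarrow(4)\Leftrightarrow(3)$ you assemble. Your explicit remark that $\CT A$ inherits separatedness from $\CP A$, so that "complete" in (3) matches "$\bbP^\dag$-algebra" in Proposition \ref{restriction to QInf}\thinspace(2), is a point the paper leaves tacit but is handled correctly.
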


	\begin{exmp}\label{Pd distributes over P} {\rm (Stubbe \cite{Stubbe2017})} The copresheaf monad $\bbP^\dag=(\CPd,{\sf m}^\dag,\syd)$  distributes over the presheaf monad $\bbP=(\CP,{\sf m},\sy)$.  This follows directly from the above proposition and the fact that for each real-enriched category $X$, the real-enriched category $\CP X$ is  complete.  \end{exmp}
	
	\begin{prop} 
		If the copresheaf monad $\bbP^\dag$ distributes over $\bbT$, then for each separated real-enriched category $A$, the following  are equivalent:\begin{enumerate}[label=\rm(\arabic*)]  \item $A$ is a   complete and continuous  $\bbT$-algebra. \item  $A$ is an algebra of  $\bbT$ when $\bbT$ is viewed as a monad in $\PdAlg$. \item   $A$  is an algebra of the composite monad  $\bbT\circ\bbP^\dag$; that means, there is map $h\colon\CT\CPd A\lra A$ such that $(A,h)$ is an algebra of the monad $\bbT\circ \bbP^\dag$. \end{enumerate}
	\end{prop}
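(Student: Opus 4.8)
The plan is to prove the equivalences by splitting off (2)$\Leftrightarrow$(3), which is formal, from (1)$\Leftrightarrow$(2), which carries the real content, and to unwind the latter through the Kock--Z\"oberlein structure of $\bbT$. Since $\bbP^\dag$ distributes over $\bbT$, Proposition \ref{comp mond} already supplies the lifting $\widetilde{\bbT}=(\widetilde{\CT},\widetilde{{\sf m}},\widetilde{\sy})$ of $\bbT$ to a monad on $\PdAlg$ and hence the composite monad $\bbT\circ\bbP^\dag$ on $\QOrd$, whose underlying endofunctor is $\CT\circ\CPd$; this is why a structure map for it has exactly the form $h\colon\CT\CPd A\lra A$ appearing in (3).

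For (2)$\Leftrightarrow$(3) I would invoke the standard fact about distributive laws recalled in \cite[II.3.8]{Monoidal top}: the lifting $\widetilde{\bbT}$ induces an isomorphism of Eilenberg--Moore categories
\[ \QOrd^{\bbT\circ\bbP^\dag}\;\cong\;(\PdAlg)^{\widetilde{\bbT}} \]
which is the identity on the underlying objects of $\QOrd$. Under it an algebra $h\colon\CT\CPd A\lra A$ of the composite monad corresponds to a $\widetilde{\bbT}$-algebra carried by the $\bbP^\dag$-algebra $A$, and conversely, so $A$ satisfies (3) precisely when it satisfies (2). No computation beyond citing this theorem is required here.

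The substantive step is (1)$\Leftrightarrow$(2), where I would simply unwind what it means for $A$ to be a $\widetilde{\bbT}$-algebra. First, $A$ must be an object of $\PdAlg$; as $A$ is separated by hypothesis, this says exactly that $A$ is complete, i.e.\ a $\bbP^\dag$-algebra. Second, the structure map must be a morphism of $\PdAlg$, that is, a right adjoint functor $h\colon\CT A\lra A$ (here $\CT A$ is itself a $\bbP^\dag$-algebra by Proposition \ref{comp mond}(3), so this makes sense). Now $\bbT$ is a KZ-monad (as is $\bbP$, by Proposition \ref{pkz}), so by the proposition characterizing algebras of a KZ-monad, $(A,h)$ is a $\bbT$-algebra in $\QOrd$ iff $h\circ\sy_A=1_A$, in which case $h$ is forced to be the left adjoint $\colim\colon\CT A\lra A$ of $\sy_A$; since $U\colon\PdAlg\lra\QOrd$ is faithful and commutes with the monad data, the underlying map of any $\widetilde{\bbT}$-structure is exactly this $\colim$. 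The only remaining requirement is that $\colim$ be a $\PdAlg$-morphism, i.e.\ a right adjoint between the separated complete categories $\CT A$ and $A$; equivalently that $\colim$ admit a left adjoint, which by Definition \ref{T-continuous} is precisely the continuity of $A$. Thus (2) holds iff $A$ is a separated, complete, continuous $\bbT$-algebra, which is (1); conversely, given (1), continuity makes $\colim$ a right adjoint and hence a $\PdAlg$-morphism, and the algebra axioms, valid in $\QOrd$ because $A$ is a $\bbT$-algebra, lift to $\PdAlg$ by faithfulness of $U$.

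The hard part will be the bookkeeping across the three ambient categories: I must verify that the single functor $\colim$ simultaneously serves as the (unique, KZ-forced) $\bbT$-algebra structure in $\QOrd$ and as a morphism in $\PdAlg$, with the only genuinely new datum being the adjointness equivalence ``$\colim$ is a right adjoint'' $\Leftrightarrow$ ``$A$ is continuous'', and that $\CT A$ really is a $\bbP^\dag$-algebra so that $\colim$ even lives in $\PdAlg$; the latter is exactly what Proposition \ref{comp mond} guarantees, so the argument should go through cleanly.
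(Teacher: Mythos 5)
Your proposal is correct and follows essentially the same route as the paper: the equivalence (2)$\Leftrightarrow$(3) is disposed of by citing the general fact that the Eilenberg--Moore category of the lifting $\widetilde{\bbT}$ is isomorphic to that of the composite monad $\bbT\circ\bbP^\dag$, and (1)$\Leftrightarrow$(2) is obtained by unwinding the definition of a $\widetilde{\bbT}$-algebra, identifying the structure map with $\colim\colon\CT A\lra A$ (forced by the KZ property), and observing that ``$\colim$ is a $\PdAlg$-morphism'' means exactly ``$\colim$ has a left adjoint,'' i.e.\ $A$ is continuous. Your write-up merely makes explicit some bookkeeping (faithfulness of the forgetful functor, the role of Proposition \ref{comp mond} in placing $\CT A$ inside $\PdAlg$) that the paper's proof leaves implicit.
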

	
	\begin{proof}  We check   $(1)\Leftrightarrow(2)$ first. Let $A$ be a complete and continuous   $\bbT$-algebra.  Then $\CT A$ is an object of $\PdAlg$ by Proposition \ref{comp mond}. So, the string of adjunctions
		\[\wayb\dashv\colim\dashv\sy \colon A\lra\CT A \]
		ensures that $\colim\colon\CT A\lra A$ is a morphism in $\PdAlg$ and  consequently, $A$ is   an algebra  for $\bbT$,  viewed as a monad in $\PdAlg$. Conversely, suppose   $A$ is an algebra for  $\bbT$,  viewed as a monad in $\PdAlg$. Then  $\sy \colon A\lra\CT A$ has a left adjoint $\colim\colon\CT A\lra A$ which is also a morphism in $\PdAlg$. That means, $\colim$ has a left adjoint. Hence $A$ is a continuous $\bbT$-algebra.
		
		The equivalence $(2)\Leftrightarrow(3)$ is a special case of a general result in category theory, see e.g.  \cite[II.3.8.4]{Monoidal top}, which says that the   Eilenberg-Moore category of the lifting $\widetilde{\bbT}$ is isomorphic to that of the composite monad $\bbT\circ\bbP^\dag$.
	\end{proof}

	\begin{cor}\label{Talg monadic over QCat} If the copresheaf monad $\bbP^\dag$ distributes over $\bbT$, then the category $\TCL$ is monadic over $\QOrd$. \end{cor}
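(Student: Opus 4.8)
The plan is to identify $\TCL$ with the Eilenberg--Moore category of the composite monad $\bbT\circ\bbP^\dag$ on $\QOrd$, and then to invoke the fact that the forgetful functor out of the Eilenberg--Moore category of any monad is monadic. Everything hinges on assembling the two monadicity statements already in hand into a single one over $\QOrd$, the passage between them being controlled by the distributive law.

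First I would unpack the hypothesis. Since $\bbP^\dag$ distributes over $\bbT$, Proposition \ref{comp mond} supplies a lifting $\widetilde{\bbT}$ of $\bbT$ along the forgetful functor $U\colon\PdAlg\lra\QOrd$, and the composite of the adjunction $\CPd\dashv U$ with the Eilenberg--Moore adjunction for $\widetilde{\bbT}$ defines the composite monad $\bbT\circ\bbP^\dag$ in $\QOrd$. By Proposition \ref{restriction to QInf}, the forgetful functor $\TCL\lra\PdAlg$ is monadic; that is, it identifies $\TCL$ with $(\PdAlg)^{\widetilde{\bbT}}$, the Eilenberg--Moore category of the lifted monad. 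The preceding proposition records this identification on objects, complete and continuous $\bbT$-algebras being exactly the $\bbT\circ\bbP^\dag$-algebras, while the monadicity of $\TCL\lra\PdAlg$ supplies it on morphisms.

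Next I would invoke the standard categorical fact cited in the proof of the preceding proposition (\cite[II.3.8.4]{Monoidal top}): the Eilenberg--Moore category of the lifting $\widetilde{\bbT}$ is isomorphic, over $\QOrd$, to the Eilenberg--Moore category $\QOrd^{\bbT\circ\bbP^\dag}$ of the composite monad. Composing this isomorphism with the identification $\TCL\cong(\PdAlg)^{\widetilde{\bbT}}$ yields an isomorphism $\TCL\cong\QOrd^{\bbT\circ\bbP^\dag}$ commuting with the forgetful functors to $\QOrd$. Since the forgetful functor from the Eilenberg--Moore category of any monad is monadic, and monadicity transports across an isomorphism of categories commuting with the functors to the base, this forces $\TCL\lra\QOrd$ to be monadic, as claimed.

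The main obstacle I anticipate is bookkeeping rather than conceptual: one must check that the isomorphism $\TCL\cong\QOrd^{\bbT\circ\bbP^\dag}$ is the identity on underlying $\QOrd$-objects and commutes strictly with the two forgetful functors, so that the transport of monadicity is legitimate. Concretely, I would verify that a morphism of $\TCL$ --- a functor preserving $\CT$-colimits and all enriched limits --- corresponds under the two identifications precisely to a homomorphism of $\bbT\circ\bbP^\dag$-algebras: the limit preservation comes from the $\bbP^\dag$-component, the $\CT$-colimit preservation from the lifted $\bbT$-component, and the distributive law guarantees that these two structures interact coherently. Once the forgetful functors are seen to agree strictly on both objects and morphisms, the conclusion is immediate.
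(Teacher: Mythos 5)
Your proposal is correct and follows essentially the same route as the paper: the corollary is deduced there from the preceding proposition (complete continuous $\bbT$-algebras are exactly the algebras of the lifted monad $\widetilde{\bbT}$, equivalently of the composite monad $\bbT\circ\bbP^\dag$, via \cite[II.3.8.4]{Monoidal top}), mirroring the explicit argument given for $\QCD$ being monadic over $\QOrd$. Your extra attention to the morphism-level bookkeeping (limit preservation from the $\bbP^\dag$-component, $\CT$-colimit preservation from the lifted $\bbT$-component) is exactly the content of Proposition \ref{restriction to QInf} and is consistent with the paper.
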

	
	\begin{ques}Is the full subcategory of $\TAlg$ composed of complete and continuous $\bbT$-algebras   monoidal closed? \end{ques}
	
	\section{Completely distributive real-enriched categories} \label{cdl}

	\begin{defn}A real-enriched category is said to be completely distributive if it is a continuous  $\mathbb{P}$-algebra. \end{defn}
	
	Completely distributive real-enriched categories are clearly analog of completely distributive lattices in the enriched context.    In Stubbe \cite{Stubbe2007} they are said to be \emph{totally continuous}, and they are a special case of completely (totally) distributive categories in the sense of Marmolejo,   Rosebrugh and  Wood \cite{MRW2012}.
	As a special case of Proposition \ref{TA is T-continuous}, we obtain that for every real-enriched category $X$, the real-enriched category $\CP X$ is  completely distributive. In particular, $\sV=([0,1],\alpha_L)$ is completely distributive.
	
	\begin{prop}
		A   $\mathbb{P}$-algebra  is  completely distributive if and only if it is a retract of some power of $\sV=([0,1],\alpha_L)$ in   $\PAlg$.
	\end{prop}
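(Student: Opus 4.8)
The plan is to prove both implications by specializing Proposition \ref{retract of continuous T-algebra} to $\bbT=\bbP$ (so that ``continuous $\bbP$-algebra'' is exactly ``completely distributive'' and $\TAlg=\PAlg$) and feeding it the single structural fact that, for every real-enriched category $X$, the free cocompletion $\CP X$ is a retract of a \emph{power} of $\sV$ in $\PAlg$. Given this, the proposition reduces to routine bookkeeping with retracts.

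For the ``if'' direction I would first record that a power $\sV^S$ is itself completely distributive. Writing $S_{\rm disc}$ for the set $S$ viewed as a discrete real-enriched category, the distributor condition on a weight of $S_{\rm disc}$ is vacuous, so $\CP(S_{\rm disc})=([0,1]^S,\sub_S)=\sV^S$; the two carry the same $\bbP$-algebra structure because that structure is the unique left adjoint of the Yoneda embedding, and colimits are pointwise in both. By the remark preceding the statement, a special case of Proposition \ref{TA is T-continuous}, every $\CP Y$ is completely distributive, hence so is $\sV^S$. If $A$ is a retract of $\sV^S$ in $\PAlg$, then $A$ is completely distributive by the first part of Proposition \ref{retract of continuous T-algebra}. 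This settles the implication.

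For the converse, suppose $A$ is completely distributive. The second part of Proposition \ref{retract of continuous T-algebra} exhibits $A$ as a retract of $\CP A$ in $\PAlg$; since retracts compose, it then suffices to realize $\CP X$ as a retract of $\sV^X$ in $\PAlg$ for $X=A$. I would use the functor $j\colon X_{\rm disc}\lra X$ that is the identity on objects (a functor since $X_{\rm disc}(x,y)\le X(x,y)$), for which $\CP(X_{\rm disc})=\sV^X$. By Proposition \ref{left and right kan} there is a string $j_\exists\dashv j^{-1}\dashv j_\forall$, so \emph{both} $j_\exists\colon\sV^X\lra\CP X$ and $j^{-1}\colon\CP X\lra\sV^X$ are left adjoints, i.e. morphisms of $\PAlg$. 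Using the formulas $j^{-1}(\phi)=\phi\circ j_*$ and $j_\exists(\phi)=\phi\circ j^*$ together with the distributor inequality $\phi(x')\with X(x,x')\le\phi(x)$, one checks that $j^{-1}$ is simply the inclusion of weights into functions and that $j_\exists\circ j^{-1}=1_{\CP X}$. Thus $j^{-1}$ is a section and $j_\exists$ a retraction, so $\CP X$ is a retract of $\sV^X$, and composing the two retracts shows $A$ is a retract of $\sV^A$.

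The step I expect to be the main obstacle — really the only non-formal point — is the one just described: it is tempting to read the inclusion $\CP X\hookrightarrow\sV^X$ as a \emph{right} adjoint (a reflective inclusion), which would be the wrong kind of morphism for $\PAlg$. The essential observation is that this inclusion coincides with $j^{-1}$ and hence, by the threefold Kan adjunction $j_\exists\dashv j^{-1}\dashv j_\forall$, is itself a left adjoint; this is exactly what places both legs of the retract inside $\PAlg$. Everything else is the distributor calculus and the (twice-used) fact that retracts compose.
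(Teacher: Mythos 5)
Your proof is correct and follows essentially the same route as the paper: sufficiency via $\sV^S\cong\CP(S_{\rm disc})$ together with Propositions \ref{TA is T-continuous} and \ref{retract of continuous T-algebra}, and necessity by composing the retract $A\triangleleft\CP A$ with the retract $\CP A\triangleleft\sV^A$. The only difference is that you explicitly verify the second retract --- identifying the section with $j^{-1}$ (a left adjoint thanks to $j_\exists\dashv j^{-1}\dashv j_\forall$) and checking $j_\exists\circ j^{-1}=1_{\CP X}$ via the distributor inequality --- a step the paper asserts without proof, and your treatment of it, including the warning not to mistake the inclusion $\CP X\hookrightarrow\sV^X$ for a right adjoint, is exactly right.
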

	
	\begin{proof} For each set $X$, the power  $\sV^X$   in $\PAlg$ is isomorphic to  $\CP X$ when $X$ is viewed as a discrete real-enriched category. Then  sufficiency follows from propositions   \ref{TA is T-continuous} and \ref{retract of continuous T-algebra}. Necessity follows from  that each completely distributive real-enriched  $A$ is a retract of $\CP A$  and that $\CP A$ is a retract of $\CP|A|$, where $|A|$ is the discrete real-enriched category with the same objects as those of $A$. \end{proof}
	
	Proposition \ref{CD is self dual} says that for complete lattices, the notion of complete distributivity is self-dual. But, this is  in general   not true in the enriched context.
	
	\begin{thm}\label{Vop is CD} {\rm (Lai and Shen \cite{LaiS2018})} The following are equivalent: \begin{enumerate}[label=\rm(\arabic*)] 
			\item The continuous t-norm $\&$ is isomorphic to the \L ukasiewicz t-norm. 
			\item  If a real-enriched category $X$ is  completely distributive, then so is $X^{\rm op}$.
			\item The real-enriched category $\sV^{\rm op}$ is  completely distributive.  
	\end{enumerate} \end{thm}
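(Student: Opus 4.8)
The plan is to prove the cyclic chain $(1)\Rightarrow(2)\Rightarrow(3)\Rightarrow(1)$, using throughout the characterization in the Proposition immediately preceding the theorem: a $\bbP$-algebra is completely distributive exactly when it is a retract, in $\PAlg$, of a power $\sV^I$; recall also that for a discrete index set $I$ this power is nothing but $\CP I$. The implication $(2)\Rightarrow(3)$ is then immediate, since $\sV=\CP\star$ is completely distributive, so applying $(2)$ to $X=\sV$ gives complete distributivity of $\sV^{\rm op}$.

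For $(1)\Rightarrow(2)$ I would run the retract characterization through the duality between presheaves and copresheaves. If $X$ is completely distributive it is a retract of $\sV^I$ in $\PAlg$. Since ``being a retract'' is a self-dual notion and $(\PAlg)^{\rm op}\cong\PdAlg$ (the isomorphism sending a left adjoint to its right adjoint, established earlier), $X$ is likewise a retract of $\sV^I$ in $\PdAlg$. Taking opposite categories is a functor $\PdAlg\lra\PAlg$, because the opposite of a right adjoint is a left adjoint and, by Theorem \ref{complete is self-dual}, opposites of complete categories are complete; hence $X^{\rm op}$ is a retract of $(\sV^I)^{\rm op}\cong(\sV^{\rm op})^I$ in $\PAlg$. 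Finally, when $\&$ is isomorphic to the \L ukasiewicz t-norm the negation $\neg=(-)\ra0$ is an involution (Proposition \ref{MV=Luka} together with the contraposition law), so $\neg\colon\sV^{\rm op}\lra\sV$ is an isomorphism and $(\sV^{\rm op})^I\cong\sV^I$. Thus $X^{\rm op}$ is a retract of $\sV^I$ in $\PAlg$, hence completely distributive.

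The crux is $(3)\Rightarrow(1)$, and here I would recycle the mechanism of Proposition \ref{subalgebra of a free one}. Assuming $\sV^{\rm op}$ completely distributive, the retract characterization produces a section $s\colon\sV^{\rm op}\lra\sV^I\cong\CP I$ in $\PAlg$ with retraction $r$ satisfying $r\circ s=\id$. Such a section is automatically fully faithful: from $\sV^{\rm op}(x,y)\le\CP I(s x,s y)\le\sV^{\rm op}(rsx,rsy)=\sV^{\rm op}(x,y)$ one reads off equality throughout. Hence $\sV^{\rm op}$ embeds as a subalgebra of the free algebra $\CP I$ through a fully faithful left adjoint, which is precisely the input to the $(2)\Rightarrow(1)$ argument of Proposition \ref{subalgebra of a free one}: running that computation with $f=s$ forces $(r\ra0)\ra0\le r$ for all $r\in[0,1]$, i.e. the law of double negation. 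By Proposition \ref{MV=Luka} this makes $\&$ isomorphic to the \L ukasiewicz t-norm.

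The main obstacle I anticipate is the bookkeeping in $(1)\Rightarrow(2)$: verifying that retracts genuinely transport along the opposite-category passage between $\PAlg$ and $\PdAlg$ (that sections and retractions remain $\bbP$- respectively $\bbP^\dag$-homomorphisms after dualizing, and that taking opposites commutes with the powers $\sV^I$), and confirming the contraposition identity $y\ra x=\neg x\ra\neg y$ underlying $\sV^{\rm op}\cong\sV$. In $(3)\Rightarrow(1)$ the only delicate point is checking that the section $s$ is fully faithful and that the double-negation computation of Proposition \ref{subalgebra of a free one} used nothing beyond a fully faithful left adjoint out of $\sV^{\rm op}$ into some $\CP X$. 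As a fallback for $(3)\Rightarrow(1)$ one could argue instead by the classification of continuous t-norms: by Theorem \ref{ordinal sum decomposition} and Corollary \ref{CA=L or P} a non-\L ukasiewicz $\&$ either possesses a nontrivial idempotent or is isomorphic to the product t-norm, and in each case one exhibits an element of $\sV^{\rm op}$ failing to be the colimit of the weight of the elements totally below it, contradicting complete distributivity.
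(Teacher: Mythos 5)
Your proposal is correct; all three implications go through. It shares the paper's cyclic skeleton $(1)\Rightarrow(2)\Rightarrow(3)\Rightarrow(1)$ and the same two pivotal facts (under the \L ukasiewicz t-norm the negation $\neg=(-)\ra 0$ is an involutive isomorphism $\sV^{\rm op}\cong\sV$; conversely the double negation law forces \L ukasiewicz via Proposition \ref{MV=Luka}), but the mechanics are genuinely different. For $(1)\Rightarrow(2)$ the paper works directly: it uses the isomorphism $\neg\colon(\CP X)^{\rm op}\lra\CP(X^{\rm op})$, $\phi\mapsto\phi\ra 0$, and exhibits the explicit adjoint pair $\neg\circ\sy\colon X^{\rm op}\lra\CP(X^{\rm op})$ and $\colim\circ\neg\colon\CP(X^{\rm op})\lra X^{\rm op}$ displaying $X^{\rm op}$ as a retract of $\CP(X^{\rm op})$ in $\PAlg$; you instead transport the retract diagram $X\lra\sV^I\lra X$ across the duality $(\PAlg)^{\rm op}\cong\PdAlg$ (right adjoints reverse and compose contravariantly, so $r\circ s=1$ becomes $s^\dashv\circ r^\dashv=1$) and the opposite functor $\PdAlg\lra\PAlg$, then use $(\sV^{\rm op})^I\cong\sV^I$. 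This is more bookkeeping but the same negation trick, applied at the level of $\sV$ rather than of $\CP X$. The real divergence is in $(3)\Rightarrow(1)$: the paper observes $\CP(\sV^{\rm op})=(\CPd\sV)^{\rm op}$, so complete distributivity of $\sV^{\rm op}$ makes $\lim_\sV\colon\CPd\sV\lra\sV$ a left adjoint, and evaluating tensor-preservation on the constant coweight $0_\sV$ yields $r=(r\ra 0)\ra 0$ in four lines; you instead extract from the retract characterization a section $s\colon\sV^{\rm op}\lra\CP I$, note that $r\circ s=\id$ makes $s$ injective (and fully faithful), and rerun the tensor computation from Proposition \ref{subalgebra of a free one}. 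Your observation that that computation only needs an injective left adjoint out of $\sV^{\rm op}$ into some $\CP X$ is accurate, and it has the merit of making explicit the link between this theorem and Proposition \ref{subalgebra of a free one} — both are manifestations of the same obstruction — at the cost of being less self-contained than the paper's direct calculation with $\lim_\sV$.
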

	
	\begin{proof} $(1)\Rightarrow(2)$ First, since $\with$ is isomorphic to the \L ukasiewicz t-norm, it is readily verified that for each real-enriched category $X$, the map $$\neg\colon (\CP X)^{\rm op}\lra\CP (X^{\rm op}), \quad \phi\mapsto \phi\ra0$$ is an isomorphism of real-enriched categories. Next, let $X$ be a completely distributive  real-enriched category. Then the Yoneda embedding $\sy\colon X\lra \CP X$ is a right adjoint  and its left adjoint $\colim\colon \CP X\lra X$ is also a right adjoint. The left adjoints  $$\colim\circ\neg\colon \CP (X^{\rm op})\lra (\CP X)^{\rm op}\lra X^{\rm op}$$ and $$\neg\circ\sy\colon X^{\rm op}\lra(\CP X)^{\rm op}\lra  \CP (X^{\rm op}) $$ exhibit $X^{\rm op}$ as a retract of $\CP(X^{\rm op})$ in $\PAlg$, so by Proposition \ref{TA is T-continuous} and Proposition \ref{retract of continuous T-algebra}, $X^{\rm op}$ is completely distributive.

		$(2)\Rightarrow(3)$ Trivial, since  $\sV=([0,1],\alpha_L)$ is completely distributive.
		
		$(3)\Rightarrow(1)$ Since $\sV^{\rm op}$ is completely distributive, the functor $${\colim}_{\sV^{\rm op}}\colon \CP(\sV^{\rm op})\lra\sV^{\rm op} $$ is a right adjoint. Since $\CP(\sV^{\rm op})=(\CPd\sV)^{\rm op}$, it follows that $${\lim}_\sV\colon \CPd\sV\lra\sV $$ is a left adjoint. So the functor  ${\lim}_\sV\colon \CPd\sV\lra\sV $ preserves tensors. Since the tensor of $r$ with  (the constant coweight) $0_\sV$ in $\CPd \sV$ is given by  $$(r\multimap0_\sV)(x) =r\ra0$$ for all $x\in[0,1]$, then  for all $r\in[0,1]$, 
		\begin{align*}r &=r\with{\lim}_\sV\thinspace0_\sV &({\lim}_\sV\thinspace0_\sV=1)\\ 
			&={\lim}_\sV(r\multimap0_\sV) &(\text{${\lim}_\sV$ preserves tensor})\\ 
			&=   \bw_{x\in [0,1]}((r\ra0)\ra x)\\ 
			&=(r\ra0)\ra0\end{align*} which shows that $\with$ is isomorphic to the \L ukasiewicz t-norm. \end{proof} 
	
	Since  every $\bbP$-algebra is a $\bbT$-algebra, it is natural to ask whether every completely distributive real-enriched category (i.e. every continuous $\bbP$-algebra) is a continuous $\bbT$-algebra. The answer depends on whether the copresheaf monad $\bbP^\dag$  distributes over the monad $\bbT$. 
	
	
	
	\begin{thm}\label{CCD implies bbT} {\rm (Lai and Zhang \cite{LaiZ2020})} The following are equivalent: 
		\begin{enumerate}[label=\rm(\arabic*)] 
			\item Every  completely distributive real-enriched category is a continuous $\bbT$-algebra. 
			\item For every real-enriched category $A$,  $\CP A$ is  a continuous $\bbT$-algebra. 
			\item  The copresheaf monad $\bbP^\dag$ distributes over $\bbT$.  \end{enumerate} 
	\end{thm}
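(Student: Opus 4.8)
The plan is to establish $(1)\Leftrightarrow(2)$ by elementary retract arguments, and to establish $(2)\Leftrightarrow(3)$ by routing through Proposition \ref{comp mond}, which lets me replace condition $(3)$ by the equivalent statement that $\CT A$ is complete (equivalently, that the inclusion $\CT A\lra\CP A$ is a right adjoint) for every $\bbP^\dag$-algebra $A$. Throughout I shall use freely that a $\bbP$-homomorphism, being a left adjoint, preserves all colimits and hence is in particular a $\bbT$-homomorphism, and that $\CT$ is a $2$-functor (a subfunctor of the $2$-functor $\CP$) and therefore preserves adjunctions.

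For $(1)\Rightarrow(2)$ I would only note that $\CP A$ is completely distributive for every real-enriched category $A$, which is the special case $\CT=\CP$ of Proposition \ref{TA is T-continuous}; so $(1)$ applies to each $\CP A$ and yields $(2)$. For the converse $(2)\Rightarrow(1)$, let $A$ be completely distributive, so that $\wayb\dashv\colim\dashv\sy_A\colon A\lra\CP A$. Then $\colim\circ\wayb=1_A$, and both $\colim$ and $\wayb$ are left adjoints, hence $\bbP$-homomorphisms and a fortiori $\bbT$-homomorphisms; thus $A$ is a retract of $\CP A$ in $\TAlg$. Since $\CP A$ is a continuous $\bbT$-algebra by $(2)$, Proposition \ref{retract of continuous T-algebra} gives that $A$ is one as well.

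The implication $(3)\Rightarrow(2)$ is the easy half of $(2)\Leftrightarrow(3)$. Fix $A$ and put $B=\CP A$, a $\bbP^\dag$-algebra. By $(3)$ and Proposition \ref{comp mond} the inclusion $\mathfrak{i}\colon\CT B\lra\CP B$ is a right adjoint, with some left adjoint $\ell$. Because $B=\CP A$ is completely distributive, $\colim_B\colon\CP B\lra B$ has a left adjoint $\wayb_B$, so $\colim_B$ is itself a right adjoint. Since a $\CT$-colimit in $B$ is computed as the colimit in $\CP B$ of the same weight, the $\bbT$-colimit functor factors as $\colim_{\bbT}=\colim_B\circ\mathfrak{i}\colon\CT B\lra B$; being a composite of the right adjoints $\mathfrak{i}$ and $\colim_B$, it is again a right adjoint, with left adjoint $\ell\circ\wayb_B$. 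Hence $B=\CP A$ is a continuous $\bbT$-algebra, which is $(2)$.

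The implication $(2)\Rightarrow(3)$ is where the real work lies, and I expect it to be the main obstacle. By Proposition \ref{comp mond} it suffices to show that $\CT A$ is complete for every $\bbP^\dag$-algebra $A$. Here I would first perform a reduction using $2$-functoriality: every $\bbP^\dag$-algebra $A$ is complete, hence cocomplete, so $\colim_A\dashv\sy_A\colon\CP A\lra A$ with $\colim_A\circ\sy_A=1_A$; applying $\CT$ yields $\CT\colim_A\dashv\CT\sy_A$ with $\CT\colim_A\circ\CT\sy_A=1_{\CT A}$, which exhibits $\CT A$ as a reflective subcategory of $\CT\CP A$ (the right adjoint $\CT\sy_A$ is fully faithful because the counit $\CT\colim_A\circ\CT\sy_A$ is the identity). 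Since reflective subcategories of complete real-enriched categories are complete, the whole matter collapses to the single assertion that $\CT\CP X$ is complete for every real-enriched category $X$. This last assertion is the crux and is not formal: $\CT\CP X$ sits inside $\CP\CP X$ only as a subcategory closed under $\CT$-colimits, and its completeness amounts to showing it is in addition closed under the cotensors $r\rat(-)$ and under the pointwise meets formed in $\CP\CP X$ (equivalently, that the inclusion $\CT\CP X\lra\CP\CP X$ is a right adjoint). I would attack this by exploiting the continuous $\bbT$-algebra structure of $\CP X$ furnished by $(2)$ --- in particular the left adjoint $\wayb$ of $\colim\colon\CT\CP X\lra\CP X$ and the associated $\CT$-below distributor of Proposition \ref{way below as left adjoint} --- either to construct the reflector $\CP\CP X\lra\CT\CP X$ explicitly, or to verify the required closure under limits directly; getting this closure to work out is the step I anticipate will require the genuine argument.
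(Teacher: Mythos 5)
Your implications $(1)\Rightarrow(2)$, $(2)\Rightarrow(1)$ and $(3)\Rightarrow(2)$ are all correct. The first is the paper's own observation; the retract argument for $(2)\Rightarrow(1)$ (via Proposition \ref{retract of continuous T-algebra}) and the factorization $\colim_{\bbT}=\colim_B\circ\mathfrak{i}$ as a composite of right adjoints for $(3)\Rightarrow(2)$ are both sound, and differ only organizationally from the paper, which instead closes the cycle $(1)\Rightarrow(2)\Rightarrow(3)\Rightarrow(1)$, proving $(3)\Rightarrow(1)$ by composing the left adjoint of $\colim_A\colon\CP A\lra A$ with the left adjoint of the inclusion $\CT A\lra\CP A$.

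The genuine gap is $(2)\Rightarrow(3)$, which you explicitly leave open, and it is the heart of the theorem. Your reduction is valid as far as it goes: $\CT\colim_A\dashv\CT\sy_A$ with identity counit does exhibit $\CT A$ as reflective in $\CT\CP A$, so completeness of $\CT\CP X$ for every $X$ would suffice. But the reduction buys nothing: completeness of $\CT\CP X$ (equivalently, that the inclusion $\CT\CP X\lra\CP\CP X$ is a right adjoint) is exactly the instance $A=\CP X$ of the statement being proved, and your proposed attack --- verifying closure of $\CT\CP X$ under cotensors and pointwise meets inside $\CP\CP X$ --- is not where the argument lies. Indeed, your expectation that a ``genuine'', non-formal argument is needed here is precisely what is mistaken: the paper finishes by a purely formal composition of adjunctions using the two ingredients you already wrote down. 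For any $\bbP^\dag$-algebra $A$: condition (2) gives $\wayb\dashv\colim\colon\CT\CP A\lra\CP A$ (continuity of the $\bbT$-algebra $\CP A$), and $2$-functoriality of $\CT$ applied to $\colim_A\dashv\sy_A$ gives $\CT\colim_A\dashv\CT\sy_A$; composing, $\CT\colim_A\circ\wayb$ is left adjoint to $\colim\circ\CT\sy_A$. The one missing observation is that this right adjoint is literally the inclusion: since $\CT$ is a subfunctor of $\CP$, one has $\colim\circ\CT\sy_A=\colim_{\CP A}\circ\CP\sy_A\circ\mathfrak{i}_A$, and $\colim_{\CP A}\circ\CP\sy_A=1_{\CP A}$ because $\sy_A$ is fully faithful (Proposition \ref{left and right kan extension} and Example \ref{sup in PX}); hence $\colim\circ\CT\sy_A=\mathfrak{i}_A$. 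So $\mathfrak{i}_A$ acquires a left adjoint, namely $\CT\colim_A\circ\wayb$, for every $\bbP^\dag$-algebra $A$ at once, and Proposition \ref{comp mond} yields (3) --- with no reduction to $A=\CP X$, no reflective-subcategory lemma, and no closure-under-limits computation.
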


	\begin{proof} 
		$(1)\Rightarrow(2)$ Trivial, since $\CP A$ is completely distributive. 
		
		$(2)\Rightarrow(3)$ By Proposition \ref{comp mond}, it suffices to show that for each $\bbP^\dag$-algebra $A$, the inclusion functor $\mathfrak{i}_A\colon \CT A\lra\CP A$ is a right adjoint. 
		
		Since $\CP A$ is   a continuous $\bbT$-algebra, there is a string of adjunctions \[\wayb\dashv {\colim}  \dashv\sy_{\CP A}\colon \CP A\lra\CT\CP A. \]
		Since   the Yoneda embedding $\sy_A\colon A\lra\CP A$ has a left adjoint ${\colim}_A\colon\CP  A\lra A$ and any 2-functor preserves adjunctions,    $\CT {\colim}_A\colon\CT\CP A\lra\CT A$ is left adjoint to $\CT \sy_A \colon\CT A\lra\CT\CP A$. Thus, \[\CT {\colim}_A\circ \wayb\colon \CP A\lra \CT\CP A\lra \CT A\] is left adjoint to \[\colim\circ \CT \sy_A\colon\CT A\lra \CT\CP A\lra\CP A.\]
		
		Since $\sy_A\colon A\lra\CP A$ is fully faithful, by Proposition \ref{left and right kan extension} and Example \ref{sup in PX} we have $1_{\CP A}= \sy_A^{-1}\circ(\sy_A)_\exists= \colim_{\CP A}\circ\CP\sy_A$.  Since $\CT$ is a subfunctor of $\CP$, the  diagram   \[\bfig\square<600,400>[\CT A`\CT\CP A`\CP A`\CP\CP A;\CT\sy_A`\mathfrak{i}_A`\mathfrak{i}_{\CP A}`\CP\sy_A] \morphism(600,0)|m|<0,-525>[\CP\CP A`\CP A;{\colim}_{\CP A}] \morphism(0,0)|l|<600,-525>[\CP A`\CP A;1_{\CP A}] 
		\morphism(600,400)|r|/{@{->}@/^3em/}/<0,-925>[\CT\CP A`\CP A;\colim]\efig\]
		is commutative.  
		Therefore,   $\mathfrak{i}_A\colon\CT A\lra\CP A$, being equal to $\colim\circ \CT \sy_A$, is a right adjoint of $\CT {\colim}_A \circ \wayb\colon\CP A\lra\CT A$.
		
		$(3)\Rightarrow(1)$ Suppose $A$ is a completely distributive real-enriched category. Since $\bbP^\dag$  distributes over $\CT$, by  Proposition \ref{comp mond} the inclusion $\CT A\lra\CP A$ has a left adjoint. The  composite of the left adjoint of ${\colim}_A\colon\CP  A\lra A$ with the left adjoint of the inclusion $\CT A\lra\CP A$ is then  a left adjoint of $\colim\colon\CT A\lra A$, so $A$ is a continuous $\bbT$-algebra.  \end{proof}
	
	We write $$\QCD$$  for the category  of  completely distributive real-enriched categories  and    functors preserving arbitrary enriched limits and enriched colimits.  The category $\QCD$ is clearly an analog in the enriched context of that of completely distributive lattices and maps  preserving arbitrary meets and joins.

	\begin{prop} {\rm (Stubbe \cite{Stubbe2017})} The category $\QCD$  is monadic over the category $\QOrd$ of real-enriched categories.  
	\end{prop}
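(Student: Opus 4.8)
The plan is to exhibit $\QCD$ as the instance of the category $\TCL$ obtained by taking $\bbT$ to be the full presheaf monad $\bbP=(\CP,{\sf m},\sy)$ itself, which corresponds to the (trivially saturated) class $\CP$ of \emph{all} weights, and then to invoke Corollary \ref{Talg monadic over QCat}. That corollary asserts that whenever the copresheaf monad $\bbP^\dag$ distributes over a saturated class $\bbT$, the category $\bbT\text{-}{\bf CCAlg}$ of complete and continuous $\bbT$-algebras is monadic over $\QOrd$. Since $\bbP$ is a submonad of itself, all of the machinery of the preceding section applies with $\CT=\CP$, and the only things to verify by hand are that, under this choice, $\bbT\text{-}{\bf CCAlg}$ really is $\QCD$, and that the distributive-law hypothesis holds.

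For the objects: by definition $\QCD$ consists of the completely distributive real-enriched categories, i.e. the continuous $\bbP$-algebras. A $\bbP$-algebra is a separated, cocomplete and (by Theorem \ref{complete is self-dual}) complete real-enriched category, so a continuous $\bbP$-algebra is automatically a \emph{complete and continuous} $\bbP$-algebra, and conversely such an object is in particular a continuous $\bbP$-algebra. Hence the objects of $\bbP\text{-}{\bf CCAlg}$ coincide with those of $\QCD$. For the morphisms: a morphism of $\bbP\text{-}{\bf CCAlg}$ is a functor preserving $\CP$-colimits and arbitrary enriched limits; preservation of $\CP$-colimits is preservation of \emph{all} weighted colimits, so these are precisely the functors preserving arbitrary enriched colimits and arbitrary enriched limits, which is the defining class of morphisms of $\QCD$. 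Thus $\QCD=\bbP\text{-}{\bf CCAlg}$ as categories.

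It then remains to check the hypothesis of Corollary \ref{Talg monadic over QCat}, namely that $\bbP^\dag$ distributes over $\bbT=\bbP$. But this is exactly the content of Example \ref{Pd distributes over P}, which in turn rests on the fact that $\CP X$ is complete for every real-enriched category $X$ (via Proposition \ref{comp mond}). Applying the corollary with $\bbT=\bbP$ yields that $\bbP\text{-}{\bf CCAlg}$, and hence $\QCD$, is monadic over $\QOrd$, completing the proof. The only genuinely delicate point is the bookkeeping in the identification $\QCD=\bbP\text{-}{\bf CCAlg}$: one must confirm that the limit- and colimit-preservation conditions on morphisms, and the completeness already built into the objects, line up precisely with the definitions. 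Once this is granted, the monadicity is an immediate specialization of the general result already established, so no further work is needed.
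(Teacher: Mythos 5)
Your proposal is correct and takes essentially the same route as the paper: the paper's own proof also rests on the distributive law of Example \ref{Pd distributes over P} and identifies $\QCD$ with the Eilenberg-Moore category of the composite monad $\bbP\circ\bbP^\dag$, which is exactly what Corollary \ref{Talg monadic over QCat} encapsulates when specialized to $\bbT=\bbP$. Your bookkeeping step $\QCD=\bbP\text{-}{\bf CCAlg}$ (using Theorem \ref{complete is self-dual} to see that continuous $\bbP$-algebras are automatically complete, and noting that $\CP$-colimits are all enriched colimits) is handled correctly and is implicit in the paper's phrase identifying the algebras of the lifting with $\QCD$.
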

	
	\begin{proof} Since the copresheaf monad $\bbP^\dag$  distributes over the presheaf monad $\bbP$ (Example \ref{Pd distributes over P}),   we have a composite monad $\bbP\circ\bbP^\dag$. Since the Eilenberg-Moore category of $\bbP\circ\bbP^\dag$ is the Eilenberg-Moore category of the lifting of $\bbP$ along the forgetful functor $\PdAlg\lra\QOrd$, which is $\QCD$, it follows that $\QCD$  is monadic over $\QOrd$. \end{proof}   
	
	Let $U\colon\QCD\lra{\bf Set}$ be the forgetful funcor, which is equal to the composite of forgetful funcors $\QCD\lra \PdAlg \lra {\bf Set}$. Then $U$ has a left adjoint, say $F\colon{\bf Set}\lra\QCD$. 
	With a slight improvement the argument of Theorem \ref{CL is monadic over Set} in next section  can be applied to show that the forgetful functor $U\colon\QCD\lra{\bf Set}$ is monadic. 
	
	\begin{prop}{\rm (Pu and Zhang \cite{PZ2015})} The category $\QCD$  is monadic over the category of sets. \end{prop}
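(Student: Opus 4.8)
The plan is to verify Beck's (precise) monadicity theorem for the forgetful functor $U\colon\QCD\lra{\bf Set}$, whose left adjoint $F$ has just been produced. Writing $\mathbb{M}=(UF,\dots)$ for the induced monad on ${\bf Set}$ and $K\colon\QCD\lra{\bf Set}^{\mathbb{M}}$ for the comparison functor, it suffices to check three things: that $U$ has a left adjoint (already known), that $U$ reflects isomorphisms, and that $\QCD$ has, and $U$ preserves, coequalizers of those parallel pairs that $U$ sends to split (hence absolute) coequalizers in ${\bf Set}$. This is the same strategy used for continuous lattices over ${\bf Set}$ in the next section; the only difference is bookkeeping with tensors in place of binary meets and joins, which is what the phrase ``slight improvement'' refers to.

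For the reflection of isomorphisms, recall that a morphism of $\QCD$ is a functor between completely distributive real-enriched categories preserving all enriched limits and colimits; in particular it preserves colimits, so by Corollary \ref{left adjoint via tensor and join} it preserves tensors and all joins of the underlying order. I would first observe that a join-preserving, \emph{injective} map reflects the underlying order: if $f(x)\sqsubseteq f(y)$ then $f(x\vee y)=f(x)\vee f(y)=f(y)$, and injectivity forces $x\vee y=y$, i.e.\ $x\sqsubseteq y$. Combining this with preservation of tensors and the formula $\alpha_{\otimes}(x,y)=\bv\{r\in[0,1]\mid r\otimes x\sqsubseteq y\}$ (valid in any separated, tensored and cotensored object) yields $A(x,y)=B(f(x),f(y))$, so a $\QCD$-morphism whose underlying map of object-sets is a bijection is fully faithful and bijective on objects, hence an isomorphism of real-enriched categories whose inverse again preserves limits and colimits. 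Thus $U$ reflects isomorphisms.

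The main obstacle is the coequalizer condition. Here I would exploit that the pairs in question are $U$-split, so their coequalizers in ${\bf Set}$ are \emph{absolute}: they are preserved by every functor, in particular by $F$ and by the endofunctors building $\mathbb{M}$. The strategy is to create the coequalizer by transporting the completely distributive structure of the codomain $B$ along the split quotient $q\colon UB\lra Q$. Concretely, I would use that every completely distributive real-enriched category is a retract in $\PAlg$ of a power $\sV^{X}\cong\CP X$ of $\sV=([0,1],\alpha_L)$ (Proposition \ref{TA is T-continuous} and Proposition \ref{retract of continuous T-algebra}), and that $\QCD$ is the Eilenberg--Moore category of the composite monad $\bbP\circ\bbP^\dag$, whose distributive law was established in Example \ref{Pd distributes over P}. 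Since an absolute coequalizer is respected by all of this structure, $Q$ should inherit a canonical completely distributive structure for which $q$ is a $\QCD$-morphism and a coequalizer, with $U$ sending it back to the given split coequalizer; uniqueness of the lift then follows from the reflection of isomorphisms. Verifying that the transported structure really is completely distributive and that $q$ preserves \emph{both} limits and colimits is the delicate point, and is exactly where the concrete computations with the fuzzy powerset monad $(\mathscr P,{\sf m},{\sf e})$ (Theorem \ref{PAlg is monadic over Set}) and the double contravariant fuzzy powerset monad (Proposition \ref{algebra of the double fuzzy powerset monad}) enter, in the manner of Pu and Zhang.

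Alternatively, and perhaps more transparently, one can aim to identify $\QCD$ outright with the Eilenberg--Moore category of an explicit monad on ${\bf Set}$ assembled from $(\mathscr P,{\sf m},{\sf e})$ and the double contravariant fuzzy powerset monad; then monadicity is immediate once the comparison functor is shown to be an isomorphism of categories, the inverse being constructed as in Proposition \ref{algebra of the double fuzzy powerset monad}. I expect the absolute-coequalizer verification in the Beck approach to be the genuinely hard step, with the reflection of isomorphisms and the existence of $F$ being routine.
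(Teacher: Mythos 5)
Your skeleton is the same as the paper's: the paper proves this by pointing at the argument of Theorem \ref{CL is monadic over Set} (Beck's theorem in the form ``create split coequalizers'') and adapting it, and your reflection-of-isomorphisms argument is correct and would be a fine ingredient. The genuine gap is that the one step carrying all the content --- producing a completely distributive structure on the split quotient $Q$ making $q\colon UB\lra Q$ a coequalizer in $\QCD$ --- is not proved but explicitly deferred (``the delicate point \dots in the manner of Pu and Zhang''). Moreover, the device you offer in its place does not work as stated: absoluteness of the split coequalizer only says that \emph{functors applied to the diagram} preserve it, but the diagram $Uf,Ug,q$ lives in ${\bf Set}$, where $Q$ is a bare set and the splitting maps $i,j$ are mere functions with no compatibility with the enriched structure. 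You cannot apply $\CP$, $\CPd$, or the composite monad $\bbP\circ\bbP^\dag$ to this diagram (they are functors on $\QOrd$, not on ${\bf Set}$), and transporting the hom-values of $B$ along the given section $i$ fails because $i$ need not be a functor, so nothing forces $q$ to become a $\QOrd$-morphism, let alone a $\QCD$-morphism. Your alternative route is essentially circular as phrased: identifying $\QCD$ with the Eilenberg--Moore category of the induced monad \emph{is} the statement being proved, and Proposition \ref{algebra of the double fuzzy powerset monad} describes algebras of $\mathscr{E}$, not of the relevant submonad, so it does not close the loop by itself.

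The missing idea, which is exactly what the paper's QCL argument supplies (Lemma \ref{cong on CL} and the proof of Theorem \ref{CL is monadic over Set}), is to replace the arbitrary set-theoretic section $i$ by a \emph{canonical} one. Let $R=\{(y_1,y_2)\mid q(y_1)=q(y_2)\}$ be the kernel relation; using the split-coequalizer identities one shows $(y_1,y_2)\in R$ iff $y_1=f(x_1)$, $y_2=f(x_2)$ with $g(x_1)=g(x_2)$, and since $f,g$ are $\QCD$-morphisms (hence preserve tensors, cotensors, and all joins and meets of the underlying order, by Corollary \ref{left adjoint via tensor and join} and its dual) the relation $R$ is closed under meets, under \emph{arbitrary} joins (not merely directed ones, which is the ``slight improvement'' over the QCL case), and under cotensors $r\multimap(-)$. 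Consequently the map $k\colon B\lra B$ sending $y$ to the meet in $B_0$ of its equivalence class is a kernel operator that preserves enriched limits and colimits; its image $k(B)$ is then a retract of $B$ by a limit- and colimit-preserving right adjoint, hence completely distributive by Corollary \ref{cc}, and $k(B)$ is equipotent with $Q$ since each class has a least element. Transporting the structure of $k(B)$ to $Q$ along this bijection makes $q$ a $\QCD$-morphism which is a coequalizer of $f,g$, and uniqueness follows from reflection of isomorphisms. Without this construction (or an equally explicit substitute), your proposal is a plan rather than a proof.
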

	
	The monad induced by the adjunction $F\dashv U\colon\QCD\lra{\bf Set}$ turns out to be a submonad of the double  contravariant fuzzy powerset monad $(\mathscr{E},\sfm,{\sf e})$ in the category of sets (see Section \ref{fuzzy powerset monad}). 
	To see this, we present a lemma first.
	
	\begin{lem}\label{submonad of E} Let $$\mathscr{G}\colon {\bf Set} \lra{\bf Set}$$ be a subfunctor of the double  contravariant fuzzy powerset functor $\mathscr{E}$. Then $\mathscr{G}$ is a submonad of of the double  contravariant fuzzy powerset monad $(\mathscr{E},\sfm,{\sf e})$ if, and only if, it satisfies the following conditions: \begin{enumerate}[label=\rm(\roman*)] 
			\item For each set $X$ and each $x\in X$, the function $$ [0,1]^X\lra[0,1],\quad \lam\mapsto \lam(x)$$ belongs to $\mathscr{G}X$. 
			\item For each $\mathcal{H}\colon [0,1]^{\mathscr{G}X}\lra [0,1]$   of $\mathscr{G}^2X$, the function $$  [0,1]^{X}\lra[0,1],\quad \lam\mapsto   \mathcal{H}(\widehat{\lam}|\mathscr{G}X)$$ belongs to $\mathscr{G}X$, where   $\widehat{\lam}\colon\mathscr{E}X\lra [0,1]$  sends  each $\Lambda\in  \mathscr{E}X$ to $ \Lambda(\lam)$.
	\end{enumerate}  \end{lem}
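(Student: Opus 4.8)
The plan is to reduce the statement to the general recognition principle for submonads. Writing $\iota\colon\mathscr{G}\lra\mathscr{E}$ for the inclusion natural transformation (componentwise a set inclusion $\mathscr{G}X\subseteq\mathscr{E}X$), a subfunctor of a monad $(\mathscr{E},\sfm,{\sf e})$ carries a necessarily unique submonad structure precisely when the unit ${\sf e}$ factors through $\iota$ and the composite $\sfm\circ(\iota*\iota)$ factors through $\iota$, where $\iota*\iota$ is the horizontal composite of $\iota$ with itself. Once these two factorizations exist, the inherited unit and multiplication satisfy the monad axioms automatically, since each $\iota_X$ is injective and the axioms for $\mathscr{E}$ restrict. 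This is exactly the device used earlier in the excerpt to recognize saturated classes of weights as submonads of $\bbP$, and I would cite Manes \cite{Manes2003} for it. So the whole task is to show that condition (i) is the first factorization and condition (ii) is the second; since each translation is an equivalence, the desired ``if and only if'' follows.

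For the unit I would simply unwind the definition ${\sf e}_X(x)(\lambda)=\lambda(x)$: the element ${\sf e}_X(x)\in\mathscr{E}X$ is the evaluation functional $\lambda\mapsto\lambda(x)$ on $[0,1]^X$. Hence the assertion that ${\sf e}_X$ corestricts to $\mathscr{G}X$ for every set $X$ and every $x\in X$ is verbatim condition (i).

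The substantive step is the multiplication, and this is where care is required. I would first compute the horizontal composite $(\iota*\iota)_X=\mathscr{E}(\iota_X)\circ\iota_{\mathscr{G}X}\colon\mathscr{G}\mathscr{G}X\lra\mathscr{E}\mathscr{E}X$. For $\mathcal{H}\in\mathscr{G}\mathscr{G}X\subseteq\mathscr{E}\mathscr{G}X$, that is $\mathcal{H}\colon[0,1]^{\mathscr{G}X}\lra[0,1]$, the defining formula $\mathscr{E}(f)(\Lambda)(\mu)=\Lambda(\mu\circ f)$ with $f=\iota_X\colon\mathscr{G}X\hookrightarrow\mathscr{E}X$ gives, for $\nu\in[0,1]^{\mathscr{E}X}$,
\[(\iota*\iota)_X(\mathcal{H})(\nu)=\mathcal{H}(\nu\circ\iota_X)=\mathcal{H}(\nu|_{\mathscr{G}X}).\]
Applying then $\sfm_X(\mathcal{K})(\lambda)=\mathcal{K}(\widehat{\lambda})$, with $\widehat{\lambda}\colon\mathscr{E}X\lra[0,1]$, $\Lambda\mapsto\Lambda(\lambda)$, yields
\[\sfm_X\bigl((\iota*\iota)_X(\mathcal{H})\bigr)(\lambda)=(\iota*\iota)_X(\mathcal{H})(\widehat{\lambda})=\mathcal{H}(\widehat{\lambda}|_{\mathscr{G}X}).\]
Thus $\sfm_X\circ(\iota*\iota)_X$ sends $\mathcal{H}$ to the function $\lambda\mapsto\mathcal{H}(\widehat{\lambda}|\mathscr{G}X)$, and its landing inside $\mathscr{G}X$ for every $\mathcal{H}\in\mathscr{G}^2X$ is exactly condition (ii). Combined with the translation of the unit, the recognition principle finishes both directions.

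The main obstacle I anticipate is bookkeeping across the three exponential layers---distinguishing $[0,1]^{\mathscr{G}X}$ from $[0,1]^{\mathscr{E}X}$, and correctly identifying the two restriction maps, one induced by $\nu\circ\iota_X$ and one by the hat operation---so that the two occurrences of ``restrict to $\mathscr{G}X$'' are not conflated. A secondary point to verify is that the horizontal composite is taken as $\mathscr{E}(\iota_X)\circ\iota_{\mathscr{G}X}$ (equivalently $\iota_{\mathscr{E}X}\circ\mathscr{G}(\iota_X)$), since an off-by-one error in which inclusion is applied at which stage would corrupt the final formula and thereby the match with condition (ii).
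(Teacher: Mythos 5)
Your proposal is correct and follows essentially the same route as the paper's proof: identify submonad-ness with the two factorization conditions (unit through $\iota$, and $\sfm\circ(\iota*\iota)$ through $\iota$), then translate the first into condition (i) and verify the key formula $\sfm_X\circ(\iota*\iota)_X(\mathcal{H})(\lambda)=\mathcal{H}(\widehat{\lambda}|\mathscr{G}X)$ to match condition (ii). Your version merely spells out the horizontal-composite computation that the paper states without intermediate steps.
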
 
	
	\begin{proof} Let $\mathfrak{i}\colon \mathscr{G}\lra\mathscr{E}$ be the inclusion natural transformation. Then, condition (i) is equivalent to that the unit ${\sf e}$ of the monad  $(\mathscr{E},\sfm,{\sf e})$ factors through $\mathfrak{i}$; condition (ii) is equivalent to that $\mathscr{G}$ is closed under the multiplication, since for each $\mathcal{H}\colon [0,1]^{\mathscr{G}X}\lra [0,1]$   of $\mathscr{G}^2X$ and each $\lam\in[0,1]^X$,  $$\sfm_X\circ(\mathfrak{i} *\mathfrak{i})_X(\mathcal{H})(\lam) = \mathcal{H}(\widehat{\lam}|\mathscr{G}X).$$ This proves the conclusion.  
	\end{proof}
	
	For each set $X$  let $$\mathscr{C}X$$ be the set of functions $\Lambda\colon [0,1]^X\lra [0,1]$ such that $ \Lambda(\mu)\with\sub_X(\lam,\mu) \leq\Lambda(\lam) $ for all $\lam,\mu\in[0,1]^X$. In other words, $\mathscr{C}X=\CP\CPd X$ with $X$ viewed as a discrete real-enriched category. The assignment $X\mapsto \mathscr{C}X$ defines a subfunctor $$\mathscr{C}\colon {\bf Set} \lra{\bf Set}$$ of the double  contravariant fuzzy powerset functor which satisfies the conditions in Lemma \ref{submonad of E}, so $\mathscr{C}X$ is a submonad of $(\mathscr{E},\sfm,{\sf e})$.  It is readily verified that   $(\mathscr{C},\sfm,{\sf e})$ is the monad defined by the adjunction $F\dashv U\colon\QCD\lra{\bf Set}$.

	
	
	\section{Real-enriched domains}
	
	For each real-enriched category $X$, let $$\CJ X=\{\phi\in\CI X\mid \phi ~\text{has a colimit}\}.$$ Then, every forward Cauchy net of $X$ has a Yoneda limit if and only if $\CJ X=\CI X$.
	
	The Yoneda embedding $\sy\colon X\lra\CP X$ factors through $\CJ X$, so we'll write $\sy\colon X\lra\CJ X$ for  the Yoneda embedding with codomain restricted to $\CJ X$. It is clear that $\sy\colon X\lra\CJ X$ has a left adjoint $\colim\colon \CJ X\lra X$ that sends each $\phi\in\CJ X$ to its colimit.
	
	\begin{defn} (Waszkiewicz \cite{Waskiewicz09}) \label{defn of continuous cats} Suppose $X$ is a real-enriched category. If   $\colim\colon \CJ X\lra X$ has a left adjoint, then we say that $X$  is continuous. In other words, $X$  is continuous if there exists a string of adjunctions  \[\wayb\dashv\colim \dashv \sy\colon X\lra\CJ X.\]    A real-enriched domain is a Yoneda complete and continuous real-enriched category. \end{defn}
	
	A real-enriched category $A$ is a real-enriched domain if and only if it is a continuous $\bbI$-algebra, where   $\bbI=(\CI,{\sf m},\sy)$ is the ideal monad in $\QOrd$; that means, $A$ is a separated real-enriched category  such that there is a string of adjunctions: \[\wayb\dashv\colim \dashv \sy\colon A\lra\CI A.\]   
	
	For each ordered set $(X,\leq)$, it is clear that $\omega(X,\leq)$ is a real-enriched domain if and only if $(X,\leq)$ is a domain. But, the underlying ordered set of a real-enriched domain may fail to be a domain.
	
	\begin{exmp}\label{underlying order of q-domain} (Yu and Zhang \cite{YuZ2022}) 
		In this example we assume that the implication  operator $\ra\colon[ 0,1]^2\lra[0,1]$ is  continuous  at every point off the diagonal. Consider the real-enriched category $X$, where $X=[0,1]\cup\{\infty\}$ and   $$ X(x,y)= \begin{cases}x\ra y & x,y\in [0,1], \\ 1 & x=y=\infty,\\  0 & x\in [0,1], y=\infty, \\  y & x=\infty,y\in[0,1]. \end{cases} $$
		
		Since forward Cauchy nets in $X$ are  forward Cauchy nets in $([0,1],\alpha_L)$ plus constant nets with value $\infty$,   $X$ is Yoneda complete. Since  $\ra\colon[ 0,1]^2\lra[0,1]$ is  continuous  at every point off the diagonal, a routine calculation shows that  \[{\sf d}\colon X\lra \CI X, \quad {\sf d}(x)=\begin{cases}X(-,\infty) & x=\infty,\\    X(-,0) & x=0,\\ \sup\limits_{y<x}X(-,y) & x\in(0,1]\end{cases}\] is left adjoint to $\colim\colon\CI X\lra X$, so $X$ is a real-enriched domain. But  $X_0$, as visualized below, is not a domain. \begin{center}\begin{tikzpicture} \draw[thick] (0,0)--(0,1.5); \draw[thick,dotted] (0,1.5)--(0.75, 0.75); \draw [fill] (0.75, 0.75) circle [radius=0.05]; \draw [fill] (0,0) circle [radius=0.05];\draw [fill] (0,1.5) circle [radius=0.05]; \node at (-0.3,0) {0}; \node at (-0.3,1.5) {1}; \node at (1.05,0.75) {$\infty$};\end{tikzpicture} \end{center}
	\end{exmp}
	
	Let $X$ be  a real-enriched category. The \emph{way below distributor}  of $X$ refers to the distributor $\mathfrak{w}\colon X\oto X$ given by \[\mathfrak{w}(y,x)=\inf_{\phi\in\CJ X}(X(x,\colim\phi)\ra\phi(y)).\]    It is clear that $\mathfrak{w}(y,x)\leq X(y,x)$.
	
	\begin{prop}\label{way below relation} {\rm(Waszkiewicz \cite{Waskiewicz09})} A real-enriched category  $X$ is continuous   if and only if for all $x\in X$, the weight $\mathfrak{w} (-,x)$ belongs to $\CJ X$ and has $x$ as a  colimit. In this case,    $\wayb x=\mathfrak{w} (-,x)$. 
	\end{prop}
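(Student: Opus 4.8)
The plan is to transcribe the proof of Proposition~\ref{way below as left adjoint} (the $\bbT$-algebra version), replacing the saturated class $\CT A$ by the subcategory $\CJ X$ of ideals that admit a colimit. Note that $\CJ X$ is not of the form $\CT X$ for a saturated class in general, so that earlier result cannot be quoted verbatim, but its argument carries over. Throughout I will use that $\colim\colon\CJ X\lra X$ is already a functor (it is the left adjoint of $\sy$) and the residuation law $a\with b\leq c\iff b\leq a\ra c$ in $[0,1]$. As a preliminary I would first record that, in the continuous case $\wayb\dashv\colim\dashv\sy\colon X\lra\CJ X$, one has $\colim\wayb x\cong x$ for every $x$: applying $\wayb\dashv\colim$ with $\phi=\wayb x$ gives $X(x,\colim\wayb x)=\CJ X(\wayb x,\wayb x)=1$, so $x\sqsubseteq\colim\wayb x$; and using $\colim\dashv\sy$ then $\wayb\dashv\colim$ gives $X(\colim\wayb x,x)=\CJ X(\wayb x,\sy(x))=X(x,\colim\sy(x))$, where $\colim\sy(x)\cong x$ since the representable weight $\sy(x)$ has colimit $x$ by full faithfulness of the Yoneda embedding, whence $\colim\wayb x\sqsubseteq x$.

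For necessity I would show $\wayb x=\mathfrak{w}(-,x)$ pointwise. The inequality $\mathfrak{w}(-,x)\leq\wayb x$ is immediate from the definition of $\mathfrak{w}$ upon taking $\phi=\wayb x$ and using $X(x,\colim\wayb x)=1$. For the reverse inequality, for every $\phi\in\CJ X$ the adjunction gives $X(x,\colim\phi)\with\wayb x(y)=\CJ X(\wayb x,\phi)\with\wayb x(y)\leq\phi(y)$, hence $\wayb x(y)\leq X(x,\colim\phi)\ra\phi(y)$ for all $y$; taking the infimum over $\phi\in\CJ X$ yields $\wayb x\leq\mathfrak{w}(-,x)$. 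Thus $\wayb x=\mathfrak{w}(-,x)$, which therefore lies in $\CJ X$ and has colimit $x$.

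For sufficiency, assuming $\mathfrak{w}(-,x)\in\CJ X$ and $\colim\mathfrak{w}(-,x)\cong x$ for all $x$, I would set $\wayb x\coloneqq\mathfrak{w}(-,x)$ and verify the hom-equality $\CJ X(\wayb x,\phi)=X(x,\colim\phi)$ for all $x\in X$ and $\phi\in\CJ X$; by Theorem~\ref{Characterization of adjoints} this single equality simultaneously makes $\wayb$ a functor and exhibits $\wayb\dashv\colim$, so that $X$ is continuous. The inequality ``$\geq$'' follows from the defining property $\wayb x(y)\leq X(x,\colim\phi)\ra\phi(y)$, rewritten by residuation as $X(x,\colim\phi)\leq\wayb x(y)\ra\phi(y)$ for every $y$ and then taking the infimum; the inequality ``$\leq$'' is $\CJ X(\wayb x,\phi)\leq X(\colim\wayb x,\colim\phi)=X(x,\colim\phi)$, using that $\colim$ is a functor together with $\colim\wayb x\cong x$.

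The only genuinely delicate point is the bookkeeping in the sufficiency direction: one must be careful that establishing the single hom-equality legitimately produces both functoriality of $\wayb$ and the adjunction at once, which is exactly the content of Theorem~\ref{Characterization of adjoints} applied to the pair $(\wayb,\colim)$; everything else is a direct transcription of the residuation manipulations already used in Proposition~\ref{way below as left adjoint}, now performed inside $\CJ X$ rather than $\CT A$.
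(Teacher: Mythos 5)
Your proposal is correct and takes essentially the same route as the paper's own proof: the necessity argument identifying $\wayb x=\mathfrak{w}(-,x)$ via the two residuation inequalities, and the sufficiency argument establishing the hom-equality $\CJ X(\mathfrak{w}(-,x),\phi)=X(x,\colim\phi)$ from the definition of $\mathfrak{w}$ together with functoriality of $\colim$, are precisely the paper's steps. The only cosmetic differences are your explicit preliminary verification that $\colim\wayb x\cong x$ and your explicit appeal to Theorem \ref{Characterization of adjoints} to extract functoriality of $\wayb$ and the adjunction from the single hom-equality, both of which the paper leaves implicit.
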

	
	\begin{proof} We prove the necessity first. Suppose $X$ is continuous. We show that  for each $x\in X$, $\mathfrak{w} (-,x)=\wayb x$,   hence $\mathfrak{w} (-,x)$ belongs to $\CJ X$ and has $x$ as  colimit.   Since $\colim\wayb x=x$, then $$\mathfrak{w} (-,x)\leq X(x,\colim\wayb x)\ra\wayb x= \wayb x.$$ On the other hand, since  for all   $\phi\in\CJ X$, we have $$X(x,\colim\phi)\with\wayb x = \CJ X(\wayb x,\phi)\with\wayb x\leq \phi,$$  then $\wayb x\leq X(x,\colim\phi)\ra\phi$ for all $\phi\in\CJ X$, hence $\wayb x\leq \mathfrak{w} (-,x)$.  
		
		For sufficiency we show that the assignment $x\mapsto \wayb x\coloneqq \mathfrak{w} (-,x)$ defines a  left  adjoint of $\colim\colon\CJ X\lra X$. Let $x\in X$ and $\phi\in\CJ X$. By definition   we have $\wayb x\leq   X(x,\colim\phi)\ra\phi$, hence $$X(x,\colim\phi)\leq \CJ X(\wayb x,\phi).$$  Since taking colimit is functorial $\CJ X\lra X$ and $\colim\wayb x=x$, then $$\CJ X(\wayb x,\phi)\leq X(\colim\wayb x,\colim\phi)=X(x,\colim\phi).$$ Therefore, $\wayb$ is a  left adjoint  of $\colim\colon\CJ X\lra X$, as desired. \end{proof} 
	
	The following conclusion, contained in Waszkiewicz \cite[Section 3]{Waskiewicz09},  extends the fact that the way below relation of a continuous partially ordered set is interpolative (see Corollary \ref{way below interpolates}).
	
	\begin{prop} The way-below distributor $\mathfrak{w}$ of a continuous real-enriched category   interpolates in the sense that $\mathfrak{w} \circ \mathfrak{w} =\mathfrak{w} $. \end{prop}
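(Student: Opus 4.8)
The plan is to transport the statement, via Proposition \ref{way below relation}, into an identity between weights and then to reuse the mechanism of the analogous result for continuous $\bbT$-algebras. Recall that $\mathfrak{w}$ is the transpose of the left adjoint $\wayb\colon X\lra\CJ X$, so that $\mathfrak{w}(y,x)=\wayb x(y)$ for all $x,y\in X$, and hence
\[(\mathfrak{w}\circ\mathfrak{w})(y,x)=\sup_{z\in X}\mathfrak{w}(z,x)\with\mathfrak{w}(y,z)=\sup_{z\in X}\wayb x(z)\with\wayb z(y).\]
Thus $\mathfrak{w}\circ\mathfrak{w}=\mathfrak{w}$ is equivalent to the pointwise identity $\wayb x=\sup_{z\in X}\wayb x(z)\with\wayb z$ in $\CP X$, for every $x\in X$. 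First I would dispose of the inequality $\mathfrak{w}\circ\mathfrak{w}\leq\mathfrak{w}$: since $\mathfrak{w}(y,x)\leq X(y,x)$ always holds and $\mathfrak{w}$ is a distributor $X\oto X$, we get $\mathfrak{w}\circ\mathfrak{w}\leq X\circ\mathfrak{w}=\mathfrak{w}$. So everything hinges on the reverse inequality $\wayb x\leq\sup_{z}\wayb x(z)\with\wayb z$.

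For this I would argue exactly as in the proof of the corresponding statement for $\bbT$-algebras. Continuity of $X$ gives $\colim\wayb x=x$, i.e. $x$ is the colimit of the identity functor weighted by $\wayb x$. Since $\wayb\colon X\lra\CJ X$ is a left adjoint (left adjoint to $\colim$), it preserves colimits, whence
\[\wayb x=\wayb(\colim\wayb x)={\colim}_{\wayb x}\wayb,\]
the colimit being formed in $\CJ X$. Writing $\mathfrak{i}\colon\CJ X\lra\CP X$ for the inclusion, if this colimit is preserved by $\mathfrak{i}$, then Corollary \ref{calculation of colimit in PX} computes it pointwise in $\CP X$ as ${\colim}_{\wayb x}(\mathfrak{i}\circ\wayb)=\sup_{z\in X}\wayb x(z)\with\wayb z$, which is precisely the required identity.

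The crux, and the step I expect to be the main obstacle, is therefore justifying that $\mathfrak{i}$ preserves the colimit ${\colim}_{\wayb x}\wayb$; this is the analogue of the closure of $\CT A$ under $\CT$-colimits that is invoked silently in the $\bbT$-algebra argument. The general comparison principle only delivers $\sup_{z}\wayb x(z)\with\wayb z\sqsubseteq\wayb x$, i.e. the easy inequality again, so preservation must be established separately. I would do this by showing that $\sigma:=\sup_{z\in X}\wayb x(z)\with\wayb z$ already belongs to $\CJ X$: each $\wayb z$ and $\wayb x$ are ideals, so $\sigma$ is the colimit in $\CI X$ of $\wayb$ weighted by the ideal $\wayb x$ and hence is an ideal by the closure of $\CI X$ under colimits of ideals (Theorem \ref{univeral property of IX}); checking that $\sigma$ moreover has a colimit (with $\colim\sigma\sqsubseteq\colim\wayb x=x$ immediate from $\sigma\sqsubseteq\wayb x$, the reverse estimate being where the genuine interpolation work sits) places $\sigma$ in $\CJ X$, after which full faithfulness of $\mathfrak{i}$ on hom-objects transfers the $\CP X$-universal property to $\CJ X$ and forces $\sigma={\colim}_{\wayb x}\wayb=\wayb x$. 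I would also remark that when $X$ is Yoneda complete one has $\CJ X=\CI X$ and $X$ is a continuous $\bbI$-algebra, so the whole statement is just the $\bbT=\bbI$ instance of the interpolation proposition already proved for $\bbT$-algebras; the only real novelty is carrying that argument out inside $\CJ X$ for a merely continuous $X$, rather than inside a full ideal completion.
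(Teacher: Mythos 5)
Your reduction is sound up to the point you yourself flag as the crux, but there the argument genuinely stops rather than being completed. The pieces that do work: the easy inequality $\mathfrak{w}\circ\mathfrak{w}\leq X\circ\mathfrak{w}=\mathfrak{w}$; the identity $\wayb x={\colim}_{\wayb x}\wayb$ from $\colim\wayb x=x$ and colimit-preservation by the left adjoint $\wayb$; and the ideal-hood of $\sigma\coloneqq\sup_{z}\wayb x(z)\with\wayb z$, which you get by writing $\sigma$ as the colimit of the inclusion $\CI X\lra\CP X$ weighted by the ideal $\wayb x\circ\wayb^*$ of $\CI X$ and invoking Theorem \ref{univeral property of IX}\thinspace(i) — this last step is in fact a cleaner, more abstract route than the paper's Step 1, which verifies by hand that $\sub_X(\mathfrak{w}\circ\mathfrak{w}(-,x),-)$ preserves tensors and finite joins. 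But membership of $\sigma$ in $\CJ X$ also requires $\sigma$ to \emph{have} a colimit, and this you never establish: you write that $\colim\sigma\sqsubseteq x$ is immediate from $\sigma\leq\wayb x$ (which is not even meaningful before existence is known) and that the reverse estimate is ``where the genuine interpolation work sits'' — that is, you name the essential step and leave it unproven. Everything downstream (placing $\sigma$ in $\CJ X$, transferring the universal property along the fully faithful inclusion, concluding $\sigma=\wayb x$) depends on it.

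The gap is real and not a formality. In the $\bbT$-algebra proposition the inclusion $\CT A\lra\CP A$ automatically preserves the relevant colimit because saturation makes $\CT A$ closed under $\CT$-colimits; here $X$ is merely continuous, not Yoneda complete, so $\CJ X$ carries no such closure property, and an ideal lying pointwise below an element of $\CJ X$ need not have a colimit at all. What fills the gap is precisely where continuity must be used at the \emph{intermediate} points, namely $\colim\mathfrak{w}(-,y)=y$ for every $y$, not only at $x$; it is the paper's Step 2, a short Yoneda-style computation: for all $b\in X$,
\begin{align*}
X(x,b)&=\bw_{y\in X}\bigl(\mathfrak{w}(y,x)\ra X(y,b)\bigr)\\
&=\bw_{y\in X}\bigl(\mathfrak{w}(y,x)\ra\CP X(\mathfrak{w}(-,y),X(-,b))\bigr)\\
&=\CP X\bigl(\mathfrak{w}\circ\mathfrak{w}(-,x),X(-,b)\bigr),
\end{align*}
so $x$ is a colimit of $\sigma$. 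With this inserted your argument closes — indeed more simply than via transferring universal properties, since once $\sigma\in\CJ X$ has $x$ as colimit, the minimality of $\mathfrak{w}(-,x)$ among elements of $\CJ X$ with colimit $x$ (immediate from the definition of $\mathfrak{w}$) gives $\wayb x\leq\sigma$ directly. As written, however, the proposal omits the one step that actually constitutes the interpolation property.
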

	
	\begin{proof}  By Proposition \ref{way below relation}, $\mathfrak{w}(-,x)$ is the least element of $\CJ X$ having $x$ as colimit. Since $\mathfrak{w}(y,x)\leq X(y,x)$ for all $y,x\in X$, it follows that $\mathfrak{w} \circ \mathfrak{w}(-,x)\leq\mathfrak{w}(-,x)$, so  we  only need to show that $\mathfrak{w}\circ\mathfrak{w}(-,x)$ belongs to $\CI X$ and has $x$ as colimit. 
		
		{\bf Step 1}. $\mathfrak{w}\circ\mathfrak{w}(-,x)\in \CI X$. 
		
		Let $\lam,\mu\in\CP X$. Since $$\sub_X(\mathfrak{w}(-,y),\lam)\with X(z,y)\with \mathfrak{w}(-,z) 
		\leq \lam, $$ it follows that $$\sub_X(\mathfrak{w}(-,y),\lam)\with X(z,y)\leq \sub_X(\mathfrak{w}(-,z),\lam), $$ so the assignment $y\mapsto \sub_X(\mathfrak{w}(-,y),\lam)$ defines a weight of $X$. Then,  
		\begin{align*} &\quad\quad ~  \sub_X(\mathfrak{w}\circ\mathfrak{w}(-,x),\lam\vee\mu)\\ 
			& =\bw_{y\in X}\sub_X(\mathfrak{w}(y,x)\with\mathfrak{w}(-,y),\lam\vee \mu)\\ 
			&= \bw_{y\in X}\Big(\mathfrak{w}(y,x)\ra\sub_X(\mathfrak{w}(-,y),\lam\vee \mu)\Big)\\ 
			&=\bw_{y\in X}\Big(\mathfrak{w}(y,x)\ra(\sub_X(\mathfrak{w}(-,y),\lam)\vee\sub_X(\mathfrak{w}(-,y),\mu)\Big)\\ &=\bw_{y\in X}\Big(\mathfrak{w}(y,x)\ra\sub_X(\mathfrak{w}(-,y),\lam)\Big)\vee\bw_{y\in X}\Big(\mathfrak{w}(y,x)\ra\sub_X(\mathfrak{w}(-,y),\mu)\Big)\\ &=\sub_X(\mathfrak{w}\circ\mathfrak{w}(-,x),\lam)\vee\sub_X(\mathfrak{w}\circ\mathfrak{w}(-,x),\mu).
		\end{align*} Likewise, one verifies that for all  $\lam\in\CP X$ and  $r\in[0,1]$, $$\sub_X(\mathfrak{w}\circ\mathfrak{w}(-,x),r\with\lam)=r\with\sub_X(\mathfrak{w}\circ\mathfrak{w}(-,x),\lam).$$  This proves that $\mathfrak{w}\circ\mathfrak{w}(-,x)\in \CI X$. 
		
		{\bf Step 2}. $\colim\mathfrak{w}\circ\mathfrak{w}(-,x)=x$.
		
		For all $b\in X$, \begin{align*} X(x,b) &=  \bw_{y\in X}(\mathfrak{w}(y,x)\ra X(y,b)) &(\colim\mathfrak{w}(-,x)=x) \\ &= \bw_{y\in X}(\mathfrak{w}(y,x)\ra\CP X(\mathfrak{w}(-,y),X(-,b))) &(\colim\mathfrak{w}(-,y)=y)\\ &= \CP X(\mathfrak{w}\circ\mathfrak{w}(-,x),X(-,b)) , \end{align*} hence $\colim\mathfrak{w}\circ\mathfrak{w}(-,x)=x$.
	\end{proof}

	\begin{cor}\label{w(x,-)}  
		Let $X$ be a continuous real-enriched category. Then for each $x\in X$, the functor $$\mathfrak{w}(x,-)\colon X\lra ([0,1],\alpha_L)$$ is Yoneda continuous.
	\end{cor}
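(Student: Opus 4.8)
The plan is to reduce the statement to the preservation of ideal colimits and then to compute those colimits explicitly, exploiting the interpolation property $\mathfrak{w}\circ\mathfrak{w}=\mathfrak{w}$ established in the preceding proposition. First I would record that, since $\mathfrak{w}\colon X\oto X$ is a distributor, fixing its first argument does produce a functor $\mathfrak{w}(x,-)\colon X\lra\sV$, so the assertion is meaningful. By the characterization of Yoneda continuity recorded just after Proposition~\ref{characterization of ideal} (a functor is Yoneda continuous if and only if it preserves the colimits of all ideals), it suffices to fix an ideal $\phi\in\CJ X$ with colimit $b=\colim\phi$ and to verify $\mathfrak{w}(x,b)=\colim_\phi\mathfrak{w}(x,-)$. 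Because $\sV$ is cocomplete, Example~\ref{cpt as colimit} evaluates the right-hand side as $\colim_\phi\mathfrak{w}(x,-)=\sup_{z\in X}\phi(z)\with\mathfrak{w}(x,z)$, so the entire task collapses to the single equality
\[
\mathfrak{w}(x,b)=\sup_{z\in X}\phi(z)\with\mathfrak{w}(x,z).
\]

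Two elementary facts about $b=\colim\phi$ will drive the two inequalities. On the one hand, evaluating the defining equation of the colimit at $b$ gives $1=X(b,b)=\sub_X(\phi,\sy(b))$, whence $\phi(z)\le X(z,b)$ for every $z\in X$, i.e. $\phi\le\sy(b)$. On the other hand, by Proposition~\ref{way below relation} the weight $\mathfrak{w}(-,b)=\wayb b$ is the least element of $\CJ X$ whose colimit is $b$; since $\phi$ is itself such an element, we get $\mathfrak{w}(-,b)\le\phi$, that is, $\mathfrak{w}(z,b)\le\phi(z)$ for every $z$.

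For the inequality $\sup_{z}\phi(z)\with\mathfrak{w}(x,z)\le\mathfrak{w}(x,b)$ I would invoke only that $\mathfrak{w}$ is a distributor: condition (ii) in the definition of a distributor gives $X(z,b)\with\mathfrak{w}(x,z)\le\mathfrak{w}(x,b)$, and combining this with $\phi(z)\le X(z,b)$ yields $\phi(z)\with\mathfrak{w}(x,z)\le\mathfrak{w}(x,b)$ for each $z$, hence the claimed bound after taking the supremum. The reverse inequality is where interpolation enters, and it is the one genuinely nontrivial step: using $\mathfrak{w}(x,b)=(\mathfrak{w}\circ\mathfrak{w})(x,b)=\sup_{c\in X}\mathfrak{w}(c,b)\with\mathfrak{w}(x,c)$ and then replacing each $\mathfrak{w}(c,b)$ by the larger $\phi(c)$ produces $\mathfrak{w}(x,b)\le\sup_{c}\phi(c)\with\mathfrak{w}(x,c)$. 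Putting the two inequalities together finishes the computation and hence the proof.

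The main obstacle, as indicated, is the reverse inequality: without the interpolation identity there is no way to expand $\mathfrak{w}(x,b)$ as a supremum indexed by an auxiliary point $c$, and it is precisely this expansion that lets the bound $\mathfrak{w}(-,b)\le\phi$ be applied. The only bookkeeping point to keep in mind is the distinction between $\CJ X$ and $\CI X$: preservation of an ideal colimit is required exactly when that colimit exists, i.e. for $\phi\in\CJ X$, which is also precisely the setting in which Proposition~\ref{way below relation} furnishes the inequality $\mathfrak{w}(-,b)\le\phi$, so the two halves of the argument are available simultaneously.
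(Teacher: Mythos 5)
Your proof is correct and follows essentially the same route as the paper's: both reduce the claim to the single equality $\mathfrak{w}(x,\colim\phi)=\sup_{z\in X}\phi(z)\with\mathfrak{w}(x,z)$ for $\phi\in\CJ X$, and both obtain the crucial inequality $\mathfrak{w}(x,\colim\phi)\leq\sup_{z\in X}\phi(z)\with\mathfrak{w}(x,z)$ by expanding the left side through the interpolation identity $\mathfrak{w}\circ\mathfrak{w}=\mathfrak{w}$ and then replacing $\mathfrak{w}(-,\colim\phi)$ by the larger $\phi$ (the least-ideal property from Proposition \ref{way below relation}). The only cosmetic difference is in the easy direction, where the paper invokes the general fact $\colim_\phi f\sqsubseteq f(\colim\phi)$ for the functor $\mathfrak{w}(x,-)$, while you use the distributor axiom for $\mathfrak{w}$ together with $\phi\leq\sy(\colim\phi)$; these amount to the same estimate.
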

	
	\begin{proof} We need to show that $\mathfrak{w}(x,\colim\phi) =\colim_\phi\mathfrak{w}(x,-)$ for each ideal $\phi$ of $X$ that has a colimit.  Since $\mathfrak{w}(x,-)$ is a functor, it suffices to check that $\mathfrak{w}(x,\colim\phi) \leq \colim_\phi\mathfrak{w}(x,-)$. This is easy since \begin{align*} {\colim}_\phi\mathfrak{w}(x,-)
			&= \sup_{y\in X}\phi(y)\with  \mathfrak{w}(x,y)\\ 
			&\geq \sup_{y\in X}\mathfrak{w}(y, \colim\phi)\with  \mathfrak{w}(x,y) \\ 
			&= \mathfrak{w}(x,\colim\phi). \qedhere \end{align*} 
	\end{proof}    
	
	\begin{defn}
		An element $a$ of a real-enriched category $X$  is compact if the functor $$X(a,-)\colon X\lra\sV$$  is Yoneda continuous, where $\sV=([0,1],\alpha_L)$.  \end{defn}

	This is a direct extension of compact elements (also called finite elements) in generalized metric spaces, see e.g. Bonsangue,   van Breugel and Rutten \cite{BBR98}, and Goubaul-Larrecq \cite{Goubault}.
	
	It follows from Proposition \ref{Cauchy weight via sub} that a Cauchy weight  of a real-enriched category $X$ is precisely  an ideal of $X$ that is a compact element of $\CP X$.
	
	\begin{lem}\label{characterizing compact element} For each real-enriched category $X$ and each element $a$ of $X$, the following  are equivalent: \begin{enumerate}[label=\rm(\arabic*)] 
			\item    $a$   is  compact. 
			\item $X(a,\colim\phi) =\phi(a)$ for all  $\phi\in\CJ X$.  
			\item $\mathfrak{w}(-,a)=X(-,a)$. \end{enumerate} \end{lem}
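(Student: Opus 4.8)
The plan is to prove the cycle of implications $(1)\Rightarrow(2)\Rightarrow(3)\Rightarrow(1)$, unwinding each condition directly from the definitions of compactness, colimit, and the way below distributor $\mathfrak{w}$. Throughout I will use the formula $\mathfrak{w}(y,a)=\inf_{\phi\in\CJ X}(X(a,\colim\phi)\ra\phi(y))$ and the inequality $\mathfrak{w}(y,a)\leq X(y,a)$ established just before the statement, together with the fact that $\colim\colon\CJ X\lra X$ is left adjoint to $\sy\colon X\lra\CJ X$, so that $X(a,\colim\phi)=\CJ X(\sy(a),\phi)=\phi(a)$ whenever I can invoke the Yoneda lemma inside $\CP X$.

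For $(1)\Rightarrow(2)$, I would argue as follows. By the Yoneda lemma, the functor $X(a,-)\colon X\lra\sV$ sends each $x$ to $\CP X(\sy(a),\sy(x))$, and a colimit $\colim\phi$ of an ideal $\phi\in\CJ X$ is a Yoneda limit of the forward Cauchy net generating $\phi$ (by Proposition \ref{characterization of  ideal} and Lemma \ref{yoneda limit as colimits}). Compactness of $a$ means $X(a,-)$ preserves this Yoneda limit, i.e. $X(a,\colim\phi)=\colim_\phi X(a,-)=\sup_{x\in X}\phi(x)\with X(a,x)=\phi\circ\syd(a)$. By the Yoneda lemma in $\CP X$ (or directly, since $\phi$ is a weight), $\phi\circ\syd(a)=\phi(a)$, which gives (2). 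I will need to be careful that ``compact'' as defined refers to Yoneda continuity, and that Yoneda limits of the relevant net coincide with colimits of $\phi$; this is exactly the content of the cited lemma, so the step is essentially bookkeeping.

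For $(2)\Rightarrow(3)$, assume $X(a,\colim\phi)=\phi(a)$ for all $\phi\in\CJ X$. I want $\mathfrak{w}(-,a)=X(-,a)$. The inequality $\mathfrak{w}(-,a)\leq X(-,a)$ is already known, so it suffices to prove $X(y,a)\leq\mathfrak{w}(y,a)$ for all $y$, i.e. $X(y,a)\leq X(a,\colim\phi)\ra\phi(y)$ for every $\phi\in\CJ X$. Substituting the hypothesis $X(a,\colim\phi)=\phi(a)$, this reduces to $\phi(a)\with X(y,a)\leq\phi(y)$, which is precisely the defining inequality of a distributor (condition (i) in the definition of distributor, with $\phi\colon X\oto\star$). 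Hence the inequality holds, giving (3). This step is a short direct computation. For $(3)\Rightarrow(1)$, assume $\mathfrak{w}(-,a)=X(-,a)=\sy(a)$. I must show $X(a,-)$ is Yoneda continuous, i.e. preserves colimits of ideals. Given $\phi\in\CJ X$ with colimit $\colim\phi$, I compute $X(a,\colim\phi)$ using the definition of $\mathfrak{w}$: since $\mathfrak{w}(a,a)=X(a,a)=1$ forces $\sy(a)=\mathfrak{w}(-,a)$ to be way below $a$, and more usefully I can run the Yoneda-limit argument in reverse, showing $X(a,\colim\phi)=\phi(a)=\sup_x\phi(x)\with X(a,x)=\colim_\phi X(a,-)$. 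The key is that $\mathfrak{w}(-,a)=\sy(a)$ lets me replace the infimum defining $\mathfrak{w}(a,a)$ by a pointwise identity, yielding $X(a,\colim\phi)=\phi(a)$ for all $\phi$, which is (2), and then (1) follows since (2) is equivalent to compactness by the first implication.

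The main obstacle I anticipate is the careful identification, in $(1)\Rightarrow(2)$ and its reverse, of the colimit of an ideal $\phi$ with the Yoneda limit of its generating forward Cauchy net, and ensuring that Yoneda continuity (preservation of Yoneda limits of \emph{all} forward Cauchy nets) correctly matches preservation of colimits of \emph{those particular} ideals that have colimits. Since by Proposition \ref{characterization of  ideal} every ideal arises from a forward Cauchy net and by Lemma \ref{yoneda limit as colimits} Yoneda limits are exactly colimits of the associated weights, the translation is legitimate, but I would state this correspondence explicitly before the computation to avoid a gap. The remaining implications are elementary manipulations with the implication operator $\ra$ and the distributor inequalities, so I expect no further difficulty there.
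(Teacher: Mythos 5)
Your cycle $(1)\Rightarrow(2)\Rightarrow(3)\Rightarrow(1)$ follows essentially the paper's route (the paper proves $(1)\Leftrightarrow(2)$ via Example \ref{cpt as colimit}, then $(2)\Rightarrow(3)$ and $(3)\Rightarrow(2)$ by the same computations you outline), but there is a genuine error in your preliminary ``fact'', and it is exactly where your last implication needs it. The adjunction $\colim\dashv\sy\colon X\lra\CJ X$ says $X(\colim\phi,x)=\CJ X(\phi,\sy(x))$ --- the colimit sits in the \emph{first} variable. The formula you state, $X(a,\colim\phi)=\CJ X(\sy(a),\phi)=\phi(a)$, is not the adjunction; it is precisely condition (2) of the lemma, i.e.\ the compactness of $a$. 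If it held for all $a$ and all $\phi\in\CJ X$, every element of every real-enriched category would be compact, contradicting Example \ref{compact element of V}. What the adjunction actually yields (via its unit, equivalently functoriality of $\colim$ together with the Yoneda lemma) is only the one-sided inequality $\phi(a)=\CP X(\sy(a),\phi)\leq X(\colim\sy(a),\colim\phi)=X(a,\colim\phi)$.

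This matters in your step $(3)\Rightarrow(1)$. From $\mathfrak{w}(a,a)=X(a,a)=1$ you correctly extract $X(a,\colim\phi)\ra\phi(a)=1$, hence $X(a,\colim\phi)\leq\phi(a)$, for every $\phi\in\CJ X$; but you then assert the \emph{equality} $X(a,\colim\phi)=\phi(a)$, and the only justification your text offers for the reverse inequality is the erroneous preliminary claim --- which would be circular, since that claim is the statement being proved. The fix is the correct one-sided inequality above, which is exactly what the paper invokes at this point (``since taking colimit is functorial, $\phi(a)=\CP X(X(-,a),\phi)\leq X(a,\colim\phi)$''). With that one line inserted, your $(3)\Rightarrow(1)$ closes, and your other two implications, which never use the false claim --- $(1)\Rightarrow(2)$ via the identification of Yoneda continuity with preservation of colimits of ideals, and $(2)\Rightarrow(3)$ via the distributor inequality $\phi(a)\with X(y,a)\leq\phi(y)$ --- are sound and agree with the paper's argument.
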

	
	\begin{proof} For all $\phi\in \CJ X$, by Example \ref{cpt as colimit} the colimit of   $X(a,-)\colon X\lra\sV$ weighted by $\phi$ is equal to the composite of $X(a,-)$ and $\phi$; that is,   $\colim_\phi  X(a,-) = \phi\circ X(a,-)=\phi(a)$.  This proves $(1)\Leftrightarrow(2)$.  
		
		$(2)\Rightarrow(3)$ For each $y\in X$, \begin{align*}\mathfrak{w}(y,a)&=\inf_{\phi\in\CJ X}(X(a,\colim\phi)\ra\phi(y))\\ & = \inf_{\phi\in\CJ X}(\phi(a)\ra\phi(y))\\ & = X(y,a).\end{align*}
		
		$(3)\Rightarrow(2)$ Since taking colimit is functorial, then $$\phi(a)=\CP X(X(-,a),\phi)\leq X(a,\colim\phi) $$ for all $\phi\in\CJ X$. On the other hand, since by definition $$\mathfrak{w}(y,a)=\inf_{\phi\in\CJ X}(X(a,\colim\phi)\ra\phi(y)),$$  then $X(a,\colim\phi)\leq \mathfrak{w}(y,a)\ra \phi(y)$ for all $y\in X$ and all $\phi\in\CJ X$,  hence $$X(a,\colim\phi)\leq \CP X(\mathfrak{w}(-,a),\phi)=\CP X(X(-,a),\phi)=\phi(a).$$ Therefore, $X(a,\colim\phi)=\phi(a)$. \end{proof}

	A real-enriched category $X$ is  \emph{algebraic} if each of its elements is a Yoneda limit of a forward Cauchy net consisting of compact elements. Said differently, $X$  is algebraic if   for each $x\in X$, there is an ideal $\phi$ of the real-enriched category $K(X)$ composed of compact elements of $X$ such that $x$ is a colimit of the inclusion functor $i\colon K(X)\lra X$ weighted by $\phi$.  
	\begin{exmp}\label{compact element of V}  The real-enriched category $\sV=([0,1],\alpha_L)$ is algebraic if and only if so is its opposite $\sV^{\rm op}$, if and only if the continuous t-norm $\with$ is Archimedean.   We distinguish three cases. 
		
		Case 1. $\&$ is isomorphic to the \L ukasiewicz t-norm. Then all elements of $\sV$ and $\sV^{\rm op}$ are compact, so both $\sV$ and $\sV^{\rm op}$ are algebraic. 
		
		Case 2. $\&$ is isomorphic to the product t-norm. Then all elements of $\sV$ are compact and $0$ is the only non-compact element of $\sV^{\rm op}$, so, both $\sV$ and $\sV^{\rm op}$ are algebraic. 
		
		Case 3. $\&$ is not Archimedean. Let $b$ be an idempotent element other than $0$ and $1$.  For each $a\in[0,b]$  consider the ideal $\phi=\bv_{x<a}\sV(-,x)$ of $\sV$.  Since $\colim\phi =a$ and  $$\phi(a)=\bv_{x<a}(a\ra x)\leq b<1=\sV(a,\colim\phi),$$ it follows that $a$ is not compact. By help of this fact one readily verifies that $\sV$ is not algebraic. Likewise, one sees that each $a\in[0,b]$ is not compact in $\sV^{\rm op}$, so $\sV^{\rm op}$ is not algebraic either.  \end{exmp}
	
	\begin{prop}\label{algebraic is continuous}  Every  algebraic real-enriched category is continuous.   \end{prop}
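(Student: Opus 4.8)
The plan is to establish continuity through the criterion of Proposition \ref{way below relation}: it suffices to show that for every $x\in X$ the way below weight $\mathfrak{w}(-,x)$ lies in $\CJ X$ and has $x$ as its colimit. I would fix $x\in X$ and invoke algebraicity in its second formulation, choosing an ideal $\phi$ of $K(X)$ with $x=\colim_\phi i$, where $i\colon K(X)\lra X$ is the inclusion. Setting $\psi\coloneqq i_\exists(\phi)=\phi\circ i^*\in\CP X$, so that $\psi(y)=\sup_{a\in K(X)}\phi(a)\with X(y,a)$, functoriality of $\CI$ gives $\psi\in\CI X$, and by construction $\colim\psi=x$, whence $\psi\in\CJ X$. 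The whole argument then reduces to proving the single identity $\mathfrak{w}(-,x)=\psi$.

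One inequality is immediate: evaluating the infimum $\mathfrak{w}(y,x)=\inf_{\chi\in\CJ X}(X(x,\colim\chi)\ra\chi(y))$ at $\chi=\psi$ and using $\colim\psi=x$ yields $\mathfrak{w}(y,x)\leq X(x,x)\ra\psi(y)=\psi(y)$, so $\mathfrak{w}(-,x)\leq\psi$.

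For the reverse inequality $\psi\leq\mathfrak{w}(-,x)$ I would show that $\psi(y)\with X(x,\colim\chi)\leq\chi(y)$ for every $\chi\in\CJ X$ and $y\in X$. Expanding $\psi(y)$ as a supremum over $a\in K(X)$, it is enough to check $\phi(a)\with X(y,a)\with X(x,\colim\chi)\leq\chi(y)$ for each compact $a$. Since $\chi$ is a weight, $\chi(a)\with X(y,a)\leq\chi(y)$, so it suffices to bound $\phi(a)\with X(x,\colim\chi)\leq\chi(a)$. Here compactness of $a$ enters through Lemma \ref{characterizing compact element}, which rewrites $\chi(a)=X(a,\colim\chi)$; transitivity then reduces the task to $\phi(a)\leq X(a,x)$. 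This last estimate follows because any weight lies below the representable weight of its colimit, giving $\psi\leq X(-,\colim\psi)=X(-,x)$, together with the trivial bound $\phi(a)\leq\psi(a)$.

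Putting the two inequalities together gives $\mathfrak{w}(-,x)=\psi\in\CJ X$ with colimit $x$, and Proposition \ref{way below relation} then delivers the string $\wayb\dashv\colim\dashv\sy\colon X\lra\CJ X$, that is, continuity. I expect the main obstacle to be the reverse inequality: organizing the chain of reductions so that compactness (via Lemma \ref{characterizing compact element}) and the colimit description of $x$ are applied to the correct term, and confirming that the defining conditions of a weight are invoked in the right variance. The forward inequality and the membership bookkeeping establishing $\psi\in\CJ X$ are routine.
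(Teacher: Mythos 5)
Your proof is correct. It starts from the same candidate weight as the paper --- from the algebraicity data you form $\psi=i_\exists(\phi)=\phi\circ i^*$, which (via Proposition \ref{characterization of  ideal}) is the same ideal the paper writes down from a forward Cauchy net of compact elements --- but the verification of continuity proceeds differently. The paper checks the adjunction $\wayb\dashv\colim$ head-on: it computes $\CJ X(\wayb a,\chi)=X(a,\colim\chi)$ using the net machinery (Lemma \ref{Yoneda limits in PX}, the Yoneda lemma, and compactness in the form $\chi(a_j)=X(a_j,\colim\chi)$). You instead invoke the criterion of Proposition \ref{way below relation} and reduce everything to the single pointwise identity $\mathfrak{w}(-,x)=\psi$, proved by two inequalities: the easy one by evaluating the defining infimum of $\mathfrak{w}$ at $\chi=\psi$, and the reverse one by a chain of distributor-calculus estimates ($\chi(a)\with X(y,a)\leq\chi(y)$, compactness via Lemma \ref{characterizing compact element}\thinspace(2), transitivity, and $\phi(a)\leq\psi(a)\leq X(a,x)$ since any weight lies below the representable weight of its colimit). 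Each step checks out. What your route buys is independence from the net-theoretic lemmas --- it is pure weight/distributor algebra once Proposition \ref{way below relation} is available --- and it exhibits $\wayb x$ explicitly as the way-below weight $\mathfrak{w}(-,x)$; what the paper's route buys is a shorter computation given that the Yoneda-limit lemmas for nets are already in place, and it produces the left adjoint directly without passing through the intermediate identification.
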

	
	\begin{proof}  
		We need to check that the functor $\colim\colon \CJ X  \lra X$   has a left adjoint. For each  $a\in X$, pick a forward Cauchy net $\{a_i\}_{i\in D}$ of compact elements of $X$ with $a$ as a Yoneda limit  and let $$\wayb a=\inf_{i\in D}\sup_{j\geq i}X(-,a_j).$$ Then  $\wayb a$ is an ideal of $X$ with $a$ as a colimit (Lemma \ref{yoneda limit as colimits}), hence it is an element of $\CJ X$. Since $\wayb a$ is a Yoneda limit of the forward Cauchy net $\{X(-,a_i)\}_{i\in D}$ in $\CP X$ (Lemma \ref{Yoneda limits in PX}), it is then a Yoneda limit of   $\{X(-,a_i)\}_{i\in D}$ in $\CJ X$. Thus, for every   $\phi\in \CJ X$, \begin{align*} \CJ X(\wayb a,\phi) &=\inf_{i\in D}\sup_{j\geq i}\CJ X(X(-,a_j), \phi)&(\text{$\wayb  a$ is a Yoneda limit})\\ &=\inf_{i\in D}\sup_{j\geq i}\phi(a_j)&({\rm Yoneda~lemma}) \\ &=\inf_{i\in D}\sup_{j\geq i}X(a_j, \colim\phi)&( a_j{\rm~is~compact}) \\ &=X(a,\colim\phi),&(a~ \text{is a Yoneda limit}) \end{align*} which shows $\wayb$ is a left adjoint of $\colim$. \end{proof}
	
	\begin{prop}A real-enriched category is a real-enriched domain if and only if it is a retract  of an algebraic real-enriched category in the category of $\bbI$-algebras.  \end{prop}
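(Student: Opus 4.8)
The plan is to derive both implications from Proposition \ref{retract of continuous T-algebra} applied to the ideal monad $\bbT=\bbI$, for which $\TAlg=\IAlg$ and ``continuous $\bbI$-algebra'' means exactly ``real-enriched domain''. That proposition supplies two facts I will lean on: every retract in $\IAlg$ of a continuous $\bbI$-algebra is again a continuous $\bbI$-algebra, and a $\bbI$-algebra $A$ is continuous if and only if it is a retract of $\CI A$ in $\IAlg$. Sufficiency is then almost immediate: if $X$ is a retract in $\IAlg$ of an algebraic real-enriched category $Y$, then $Y$ is continuous by Proposition \ref{algebraic is continuous} and is Yoneda complete as an object of $\IAlg$, hence a real-enriched domain; being a retract of $Y$ in $\IAlg$, the category $X$ is itself a continuous $\bbI$-algebra, i.e. a real-enriched domain.

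For necessity, a real-enriched domain $X$ is, by the retract characterization above, a retract of $\CI X$ in $\IAlg$, so the whole problem reduces to proving that $\CI X$ is algebraic. This is the heart of the matter, and I would establish it in two steps, writing ${\sf t}\colon X\lra\CI X$ for the Yoneda embedding with codomain restricted to $\CI X$.

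Step 1: every representable ideal ${\sf t}(x)=\sy_X(x)$ is a compact element of $\CI X$. I would invoke the characterization in Lemma \ref{characterizing compact element}, so the task is to show $\CI X(\sy_X(x),\colim\Lambda)=\Lambda(\sy_X(x))$ for every $\Lambda\in\CI(\CI X)$ (all ideals of $\CI X$ have colimits since $\CI X$ is Yoneda complete). By the computation in the proof of Theorem \ref{univeral property of IX}, the colimit of $\Lambda$ in $\CI X$ is $\Lambda\circ{\sf t}_*$, and the Yoneda lemma gives $\CI X(\sy_X(x),\Lambda\circ{\sf t}_*)=(\Lambda\circ{\sf t}_*)(x)=\sup_{\eta\in\CI X}\Lambda(\eta)\with\eta(x)$, where I have also used $\CI X(\sy_X(x),\eta)=\eta(x)$. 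The inequality $\geq$ follows by taking $\eta=\sy_X(x)$, and $\leq$ follows from the defining inequality $\Lambda(\eta)\with\CI X(\sy_X(x),\eta)\leq\Lambda(\sy_X(x))$ of the weight $\Lambda$ together with $\CI X(\sy_X(x),\eta)=\eta(x)$.

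Step 2: every $\phi\in\CI X$ is a Yoneda limit in $\CI X$ of a forward Cauchy net of compact elements. By Proposition \ref{characterization of  ideal} there is a forward Cauchy net $\{x_i\}_{i\in D}$ of $X$ with $\phi=\sup_{i\in D}\inf_{j\geq i}X(-,x_j)$. Since ${\sf t}$ is fully faithful, $\{\sy_X(x_i)\}_{i\in D}$ is a forward Cauchy net of $\CI X$, each term compact by Step 1; and by Lemma \ref{Yoneda limits in PX} its Yoneda limit in $\CP X$ is precisely $\phi$. Because $\CI X$ is a full subcategory of $\CP X$ closed under such Yoneda limits (Theorem \ref{univeral property of IX}), $\phi$ is equally the Yoneda limit in $\CI X$, so $\CI X$ is algebraic and necessity follows. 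The main obstacle is exactly this verification that $\CI X$ is algebraic, and within it the delicate point is Step 1: one must keep careful track of the three ambient categories $X$, $\CI X$, $\CP X$ and of the fact that colimits of ideals in $\CI X$ are computed by restricting the presheaf colimit, so that the Yoneda lemma applies cleanly; once compactness of representables is secured, Step 2 is a routine transfer of the forward-Cauchy-net description of ideals across the fully faithful embedding ${\sf t}$.
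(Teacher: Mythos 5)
Your proof is correct and takes essentially the same route as the paper: both reduce the statement to Proposition \ref{retract of continuous T-algebra} (retracts of continuous $\bbI$-algebras are continuous, and every continuous $\bbI$-algebra is a retract of $\CI X$ in $\IAlg$) combined with Proposition \ref{algebraic is continuous} and the algebraicity of $\CI X$. The only difference is that the paper merely asserts that $\CI X$ is algebraic as a known fact, whereas you supply a correct detailed verification of it (compactness of the representable ideals via the Yoneda lemma and the formula $\colim\Lambda=\Lambda\circ{\sf t}_*$, then exhibiting each ideal as a Yoneda limit of representables through Proposition \ref{characterization of  ideal} and Lemma \ref{Yoneda limits in PX}).
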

	\begin{proof} This follows from Proposition \ref{retract of continuous T-algebra} and the fact that  for each real-enriched category $X$, the category $\CI X$ of ideals of $X$ is algebraic. \end{proof}
	\begin{prop}For a Smyth completable real-enriched category $X$, the functor $\sy\colon X\lra\CJ X$ is both left and right adjoint to  $\colim\colon \CJ X\lra X$. In particular, $X$ is continuous. \end{prop}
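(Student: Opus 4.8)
The plan is to show that, for a Smyth completable $X$, the Yoneda embedding $\sy\colon X\lra\CJ X$ and the colimit functor $\colim\colon\CJ X\lra X$ are mutually inverse, and then to read off both adjunctions and continuity from this. I would begin by recording what is automatic: $\colim$ is a left adjoint of $\sy$ (so $\colim\dashv\sy$), and $\colim\circ\sy=\id_X$ because the colimit of a representable weight $\sy(a)=X(-,a)$ is $a$ itself (for any $x$ one has $X(\colim\sy(a),x)=\CP X(\sy(a),\sy(x))=X(a,x)$ by the Yoneda lemma, and $X$ is separated). Thus the only thing left to prove is that the other composite is the identity, i.e. that $\sy(\colim\phi)=\phi$ for every $\phi\in\CJ X$.

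The heart of the argument is the observation that every weight in $\CJ X$ is representable. Indeed, fix $\phi\in\CJ X$; by definition $\phi$ is an ideal of $X$ that possesses a colimit. Since $X$ is Smyth completable, Proposition \ref{characterizing Smyth completable} tells us that every ideal of $X$ is a Cauchy weight, so $\phi$ is Cauchy. A Cauchy weight that has a colimit is representable by Proposition \ref{absolute sup}; concretely, writing $a=\colim\phi$, that proposition yields $\phi=X(-,a)=\sy(a)$. Hence $\sy(\colim\phi)=\phi$, and together with $\colim\circ\sy=\id_X$ and the separatedness of $X$ this exhibits $\sy$ and $\colim$ as inverse isomorphisms of real-enriched categories.

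It then remains to turn this into the two adjunctions. The adjunction $\colim\dashv\sy$ is already in hand, so I would verify $\sy\dashv\colim$ directly: for $x\in X$ and $\phi\in\CJ X$ the Yoneda lemma gives $\CJ X(\sy(x),\phi)=\phi(x)$, while representability of $\phi$ gives $\phi(x)=X(-,\colim\phi)(x)=X(x,\colim\phi)$, so $\CJ X(\sy(x),\phi)=X(x,\colim\phi)$, which is exactly the identity defining $\sy\dashv\colim$. Thus $\sy$ is simultaneously left and right adjoint to $\colim$; in particular $\colim$ has the left adjoint $\sy$, so the defining string $\wayb\dashv\colim\dashv\sy$ holds with $\wayb=\sy$ and $X$ is continuous. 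I do not expect a serious obstacle: the argument is a short chain of cited results, and the only place demanding care is checking that the representing object of $\phi$ is precisely $\colim\phi$, so that the two composites really are identities rather than merely isomorphic to them.
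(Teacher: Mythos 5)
Your proof is correct and takes essentially the same route as the paper: both arguments reduce everything, via Proposition \ref{characterizing Smyth completable} (ideals of a Smyth completable category are Cauchy) and Proposition \ref{absolute sup} (a Cauchy weight with a colimit is representable, with representing object the colimit), to the identity $\CJ X(\sy(x),\phi)=\phi(x)=X(x,\colim\phi)$ furnished by the Yoneda lemma, which is precisely the adjunction $\sy\dashv\colim$. Your packaging of the key step as ``$\sy$ and $\colim$ are mutually inverse isomorphisms'' is just a mild rephrasing of the paper's formulation that every element of $X$ is compact.
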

	
	\begin{proof} First we show that every element of $X$ is compact; that is, $X(a,\colim\phi)=\phi(a)$ for all $a\in X$ and all $\phi\in \CJ X$. Since $X$ is Smyth completable, $\phi$ is   Cauchy, hence representable because it has a colimit. This means $\phi=X(-,b)$ for some $b\in X$. Therefore, \begin{align*} X(a,\colim\phi)   &= X(\colim X(-,a),b) \\ &=\CP X(X(-,a), X(-,b))\\ &=\CP X(X(-,a), \phi)\\ &=\phi(a).  \end{align*}
		
		Next we show that   $\sy\colon X\lra\CJ X$   is left adjoint to $\colim\colon \CJ X\lra X$. This is easy, since $ \CJ X(\sy(a),\phi)=\phi(a)= X(a,\colim\phi)$ for all $a\in X$ and $\phi\in \CJ X$. \end{proof}
	
	\begin{cor}Every Smyth completeable real-enriched category is algebraic, hence continuous. \end{cor}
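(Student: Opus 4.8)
The plan is to reduce the corollary to the compactness fact already isolated inside the proof of the immediately preceding proposition, namely that in a Smyth completable real-enriched category \emph{every} element is compact. Once that is in hand, the subcategory $K(X)$ of compact elements coincides with $X$ itself, and verifying algebraicity against its definition becomes essentially automatic; the only work is to produce, for each point, a forward Cauchy net of compact elements with that point as Yoneda limit.

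First I would re-establish the compactness input. Since $X$ is Smyth completable, Proposition \ref{characterizing Smyth completable} tells us that every ideal of $X$ is a Cauchy weight. Hence any $\phi\in\CJ X$, being by definition an ideal that possesses a colimit, is a Cauchy weight with a colimit and therefore representable by Proposition \ref{absolute sup}, say $\phi=X(-,b)$ with $\colim\phi=b$. A one-line computation $X(a,\colim\phi)=\CP X(X(-,a),X(-,b))=\phi(a)$ then holds for every $a\in X$ and every $\phi\in\CJ X$, which is exactly the compactness criterion (2) of Lemma \ref{characterizing compact element}. Thus every element of $X$ is compact, and so $K(X)=X$.

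Next, for each $x\in X$ I would exhibit $x$ as a Yoneda limit of a forward Cauchy net made of compact elements. The cleanest witness is the constant net at $x$: it is trivially forward Cauchy, all its terms are compact (every element is), and by Lemma \ref{yoneda limit as colimits} its Yoneda limit is the colimit of $\sup_{i}\inf_{j\geq i}X(-,x)=X(-,x)=\sy(x)$, namely $\colim\sy(x)=x$. Equivalently, in the ``said differently'' formulation of algebraicity, $\sy(x)$ is a representable weight, hence an ideal of $K(X)=X$, and $x$ is the colimit of the (identity) inclusion $K(X)\lra X$ weighted by $\sy(x)$. Either way $X$ meets the definition of an algebraic real-enriched category, and Proposition \ref{algebraic is continuous} then yields that $X$ is continuous.

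The point worth flagging is that there is essentially no obstacle internal to this corollary: the entire substance sits in the compactness statement imported from the previous proposition, and the residual care is purely bookkeeping, namely matching the literal definition of \emph{algebraic} by confirming that the chosen constant net is forward Cauchy, has compact terms, and has $x$ as its Yoneda limit (the last equality requiring separatedness of $X$, which is part of being Smyth completable). As a consistency check, the conclusion agrees with the direct continuity assertion of the preceding proposition, but the route advertised here --- compactness, hence $K(X)=X$, hence algebraic, hence continuous via Proposition \ref{algebraic is continuous} --- is the intended one.
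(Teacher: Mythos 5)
Your proposal is correct and follows essentially the same route the paper intends: the corollary is immediate from the proof of the preceding proposition, which shows every element of a Smyth completable category is compact (exactly your computation via Proposition \ref{characterizing Smyth completable} and representability of Cauchy weights with colimits), after which constant nets witness algebraicity and Proposition \ref{algebraic is continuous} gives continuity. The bookkeeping you flag (constant nets are forward Cauchy with Yoneda limit $x$) is precisely the unstated step the paper leaves to the reader.
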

	
	The equivalence between (1) and (3) in the next corollary is due to	Ali-Akbari, Honarii, Pourmahdian and Rezaii \cite{Ali}.
	\begin{cor}For each separated real-enriched category $X$, the following are equivalent:  \begin{enumerate}[label={\rm(\arabic*)}]  \item $X$ is Smyth complete.  \item  The functor $\sy\colon X\to\CI X$  is both  left   and  right adjoint to  the functor $\colim\colon \CI X\to X$. \item  Every forward Cauchy net of $X$ has a Yoneda limit and every element of $X$ is compact.  \end{enumerate} \end{cor}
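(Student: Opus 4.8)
The plan is to prove the cycle $(1)\Rightarrow(2)\Rightarrow(3)\Rightarrow(1)$, and the first thing I would record is a reading of condition $(2)$: for the functor $\colim\colon\CI X\to X$ to be defined on all of $\CI X$ it is necessary that every ideal of $X$ have a colimit, which by Theorem \ref{Yoneda com via ideals} (together with separatedness of $X$, giving uniqueness) is exactly the statement that every forward Cauchy net of $X$ has a Yoneda limit. Thus in all three conditions $X$ will turn out to be Yoneda complete, so that $\CJ X=\CI X$ and $\colim\colon\CI X\to X$ is precisely the left adjoint of $\sy$; in particular $\colim\dashv\sy$ (i.e.\ $\sy$ is a right adjoint of $\colim$) holds automatically, and the whole content of ``both left and right adjoint'' is the extra relation $\sy\dashv\colim$.

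For $(1)\Rightarrow(2)$ I would note that a Smyth complete $X$ is Yoneda complete (the final clause of Theorem \ref{Smyth via bilimit}), so $\CJ X=\CI X$, and that it is trivially Smyth completable via the identity functor; the proposition immediately preceding this corollary then gives that $\sy$ is both left and right adjoint to $\colim$. For $(2)\Rightarrow(3)$ the totality of $\colim$ already yields that every forward Cauchy net has a Yoneda limit, and the adjunction $\sy\dashv\colim$ unwinds, using the Yoneda lemma $\CI X(\sy(a),\phi)=\phi(a)$, to $\phi(a)=X(a,\colim\phi)$ for all $a\in X$ and all $\phi\in\CI X$; by Lemma \ref{characterizing compact element} this says exactly that every element of $X$ is compact.

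The substantive implication is $(3)\Rightarrow(1)$, and here the idea is to show that every ideal $\phi$ is representable and then invoke the equivalence $(1)\Leftrightarrow(3)$ of Theorem \ref{Smyth via bilimit}. From $(3)$, Yoneda completeness again gives $\CI X=\CJ X$ with $\colim\dashv\sy$, while compactness reverses the computation above to give $\sy\dashv\colim$. Now fix an ideal $\phi$ and set $a=\colim\phi$. The adjunction $\colim\dashv\sy$ evaluated at $a$ gives $\CI X(\phi,\sy(a))=X(\colim\phi,a)=X(a,a)=1$, i.e.\ $\phi(y)\le X(y,a)$ for every $y$ since the underlying order of $\CI X$ is pointwise; the adjunction $\sy\dashv\colim$ evaluated at $a$ gives $\phi(a)=\CI X(\sy(a),\phi)=X(a,\colim\phi)=1$, and then the weight inequality $\phi(a)\with X(y,a)\le\phi(y)$ forces $X(y,a)\le\phi(y)$ for every $y$. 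Combining the two comparisons yields $\phi(y)=X(y,a)$, so $\phi=\sy(\colim\phi)$ is representable, and Smyth completeness follows. I expect the crux to be precisely this sandwiching step: getting the two adjunctions to pin $\phi$ to $\sy(\colim\phi)$ from above and below, where the only slightly delicate point is converting the numerical identity $\phi(\colim\phi)=1$ into the pointwise inequality $\sy(\colim\phi)\sqsubseteq\phi$ via the defining monotonicity of the weight $\phi$.
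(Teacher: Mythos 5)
Your proposal is correct and follows essentially the route the paper intends: the corollary is stated without an explicit proof, being an assembly of the immediately preceding proposition (giving $(1)\Rightarrow(2)$ via Smyth completability of a Smyth complete category), the Yoneda lemma together with Lemma \ref{characterizing compact element} (giving $(2)\Rightarrow(3)$), and Theorem \ref{Smyth via bilimit}(3) after showing every ideal is representable (giving $(3)\Rightarrow(1)$). Your sandwich argument pinning $\phi$ between $\sy(\colim\phi)$ from above (via $\colim\dashv\sy$) and below (via $\phi(\colim\phi)=1$ and the weight inequality) is exactly the right way to fill in that last step.
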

	
	Theorem \ref{yoneda via formal ball} shows that when the continuous t-norm $\&$ is   Archimedean, Yoneda complete  real-enriched categories can be characterized purely in terms of their set of formal balls. Theorem \ref{continuity via formal ball} below shows that in this case, under a mild assumption, there is a parallel result for continuous real-enriched categories.

	The following notion is a direct extension of that of \emph{standard quasi-metric space} in  Goubault-Larrecq and Ng \cite[Definition 2.1]{Goubault-Ng}.
	\begin{defn}Suppose $\&$ is a continuous Archimedean t-norm. A real-enriched category $X$ is standard provided that for each directed subset $\{(x_i,r_i)\}_{i\in D}$ of $\mathrm{B}X$ with $\bv_{i\in D}r_i>0$,    if   $\{(x_i,r_i)\}_{i\in D}$  has a join, then so do the following subsets: \begin{enumerate}[label={\rm(\roman*)}] 
			\item $\{(x_i,t\with r_i)\}_{i\in D}$ for each $t>0$; and \item $\{(x_i,t\ra r_i)\}_{i\in D}$ for $t=\bv_{i\in D}r_i$.\end{enumerate}
	\end{defn} 
	
	It follows from Lemma \ref{directed set is forward Cauchy} and Lemma \ref{join of directed set in BX} that, when $\&$ is a continuous Archimedean t-norm, every Yoneda complete real-enriched category is standard.   The real-enriched category in Example \ref{second example} is not standard, otherwise, it would be Yoneda complete by Lemma \ref {yoneda complete = dcpo}.
	
	\begin{lem}
		Suppose $\&$ is a continuous Archimedean t-norm; suppose $X$ is a standard  real-enriched category and  $\{(x_i,r_i)\}_{i\in D}$ is a directed subset   of $\mathrm{B}X$ with  $\bv_{i\in D}r_i>0$. If $(x,r)$ is a join of $\{(x_i,r_i)\}_{i\in D}$,  then \begin{enumerate}[label={\rm(\roman*)}] 
			\item $x$ is a Yoneda limit of the forward Cauchy net $\{x_i\}_{i\in D}$ and $r=\bv_{i\in D}r_i$; \item   $(x,1)$ is a join of   $\{(x_i,r\ra r_i)\}_{i\in D}$; \item for each $t\in[0,1]$, $(x,t\with r)$ is a join of   $\{(x_i,t\with r_i)\}_{i\in D}$. 
	\end{enumerate}   \end{lem}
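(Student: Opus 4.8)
The plan is to reduce all three statements to the single fact that $x$ is a Yoneda limit of $\{x_i\}_{i\in D}$, and then to read (i)--(iii) off Lemma \ref{join of directed set in BX}. First I would record the standing facts. Since $\&$ is continuous Archimedean, every real-enriched category has property (R) by Proposition \ref{Archi vs reciprocal}, and since $\bv_{i\in D}r_i>0$ some $r_i$ is positive, so $\{x_i\}_{i\in D}$ is forward Cauchy by Lemma \ref{directed set is forward Cauchy}(ii). Writing $r^\ast=\bv_i r_i$, the relation $(x_i,r_i)\sqsubseteq(x,r)$ gives $r_i\leq r\with X(x_i,x)\leq r$, whence $r^\ast\leq r$, and directedness forces the radii $r_i$ to form a monotone net. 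Because a cofinal tail of a forward Cauchy net has the same upper bounds in $\mathrm{B}X$ (hence the same join) and the same Yoneda limits, I may replace $D$ by $\{i\geq i_0\}$ for some $i_0$ with $r_{i_0}>0$ and thereby assume $r_i>0$ for all $i$, without changing $r^\ast$, $x$ or $r$.

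The core step is to prove that $x$ is a Yoneda limit of $\{x_i\}$, which I would do by rescaling the radii with $r^\ast\ra-$. The family $\{(x_i,r^\ast\ra r_i)\}_{i\in D}$ is again directed: invoking Corollary \ref{CA=L or P} to assume $\&$ is the product or the \L ukasiewicz t-norm, and using $0<r_i\leq r_j\leq r^\ast$, the inequality $r^\ast\ra r_i\leq (r^\ast\ra r_j)\with X(x_i,x_j)$ follows from $(x_i,r_i)\sqsubseteq(x_j,r_j)$ by a short computation in each representation. Since the monotone net $r_i$ has supremum $r^\ast$ and $r^\ast\ra-$ is continuous and monotone (the implication of an Archimedean t-norm is continuous off the diagonal and $r^\ast>0$), one gets $\bv_i(r^\ast\ra r_i)=1$. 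Standardness of $X$, applied to $\{(x_i,r_i)\}$ with $t=r^\ast$, yields a join of $\{(x_i,r^\ast\ra r_i)\}$; its radius is at least $\bv_i(r^\ast\ra r_i)=1$, so this join has the form $(b,1)$. Applying standardness once more, now to the directed family $\{(x_i,r^\ast\ra r_i)\}$ itself (which has positive radius-supremum and a join), supplies joins of $\{(x_i,t\with(r^\ast\ra r_i))\}$ for every $t>0$. These are precisely the three hypotheses of Lemma \ref{yoneda complete = dcpo}, so $b$ is a Yoneda limit of $\{x_i\}$.

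It then remains to transfer this back to $x$. By Lemma \ref{join of directed set in BX}, $(b,r^\ast)$ is a join of $\{(x_i,r_i)\}$; comparing it with the given join $(x,r)$ and using $r^\ast\leq r$ gives first $r=r^\ast$, and then $X(x,b)=X(b,x)=1$ via the cancellation property of Archimedean t-norms (if $0<s$ and $s\leq s\with q$ then $q=1$, exactly as in the proof of Proposition \ref{Archi vs reciprocal}). Hence $x\cong b$, so $X(x,-)=X(b,-)$ and $x$ is itself a Yoneda limit of $\{x_i\}$; together with $r=r^\ast$ this proves (i). Finally (ii) and (iii) follow directly from Lemma \ref{join of directed set in BX} now that $x$ is a Yoneda limit and $r=\bv_i r_i$: for (ii), $\bv_i(r\ra r_i)=1$ gives that $(x,1)$ is a join of $\{(x_i,r\ra r_i)\}$; for (iii), $\{(x_i,t\with r_i)\}$ is directed with $\bv_i(t\with r_i)=t\with r$ by continuity of $\&$, giving that $(x,t\with r)$ is its join (the case $t=0$ being the trivial bottom element). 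I expect the main obstacles to be the two bookkeeping points in the middle paragraph --- verifying that the rescaled family $\{(x_i,r^\ast\ra r_i)\}$ is directed, and checking that the cofinal-tail reduction genuinely preserves both the join and the Yoneda limit --- rather than the final assembly.
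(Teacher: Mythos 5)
Your route is the same as the paper's --- rescale the radii by $t\ra-$ with $t=\bv_{i}r_i$, use standardness to produce a join $(b,1)$ of the rescaled family, feed it to Lemma \ref{yoneda complete = dcpo}, and transfer back to $(x,r)$ via Lemma \ref{join of directed set in BX} and Archimedean cancellation --- and your treatment of (i) and (iii) is sound. On one point you are even more careful than the paper: the paper simply calls $\{(x_i,t\ra r_i)\}_{i\in D}$ ``the directed set'' without comment, whereas you correctly observe that directedness of the rescaled family needs an argument (it is automatic for the product t-norm, but in the \L ukasiewicz case it can fail at indices with $r_i=0$, which is what forces your tail reduction).

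The genuine gap is in (ii), and it sits exactly in the ``final assembly'' you expected to be routine. Conclusion (ii) is about the family indexed by all of $D$, but your argument delivers only that $(x,1)$ is a join of the tail $\{(x_i,r\ra r_i)\}_{i\geq i_0}$. For (iii) the tail-to-full transfer is harmless, because $t\with-$ preserves the order over $D$: for $i\leq j$ one has $t\with r_i\leq t\with r_j\with X(x_i,x_j)$, so every member of the full scaled family lies below a tail member and the two families have the same upper bounds. For (ii) this transfer breaks down: an index with $r_i=0$ is discarded by your reduction, yet its rescaled ball $(x_i,r\ra 0)$ has strictly positive radius in the \L ukasiewicz case, and nothing you have proved bounds $X(x_i,x)$ from below --- the relation $(x_i,0)\sqsubseteq(x_j,r_j)$ is vacuous. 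This is not repairable bookkeeping: with \L ukasiewicz $\with$, take $X=\{z,x,b\}$ with $X(z,b)=X(x,b)=1$, $X(z,x)=0.1$ and all other non-identity hom-values $0$ (a routine case check shows this $X$ is standard), and the directed family $\{(z,0),(x,r)\}$ with $0<r<0.9$, whose join is $(x,r)$; the rescaled family $\{(z,1-r),(x,1)\}$ then has join $(b,1)$, and $(x,1)$ is not even one of its upper bounds. So the full-family form of (ii) needs either the radius-zero members to enter the argument or the extra hypothesis that all $r_i$ are positive; as written, your proof establishes (ii) only for the tail. (Note that the same subtlety afflicts the paper's own proof, whose appeal to Lemma \ref{join of directed set in BX} for (ii) silently assumes the full rescaled family is monotone over $D$, which is precisely what fails here.)
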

	
	\begin{proof}It suffices to show that $x$ is a Yoneda limit of the forward Cauchy net $\{x_i\}_{i\in D}$, the other conclusions will follow by Lemma \ref{join of directed set in BX}.
		Let $t=\bv_{i\in D}r_i$. Since $\&$ is Archimedean, then $\bv_{i\in D}(t\ra r_i)=1$. Since $X$ is standard, the directed set $\{(x_i,t\ra r_i)\}_{i\in D}$ has a join, say $(b,1)$. By Lemma \ref {yoneda complete = dcpo}, $b$ is a Yoneda limit of the forward Cauchy net $\{x_i\}_{i\in D}$, then $(b, t)$ is a join of $\{(x_i,r_i)\}_{i\in D}$ by Lemma \ref{join of directed set in BX}. Therefore, $r=t$ and $x$ is isomorphic to $b$, so $x$ is a Yoneda limit of  $\{x_i\}_{i\in D}$. \end{proof} 
	
	The following theorem   is a combination and a slight improvement of related results in  Goubault-Larrecq and Ng \cite[Section 3]{Goubault-Ng} and Kostanek and Waszkiewicz \cite[Section 9]{KW2011}. It implies in particular that for standard quasi-metric spaces, continuity in the sense of Definition \ref{defn of continuous cats} coincides with that in the sense of Goubault-Larrecq and Ng \cite[Definition 3.10]{Goubault-Ng}. 
	
	\begin{thm}\label{continuity via formal ball} Suppose $\&$ is a continuous Archimedean t-norm and $X$ is a standard real-enriched category. The following are equivalent:  \begin{enumerate}[label={\rm(\arabic*)}] 
			\item $X$ is continuous. \item  The ordered set $\mathrm{B} X$ is continuous. 
		\end{enumerate} In this case, the way below distributor $\mathfrak{w}$ of $X$ and the way below relation $\ll$ of $\mathrm{B}X$ determine each other via $$(x,r)\ll(y,s)\iff  r<s\with\mathfrak{w}(x,y)$$ for all $x,y\in X$ and $r,s>0$. \end{thm}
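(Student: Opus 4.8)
The plan is to prove the equivalence by establishing, as its technical heart, the displayed characterization of $\ll$ on $\mathrm{B}X$ in terms of the way below distributor $\mathfrak{w}$, and then to read both implications off from it. Throughout I would exploit the three consequences of $\&$ being Archimedean that are used repeatedly here: every real-enriched category has property {\rm(R)} (Proposition~\ref{Archi vs reciprocal}); a continuous Archimedean t-norm is strictly monotone wherever its value is positive and its implication is continuous off the diagonal; and a directed subset of $\mathrm{B}X$ with at least one positive radius has a forward Cauchy net of centers (Lemma~\ref{directed set is forward Cauchy}(ii)). The standing bridge between the two sides is the lemma immediately preceding the theorem together with Lemma~\ref{join of directed set in BX}: for standard $X$, a directed $\{(x_i,r_i)\}$ with $\bv_i r_i>0$ that has a join $(x,r)$ satisfies $r=\bv_i r_i$ and $x=$ Yoneda limit of $\{x_i\}$, and joins commute with the scalings $(z,\rho)\mapsto(z,t\with\rho)$.

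For $(1)\Rightarrow(2)$ I would first invoke Proposition~\ref{way below relation} to get $\wayb y=\mathfrak{w}(-,y)\in\CJ X$ with $\colim\mathfrak{w}(-,y)=y$, the interpolation $\mathfrak{w}\circ\mathfrak{w}=\mathfrak{w}$, and the Yoneda continuity of $\mathfrak{w}(x,-)$ (Corollary~\ref{w(x,-)}). To prove $(x,r)\ll(y,s)\Rightarrow r<s\with\mathfrak{w}(x,y)$ for $r,s>0$, I would present $(y,s)$ as the directed join of the scaling by $s$ of $\mathrm{B}\mathfrak{w}(-,y)$ (its join being $(y,1)$ by the bridge lemma, since $\bv_{x'}\mathfrak{w}(x',y)=1$), catch one member $(x'',s\with\rho)$ with $\rho<\mathfrak{w}(x'',y)$ above $(x,r)$, and combine the distributor inequality $\mathfrak{w}(x'',y)\with X(x,x'')\le\mathfrak{w}(x,y)$ with strict monotonicity of $\&$ (legitimate since $r>0$ forces $X(x,x'')>0$). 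For the converse, given any directed $\{(y_i,s_i)\}$ with join $(w,u)\sqsupseteq(y,s)$, the bridge lemma makes $w$ a Yoneda limit of $\{y_i\}$ and $u=\bv_i s_i$; using $\mathfrak{w}(x,w)=\bv_i\bw_{j\ge i}\mathfrak{w}(x,y_j)$ (Corollary~\ref{w(x,-)}), the distributor inequality $X(y,w)\with\mathfrak{w}(x,y)\le\mathfrak{w}(x,w)$, and continuity of $\&$ to pass a directed supremum through $\with$, I would extract $i_0$ with $r\le s_{i_0}\with X(x,y_{i_0})$, i.e. $(x,r)\sqsubseteq(y_{i_0},s_{i_0})$. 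With the formula in hand, each $(y,s)$ with $s>0$ is the directed join of the positive-radius balls way below it (bottom balls being trivially way below), and bottom balls are their own joins, so $\mathrm{B}X$ is continuous.

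For $(2)\Rightarrow(1)$ I would fix $y$ and use continuity of $\mathrm{B}X$ to write $(y,1)=\bigsqcup D_y$ with $D_y=\{(x,r)\mid(x,r)\ll(y,1)\}$ directed; since the join has radius $1$, some radius in $D_y$ is positive and $\bv_{(x,r)\in D_y}r=1$, so Lemma~\ref{directed set is forward Cauchy}(ii) and the bridge lemma make the centers a forward Cauchy net with Yoneda limit $y$. By Proposition~\ref{characterization of  ideal} and Lemma~\ref{yoneda limit as colimits} the generated weight $\phi_y=\bv_i\bw_{j\ge i}X(-,x_j)$ is then an ideal lying in $\CJ X$ with $\colim\phi_y=y$. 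Because $\mathfrak{w}(-,y)$ is by definition dominated by every ideal having colimit $y$, one gets $\mathfrak{w}(-,y)\le\phi_y$ for free; the remaining point is the reverse inequality $\phi_y\le\mathfrak{w}(-,y)$, which I would derive by showing that each $(x,r)\ll(y,1)$ forces $r\le\mathfrak{w}(x,y)$ — testing against an arbitrary $\psi\in\CJ X$ with $\colim\psi=b$ through the representation $(b,1)=\bigsqcup\mathrm{B}\psi$ and the behaviour of $\ll$ under the scaling maps. This yields $\phi_y=\mathfrak{w}(-,y)$, so $\mathfrak{w}(-,y)\in\CJ X$ with colimit $y$ for every $y$, and Proposition~\ref{way below relation} then gives continuity of $X$; the displayed formula follows from the computation of $(1)\Rightarrow(2)$, now that $X$ is known to be continuous.

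The main obstacle I anticipate is precisely the reverse inequality $\phi_y\le\mathfrak{w}(-,y)$ in $(2)\Rightarrow(1)$, that is, proving that the balls way below $(y,1)$ generate no more than the way below distributor: this transports the purely order-theoretic fact $(x,r)\ll(y,1)$ into the quantitative bound $r\le\mathfrak{w}(x,y)$ against every competing ideal, and demands the delicate use of the scaling maps and of the preceding lemma. A pervasive secondary difficulty is the bookkeeping between the strict relations ($\ll$ and the strict radius inequality) and the non-strict ones ($\sqsubseteq$, $\le$); it is exactly here that the Archimedean hypothesis is needed, through strict monotonicity of $\&$ on its positive region and continuity of the implication off the diagonal, and where the restriction $r,s>0$ in the statement is indispensable. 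Finally, one must remember that continuity of the poset $\mathrm{B}X$ is strictly weaker than directed completeness, so Theorem~\ref{yoneda via formal ball} cannot be applied as a black box; only its constituent lemmas, invoked on individual directed sets, are available.
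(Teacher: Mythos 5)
Your proposal is correct, and it rests on the same technical pillars as the paper's own proof: the directed sets $\mathrm{B}\phi$ attached to ideals $\phi$, the representation $(\colim\phi,1)=\bigsqcup\mathrm{B}\phi$ via Lemma \ref{join of directed set in BX}, and the transport of joins and of $\ll$ along the scalings $(z,\rho)\mapsto(z,t\with\rho)$ and $(z,\rho)\mapsto(z,t\ra\rho)$; but the assembly is genuinely different in both directions. In $(1)\Rightarrow(2)$ you prove the displayed equivalence first---via Corollary \ref{w(x,-)} and strict monotonicity of $\&$ on its positive region---and read continuity of $\mathrm{B}X$ off it; this has the merit of making the forward implication $(x,r)\ll(y,s)\Rightarrow r<s\with\mathfrak{w}(x,y)$ explicit, which the paper leaves implicit (its Step 2 proves only the converse, unwinding the interpolation $\mathfrak{w}\circ\mathfrak{w}=\mathfrak{w}$ by hand where you invoke Yoneda continuity of $\mathfrak{w}(x,-)$, and its Step 1 rests on the adjunction $\wayb\dashv\colim$ rather than on strict monotonicity). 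In $(2)\Rightarrow(1)$ the paper constructs $k(y)$ as the ideal generated by the centers of the way-below set of $(y,1)$, proves the formula $k(y)=\bv_{i}r_i\with X(-,x_i)$, checks that $k$ is a functor, and verifies $k\dashv\colim$ directly, so that it only needs the weak fact $(x,r)\ll(\colim\phi,1)\Rightarrow\phi(x)>r$ applied at the colimit itself; you instead prove the stronger pointwise bound $(x,r)\ll(y,1)\Rightarrow r\leq\mathfrak{w}(x,y)$ against every $\psi\in\CJ X$ (the factor $X(y,\colim\psi)$ being absorbed by scaling), identify $\phi_y$ with $\mathfrak{w}(-,y)$, and conclude by Proposition \ref{way below relation}---trading the paper's functoriality check and Step-1 formula for a heavier quantitative claim. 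One caution: the step you call ``the behaviour of $\ll$ under the scaling maps,'' namely $(x,r)\ll(y,1)\Rightarrow(x,t\with r)\ll(y,t)$, is not a formality but a lemma in its own right (it is precisely the paper's Step 2 of $(2)\Rightarrow(1)$, and it is also where standardness condition (ii) enters): one indexes the way-below set of $(y,t)$, rescales it by $t\ra-$ using standardness, and identifies the rescaled join as $(y,1)$ via the bridge lemma and continuity of $t\ra-$ for Archimedean $\&$, before multiplying back by $t$ using divisibility; your outline names exactly the right tools for this, so it is a gap of writeup rather than of ideas, but since both your key claim and hence your whole direction $(2)\Rightarrow(1)$ stand on it, it must be proved and not merely invoked.
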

	
	\begin{proof} 
		$(1)\Rightarrow(2)$ First we present a fact that will be used  in Step 1 below.
		Suppose $X$ is a real-enriched category and $\{(x_i,r_i)\}_{i\in D}$ is  a directed subset of $\mathrm{B}X$ with $\bv_{i\in D}r_i=1$.
		Let   $\phi_D=\bv_{i\in D}\bw_{j\geq i} X(-,x_j)$ and  let $\mathrm{B}\phi_D =\{(x,r)\mid \phi_D(x)>r\}$. By  Proposition \ref{characterization of  ideal}, $\mathrm{B}\phi_D$ is a directed subset of $\mathrm{B}X$. We assert that for each $(x,r)\in \mathrm{B}\phi_D$, there exists $j\in D$ such that $(x,r)\sqsubseteq(x_j,r_j)$.   To see this, suppose $(x,r)\in \mathrm{B}\phi_D$. Since $\phi_D(x)=\bv_{i\in D}\bw_{j\geq i} X(x,x_j)$, there is $i\in D$ such that $r<\bw_{j\geq i} X(x,x_j)$. Since $\bv_{j\geq i}r_j=1$, there is   $j\geq i$ such that $r<r_j\with X(x,x_j)$, so $(x,r)\sqsubseteq(x_j,r_j)$. 
		
		Now we turn to the proof of the conclusion. Since $X$ is continuous, the functor $\colim\colon \CJ X\lra X$ has a left adjoint $\wayb\colon X\lra\CJ X$. For each $y\in X$, let $$d(y)=\{(x,r)\in\mathrm{B}X\mid r<\mathfrak{w}(x,y)\}.$$ Then $d(y)$ is a directed subset of $\mathrm{B}X$ with $(y,1)$ as a join.  In the following we prove in two steps that  every element of $\mathrm{B}X$ is the join of the elements way below it, so $\mathrm{B}X$ is continuous.
		
		{\bf Step 1}. We show that  $(x,r)\ll(y,1)$ for all $(x,r)\in d(y)$, hence $(y,1)$ is the join of the elements way below it. 
		
		Let $(x,r)\in d(y)$. Suppose $D=\{(z_i,t_i)\}_{i\in D}$ is a directed subset of $\mathrm{B}X$ with a join, say $(z,t)$, larger than $(y,1)$. It is clear that $t=1$ and $X(y,z)=1$. Since $X$ is standard,  $\bv_{i\in D}t_i=1$ and $z$ is a Yoneda limit of the forward Cauchy net $\{z_i\}_{i\in D}$.    So, $z$ is a colimit of the ideal $\phi_D$ generated by   $\{z_i\}_{i\in D}$. Since $\wayb$ is left adjoint to $\colim$ we have $$\CJ X(\wayb z,\phi_D)=X(z,\colim\phi_D)=X(z,z)=1, $$ then $\wayb z\leq \phi_D$ and   $d(z)\subseteq \mathrm{B}\phi_D$. Since $X(y,z)=1$ and $\wayb$ is a functor, then $\wayb y\leq\wayb z$. Therefore, $d(y)\subseteq d(z)\subseteq \mathrm{B}\phi_D$ and consequently,   $(x,r)\sqsubseteq(z_j,t_j)$ for some $j\in D$. This proves that $(x,r)\ll(y,1)$.
		
		{\bf Step 2}. We show that $(x,r)\ll(y,s)$ whenever $r<s\with\mathfrak{w}(x,y)$, in particular $(x,s\with r)\ll (y,s)$ for all $(x,r)\in d(y)$,   hence  by Step 1, $(y,s)$ is the join of the elements way below it.  
		
		Suppose $D=\{(z_i,t_i)\}_{i\in D}$ is a directed subset of $\mathrm{B}X$ with a join, say $(z,t)$, larger than $(y,s)$. We wish to show that $(x,r)\sqsubseteq(z_j,t_j)$ for some $j\in D$.  Since $X$ is standard, the monotone net $\{t_i\}_{i\in D}$ converges to $t$. Since $s\leq t\with X(y,z)$ and $r<s\with\mathfrak{w}(x,y)$, it follows that $$  r< t\with X(y,z) \with\mathfrak{w}(x,y)\leq t\with \mathfrak{w}(x,z). $$ Let $\phi= \bv_{i\in D}\bw_{j\geq i} X(-,z_j)$. Since $\phi$ is an ideal of $X$ with $z$ as a colimit,   it follows that $\mathfrak{w}(-,z)\leq \phi$, then   \begin{align*}  r &< t\with \mathfrak{w}(x,z)\\ 
			& = t\with \bv_{w\in X}(\mathfrak{w}(w,z)\with \mathfrak{w}(x,w))\\ 
			& \leq t\with \bv_{w\in X}(\phi(w)\with \mathfrak{w}(x,w)) \\ 
			&= t\with \bv_{w\in X}\Big[\Big(\bv_{i\in D}\bw_{j\geq i} X(w,z_j)\Big)\with \mathfrak{w}(x,w)\Big] \\ 
			&\leq t\with \bv_{i\in D}\Big(\bv_{w\in X}\bw_{j\geq i} X(w,z_j) \with \mathfrak{w}(x,w)\Big) \\ 
			&\leq \Big(\bv_{i\in D}t_i\Big)\with \Big(\bv_{i\in D}\bw_{j\geq i}   \mathfrak{w}(x,z_j)\Big).\end{align*} Therefore, there is some $j\in D$ such that $$r< t_j\with\mathfrak{w}(x,z_j)\leq t_j\with X(x,z_j),$$  hence   $(x,r)\sqsubseteq(z_j,t_j)$.
		
		$(2)\Rightarrow(1)$  First we present a fact that will be used  in Step 3 below.
		Suppose   $X$ is a real-enriched category, $\phi\in\CJ X$,     $(x,r)\in\mathrm{B}X$. We assert that if $(x,r)\ll(\colim\phi,1)$, then $ \phi(x)>r$.  To see this, index the directed subset $\{(y,s)\mid \phi(y)>s\}$ of $\mathrm{B}X$ as $\{(y_i,s_i)\}_{i\in D}$. Then $\bv_{i\in D}s_i=1$ and $\{y_i\}_{i\in D}$ is a forward Cauchy net with $\colim\phi$ as a Yoneda limit. By Lemma \ref{join of directed set in BX} $(\colim\phi,1)$ is a join of $\{(y_i,s_i)\}_{i\in D}$, then $(x,r)\sqsubseteq(y_i,s_i)$ for some $i\in D$, hence $$r\leq s_i\with X(x,y_i)<\phi(y_i)\with X(x,y_i)\leq \phi(x),$$ showing that $\phi(x)>r$.  
		
		Now we turn to the proof of the conclusion.
		For each $y\in X$, let $$D=\{(x,r)\mid (x,r)\ll(y,1)\}.$$ Then $D$ is a directed set of $\mathrm{B}X$ with $(y,1)$ as a join. Index $D$ by itself, i.e. $D=\{(x_i,r_i)\}_{i\in D}$. Then $\bv_{i\in D}r_i=1$ and $\{x_i\}_{i\in D}$ is a forward Cauchy net. Let $$k(y)=\bv_{i\in D}\bw_{j\geq i}X(-,x_j).$$ Then $k(y)$ is an ideal of $X$ with $y$ as a colimit. We show that the assignment $y\mapsto k(y)$ defines a left adjoint of $\colim\colon \CJ X\lra X$, so $X$ is continuous. We do this in three steps.
		
		{\bf Step 1}. $k(y)=\bv_{i\in D}r_i\with X(-,x_i)$. This implies that, if $k(y)(x)>r$ then $(x,r)\ll(y,1)$. 
		
		For any $(x_i,r_i)\sqsubseteq(x_j,r_j)$ we have $r_i \leq X(x_i,x_j)$, then $r_i\with X(-,x_i)\leq X(-,x_j)$. Therefore, $$r_i\with X(-,x_i)\leq \bw_{j\geq i}X(-, x_j),$$ hence  $$k(y)\geq \bv_{i\in D}r_i\with X(-,x_i).$$ To see the converse inequality, let $t<1$. Since $r_i$ tends to $1$, there exists  $i_0\in D$ such that $r_j\geq t$ whenever $j\geq i_0$. Then $$t\with k(y)\leq \bv_{i\geq i_0}\bw_{j\geq i}r_j\with X(-,x_j) \leq \bv_{i\in D}r_i\with X(-,x_i),$$ hence $k(y)\leq \bv_{i\in D}r_i\with X(-,x_i)$ by arbitrariness of $t$.
		
		{\bf Step 2}.  If $(x,r)\ll(y,1)$, then $(x,t\with r)\ll(y,t)$ for all $t\in[0,1]$. 
		
		If $t\with r=0$ there is nothing to prove, so in the following we assume that $t\with r>0$. Index the directed set $\{(z,s)\mid (z,s)\ll(y,t)\}$ as $\{(z_i,s_i)\}_{i\in D}$. Since $\mathrm{B}X$ is continuous,  $(y,t)$ is a join of $\{(z_i,s_i)\}_{i\in D}$, then $(y,1)$ is a join of the directed set $\{(z_i,t\ra s_i)\}_{i\in D}$, so there exists   $i\in D$ such that $(x,r)\leq (z_i,t\ra s_i)$, hence  $$(x,t\with r)\leq(z_i,s_i)\ll (y,t).$$ 
		
		{\bf Step 3}. $k\colon X\lra \CJ X$ is left adjoint to $\colim\colon\CJ X\lra X$.

		First we check that $k$ is a functor. Suppose $y_1,y_2\in X$ and $t=X(y_1,y_2)$. If $(x,r)\ll (y_1,1)$, then $(x,t\with r)\ll(y_1,t)$ by Step 2. Since $(y_1,t)\sqsubseteq(y_2,1)$, it follows that $(x,t\with r)\ll(y_2,1)$,   then  
		\begin{align*} t\with k(y_1)
			&= \bv\{t\with r\with X(-,x)\mid (x,r)\ll(y_1,1)\}\\ &\leq \bv\{  r\with X(-,x)\mid (x,r)\ll(y_2,1)\} \\ &\leq k(y_2),\end{align*} so $k$ is a functor. By Theorem \ref{Characterization of adjoints}
		it remains to check that for all $y\in X$ and $\phi\in\CJ X$ it holds that $$k(y)\leq\phi\iff y\sqsubseteq\colim\phi.$$ The direction $\implies$ is obvious since $\colim$ is functorial and $\colim k(y)=y$. For the other direction it suffices to check that $k(\colim\phi)\leq\phi$. This follows from that $\phi(x)>r$ whenever $(x,r)\ll(\colim\phi,1)$. \end{proof} 
	
	\begin{cor}{\rm(Goubault-Larrecq and Ng \cite[Theorem 3.7]{Goubault-Ng})} \label{domain via formal ball} Suppose $\&$ is a continuous Archimedean t-norm. Then a Yoneda complete real-enriched category $X$   is a real-enriched domain if and only if $\mathrm{B} X$ is directed complete and continuous.   \end{cor}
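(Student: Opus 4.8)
The plan is to reduce the corollary to the two formal-ball theorems already established, exploiting the fact that $X$ is assumed Yoneda complete throughout. First I would record that, by Definition \ref{defn of continuous cats}, a real-enriched domain is precisely a Yoneda complete and continuous real-enriched category; since $X$ is Yoneda complete by hypothesis, the assertion ``$X$ is a real-enriched domain'' is equivalent to ``$X$ is continuous.'' Thus the whole corollary amounts to showing
\[
X \text{ is continuous} \iff \mathrm{B}X \text{ is directed complete and continuous.}
\]

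Next I would dispose of the directed-completeness clause, which turns out to be automatic. Because $\&$ is continuous Archimedean and $X$ is Yoneda complete, every forward Cauchy net of $X$ has a Yoneda limit, so Theorem \ref{yoneda via formal ball} gives at once that every directed subset of $\mathrm{B}X$ has a join; that is, $\mathrm{B}X$ is directed complete. Hence on the right-hand side only the continuity of $\mathrm{B}X$ carries information, and the equivalence to be proved simplifies to ``$X$ is continuous if and only if $\mathrm{B}X$ is continuous.''

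The core step is then an application of Theorem \ref{continuity via formal ball}, whose hypotheses I must check. That theorem requires $X$ to be \emph{standard}; but, as remarked after the definition of standard real-enriched categories, when $\&$ is continuous Archimedean every Yoneda complete real-enriched category is standard (this follows from Lemma \ref{directed set is forward Cauchy} and Lemma \ref{join of directed set in BX}). Since $X$ is Yoneda complete it is standard, so Theorem \ref{continuity via formal ball} applies verbatim and yields the equivalence of continuity of $X$ with continuity of $\mathrm{B}X$. Assembling the three observations gives the corollary: for the forward direction, a real-enriched domain is Yoneda complete and continuous, so $\mathrm{B}X$ is directed complete (Theorem \ref{yoneda via formal ball}) and continuous (Theorem \ref{continuity via formal ball}); for the converse, Yoneda completeness of $X$ makes $\mathrm{B}X$ directed complete and $X$ standard, and continuity of $\mathrm{B}X$ then forces $X$ to be continuous, hence a real-enriched domain.

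I expect the only genuine subtlety, rather than a real obstacle, to be the bookkeeping around which implications are ``free'': one must notice that directed completeness of $\mathrm{B}X$ is not an extra assumption in either direction but a consequence of the standing hypothesis that $X$ is Yoneda complete, and that the standardness hypothesis needed to invoke Theorem \ref{continuity via formal ball} is likewise supplied by Yoneda completeness. No new estimates on the t-norm or on formal balls are needed beyond those already packaged into the cited theorems.
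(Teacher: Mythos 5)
Your proof is correct and follows exactly the route the paper intends: the corollary is an immediate consequence of Theorem \ref{yoneda via formal ball} (Yoneda completeness of $X$ gives directed completeness of $\mathrm{B}X$), the remark that Yoneda complete categories are standard when $\&$ is Archimedean, and Theorem \ref{continuity via formal ball} (continuity of $X$ is equivalent to continuity of $\mathrm{B}X$ for standard $X$). Your observation that directed completeness of $\mathrm{B}X$ carries no independent information under the standing hypothesis is precisely the right bookkeeping.
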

	
	The following corollary extends  the characterization of Smyth complete quasi-metric spaces in Romaguera and Valero \cite[Theorem 3.1]{RV2010}.
	
	\begin{cor}\label{Smyth via formal ball}  Suppose $\&$ is a continuous Archimedean t-norm. Then for each separated and standard real-enriched category $X$, the following are equivalent:  \begin{enumerate}[label={\rm(\arabic*)}] 
			\item $X$ is Smyth complete.  \item  $\mathrm{B} X$ is directed complete, continuous, and    $$(x,r)\ll(y,s)\iff   r< s\with X(x,y)$$  for all $x,y\in X$ and $r,s>0$.
			\item  $\mathrm{B} X$ is directed complete, continuous, and   $$(x,r)\ll(y,1)\iff   r< X(x,y)$$ for all $x,y\in X$ and $r>0$.
	\end{enumerate}    \end{cor}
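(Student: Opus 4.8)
The plan is to route the whole equivalence through the formal-ball dictionary together with the characterization of Smyth completeness by compactness of all elements. The two structural inputs I would lean on are, first, the corollary due to Ali-Akbari, Honarii, Pourmahdian and Rezaii, which says that a separated $X$ is Smyth complete precisely when it is Yoneda complete and \emph{every} element of $X$ is compact; and second, Lemma \ref{characterizing compact element}, by which an element $a$ is compact exactly when $\mathfrak{w}(-,a)=X(-,a)$. Since $\&$ is Archimedean and $X$ is standard, the translations in Theorem \ref{yoneda via formal ball}, Theorem \ref{continuity via formal ball} and Corollary \ref{domain via formal ball} convert statements about ideals and the way-below distributor $\mathfrak{w}$ of $X$ into statements about directed completeness, continuity and the way-below relation $\ll$ of $\mathrm{B}X$. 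The role of the extra clause on $\ll$ in (2) and (3) is exactly to encode, at the level of formal balls, that $\mathfrak{w}$ coincides with $X$, i.e. that all elements are compact.

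For $(1)\Rightarrow(2)$ I would argue as follows. From $X$ Smyth complete, Theorem \ref{Smyth via bilimit} gives that $X$ is Yoneda complete; moreover $X$ is continuous (it is algebraic, since every element is a Yoneda limit of the constant forward Cauchy net at itself and all elements are compact, so Proposition \ref{algebraic is continuous} applies). Hence $X$ is a real-enriched domain, and Corollary \ref{domain via formal ball} yields that $\mathrm{B}X$ is directed complete and continuous. For the way-below clause, the Ali-Akbari corollary gives that every element is compact, so $\mathfrak{w}(x,y)=X(x,y)$ for all $x,y$ by Lemma \ref{characterizing compact element}; substituting into the identity $(x,r)\ll(y,s)\iff r<s\with\mathfrak{w}(x,y)$ supplied by Theorem \ref{continuity via formal ball} gives exactly $(x,r)\ll(y,s)\iff r<s\with X(x,y)$ for $r,s>0$.

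The implication $(2)\Rightarrow(3)$ is immediate: put $s=1$ and use $1\with X(x,y)=X(x,y)$. For $(3)\Rightarrow(1)$, directed completeness of $\mathrm{B}X$ together with the Archimedean hypothesis gives $X$ Yoneda complete by Theorem \ref{yoneda via formal ball}; continuity of $\mathrm{B}X$ then makes $X$ a real-enriched domain by Corollary \ref{domain via formal ball}, in particular continuous. Now Theorem \ref{continuity via formal ball} applies and furnishes $(x,r)\ll(y,1)\iff r<\mathfrak{w}(x,y)$ for all $r>0$. Comparing this with the hypothesis $(x,r)\ll(y,1)\iff r<X(x,y)$ forces $\mathfrak{w}(x,y)=X(x,y)$ for every pair, since a number $c\in[0,1]$ is determined by the set of positive reals strictly below it. By Lemma \ref{characterizing compact element} every element of $X$ is then compact, and as $X$ is already Yoneda complete, the Ali-Akbari corollary concludes that $X$ is Smyth complete.

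I expect the main obstacle to be the final passage in $(3)\Rightarrow(1)$: recognizing that comparing the $s=1$ way-below formula coming from continuity with the hypothesized $s=1$ formula pins down $\mathfrak{w}=X$, and that this equality is precisely the compactness-of-all-elements condition separating Smyth completeness from bare domain structure. Care is also needed to verify the applicability hypotheses (standardness and continuity of $X$) \emph{before} invoking the way-below formula of Theorem \ref{continuity via formal ball}, rather than after, so that no step is circular.
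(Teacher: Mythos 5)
Your proof is correct and takes essentially the same route as the paper's: both translate between $X$ and $\mathrm{B}X$ via Theorem \ref{yoneda via formal ball} and Theorem \ref{continuity via formal ball}, and both identify the extra way-below clause with compactness of every element (i.e.\ $\mathfrak{w}=X$) through Lemma \ref{characterizing compact element} and the characterization of Smyth completeness as Yoneda completeness plus compactness of all elements. Your write-up merely makes explicit two steps the paper compresses, namely the algebraicity argument in $(1)\Rightarrow(2)$ and the observation in $(3)\Rightarrow(1)$ that comparing the two $s=1$ way-below formulas pins down $\mathfrak{w}(x,y)=X(x,y)$.
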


	\begin{proof} $(1)\Rightarrow(2)$ Since $X$ is Smyth comple,  it is a real-enriched domain with way below distributor given by $\mathfrak{w}(x,y)=X(x,y)$. Then, from Theorem \ref{yoneda via formal ball} and Theorem \ref{continuity via formal ball} it follows that $\mathrm{B}X$ is directed complete and continuous, and that  for all $x,y\in X$ and $r,s>0$, $$(x,r)\ll(y,s)\iff   r< s\with X(x,y).$$ 
		
		$(2)\Rightarrow(3)$ Trivial. 
		
		$(3)\Rightarrow(1)$ By Theorem \ref{yoneda via formal ball} and Theorem \ref{continuity via formal ball}, $X$ is a real-enriched domain with way below distributor given by $\mathfrak{w}(x,y)=X(x,y)$, then every element of $X$ is compact and $X$ is Smyth complete. \end{proof} 
	
	Consider the real-enriched category $X$ in Example \ref{Example-G}. Since every forward Cauchy net of $X$ is eventually constant, it follows that $X$ is Smyth complete. But, the ordered set of formal balls of $X$ fails to be directed complete. This shows that in corollaries \ref{domain via formal ball} and \ref{Smyth via formal ball}  the assumption  that $\&$ is Archimedean is indispensable. 
	
	\vskip 5pt \noindent{\bf Real-enriched continuous lattices}
	
	A complete (hence cocomplete)  real-enriched domain is called a \emph{real-enriched continuous lattice}.
	
	Before proceeding with our investigation of real-enriched continuous lattices, we present a property of ideals of separated and complete real-enriched categories.  
	
	Let $X$ be a real-enriched category. For  each ideal $I$ of the ordered set $X_0$, the conical weight  \[\Lambda(I)\coloneqq\sup_{x\in I}X(-,x)\]   is clearly an ideal of $X$. Conversely, if $X$ is finitely complete and finitely cocomplete,   with help of Proposition \ref{chara of cotensored}   one verifies that for each ideal $\phi$ of $X$, the set \[\chi(\phi)\coloneqq\{x\in  X\mid\phi(x)=1\}\] is an ideal of the ordered set $X_0$.

	\begin{lem}\label{ideals and directed sets}
		If $X$ is a  finitely complete and finitely cocomplete real-enriched category and  $\idl X_0$ is the set of  ideals  of the ordered set $X_0$, then $\Lambda\colon \idl X_0 \lra(\CI X)_0$ is  left adjoint to $\chi\colon (\CI X)_0\lra \idl X_0 $.   \end{lem}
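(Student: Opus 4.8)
The plan is to verify directly the defining bi-implication of a Galois connection between the ordered sets $\idl X_0$ and $(\CI X)_0$, namely
\[
\Lambda(I)\sqsubseteq\phi \iff I\subseteq\chi(\phi)
\]
for all $I\in\idl X_0$ and all $\phi\in\CI X$. Since the definition of adjunction between ordered sets in Section~\ref{galois connections} is precisely this equivalence, establishing it already constitutes the whole statement, and by the monotonicity lemma for adjoints there is no separate order-preservation check to perform. Before starting I would recall from the discussion preceding the statement that the two maps are well defined: $\Lambda(I)=\sup_{x\in I}\sy(x)$ is a conical, hence enriched, ideal of $X$, while $\chi(\phi)=\{x\mid\phi(x)=1\}$ is an ideal of the ordered set $X_0$, the directedness of the latter being exactly the point where finite completeness and finite cocompleteness of $X$ enter, through Proposition~\ref{chara of cotensored}.

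The first key step is to unwind the left-hand order. Since $\CI X$ carries the category structure inherited from $\CP X$, its underlying order is the pointwise order on weights, and the join $\sup_{x\in I}\sy(x)$ defining $\Lambda(I)$ is computed pointwise in $\CP X$. Consequently $\Lambda(I)\sqsubseteq\phi$ holds precisely when $\sup_{x\in I}X(y,x)\le\phi(y)$ for every $y\in X$, which, the supremum being pointwise, is equivalent to $\sy(x)\sqsubseteq\phi$ for every single $x\in I$.

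The second key step identifies when a representable weight lies below $\phi$. By the Yoneda lemma $\CP X(\sy(x),\phi)=\phi(x)$, and $\sy(x)\sqsubseteq\phi$ in the underlying order means exactly $1\le\CP X(\sy(x),\phi)$; hence $\sy(x)\sqsubseteq\phi$ if and only if $\phi(x)=1$, that is, if and only if $x\in\chi(\phi)$. Combining the two steps gives
\[
\Lambda(I)\sqsubseteq\phi \iff \bigl(\forall x\in I:\ x\in\chi(\phi)\bigr) \iff I\subseteq\chi(\phi),
\]
which is the required equivalence.

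Honestly there is no deep obstacle here: the computation collapses to one application of the Yoneda lemma once the two maps are known to be well defined. The only genuinely delicate points are bookkeeping ones — keeping straight that the underlying order of $\CI X$ is the pointwise (not the reversed) order, so that $\Lambda(I)\sqsubseteq\phi$ really does translate into the pointwise inequalities above, and confirming the well-definedness of $\chi(\phi)$, in particular its directedness, which is the sole place the finite-(co)completeness hypothesis is used. I would present that well-definedness as an invocation of the paragraph preceding the lemma rather than reprove it here.
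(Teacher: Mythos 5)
Your proof is correct and follows essentially the same route as the paper: the paper's argument is exactly the chain $I\subseteq\chi(\phi)\iff\forall x\in I,\ \phi(x)=1\iff\Lambda(I)\leq\phi$, which you reproduce with the intermediate steps (pointwise order on $(\CI X)_0$ and the Yoneda lemma) made explicit. The extra detail is fine but not a different method.
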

	\begin{proof}   Suppose   $I$ is an ideal of the ordered set $X_0$ and 
		$\phi$ is an ideal of $X$. Then
		\[I\subseteq\chi(\phi)\iff\forall x\in I,\phi(x)= 1 \iff \Lambda(I)\leq\phi,\]
		so $\Lambda\dashv \chi$. \end{proof}

	\begin{prop}\label{ideals and directed sets II} Let $X$ be a separated and complete real-enriched category. Then for each weight $\phi$ of $X$, the following are equivalent: \begin{enumerate}[label=\rm(\arabic*)]  \item $\phi$ is an ideal of $X$. \item  $\phi=\Lambda(I)$ for some ideal $I$ of the complete lattice $X_0$.   
		\end{enumerate} In this case, the colimit of $\phi$ is a join of $\chi(\phi)$ in the complete lattice $X_0$. 
	\end{prop}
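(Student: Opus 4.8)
The implication $(2)\Rightarrow(1)$ is immediate and needs no real work: a weight of the form $\Lambda(I)=\bv_{x\in I}X(-,x)$ is conical, and it was already observed (just before Lemma \ref{ideals and directed sets}) that such a conical weight attached to an ideal $I$ of $X_0$ is an ideal of $X$. So the whole substance lies in the converse $(1)\Rightarrow(2)$, and my plan is to prove the stronger statement that \emph{every} ideal of a complete $X$ is a conical weight assembled from a directed family of representables. This reformulation is what makes the result tractable, because once $\phi=\bv_{x\in A}\sy(x)$ for a suitable directed $A\subseteq X_0$, the identification $\phi=\Lambda(\da A)$ is automatic.

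The construction goes as follows. Let $\phi$ be an ideal of $X$. By Proposition \ref{characterization of  ideal} there is a forward Cauchy net $\{x_i\}_{i\in D}$ with
\[
\phi=\bv_{i\in D}\bw_{j\geq i}\sy(x_j).
\]
Since $X$ is complete, its underlying order $X_0$ is a complete lattice (Theorem \ref{characterizations of complete Q-orders}), so for each $i\in D$ I may form the meet $m_i:=\bigwedge_{j\geq i}x_j$ in $X_0$. The plan is then to replace each $\bw_{j\geq i}\sy(x_j)$ by the single representable $\sy(m_i)$, so that $\phi$ becomes a plain join of representables. Because $D$ is directed and larger indices give smaller tails, the family $\{m_i\}_{i\in D}$ is increasing, hence directed, in $X_0$; thus $I:=\da\{m_i\mid i\in D\}$ is an ideal of $X_0$, and the reformulation gives $\phi=\bv_i\sy(m_i)=\Lambda(I)$, which is exactly $(2)$.

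The step I expect to be the main obstacle — and the only place where completeness (not merely Yoneda completeness) is essential — is the identity $\sy(m_i)=\bw_{j\geq i}\sy(x_j)$, i.e.
\[
X(w,m_i)=\bw_{j\geq i}X(w,x_j)\qquad\text{for all }w\in X.
\]
The plan to establish this is to note that, $X$ being complete hence tensored, each functor $X(w,-)\colon X\lra\sV$ is a right adjoint by Proposition \ref{property of tensor}, and right adjoints preserve limits. The meet $m_i$ in the complete lattice $X_0$ coincides with the corresponding (conical) weighted limit in $X$, so $X(w,-)$ must send it to the infimum in $\sV$ of the values $X(w,x_j)$, giving the displayed equality. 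The delicate point to verify carefully here is precisely that the order-theoretic meet in $X_0$ is the weighted limit that the right adjoint $X(w,-)$ preserves, and that the image limit in $\sV$ is the pointwise infimum.

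Finally, for the closing assertion about the colimit: each $m_i$ satisfies $\sy(m_i)\leq\phi$, hence $\phi(m_i)=1$ and $m_i\in\chi(\phi)$, so $I\subseteq\chi(\phi)$. Together with $\Lambda(\chi(\phi))\leq\phi=\Lambda(I)$, which comes from the adjunction $\Lambda\dashv\chi$ of Lemma \ref{ideals and directed sets}, this forces $\phi=\Lambda(\chi(\phi))$. Since $\Lambda(\chi(\phi))=\bv_{x\in\chi(\phi)}\sy(x)$ is a conical weight, its colimit is the join of $\chi(\phi)$ in $X_0$, by the computation of colimits of conical weights carried out in the proof of Theorem \ref{characterizations of complete Q-orders}. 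This is the required description of $\colim\phi$, completing the plan.
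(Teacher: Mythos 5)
Your proof is correct and takes essentially the same route as the paper's: both pass to the tail meets $m_i=\bigwedge_{j\geq i}x_j$ in $X_0$ of a forward Cauchy net generating $\phi$, form the ideal $I=\da\{m_i\mid i\in D\}$, and identify $\phi$ with $\Lambda(I)$; the only difference is that you spell out the identity $\sy(m_i)=\bw_{j\geq i}\sy(x_j)$ (via tensoredness of $X$ and preservation of meets by the right adjoint $X(w,-)$), which the paper asserts implicitly when it claims $\phi=\sup_{x\in I}X(-,x)$. The closing colimit statement is likewise handled the same way, through $I\subseteq\chi(\phi)$, the adjunction $\Lambda\dashv\chi$, and the conical-colimit computation from Theorem \ref{characterizations of complete Q-orders}.
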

	\begin{proof}
		$(1)\Rightarrow(2)$  We show that the ideal $\chi(\phi)$ satisfies the requirement; that is, $\Lambda\circ\chi(\phi)=\phi$.  Since $\Lambda$ is left adjoint to $\chi$, then $\Lambda\circ\chi(\phi)\leq\phi$. For the converse inequality, pick a forward Cauchy net $\{x_i\}_{i\in D}$ of $X$ such that \[\phi(x)=\sup_{i\in D}\inf_{j\geq i} X(x,x_j).\]  Let \[I  =\Big\{x\in X\mid   x\sqsubseteq \inf_{j\geq i}x_j~\text{for some}~ i\in D\Big\},\] where $\inf_{j\geq i}x_j$ denotes the meet in   $X_0$.
		Then, $I$ is an ideal of the ordered set $X_0$ and \[\phi =\sup_{x\in I} X(-,x),\] so $I\subseteq \chi(\phi)$  and consequently,   $\phi\leq\Lambda\circ\chi(\phi)$.
		
		$(2)\Rightarrow(1)$ Trivial, since every ideal  of the ordered set $X_0$ may be viewed as a forward Cauchy net of $X$.

		Finally, we check that the colimit of $ \phi$ is the join   $\sup\chi(\phi)$   of $\chi(\phi)$ in the complete lattice $X_0$. Let $I$ be the ideal  of $X_0$ given in   $(1)\Rightarrow(2)$. Since $I$ is contained in $\chi(\phi)$ and $\colim \phi=\sup_{z\in X}\phi(z)\otimes z$,  it follows that $\colim \phi\sqsupseteq\sup\chi(\phi)$. The converse inequality follows from that \begin{align*}\colim\phi &= \colim \sup_{x\in I} X(-,x)= \sup I\sqsubseteq\sup\chi(\phi). \qedhere \end{align*}
	\end{proof}
	
	\begin{cor}\label{Yoneda continuity = Scott continuity} A functor $f\colon  X\lra Y$  between complete real-enriched categories  is Yoneda continuous if and only if \(f\colon  X_0\lra Y_0\)   is Scott continuous. \end{cor}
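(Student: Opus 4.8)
The plan is to route everything through the bridge between ideals of $X$ and directed (ideal) subsets of the underlying order $X_0$ provided by Proposition \ref{ideals and directed sets II}, together with the remark following Proposition \ref{characterization of  ideal} that a functor is Yoneda continuous exactly when it preserves colimits of all ideals. It therefore suffices to prove that $f$ preserves colimits of ideals of $X$ if and only if $f\colon X_0\lra Y_0$ preserves directed joins, i.e.\ is Scott continuous.

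The technical heart is a single computation, valid for any functor $f$ and any subset $A\subseteq X$. Writing $\phi=\sup_{a\in A}\sy_X(a)$ for the associated conical weight, I claim the weight $\phi\circ f^*$ of $Y$ equals $\sup_{a\in A}\sy_Y(f(a))$: indeed $(\phi\circ f^*)(y)=\sup_{x\in X}\phi(x)\with Y(y,f(x))=\sup_{a\in A}\sup_{x\in X}X(x,a)\with Y(y,f(x))$, and since $f$ is a functor one has $X(x,a)\with Y(y,f(x))\leq Y(y,f(a))$ with equality at $x=a$, so the inner supremum is $Y(y,f(a))$. When $A$ is directed in $X_0$, $\phi$ is the ideal generated by $A$ viewed as a forward Cauchy net (Proposition \ref{characterization of  ideal}), its colimit is the join $\sup A$ in $X_0$ by the argument of Theorem \ref{characterizations of complete Q-orders} (see also Corollary \ref{underlying order of Yoneda}), and likewise $\phi\circ f^*$ is a conical weight of $Y$ whose colimit is $\sup f(A)$ in $Y_0$.

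For necessity I would assume $f$ is Yoneda continuous, hence preserves colimits of ideals, and take a directed $D\subseteq X_0$. Applying preservation of the colimit of the conical ideal $\phi=\sup_{a\in D}\sy_X(a)$ together with the computation above gives $f(\sup D)=f(\colim\phi)=\colim(\phi\circ f^*)=\sup f(D)$, so $f\colon X_0\lra Y_0$ is Scott continuous. For sufficiency I would assume $f\colon X_0\lra Y_0$ is Scott continuous and take an arbitrary ideal $\phi$ of $X$. By Proposition \ref{ideals and directed sets II}, $\phi=\sup_{x\in I}\sy_X(x)$ where $I=\chi(\phi)$ is an ideal (hence directed subset) of the complete lattice $X_0$ and $\colim\phi=\sup I$. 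Then $\phi\circ f^*=\sup_{x\in I}\sy_Y(f(x))$ has colimit $\sup f(I)$, and Scott continuity yields $\sup f(I)=f(\sup I)=f(\colim\phi)$, so $f$ preserves the colimit of $\phi$ and is Yoneda continuous.

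The delicate points I anticipate are bookkeeping rather than conceptual: first, confirming that the conical weight attached to a directed set is genuinely an ideal, so that Yoneda continuity phrased via ideals applies; and second, invoking Proposition \ref{ideals and directed sets II} correctly, which is exactly where separatedness and completeness of $X$ enter, guaranteeing that every ideal is conical of the form $\sup_{x\in\chi(\phi)}\sy_X(x)$ with colimit $\sup\chi(\phi)$. Once the identity $\phi\circ f^*=\sup_{a\in A}\sy_Y(f(a))$ is established, both implications fall out immediately from matching ``colimit of an ideal'' against ``directed join in the underlying order.''
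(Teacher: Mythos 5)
Your proposal is correct and takes exactly the route the paper intends: the paper states this corollary without proof as an immediate consequence of Proposition \ref{ideals and directed sets II}, and your argument --- combining the characterization of Yoneda continuity as preservation of colimits of ideals (the remark after Proposition \ref{characterization of  ideal}) with the correspondence $\phi=\Lambda(\chi(\phi))$, $\colim\phi=\sup\chi(\phi)$, and the identity $\phi\circ f^*=\sup_{a\in A}\sy_Y(f(a))$ --- is precisely the fleshing-out of that implicit proof. The key computation and both implications are sound, so nothing is missing.
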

	
	As another consequence of  Proposition \ref{ideals and directed sets II} we present a  useful characterization of ideals of a separated and complete real-enriched category.  
	
	\begin{prop}\label{ideal of complete A} Suppose $X$ is a separated and complete real-enriched category. Then, a weight $\phi$ of  $X$ is an ideal if and only if  \begin{enumerate}[label=\rm(\roman*)]  \item $\phi(x)>0$ for some $x\in X$. \item $\phi(x\vee y)=\phi(x)\wedge\phi(y)$ for all $x,y\in X$, where $x\vee y$ denotes the join in $X_0$. \item   $\phi(r\otimes x)=1$ whenever $r<\phi(x)$. \end{enumerate} \end{prop}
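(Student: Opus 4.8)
The plan is to reduce everything to the characterization of ideals of a separated complete real-enriched category given in Proposition \ref{ideals and directed sets II}: a weight $\phi$ of $X$ is an ideal if and only if $\phi=\Lambda(I)=\sup_{z\in I}X(-,z)$ for some ideal $I$ of the complete lattice $X_0$. Two elementary facts about complete real-enriched categories will be used throughout. First, every weight $\phi$ is antitone for the underlying order, so $x\sqsubseteq y$ forces $\phi(y)\le\phi(x)$; in particular $\phi(x\vee y)\le\phi(x)\wedge\phi(y)$ always holds. Second, since joins in $X_0$ are conical colimits, $X(x\vee y,u)=X(x,u)\wedge X(y,u)$ by the argument of Theorem \ref{characterizations of complete Q-orders}, and the tensor satisfies $r\le X(x,r\otimes x)$, because reflexivity gives $1=X(r\otimes x,r\otimes x)=r\ra X(x,r\otimes x)$ and $r\ra s=1$ iff $r\le s$.

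For necessity I would write $\phi=\Lambda(I)$ with $I$ a (directed, lower) ideal of $X_0$. Condition (i) is immediate since an ideal is inhabited, hence $\sup_x\phi(x)=1$. For (ii) it suffices to prove $\phi(x)\wedge\phi(y)\le\phi(x\vee y)$: given $r<\phi(x)\wedge\phi(y)=\min\{\phi(x),\phi(y)\}$, pick $z,w\in I$ with $X(x,z)>r$ and $X(y,w)>r$, use directedness to find $u\in I$ above both, and conclude $X(x\vee y,u)=X(x,u)\wedge X(y,u)>r$, whence $\phi(x\vee y)>r$. For (iii), if $r<\phi(x)=\sup_{z\in I}X(x,z)$ then some $z\in I$ has $X(x,z)>r$, so $r\ra X(x,z)=1$ and therefore $\phi(r\otimes x)=\sup_{z\in I}(r\ra X(x,z))=1$.

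For sufficiency I would take $I=\chi(\phi)=\{x\in X\mid\phi(x)=1\}$ and show both that $I$ is an ideal of $X_0$ and that $\phi=\Lambda(I)$, after which Proposition \ref{ideals and directed sets II} finishes the argument. Antitonicity of $\phi$ makes $\chi(\phi)$ a lower set; by (i) there is $x_0$ with $\phi(x_0)>0$, and then (iii) gives $\phi(r\otimes x_0)=1$ for $r<\phi(x_0)$, so $\chi(\phi)$ is nonempty; condition (ii) shows $\phi(x\vee y)=1$ whenever $\phi(x)=\phi(y)=1$, so $\chi(\phi)$ is closed under binary joins and hence directed. For $\phi=\Lambda(\chi(\phi))$, the inequality $\phi\ge\Lambda(\chi(\phi))$ follows from the weight axiom $\phi(z)\with X(x,z)\le\phi(x)$ with $\phi(z)=1$; for the reverse, fix $x$ and $r<\phi(x)$, note $r\otimes x\in\chi(\phi)$ by (iii), and use $X(x,r\otimes x)\ge r$ to get $\Lambda(\chi(\phi))(x)=\sup_{z\in\chi(\phi)}X(x,z)\ge r$, so $\Lambda(\chi(\phi))(x)\ge\phi(x)$.

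The routine monotonicity and supremum manipulations aside, the step that needs the most care is the bookkeeping around the tensor in condition (iii): both the forward verification $\phi(r\otimes x)=1$ and, more delicately, the reverse inequality in $\phi=\Lambda(\chi(\phi))$ hinge on correctly exploiting the defining adjunction $X(r\otimes x,-)=r\ra X(x,-)$, in particular the identity $r\le X(x,r\otimes x)$ that lets a tensor $r\otimes x$ of small value be recovered as an element of $\chi(\phi)$ approximating $x$ to degree $r$. The remaining mild subtlety is to ensure the join formula $X(x\vee y,u)=X(x,u)\wedge X(y,u)$, valid because $X$ is complete, is invoked when verifying (ii).
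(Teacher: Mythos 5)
Your proof is correct and follows essentially the same route as the paper: both directions reduce to Proposition \ref{ideals and directed sets II}, with necessity verified from $\phi=\Lambda(I)$ (using $X(x\vee y,u)=X(x,u)\wedge X(y,u)$ and the tensor adjunction), and sufficiency obtained by showing $\chi(\phi)=\{x\mid\phi(x)=1\}$ is directed and that $\phi=\Lambda(\chi(\phi))$ via $r\otimes x\in\chi(\phi)$ and $X(x,r\otimes x)\geq r$. The only differences are presentational (your one-sided inequality argument for (ii) versus the paper's $\sup$-$\inf$ equality chain), and you in fact spell out a couple of points the paper leaves implicit, such as $\chi(\phi)$ being a lower set.
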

	
	\begin{proof} We prove  the necessity first. Since $\phi$ is an ideal, it is inhabited, hence there is some $x$ for which $\phi(x)>0$.   In the following we use Proposition \ref{ideals and directed sets II} to check that $\phi$ satisfies both (ii) and (iii). Pick an ideal $I$ of $X_0$ such that $\phi=\Lambda(I)$. We index the elements of $I$ by itself; that means, we write an element of $I$ as $x_i$, with $i\leq j$ if   $x_i\sqsubseteq x_j$ in $X_0$. Then  \begin{align*}
			\phi(x\vee y) & =\sup_{i\in I}\inf_{j\geq i}X(x\vee y,x_j)\\
			& = \sup_{i\in I}\inf_{j\geq i} X(x,x_j)\wedge X(y,x_j) \\
			& = \sup_{i\in I} \Big(\inf_{j\geq i} X(x,x_j)\wedge \inf_{j\geq i}X(y,x_j)\Big)\\
			& = \Big(\sup_{i\in I}\inf_{j\geq i}X(x,x_j)\Big)\wedge \Big(\sup_{i\in I}\inf_{j\geq i}X(y,x_j)\Big)  \\
			& =  \phi(x)\wedge\phi(y),
		\end{align*} which shows that $\phi$ satisfies (ii). For (iii), assume that $\phi(x)>r$. Then there exists  $i_0\in I$ such that $\inf_{j\geq i_0}X(x,x_j)\geq r$, hence \begin{align*}
			\phi(r\otimes x) & =\sup_{i\in I}\inf_{j\geq i}X(r\otimes x,x_j)\\ & = \sup_{i\in I}\inf_{j\geq i}(r\ra X(x,x_j)) \\ &\geq r\ra \inf_{j\geq i_0}X(x,x_j) \\ &=1.\end{align*} 
		
		Now we prove the sufficiency. Let
		$ I=\{x\in X\mid\phi(x)=1\}.$ 
		By (i) and (iii) one sees that $I$ is not empty. By (ii) one sees that $I$ is directed. So $I$ is an ideal of $X_0$. It is clear that $\Lambda(I)\leq\phi$. To see that $\phi$ is an idea, it suffices to check that $\phi(x)\leq\Lambda (I)(x)$ for all $x\in X$. If $\phi(x)>r$,  then $\phi(r\otimes x)=1$ by (iii), hence  $r\otimes x\in I$  and consequently, $\Lambda(I)(x)\geq X(x,r\otimes x)\geq r$. Then $\phi(x)\leq\Lambda (I)(x)$ by arbitrariness of $r$. \end{proof}
	
	\begin{cor}\label{CA closed under meets}
		For each separated and complete real-enriched category $X$, the set   $\CI X$ of ideals of $X$  is closed in $[0,1]^X$ (ordered pointwise)  under   meets and directed joins.  \end{cor}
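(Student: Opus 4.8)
The plan is to deduce both closure properties from the characterization of ideals in a separated and complete real-enriched category given in Proposition \ref{ideal of complete A}, combined with the fact (Proposition \ref{Distcalcu}) that pointwise meets and pointwise joins of weights are again weights. The latter guarantees that meets and joins computed in $[0,1]^X$ are the relevant operations, so it suffices to prove: for any family $\{\phi_i\}_{i\in I}$ of ideals the pointwise meet $\inf_i\phi_i$ is an ideal, and for any \emph{directed} family the pointwise join $\sup_i\phi_i$ is an ideal. In each case I would simply verify conditions (i)--(iii) of Proposition \ref{ideal of complete A}.

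First I would record a uniform consequence of condition (iii): every ideal $\phi$ of $X$ satisfies $\phi(\bot)=1$, where $\bot$ is the bottom element of the complete lattice $X_0$. Indeed, $\bot=0\otimes x$ for any $x$ since $X(0\otimes x,-)=0\ra X(x,-)=1$; and by condition (i) there is some $x$ with $\phi(x)>0$, whence $0<\phi(x)$ and condition (iii) give $\phi(\bot)=\phi(0\otimes x)=1$. This single point $\bot$, at which \emph{all} ideals take value $1$, is exactly what will keep condition (i) alive under the formation of arbitrary meets.

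For closure under meets I would set $\phi=\inf_i\phi_i$ and check the three conditions. Condition (i) holds since $\phi(\bot)=\inf_i\phi_i(\bot)=1$. Condition (ii) follows from the identity $\inf_i(a_i\wedge b_i)=(\inf_i a_i)\wedge(\inf_i b_i)$, valid in the complete lattice $[0,1]$, applied with $a_i=\phi_i(x)$ and $b_i=\phi_i(y)$ together with $\phi_i(x\vee y)=\phi_i(x)\wedge\phi_i(y)$. Condition (iii) holds because $r<\phi(x)$ forces $r<\phi_i(x)$ for every $i$, so $\phi_i(r\otimes x)=1$ for all $i$ and hence $\phi(r\otimes x)=1$. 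All three steps are routine.

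For closure under directed joins I would index the directed family by itself, so that $\{\phi_i(x)\}_i$ and $\{\phi_i(y)\}_i$ become monotone nets, and put $\phi=\sup_i\phi_i$. Conditions (i) and (iii) are immediate, since $\phi$ dominates each $\phi_i$. The crux is condition (ii): I must establish $\sup_i(\phi_i(x)\wedge\phi_i(y))=(\sup_i\phi_i(x))\wedge(\sup_i\phi_i(y))$, where the inequality $\le$ is automatic. For $\ge$, given any $c$ strictly below the right-hand side, I would choose indices with $\phi_{i_1}(x)>c$ and $\phi_{i_2}(y)>c$, use directedness to find $k\ge i_1,i_2$, and invoke monotonicity to get $\phi_k(x)\wedge\phi_k(y)>c$. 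This is the only place where directedness is genuinely used, and it is the main (if mild) obstacle: binary meet in $[0,1]$ distributes over \emph{directed} joins but not over arbitrary ones (as $1\wedge0$ versus $0\wedge1$ already shows), which is precisely why the corollary asserts closure under directed joins rather than under all joins.
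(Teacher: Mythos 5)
Your proof is correct and takes essentially the same route as the paper: both arguments verify conditions (i)--(iii) of Proposition \ref{ideal of complete A} for the pointwise meet of an arbitrary family and for the pointwise join of a directed family, with directedness entering only in condition (ii) for joins. The one point where you add detail is welcome: your observation that every ideal takes value $1$ at the bottom element of $X_0$ (via condition (iii) with $r=0$) makes rigorous the paper's bare assertion that the meet ``clearly'' satisfies condition (i).
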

	
	\begin{proof} Let $\{\phi_i\}_{i\in J}$ be a subset of $\CI X$. The meet $\bw_{i\in J}\phi_i$ clearly satisfies (i) and (ii) in Proposition \ref{ideal of complete A}. It remains to check it also satisfies the condition (iii).  If $r<\bw_{i\in J}\phi_i(x)$, then for each $i\in J$, $r<\phi_i(x)$, hence $\phi_i(r\otimes x)=1$ and consequently, $\bw_{i\in J}\phi_i(r\otimes x)=1$. Likewise,  $\CI X$ is closed in $[0,1]^X$  under directed joins. \end{proof}
	
	\begin{thm}  \label{chacl} {\rm (Lai and Zhang \cite{LaiZ2020})}
		For each separated and complete real-enriched category $X$, the following  are equivalent: \begin{enumerate}[label=\rm(\arabic*)] 
			\item $X$ is a real-enriched continuous lattice.
			\item    $X_0$ is a continuous  lattice and for each $x\in X$ and each ideal $\phi$ of $X$,
			$$ X(x,\colim\phi)=\inf_{y\ll x}\phi(y),$$
			where $\ll$ denotes the way below relation in $X_0$.
			\item $X_0$ is a continuous  lattice and the map \[{\sf d}\colon X\lra \mathcal{I}X,\quad {\sf d}(x)=\sup_{y\ll x}X(-,y)\] is a functor, 	where $\ll$ denotes the way below relation in $X_0$. \end{enumerate}	
	\end{thm}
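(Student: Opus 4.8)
The plan is to run the cycle $(1)\Rightarrow(2)\Rightarrow(3)\Rightarrow(1)$. Throughout I use that a separated complete $X$ is automatically Yoneda complete and satisfies $\CJ X=\CI X$ (every ideal has a colimit), so ``real-enriched continuous lattice'' means exactly that $\colim\colon\CI X\lra X$ has a left adjoint $\wayb$, which by Proposition \ref{way below relation} amounts to $\mathfrak{w}(-,x)\in\CI X$ having colimit $x$, with $\wayb x=\mathfrak{w}(-,x)$. From Proposition \ref{ideals and directed sets II} I record that every ideal is $\phi=\Lambda(\chi(\phi))$ with $\chi(\phi)=\{w\mid\phi(w)=1\}$ a directed lower set of the complete lattice $X_0$ and $\colim\phi=\sup\chi(\phi)$, and from Corollary \ref{calculation of sup by tensors} that $\colim\phi=\sup_z\phi(z)\otimes z$. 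One identity will recur: whenever $\{y\mid y\ll x\}$ is an ideal of $X_0$, so that ${\sf d}(x)=\sup_{y\ll x}X(-,y)\in\CI X$, then $\CI X({\sf d}(x),\psi)=\inf_{y\ll x}\psi(y)$; this follows formally because $\ra$ turns joins in its first variable into meets, together with the Yoneda lemma $\sub_X(X(-,y),\psi)=\psi(y)$.

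For $(1)\Rightarrow(2)$ the substantial point is that $X_0$ is a continuous lattice. The key transfer lemma is: if $r<\mathfrak{w}(w,x)$ and $r<1$, then $r\otimes w\ll x$ in $X_0$. Indeed, let $D$ be directed in $X_0$ with $x\sqsubseteq\sup D$, put $\psi=\sup_{d\in D}X(-,d)$ (an ideal with $\colim\psi=\sup D$), and note $X(x,\colim\psi)=1$; the definition of $\mathfrak{w}$ then forces $\psi(w)=\sup_{d\in D}X(w,d)\geq\mathfrak{w}(w,x)>r$, so $r<X(w,d)$, i.e.\ $r\otimes w\sqsubseteq d$, for some $d\in D$. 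Since $\colim\wayb x=x$ yields $x=\sup_w\mathfrak{w}(w,x)\otimes w=\sup\{r\otimes w\mid r<\mathfrak{w}(w,x)\}$ and each such $r\otimes w$ is way below $x$, we get $x=\sup\{y\mid y\ll x\}$; as finite joins of elements way below $x$ are again way below $x$, the set $\{y\mid y\ll x\}$ is directed, so $X_0$ is continuous. Now ${\sf d}(x)=\Lambda(\{y\mid y\ll x\})\in\CI X$ has colimit $x$, whence $\wayb x=\mathfrak{w}(-,x)\sqsubseteq{\sf d}(x)$ since $\mathfrak{w}(-,x)$ is the least ideal with colimit $x$ (Proposition \ref{way below relation}); conversely, writing $x=\sup\chi(\wayb x)$ with $\chi(\wayb x)$ directed, any $y\ll x$ lies below some $w_0$ with $\mathfrak{w}(w_0,x)=1$, so the distributor law gives $\mathfrak{w}(y,x)\geq\mathfrak{w}(w_0,x)\with X(y,w_0)=1$, i.e.\ $X(-,y)\leq\mathfrak{w}(-,x)$ and hence ${\sf d}(x)\sqsubseteq\wayb x$. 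Thus ${\sf d}(x)=\wayb x$, and $X(x,\colim\phi)=\CI X(\wayb x,\phi)=\inf_{y\ll x}\phi(y)$ is just the adjunction $\wayb\dashv\colim$ read off through the recurring identity.

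For $(2)\Rightarrow(3)$, continuity of $X_0$ makes ${\sf d}(x)=\Lambda(\{y\mid y\ll x\})$ an ideal with $\colim{\sf d}(x)=x$; the recurring identity gives $\CI X({\sf d}(x_1),{\sf d}(x_2))=\inf_{z\ll x_1}{\sf d}(x_2)(z)$, and the formula of $(2)$ applied to $\phi={\sf d}(x_2)$ identifies this with $X(x_1,\colim{\sf d}(x_2))=X(x_1,x_2)$, so ${\sf d}$ is (fully faithful, in particular) a functor. For $(3)\Rightarrow(1)$, I deduce the enriched adjunction ${\sf d}\dashv\colim$ from an order adjunction via Theorem \ref{Characterization of adjoints}: since ${\sf d}$ and $\colim$ are functors it suffices to verify ${\sf d}\dashv\colim$ between $X_0$ and $(\CI X)_0$, i.e.\ $\colim\circ{\sf d}=\id$ (clear, equal to $x$) and ${\sf d}(\colim\phi)\sqsubseteq\phi$ pointwise; the latter reduces to showing $y\ll\colim\phi=\sup\chi(\phi)\Rightarrow y\in\chi(\phi)$, which holds because $\chi(\phi)$ is a directed lower set. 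Hence $X$ is continuous, and being complete it is a real-enriched continuous lattice.

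The main obstacle is the first half of $(1)\Rightarrow(2)$, proving $X_0$ continuous: the naive candidate $\{w\mid\mathfrak{w}(w,x)=1\}=\chi(\wayb x)$ is directed with join $x$, yet its members need not be order-way-below $x$ (the product-t-norm unit interval already exhibits this), so continuity of $X_0$ cannot be read off directly from $\colim\wayb x=x$. The transfer lemma above --- trading the strict inequality $r<\mathfrak{w}(w,x)$ for the genuine relation $r\otimes w\ll x$ and then recovering $x$ as a join of such tensors --- is what repairs this gap, and it is the step I expect to demand the most care.
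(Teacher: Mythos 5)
Your proof is correct, and its overall architecture coincides with the paper's: the same cycle $(1)\Rightarrow(2)\Rightarrow(3)\Rightarrow(1)$, the same identification $\wayb x={\sf d}(x)$ resting on Proposition \ref{ideals and directed sets II}, essentially the same full-faithfulness computation for $(2)\Rightarrow(3)$, and the same reduction of $(3)\Rightarrow(1)$ to an order adjunction ${\sf d}\dashv\colim$ between $X_0$ and $(\CI X)_0$ via Theorem \ref{Characterization of adjoints}. Where you genuinely diverge is exactly the step you single out as the main obstacle: proving in $(1)\Rightarrow(2)$ that $X_0$ is a continuous lattice. The paper dispatches this abstractly: by Corollary \ref{CA closed under meets}, $\CI X$ is closed in $[0,1]^X$ under meets and directed joins, so $(\CI X)_0$ is a continuous lattice, and the adjunction $\wayb\dashv\colim$ exhibits $X_0$ as a retract of $(\CI X)_0$ in ${\bf dcpo}$, hence continuous. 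You instead prove a concrete transfer lemma --- $r<\mathfrak{w}(w,x)$ implies $r\otimes w\ll x$ in $X_0$, obtained by testing $\mathfrak{w}$ against the conical ideal $\sup_{d\in D}X(-,d)$ of an arbitrary directed set $D$ above $x$ --- and then combine $x=\colim\wayb x=\sup_w\mathfrak{w}(w,x)\otimes w$ (Corollary \ref{calculation of sup by tensors}) with join-preservation of $-\otimes w$ to exhibit $x$ as a (directed, after closing under finite joins) supremum of elements way below it. Both arguments are sound. The paper's is shorter and leans on general machinery (retracts of continuous lattices in ${\bf dcpo}$ are continuous); yours is self-contained at the level of the way-below distributor and makes visible exactly which elements witness continuity of $X_0$, namely the tensors $r\otimes w$ with $r$ strictly below $\mathfrak{w}(w,x)$. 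Your diagnosis that the naive candidate $\chi(\wayb x)$ cannot serve directly (its members need not be order-way-below $x$) is also correct, and it is precisely this gap that the two approaches fill in different ways.
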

	
	\begin{proof} 
		$(1)\Rightarrow(2)$  Since $\CI X$ is closed in $[0,1]^X$  under  meets and directed joins, then $(\CI X)_0$ is a continuous lattice since so is $[0,1]$. The adjunction $\wayb \dashv \colim \colon\CI X\lra X$ ensures that   $X_0$ is a retract of $(\CI X)_0$ in the category of dcpos,  so $X_0$ is a continuous lattice. 
		To see that 
		$$ X(x,\colim\phi)=\inf_{y\ll x}\phi(y)$$ for all $x\in X$ and all $\phi\in\CI X$, it suffices to show that the left adjoint $\wayb\colon A\lra\CI A$ of $\colim \colon\CI A\lra A$ is given by $\wayb x=\sup_{y\ll x}\sy(y)$, because $$\CI X\Big(\sup_{y\ll x}\sy(y), \phi\Big)=\sub_X\Big(\sup_{y\ll x}\sy(y), \phi\Big)= \inf_{y\ll x}\phi(y). $$
		
		Since $x$ is a colimit of the ideal $\sup_{y\ll x}\sy(y)$, then $\wayb x\leq \sup_{y\ll x}\sy(y)$.  Since $\sup\chi(\wayb x)=\colim \wayb x=x$ by Proposition \ref{ideals and directed sets II}, then every element   way below $x$ in $X_0$ belongs to $\chi(\wayb x)$, so $\sup_{y\ll x}\sy(y)\leq \wayb x$.
		
		$(2)\Rightarrow(3)$ Obvious.
		
		$(3)\Rightarrow(1)$    
		By virtue of Theorem \ref{Characterization of adjoints} we only need to show that ${\sf d}\colon X_0\lra (\mathcal{I}X)_0$ is left adjoint to $\colim\colon  (\mathcal{I}X)_0\lra X_0$. This follows directly from Proposition \ref{ideals and directed sets II}, which guarantees that for each $x\in X$, $\sup_{y\ll x}X(-,y)$ is the smallest ideal  of $X$ (w.r.t. underlying order) that has $x$ as a colimit. 
	\end{proof}
	\begin{prop}\label{way-product}
		Suppose $X$ is a real-enriched continuous  lattice. Then for all $x,y\in X$,
		$$\mathfrak{w}(y,x)=\sup_{z\ll x}X(y,z)=\sup_{z\ll x}\mathfrak{w}(y,z),$$ where $\ll$ refers to the way below relation in the complete lattice $X_0$.
	\end{prop}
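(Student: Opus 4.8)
The plan is to reduce both equalities to two facts already in hand: the explicit description of the left adjoint $\wayb$ obtained while proving Theorem \ref{chacl}, and the interpolation property of the way below relation $\ll$ in the continuous lattice $X_0$ (Corollary \ref{way below interpolates}). Since $X$ is a real-enriched continuous lattice it is in particular a continuous complete real-enriched category, so Proposition \ref{way below relation} applies and gives $\wayb x=\mathfrak{w}(-,x)$ for every $x\in X$, with $\CJ X=\CI X$ because every ideal of a complete category has a colimit.

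For the \textbf{first equality} I would recall that the proof of $(1)\Rightarrow(2)$ in Theorem \ref{chacl} shows $\wayb x=\sup_{z\ll x}\sy(z)$ in $\CI X$. Combining this with $\wayb x=\mathfrak{w}(-,x)$ yields $\mathfrak{w}(-,x)=\sup_{z\ll x}\sy(z)$. The join on the right is a conical colimit, i.e.\ a join of representables, which in $\CP X$ (and hence in $\CI X$, computed pointwise) is evaluated pointwise; so evaluating at $y$ gives $\mathfrak{w}(y,x)=\sup_{z\ll x}\sy(z)(y)=\sup_{z\ll x}X(y,z)$, which is exactly the first claimed identity.

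For the \textbf{second equality} the inequality $\sup_{z\ll x}\mathfrak{w}(y,z)\le\sup_{z\ll x}X(y,z)$ is immediate from the pointwise bound $\mathfrak{w}(y,z)\le X(y,z)$ noted right after the definition of the way below distributor. For the reverse inequality I would use interpolation: given $z\ll x$, Corollary \ref{way below interpolates} provides $w$ with $z\ll w\ll x$; applying the first equality to the pair $(y,w)$ gives $\mathfrak{w}(y,w)=\sup_{z'\ll w}X(y,z')\ge X(y,z)$, since $z\ll w$. Hence $X(y,z)\le\mathfrak{w}(y,w)\le\sup_{w\ll x}\mathfrak{w}(y,w)$, and taking the supremum over all $z\ll x$ yields $\sup_{z\ll x}X(y,z)\le\sup_{w\ll x}\mathfrak{w}(y,w)$. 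The two inequalities together give the second identity.

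The only genuinely delicate point is bookkeeping rather than a deep obstacle: one must be sure that the join defining $\wayb x$, which lives in the category $\CI X$ of ideals, really is computed pointwise when evaluated at $y$, and that the index set $\{z\mid z\ll x\}$ is the correct one (directedness of this set in the continuous lattice $X_0$ guarantees the supremum behaves well). Once the pointwise evaluation of conical colimits is invoked cleanly, the interpolation step is routine, so I expect no serious difficulty beyond citing Theorem \ref{chacl}, Proposition \ref{way below relation}, and Corollary \ref{way below interpolates} in the right order.
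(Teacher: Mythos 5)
Your proposal is correct and follows essentially the same route as the paper: the first equality is read off from the identity $\wayb x=\sup_{z\ll x}\sy(z)$ established in the argument of Theorem \ref{chacl} (combined, as you do, with $\wayb x=\mathfrak{w}(-,x)$ from Proposition \ref{way below relation}), and the second equality is deduced from the first together with the interpolation property of $\ll$ in the continuous lattice $X_0$. The paper's proof is just a two-sentence citation of these same ingredients, so your write-up is simply a fleshed-out version of it.
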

	
	\begin{proof} The first equality follows from the argument of Theorem \ref{chacl}. The second equality follows from  the first one and the interpolative property of the way below relation in $X_0$.
	\end{proof}
	
	\begin{cor}\label{cond impl cont} Suppose $X$ is a  separated and complete real-enriched category. If  $X_0$ is a continuous lattice  and  for all $p\in [0,1]$, the map \[X_0\lra X_0, \quad x\mapsto p\multimap x\] is Scott continuous, then $X$ is a real-enriched continuous lattice.
	\end{cor}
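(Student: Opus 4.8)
The plan is to verify condition (3) of Theorem \ref{chacl}, which characterizes real-enriched continuous lattices among separated and complete real-enriched categories: since $X_0$ is already assumed to be a continuous lattice, it remains only to check that the map ${\sf d}\colon X\lra\CI X$, ${\sf d}(x)=\sup_{z\ll x}X(-,z)$ (with $\ll$ the way below relation of $X_0$) is a functor. First I would record that ${\sf d}$ is well defined: for each $x$ the set $\{z\in X\mid z\ll x\}$ is an ideal of the ordered set $X_0$ (it is directed because $X_0$ is continuous, and it is a lower set), so ${\sf d}(x)$ is a genuine ideal of $X$ by Proposition \ref{ideals and directed sets II}.

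The core of the argument is to prove the functor inequality $X(a,b)\le\sub_X({\sf d}(a),{\sf d}(b))$, equivalently $X(a,b)\with{\sf d}(a)(y)\le{\sf d}(b)(y)$ for all $y\in X$. Writing $r=X(a,b)$ and unfolding the definitions, this amounts to
\[
\sup_{z\ll a}\bigl(r\with X(y,z)\bigr)\le\sup_{w\ll b}X(y,w).
\]
The key observation is that $a\sqsubseteq r\multimap b$, since $X(a,r\multimap b)=r\ra X(a,b)=r\ra r=1$. Here is where the two hypotheses enter decisively: because $X_0$ is continuous we have $b=\sup_{w\ll b}w$ as a directed join, and because $r\multimap-\colon X_0\lra X_0$ is Scott continuous we may pass it through this join to obtain $r\multimap b=\sup_{w\ll b}(r\multimap w)$, again directed. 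Then for any $z\ll a$ we have $z\ll a\sqsubseteq r\multimap b$, so $z\ll r\multimap b=\sup_{w\ll b}(r\multimap w)$ as well, and by the defining property of $\ll$ there is some $w\ll b$ with $z\sqsubseteq r\multimap w$, i.e. $r\le X(z,w)$. Transitivity of $X$ now gives $r\with X(y,z)\le X(z,w)\with X(y,z)\le X(y,w)\le{\sf d}(b)(y)$, and taking the supremum over $z\ll a$ yields the desired inequality. Hence ${\sf d}$ is a functor and $X$ is a real-enriched continuous lattice by Theorem \ref{chacl}.

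The main obstacle---and the step that genuinely uses the standing hypotheses rather than formal nonsense---is the interchange $r\multimap\sup_{w\ll b}w=\sup_{w\ll b}(r\multimap w)$. The inequality $a\sqsubseteq r\multimap b$ is cheap, but converting it into usable information about elements way below $a$ requires that the cotensor operation $r\multimap-$ commute with the directed join expressing $b$ as the join of its way-below elements; this is exactly Scott continuity of $r\multimap-$, and it is known to fail for some non-Archimedean t-norms (cf. the failure of $r\ra-$ to preserve directed joins for the G\"{o}del t-norm), which is why the assumption cannot simply be dropped. Everything else is bookkeeping with the order-theoretic facts about $\ll$ recorded in Section \ref{galois connections} and with the real-enriched transitivity law.
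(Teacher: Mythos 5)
Your proof is correct. Both you and the paper reduce the corollary to Theorem \ref{chacl}, and both exploit exactly the same mechanism: turn a hom-inequality into a cotensor inequality of the form $a\sqsubseteq p\multimap(\text{a directed join})$, push $p\multimap-$ through that directed join by Scott continuity, and then apply the defining property of $\ll$ to extract an interpolating element. The difference is which clause of Theorem \ref{chacl} you target. The paper verifies condition (2): for every ideal $\phi$ it proves $X(x,\colim\phi)=\inf_{y\ll x}\phi(y)$, applying Scott continuity to the directed join $\colim\phi=\sup\chi(\phi)$ furnished by Proposition \ref{ideals and directed sets II}. You verify condition (3), functoriality of ${\sf d}$, applying Scott continuity instead to the join $b=\sup_{w\ll b}w$ supplied by continuity of $X_0$. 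Your route has the small advantage that arbitrary ideals and the set $\chi(\phi)$ never enter the core computation (Proposition \ref{ideals and directed sets II} is needed only to see that ${\sf d}(x)$ is a genuine ideal of $X$), whereas the paper's route produces the explicit colimit formula of condition (2), which is slightly more informative than functoriality alone. Your closing remark on why Scott continuity of $p\multimap-$ cannot be dropped matches precisely the role it plays in the paper's argument.
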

	
	\begin{proof} We show that for each $x\in X$ and each ideal $\phi$ of $X$, \[X(x, \colim \phi)=\inf_{y\ll x}\phi(y),\] where $\ll$ denotes the way below relation in $X_0$.
		
		On the one hand, since $\{y\in X\mid y\ll x\}$ is a directed set of $X_0$ with join $x$,  the conical weight $\sup_{y\ll x}X(-,y)$ is an ideal of $X$ with $x$ as colimit, then \[X(x, \colim \phi)\geq \CI X\Big(\sup_{y\ll x}\sy(y),\phi\Big)=\inf_{y\ll x}\phi(y).\]
		
		On the other hand,   for all $p\in[0,1]$,
		\begin{align*}
			p\leq X(x, \colim \phi)\implies& x\leq p\multimap \colim  \phi\\
			\implies & x\leq\sup_{d\in \chi(\phi)}(p\multimap d)  
			\\
			\implies & \forall y\ll x, ~ y\leq p\multimap d ~\text{for some}~  d\in \chi(\phi) 
			\\
			\implies & \forall y\ll x, ~p\leq X(y,d)~\text{for some}~  d\in \chi(\phi), \\
			\implies & \forall y\ll x, p\leq\sup_{d\in \chi(\phi)} X(y,d)\\
			\implies  & \forall {y\ll x}, p\leq \phi(y)\\
			\implies & p\leq\inf_{y\ll x}\phi(y).
		\end{align*}
		Therefore,  $X(x,\colim \phi)\leq\inf_{y\ll x}\phi(y)$.
	\end{proof}

	\begin{exmp}  \label{dr is continuous} For each continuous t-norm $\with$,  $\sV^{\rm op}=([0,1],\alpha_R)$ is a real-enriched continuous  lattice. We use Corollary \ref{cond impl cont} to verify the claim. First, since the underlying order of $\sV^{\rm op}$ is the opposite of the usual order  between real numbers,   $(\sV^{\rm op})_0$ is a continuous lattice. Second, since the cotensor $p\multimap x$ of $p$ with $x$ in $\sV^{\rm op}$ is given by $p\with x$, the map $p\multimap-\colon (\sV^{\rm op})_0\lra (\sV^{\rm op})_0$ is Scott continuous.   
	\end{exmp}
	
	\begin{thm}\label{CD implies CL}   {\rm (Lai and Zhang \cite{LaiZ2020})}
		The following are equivalent:
		\begin{enumerate}[label=\rm(\arabic*)] 
			\item Every real-enriched completely distributive lattice is a real-enriched domain.
			\item The real-enriched category $\sV=([0,1],\alpha_L)$ is a real-enriched domain.
			
			\item The implication operator $\ra\colon[ 0,1]^2\lra[0,1]$ is  continuous  at every point off the diagonal.  
			\item For each $\bbP^\dag$-algebra $X$, the inclusion $\CI X\lra\CP X$ is a right adjoint.
			\item  The copresheaf monad $\bbP^\dag=(\CPd,{\sf m}^\dag,\syd)$ distributes over the ideal monad $\bbI=(\CI,{\sf m},\sy)$.
		\end{enumerate}
	\end{thm}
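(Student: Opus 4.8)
The plan is to prove the five conditions equivalent by establishing the cycle
\[(1)\Rightarrow(2)\Rightarrow(3)\Rightarrow(4)\Rightarrow(5)\Rightarrow(1),\]
in which three arrows are essentially immediate consequences of results already in hand, while the two arrows $(3)\Rightarrow(4)$ and $(2)\Rightarrow(3)$ carry the genuine content linking the logic of the truth-value set to the behaviour of the ideal monad $\bbI$. The underlying observation is that our conditions are exactly the $\bbT=\bbI$ instances of the general machinery: a continuous $\bbI$-algebra is by definition a real-enriched domain, so Theorem \ref{CCD implies bbT} and Proposition \ref{comp mond} dispose of the purely monad-theoretic part.

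Concretely, for the soft arrows I would argue as follows. The implication $(1)\Rightarrow(2)$ holds because $\sV=\CP\star$ is completely distributive (as recorded for every $\CP X$ in Section \ref{cdl}), so $(1)$ makes it a real-enriched domain, which is $(2)$. The equivalence $(4)\Leftrightarrow(5)$, in particular $(4)\Rightarrow(5)$, is Proposition \ref{comp mond} applied to $\bbT=\bbI$ (the conditions ``$\CI X\lra\CP X$ is a right adjoint for each $\bbP^\dag$-algebra'' and ``$\bbP^\dag$ distributes over $\bbI$''). Finally $(5)\Rightarrow(1)$ is the implication $(3)\Rightarrow(1)$ of Theorem \ref{CCD implies bbT} specialised to the saturated class $\CI$. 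Thus everything reduces to the two remaining implications.

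For $(3)\Rightarrow(4)$ I would show that $\CI X$ is complete for every $\bbP^\dag$-algebra $X$, which by Proposition \ref{comp mond} is equivalent to the inclusion $\CI X\lra\CP X$ being a right adjoint. Since a $\bbP^\dag$-algebra is a separated and complete real-enriched category, Corollary \ref{CA closed under meets} already gives that $\CI X$ is closed in $\CP X$ under meets, so the only thing to check is closure under cotensors, i.e. that $r\ra\phi\in\CI X$ whenever $\phi\in\CI X$ (recall the cotensor in $\CP X$ is $r\ra\phi$ by Example \ref{tensor and cotensor in PX}). Here I would use the pointwise description of ideals of a complete $X$ in Proposition \ref{ideal of complete A}: conditions (i) and (ii) for $r\ra\phi$ are immediate because $r\ra-$ preserves meets and fixes $1$, and condition (iii) reduces, via $\phi(x)=\sup_{d}X(x,d)$, to the inequality $\sup_d(\sigma\ra a_d)\geq r$ for an increasing net $a_d\uparrow\phi(x)$ with $\sigma\with r\leq\phi(x)$; the only nontrivial case $\phi(x)=\sigma\with r<\sigma$ is settled by continuity of $\sigma\ra-$ on $[0,\sigma)$, which is exactly hypothesis $(3)$ in the form of Proposition \ref{Luka-free}(3). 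Once $\CI X$ is closed under meets and cotensors it is order-complete and cotensored, hence complete by Theorem \ref{characterizations of complete Q-orders}, and the inclusion preserves meets and cotensors, hence is a right adjoint by the dual of Corollary \ref{left adjoint via tensor and join}.

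The main obstacle is $(2)\Rightarrow(3)$, which must extract the t-norm condition from the bare continuity of $\sV$; this is the only step with no shortcut through the general theory, since it reverses the flow from truth values to algebras. I would argue by contraposition. As $\sV$ is separated and complete, $(2)$ is equivalent to ``$\sV$ is a real-enriched continuous lattice'', so by Theorem \ref{chacl}(2) to the identity $X(x,\colim\phi)=\inf_{y\ll x}\phi(y)$ for all $x$ and all ideals $\phi$. If $(3)$ fails, then by Proposition \ref{Luka-free} the t-norm has an Archimedean block $[p^-,p^+]$ isomorphic to the \L ukasiewicz t-norm with $p^->0$; using the explicit ideals of $\sV$ from Corollary \ref{ideals in [0,1]} I would take $\phi(z)=\sup_{b<p^-}(z\ra b)$, compute $\phi(1)=p^-$ and $\inf_{y<x}\phi(y)=p^-$ for $x\in(p^-,p^+)$, and note that $x\ra\phi(1)=x\ra p^-$ is the nonzero \L ukasiewicz negation inside the block, hence strictly above $p^-$. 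This violates the Theorem \ref{chacl}(2) identity, so $\sV$ is not a real-enriched continuous lattice and a fortiori not a real-enriched domain, contradicting $(2)$. The delicate part throughout both nontrivial steps is keeping the three avatars of continuity aligned — continuity of $\ra$ off the diagonal, continuity of $p\ra-$ on $[0,p)$, and the left-limit behaviour of implications at block boundaries — and matching each to the right clause in the ideal computations.
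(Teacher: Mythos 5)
Your cycle $(1)\Rightarrow(2)\Rightarrow(3)\Rightarrow(4)\Rightarrow(5)\Rightarrow(1)$ is exactly the paper's, and four of the five arrows are handled essentially as the paper handles them: $(1)\Rightarrow(2)$ from complete distributivity of $\CP\star=\sV$, $(4)\Rightarrow(5)$ and $(5)\Rightarrow(1)$ from Proposition \ref{comp mond} and Theorem \ref{CCD implies bbT}, and your contrapositive $(2)\Rightarrow(3)$ is, after unwinding, the paper's own computation: your ideal $\phi=\sup_{b<p^-}(z\ra b)$ is the paper's ${\sf d}(p)$, and your violation of the identity in Theorem \ref{chacl}(2), namely $x\ra p^->p^-=\inf_{y\ll x}\phi(y)$, is precisely the paper's statement that ${\sf d}$ fails to be a functor. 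All of that is correct.

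The gap is in $(3)\Rightarrow(4)$, in your verification of condition (iii) of Proposition \ref{ideal of complete A} for $r\ra\phi$. Writing $a_d=X(x,d)$ for $d\in\chi(\phi)$, so that $\phi(x)=\sup_d a_d$, the statement to prove is: whenever $\sigma<r\ra\phi(x)$, one has $\sup_d(\sigma\ra a_d)\geq r$. The relevant case split is governed by comparing $\phi(x)$ with $\sigma$ (the tensor parameter), not with $\sigma\with r$: the case $\phi(x)>\sigma$ is trivial (some $a_d\geq\sigma$ forces $\sigma\ra a_d=1$), and continuity of $\sigma\ra-$ on $[0,\sigma)$ settles the case $\phi(x)<\sigma$, giving $\sup_d(\sigma\ra a_d)=\sigma\ra\phi(x)\geq r$. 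But you omit the boundary case $\phi(x)=\sigma$ with $a_d<\sigma$ for all $d$. There $\sup_d(\sigma\ra a_d)=\lim_{t\uparrow\sigma}(\sigma\ra t)$ is the left limit of $\sigma\ra-$ at the diagonal point $(\sigma,\sigma)$, about which hypothesis (3), in the form of continuity on $[0,\sigma)$, says nothing. And the needed inequality is genuinely not a consequence of that continuity: the hypothesis $\sigma<r\ra\sigma$ can hold with $r>\sigma$ — namely when $\sigma$ and $r$ lie in a common product-type block $[a,b]$ with $\sigma<r<b$ — and then one must know that $\lim_{t\uparrow\sigma}(\sigma\ra t)=b\geq r$, which is a fact about the ordinal-sum structure (Proposition \ref{Luka-free}(1)) together with Corollary \ref{idempotent between}, not about continuity below $\sigma$. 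The case is fillable (one shows $\sigma\with r'<\sigma$ for every $r'<r$ using the block decomposition), but it needs an argument you do not give. The paper sidesteps the issue with a different decomposition: it proves $p\ra\phi$ equals the conical ideal $\sup_{d\in D}X(-,d)$ with $D=\{d\mid p\leq\phi(d)\}$, splitting on $p\leq\phi(x)$, where the target value is trivially $1$, versus $p>\phi(x)$, where $(p,\phi(x))$ is off the diagonal and continuity applies — so no boundary case ever arises.
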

	
	\begin{proof}
		$(1)\Rightarrow(2)$  Obvious.
		
		$(2)\Rightarrow(3)$ By Theorem \ref{chacl}, it suffices to show that if   \[{\sf d}\colon \sV\lra \CI\sV,\quad   {\sf d}(x)(t) =\begin{cases}t\ra 0 &x=0,\\ \sup\limits_{y<x}(t\ra y) &x>0 \end{cases} \]
		is a functor, then the implication operator $\ra\colon[ 0,1]^2\lra[0,1]$ is  continuous at every point off the diagonal. 
		
		Suppose on the contrary that $\ra\colon[ 0,1]^2\lra[0,1]$ is not continuous at some point off the diagonal. Then there are idempotent elements $p,q>0$  such that the restriction of $\&$ on $[p,q]$   is isomorphic to the \L ukasiewicz t-norm. Pick $x\in(p,q)$. Then  
		\begin{align*}\CI\sV({\sf d}(x), {\sf d}(p))&= \inf_{t\in[0,1]}\Big(\sup_{y<x}(t\ra y)\rightarrow\sup_{z<p}(t\rightarrow z)\Big)\\ &\leq \sup_{y<x}(x\rightarrow y)\rightarrow\sup_{z<p}(x\rightarrow z)\\ &=q\rightarrow p   \\ & <x\rightarrow p,\end{align*} which shows that ${\sf d}\colon \sV\lra \CI\sV$ is not a functor, a contradiction. 
		
		$(3)\Rightarrow(4)$  It suffices to show that for each $\bbP^\dag$-algebra $X$,  $\CI X$ is closed in $\CP X$ under meets and cotensors. That $\CI X$ is closed in $\CP X$ under meets is ensured by Lemma \ref{CA closed under meets}, so we only need to show that $\CI X$ is closed in $\CP X$ under cotensors.
		
		For  $p\in[0,1]$ and $\phi\in\CI X$, set \[D=\{d\in X\mid p\leq\phi(d)\}.\] Then $D$ is    a directed subset of  $X_0$. First we show that  \[\{p\multimap y\mid y\in \chi(\phi)\}\subseteq D.\] Since $\phi=\sup_{z\in\chi(\phi)}X(-,z)$, then for all $y\in \chi(\phi)$,
		\begin{align*}p\ra\phi(p\multimap y)&=p\ra\sup_{z\in \chi(\phi)} X(p\multimap y,z)\\ &\geq p\ra X(p\multimap y,y)\\ &= X(p\multimap y,p\multimap y)\\ &=1,\end{align*}
		which implies that $p\multimap y\in D$.
		
		Next, let $\rho=\sup_{d\in D} X(-,d).$ We wish to show that $p\ra\phi=\rho$, from which the conclusion follows since $\rho$ is an ideal of $X$ and $p\ra\phi$ is the cotensor of $p$ with $\phi$ in $\CP X$.
		That $\rho\leq p\ra\phi$ is clear.   It remains to check that $p\ra\phi(x)\leq\rho(x)$ for all $x\in X$. If $p\leq\phi(x)$, then  $x\in D$ and \[  \rho(x)=\sup_{d\in D} X(x,d)\geq X(x,  x) =1= p\ra\phi(x).\]
		If  $p>\phi(x)$, then the implication operator $\ra$ is continuous at $(p,\phi(x))$, so, \begin{align*}p\ra\phi(x)&=p\ra\sup_{y\in \chi(\phi)} X(x,y)\\ &
			=\sup_{y\in \chi(\phi)}(p\ra X(x,y))   
			\\ &
			=\sup_{y\in \chi(\phi)} X(x,p\multimap y) \\ &
			\leq\sup_{d\in D} X(x,d) &(p\multimap y\in D)\\ &
			=\rho(x).
		\end{align*}
		
		$(4)\Rightarrow(5)$  Proposition \ref{comp mond}.
		
		$(5)\Rightarrow(1)$ Theorem \ref{CCD implies bbT}.
	\end{proof}
	
	\begin{rem} \begin{enumerate}[label=\rm(\roman*)] 
			\item Continuity of the implication operator off the diagonal is related to the equivalence of the logic formulas $$\exists x(p\ra q(x))\quad \text{and}\quad p\ra\exists xq(x).$$ So, in many-valued logic whether complete distributivity implies continuity depends on the structure of the truth-values. \item Ideals of real-enriched categories may be thought of as analogue of ind-objects of (set-valued) categories, but in contrast to the fact that the category of ind-objects of a small category with finite colimits is complete and cocomplete (see e.g. Johnstone \cite[Chapter VI, Section 1]{Johnstone82}), the real-enriched category $\mathcal{I}X$ of ideals of a complete real-enriched category $X$ may fail to be complete. Actually, by Proposition \ref{restriction to QInf} and the above theorem, in order that $\mathcal{I}X$  be complete  for every complete real-enriched category $X$,  it is necessary and sufficient that the implication operator of the continuous t-norm $\with$ is continuous at every point off the diagonal.  \end{enumerate}\end{rem}
	
	Let \[\QCL\] be the category of real-enriched continuous lattices and Yoneda continuous functors preserving enriched limits.  
	When the implication operator of the t-norm $\with$ is continuous at each point off the diagonal, $\QCL$ is the Eilenberg-Moore category of the composite monad $\bbI\circ\bbP^\dag$ of the copresheaf monad and the ideal monad, hence monadic over $\QOrd$. In this case, $\QCL$ is also monadic over the category of sets, as we see below.
	
	\begin{thm}\label{CL is monadic over Set} If the implication operator of  $\with$ is continuous at every point off the diagonal, then the forgetful functor \(U\colon \QCL\lra{\bf Set}\) is    monadic. \end{thm}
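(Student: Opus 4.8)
The plan is to run, for real-enriched continuous lattices, the same argument that realizes $\QCD$ as the Eilenberg--Moore category of an explicit submonad of the double contravariant fuzzy powerset monad $(\mathscr{E},{\sf m},{\sf e})$. First I would exhibit the left adjoint of $U$. Under the hypothesis the copresheaf monad $\bbP^\dag$ distributes over the ideal monad $\bbI$ (Theorem \ref{CD implies CL}), so by Proposition \ref{comp mond} the monad $\bbI$ lifts to $\PdAlg$ and $\QCL$ is the Eilenberg--Moore category of this lifting; in particular the forgetful functor $\QCL\lra\PdAlg$ is monadic with left adjoint $A\mapsto\CI A$. Composing with the free $\bbP^\dag$-algebra functor $X\mapsto\CPd|X|$, where $|X|$ is the discrete real-enriched category on the set $X$, which is left adjoint to $\PdAlg\lra{\bf Set}$, yields a left adjoint $F$ of $U$ with $FX=\CI\CPd|X|$. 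This exhibits $U$ as the composite $\QCL\lra\PdAlg\lra{\bf Set}$.

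Next I would identify the induced monad as a submonad of $(\mathscr{E},{\sf m},{\sf e})$. For discrete $|X|$ one has $\CPd|X|=([0,1]^X,\sub_X^{\mathrm{op}})$ and, as recorded in the previous section, $\CP\CPd|X|=\mathscr{C}X\subseteq\mathscr{E}X$; hence the underlying set $\mathscr{D}X\coloneqq UFX$ of $\CI\CPd|X|$ is exactly the set of those $\Lambda\in\mathscr{C}X$ that are ideals of $\CPd|X|$, and $\mathscr{D}$ is a subfunctor of $\mathscr{E}$. I would then check the two conditions of Lemma \ref{submonad of E}. Condition (i) holds because the evaluation $\lambda\mapsto\lambda(x)$ is the representable weight $\sy_{\CPd|X|}(1_x)$ of $\CPd|X|$, and representable weights are ideals. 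Condition (ii) is precisely the statement that the class of ideals is closed under the formation of ideal-colimits, i.e.\ saturatedness of $\CI$ (Theorem \ref{univeral property of IX}, Example \ref{exmp of submonads}), transported through the identification $\mathscr{D}X=\CI\CPd|X|$; this is exactly the point where the standing hypothesis is used, since it guarantees via Proposition \ref{comp mond} that $\CI$ restricts to a saturated class of weights on $\PdAlg$. Consequently $(\mathscr{D},{\sf m},{\sf e})$ is the monad induced by $F\dashv U$.

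It then remains to prove that the comparison functor $K\colon\QCL\lra{\bf Set}^{(\mathscr{D},{\sf m},{\sf e})}$ is an isomorphism of categories, i.e.\ that a $\mathscr{D}$-algebra carries exactly the data of a real-enriched continuous lattice. The reconstruction would proceed as in Pu and Zhang \cite{PZ2015}. The chain of submonads $\mathscr{P}\hookrightarrow\mathscr{D}\hookrightarrow\mathscr{C}\hookrightarrow\mathscr{E}$, the leftmost inclusion being the natural transformation $j$ of the previous section, lets one restrict the structure map $h\colon\mathscr{D}A\lra A$ along $\mathscr{P}\hookrightarrow\mathscr{D}$ to a $\mathscr{P}$-algebra, which by Theorem \ref{PAlg is monadic over Set} is a $[0,1]$-module, equivalently a separated, order-complete, tensored and cotensored real-enriched category $(A,\alpha)$; the remaining part of $h$ then forces the ideal-colimit, Yoneda-limit, operation and its left adjoint $\wayb$, making $(A,\alpha)$ a complete and continuous $\bbI$-algebra, that is, an object of $\QCL$. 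Functoriality and compatibility of morphisms would be checked by the same comparison, using that $\mathscr{D}$-homomorphisms restrict to $\mathscr{P}$-homomorphisms and respect the colimit operation.

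The main obstacle is this last step: showing that a bare set $A$ equipped with a single map $h\colon\CI\CPd|A|\lra A$ satisfying the unit and associativity laws determines, and is determined by, a genuine real-enriched continuous lattice. One must verify that the reconstructed $\alpha$ is transitive, hence a real-enriched category, that completeness in the enriched sense and continuity of the $\bbI$-algebra both follow from $h$, and that $\QCL$-morphisms are exactly $\mathscr{D}$-homomorphisms. This is where the classification of $(\mathscr{E},{\sf m},{\sf e})$-algebras (Proposition \ref{algebra of the double fuzzy powerset monad}) and the calculus of $\mathscr{P}$ versus $\exp$ (Lemma \ref{relation between P and exp}) do the real work, and where the off-diagonal continuity hypothesis is indispensable, exactly as it was in guaranteeing that $\CI$ is a class of weights on $\PdAlg$.
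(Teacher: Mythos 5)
Your first two steps are sound and agree with facts the paper itself establishes: the left adjoint is $FX=\CI\CPd|X|$ (the paper constructs it the same way, as a composite of the free $\bbP^\dag$-algebra functor and the free algebra functor for the lifting of $\bbI$, which exists by Theorem \ref{CD implies CL} and Proposition \ref{comp mond}), and the induced monad is exactly the conical filter monad $(\mathscr{F},\sfm,{\sf e})$, your $\mathscr{D}$, shown to be a submonad of $(\mathscr{E},\sfm,{\sf e})$ in Lemma \ref{conical filter form a monad}. But neither of these is the theorem: having a left adjoint and naming the induced monad does not give monadicity. The entire content of the statement is that the comparison functor $K\colon\QCL\lra{\bf Set}^{(\mathscr{F},\sfm,{\sf e})}$ is an isomorphism, and this is precisely the step you flag as ``the main obstacle'' and then do not carry out. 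Worse, the tools you cite cannot carry it out. Proposition \ref{algebra of the double fuzzy powerset monad} classifies algebras of the \emph{full} monad $\mathscr{E}$; for a submonad inclusion $\mathscr{F}\lra\mathscr{E}$ the induced functor between Eilenberg--Moore categories goes ${\bf Set}^{\mathscr{E}}\lra{\bf Set}^{\mathscr{F}}$, so $\mathscr{F}$-algebras form a strictly \emph{larger} class than $\mathscr{E}$-algebras and the classification of the latter says nothing about the former (indeed every real-enriched continuous lattice must be an $\mathscr{F}$-algebra, while $\mathscr{E}$-algebras are only the categories of the very special form $\exp B$). Similarly, restricting a structure map $h\colon\mathscr{F}A\lra A$ along $\mathscr{P}\hookrightarrow\mathscr{F}$ does give a $[0,1]$-module by Theorem \ref{PAlg is monadic over Set}, but the assertion that ``the remaining part of $h$ forces'' the colimit-of-ideals operation and its left adjoint $\wayb$ is exactly what has to be proved: one must relate conical filters of the \emph{set} $A$ to ideals of the reconstructed \emph{enriched} category $(A,\alpha)$, show $h$ computes their colimits, produce $\wayb$, and match the morphisms. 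None of this is routine, and if one tries to do it via the canonical presentation of an $\mathscr{F}$-algebra by free algebras, one is led straight back to checking that $U$ creates certain (split) coequalizers --- i.e., to a different proof.

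That different proof is the one the paper gives, and it bypasses algebra classification entirely: since $U$ is a composite of right adjoints, by Beck's theorem it suffices to show $U$ creates split coequalizers. Given a parallel pair $f,g\colon X\lra Y$ in $\QCL$ with a split coequalizer $h\colon Y\lra Z$ in ${\bf Set}$, the kernel relation $R=\{(y_1,y_2)\mid h(y_1)=h(y_2)\}$ is shown (using the splitting maps) to be closed under directed joins, meets and cotensors, so Lemma \ref{cong on CL} yields a Yoneda continuous kernel operator $k\colon Y\lra Y$; then $k(Y)$ is a real-enriched continuous lattice by Corollary \ref{cc}, its underlying set is equipotent to $Z$, and transporting the structure makes $h$ a morphism of $\QCL$. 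If you want to salvage your approach, you would need to supply the missing comparison argument (essentially redoing Pu--Zhang's analysis for the submonad $\mathscr{F}$ rather than $\mathscr{E}$); as written, the proposal proves the two preliminary facts but not the theorem.
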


	We say that a functor $k\colon X\lra X$ is a \emph{kernel operator} if $k^2=k$ and $k(x)\sqsubseteq x$ in $X_0$ for all $x\in X$. If $k\colon X\lra X$ is a kernel operator, then $k\colon X\lra k(X)$ is right adjoint to the inclusion $k(X)\lra X$, so $k(X)$ is a retract of $X$ in $\QOrd$. In particular, if $X$ is a real-enriched continuous  lattice and $k\colon X\lra X$ is a Yoneda continuous kernel operator, then  $k\colon X\lra k(X)$ is a Yoneda continuous right adjoint, hence $k(X)$ is a real-enriched continuous  lattice by Corollary \ref{cc}.
	The verification of the following lemma is routine.
	
	\begin{lem}\label{cong on CL}
		Suppose that $X$ is a separated and complete real-enriched category. If $R$ is an equivalence relation on $X$ subject to the following conditions:
		\begin{enumerate}[label=\rm(\roman*)]  \item     $R$ is closed w.r.t. directed joins in $X_0 \times  X_0$,
			\item   $R$ is closed w.r.t. meets in $X_0\times  X_0$,
			\item If $(x,y)\in R$, then  $(r\multimap x, r\multimap  y)\in R$ for all $r\in[0,1]$,
		\end{enumerate} then  the map $k\colon   X\lra X$ that sends each $x\in X$ to the meet of $\{y\mid (x,y)\in R\}$ in $X_0$  is a Yoneda continuous kernel operator.
	\end{lem}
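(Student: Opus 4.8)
The plan is to distill from the three closure conditions on $R$ the single fact that $(x,k(x))\in R$ for every $x\in X$, and then to read off each required property of $k$ from one of the conditions. First recall that, since $X$ is separated and complete, Theorem \ref{characterizations of complete Q-orders} guarantees that $X_0$ is a complete lattice and that $X$ is cotensored; meets and joins in $X_0\times X_0$ are computed coordinatewise. Because $R$ is reflexive, $x$ itself lies in $\{y\mid (x,y)\in R\}$, so $k(x)=\bigwedge\{y\mid (x,y)\in R\}\sqsubseteq x$. Applying condition (ii) to the family $\{(x,y)\mid (x,y)\in R\}$, whose coordinatewise meet is $(x,k(x))$, yields the linchpin $(x,k(x))\in R$.

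With this in hand, idempotency and monotonicity are immediate. For idempotency, symmetry and transitivity of $R$ give $\{y\mid (k(x),y)\in R\}=\{y\mid (x,y)\in R\}$ (using $(k(x),x)\in R$ in both inclusions), whence $k(k(x))=k(x)$. For monotonicity, suppose $x\sqsubseteq x'$; then $(x,k(x))\in R$ and $(x',k(x'))\in R$, so by condition (ii) their coordinatewise meet $(x\wedge x',\, k(x)\wedge k(x'))=(x,\, k(x)\wedge k(x'))$ lies in $R$, which forces $k(x)\sqsubseteq k(x)\wedge k(x')$ and in particular $k(x)\sqsubseteq k(x')$.

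It remains to see that $k$ is a Yoneda continuous functor. Since $X$ is cotensored and $k\colon X_0\lra X_0$ preserves order, Proposition \ref{functor via tensors}(ii) reduces functoriality to the inequality $k(r\multimap y)\sqsubseteq r\multimap k(y)$; this follows from condition (iii), because $(y,k(y))\in R$ gives $(r\multimap y,\, r\multimap k(y))\in R$, so $r\multimap k(y)$ belongs to $\{z\mid (r\multimap y,z)\in R\}$ and hence dominates its meet $k(r\multimap y)$. Finally, by Corollary \ref{Yoneda continuity = Scott continuity} Yoneda continuity of $k$ is equivalent to Scott continuity of $k\colon X_0\lra X_0$, i.e.\ preservation of directed joins. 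Given a directed $D\subseteq X_0$, the set $\{(d,k(d))\mid d\in D\}$ is directed in $X_0\times X_0$, so condition (i) places its join $(\bigvee D,\, \bigvee_{d}k(d))$ in $R$; hence $\bigvee_{d}k(d)$ dominates $k(\bigvee D)=\bigwedge\{y\mid (\bigvee D,y)\in R\}$, while the reverse inequality is monotonicity.

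No single step is genuinely hard; the one place to stay alert is the repeated use of the identity $(x,k(x))\in R$, which is exactly what lets each closure axiom of $R$ be transferred to the corresponding structural property of $k$, together with the bookkeeping that meets and joins in the product $X_0\times X_0$ are coordinatewise, so that conditions (i) and (ii) apply to the specific families built from the graph of $R$.
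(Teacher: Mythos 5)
Your proof is correct; the paper itself omits the verification, declaring it ``routine,'' and your argument is exactly the intended one. The key observation that condition (ii) applied to the family $\{(x,y)\mid (x,y)\in R\}$ yields $(x,k(x))\in R$, followed by reading off idempotency, monotonicity, the cotensor inequality of Proposition \ref{functor via tensors}\thinspace(ii), and Scott continuity (via Corollary \ref{Yoneda continuity = Scott continuity}) from the three closure axioms, is precisely the routine check the paper leaves to the reader.
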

	
	\begin{proof}[Proof of Theorem \ref{CL is monadic over Set}] Since   the forgetful functor $U\colon \QCL\lra {\bf Set}$ is the composite of the forgetful functors $\QCL\lra \PdAlg$ and $\PdAlg\lra {\bf Set}$, it is a right adjoint. So it suffices to check that it creates split coequalizers. 
		
		Let $f,g\colon  X\lra Y$ be a parallel pair of morphisms in $\QCL$  and let  $h\colon  Y\lra Z$ be a  split coequalizer of $f,g$ in  ${\bf Set}$. By definition there exist morphisms  $Z\to^i Y\to^j X $ in ${\bf Set}$ such that
		$$h\circ f=h\circ g,~ f\circ j=\id,~ h\circ i=\id,~ g\circ j=i\circ h.$$
		Let \[R=\{(y_1,y_2)\in Y\times Y\mid  h(y_1)=h(y_2)\}.\] It is not hard to check that $(y_1,y_2)\in R$ if and only if there exists $(x_1,x_2)\in X\times X$ such that $g(x_1)=g(x_2)$, $y_1=f(x_1)$, $y_2=f(x_2)$. With help of this fact, one readily verifies that $R$ satisfies the conditions (i)-(iii) in Lemma \ref{cong on CL}, hence  determines a Yoneda continuous kernel operator $k\colon  Y\lra  Y$. Since $k(Y)$ is a real-enriched continuous  lattice with an underlying   set   equipotent to $Z$,   $Z$ can be made into a real-enriched continuous  lattice  so that $h\colon  Y\lra Z$ is a Yoneda continuous right adjoint. This proves that  the forgetful functor $\QCL\lra {\bf Set}$ creates split coequalizers. \end{proof}
	
	Suppose the implication operator of  $\with$ is continuous at each point off the diagonal and $F\colon{\bf Set}\lra\QCL$ is the left adjoint of the forgetful functor $U\colon \QCL\lra {\bf Set}$. In the following we show that the monad in the category of sets induced by the adjunction $F\dashv U$ is a submonad of the double  contravariant fuzzy powerset monad $(\mathscr{E},\sfm,{\sf e})$ in Section \ref{fuzzy powerset monad}.  
	
	Let $X$ be a real-enriched category. Following Antoniuk and Waszkiewicz \cite{AW2011} we define a \emph{filter} of $X$ to be an ideal of its opposite $X^{\rm op}$. In other words, a filter of $X$ is a coweight $\psi$ of $X$ such that $$\psi =\sup_{i\in D}\inf_{j\geq i} X(x_j,-) $$ for some net  $\{x_i\}_{i\in D}$  that is \emph{backward Cauchy} in the sense that $$\sup_{i\in D}\inf_{k\geq j\geq i}X(x_k,x_j)=1.$$
	
	\begin{defn} \label{Q-filter}   Let $X$ be a set. Then a filter of the  real-enriched category $([0,1]^X,\sub_X)$ is called a  conical filter  of    $X$. \end{defn} 
	
	\begin{prop}\label{conical filter} {\rm (Morsi \cite{Morsi1995a})} Let $X$ be a set. Then, $\mathfrak{F}\colon [0,1]^X\lra \sQ$ is a conical filter  if and only if  for all    $\lam,\mu\in [0,1]^X$ and all $r\in[0,1]$, \begin{enumerate}[label=\rm (CF\arabic*)] \item \label{FF3} $\sub_X(\lam,\mu)\leq\mathfrak{F}(\lam)\ra \mathfrak{F}(\mu)$;  
			\item \label{FF1} $\mathfrak{F}(1_X)=1$; \item \label{FF2} $\mathfrak{F}(\lam)\wedge\mathfrak{F}(\mu)=\mathfrak{F}(\lam\wedge\mu)$;  \item  $\mathfrak{F}(r\ra\lam)=1$ whenever $\mathfrak{F}(\lam)>r$.\end{enumerate} \end{prop}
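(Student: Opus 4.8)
The plan is to unwind the definition of a conical filter and reduce the statement directly to the characterization of ideals in a separated, complete real-enriched category, namely Proposition \ref{ideal of complete A}. By Definition \ref{Q-filter} a conical filter of the set $X$ is a filter of the enriched powerset $([0,1]^X,\sub_X)$, and a filter is by convention an ideal of the opposite category $Z\coloneqq([0,1]^X,\sub_X)^{\rm op}$. First I would record that $Z$ is separated and complete: the enriched powerset $([0,1]^X,\sub_X)$ is just $\CP X$ with $X$ regarded as a discrete real-enriched category, hence complete and cocomplete, and by Theorem \ref{complete is self-dual} together with the duality between limits and colimits its opposite $Z$ is again separated and complete. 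This is exactly the hypothesis under which Proposition \ref{ideal of complete A} applies to weights of $Z$.

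Next I would translate the enriched structure of $Z$ into pointwise terms. A weight of $Z$ is a functor $Z^{\rm op}=([0,1]^X,\sub_X)\to\sV$, and writing out functoriality as $\sub_X(\lam,\mu)\leq\mathfrak{F}(\lam)\ra\mathfrak{F}(\mu)$ shows that ``$\mathfrak{F}$ is a weight of $Z$'' is precisely condition (CF1). The underlying order $Z_0$ is the reverse of the pointwise order, so the join of $\lam$ and $\mu$ in $Z_0$ is the pointwise meet $\lam\wedge\mu$; and since passing to the opposite interchanges tensors with cotensors, Example \ref{tensor and cotensor in PX} tells me that the tensor of $r$ with $\lam$ in $Z$ should be $r\ra\lam$. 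I would confirm this by the one-line computation $\sub_X(\nu,r\ra\lam)=r\ra\sub_X(\nu,\lam)$ for all $\nu$, which is exactly the defining property of the tensor $r\otimes\lam$ in $Z$.

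With these identifications in hand, condition (ii) of Proposition \ref{ideal of complete A} becomes $\mathfrak{F}(\lam\wedge\mu)=\mathfrak{F}(\lam)\wedge\mathfrak{F}(\mu)$, i.e. (CF3), and condition (iii) becomes $\mathfrak{F}(r\ra\lam)=1$ whenever $r<\mathfrak{F}(\lam)$, i.e. (CF4). The remaining point is to reconcile the inhabitedness clause (i) of Proposition \ref{ideal of complete A} — that $\mathfrak{F}(\lam)>0$ for some $\lam$ — with the stronger-looking (CF2). Here I would use (CF1) with $\mu=1_X$: since $\sub_X(\lam,1_X)=1$, it forces $\mathfrak{F}(\lam)\leq\mathfrak{F}(1_X)$ for every $\lam$, whence $\mathfrak{F}(1_X)=\sup_{\lam}\mathfrak{F}(\lam)$. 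Because every ideal is inhabited (as noted after Definition \ref{defn of ideal}, $\sup_{\lam}\mathfrak{F}(\lam)=1$), conditions (i)--(iii) give $\mathfrak{F}(1_X)=1$, i.e. (CF2); conversely (CF2) supplies $\mathfrak{F}(1_X)=1>0$, so (i) holds. Thus, given (CF1), the clauses (i)--(iii) of Proposition \ref{ideal of complete A} are equivalent to (CF2)--(CF4), which is exactly the asserted characterization.

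The only genuinely delicate step is the bookkeeping for the opposite category: reading off the underlying order of $Z$, identifying its join as the pointwise meet, and — above all — recognizing its tensor as $r\ra\lam$ rather than $r\with\lam$. Once the tensor is correctly pinned down, the translation of Proposition \ref{ideal of complete A} into (CF1)--(CF4) is mechanical. The reconciliation of clause (i) with (CF2) through the identity $\mathfrak{F}(1_X)=\sup_{\lam}\mathfrak{F}(\lam)$ is the other spot that needs a moment's attention, but it is routine once inhabitedness of ideals is invoked.
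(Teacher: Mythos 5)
Your proposal is correct and follows the same route as the paper, whose entire proof is the one-line remark that the statement ``follows from Proposition \ref{ideal of complete A} directly'': you identify a conical filter with an ideal of $\CPd X=([0,1]^X,\sub_X)^{\rm op}$ and apply that proposition, exactly as intended. The only difference is that you spell out the bookkeeping the paper leaves implicit --- the reversed underlying order, the tensor in $\CPd X$ being $r\ra\lam$, and the reconciliation of inhabitedness with (CF2) via $\mathfrak{F}(1_X)=\sup_\lam\mathfrak{F}(\lam)$ --- all of which is done correctly.
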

	
	\begin{proof} Follows from Corollary \ref{ideal of complete A} directly. \end{proof} 
	
	\begin{prop}\label{CSF is closed under meets} Let $X$ be a set.    \begin{enumerate}[label=\rm(\roman*)]  \item   For each family     $\{\mathfrak{F}_i\}_{i\in J}$ of conical  filters  of $X$,  the meet $\bw_{i\in J}\mathfrak{F}_i$ is a conical filter of $X$.
			\item  For each directed family  $\{\mathfrak{F}_i\}_{i\in J}$ of  conical  filters  of $X$,  the join $\bv_{i\in J}\mathfrak{F}_i$ is a conical filter of $X$. 
			\item If  the implication operator $\ra\colon[0,1]\times[0,1]\lra[0,1]$ is continuous at each point off the diagonal, then for each $p\in[0,1]$ and each conical filter $\mathfrak{F}$ of $X$, $p\ra\mathfrak{F}$ is a conical filter of $X$.
	\end{enumerate}   \end{prop}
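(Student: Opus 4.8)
The plan is to read a conical filter of $X$ as exactly what Definition \ref{Q-filter} says it is: a filter of $([0,1]^X,\sub_X)$, i.e. an ideal of the opposite category $([0,1]^X,\sub_X)^{\rm op}=\CPd X$. Since $\CPd X$ is separated and complete (Example \ref{inf in PdX}), in particular a $\bbP^\dag$-algebra, the set of conical filters of $X$ is precisely $\CI(\CPd X)$, regarded as a set of functions $[0,1]^X\lra[0,1]$ ordered pointwise. Under this identification the meet, the directed join, and the function $p\ra\mathfrak{F}$ appearing in the statement are nothing but the pointwise meet, the pointwise directed join, and the cotensor of $p$ with $\mathfrak{F}$ computed in $\CP(\CPd X)$ (Example \ref{tensor and cotensor in PX}). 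Thus the whole proposition reduces to closure properties of $\CI(\CPd X)$ inside $\CP(\CPd X)$ that have already been proved in greater generality.

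With this reduction in hand, parts (i) and (ii) follow immediately by applying Corollary \ref{CA closed under meets} to the separated and complete category $\CPd X$: it states precisely that $\CI(\CPd X)$ is closed in $[0,1]^{\CPd X}$ (ordered pointwise) under arbitrary meets and under directed joins. For part (iii) I would identify $p\ra\mathfrak{F}$ with the cotensor of $p$ with $\mathfrak{F}$ in $\CP(\CPd X)$ and then quote the step of Theorem \ref{CD implies CL} in which one shows that, once the implication $\ra$ is continuous off the diagonal, $\CI Z$ is closed in $\CP Z$ under cotensors for every $\bbP^\dag$-algebra $Z$; taking $Z=\CPd X$ yields that $p\ra\mathfrak{F}$ is again an ideal of $\CPd X$, i.e. a conical filter of $X$. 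This is the most economical route.

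Should a self-contained argument be preferred, I would instead verify Morsi's four conditions (Proposition \ref{conical filter}) for each of the three constructions. Conditions \ref{FF1} and \ref{FF3} are immediate throughout, using only that $\with$ is a quantale multiplication and that $p\with(p\ra a)\leq a$. For \ref{FF2} the meet case uses that binary $\wedge$ commutes with arbitrary infima in $[0,1]$, the $p\ra(-)$ case uses that $p\ra-$ preserves meets, and the directed-join case uses that binary $\wedge$ distributes over \emph{directed} joins in $[0,1]$ — this is the one spot where the directedness hypothesis of (ii) is genuinely needed. The fourth condition, $\mathfrak{F}(r\ra\lam)=1$ whenever $\mathfrak{F}(\lam)>r$, is routine for the meet and for the directed join.

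The hard part, in either route, is the fourth condition for $p\ra\mathfrak{F}$: showing $\mathfrak{F}(r\ra\lam)\geq p$ whenever $p\ra\mathfrak{F}(\lam)>r$. The clean way to reach it is to prove the stronger equality $p\ra\mathfrak{F}=\sup_{d\in D}\sub_X(d,-)$, where $D=\{d\in[0,1]^X\mid \mathfrak{F}(d)\geq p\}$ is up-directed in $(\CPd X)_0$ by condition \ref{FF2} for $\mathfrak{F}$, so that the right-hand side is visibly a conical filter. The inequality $\sup_{d\in D}\sub_X(d,-)\leq p\ra\mathfrak{F}$ is formal from \ref{FF3} and the adjunction $p\with-\dashv p\ra-$; for the reverse one writes $\mathfrak{F}=\sup_{\nu\in\chi(\mathfrak{F})}\sub_X(\nu,-)$ (Proposition \ref{ideals and directed sets II}) and must interchange $p\ra(-)$ with this directed supremum. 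That interchange fails for a general continuous t-norm and succeeds precisely because $\ra$ is continuous off the diagonal, i.e. continuous at $(p,\mathfrak{F}(\lam))$ in the decisive case $p>\mathfrak{F}(\lam)$; combined with the currying identity $p\ra\sub_X(\nu,\mu)=\sub_X(p\with\nu,\mu)$ and the easy check that $p\with\nu\in D$ for every $\nu\in\chi(\mathfrak{F})$, this delivers the missing inequality. This is exactly the mechanism used in the proof of $(3)\Rightarrow(4)$ of Theorem \ref{CD implies CL}, which is why I expect this continuity step to be the sole real obstacle and would lean on that theorem rather than reprove it.
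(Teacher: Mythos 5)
Your primary route is exactly the paper's proof: the paper identifies conical filters of $X$ with $\CI(\CPd X)$ and disposes of (i) and (ii) as a special case of Corollary \ref{CA closed under meets} applied to the separated and complete category $\CPd X$, and of (iii) as a special case of the cotensor-closure step in the implication $(3)\Rightarrow(4)$ of Theorem \ref{CD implies CL}. The additional self-contained verification via Morsi's conditions that you sketch is a correct but unnecessary supplement; the paper stops at the two citations.
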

	
	\begin{proof} (i) and (ii) are a special case of Corollary \ref{CA closed under meets}; (iii) is a special case of the implication $(3)\Rightarrow(4)$ in Theorem \ref{CD implies CL}. \end{proof}
	
	For each set $X$ let $\mathscr{F}X$ be the set of all conical filters of $X$; or equivalently, $\mathscr{F}X=\CI \CPd X$ with $X$ viewed as a discrete real-enriched category. For each function $f\colon X\lra Y$ and each conical filter $\mathfrak{F}$ of $X$, the function $$f(\mathfrak{F})\colon [0,1]^Y\lra [0,1], \quad \mu\mapsto \mathfrak{F}(\mu\circ f)$$ is  a conical filter of $Y$.  In this way we obtain a functor $ \mathscr{F}\colon {\bf Set}\lra{\bf Set},$  called the \emph{conical filter functor}. The conical filter functor $\mathscr{F}$ is a subfunctor of the double  contravariant fuzzy powerset functor $\mathscr{E}\colon {\bf Set}\lra{\bf Set}$, also a subfunctor of the functor $\mathscr{C}\colon {\bf Set}\lra{\bf Set}$ in Section \ref{cdl}.

	\begin{lem}\label{conical filter form a monad}   {\rm (Lai, D\thinspace{\&}\thinspace G Zhang \cite{LZZ2021})}
		The following are equivalent:
		\begin{enumerate}[label=\rm(\arabic*)] 
			\item The implication operator $\ra\colon[0,1]\times[0,1]\lra[0,1]$ is continuous at each point off the diagonal $\{(x,x)\mid x\in[0,1]\}$. 
			
			\item The conical filter functor $\mathscr{F}\colon {\bf Set}\lra{\bf Set}$ is a submonad of the double  contravariant fuzzy powerset monad $(\mathscr{E},\sfm,{\sf e})$. \end{enumerate} \end{lem}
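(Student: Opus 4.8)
The plan is to run both implications through Lemma~\ref{submonad of E}, which reduces the assertion that the subfunctor $\mathscr{F}\subseteq\mathscr{E}$ is a submonad to its two conditions. Throughout I identify $\mathscr{F}X=\CI\CPd X$ (with $X$ a discrete real-enriched category) as a subset of $\mathscr{E}X=[0,1]^{[0,1]^X}$, and I use the pointwise description (CF1)--(CF4) of conical filters from Proposition~\ref{conical filter}. Condition~(i) of Lemma~\ref{submonad of E} — that for each $x$ the evaluation $\lambda\mapsto\lambda(x)$ lies in $\mathscr{F}X$ — holds with no hypothesis on $\&$: this evaluation equals $\sy_{\CPd X}(\syd(x))$, a representable weight of $\CPd X$, and every representable weight is an ideal. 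So the whole equivalence is carried by condition~(ii): whether, for every $\mathcal{H}\in\mathscr{F}\mathscr{F}X$, the function $G(\lambda):=\mathcal{H}(\widehat\lambda\,|\,\mathscr{F}X)$ is again a conical filter of $X$, where $\widehat\lambda\,|\,\mathscr{F}X$ is the map $e_\lambda\colon\mathfrak{F}\mapsto\mathfrak{F}(\lambda)$.

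For $(1)\Rightarrow(2)$ I would verify (CF1)--(CF4) for $G$. The first three are formal and need no assumption on the t-norm: from (CF2)--(CF3) for the individual conical filters $\mathfrak{F}$ one gets $e_{1_X}=1_{\mathscr{F}X}$ and $e_{\lambda\wedge\mu}=e_\lambda\wedge e_\mu$, and substituting these into (CF2)--(CF3) for $\mathcal{H}$ gives $G(1_X)=1$ and $G(\lambda\wedge\mu)=G(\lambda)\wedge G(\mu)$; (CF1) for $G$ drops out of $\sub_X(\lambda,\mu)\le\sub_{\mathscr{F}X}(e_\lambda,e_\mu)$, which is (CF1) applied to each $\mathfrak{F}$. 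The real work is (CF4): given $G(\lambda)=\mathcal{H}(e_\lambda)>r$ I must produce $\mathcal{H}(e_{r\to\lambda})=1$. By (CF4) for $\mathcal{H}$ one has $\mathcal{H}(r\to e_\lambda)=1$, where $(r\to e_\lambda)(\mathfrak{F})=r\to\mathfrak{F}(\lambda)$; and one always has $\mathfrak{F}(r\to\lambda)\le r\to\mathfrak{F}(\lambda)$, since the cotensor $r\to-$ of $\CP\CPd X$ only lowers values through the defining suprema of $\mathfrak{F}$. The point is therefore to upgrade this to the equality $\mathfrak{F}(r\to\lambda)=r\to\mathfrak{F}(\lambda)$, i.e. $e_{r\to\lambda}=r\to e_\lambda$. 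This equality says exactly that $r\to-$ commutes with the increasing suprema $\sup_\psi\sub_X(\psi,\lambda)=\mathfrak{F}(\lambda)$ defining the ideal $\mathfrak{F}$, and it is here that continuity off the diagonal enters: by Proposition~\ref{CSF is closed under meets}(iii) the cotensor $r\to\mathfrak{F}$ is again a conical filter, equivalently (Theorem~\ref{CD implies CL}(4), applied to the $\bbP^\dag$-algebra $\CPd X$) $\CI\CPd X$ is closed under cotensors in $\CP\CPd X$. Granting the equality, $\mathcal{H}(e_{r\to\lambda})=\mathcal{H}(r\to e_\lambda)=1$, so (CF4) holds and $G\in\mathscr{F}X$.

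For $(2)\Rightarrow(1)$ I would argue by contraposition. If (1) fails then, by Proposition~\ref{Luka-free}, $\&$ has a \L ukasiewicz block $[p^-,p^+]$ with $p^->0$, so some $p\to-$ is discontinuous at $p^-$; this is precisely the failure of the cotensor-closure invoked above. The plan is to realize this failure inside the monad: taking $X$ a singleton, so that $\mathscr{F}X=\CI\sV^{\rm op}$ is described by Corollary~\ref{ideals in [0,1]}(ii), one builds a conical filter $\mathfrak{F}$ with a non-attained supremum at $p^-$ (generated by a forward Cauchy net climbing to the block endpoint), and then an $\mathcal{H}\in\mathscr{F}\mathscr{F}X$ concentrated on such filters in a limiting way, for which the strict inequality $\mathfrak{F}(r\to\lambda)<r\to\mathfrak{F}(\lambda)$ propagates to $G(r\to\lambda)<1$ while $G(\lambda)>r$; this violates (CF4), so $G\notin\mathscr{F}X$ and $\mathscr{F}$ is not closed under the multiplication $\sfm$. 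Conceptually this is the same obstruction Theorem~\ref{CD implies CL} records: without (1) the copresheaf monad $\bbP^\dag$ does not distribute over $\bbI$, so $\CI\CPd$ cannot carry a monad structure compatible with $\sfm$.

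The main obstacle is thus the verification of (CF4) in condition~(ii): controlling the cotensor $r\to-$ against the suprema that define an ideal of $\CPd X$. Everything else is bookkeeping, and it is exactly this interaction that continuity of the implication off the diagonal governs, which is why continuity is both necessary and sufficient.
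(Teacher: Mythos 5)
Your reduction to Lemma \ref{submonad of E}, your treatment of its condition (i) (evaluations are representable weights of $\CPd X$, hence ideals), and your check of (CF1)--(CF3) are all fine, but the pivot of your $(1)\Rightarrow(2)$ argument --- the equality $e_{r\ra\lambda}=r\ra e_\lambda$, i.e. $\mathfrak{F}(r\ra\lambda)=r\ra\mathfrak{F}(\lambda)$ for \emph{every} conical filter $\mathfrak{F}$ --- is false under hypothesis (1), and Proposition \ref{CSF is closed under meets}\thinspace(iii) does not assert it. That proposition says the \emph{pointwise} cotensor $\mu\mapsto r\ra\mathfrak{F}(\mu)$ is again a conical filter; it says nothing about moving $r\ra$ inside the argument of $\mathfrak{F}$, and these are genuinely different statements. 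Concretely, take the G\"{o}del t-norm (which satisfies (1)), $X=\mathbb{N}$, the tail filter $\mathfrak{F}(\mu)=\sup_n\inf_{m\geq n}\mu(m)$ (exactly the kind of filter the paper itself uses), and any $\lambda$ whose values increase strictly to $r$, so $\lambda(m)<r$ for all $m$. Then $\mathfrak{F}(\lambda)=r$, hence $r\ra\mathfrak{F}(\lambda)=1$, while $\mathfrak{F}(r\ra\lambda)=\sup_n\inf_{m\geq n}(r\ra\lambda(m))=\sup_n\inf_{m\geq n}\lambda(m)=r<1$. In general one only gets $\mathfrak{F}(r\ra\lambda)=\sup_i(r\ra c_i)\leq r\ra\sup_i c_i$ for an increasing net $c_i\uparrow\mathfrak{F}(\lambda)$, and the defect is precisely the discontinuity of $r\ra-$ at the argument $r$ itself, a point \emph{on} the diagonal, which hypothesis (1) does not control. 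Since $\mathcal{H}$ is monotone, the true inequality $e_{r\ra\lambda}\leq r\ra e_\lambda$ only yields $\mathcal{H}(e_{r\ra\lambda})\leq\mathcal{H}(r\ra e_\lambda)=1$, the wrong direction; so your verification of (CF4) for $G$ collapses, and it cannot be repaired within your scheme of transferring (CF4) from $\mathcal{H}$ to $G$.

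The paper's proof routes around this obstacle: it represents the outer filter as a directed join $\mathcal{H}=\sup_{\xi\in D}\sub_{\mathscr{F}X}(\xi,-)$, so that $G=\sup_{\xi\in D}\inf_{\mathfrak{F}\in\mathscr{F}X}\bigl(\xi(\mathfrak{F})\ra\mathfrak{F}\bigr)$, and then invokes all three closure properties of Proposition \ref{CSF is closed under meets}: each $\xi(\mathfrak{F})\ra\mathfrak{F}$ is a conical filter by (iii) (this is the only place (1) enters), meets of conical filters are conical filters by (i), and directed joins by (ii). In particular (CF4) for $G$ is inherited from (CF4) of the auxiliary filters $\inf_{\mathfrak{F}}(\xi(\mathfrak{F})\ra\mathfrak{F})$, not deduced from (CF4) of $\mathcal{H}$ as you attempt. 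Your $(2)\Rightarrow(1)$ sketch rests on the same conflation and is moreover not executable as stated: the strict inequality $\mathfrak{F}(r\ra\lambda)<r\ra\mathfrak{F}(\lambda)$ you want to propagate occurs even under (1), as the G\"{o}del example shows, so it cannot be the obstruction; and the singleton is too poor a base, since by Corollary \ref{ideals in [0,1]}\thinspace(ii) its conical filters are only the representable ones and block-limits, whose pointwise cotensors collapse back to filters. What the paper does instead is first show that (2) forces pointwise cotensor closure (by feeding the multiplication $\sfm$ the conical filter of $\mathscr{F}X$ generated by $\{\xi\mid\xi(\mathfrak{F})\geq r\}$), and then, assuming (1) fails, exhibit on $X=[0,1]$ a tail filter $\mathfrak{F}$ and a function $\lambda$ climbing strictly to the idempotent bottom $p$ of a \L ukasiewicz block for which $t\ra\mathfrak{F}$ violates (CF4).
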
 
	
	\begin{proof} $(1)\Rightarrow(2)$ By Lemma \ref{submonad of E} we only need to check  that for each set $X$ and each conical filter $\mathcal{F}$ of $\mathscr{F}X$, the function \[ k(\mathcal{F})\colon [0,1]^X\lra [0,1],\quad  \lam\mapsto \mathcal{F}(\widehat{\lam})\] is a conical filter of $X$, where the function $\widehat{\lam}\colon \mathscr{F}X\lra[0,1]$ maps each conical filter $\mathfrak{F}$ of $X$ to $\mathfrak{F}(\lam)$. 
		
		Since $\mathcal{F}$ is a conical filter of $\mathscr{F}X$, there is a directed subset $D$ of $[0,1]^{\mathscr{F}X}$ (ordered pointwise) such that $$\mathcal{F}=\sup_{\xi\in D}\sub_{\mathscr{F}X}(\xi,-).$$ Then for each $\lam\in [0,1]^X$ it holds that \begin{align*} \mathcal{F}(\widehat{\lam})&= \bv_{\xi\in D}\sub_{\mathscr{F}X}(\xi,\widehat{\lam})  = \bv_{\xi\in D} \bw_{\mathfrak{F}\in \mathscr{F}X}(\xi(\mathfrak{F})\ra \mathfrak{F})(\lam),\end{align*} which shows that   \[ k(\mathcal{F}) =  \bv_{\xi\in D} \bw_{\mathfrak{F}\in \mathscr{F}X}(\xi(\mathfrak{F})\ra \mathfrak{F}),\] hence $k(\mathcal{F})$ is a conical filter of $X$ by Proposition \ref{CSF is closed under meets}. The conical filter $k(\mathcal{F})$ is called the \emph{diagonal filter}  or the \emph{Kowalsky sum} of $\mathcal{F}$. 
		
		$(2) \Rightarrow(1)$ First we show that if the conical filter functor $\mathscr{F}\colon {\bf Set}\lra{\bf Set}$ is a submonad of   $(\mathscr{E},\sfm,{\sf e})$, then for each set $X$, each conical filter $\mathfrak{F}$ of $X$ and each $r\in [0,1]$, the function $r\ra\mathfrak{F}$ is a conical filter of $X$. Let $\mathfrak{i}\colon \mathscr{F}\lra\mathscr{E}$ be the inclusion  transformation and   $\mathcal{F}$ be the conical filter of $\mathscr{F}X$ generated  by the subset $$\{\xi\colon\mathscr{F}X\lra[0,1]\mid \xi(\mathfrak{F})\geq r\}$$ of $[0,1]^{\mathscr{F}X}$. Since $\mathscr{F}$ is a submonad of $(\mathscr{E},\sfm,{\sf e})$, then $\sfm_X\circ(\mathfrak{i}*\mathfrak{i})_X(\mathcal{F})$ is a conical filter of $X$. Since for each $\lam\in[0,1]^X$, \[\sfm_X\circ(\mathfrak{i}*\mathfrak{i})_X(\mathcal{F})(\lam)= \mathcal{F}(\widehat{\lam})=\bv_{\xi(\mathfrak{F})\geq r}\sub_{\mathscr{F}X}(\xi,\widehat{\lam})=r\ra \mathfrak{F}(\lam),\] it follows that $r\ra \mathfrak{F}$ is a conical filter of $X$, as desired. 
		
		Now we prove the conclusion.
		Suppose on the contrary that the implication operator $\ra\colon[0,1]\times[0,1]\lra[0,1]$ is not continuous at some point not on the diagonal. Then there exist idempotent elements $p,q$   of $\&$ such that $0<p<q$  and that the restriction of $\& $ on $[p,q]$ is isomorphic to the {\L}ukasiewicz t-norm. Pick  $t,s\in(p,q)$ such that $p<s<t\ra p <q$.
		
		Let $X=[0,1]$ and pick a strictly increasing sequence $\{b_n\}_n$  of $[0,1]$ that converges to $1$. Consider the conical  filter $\mathfrak{F}$ of $X$ generated by the subset $$\{1_{B_n}\mid   n\geq 1\}$$ of $[0,1]^X$,   where $B_n=\{b_m \mid m\geq n\}$; that means for all $\mu\in[0,1]^X$,   $$ \mathfrak{F}(\mu)  = \bv_{n\geq 1}\bw_{m\geq n}\mu(b_m).$$ 
		
		In the following we derive a contradiction by showing that $ t\ra\mathfrak{F} $ is not a conical filter of $X$.  Consider the function $$ \lam \colon X\lra[0,1], \quad \lam(x)=p x.$$  Since \[\mathfrak{F}(\lam)=p\bv_{n\geq 1}\bw_{m\geq n}b_m=p,\] it follows that \[s<t\ra p=(t\ra\mathfrak{F})(\lam).\] But, \begin{align*} (t\ra\mathfrak{F})(s\ra\lam)  &=t\ra\bv_{n\geq 1}\bw_{m\geq n}(s\ra\lam(b_m))\\ 
			&=t\ra\bv_{n\geq 1}\bw_{m\geq n} \lam(b_m)&(\lam(b_m)< p<s)) \\ 
			&=t\ra p\\ &<1. \end{align*} This shows, by Proposition \ref{conical filter}, that $t\ra\mathfrak{F} $ is not a conical filter of $X$. \end{proof}
	
	Suppose the implication operator $\ra\colon[0,1]\times[0,1]\lra[0,1]$ is continuous at each point off the diagonal and $F\colon{\bf Set}\lra\QCL$ is the left adjoint of the forgetful functor $U\colon \QCL\lra {\bf Set}$. Then, it is readily verified that the monad induced by   $F\dashv U$ is the conical filter momad $(\mathscr{F},\sfm,{\sf e})$.   This proves:
	
	\begin{prop} The conical filter functor $\mathscr{F}\colon {\bf Set}\lra{\bf Set}$ is a submonad of the double contravariant fuzzy powerset monad $ (\mathscr{E}, \sfm,{\sf e})$ if and only if the implication operator of the continuous t-norm   is continuous at each point off the diagonal. In this case, the Eilenberg-Moore category of $ (\mathscr{F}, \sfm,{\sf e})$ is the category of real-enriched continuous lattices and Yoneda continuous functors preserving  enriched limits. \end{prop}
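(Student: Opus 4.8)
The plan is to assemble the statement from three facts already established in the excerpt: Lemma~\ref{conical filter form a monad} supplies the equivalence, Theorem~\ref{CL is monadic over Set} supplies monadicity over ${\bf Set}$, and the remaining work is to identify the monad induced by the free--forgetful adjunction with $(\mathscr{F},\sfm,{\sf e})$. For the equivalence itself I would do nothing new: it is precisely Lemma~\ref{conical filter form a monad}, which asserts that the implication operator $\ra\colon[0,1]^2\lra[0,1]$ is continuous at every point off the diagonal if and only if $\mathscr{F}$ is a submonad of the double contravariant fuzzy powerset monad $(\mathscr{E},\sfm,{\sf e})$. In particular, under this hypothesis $(\mathscr{F},\sfm,{\sf e})$ is a genuine monad in ${\bf Set}$, so its Eilenberg--Moore category is defined.

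For the identification of the Eilenberg--Moore category, I would reason as follows. Under the continuity hypothesis, Theorem~\ref{CD implies CL} guarantees that $\bbP^\dag$ distributes over $\bbI$, so the composite monad $\bbI\circ\bbP^\dag$ exists and, by the characterization of complete and continuous $\bbI$-algebras, $\QCL$ is its category of algebras over $\QOrd$. By Theorem~\ref{CL is monadic over Set} the forgetful functor $U\colon\QCL\lra{\bf Set}$ is itself monadic, hence $\QCL$ is isomorphic to the Eilenberg--Moore category of the monad on ${\bf Set}$ induced by the adjunction $F\dashv U$, where $F$ is the left adjoint of $U$. It therefore suffices to verify that this induced monad is $(\mathscr{F},\sfm,{\sf e})$. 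For the functor part I would compute $UF(X)$ for a set $X$: viewing $X$ as a discrete real-enriched category, the free real-enriched continuous lattice on $X$ is the free $\bbI\circ\bbP^\dag$-algebra, whose underlying real-enriched category is $\CI\CPd X$, so its underlying set is $\mathscr{F}X$, the set of conical filters of $X$. For the unit, the unit of $F\dashv U$ sends $x\in X$ to the representable conical filter $\lam\mapsto\lam(x)$, which is exactly ${\sf e}_X(x)$. For the multiplication, I would show that $U$ applied to the counit at $FX$ agrees with $\sfm_X$.

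The main obstacle will be this last verification. Concretely, for $\mathcal{H}\in\mathscr{F}^2X=\CI\CPd\CI\CPd X$ and $\lam\in[0,1]^X$, one must trace the structure map of the composite monad $\bbI\circ\bbP^\dag$ through the composite adjunction and the distributive law, and confirm that it evaluates to $\mathcal{H}(\widehat{\lam}|\mathscr{F}X)$, which by the formula in Lemma~\ref{submonad of E} is precisely $\sfm_X(\mathcal{H})(\lam)$; that is, the multiplication is the Kowalsky sum / diagonal-filter operation. This is the substance behind the phrase ``readily verified'' in the surrounding discussion, and it is where the colimit-of-ideals description of the composite monad must be matched, term by term, with the fuzzy-powerset formula. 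Once the functor, unit, and multiplication of the induced monad are shown to coincide with those of $(\mathscr{F},\sfm,{\sf e})$, the comparison functor provided by Theorem~\ref{CL is monadic over Set} exhibits $\QCL$ as the Eilenberg--Moore category ${\bf Set}^{(\mathscr{F},\sfm,{\sf e})}$, which completes the proof.
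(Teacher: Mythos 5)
Your proposal matches the paper's own proof: the paper likewise obtains the equivalence directly from Lemma \ref{conical filter form a monad}, and identifies the Eilenberg--Moore category by combining Theorem \ref{CL is monadic over Set} with the observation (stated there only as ``readily verified'') that the monad induced by the adjunction $F\dashv U\colon\QCL\lra{\bf Set}$ is precisely $(\mathscr{F},\sfm,{\sf e})$. Your spelled-out verification of the functor part, the unit, and the multiplication (the Kowalsky-sum computation) is exactly the content the paper leaves to the reader.
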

	  
\begin{ques}
	A conical filter $\mathfrak{F}$ of a set $X$ is  \emph{proper} if $\mathfrak{F}(p_X)=p$ for all $p\in[0,1]$. Assigning to each set $X$ the set of all  conical proper filters of $X$ gives rise to a subfunctor of conical filter functor $\mathscr{F}$. 
	It is not hard to check that if the implication operator $\ra\colon[0,1] \times[0,1] \lra[0,1]$ is continuous at each point off the diagonal, then the conical proper filter functor is a submonad of   $(\mathscr{F},\sfm,{\sf e})$.   What are the Eilenberg-Moore algebras of this monad?
\end{ques}

\begin{ques}Suppose   $\bbT$ is a submonad of the presheaf monad   over which the copresheaf monad $\bbP^\dag$ distributes. By Corollary \ref{Talg monadic over QCat}, the category $\TCL$ of complete and continuous $\bbT$-algebras and right adjoint $\bbT$-homomorphisms is monadic over $\QOrd$.  Is $\TCL$  monadic over the category of sets too? \end{ques}

\begin{ques} When does the copresheaf monad $\bbP^\dag$ distributes over the the flat weight monad  or over the conically flat weight monad? \end{ques}

\section{Saturation}

For each class of weights $\CT$, no matter   saturated or not, we write \[\CT\text{-}{\sf Alg}\] for the category of separated  $\CT$-cocomplete real-enriched categories and $\CT$-colimits preserving functors. If $\CT$ happens to be saturated, then ${\CT}\text{-}{\sf Alg}$ is  the category of Eilenberg-Moore algebras of the monad $\bbT=(\CT,{\sf m},\sy)$. In this section, we show that for each class of weights $\CT$, there is a unique saturated class of weights $\widetilde{\CT}$ such that ${\CT}\text{-}{\sf Alg}=\widetilde{\CT}\text{-}{\sf Alg}$, which implies, in particular, that ${\CT}\text{-}{\sf Alg}$ is category of Eilenberg-Moore algebras of a unique submonad of the presheaf monad.

The main result of section, namely   Theorem \ref{saturation of class of weights}, is a special case of a result of Albert and Kelly \cite{AK98} on saturation of class of weights in enriched category theory.

\begin{thm}\label{saturation of class of weights} For each class of weights $\CT$, there exists a unique saturated class   of weights $\widetilde{\CT}$, called the saturation of  $\CT$, such that  \begin{enumerate}[label={\rm(\roman*)}] 
		\item  a   real-enriched category $X$  is $\CT$-cocomplete if and only if it is $\widetilde{\CT}$-cocomplete;
		\item  a  functor   $f\colon X\lra Y$ between  $\CT$-cocomplete real-enriched categories preserves $\CT$-colimits if and only if it preserves $\widetilde{\CT}$-colimits.
	\end{enumerate} In particular, ${\CT}\text{-}{\sf Alg}=\widetilde{\CT}\text{-}{\sf Alg}$.
\end{thm}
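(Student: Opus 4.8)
The plan is to construct $\widetilde{\CT}$ as a colimit-closure inside the presheaf categories and then read off all the required properties. For a real-enriched category $X$, I would define $\widetilde{\CT}X$ to be the smallest subset of $\CP X$ that contains all representable weights and is closed under $\CT$-colimits, these colimits being computed in the cocomplete real-enriched category $\CP X$ (Example \ref{sup in PX}); concretely, $\widetilde{\CT}X$ is the intersection of all subsets $\mathcal{S}\subseteq\CP X$ containing the representables such that $\colim_\Phi\iota_{\mathcal{S}}\in\mathcal{S}$ for every $\Phi\in\CT\mathcal{S}$, where $\iota_{\mathcal{S}}\colon\mathcal{S}\lra\CP X$ is the inclusion. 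That $\widetilde{\CT}$ is a class of weights is straightforward: it contains representables by construction, and for a functor $f\colon X\lra Y$ the left adjoint $f_\exists\colon\CP X\lra\CP Y$ preserves all colimits and sends representables to representables, so $f_\exists^{-1}(\widetilde{\CT}Y)$ is a $\CT$-colimit-closed subset of $\CP X$ containing the representables, whence $f_\exists(\widetilde{\CT}X)\subseteq\widetilde{\CT}Y$.

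The heart of the argument is to prove the two stability statements (i) and (ii) by induction along the closure. The key tool is the Fubini law for weighted colimits: if a weight $\phi$ of $X$ is exhibited as $\colim_\Phi\iota_{\mathcal S}$ for some $\Phi\in\CT\mathcal S$ with $\mathcal S\subseteq\CP X$, then for any functor $h\colon X\lra A$ the colimit $\colim_\phi h$ agrees with the $\CT$-colimit $\colim_\Phi(\,s\mapsto\colim_{s}h\,)$ whenever the inner colimits exist; this follows from associativity of composition of distributors together with the functoriality of weighted colimits and their preservation by left adjoints (Theorem \ref{free cocompletion}). Granting this, statement (i) follows: if $X$ is $\CT$-cocomplete then representable weights have colimits, and the class of weights admitting a colimit in $X$ is closed under $\CT$-colimits by the Fubini law, hence contains $\widetilde{\CT}X$; the converse is trivial since $\CT\subseteq\widetilde{\CT}$. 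Statement (ii) is proved by the same induction: a functor preserving $\CT$-colimits preserves the iterated colimits produced by the Fubini law, so the class of weights whose colimits it preserves is $\CT$-colimit-closed and contains $\widetilde{\CT}$. I expect the precise bookkeeping of the Fubini law --- matching $\colim_\phi h$ with a genuine $\CT$-weighted colimit of the pointwise colimits and verifying the existence hypotheses --- to be the main obstacle.

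Saturation of $\widetilde{\CT}$ then drops out of (i) and (ii). Fix $X$ and set $\mathcal S=\widetilde{\CT}X$; by construction $\mathcal S$ is closed under $\CT$-colimits in $\CP X$, so it is a $\CT$-cocomplete real-enriched category whose $\CT$-colimits are computed in $\CP X$, and the inclusion $\iota_{\mathcal S}\colon\mathcal S\lra\CP X$ preserves $\CT$-colimits. By (i), $\mathcal S$ is $\widetilde{\CT}$-cocomplete, and by (ii) the $\CT$-cocontinuous functor $\iota_{\mathcal S}$ preserves $\widetilde{\CT}$-colimits. Hence for every $\Psi\in\widetilde{\CT}\mathcal S$ we have $\colim_\Psi\iota_{\mathcal S}=\iota_{\mathcal S}(\colim_\Psi\mathrm{id}_{\mathcal S})\in\mathcal S$, so $\widetilde{\CT}X$ is closed in $\CP X$ under $\widetilde{\CT}$-colimits. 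By Proposition \ref{equivalents for submonad} this is precisely saturation of $\widetilde{\CT}$, and (i), (ii) give $\CT\text{-}{\sf Alg}=\widetilde{\CT}\text{-}{\sf Alg}$.

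Finally, for uniqueness I would use that a saturated class $\CS$ is recovered from its cocompleteness notion: since $\CS X$ contains the representables, is closed under $\CS$-colimits (Proposition \ref{equivalents for submonad}), and every $\phi\in\CS X$ equals $\colim_\phi\sy$, the set $\CS X$ coincides with the closure of the representables under $\CS$-colimits. Thus if $\CT'$ is any saturated class satisfying (i) and (ii) relative to $\CT$, then $\CT'$ has the same $\CT$-cocomplete objects and $\CT$-cocontinuous morphisms as $\widetilde{\CT}$, so the two colimit-closures of the representables coincide and $\CT'X=\widetilde{\CT}X$ for all $X$. This yields the uniqueness of $\widetilde{\CT}$ and completes the plan.
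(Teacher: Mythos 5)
Your proposal is correct, and it reaches the theorem by a genuinely different organization than the paper, although the central object is the same. You take $\widetilde{\CT}X$ to be the $\CT$-colimit closure of the representables in $\CP X$ --- this is precisely the paper's auxiliary class $\overline{\CT}X$ --- prove (i) and (ii) directly by the Fubini/interchange law for iterated weighted colimits, obtain saturation by a bootstrap (apply (i) and (ii) to the inclusion $\widetilde{\CT}X\lra\CP X$ and invoke Proposition \ref{equivalents for submonad}), and prove uniqueness by showing a saturated class is determined by the closure of the representables. The paper instead introduces a second class $\widehat{\CT}$ --- the weights whose weighted colimits exist and are preserved by every morphism of ${\CT}\text{-}{\sf Alg}$ --- which is tautologically the largest class with the same algebras, and proves $\overline{\CT}=\widehat{\CT}$: the containment $\overline{\CT}X\subseteq\widehat{\CT}X$ is run through the free-cocompletion adjunction (Proposition \ref{saturation}, Lemma \ref{phi in S(Y) has a sup}, and the counit square), while the reverse containment is the same algebra-transfer trick as your bootstrap; uniqueness then falls out of the sandwich $\overline{\CT}\subseteq\mathcal{S}\subseteq\widehat{\CT}$. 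Your route is leaner (one construction, no adjunction, no $\widehat{\CT}$) but shifts the analytic burden onto the Fubini law, which you rightly flag as the main obstacle; it does hold, and the two instances you need are in substance the computation in the paper's Lemma \ref{phi in S(Y) has a sup} (existence) and the counit naturality square (preservation). What the paper's detour buys is the extra characterization of the saturation as the largest class with ${\CT}\text{-}{\sf Alg}=\widetilde{\CT}\text{-}{\sf Alg}$ and the identification of $\overline{\CT}X$ as the free $\CT$-cocompletion of $X$, both reused elsewhere. One step of yours deserves to be spelled out: in the uniqueness argument, the claim that the $\CT'$-closure and the $\widetilde{\CT}$-closure of the representables coincide requires a symmetric transfer --- since ${\CT'}\text{-}{\sf Alg}=\widetilde{\CT}\text{-}{\sf Alg}$, each closure is separated and cocomplete for both classes and each inclusion into $\CP X$ is a morphism for both, so each closure is closed under the other class's colimits and hence contains the other; this is exactly the transfer you already used for saturation, so your argument closes.
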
 

To prove the theorem, we need to introduce two classes of weights: $\widehat{\CT} $ and $\overline{\CT} $. 

For each real-enriched category $X$, define a subcategory $$\widehat{\CT}X$$ of $\CP X$ by putting a weight $\phi$ of $X$ in  $\widehat{\CT}X$ if, for any morphism $g\colon Y\lra Z$ of ${\CT}\text{-}{\sf Alg}$ and any functor $f\colon X\lra Y$, the colimit of $f$ weighted by $\phi$ always exists and is preserved by $g$, i.e.  $g(\colim_\phi f)=\colim_\phi(g\circ f) .$

It is readily verified that \begin{enumerate}[label={\rm(\roman*)}] 
	\item $\widehat{\CT}$ is a class of weights and contains the class $\CT$;  
	\item $\widehat{\widehat{\CT}} =\widehat{\CT}$; 
	\item  $\widehat{\CT}$ is the largest class of weights for which $\widehat{\CT}\text{-}{\sf Alg}={\CT}\text{-}{\sf Alg}.$ \end{enumerate}

Now we define the the class $\overline{\CT} $.
Suppose $X$ is a real-enriched category, $A$ is a subset of $\CP X$, viewed as a subcategory.   We say that $A$  is  \emph{closed under  $\CT$-colimits} if, for each $\phi\in \CT A$, the colimit of the inclusion functor $i\colon A\lra\CP X$ weighted by $\phi$  belongs to $A$. 
It is readily seen that if  $A\subseteq \CP X$  is closed under $\CT$-colimits, then for each functor $g\colon K\lra A$ and each $\phi\in\CT K$, the colimit of $i\circ g$ weighted by $\phi$ belongs to $A$.  

For each real-enriched category $X$, let $$\overline{\CT}X$$ be the intersection of all subsets of $\CP X$ that contain all representable weights of $X$ and are closed under  $\CT$-colimits. Then $\overline{\CT}X$ is the least such subset of $\CP X$. Let \begin{itemize} \item $\mathfrak{i}_X\colon \overline{\CT}X\lra\CP X$ be the inclusion functor; and \item  ${\sf t}_X\colon  X\lra \overline{\CT}X$ be the Yoneda embedding with  codomain restricted to $\overline{\CT}X$. \end{itemize} The composite $\mathfrak{i}_X\circ {\sf t}_X$ is the Yoneda embedding $\sy_X\colon X\lra\CP X$.

\begin{lem} For each real-enriched category $X$, $\CT X\subseteq \overline{\CT}X$.\end{lem}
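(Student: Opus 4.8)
The plan is to exploit the defining minimality of $\overline{\CT}X$: since it is the smallest subset of $\CP X$ that contains all representable weights and is closed under $\CT$-colimits, it suffices to exhibit each $\phi\in\CT X$ as a $\CT$-colimit, computed inside $\CP X$, of weights already known to lie in $\overline{\CT}X$. The representable weights belong to $\overline{\CT}X$ by construction, and by colimit-density of the Yoneda embedding (Proposition \ref{y is colimit dense}) every weight satisfies $\colim_\phi\sy_X=\phi$. So the real content is to recognize this particular colimit as a $\CT$-colimit relative to the inclusion $\mathfrak{i}_X\colon\overline{\CT}X\lra\CP X$, thereby forcing $\phi$ back into $\overline{\CT}X$.

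Concretely, I would fix $\phi\in\CT X$ and push it forward along the corestricted Yoneda embedding ${\sf t}_X\colon X\lra\overline{\CT}X$. Since $\CT$ is a class of weights, condition (iii) gives $\CT({\sf t}_X)(\phi)=({\sf t}_X)_\exists(\phi)=\phi\circ{\sf t}_X^*$, and because $\CT({\sf t}_X)$ maps $\CT X$ into $\CT(\overline{\CT}X)$, this transported weight lies in $\CT(\overline{\CT}X)$. Now $\overline{\CT}X$ is closed under $\CT$-colimits, so the colimit of the inclusion $\mathfrak{i}_X\colon\overline{\CT}X\lra\CP X$ weighted by $({\sf t}_X)_\exists(\phi)$ belongs to $\overline{\CT}X$.

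It then remains to identify this colimit. Using the definition of weighted colimit, $\colim_\Psi\mathfrak{i}_X=\colim(\Psi\circ\mathfrak{i}_X^*)$, together with $\Psi=\phi\circ{\sf t}_X^*$ and the factorization $\sy_X=\mathfrak{i}_X\circ{\sf t}_X$ (whence $\sy_X^*={\sf t}_X^*\circ\mathfrak{i}_X^*$), I compute
\[\colim_{({\sf t}_X)_\exists(\phi)}\mathfrak{i}_X=\colim(\phi\circ{\sf t}_X^*\circ\mathfrak{i}_X^*)=\colim(\phi\circ\sy_X^*)=\colim_\phi\sy_X=\phi,\]
the last equality being colimit-density of $\sy_X$. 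Hence $\phi\in\overline{\CT}X$, which establishes $\CT X\subseteq\overline{\CT}X$.

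The argument is short, and the only delicate point is the bookkeeping with cographs: one must verify that transporting $\phi$ by ${\sf t}_X$ and then taking the $\mathfrak{i}_X$-weighted colimit returns $\phi$ itself rather than some other presheaf, i.e. that the $\CT$-colimit supplied by the closure hypothesis coincides with the colimit-dense expression $\colim_\phi\sy_X$. I expect this to be the main (and rather mild) obstacle; no deeper difficulty is anticipated, since all ingredients — the factorization of the Yoneda embedding, the formula $f_\exists(\phi)=\phi\circ f^*$, and colimit-density — are already in hand.
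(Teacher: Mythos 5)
Your proof is correct and follows essentially the same route as the paper's: the paper's one-line proof observes that $\phi$ is the colimit of $\mathfrak{i}_X\circ{\sf t}_X$ weighted by $\phi$ and invokes the (stated as "readily seen") fact that closure under $\CT$-colimits extends from the inclusion to functors landing in $\overline{\CT}X$, which is exactly the push-forward computation $\colim_{({\sf t}_X)_\exists(\phi)}\mathfrak{i}_X=\colim_\phi(\mathfrak{i}_X\circ{\sf t}_X)=\phi$ that you carry out explicitly. You have simply unpacked the bookkeeping the paper leaves implicit, and all steps (functoriality of $\CT$, $(\mathfrak{i}_X\circ{\sf t}_X)^*={\sf t}_X^*\circ\mathfrak{i}_X^*$, and colimit-density of $\sy_X$) are valid.
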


\begin{proof} 
	Because for each weight $\phi$ of $X$, $\phi$ is the colimit of $\mathfrak{i}_X\circ{\sf t}_X $ weighted by $\phi$. \end{proof}

\begin{lem}\label{Tbar is functorial} For each functor $f\colon X\lra Y$   and each $\phi\in \overline{\CT}X$,   $f_\exists(\phi)\in \overline{\CT}Y$. \end{lem}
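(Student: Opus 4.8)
The plan is to exploit the defining minimality of $\overline{\CT}X$: it is the \emph{least} subset of $\CP X$ that contains all representable weights and is closed under $\CT$-colimits. So I would introduce the set
\[
A=\{\phi\in\CP X\mid f_\exists(\phi)\in\overline{\CT}Y\},
\]
regarded as a subcategory of $\CP X$ (the preimage of $\overline{\CT}Y$ under $f_\exists$), and show that $A$ itself contains all representables and is closed under $\CT$-colimits. Minimality then yields $\overline{\CT}X\subseteq A$, which is exactly the assertion that $f_\exists(\phi)\in\overline{\CT}Y$ for every $\phi\in\overline{\CT}X$.

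For the first requirement I would use that $f_\exists$ carries representables to representables. Indeed, by the identity $f_\exists(r\with\sy_X(x))=r\with\sy_Y(f(x))$ (taking $r=1$) one has $f_\exists(\sy_X(x))=\sy_Y(f(x))$, which is a representable weight of $Y$ and hence lies in $\overline{\CT}Y$; thus $\sy_X(x)\in A$ for all $x\in X$.

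The key step is closure of $A$ under $\CT$-colimits, and here the two facts I would lean on are that $f_\exists$ is a left adjoint (from $f_\exists\dashv f^{-1}$ in Proposition \ref{left and right kan}) and that $\overline{\CT}Y$ is by construction closed under $\CT$-colimits. Let $\Phi\in\CT A$ and let $i_A\colon A\lra\CP X$ be the inclusion. Since $\CP X$ is cocomplete (Example \ref{sup in PX}), the colimit $\psi\coloneqq\colim_\Phi i_A$ exists in $\CP X$, and I must check $\psi\in A$, i.e.\ $f_\exists(\psi)\in\overline{\CT}Y$. Because $f_\exists$ is a left adjoint it preserves colimits, so $f_\exists(\psi)=\colim_\Phi(f_\exists\circ i_A)$. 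Now for each $\phi\in A$ we have $(f_\exists\circ i_A)(\phi)=f_\exists(\phi)\in\overline{\CT}Y$ by definition of $A$, so $f_\exists\circ i_A$ corestricts to a functor $g\colon A\lra\overline{\CT}Y$ with $\mathfrak{i}_Y\circ g=f_\exists\circ i_A$, where $\mathfrak{i}_Y\colon\overline{\CT}Y\lra\CP Y$ is the inclusion. Applying the remark preceding the lemma to the closed subset $\overline{\CT}Y\subseteq\CP Y$, the functor $g\colon A\lra\overline{\CT}Y$, and $\Phi\in\CT A$, the colimit of $\mathfrak{i}_Y\circ g$ weighted by $\Phi$ belongs to $\overline{\CT}Y$; that is, $f_\exists(\psi)=\colim_\Phi(\mathfrak{i}_Y\circ g)\in\overline{\CT}Y$, so $\psi\in A$.

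Having shown that $A$ contains all representables and is closed under $\CT$-colimits, minimality of $\overline{\CT}X$ gives $\overline{\CT}X\subseteq A$, completing the argument. I expect the only delicate point to be the bookkeeping in the closure step: one must verify that the colimit $\colim_\Phi i_A$ genuinely exists in $\CP X$ (cocompleteness), that $f_\exists$ transports it to the corresponding colimit in $\CP Y$ (preservation by a left adjoint), and that the corestricted functor $g$ is legitimate so that the closure property of $\overline{\CT}Y$ applies verbatim. Everything else is a routine appeal to the universal properties already established.
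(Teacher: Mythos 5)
Your proof is correct and follows essentially the same route as the paper's: the paper sets $Z=\{\phi\in\overline{\CT}X\mid f_\exists(\phi)\in\overline{\CT}Y\}$ and shows it contains the representables and is closed under $\CT$-colimits, using exactly your two key facts that $f_\exists$ preserves colimits and that $\overline{\CT}Y$ is closed under $\CT$-colimits of functors factoring through it. Your variant with the full preimage $A=f_\exists^{-1}(\overline{\CT}Y)$ instead of its intersection with $\overline{\CT}X$ is an immaterial difference (if anything, marginally cleaner, since membership in $A$ requires checking only one condition).
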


\begin{proof} Let \[Z  =
	\{\phi\in\overline{\CT}X\mid f_\exists(\phi)\in\overline{\CT}Y\}.\] It is clear that $Z$ contains all representable weights of $X$. If we can show that $Z$ is closed under  $\CT$-colimits, then $Z= \overline{\CT}X$ and the conclusion follows.
	
	Let $j\colon  Z \lra\CP X$ be the inclusion functor. We show that for each
	$\Phi\in \CT Z$, the colimit of $j$ weighted by $\Phi$ belongs to $ Z$. 
	
	Since $f_\exists\colon\CP  X\lra \CP Y$ preserves colimits, then $ f_\exists({\colim}_\Phi j)  = {\colim}_\Phi(f_\exists \circ j)$.  Since $f_\exists\circ j$ factors through $\overline{\CT}Y$ by definition of $Z$, $\overline{\CT}Y$ is  closed under $\CT$-colimits, it follows that $${\colim}_\Phi(f_\exists \circ j)\in \overline{\CT}Y,$$ which implies that ${\colim}_\Phi j \in Z$, as desired.
\end{proof}

The above two lemmas imply that $\overline{\CT}$ is a class of weights on $\QOrd$.  The assignment $X\mapsto \overline{\CT}X$ defines a functor $$\overline{\CT}\colon\QOrd \lra{\CT}\text{-}{\sf Alg},$$ which is indeed  left adjoint to  the forgetful functor $U\colon {\CT}\text{-}{\sf Alg} \lra \QOrd $, as we see below. This is a  special case of a general result of Kelly \cite{Kelly} in the theory of enriched categories.

\begin{prop} \label{saturation} The functor $\overline{\CT}\colon \QOrd \lra{\CT}\text{-}{\sf Alg} $ is left adjoint to the forgetful functor $U\colon {\CT}\text{-}{\sf Alg} \lra \QOrd$.  
\end{prop}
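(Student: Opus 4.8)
The plan is to exhibit, for every real-enriched category $X$, the corestricted Yoneda embedding ${\sf t}_X\colon X\lra \overline{\CT}X$ as a universal arrow from $X$ to the forgetful functor $U$; since this is what the adjunction $\overline{\CT}\dashv U$ means (with ${\sf t}$ as unit), establishing the universal property for all $X$ finishes the proof. First I would record that $\overline{\CT}X$ really is an object of $\CT\text{-}{\sf Alg}$. It is separated because $\CP X$ is separated and $\overline{\CT}X$ carries the inherited structure; it is $\CT$-cocomplete because it was defined to be closed under $\CT$-colimits inside the cocomplete category $\CP X$ (Example \ref{sup in PX}), so for each $\Phi\in\CT(\overline{\CT}X)$ the colimit of the inclusion weighted by $\Phi$, computed in $\CP X$, already lies in $\overline{\CT}X$. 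In particular $\CT$-colimits in $\overline{\CT}X$ coincide with those in $\CP X$, a fact I will use repeatedly.

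The technical heart, and the step I expect to be the main obstacle, is an interchange (Fubini) statement for weighted colimits. Fix a $\CT$-algebra $Y$ and set $C=\{\psi\in\overline{\CT}Y\mid \colim\psi \text{ exists in } Y\}$. It contains every representable weight since $\colim\sy(y)=y$. To see it is closed under $\CT$-colimits, take $\Psi\in\CT C$, let $j\colon C\lra\CP Y$ be the inclusion and $\psi=\colim_\Psi j\in\overline{\CT}Y$, and define $g\colon C\lra Y$ by $g(c)=\colim c$. The colimit $\colim_\Psi g$ exists because $g_\exists(\Psi)=\CT(g)(\Psi)\in\CT Y$ (functoriality of the class $\CT$) and $Y$ is $\CT$-cocomplete. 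Using only the defining equation $Y(\colim c,y)=\CP Y(c,\sy_Y(y))$ of colimits and the colimit formula for $\CP Y$ (Example \ref{sup in PX}), I would verify that $\colim_\Psi g$ represents $\psi$:
\[\CP Y(\psi,\sy_Y(y)) = \CP C(\Psi,\CP Y(j(-),\sy_Y(y))) = \CP C(\Psi, Y(g(-),y)) = Y(\colim_\Psi g, y).\]
Hence $\colim\psi=\colim_\Psi g$ exists and $\psi\in C$, so by minimality $C=\overline{\CT}Y$; that is, every weight of $\overline{\CT}Y$ has a colimit. The resulting total map $\colim\colon\overline{\CT}Y\lra Y$ satisfies $Y(\colim\psi,y)=\overline{\CT}Y(\psi,{\sf t}_Y(y))$ for all $\psi,y$, so by Theorem \ref{Characterization of adjoints} it is a functor, left adjoint to ${\sf t}_Y$.

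With $\colim\dashv{\sf t}_Y$ available, I would define the extension of a functor $f\colon X\lra Y$ (with $Y$ a $\CT$-algebra) as the composite $\overline{f}=\colim\circ f_\exists\colon \overline{\CT}X\to^{f_\exists}\overline{\CT}Y\to^{\colim}Y$, where $f_\exists$ restricts to $\overline{\CT}X\lra\overline{\CT}Y$ by Lemma \ref{Tbar is functorial}. It extends $f$ since $f_\exists(\sy_X(x))=\sy_Y(f(x))$ and $\colim\sy_Y(f(x))=f(x)$; it preserves $\CT$-colimits since $\colim$ is a left adjoint (hence preserves all colimits) and $f_\exists$ preserves the $\CT$-colimits of $\overline{\CT}X$, these being computed in $\CP X$, where $f_\exists$ preserves every colimit, and landing in $\overline{\CT}Y$.

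Finally I would settle uniqueness by an agreement-set argument. If $h\colon\overline{\CT}X\lra Y$ is any $\CT$-colimit-preserving functor with $h\circ{\sf t}_X=f$, then $E=\{\phi\in\overline{\CT}X\mid h(\phi)=\overline{f}(\phi)\}$ contains all representables and is closed under $\CT$-colimits: for $\Phi\in\CT E$ with colimit $\phi$ taken in $\overline{\CT}X$, both $h$ and $\overline{f}$ preserve this colimit and agree on the weighting diagram, so they agree on $\phi$. By minimality $E=\overline{\CT}X$, whence $h=\overline{f}$. Combining existence and uniqueness of $\overline{f}$ gives the universal property of ${\sf t}_X$ and hence the adjunction $\overline{\CT}\dashv U$. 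I expect all steps except the interchange computation to be routine; the delicate point is ensuring that the relevant auxiliary colimits $\colim_\Psi g$ and $\colim_\Psi j$ are genuinely matched, which the distributor-free verification above is designed to guarantee.
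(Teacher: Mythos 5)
Your proof is correct and follows essentially the same route as the paper: the key step in both is showing that every element of $\overline{\CT}Y$ admits a colimit when $Y$ is a $\CT$-algebra (you prove this inside $\overline{\CT}Y$ via your set $C$, the paper via the larger set $W$ of \emph{all} weights of $Y$ possessing colimits, which contains $\overline{\CT}Y$ by minimality), after which the definition $\overline{f}=\colim\circ f_\exists$ and the minimality-based uniqueness argument coincide with the paper's. The remaining differences are cosmetic: your interchange computation is phrased through representing properties of weighted colimits rather than the paper's pointwise sup/inf calculation, and you record the adjunction $\colim\dashv{\sf t}_Y$ explicitly where the paper leaves it implicit.
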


To prove Proposition \ref{saturation} we need a lemma.

\begin{lem} \label{phi in S(Y) has a sup} For each  $\CT$-cocomplete real-enriched category $Y$,  \[\overline{\CT}Y\subseteq \{\phi\in\CP Y\mid \phi~\text{has a colimit}\}.\] \end{lem}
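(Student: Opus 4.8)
The plan is to introduce $S := \{\phi\in\CP Y\mid \phi\text{ has a colimit in }Y\}$ and prove $\overline{\CT}Y\subseteq S$ by exploiting the fact, recorded just before the lemma, that $\overline{\CT}Y$ is the \emph{least} subset of $\CP Y$ that contains all representable weights and is closed under $\CT$-colimits. Hence it suffices to verify that $S$ has both properties. That $S$ contains the representables is immediate: for $a\in Y$ the Yoneda lemma gives $\CP Y(\sy(a),\sy(y))=\sy(y)(a)=Y(a,y)$ for all $y$, so $a$ is a colimit of $\sy(a)$. The whole content of the lemma is therefore the closure of $S$ under $\CT$-colimits.

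To set this up I would first record that the assignment $L\colon S\lra Y$ sending a weight to (a choice of) its colimit is a functor. Indeed, if $\phi_1,\phi_2\in S$ have colimits $a_1,a_2$, then from $\CP Y(\phi_2,\sy(a_2))=Y(a_2,a_2)=1$ we get $\phi_2\sqsubseteq\sy(a_2)$ in $\CP Y$, whence, by order-preservation of $\CP Y(\phi_1,-)$,
\[
\CP Y(\phi_1,\phi_2)\le \CP Y(\phi_1,\sy(a_2))=Y(a_1,a_2),
\]
which is exactly the functoriality inequality $S(\phi_1,\phi_2)\le Y(L\phi_1,L\phi_2)$. (If $Y$ is not assumed separated one simply fixes a representative colimit for each $\phi$.)

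Now take $\Phi\in\CT S$ and write $\psi=\colim_\Phi i$ for the colimit of the inclusion $i\colon S\lra\CP Y$ weighted by $\Phi$; I must show $\psi\in S$. Since $\CT$ is a class of weights, $L_\exists\Phi=\CT(L)(\Phi)\in\CT Y$, so by $\CT$-cocompleteness of $Y$ the weight $L_\exists\Phi$ has a colimit, giving $b=\colim_\Phi L=\colim(L_\exists\Phi)$. I then claim $b$ is a colimit of $\psi$. Using the pointwise formula $\psi=\sup_{\phi\in S}\Phi(\phi)\with\phi$ (Corollary \ref{calculation of colimit in PX}) together with the facts that $\sub_Y(-,\sy(y))$ turns joins into meets and that $\sub_Y(r\with\phi,\mu)=r\ra\sub_Y(\phi,\mu)$, one computes
\[
\CP Y(\psi,\sy(y))=\inf_{\phi\in S}\bigl(\Phi(\phi)\ra \CP Y(\phi,\sy(y))\bigr)=\inf_{\phi\in S}\bigl(\Phi(\phi)\ra Y(L\phi,y)\bigr),
\]
the last equality because each $\phi\in S$ satisfies $\CP Y(\phi,\sy(y))=Y(L\phi,y)$. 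On the other hand, the defining property of the weighted colimit $b=\colim_\Phi L$ gives $Y(b,y)=\CP S(\Phi,Y(L(-),y))=\inf_{\phi\in S}\bigl(\Phi(\phi)\ra Y(L\phi,y)\bigr)$. Comparing the two gives $\CP Y(\psi,\sy(y))=Y(b,y)$ for all $y$, i.e. $b$ is a colimit of $\psi$, so $\psi\in S$. This establishes closure, and minimality of $\overline{\CT}Y$ yields $\overline{\CT}Y\subseteq S$.

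The two $\sub$-identities are harmless bookkeeping; the step that needs care — and which I expect to be the main obstacle — is the identification of $b$ as a colimit of $\psi$. One must be sure that $Y(L(-),y)$ is genuinely a weight of $S$ (this follows from $L$ being a functor, via transitivity $Y(L\phi_2,y)\with Y(L\phi_1,L\phi_2)\le Y(L\phi_1,y)$) and that $\colim_\Phi L$ is literally $\colim(L_\exists\Phi)$, so that $\CT$-cocompleteness legitimately produces $b$. These are precisely the points where the functoriality of $L$ and the fact that $\CT$ is a subfunctor of $\CP$ with $\CT(f)=f_\exists$ enter the argument.
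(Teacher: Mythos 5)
Your proof is correct and follows essentially the same route as the paper's: both reduce the lemma to showing that the set of weights of $Y$ possessing a colimit is closed under $\CT$-colimits, form the functor ($L$ in your notation, $h$ in the paper's) sending each such weight to its colimit, invoke $\CT$-cocompleteness of $Y$ to obtain $b=\colim_\Phi L=\colim(L_\exists\Phi)$, and then identify $b$ as a colimit of $\colim_\Phi i=\sup_{\phi}\Phi(\phi)\with\phi$ by the same $\sub$-computation. The only differences are presentational: you spell out the functoriality of $L$ and the (trivial) check that representables lie in $S$, both of which the paper leaves implicit.
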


\begin{proof}It suffices to show that
	\[W \coloneqq\{\phi\in\CP Y\mid \phi~\text{has a colimit}\}\]
	is closed under   $\CT$-colimits.
	
	Let $j\colon W \lra\CP Y$ be the inclusion map. We wish to show that for each $\Phi\in\CT W$,  the colimit $\colim_\Phi j$, as a weight  of $Y$, has a colimit. First of all, by Corollary \ref{calculation of colimit in PX} we have \[{\colim}_\Phi j=\sup_{\phi\in W}\Phi(\phi)\with \phi.\]
	Since every $\phi\in W$ has a colimit, assigning to each $\phi\in W$  its colimit defines a functor $h\colon W \lra Y$. Since $Y$ is $\CT$-cocomplete, the colimit of   $h\colon W \lra Y$ weighted by $\Phi$ exists, say $b$. We claim that $b$ is a colimit of the weight $\colim_\Phi j$ of $Y$.
	In fact,  for all $y\in Y$,  
	\begin{align*}Y(b,y)&=\CP Y(\Phi\circ h^*,\sy(y)) \\
		&=\CP W(\Phi,h^{-1}(\sy(y))) \\
		&= \inf_{\phi\in W} (\Phi(\phi)\ra Y(h(\phi),y) )\\
		&= \inf_{\phi\in W}\Big[\Phi(\phi)\ra\Big( \inf_{z\in Y}(\phi(z)\ra Y(z,y) )\Big) \Big] \\
		&= \inf_{z\in Y}\Big[\Big(\sup_{\phi\in W}\Phi(\phi)\with \phi(z)\Big) \ra Y(z,y) \Big]\\
		&= \CP Y({\colim}_\Phi\thinspace j,\sy(y)),
	\end{align*} which shows that $b$ is a colimit of the weight $\colim_\Phi j$ of $Y$. \end{proof}

\begin{proof}[Proof of Proposition \ref{saturation}]
	It suffices to show that for each functor $f\colon X\lra Y$ with $Y$ in ${\CT}\text{-}{\sf Alg}$, there is a unique $\CT$-colimits preserving functor  $\overline{f}\colon \overline{\CT}X\lra Y$ such that $f=\overline{f}\circ  {\sf t}_X$.  
	
	Since  the real-enriched category $W$ given in Lemma \ref{phi in S(Y) has a sup} is  $\CT$-cocomplete, it is readily  verified that $\overline{\CT}X$ is contained in   \[\{\phi\in\CP X\mid f_\exists(\phi)\in W\}.\] Let $h\colon W\lra Y$ be the functor that sends each $\phi$ of $W$ to its colimit. Then   \[\overline{f}\coloneqq h\circ f_\exists\colon  \overline{\CT}X\lra W \lra Y\]  satisfies the requirement.   This proves the existence. To see the uniqueness, suppose  
	$g\colon \overline{\CT}X\lra Y$ satisfies the requirement. Then  \[\{\phi\in\overline{\CT}X\mid g(\phi)=  \overline{f}(\phi)\}\] contains all representable weights of $X$ and is closed under  $\CT$-colimits. Hence,   $g=\overline{f}$ by definition of  $\overline{\CT}X$.
\end{proof}

For each   $Y$ of ${\CT}\text{-}{\sf Alg}$, the functor  $\colim\colon \overline{\CT}Y\lra Y$   sending each $\phi$ to its colimit is the component  of the counit of the adjunction $\overline{\CT}\dashv U$ in Proposition \ref{saturation}. Hence for each  morphism  $g\colon Y\lra Z$ of ${\CT}\text{-}{\sf Alg}$, the following square is commutative: \begin{equation*}\label{counit of S dashv U}\bfig\Square[\overline{\CT}Y`\overline{\CT}Z`Y`Z; g_\exists`\colim`\colim`g]\efig\end{equation*} This fact will be used in the proof of Theorem \ref{saturation of class of weights}.

\begin{proof}[Proof of Theorem \ref{saturation of class of weights}] For existence we show that $\widetilde{\CT}\coloneqq\widehat{\CT}$ satisfies the requirement.   Since  ${\CT}\text{-}{\sf Alg} =\widehat{\CT}\text{-}{\sf Alg}$, it remains to show that $\widehat{\CT}$ is saturated.   
	
	Since $\overline{\CT} X$ is closed under $\CT$-colimits in $\CP X$, if we can prove that $\overline{\CT} X=\widehat{\CT}X$  for each real-enriched category $X$,  then $\widehat{\CT}X$ is closed under $\CT$-colimits, hence under $\widehat{\CT}$-colimits, and consequently, $\widehat{\CT}$ is saturated.
	
	First  we prove that $\overline{\CT}X\subseteq\widehat{\CT}X$. That means, if $\phi\in \overline{\CT}X$ and $g\colon Y\lra Z$ is a morphism in ${\CT}\text{-}{\sf Alg}$, then for any functor $f\colon X\lra Y$, the colimit of $f$ weighted by $\phi$   exists and is preserved by $g$. Since $f_\exists(\phi)\in \overline{\CT}Y$,   the colimit of   $f$ weighted by $\phi$ exists. By commutativity of the above  square we have $\colim(g_\exists\circ f_\exists(\phi))= g(\colim f_\exists(\phi))$,  hence the colimit of $f$ weighted by $\phi$    is preserved by $g$.  
	
	Next we prove that $\widehat{\CT}X\subseteq\overline{\CT}X$.   
	Since ${\CT}\text{-}{\sf Alg} =\widehat{\CT}\text{-}{\sf Alg}$, each $A$ of ${\CT}\text{-}{\sf Alg}$ is $\widehat{\CT}$-cocomplete.
	Since $\overline{\CT}X$ belongs to ${\CT}\text{-}{\sf Alg}$, it is  $\widehat{\CT}$-cocomplete.  Since the inclusion   $\mathfrak{i}_X\colon \overline{\CT}X\lra\CP X$ preserves $\CT$-colimits,  it  preserves $\widehat{\CT}$-colimits too. Then, for each $\phi\in \widehat{\CT}X$,  $$ {\colim}_\phi\thinspace{\sf t}_X = {\colim}_\phi(\mathfrak{i}_X\circ{\sf t}_X)={\colim}_\phi\thinspace\sy_X=\phi,$$  hence $\phi\in \overline{\CT}X$.
	
	To see uniqueness, suppose $\mathcal{S} $ is a saturated class of weights with $\mathcal{S}\text{-}{\sf Alg}= {\CT}\text{-}{\sf Alg}$.  Then we prove that $\mathcal{S} =\widetilde{\CT}$ by showing that $\mathcal{S}$ is a subclass of $\widehat{\CT}$ and contains the class $\overline{\CT}$.
	
	Since $\widehat{\CT}$  is the largest class of weights for which ${\CT}\text{-}{\sf Alg} =\widehat{\CT}\text{-}{\sf Alg}$,   $\mathcal{S}$ is   a subclass of $\widehat{\CT}$.
	To see that $\mathcal{S}$   contains the class $\overline{\CT}$, first  we show that $\mathcal{S}$   contains the class $\CT$; that is, $\CT X\subseteq \mathcal{S}X$ for each real-enriched category $X$. Since $\mathcal{S}$ is saturated, then \begin{itemize} \item  $\mathcal{S}X$ is $\mathcal{S}$-cocomplete, hence $\CT$-cocomplete; \item  the inclusion  functor $j_X\colon \mathcal{S} X \lra\CP X$ preserves $\mathcal{S} $-colimits, hence $\CT$-colimits. \end{itemize} Let ${\sf s}_X\colon X\lra \mathcal{S}X $ be  the Yoneda embedding with codomain restricted  to $\mathcal{S}X $.  Then for each $\phi\in \CT X$ we have   \[{\colim}_\phi\thinspace  {\sf s}_X = {\colim}_\phi(j_X\circ {\sf s}_X)  = {\colim}_\phi\thinspace \sy_X =\phi,\] so $\phi\in\mathcal{S}X$ and $\CT X\subseteq\mathcal{S}X $.
	
	Now we show that $\mathcal{S}$   contains  $\overline{\CT}$. Since $\mathcal{S}$ is saturated and contains the class $\CT$, it follows that for each real-enriched category $X$, $\mathcal{S}X $ is closed in $\CP X$ under $\CT$-colimits, then  $\overline{\CT}X\subseteq \mathcal{S}X$.
\end{proof}

\bibliographystyle{plain}

\end{document}